\theoremstyle{plain}
\newtheorem{thm}{Theorem}[section]
\newtheorem{lem}[thm]{Lemma}
\newtheorem{prop}[thm]{Proposition}
\newtheorem{cor}[thm]{Corollary}
\newtheorem*{thm*}{Theorem}
\newtheorem*{prop*}{Proposition}
\newtheorem*{cor*}{Corollary}
\newtheorem{thmintro}{Theorem}
\newtheorem{corintro}[thmintro]{Corollary}
\newtheorem{propintro}[thmintro]{Proposition}
\theoremstyle{definition}
\newtheorem{defn}[thm]{Definition}
\newtheorem{ex}[thm]{Example}
\newtheorem{rmk}[thm]{Remark}
\newtheorem*{rmk*}{Remark}
\newtheorem{conj}{Conjecture}
\newtheorem{quest}[conj]{Question}
\newtheorem*{quest*}{Question}
\newtheorem{ass}[thm]{Assumption}
\newtheorem*{defn*}{Definition}
\newcommand{\acts}{\curvearrowright}
\newcommand{\ra}{\rightarrow}
\newcommand{\Ra}{\Rightarrow}
\newcommand{\LRa}{\Leftrightarrow}
\newcommand{\cu}{\subseteq}
\newcommand{\wt}{\widetilde}
\newcommand{\x}{\times}
\renewcommand{\o}{\circ}
\newcommand{\id}{\mathrm{id}}
\newcommand{\mbb}{\mathbb}
\newcommand{\mc}{\mathcal}
\newcommand{\mf}{\mathfrak}
\newcommand{\mscr}{\mathscr}
\newcommand{\R}{\mathbb{R}}
\newcommand{\Z}{\mathbb{Z}}
\newcommand{\N}{\mathbb{N}}
\newcommand{\s}{\sigma}
\newcommand{\eps}{\epsilon}
\newcommand{\Om}{\Omega}
\newcommand{\om}{\omega}
\renewcommand{\l}{\lambda}
\renewcommand{\L}{\Lambda}
\newcommand{\g}{\gamma}
\newcommand{\G}{\Gamma}
\newcommand{\X}{\mc{X}}
\newcommand{\A}{\mc{A}}
\newcommand{\W}{\mc{W}}
\newcommand{\T}{\mc{T}}
\newcommand{\CAT}{{\rm CAT(0)}}
\DeclareMathOperator{\Fix}{Fix}
\DeclareMathOperator{\hull}{Hull} 
\DeclareMathOperator{\diam}{diam} 
\DeclareMathOperator{\lk}{lk}
\DeclareMathOperator{\St}{st}
\DeclareMathOperator{\isom}{Isom}
\DeclareMathOperator{\homeo}{Homeo}
\DeclareMathOperator{\aut}{Aut}
\DeclareMathOperator{\out}{Out}
\DeclareMathOperator{\rk}{rk}
\newcommand{\cc}{\mf{c}}
\newcommand{\res}{\mathrm{res}}
\newcommand{\C}{\mc{C}}
\begin{document}

\title{Coarse-median preserving automorphisms} 
\author[E. Fioravanti]{Elia Fioravanti}\address{Universit\"at Bonn, Bonn, Germany}\email{ fioravan@math.uni-bonn.de} 

\begin{abstract}
This paper has three main goals. 

First, we study fixed subgroups of automorphisms of right-angled Artin and Coxeter groups. If $\varphi$ is an untwisted automorphism of a RAAG, or an arbitrary automorphism of a RACG, we prove that $\Fix\varphi$ is finitely generated and undistorted. Up to replacing $\varphi$ with a power, we show that $\Fix\varphi$ is quasi-convex with respect to the standard word metric. This implies that $\Fix\varphi$ is separable and a special group in the sense of Haglund--Wise.

By contrast, there exist ``twisted'' automorphisms of RAAGs for which $\Fix\varphi$ is undistorted but not of type $F$ (hence not special), of type $F$ but distorted, or even infinitely generated.

Secondly, we introduce the notion of ``coarse-median preserving'' automorphism of a coarse median group, which plays a key role in the above results. We show that automorphisms of RAAGs are coarse-median preserving if and only if they are untwisted. On the other hand, all automorphisms of Gromov-hyperbolic groups and right-angled Coxeter groups are coarse-median preserving. These facts also yield new or more elementary proofs of Nielsen realisation for RAAGs and RACGs.

Finally, we show that, for every special group $G$ (in the sense of Haglund--Wise), every infinite-order, coarse-median preserving outer automorphism of $G$ can be realised as a homothety of a finite-rank median space $X$ equipped with a ``moderate'' isometric $G$--action. This generalises the classical result, due to Paulin, that every infinite-order outer automorphism of a hyperbolic group $H$ projectively stabilises a small $H$--tree.
\end{abstract}

\maketitle

\tableofcontents

\addtocontents{toc}{\protect\setcounter{tocdepth}{1}}

\section{Introduction.}

This paper is inspired by the following, at first sight unrelated, questions. 

\begin{quest}\label{Q1}
Given a finitely generated group $G$ and $\varphi\in\aut G$, what is the structure of the subgroup of fixed points $\Fix\varphi\leq G$?
\end{quest}

\begin{quest}\label{Q2}
Given a finitely generated group $G$ and $\varphi\in\aut G$, when can we realise $\varphi$ as a homothety of a non-positively curved metric space $X$ equipped with a ``nice'' $G$--action by isometries?
\end{quest} 

Our motivation comes from the theory of automorphisms of free groups. When $G=F_n$, a complete answer to Question~\ref{Q1} was first conjectured by Peter Scott in 1978, and later proved by Bestvina and Handel \cite{BH-Ann} (after work, among others, by \cite{Dyer-Scott,Jaco-Shalen,Gersten-bull,Culler-finiteorder,Gersten-proc,Goldstein-Turner,Gersten-fix,Cooper-fix,Cohen-Lustig}): 
\smallskip
\begin{center}
\emph{``for every $\varphi\in\aut F_n$, the fixed subgroup $\Fix\varphi\leq F_n$ is generated by at most $n$ elements''}.
\end{center} 
\smallskip
In particular, $\Fix\varphi$ is finitely generated, free, and quasi-convex in $F_n$.

Bestvina and Handel's proof is based on the extension of several ideas of Nielsen--Thurston theory from surfaces to graphs. Specifically, every homotopy equivalence between finite graphs is homotopic to a \emph{(relative) train track map} \cite{BH-Ann,BFH1}. This result is also a key ingredient in providing the following answer to Question~\ref{Q2} \cite{GJLL}: 
\smallskip
\begin{center}
\emph{``for every $\varphi\in\aut F_n$, there exists an action by homotheties $F_n\rtimes_{\varphi}\Z\acts T$, where $T$ is an $\R$--tree and the restriction $F_n\acts T$ is isometric, minimal, and with trivial arc-stabilisers''}.
\end{center} 
\smallskip
If $\varphi$ is exponentially growing, then $F_n\acts T$ has dense orbits and $\Fix\varphi$ is elliptic.

We are interested in Question~\ref{Q2} because of its connections to Question~\ref{Q1}. Indeed, if one admits the existence of an $F_n$--tree as above, it is possible to give more elementary proofs of the Scott conjecture, which are completely independent of the complicated machinery of train tracks \cite{GLL} and instead rely on an ``index theory'' for $F_n$--trees \cite{Gaboriau-Levitt}.

More generally, a satisfactory answer to Question~\ref{Q2} was obtained by Paulin for all \emph{Gromov-hyperbolic groups} $G$ \cite{Paulin-ENS}. If $\phi\in\out G$ has infinite order, then it can be similarly realised as a homothety of a \emph{small} $G$--tree, i.e.\ an $\R$--tree with a minimal isometric $G$--action such that no $G$--stabiliser of an arc contains a copy of the free group $F_2$. 

Paulin's proof is abstract in nature, but his result can be pictured quite concretely in the case when $G=\pi_1(S)$ for a closed surface $S$: Thurston showed that $\phi$ is induced by a homeomorphism of $S$ that preserves a projective measured singular foliation on $S$ \cite{Thur-BullAMS}; 
the $\R$--tree $T$ can then be constructed by lifting this singular foliation to the universal cover $\wt S$ and considering its leaf space.

It is natural to wonder if the above discussion is specific to hyperbolic groups. This might be suggested by the fact that automorphism groups of one-ended hyperbolic groups can essentially be understood in terms of mapping class groups of finite-type surfaces \cite{Sela-II,Levitt-hyp}, for which Nielsen--Thurston theory is available.  

\medskip
In recent years, the study of outer automorphisms of groups other than $\pi_1(S)$ and $F_n$ has gained significant traction. The groups $\out\A_{\G}$ --- where $\A_{\G}$ is a right-angled Artin group (RAAG) --- are particularly appealing in this context, as they can exhibit a variety of interesting behaviours ranging between the extremal cases of $\out F_n$ and $\out\Z^n={\rm GL}_n\Z$.

One may look at the large body of work on $\out F_n$ hoping to extract a blueprint that will direct the study of the groups $\out\A_{\G}$. This has proved a successful approach in some cases, remarkably with the definition of analogues of Outer Space \cite{CSV,BCV} and its consequences for the study of homological properties. However, there are limits to such analogies: in practice, techniques that are taylored to general RAAGs and based on induction on the complexity of the graph $\G$ seem to provide the most effective approach to many problems \cite{CV1,CV2,Guirardel-Sale,Day-Wade,Day-Sale-Wade}.

Our aim is to investigate Questions~\ref{Q1} and~\ref{Q2} when $G$ is a RAAG or, more generally, a cocompactly cubulated group. These are just two of the many basic questions that have been fully solved for $\out F_n$, but have so far remained out of the limelight for the groups $\out\A_{\G}$.

One quickly realises that it is necessary to impose some restrictions on $\varphi\in\aut\A_{\G}$ if the two questions are to be fruitfully addressed. To begin with, it is not hard to construct automorphisms of $F_2\x\Z$ whose fixed subgroup is infinitely generated (Example~\ref{mother of all evil}), which would prevent us from relying on the tools of geometric group theory in relation to Question~\ref{Q1}. In addition, when $G=\Z^n$, it should heuristically always be possible to equivariantly collapse the space $X$ in Question~\ref{Q2} to a copy of $\R$, which forces $\varphi\in {\rm GL}_n\Z$ to have a positive eigenvalue.

We choose to consider the subgroup of \emph{untwisted automorphisms} $U(\A_{\G})\leq\aut\A_{\G}$, which was introduced by Day in \cite{Day-peak} (with the name of ``long-range automorphisms'') and further studied by Charney, Stambaugh and Vogtmann \cite{CSV} and Hensel and Kielak \cite{Hensel-Kielak}. This can be defined as the subgroup generated by a certain subset of the Laurence--Servatius generators for $\aut\A_{\G}$ \cite{Laurence,Servatius}, excluding generators that ``resemble'' too closely elements of ${\rm GL}_n\Z$. 

The subgroup $U(\A_{\G})\leq\aut\A_{\G}$ displays stronger similarities to $\aut F_n$ and often makes up a large portion of the entire group $\aut\A_{\G}$. For instance, $U(F_n)=\aut F_n$ and $U(\A_{\G})$ always contains the kernel of the homomorphism $\aut\A_{\G}\ra{\rm GL}_n\Z$ induced by the $(\aut\A_{\G})$--action on the abelianisation of $\A_{\G}$.

\medskip
Our first result is a novel, \emph{coarse geometric} characterisation of untwisted automorphisms. This will play a fundamental role in addressing both Questions~\ref{Q1} and~\ref{Q2} in the rest of the paper.

Recall that every right-angled Artin group $\A_{\G}$ is equipped with a median operator $\mu\colon\A_{\G}^3\ra\A_{\G}$ coming from the fact that $\A_{\G}$ is naturally identified with the $0$--skeleton of a $\CAT$ cube complex (the universal cover of its Salvetti complex) \cite{Chepoi}. Thus, one can consider those automorphisms of $\A_{\G}$ with respect to which $\mu$ is coarsely equivariant. 

More generally, it makes sense to study such automorphisms for any \emph{coarse median group} $(G,\mu)$. This remarkably broad class of groups was introduced by Bowditch in \cite{Bow-cm} and contains all Gromov-hyperbolic groups, as well as all groups admitting a geometric action on a $\CAT$ cube complex, and all \emph{hierarchically hyperbolic groups} in the sense of \cite[Definition~1.21]{HHS2}.

\begin{defn*}
An automorphism $\varphi$ of a coarse median group $(G,\mu)$ is \emph{coarse-median preserving}\footnote{This terminology is motivated in Subsection~\ref{coarse median structures sect}, see Remark~\ref{motivation for cmp terminology}.} \emph{(CMP)} if there exists a constant $C\geq 0$ such that:
\[\varphi(\mu(g_1,g_2,g_3))\approx_C \mu(\varphi(g_1),\varphi(g_2),\varphi(g_3)),\ \forall g_1,g_2,g_3\in G,\]
where ``$x\approx_C y$'' means ``$d(x,y)\leq C$'' with respect to some fixed word metric $d$ on $G$.
\end{defn*} 

It is easy to see that CMP automorphisms form a subgroup of $\aut G$ containing all inner automorphisms\footnote{Here it is important that our definition of \emph{coarse median group} (Definition~\ref{coarse median group defn}) is slightly stronger than Bowditch's original definition \cite{Bow-cm}, in that we require $\mu$ to be coarsely $G$--equivariant. The difference between the two notions is analogous to the distinction between \emph{hierarchically hyperbolic groups} and groups that are just a \emph{hierarchically hyperbolic space}.}. Thus, it makes sense to speak of CMP \emph{outer} automorphisms, as this property does not depend on the specific lift to $\aut G$.

It turns out that, in the setting of right-angled Artin groups, CMP automorphisms coincide with untwisted automorphisms, perhaps explaining the closer analogy between $U(\A_{\G})$ and $\aut F_n$. In particular, every element of $\aut F_n$ is CMP, while only a finite subgroup of $\aut\Z^n$ is CMP.  

More precisely, we have the following. We endow right-angled Artin/Coxeter groups with the coarse median structure induced by the action on the universal cover of the Salvetti/Davis complex.

\begin{propintro}\label{cmp prop intro}
\begin{enumerate}
\item[]
\item All automorphisms of hyperbolic groups are CMP.
\item All automorphisms of right-angled Coxeter groups are CMP.
\item Automorphisms of right-angled Artin groups are CMP if and only if they are untwisted.
\end{enumerate}
\end{propintro}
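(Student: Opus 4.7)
The plan is to exploit that CMP automorphisms form a subgroup of $\aut G$ containing all inner automorphisms, so it suffices to verify CMP on a generating set of the relevant (sub-)group for the ``containment'' directions, and then exhibit explicit witnesses of non-CMP for the converse.

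For Part (1), in any $\delta$-hyperbolic group the coarse median $\mu(x,y,z)$ is represented (up to $O(\delta)$-error) by any approximate centre of the geodesic triangle on $x,y,z$. Any $\varphi \in \aut G$ is bilipschitz with respect to a fixed word metric (its Lipschitz constant being bounded by $\max_i |\varphi(s_i)|$ for a finite generating set $\{s_i\}$), hence sends geodesics to uniform quasi-geodesics that fellow-travel their straightenings by the Morse lemma. Consequently $\varphi$ sends approximate triangle centres to approximate triangle centres with uniform control, giving CMP.

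For Part (2) and the ``if'' direction of Part (3), I would work with known Laurence--Servatius-type generators: in the RAAG case $U(\A_{\G})$ is generated by graph automorphisms, inversions, partial conjugations, and non-adjacent transvections; in the RACG case $\aut W_{\G}$ is generated by graph automorphisms, inner automorphisms, and generators of bounded-range action (partial conjugations and their RACG analogues). Graph automorphisms and inversions act as genuine cubical isomorphisms of the Davis or Salvetti complex, preserving the cubical median exactly. Partial conjugations only alter hyperplane orientations within a bounded strip near a ``separating wall'', so the displacement they induce on medians is uniformly bounded. For a non-adjacent transvection $v \mapsto vw$ (where $\lk(v)\subseteq\lk(w)$ and $v,w$ are not adjacent), the standard subcomplex on which it acts nontrivially admits a product decomposition of the form ``free factor $\times \Z^k$'', with the transvection acting by a Nielsen transformation on the free factor and trivially on the $\Z^k$-factor; CMP then reduces to Part (1) applied to the (hyperbolic) free factor, combined with compatibility of the product median.

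For the ``only if'' direction of Part (3), the key local observation is that an \emph{adjacent} transvection $v \mapsto vw$ (with $v,w$ adjacent in $\G$) restricts to the standard $\Z^2$-subgroup $\langle v,w\rangle$ as the shear $(a,b) \mapsto (a,a+b)$. This $\Z^2$ is a convex, isometrically embedded subcomplex of the Salvetti complex, with induced median equal to the componentwise $\ell^1$-median; for the triple $g_1 = (0,0)$, $g_2 = (0,n)$, $g_3 = (n,0)$ we compute $\varphi(\mu(g_1,g_2,g_3)) = (0,0)$ but $\mu(\varphi g_1,\varphi g_2,\varphi g_3) = (0,n)$, giving displacement $n$ and ruling out CMP. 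The hardest part, and the main obstacle I anticipate, is promoting this local obstruction to a \emph{global} statement ruling out all automorphisms outside $U(\A_{\G})$, not merely adjacent transvections. This likely requires a structural argument on the quotient $\aut\A_{\G} / U(\A_{\G})$, showing that no element of this quotient can be ``CMP-trivialised'' by composition with elements of $U(\A_{\G})$, presumably by exhibiting an appropriate $\Z^k$-witness for each non-untwisted coset.
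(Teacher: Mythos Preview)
Your Part~(1) is fine; the paper takes a slicker route via uniqueness of the coarse median structure on hyperbolic groups (quoting \cite{NWZ1}), but your Morse-lemma argument works.

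For the ``if'' direction, your treatment of joins is not right. You claim the ``standard subcomplex on which $\tau_{v,w}$ acts nontrivially'' decomposes as a free factor times $\Z^k$, and that CMP reduces to Part~(1) on the free factor. But $\tau_{v,w}$ acts on \emph{all} of $\A_\G$: to verify CMP you must control $\mu(\varphi(g_1),\varphi(g_2),\varphi(g_3))$ for arbitrary $g_i\in\A_\G$, not just for $g_i$ in $\langle v,w\rangle\times\A_{\lk v}$. There is no obvious way to reduce the global CMP estimate to one on that subgroup, because medians of generic triples genuinely see the rest of $\G$. The paper instead proves a uniform disc-diagram lemma (Lemma~3.11): for $\varphi$ a join or partial conjugation and any geodesic $\alpha$, the ``backtracking'' halfspaces entered and left by the path $\varphi(\alpha)$ are pairwise disjoint and all have label $w$. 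From this one reads off directly that $\mu(1,\varphi(x),\varphi(y))\in\{1,w^{-1}\}$ whenever $\mu(1,x,y)=1$, which is a finite set, and a short criterion (Lemma~3.10) converts this to CMP. Your partial-conjugation sketch is in the right spirit but would also need this kind of analysis to be made precise.

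For the ``only if'' direction, the gap you identify is real, and the approach you propose (analysing cosets in $\aut\A_\G/U(\A_\G)$ and producing a $\Z^k$-witness for each) is not how the paper proceeds, nor is it clear it would work. The paper argues in the opposite direction: rather than exhibiting non-CMP witnesses, it shows that any CMP automorphism is already untwisted. The key observation is that CMP automorphisms preserve convex-cocompactness of subgroups (Corollary~3.5), and $\langle g\rangle$ is convex-cocompact in $\A_\G\curvearrowright\X_\G$ precisely when $g$ is \emph{label-irreducible} (Lemma~3.4). Thus a CMP automorphism sends each generator $v$ to a label-irreducible element; after composing with a graph automorphism (via a lemma of Laurence) one arranges $v\in\G(\varphi(v))$, making $\varphi$ \emph{simple} in Laurence's sense. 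Laurence's structural results then decompose any simple automorphism as a product of inversions, joins and partial conjugations, hence $\varphi\in U(\A_\G)$. This route completely avoids the coset-by-coset obstruction argument.
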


Part~(1) is due to the fact that hyperbolic groups admit a unique coarse median structure, which was shown in \cite{NWZ1} (see Example~\ref{hyp cmp ex} below). 
That CMP automorphisms of RAAGs are untwisted can be easily deduced from the proof, due to Laurence, that elementary automorphisms generate the automorphism group \cite{Laurence}. We prove the rest of Proposition~\ref{cmp prop intro} in Subsection~\ref{of RAAGs sect}. 

Our first result on Question~\ref{Q1} applies to all CMP automorphisms of \emph{cocompactly cubulated} groups, i.e.\ those groups that admit a proper cocompact action on a $\CAT$ cube complex. 

We remark that, in addition to Proposition~\ref{cmp prop intro}, examples of CMP automorphisms of cubulated groups are provided by \cite[Theorem~E]{Fio10e}, which characterises when a generalised Dehn twist preserves the coarse median structure induced by the cubulation.

\begin{thmintro}\label{cmp und intro}
Let $G$ be a cocompactly cubulated group, with the induced coarse median structure. If $\varphi\in\aut G$ is coarse-median preserving, then: 
\begin{enumerate}
\item $\Fix\varphi$ is finitely generated and undistorted in $G$;
\item $\Fix\varphi$ is itself cocompactly cubulated.
\end{enumerate}
\end{thmintro}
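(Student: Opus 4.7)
My approach is to use the cubulation to locate, inside a \CAT cube complex on which $G$ acts geometrically, a convex subcomplex $Y$ on which $\Fix\varphi$ acts geometrically. Granting this, both conclusions are immediate: part~(2) is witnessed by the action $\Fix\varphi\acts Y$, while in part~(1) finite generation follows from cocompact cubulation and undistortedness follows because the combinatorial inclusion of a convex subcomplex in a \CAT cube complex is an isometric embedding, so the orbit map realises $\Fix\varphi\hookrightarrow G$ as a quasi-isometric embedding.

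To find $Y$ I would first pin down the relevant coarsely fixed set. Fix a geometric action $G\acts X$ on a \CAT cube complex and identify $G$ with a vertex orbit $G\cdot x_0\cu X^{(0)}$; the orbit map is a quasi-median quasi-isometry, so the CMP hypothesis says precisely that the induced self-map $g\cdot x_0\mapsto\varphi(g)\cdot x_0$ of $X^{(0)}$ coarsely preserves the median $\mu$ on $X$. Set
\[ F:=\{g\in G:d(g,\varphi(g))\leq C_0\} \]
for a sufficiently large constant $C_0$. Clearly $\Fix\varphi\cu F$. Using CMP together with the coarse Lipschitz property of $\mu$, one checks that $F$ is \emph{coarsely median-closed}: $\mu(F,F,F)\cu F$ after possibly enlarging $C_0$. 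Invoking a quantitative ``median hull equals convex hull up to bounded error'' statement, which I expect to be available earlier in the paper, this upgrades to a convex subcomplex $Y\cu X$ at bounded Hausdorff distance from $F$ and containing $\Fix\varphi\cdot x_0$.

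The cocompactness of $\Fix\varphi\acts Y$ then reduces to a short counting argument via the map $f\colon G\ra G$ given by $f(y):=y^{-1}\varphi(y)$. A direct computation shows that $f(hy)=f(y)$ if and only if $h\in\Fix\varphi$, so the fibres of $f$ are exactly the orbits of $\Fix\varphi$ acting on $G$ by left multiplication. Since $f(F)\cu B_{C_0}(1)$ is a finite subset of $G$, it follows that $F$ is a finite union of $\Fix\varphi$--orbits. Moreover $F$ is exactly $\Fix\varphi$--invariant, because $d(hy,\varphi(hy))=d(hy,h\varphi(y))=d(y,\varphi(y))$ whenever $h\in\Fix\varphi$, and hence $Y$ is $\Fix\varphi$--invariant too. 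The action $\Fix\varphi\acts Y$ is thus proper (as a restriction of $G\acts X$) and cocompact (a bounded neighbourhood of a set with finitely many $\Fix\varphi$--orbits), as required.

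The step I expect to be the main obstacle is the median-hull passage: upgrading the coarsely median-closed set $F$ to an honest convex subcomplex at bounded Hausdorff distance. In a finite-dimensional \CAT cube complex one expects this to follow from the standard description of convex subcomplexes via halfspaces combined with an induction on the complexity of the separating hyperplane pattern, but the uniform quantitative control is the delicate point. Everything else -- the counting by fibres of $f$, the invariance of $F$, and the translation to a geometric cubulation of $\Fix\varphi$ -- is routine bookkeeping once this technical input is in place.
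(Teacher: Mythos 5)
Your strategy hinges on a claim that is actually false, and the paper warns against exactly this mistake. You propose to pass from the approximately median-closed set $F=\{g : d(g,\varphi(g))\leq C_0\}$ to a \emph{convex} subcomplex $Y\cu X$ at bounded Hausdorff distance from $F$, via a putative ``median hull equals convex hull up to bounded error'' statement. No such statement holds: for $G=\Z^2$ acting on $\R^2$ with its standard cubulation and $\varphi$ swapping the two coordinates (a median automorphism, hence CMP), $\Fix\varphi$ is the diagonal copy of $\Z$, and its convex hull (in the $\ell^1$/median sense) is all of $\Z^2$, at infinite Hausdorff distance from the diagonal. So there is no convex subcomplex of $X$ at bounded Hausdorff distance from $F$, and the rest of your argument (isometric embedding of $Y^{(1)}$, cocompactness of $\Fix\varphi\acts Y$) has nothing to stand on. The paper flags this explicitly right after the theorem statement: ``the cubulation of $\Fix\varphi$ \dots\ does not arise from a convex subcomplex of the cubulation of $G$ in general.''

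The correct substitute is weaker and requires extra work. What is true (Proposition~\ref{approx subalgebras}, or Bowditch's \cite[Proposition~4.1]{Bow-hulls}) is that an approximate median subalgebra of a finite-rank median space is at bounded Hausdorff distance from the \emph{median subalgebra} $\langle F\rangle$ it generates, which is a far weaker structure than a convex set. Since $\langle F\rangle$ need not be convex in $X$, its combinatorial metric inside $X$ is not automatically comparable to its intrinsic median metric; the paper's Lemma~\ref{cubical undistortion criterion} supplies the missing comparison, realising $\langle F\rangle$ as the vertex set of an \emph{abstract} $\CAT$ cube complex $X(M)$ on which $\Fix\varphi$ acts geometrically, and establishing undistortion. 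That lemma, in turn, needs $\Fix\varphi$ to be finitely generated (to get coarse connectivity of the orbit and hence of $\langle F\rangle$), so finite generation must be proved \emph{first} and cannot simply be read off the cubulation as in your sketch; the paper does this separately by a Paulin/Cooper-style compactness argument in the Roller boundary (Proposition~\ref{fg fix}). Your fibre-counting observation that $F$ is a finite union of $\Fix\varphi$-orbits is correct and corresponds to Lemma~\ref{displacement vs fix dist} in the paper; it is the convex-hull step you singled out as ``the main obstacle'' that is genuinely unrepairable as stated.
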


Both parts of this result fail badly for ``twisted'' automorphisms of right-angled Artin groups. For every finite graph $\G$, there exist automorphisms $\psi\in\aut(\A_{\G}\x\Z)$ with $\Fix\psi=BB_{\G}\x\Z$, where $BB_{\G}\leq\A_{\G}$ denotes the Bestvina--Brady subgroup \cite{Bestvina-Brady} (see Example~\ref{mother of all evil}). When finitely generated, $BB_{\G}$ is quadratically distorted in $\A_{\G}$ as soon as $\A_{\G}$ is directly irreducible and non-cyclic \cite{Tran-distortion}. Even when $\Fix\psi$ is finitely generated and undistorted, one can ensure that $\Fix\psi$ not be of type $F$, which implies that $\Fix\psi$ is not cocompactly cubulated. These examples can be easily extended to RAAGs that do not split as products.

We emphasise that the cubulation of $\Fix\varphi$ provided by Theorem~\ref{cmp und intro} does not arise from a \emph{convex} subcomplex of the cubulation of $G$ in general, but just from a \emph{median subalgebra} of it (see Subsection~\ref{median algebras subsect} for a definition). In fact, the subgroup $\Fix\varphi$ need not be \emph{quasi-convex} in $G$, as can be observed for the automorphism $\varphi\in\aut\Z^2$ that swaps the standard generators, where $\Fix\varphi$ is the diagonal subgroup of $\Z^2$.

Nevertheless, in many situations, $\Fix\varphi$ does turn out to be quasi-convex in the ambient group. We prove this fact in the context of right-angled Artin and Coxeter groups, where it has the remarkable consequence that $\Fix\varphi$ is a \emph{separable} subgroup \cite[Corollary~7.9]{Haglund-Wise-GAFA}.

\begin{thmintro}\label{U_0 cc intro}
Consider the right-angled Artin group $\A_{\G}$ or the right-angled Coxeter group $\W_{\G}$. There are finite-index subgroups $U_0(\A_{\G})\leq U(\A_{\G})$ and $\aut_0\W_{\G}\leq\aut\W_{\G}$ such that, for any automorphism $\varphi$ lying in either of these subgroups:
\begin{enumerate}
\item $\Fix\varphi$ is quasi-convex in $\A_{\G}$ or $\W_{\G}$ with respect to their standard word metric, i.e.\ geodesics in their standard Cayley graph with endpoints in $\Fix\varphi$ stay uniformly close to $\Fix\varphi$;
\item in particular, $\Fix\varphi$ is separable and it is a special group in the Haglund--Wise sense.
\end{enumerate}
\end{thmintro}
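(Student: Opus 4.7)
The plan is to upgrade the undistortedness from Theorem~\ref{cmp und intro} to genuine quasi-convexity in the standard word metric, after which part~(2) follows immediately from the Haglund--Wise theory of special cube complexes~\cite{Haglund-Wise-GAFA}. For the choice of $\aut_0\W_\G$, the natural candidate is the kernel of the action of $\aut\W_\G$ on the finite set of conjugacy classes of standard generators; this is of finite index by Tits/Mühlherr. Analogously, $U_0(\A_\G)$ should be defined inside $U(\A_\G)$ by imposing that $\varphi$ preserves each conjugacy class of standard generator and acts trivially on the relevant finite quotient of ${\rm GL}_n\Z$. The purpose of passing to these finite-index subgroups is precisely to rule out the ``permutation of generators'' type behaviour (as in the $\aut\Z^2$ example discussed after Theorem~\ref{cmp und intro}) which is exactly what prevents $\Fix\varphi$ from sitting as a convex subcomplex of the ambient cubulation.

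The geometric heart of the argument is to exhibit a $\Fix\varphi$-cocompact \emph{convex} subcomplex $Y$ of the universal cover $\wt S$ of the Salvetti (resp.\ Davis) complex. I would take $Y$ to be the combinatorial convex hull of a $\Fix\varphi$-orbit, and verify $\Fix\varphi$-cocompactness by comparing $Y$ with the coarse-median hull $\hull_\mu(\Fix\varphi\cdot e) \subseteq G$. Since $\varphi$ is CMP by Proposition~\ref{cmp prop intro}, this coarse-median hull is $\Fix\varphi$-invariant, and Theorem~\ref{cmp und intro}(2) combined with a standard coarse-median argument shows that it is also $\Fix\varphi$-cocompact. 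The task then becomes to control the difference between the genuine combinatorial convex hull and the coarse-median hull; the rigidity granted by $U_0$ and $\aut_0$ should force this difference to be absorbed into finitely many $\Fix\varphi$-orbits of cubes, via a disc-diagram argument in the spirit of the one used for Proposition~\ref{cmp prop intro}(2).

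Once such a convex subcomplex $Y$ has been produced, part~(1) of the theorem is immediate: convex subcomplexes of $\wt S$ are isometrically embedded in both the combinatorial and $\CAT$ metrics, and the geometric action of $G$ on $\wt S$ transports this to quasi-convexity of $\Fix\varphi$ in $G$. For part~(2), one then invokes \cite[Corollary~7.9]{Haglund-Wise-GAFA}: quasi-convex subgroups of virtually special groups are themselves separable and virtually special, and both $\A_\G$ and $\W_\G$ are special by construction.

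The principal obstacle is the promotion from the \emph{coarse} cocompactness of $\hull_\mu(\Fix\varphi\cdot e)$ modulo $\Fix\varphi$ (which is all that Theorem~\ref{cmp und intro} directly gives) to the \emph{honest} cocompactness of the combinatorial convex hull in $\wt S$. In the absence of the restrictions carved out by $U_0$ and $\aut_0$, this upgrade genuinely fails, as the $\aut\Z^2$ example shows; so the technical work will consist in pinpointing exactly what combinatorial constraint on $\varphi$ is needed, defining $U_0$ and $\aut_0$ accordingly, and carrying out a cubical/disc-diagram argument showing that CMP together with this constraint suffices.
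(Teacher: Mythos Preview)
Your overall architecture is right --- restrict to a finite-index subgroup, promote $\Fix\varphi$ from ``coarsely'' to ``honestly'' convex-cocompact in the Salvetti/Davis complex, then invoke Haglund--Wise --- but the two load-bearing pieces of the argument are not the ones you propose, and your sketch does not supply a mechanism that would actually close the gap.

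First, the definition of $U_0(\A_\G)$ and $\aut_0\W_\G$: the paper takes these to be the subgroups generated by inversions, joins and partial conjugations (excluding graph automorphisms). Your proposed definition via ``preserving each conjugacy class of standard generator'' is not the same (it would exclude inversions, for instance), and more importantly the property one actually needs is different: the point is that every $\varphi\in U_0(\A_\G)$ takes each parabolic $\A_{\Delta^\perp}$ to a conjugate of itself (Lemma~\ref{orthogonals are preserved}). This gives a uniform ``label control'' estimate (Lemma~\ref{few labels outside}): at most $K(\varphi)$ hyperplanes in $\mscr{W}(\varphi(x)|\varphi(y))$ have label outside $\g(\mscr{W}(x|y))^{\perp\perp}$.

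Second, and more seriously, the promotion from Theorem~\ref{cmp und intro} to genuine quasi-convexity does not go via a disc-diagram comparison of a ``coarse-median hull'' with the combinatorial hull. The paper's route is: introduce a notion of \emph{weakly quasi-convex} subset (Definition~\ref{wqc defn}), use the label-control lemma to show $\Fix\varphi$ is weakly quasi-convex (Proposition~\ref{fix wqc}), and then prove a general criterion (Proposition~\ref{wqc->qc}) that in any cube complex of finite dimension and finite \emph{staircase length}, edge-connected weakly quasi-convex median subalgebras are quasi-convex. The finite staircase length of $\X_\G$ (Lemma~\ref{no staircases}) is a genuine combinatorial input specific to Salvetti/Davis complexes; without it the upgrade fails, as an explicit $2$--dimensional example in the paper shows. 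Your ``disc-diagram argument to absorb the difference into finitely many orbits of cubes'' is not a substitute for this machinery --- it is not clear what diagram you would set up or what would bound its size, and nothing in your outline accesses the staircase/label structure that actually does the work.
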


For the experts, the finite-index subgroups in Theorem~\ref{U_0 cc intro} are generated by the elementary automorphisms known as inversions, folds and partial conjugations (see Subsection~\ref{of RAAGs sect} and Remark~\ref{0 finite index}). Quasi-convexity of $\Fix\varphi$ can alternatively be characterised saying that $\Fix\varphi$ acts properly and cocompactly on a convex subcomplex of the universal cover of the Salvetti/Davis complex, or, again, in coarse median terms (see Definition~\ref{qc defn}, Remark~\ref{right-angled qc rmk} and Lemma~\ref{equiv cc}).

In light of Theorem~\ref{U_0 cc intro}, it is only natural to wonder what isomorphism types of special groups can arise as $\Fix\varphi$, and whether their complexity can be bounded in any way in terms of the ambient group, in the spirit of Scott's conjecture. We only provide a very partial result on these questions (Corollary~\ref{intro fix graph}), leaving a more detailed treatment for later work. The main proof ingredient, which we believe of independent interest, is the following construction of $U_0(\A_{\G})$--invariant Bass--Serre trees for most right-angled Artin groups.

\begin{propintro}\label{intro invariant splitting}
Let $\A_{\G}$ be directly irreducible, freely irreducible and non-cyclic. Then there exists an amalgamated product splitting $\A_{\G}=\A_+\ast_{\A_0}\A_-$, with $\A_{\pm}$ and $\A_0$ parabolic subgroups of $\A_{\G}$, such that the corresponding Bass--Serre tree $\A_{\G}\acts T$ is $U_0(\A_{\G})$--invariant. That is: for every $\varphi\in U_0(\A_{\G})$, there exists an isometry $f\colon T\ra T$ satisfying $f\o g=\varphi(g)\o f$ for all $g\in\A_{\G}$.
\end{propintro}

\begin{corintro}\label{intro fix graph}
Consider a right-angled Artin group $\A_{\G}$ and $\varphi\in U_0(\A_{\G})$. 
\begin{enumerate}
\item If $\A_{\G}$ splits as a direct product $\A_1\x\A_2$, then $\varphi(\A_i)=\A_i$ and $\Fix\varphi=\Fix\varphi|_{\A_1}\x\Fix\varphi|_{\A_2}$.
\item If $\A_{\G}$ is directly irreducible, then the subgroup $\Fix\varphi\leq\A_{\G}$ splits as a (possibly trivial) finite graph of groups with vertex and edge groups of the form $\Fix\varphi|_P$, for proper parabolic subgroups $P\leq\A_{\G}$ with $\varphi(P)=P$ and $\varphi|_{P}\in U_0(P)$.
\end{enumerate}
\end{corintro}

The same two results hold for right-angled Coxeter groups $\W_{\G}$ and automorphisms $\varphi\in\aut_0\W_{\G}$.

\medskip
We now turn to Question~\ref{Q2}, which is the second main focus of the paper. Recall that Paulin showed that, for every Gromov-hyperbolic group $G$, every infinite-order element of $\out G$ can be realised as a homothety of a small, isometric $G$--tree \cite{Paulin-ENS}.

Our main result on Question~\ref{Q2}, generalises Paulin's theorem to CMP automorphisms of \emph{special groups} $G$, in the Haglund--Wise sense \cite{Haglund-Wise-GAFA,Sageev-notes}. This is a broad class of groups including right-angled Artin groups, finite-index subgroups of right-angled Coxeter groups, as well as free and surface groups and a number of other hyperbolic examples.

\begin{figure} 
\begin{tikzpicture}
\draw[fill] (0,-1) -- (0,1);
\draw[fill] (0,-1) -- (-1,-0.5);
\draw[fill] (0,-1) -- (1,-0.5);
\draw[fill] (-1,-0.5) -- (-1,0.5);
\draw[fill] (1,-0.5) -- (1,0.5);
\draw[fill] (0,1) -- (-1,0.5);
\draw[fill] (0,1) -- (1,0.5);
\draw[fill] (0,0) circle [radius=0.04cm];
\draw[fill] (0,1) circle [radius=0.04cm];
\draw[fill] (0,-1) circle [radius=0.04cm];
\draw[fill] (-1,0.5) circle [radius=0.04cm];
\draw[fill] (-1,-0.5) circle [radius=0.04cm];
\draw[fill] (1,0.5) circle [radius=0.04cm];
\draw[fill] (1,-0.5) circle [radius=0.04cm];
\end{tikzpicture}
\caption{}
\label{intro fig} 
\end{figure}

Note that \emph{small} $G$--actions on $\R$--trees are not the right notion to consider in this context. Indeed, if a special group $G$ has a small action on an $\R$--tree $T$, then every arc stabiliser is free abelian and the work of Rips and Bestvina--Feighn implies that $G$ splits over an abelian subgroup \cite[Theorem~9.5]{BF-stable}. However, there exist special groups that admit an infinite-order CMP outer automorphism, but do not split over any abelian subgroup (e.g.\ the RAAG $\A_{\G}$ with $\G$ as in Figure~\ref{intro fig}, by \cite{Groves-Hull-little}).

In fact, due to the lack of hyperbolicity, it is reasonable to expect that $\R$--trees will need to be replaced by higher-dimensional analogues.

The correct setting seems to be provided by the simultaneous generalisation of $\R$--trees and $\CAT$ cube complexes known as \emph{median spaces}. These are those metric spaces $(X,d)$ such that, for all $x_1,x_2,x_3\in X$, there exists a unique point $m(x_1,x_2,x_3)$ (known as their \emph{median}) satisfying:
\[d(x_i,x_j)=d(x_i,m(x_1,x_2,x_3))+d(m(x_1,x_2,x_3),x_j),\ \forall 1\leq i<j\leq 3.\]
A connected median space X is said to have \emph{rank $\leq r$} if all its locally compact subsets have topological dimension $\leq r$. Rank--$1$ connected median spaces are precisely $\R$--trees. 

The following is our main result on Question~\ref{Q2} (a more general statement for infinite abelian subgroups of $\out G$ is Theorem~\ref{homothety 2}). Note that, although higher-rank median spaces are never non-positively curved, they always admit a canonical, bi-Lipschitz equivalent $\CAT$ metric\footnote{The reader should keep in mind the case of $\R^n$, where the $\ell^1$ metric is median and the Euclidean metric is $\CAT$.} \cite{Bow4}.

\begin{thmintro}\label{Q2 thm intro}
Let $G$ be the fundamental group of a compact special cube complex. Suppose $G$ has trivial centre. Let $\phi\in\out G$ be infinite-order and coarse-median preserving. Then:
\begin{enumerate}
\item there is a geodesic, finite-rank median space $X$ and an action by homotheties $G\rtimes_{\phi}\Z\acts X$;
\item the restriction $G\acts X$ is isometric, minimal, with unbounded orbits, and ``moderate'';
\item if $\varphi\in\aut G$ represents $\phi$, then the subgroup $\Fix\varphi\leq G$ fixes a point of $X$;
\item if $\phi$ and $\phi^{-1}$ are sub-exponentially growing, then the action $G\rtimes_{\phi}\Z\acts X$ is isometric.
\end{enumerate}
\end{thmintro}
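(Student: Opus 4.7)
The plan is to adapt Paulin's limiting construction to the category of finite-rank median spaces, using the coarse-median-preserving hypothesis in place of hyperbolicity. Let $Y$ denote the universal cover of the compact special cube complex with fundamental group $G$, so that $G\acts Y$ is proper and cocompact and induces the coarse median $\mu$ on $G$. Fix a finite generating set $S\cu G$ and a lift $\varphi\in\aut G$ of $\phi$. Set
\[\lambda_n:=\inf_{y\in Y}\;\max_{s\in S} d_Y(y,\varphi^n(s)\cdot y),\]
and choose almost-realising basepoints $y_n\in Y$. Because $\phi$ has infinite order in $\out G$ and $G$ is centreless, an Arzel\`a--Ascoli argument forces $\lambda_n\to\infty$: otherwise a subsequence of $\varphi^n$ would converge, modulo conjugation, to an inner automorphism, contradicting that $\phi$ has infinite order in $\out G$.

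Next I form the equivariant Gromov--Hausdorff (equivalently, ultrafilter) limit of the rescaled pointed $G$-spaces $(Y,\lambda_n^{-1}d_Y,y_n)$, where $G$ acts through $g\mapsto\varphi^n(g)$. Generators have bounded displacement by construction, so one obtains a pointed geodesic space $(X,x_0)$ with an isometric $G$-action. The crucial new ingredient is that $X$ is a \emph{median} space: the CMP hypothesis asserts that $\varphi^n$ distorts $\mu$ by a uniform additive error $C$, which rescales to $C\lambda_n^{-1}\to 0$ and passes in the limit to a genuine median operator on $X$. Finite rank is then inherited from the cubical dimension of $Y$: approximate $r$-cubes in the rescaled $(Y,\lambda_n^{-1}d_Y)$ must lift to actual $r$-cubes in $Y$ for $n$ large, giving $\rk X\leq\dim Y$ via Bowditch's rank criterion for coarse median limits.

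The homothety action of $G\rtimes_\phi\Z$ arises from the twist relation $\rho_{n+1}=\rho_n\o\varphi$: after passing to a subsequence, the ratios $\lambda_{n+1}/\lambda_n$ converge to some $\l\geq 1$ and yield a bijection $T\colon X\to X$ that intertwines $G$ with its $\varphi$-twist and scales $d_X$ by $\l$. Restricting to a minimal $G$-invariant closed subset gives minimality, while $\lambda_n\to\infty$ gives unbounded orbits. For the fixed subgroup: if $g\in\Fix\varphi$ then $\varphi^n(g)=g$, so its displacement in the rescaled metric equals $\lambda_n^{-1}d_Y(y_n,gy_n)$; undistortion of $\Fix\varphi$ (Theorem~\ref{cmp und intro}) combined with a careful choice of the $y_n$ inside the $\Fix\varphi$-almost-fixed set makes this tend to $0$, so $\Fix\varphi$ fixes $x_0$. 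Finally, if both $\phi$ and $\phi^{-1}$ have sub-exponential growth then $\lambda_{n+1}/\lambda_n\to 1$, so $\l=1$ and the full $G\rtimes_\phi\Z$-action is isometric.

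The main obstacle will be establishing that the limit genuinely lives in the category of finite-rank median spaces with a \emph{moderate} $G$-action, rather than merely a coarse median space. The theory of equivariant convergence for median spaces is far less mature than for $\R$-trees, so both passing from the coarse median to a genuine median on $X$ and controlling stabilisers of halfspaces tightly enough to encode Paulin's smallness hypothesis (moderateness) require significant technical work. One presumably has to develop a median-space analogue of the Bestvina--Paulin compactness theorem and exploit the structure theory of halfspaces in finite-rank median spaces to rule out exotic limit phenomena absent in the hyperbolic setting; verifying convergence of the ratios $\lambda_{n+1}/\lambda_n$ and well-definedness of $T$ as a genuine homothety, rather than just a coarse equivariant map, is a secondary but non-trivial point.
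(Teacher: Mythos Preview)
Your outline correctly identifies the Bestvina--Paulin ultralimit as the starting point, and the fact that the limit $X_\omega$ inherits a genuine median structure of bounded rank is indeed routine (this is Bowditch's result, cited in the paper as \cite[Section~9]{Bow-cm}, and does not require CMP since each rescaled space is already a cube complex). However, two of the steps you describe as secondary or routine are in fact the heart of the argument, and your proposed route to them does not work.

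First, the well-definedness of the map $T$. You write that the ratios $\lambda_{n+1}/\lambda_n$ converge and ``yield a bijection $T\colon X\to X$''. But the map $T((x_n))=(\wt\varphi(x_n))$ is only defined if $\lim_\omega \lambda_n^{-1}d_Y(\wt\varphi(y_n),y_n)<\infty$, and there is no a priori reason for this. In the hyperbolic case Paulin controls this using $\delta$--thinness; here the paper isolates the required property as \emph{uniform non-elementarity} (UNE) and proves it for special groups with trivial centre as a separate theorem (Theorem~\ref{special UNE thm intro}). This is not a technicality: it is a structural fact about special groups requiring the analysis of label-irreducible elements and convex-cocompact subgroups in Salvetti complexes.

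Second, and more seriously, even once $T$ is well-defined it is only a \emph{bi-Lipschitz} median automorphism of $X_\omega$, not a homothety: $\wt\varphi$ is a quasi-isometry, and passing to the limit kills the additive constant but not the multiplicative one. Convergence of $\lambda_{n+1}/\lambda_n$ does not help, because $d_Y(\wt\varphi(x_n),\wt\varphi(x_n'))$ is not equal to $d_Y(x_n,x_n')$; it is only comparable to it. The paper resolves this by an entirely different mechanism: it considers the cone of $G$--invariant compatible pseudo-metrics on (a countable subalgebra of) $X_\omega$, shows this cone has compact projectivisation using the \emph{WNE} property (a strengthening of UNE), and then applies the Lefschetz fixed-point theorem for compact ARs to find a pseudo-metric $\eta$ with $T\cdot\eta=\lambda\eta$. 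Only after passing to this new metric does $T$ become a homothety. Your proposal contains no analogue of this step.

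Moderateness is handled in the paper by the same machinery that proves UNE (Theorem~7.11 and Corollary~7.12), not by a separate halfspace analysis. For part~(3), the paper's argument is simpler than yours: $g\in\Fix\varphi$ has bounded $\ell(\varphi^n(g),Y)$, hence is elliptic in $X_\omega$, and finite generation of $\Fix\varphi$ plus Helly-type properties give a common fixed point; no special choice of $y_n$ or undistortion is needed.
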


As for actions on $\R$--trees, we say that $G\acts X$ is \emph{minimal} if $X$ does not contain any proper, $G$--invariant convex subsets. We propose the notion of ``moderate'' action on a median space as a higher-rank generalisation of the notion of small action on an $\R$--tree.

\begin{defn*}[Moderate actions]
Let $G$ be a group and $X$ be a median space.
\begin{enumerate} 
\item A \emph{$k$--cube} in $X$ is a median subalgebra $C\cu X$ isomorphic to the product $\{0,1\}^k$.
\item An isometric action $G\acts X$ is \emph{moderate} if, for every $k\geq 1$ and every $k$--cube $C\cu X$, the subgroup of $G$ fixing $C$ pointwise contains a copy of $\Z^k$ in its centraliser.
\end{enumerate}
\end{defn*}

Any $2$--element subset of $X$ is a $1$--cube. Thus, if $G$ is hyperbolic and $G\acts X$ is moderate, the intersection of any two point-stabilisers must be virtually cyclic. In particular, if $G$ is torsion-free hyperbolic and $T$ is an $\R$--tree, then the action $G\acts T$ is moderate if and only if it is small. We remark that, when $G$ is hyperbolic, the space $X$ provided by Theorem~\ref{Q2 thm intro} is indeed an $\R$--tree.

We would like to emphasise that Theorem~\ref{Q2 thm intro} does not provide any \emph{lower} bounds to the rank of the median space $X$. In particular, we still do not have an answer to the following:

\begin{quest}\label{trees Q}
\begin{enumerate}
\item[]
\item Can we always take the median space $X$ in Theorem~\ref{Q2 thm intro} to be an $\R$--tree?
\item If $G$ is a directly and freely irreducible RAAG, can we even take $X$ to be a simplicial tree?
\end{enumerate}
\end{quest}

We have seen that, when $\A_{\G}$ is directly and freely irreducible, Proposition~\ref{intro invariant splitting} yields a $U_0(\A_{\G})$--invariant simplicial $\A_{\G}$--tree. However, it remains unclear if such a simplicial tree can always be taken to be moderate and, more importantly, if it can be constructed so that $\Fix\varphi$ is elliptic.

\medskip
We conclude this overview by highlighting two more results. These fall outside the main purpose of this text, but they are almost immediate consequences of the techniques used in this paper and we find them of independent interest. We prove them at the end of Subsection~\ref{fg sect}.

Recall that the property of being cocompactly cubulated does not, in general, pass to finite-index \emph{overgroups}. Many examples of this are provided by crystallographic groups \cite{Hagen-cryst}: for instance, the $(3,3,3)$ triangle group has $\Z^2$ as a finite-index subgroup, but it is not itself cocompactly cubulated. 

The following is a criterion for cubulating finite-index overgroups. Its proof is loosely inspired by the idea of Guirardel cores \cite{Guirardel-core,Hagen-Wilton}, but it requires none of the technical machinery. Instead, it is a simple consequence of Proposition~\ref{approx subalgebras} (or the earlier \cite[Proposition~4.1]{Bow-hulls}).

\begin{corintro}\label{cmp fi overgroups}
Let $G$ be a group with a cocompactly cubulated finite-index subgroup $H$. Suppose that the coarse median structure on $G$ induced by the cubulation of $H$ is $G$--invariant (it is automatically $H$--invariant). Then $G$ is cocompactly cubulated.
\end{corintro}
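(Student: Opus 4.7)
The plan is to realize $G$ itself as the vertex set of a CAT(0) cube complex carrying a geometric $G$-action, by promoting the coarse median $\mu$ on $G$ to an exact median that is exactly $G$-invariant. After passing to $\bigcap_{g\in G} gHg^{-1}$ and then to a further finite-index subgroup, I would assume $H\trianglelefteq G$ and that $H$ acts freely on the CAT(0) cube complex $Y$. Fixing a base vertex $v\in Y$, the orbit bijection $h\mapsto hv$ identifies the coarse median on $H$ with the restriction of the exact cubical median on the orbit $Hv\subseteq Y^{(0)}$. Picking coset representatives $1=g_1,\dots,g_n$ for $G/H$ and extending $H$-equivariantly by $\iota(hg_i):=hv$ produces an $H$-equivariant map $\iota\colon G\to Y^{(0)}$ whose pullback is the coarse median $\mu$ on $G$. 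By hypothesis, $\mu$ is $G$-invariant up to a bounded constant $C$.

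The heart of the argument is an application of Proposition~\ref{approx subalgebras}. I would choose a finite subset $F\subseteq G$ containing a symmetric generating set, the coset representatives $g_i$, and the $d_G$-ball of radius sufficiently large compared to $C$. The proposition supplies a finite \emph{exact} median subalgebra $A\subseteq G$ with $F\subseteq A$, lying at bounded Hausdorff distance from the coarse median hull of $F$. The key point is that the bounded discrepancy in the $G$-invariance of $\mu$ is absorbed inside $A$: for every generator $g$, the exact median on $A$ and the exact median on $gA$ agree on the overlap $A\cap gA$, so they patch consistently into an exact, honestly $G$-invariant median structure on $G\cdot A=G$.

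Chepoi's theorem then realizes $G$ with this median as the $0$-skeleton of a CAT(0) cube complex $X$, and the $G$-action on itself by left multiplication extends to a cellular action on $X$. This action is free (it is already free on $X^{(0)}=G$) and cocompact (because $A$ is finite, so there are only finitely many $G$-orbits of cubes), so $G$ is cocompactly cubulated, completing the proof.

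The main obstacle is this leap from coarse to exact $G$-equivariance of the median: $\mu$ is $G$-invariant only up to bounded error, whereas an honest cubical $G$-action demands exact equivariance. Proposition~\ref{approx subalgebras} performs the local conversion of coarse medians into exact ones on the finite piece $A$, and the hypothesis supplies the global compatibility of these local exact structures under the $G$-action --- it is the conjunction of these two facts that allows exactification to propagate across the whole of $G$. This is precisely the ingredient that fails for crystallographic groups such as the $(3,3,3)$ triangle group, where the $\ell^1$ median on the $\Z^2$ subgroup is not preserved by the order-three rotational conjugation, explaining why such overgroups are not themselves cocompactly cubulated.
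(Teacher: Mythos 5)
The central step of your argument --- that the exact medians on $A$ and on its translates $gA$ ``patch consistently'' into a $G$-invariant exact median on all of $G$ --- is asserted but not justified, and this is exactly where the real difficulty lies. Proposition~\ref{approx subalgebras} operates inside a genuine median space and produces a median subalgebra of that ambient space; here the only median space in sight is $Y^{(0)}$, which carries an $H$-action but no $G$-action, and your $\iota\colon G\to Y^{(0)}$ is only $H$-equivariant and not even injective. So the ``exact medians'' on $A$ and on $gA$ are not two restrictions of one global median operator; $G$-invariance of $[\mu]$ only controls them up to a bounded error on overlaps, not exactly. Without a single median space carrying a $G$-action there is nothing against which to compare the local pieces, and a bounded discrepancy does not ``absorb'' itself away --- converting coarse $G$-invariance into exact $G$-invariance is precisely the content of the corollary, so it cannot be assumed at this point. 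A secondary problem: aiming for a cube complex with $G$ as its $0$-skeleton and a free $G$-action forces $G$ to be torsion-free, whereas the corollary must, and in the paper does, apply to groups with torsion (see Corollary~\ref{Nielsen intro}(2) for right-angled Coxeter groups).

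The paper resolves both issues with one ingredient absent from your proposal: the \emph{induced representation}. After replacing $H$ with a normal finite-index subgroup, it builds a proper $G$-action on the product $X_1\times\dots\times X_n$ of $n=[G:H]$ twisted copies of the cubulation of $H$. Precisely because $[\mu]$ is $G$-invariant, all $n$ twisted $H$-actions on the factors $X_i$ induce the \emph{same} coarse median structure on $H$, so for any finite $A$ the orbit $H\cdot A$ is an approximate median subalgebra of the product, and Proposition~\ref{approx subalgebras} yields an honest median subalgebra $M=\langle H\cdot A\rangle$ at finite Hausdorff distance from it. Choosing $A$ so that $H\cdot A$ is $G$-invariant (possible since $H$ is normal) makes $M$ a $G$-invariant, $G$-cofinite median subalgebra; the associated cube complex then cubulates $G$ properly and cocompactly, but not necessarily freely, and its $0$-skeleton is $M$ rather than $G$. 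Your instinct to lean on Proposition~\ref{approx subalgebras} and to treat $G$-invariance of $[\mu]$ as the load-bearing hypothesis is correct; what is missing is the ambient $G$-median space in which that invariance can actually be cashed out.
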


Along with Proposition~\ref{cmp prop intro}, the previous corollary implies the following version of Nielsen realisation for automorphisms of right-angled Artin and Coxeter groups.

\begin{corintro}[Nielsen realisation for RA$\ast$Gs]\label{Nielsen intro}
Consider one of the following two settings:
\begin{enumerate}
\item a centreless right-angled Artin group $G=\A_{\G}$ and a finite subgroup $F\leq\out\A_{\G}$ contained in the projection to outer automorphisms of the untwisted subgroup $U(\A_{\G})\leq\aut\A_{\G}$;
\item a centreless right-angled Coxeter group $G=\W_{\G}$ and any finite subgroup $F\leq\out\W_{\G}$.
\end{enumerate}
Then $F$ can be realised as a group of automorphisms of a compact, non-positively curved, cube (orbi-)complex $Q$ with $G=\pi_1Q$.
\end{corintro}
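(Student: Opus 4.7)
The plan is to cubulate the preimage of $F$ inside $\aut G$ via Corollary~\ref{cmp fi overgroups}, and then descend to $G$. Since $G$ is centreless, the canonical map $\mathrm{Inn}\,G\hookrightarrow\aut G$ is injective; identify its image with $G$. Let $\wt F\leq\aut G$ be the preimage of $F$ under the projection $\aut G\twoheadrightarrow\out G$, so we have a short exact sequence
\[1\longrightarrow G\longrightarrow\wt F\longrightarrow F\longrightarrow 1,\]
in which $G$ is normal and of finite index in $\wt F$.

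The key observation is that every element of $\wt F$ is a coarse-median preserving (CMP) automorphism of $G$. Inner automorphisms are always CMP, and CMP automorphisms form a subgroup of $\aut G$. In case~(1), each outer class in $F$ has a representative in $U(\A_{\G})$, so $\wt F\subseteq U(\A_{\G})\cdot\mathrm{Inn}\,G$; elements of $U(\A_{\G})$ are CMP by Proposition~\ref{cmp prop intro}(3). In case~(2), \emph{every} automorphism of $\W_{\G}$ is CMP by Proposition~\ref{cmp prop intro}(2). Finiteness of $\wt F/G$ allows us to pick a uniform CMP constant. Consequently, the coarse median on $G$ coming from its Salvetti or Davis cubulation extends, along the quasi-isometry $G\hookrightarrow\wt F$, to a coarse median structure on $\wt F$ that is $\wt F$-invariant: left-translation by $\varphi\in\wt F$ corresponds, on the $G$-part, to conjugation by $\varphi$, which is CMP by the previous step. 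Applying Corollary~\ref{cmp fi overgroups} with $H=G$ therefore gives a proper cocompact $\wt F$-action on a $\CAT$ cube complex $\wt Q$.

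Setting $Q=\wt Q/G$ yields a compact cube (orbi-)complex on which the finite quotient $F=\wt F/G$ acts by cubical isometries, with orbifold fundamental group $G$ and inducing the prescribed outer $F$-action. In case~(1), $G$ is torsion-free and $Q$ is a genuine cube complex; in case~(2) it is an orbi-complex because $\W_{\G}$ contains torsion. The main obstacle is verifying that $Q$ is \emph{special}, rather than merely non-positively curved. Since Corollary~\ref{cmp fi overgroups} is built on the subalgebra-approximation result Proposition~\ref{approx subalgebras}, the hyperplanes of $\wt Q$ are refinements of the $\wt F$-orbit of the hyperplane system of the Salvetti/Davis cubulation of $G$; consequently the local Haglund--Wise hyperplane conditions --- two-sidedness, no self-intersection, no self-osculation, no inter-osculation --- for that cubulation pull through to $\wt Q/G$, possibly after passing to the finite torsion-free cover corresponding to a torsion-free finite-index subgroup of $\W_{\G}$ in case~(2). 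This exhibits $Q$ as a special cube orbi-complex in the sense of Haglund--Wise, completing the realisation of $F$.
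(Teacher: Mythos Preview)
Your overall strategy is exactly that of the paper: set $\wt F=\pi^{-1}(F)\leq\aut G$, note that it acts by CMP automorphisms (Proposition~\ref{cmp prop intro}), apply Corollary~\ref{cmp fi overgroups} to cubulate $\wt F$, and take $Q$ to be the quotient by $G$. That part is fine.

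The gap is in your verification of specialness. Recall how the cubulation $\wt Q$ is produced inside the proof of Corollary~\ref{cmp fi overgroups}: one takes a $\wt F$--invariant median subalgebra $M=\langle H\cdot A\rangle$ of the $0$--skeleton of the product $X_1\times\dots\times X_n$ (each $X_i$ a twisted copy of the Salvetti/Davis complex), and $\wt Q$ is the cube complex with $0$--skeleton $M$. The walls of $M$ arise via the \emph{surjection} $\res_M\colon\mscr{H}_M(\prod X_i)\to\mscr{H}(M)$ of Remark~\ref{halfspaces of subsets}, and in general this map is not injective: several ambient hyperplanes can collapse to a single wall of $M$. Your claim that the hyperplanes of $\wt Q$ are ``refinements'' of the ambient ones, and that the Haglund--Wise conditions therefore ``pull through'', is not justified without controlling this collapsing; a priori, two ambient hyperplanes that are well-behaved could merge into a single wall of $M$ that self-osculates or inter-osculates.

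The paper fixes this by arranging that $M$ is \emph{edge-connected} (Definition~\ref{edge-connected defn}). Since $G$ is finitely generated, one can choose the seed set $A$ so that $H\cdot A$ is edge-connected; Lemma~\ref{generate edge-connected} then ensures $M=\langle H\cdot A\rangle$ is edge-connected. By Lemma~\ref{eq characterisations}(3), edge-connectedness forces $\res_M$ to be injective, so walls of $M$ correspond bijectively to ambient hyperplanes that cross $M$. Cospecialness of $G\acts\prod X_i$ (which holds because each factor is a Salvetti/Davis complex) then passes to $G\acts M$ directly. This is the missing ingredient you need to replace the hand-wave in your final paragraph.
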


Part~(2) is new, while part~(1) is originally due to Hensel and Kielak \cite{Hensel-Kielak}. When $F\leq U_0(\A_{\G})$, they constructed $Q$ quite explicitly via a glueing construction, ensuring that $\dim Q=\dim\X_{\G}$. By comparison, our approach does not offer much control on dimension (except $\dim Q\leq \#F\cdot\dim\X_{\G}$), but it provides a much more elementary proof of the existence of some $Q$. 

We expect our complex $Q$ to be special, but this would require additional arguments in the proof (the only delicate point being lack of inter-osculations). We also think it should be possible to ``trim'' $Q$ into having the optimal dimension $\dim\X_{\G}$ by relying on the ``panel collapse'' procedure of Hagen and Touikan \cite{Hagen-Touikan} (or small variations thereof), but the details seem too technical to be discussed here.

\subsection{On the proof of Theorems~\ref{cmp und intro} and~\ref{U_0 cc intro}}

The two theorems are proved in Section~\ref{fix cmp sect} under the aliases of Theorem~\ref{AMS thm} and Corollaries~\ref{fix RAAG cc} and~\ref{fix RACG cc}.

Regarding Theorem~\ref{cmp und intro}, the starting observation is that $\Fix\varphi$ is an \emph{approximate median subalgebra} of the group $G$ (see Definition~\ref{approx subalg defn} and Lemma~\ref{fix approximate subalgebra}). Fixing a proper cocompact action on a $\CAT$ cube complex $G\acts \mc{Z}$, the proof then takes place in three steps.
\begin{enumerate}
\item If a subgroup $H\leq G$ is an approximate median subalgebra, then $H$ is finitely generated (Proposition~\ref{AMS fg}). We prove this by relying on a straightforward adaptation of an argument due to Paulin in the context of hyperbolic groups \cite{Paulin-Fourier}. Paulin's argument is itself a generalisation of Cooper's proof in the case when the group $G$ is free \cite{Cooper-fix} (a result originally due to Gersten from the early 80s \cite{Gersten-fix}). 
\item Approximate median subalgebras of $\CAT$ cube complexes are always at finite Hausdorff distance from actual median subalgebras (Proposition~\ref{approx subalgebras} or \cite[Proposition~4.1]{Bow-hulls}).
\item Applying the previous step to $H$--orbits in $\mc{Z}$, we obtain an $H$--invariant median subalgebra $M\cu\mc{Z}^{(0)}$ such that $H\acts M$ is cofinite. Along with the fact that $H$ is finitely generated, this yields a cocompact cubulation that quasi-isometrically embeds into $\mc{Z}$ (Lemma~\ref{cubical undistortion criterion}), though not necessarily as a convex subcomplex.
\end{enumerate}

A similar strategy gives a new proof of W.\ Neumann's result that fixed subgroups of automorphisms of hyperbolic groups are quasi-convex \cite{Neumann-fix,Minasyan-Osin-fix}. Indeed, recall that, although not all hyperbolic groups are cocompactly cubulated, they are all coarse median, and all their automorphisms $\varphi$ are CMP by Proposition~\ref{cmp prop intro}. It is easy to see that all coarsely connected, approximate median subalgebras of hyperbolic spaces are quasi-convex. 
As above, this implies that $\Fix\varphi$ is quasi-convex.

When dealing with non-hyperbolic groups, quasi-convexity is significantly harder to ensure and the proof of Theorem~\ref{U_0 cc intro} requires additional work. Namely, assuming that $\varphi\in U_0(\A_{\G})$ or $\varphi\in\aut_0\W_{\G}$, we need to show that $(\Fix\varphi)$--orbits in the Salvetti complex $\X_{\G}$ or Davis complex $\mc{Y}_{\G}$ are quasi-convex (in the coarse median sense, see Definition~\ref{qc defn}, Remark~\ref{right-angled qc rmk} and Lemma~\ref{equiv cc}).

The proof of this is based on a quasi-convexity criterion for median subalgebras of $\CAT$ cube complexes (Proposition~\ref{wqc->qc}). The most important ingredients are the fact that $\X_{\G}$ and $\mc{Y}_{\G}$ do not contain ``infinite staircases'' (Subsection~\ref{staircases sect}), and certain properties that distinguish elements of $U_0(\A_{\G})$ and $\aut_0\W_{\G}$ from more general CMP automorphisms in $U(\A_{\G})$ and $\aut\W_{\G}$ (Lemmas~\ref{orthogonals are preserved} and~\ref{few labels outside}).

We conclude by mentioning that other important tools for the study of undistortion and quasi-convexity of subgroups of cubulated groups were recently developed by Beeker--Lazarovich and Dani--Levcovitz, based on extensions of the classical machinery of \emph{Stallings folds} \cite{Stallings-folds,Stallings-arboreal} from graphs to higher-dimensional cube complexes (see in particular \cite{BL-res}, \cite[Theorem~1.2(2)]{BL-folds}, \cite[Theorem~A]{Dani-Levcovitz}). These techniques play no role in our arguments, but it is possible that they can be used to give alternative proofs of certain special cases of Theorems~\ref{cmp und intro} and~\ref{U_0 cc intro}.

\subsection{On the proof of Theorem~\ref{Q2 thm intro}}

Keeping the case of $\out F_n$ in mind, as described e.g.\ in \cite[Section~2]{GJLL}, there are two main obstacles to overcome.

\begin{enumerate}
\item[(a)] No good analogue of \emph{(relative) train track maps} is available to represent homotopy equivalences between non-positively curved cube complexes.
\item[(b)] It is not known if (isometric) actions on finite-rank median spaces are completely determined by their length function. There are results of this type for actions on $\R$--trees \cite{Culler-Morgan} and cube complexes \cite{BF1,BF2}, but their extension to a general median setting would require some significantly new ideas.
\end{enumerate}

The proof of Theorem~\ref{Q2 thm intro} is made up of two main steps, which we now describe. In this sketch, we restrict our attention to the construction of the homothetic action $G\rtimes_{\phi}\Z\acts X$ (parts~(1) and~(2) of the theorem). Parts~(3) and~(4) follow, respectively, from parts~(1) and~(2) of Remark~\ref{addendum to Q2 thm}.

Let $G$ be a special group, let $\mc{Z}$ be a $\CAT$ cube complex, and let $\rho\colon G\ra\aut\mc{Z}$ be the homomorphism corresponding to a proper, cocompact, cospecial action $G\acts\mc{Z}$. Equip $G$ with the coarse median structure arising from $\mc{Z}$. Let $\varphi\in\aut G$ be a coarse-median preserving automorphism projecting to an infinite-order element of $\out G$. 

\smallskip
{\bf Step~1:} \emph{there exist a finite-rank median space $X$, an isometric action $G\acts X$ with unbounded orbits, and a homeomorphism $H\colon X\ra X$ satisfying $H\o g=\varphi(g)\o H$ for all $g\in G$.}

\smallskip
In order to prove this, we consider the sequence of homomorphisms $\rho_n:=\rho\o\varphi^n$ and the sequence of $G$--actions on cube complexes $G\acts\mc{Z}_n$ that they induce. We then fix a non-principal ultrafilter $\om$, choose basepoints $p_n\in\mc{Z}_n$ and scaling factors $\eps_n>0$, and consider the ultralimit:
\[ (X,p):=\lim_{\om}(\eps_n\mc{Z}_n,p_n).\]
This is easily seen to be a finite-rank median space and, for a suitable choice of $p_n$ and $\l_n$, the actions $G\acts\mc{Z}_n$ converge to an isometric action $G\acts X$ with unbounded orbits.

So far this is just a classical Bestvina--Paulin construction \cite{Bestvina-Duke,Paulin-IM}. The actual subtleties lie in the definition of the map $H\colon X\ra X$. By the Milnor--Schwarz lemma, there exists a quasi-isometry $h\colon\mc{Z}\ra\mc{Z}$ satisfying $h\o g=\varphi(g)\o h$ for all $g\in G$. We would like to define $H$ as the ultralimit of the corresponding sequence of quasi-isometries $\mc{Z}_n\ra\mc{Z}_n$, but this might displace the basepoint $p\in X$ by an infinite amount.

In order to rule out this eventuality, we rely on an argument similar to the one used in \cite{Paulin-ENS} for hyperbolic groups. On closer inspection, Paulin's argument only requires the following property, which is satisfied by non-elementary hyperbolic groups.

\begin{defn*}
Let $G$ be a infinite group with a (fixed) Cayley graph $(\mc{G},d)$. We say that $G$ is \emph{uniformly non-elementary (UNE)} if there exists a constant $c>0$ with the following property. For every finite generating set $S\cu G$ and for all $x,y\in\mc{G}$, we have:
\[d(x,y)\leq c\cdot\max_{s\in S}[d(x,sx)+d(y,sy)].\]
\end{defn*}

The important part of this definition is that the constant $c$ \emph{does not} depend on the generating set $S$. Note that the UNE property is independent of the specific choice of $\mc{G}$ (cf.\ Definition~\ref{UNE defn}). 

Our main contribution to Step~1 is the proof of the following fact (Corollary~\ref{UNE cor}), which is potentially of independent interest.

\begin{thmintro}\label{special UNE thm intro}
Let $G$ be the fundamental group of a compact special cube complex. If $G$ has trivial centre, then $G$ is uniformly non-elementary.
\end{thmintro}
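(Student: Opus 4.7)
The plan is to argue by contradiction, along the lines of Paulin's original UNE proof for hyperbolic groups but replacing hyperbolicity by the median-space structure of the cubulation. Suppose UNE fails; after left-translating we obtain sequences of finite generating sets $S_n \cu G$ and elements $g_n \in G$ with
\[ |g_n|_0 \;\geq\; n\cdot \lambda_n, \qquad \lambda_n \;:=\; \max_{s\in S_n}\bigl( |s|_0 + |g_n^{-1} s g_n|_0 \bigr), \]
in the fixed word length $|\cdot|_0$. Let $\mc{Z}$ be the universal cover of the compact special cube complex $Q$ with $\pi_1 Q \cong G$, equipped with its combinatorial metric, and let $\rho\colon G \to \isom(\mc{Z})$ be the associated proper cocompact cospecial action, with basepoint $o\in\mc{Z}$.

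The key move is to consider the twisted actions $\rho_n := \rho\o\mathrm{Ad}(g_n)$. Each $\rho_n$ is still proper and cocompact, but by construction every $s\in S_n$ satisfies $d_{\mc{Z}}(o, \rho_n(s)\cdot o) \leq C\lambda_n$ for a uniform QI constant $C$. Fix a non-principal ultrafilter $\omega$ and form the rescaled ultralimit
\[ (X,\bar o) \;:=\; \lim_\omega\bigl( \tfrac{1}{\lambda_n}\mc{Z}\,,\,o \bigr), \]
which, by the same finite-rank median-space convergence already used in Step~1 of Theorem~\ref{Q2 thm intro}, is a complete geodesic median space of finite rank. Every $\omega$-coherent sequence $(s_n)$ with $s_n \in S_n$ yields an isometry of $X$ moving $\bar o$ by at most $C$; let $\Gamma \leq \isom(X)$ be the subgroup they generate. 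Since $\Gamma$ has a bounded orbit at $\bar o$ and $X$ is of finite rank, a median barycentre / circumcentre construction produces a $\Gamma$-fixed point $\bar p \in X$.

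The contradiction is then derived by showing that $\Gamma$ encodes \emph{all} of $G$ in the limit: each fixed $h \in G$ can be written as a word $w_n$ in $S_n$ for every $n$, and taking the $\omega$-limit of $\rho_n(h)=\rho_n(w_n)$ recovers an element of $\Gamma$ fixing $\bar p$. Consequently, for every $h \in G$ there are points $p_n \in \mc{Z}$ tracking $\bar p$ such that $d_{\mc{Z}}(p_n, \rho(g_n^{-1} h g_n)\cdot p_n) = o(\lambda_n)$. Running this simultaneously over a finite generating set for $G$ and invoking that centralisers in special groups are controlled (via the embedding into a RAAG and Servatius' parabolic-centraliser theorem), one upgrades these approximate commutations into an actual non-trivial element of the centre of $G$, contradicting the centreless hypothesis.

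The main obstacle, and the step I expect to demand the most care, is this last upgrade from a \emph{limit} fixed-point statement to an \emph{algebraic} central element: for general coarse median groups this passage genuinely fails, and it is here that the specialness assumption does work, with the rigidity of RAAG centralisers translating asymptotic commutation into exact commutation. A secondary technical point is the coherent-sequence argument showing that $\Gamma$ captures every element of $G$, which requires some care with ultrafilters since the word $w_n$ representing a fixed $h$ varies with $n$ and may have unbounded $S_n$-length.
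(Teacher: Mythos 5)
Your proposal attempts a Bestvina--Paulin style ultralimit argument, which is genuinely different from the paper's route: the paper obtains Theorem~\ref{special UNE thm intro} as an immediate consequence of part~(1) of Theorem~\ref{from sub-ultras to X new}, a purely combinatorial statement about walls in the Salvetti complex, proved via multi-bridges (Subsection~\ref{multi-bridge sect}), measurable partitions of halfspace-intervals (Appendix~\ref{app1}), and the label-irreducible decomposition (Subsection~\ref{label-irreducible sect}); no ultralimit enters part~(1). To deduce UNE one takes $k=1$, sets $F=S$ a generating set, and takes $C=\{x,y\}$ a $1$--cube: failure of UNE with the constant $2r^2(q+1)$ would force $Z_G(S)$ to contain a copy of $\Z$, contradicting triviality of the centre.

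The central step of your proposal does not go through. You assert that the group $\Gamma$ generated by the ultralimit isometries has a bounded orbit at $\bar o$ and then invoke a circumcentre. But each \emph{generator} of $\Gamma$ displaces $\bar o$ by at most $C$; a word of length $\ell$ in these generators may displace $\bar o$ by up to $\ell C$, and nothing at this stage bounds the $\Gamma$--orbit of $\bar o$. Establishing such a bound is essentially a restatement of the UNE property one is trying to prove, so the argument is circular here --- already $\Z^2$ acting on $\R^2$ by unit translations shows that small generator displacement does not bound an orbit. There are two further issues. The ``upgrade'' from asymptotic to exact commutation --- which you correctly identify as the crux --- is left as a sketch, whereas the paper carries it out with concrete wall-combinatorics in the ambient Salvetti complex (Lemma~\ref{friendly label-irreducibles}, Corollary~\ref{label-irreducible cor}, Lemma~\ref{another commutation criterion 2}); it does not reduce to ``rigidity of RAAG centralisers'' in any soft sense. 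And the claim that $\Gamma$ ``encodes all of $G$'' is problematic: for a fixed $h\in G$, the $S_n$--word length of $h$ can be unbounded, in which case $d(o,\rho_n(h)o)$ need not be $O(\lambda_n)$ and $h$ does not define an isometry of the ultralimit $X$ at all.
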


Now, let $m\colon X^3\ra X$ denote the median operator of the median space $X$. The fact that $\varphi\in\aut G$ is coarse-median preserving easily implies that the homeomorphism $H\colon X\ra X$ arising from the above construction satisfies $H(m(x,y,z))=m(H(x),H(y),H(z))$ for all $x,y,z\in X$. However, $H$ needs not be a homothety at this stage.

\smallskip
{\bf Step~2:} \emph{there exists a $G$--invariant (pseudo-)metric $\eta\colon X\x X\ra[0,+\infty)$ such that $(X,\eta)$ is a median space with the same median operator $m$, and $H$ is a homothety with respect to $\eta$.}

\smallskip
Since $H\colon X\ra X$ preserves the median operator $m$, there is an action of $H$ on the space of all $G$--invariant median pseudo-metrics on $X$ that induce $m$. More precisely, we show that $H$ gives a homeomorphism of a certain space of (projectivised) median pseudo-metrics on $X$, and that the latter is a compact \emph{absolute retract (AR)}. The existence of the required pseudo-metric $\eta$ then follows from the Lefschetz fixed point theorem for homeomorphisms of compact ANRs. This is discussed mainly in Subsections~\ref{metrics on algebras} and~\ref{meaty sect} (see especially Corollaries~\ref{homothety 1} and~\ref{from sub-ultras to X new cor}).

Once the pseudo-metric $\eta$ is obtained, we can pass to the quotient metric space to obtain a genuine median space.

\subsection{Further questions.} 

We would like to highlight four questions raised by our results. 

As mentioned earlier, every hyperbolic group admits a unique \emph{coarse median structure} (Definition~\ref{structure defn}). At the opposite end of the spectrum, any RAAG for which $U(\A_{\G})$ has infinite index in $\aut\A_{\G}$ will admit infinitely many $\A_{\G}$--invariant coarse median structures. 

Right-angled Coxeter groups $\W_{\G}$ seem to place themselves in-between these two extremal situations: they can admit infinitely many distinct coarse median structures --- e.g.\ because every RAAG is a finite-index subgroup of a RACG \cite{Davis-Janusz} --- but it is not clear which of these structures are $\W_{\G}$--invariant. For instance, Proposition~\ref{cmp prop intro}(2) implies that all Coxeter generating sets of $\W_{\G}$ give rise to the same coarse median structure (which fails for Artin generating sets of $\A_{\G}$).

\begin{quest}
Does each RACG $\W_{\G}$ have only \emph{finitely many} $\W_{\G}$--invariant coarse median structures?
\end{quest}

As an example of why one might expect this kind of rigidity, we suggest looking at the difference between the RAAG $\Z^n$ and the RACG $(D_{\infty})^n$, where $D_{\infty}$ is the infinite dihedral group. The space of $\Z^n$--invariant coarse median structures on $\Z^n$ (equivalently: on $\R^n$) is uncountable, simply because it is endowed with a natural $GL_n\R$--action and we can consider the orbit of the standard structure. 
However, of the structures in this orbit, only finitely many are $(D_{\infty})^n$--invariant.

The second question naturally arises from Theorem~\ref{U_0 cc intro} and was already mentioned above:

\begin{quest}\label{cc quest}
Consider $\varphi\in U_0(\A_{\G})$ or $\varphi\in\aut_0\W_{\G}$. 
\begin{enumerate}
\item What isomorphism types of special groups can arise as $\Fix\varphi$ for some choice of $\varphi$ and $\G$? When $\varphi\in U_0(\A_{\G})$, is $\Fix\varphi$ itself a right-angled Artin group?
\item Can we bound the ``complexity'' of $\Fix\varphi$ in terms of $\#\G^{(0)}$, in the spirit of Scott's conjecture? 
\end{enumerate}
\end{quest}

Regarding part~(1) of Question~\ref{cc quest}, note that every RAAG can arise as the fixed subgroup of some element of $U_0(\A_{\G})$, simply because we can always take $\varphi=\id$. One can easily construct more elaborate examples using this observation as a starting point.

One can also wonder about fixed subgroups of automorphisms of general coarse median groups $G$. By Lemma~\ref{fix approximate subalgebra}, this reduces to understanding subgroups that are \emph{approximate median subalgebras} (Definition~\ref{approx subalg defn}). We study these subgroups when $G$ is cocompactly cubulated (Theorem~\ref{AMS thm}), but some of our arguments should work more generally (especially the proof of Proposition~\ref{AMS fg}).

\begin{quest}
Let $(G,\mu)$ be a finite-rank coarse median group. Let a subgroup $H\leq G$ be an approximate median subalgebra. 
\begin{enumerate}
\item Is $H$ finitely generated? 
\item Is $H$ undistorted? Which properties of $G$ does $H$ retain?
\end{enumerate}
\end{quest}

For instance, when $G$ is hierarchically hyperbolic, I do not know if $H$ must be finitely generated. However, assuming that it is, the second part of the question has a positive answer: $H$ is undistorted and hierarchically hyperbolic. This is evident from Bowditch's axioms (B1)--(B10) for (weak) hierarchically hyperbolic spaces \cite[Section~7]{Bow-hulls} and the coarse median characterisation of hierarchy paths \cite[Theorem~1.1]{Bow-hulls}.

We emphasise that our definition of \emph{coarse median group} (Definition~\ref{coarse median group defn}) is slightly stronger than Bowditch's original definition \cite{Bow-cm}, in that we require $\mu$ to be coarsely $G$--equivariant.

Our last question regards UNE groups. It is clear that UNE groups have finite centre, and it is not hard to show that non-elementary hyperbolic groups are UNE. All other examples of UNE groups that we are aware of are provided by Theorem~\ref{special UNE thm intro}.

Are there other interesting examples or non-examples of UNE groups? Given the proof of Theorem~\ref{special UNE thm intro}, a positive answer to the following seems likely:

\begin{quest}
Are hierarchically hyperbolic groups with finite centre UNE?
\end{quest}

{\bf Outline of the paper.} Section~\ref{prelim sect} mostly contains background material on median algebras, cube complexes and coarse median groups. An exception is Subsection~\ref{cores sect}, which reviews some of the results of \cite{Fio10b}. The latter will be helpful, mostly in Sections~\ref{compatible sect} and~\ref{ultra sect}, for some of the more technical arguments in the proof of Theorem~\ref{Q2 thm intro}.

In Section~\ref{cc whole sect}, we consider cocompactly cubulated groups $G$ and study a notion of \emph{convex-co\-com\-pact\-ness} for subgroups of $G$, which is a special instance of quasi-convexity in coarse median spaces (Definition~\ref{qc defn}). Subsection~\ref{label-irreducible sect} studies cyclic, convex-cocompact subgroups of RAAGs (whose generators we name \emph{label-irreducible}). Subsection~\ref{of RAAGs sect} contains the proof of Proposition~\ref{cmp prop intro}.

Section~\ref{fix cmp sect} is concerned with fixed subgroups of CMP automorphisms. First, Subsections~\ref{approx subalg sect} and~\ref{fg sect} are devoted to the proof of Theorem~\ref{cmp und intro}. Then Subsection~\ref{staircases sect} studies staircases in cube complexes, allowing us to formulate a quasi-convexity criterion for median subalgebras in Subsection~\ref{wqc sect}. Finally, Subsection~\ref{fix cc sect} restricts to Salvetti and Davis complexes, proving Theorem~\ref{U_0 cc intro}.

Section~\ref{invariant splittings sect} is completely independent from the subsequent part of the paper and can be safely skipped. It only contains the proof of Proposition~\ref{intro invariant splitting} and Corollary~\ref{intro fix graph}.

Finally, Sections~\ref{compatible sect} and~\ref{ultra sect} are the most technical parts of the paper and they contain the bulk of the proof of Theorem~\ref{Q2 thm intro}. In Section~\ref{compatible sect}, we consider group actions on finite-rank median algebras and develop a criterion for the existence of a (projectively) invariant metric (as required for Step~2 of the proof sketch for Theorem~\ref{Q2 thm intro}). In Section~\ref{ultra sect}, we study ultralimits of actions on Salvetti complexes, in order to obtain the properties needed to apply the results of Section~\ref{compatible sect}. Theorems~\ref{Q2 thm intro} and~\ref{special UNE thm intro} are proved in Subsection~\ref{meaty sect}.

\smallskip
{\bf Acknowledgments.} I would like to thank Mark Hagen for noticing that a result similar to Proposition~\ref{approx subalgebras} already appeared in \cite{Bow-hulls}, Camille Horbez for mentioning that ARs have good fixed-point properties, Robert Kropholler for pointing me to \cite{Tran-distortion} and \cite{Hagen-cryst}, Alessandro Sisto for discussions related to Definition~\ref{UNE defn}, and Ric Wade for suggesting a major simplification to my original argument for Proposition~\ref{cmp prop intro} in the case of RAAGs. I thank Jason Behrstock, Corey Bregman, Ruth Charney, Ashot Minasyan and Karen Vogtmann for other interesting conversations, and the referee for their many useful comments.

I am grateful to Ursula Hamenst\"adt and the Max Planck Institute for Mathematics in Bonn for their hospitality and financial support while part of this work was being completed.

\addtocontents{toc}{\protect\setcounter{tocdepth}{2}}
\section{Preliminaries.}\label{prelim sect}

\subsection{Frequent notation and identities.}\label{identities sect}

Throughout the paper, all groups will be equipped with the discrete topology. Thus, we will refer to \emph{properly discontinuous} actions on topological spaces simply as \emph{proper} actions.

If $G$ is a group and $F\cu G$ is a subset, we denote by $\langle F\rangle$ the subgroup of $G$ generated by $F$. We denote by $Z_G(F)$ the \emph{centraliser} of the subset $F$, i.e.\ the subgroup of elements of $G$ commuting with all elements of $F$.

If $(X,d)$ is a metric space, $A\cu X$ is a subset, and $R\geq 0$ is a real number, we denote by $\mc{N}_R(A)$ the closed $R$--neighbourhood of $A$. If $x,y\in X$, we write $x\approx_R y$ with the meaning of $d(x,y)\leq R$.

\smallskip
Consider a group action on a set $G\acts X$. If $\eta$ is a $G$--invariant pseudo-metric on $X$, we write, for every $x\in X$, $g\in G$, and $F\cu G$:
\begin{align*}
\ell(g,\eta)=\inf_{x\in X}\eta(x,gx), & & \tau_F^{\eta}(x)=\max_{f\in F}\eta(x,fx), & & \overline{\tau}_F^{\eta}=\inf_{x\in X}\tau_F^{\eta}(x).
\end{align*}
When $X$ is a metric space and we do not name its metric explicitly, we also write: $\ell(g,X)$, $\tau_F^X$, $\overline\tau_F^X$. If $X$ is equipped with several $G$--actions originating from homomorphisms $\rho_n\colon G\ra\isom X$, we will write $\ell(g,\rho_n)$, $\tau_F^{\rho_n}$, $\overline\tau_F^{\rho_n}$ in order to avoid confusion.

If $S\cu G$ is a finite generating set, we denote by $|\cdot|_S$ and $\|\cdot\|_S$ the associated \emph{word length} and \emph{conjugacy length}, respectively:
\begin{align*}
|g|_S&=\inf \{k \mid g=s_1\cdot\ldots\cdot s_k,\ s_i\in S^{\pm}\}, & \|g\|_S&=\inf_{h\in G} |hgh^{-1}|_S.
\end{align*}

The following useful identities will be repeatedly used in this text. We consider a $G$--action on a set $X$, a $G$--invariant pseudo-metric $\eta$, a point $x\in X$, and finite generating sets $S,S_1,S_2\cu G$:
\begin{align*}
\eta(x,gx)&\leq |g|_S\cdot\tau_S^{\eta}(x), & \ell(g,\eta)&\leq \|g\|_S\cdot\overline\tau^{\eta}_S;
\end{align*}
\begin{align*}
\tau_{S_1}^{\eta}(x)&\leq |S_1|_{S_2}\cdot\tau_{S_2}^{\eta}(x), & \text{ where we have defined: \hspace{.2cm} } |S_1|_{S_2}&:=\max_{s\in S_1}|s|_{S_2}.
\end{align*}


\subsection{Median algebras.}\label{median algebras subsect}

In this and the next section, we only fix notation and prove a few simple facts that do not appear elsewhere in the literature. For a comprehensive introduction to median algebras and median spaces, the reader can consult \cite[Sections~(2)--(4)]{CDH}, \cite[Sections~(4)--(6)]{Bow-cm} and \cite[Section~2]{Fio1}.

A median algebra is a pair $(M,m)$, where $M$ is a set and $m\colon M^3\ra M$ is a map satisfying, for all $a,b,c,x\in M$:
\begin{align*}
m(a,a,b)&=a, & m(a,b,c)&=m(b,c,a)=m(b,a,c), & m(m(a,x,b),x,c)&=m(a,x,m(b,x,c)).
\end{align*}
The third identity, usually known as the \emph{4--point condition}, is sometimes replaced by a different identity involving 5 points (for instance, in \cite{Roller,CDH,Bow-cm,Fio1}). The equivalence of the two conditions \cite{Kolibiar-Marcisova,Bandelt-Hedlikova} is quite nontrivial, but not required in the rest of the paper.

A map $\phi\colon M\ra N$ between median algebras is a \emph{median morphism} if, for all $x,y,z\in M$, we have $\phi(m(x,y,z))=m(\phi(x),\phi(y),\phi(z))$. We denote by $\aut M$ the group of median automorphisms of $M$. Throughout the paper, all group actions on median algebras will be by (median) automorphisms, unless stated otherwise.

A subset $S\cu M$ is a \emph{median subalgebra} if $m(S\x S\x S)\cu S$. A subset $C\cu M$ is \emph{convex} if $m(C\x C\x M)\cu C$. Helly's lemma states that any finite family of pairwise-intersecting convex subsets of $M$ has nonempty intersection \cite[Theorem~2.2]{Roller}. We say that $C$ is \emph{gate-convex} if it admits a \emph{gate-projection}, i.e.\ a map $\pi_C\colon M\ra C$ with the property that $m(z,\pi_C(z),x)=\pi_C(z)$ for all $x\in C$ and $z\in M$. Gate-convex subsets are convex, and convex subsets are median subalgebras. Each gate-convex subset admits a unique gate-projection, and gate-projections are median morphisms.

The \emph{interval} $I(x,y)$ between points $x,y\in M$ is defined as the set $\{z\in M\mid m(x,y,z)=z\}$. Note that $I(x,y)$ is gate-convex with projection given by the map $z\mapsto m(x,y,z)$. Intervals can be used to give an alternative description of convexity: a subset $C\cu M$ is convex if and only if $I(x,y)\cu C$ for all $x,y\in C$.

A \emph{halfspace} is a subset $\mf{h}\cu M$ such that both $\mf{h}$ and $\mf{h}^*:=M\setminus\mf{h}$ are convex and nonempty. A \emph{wall} is a set of the form $\mf{w}=\{\mf{h},\mf{h}^*\}$, where $\mf{h}$ and $\mf{h}^*$ are halfspaces. We say that $\mf{w}$ is the wall \emph{bounding} $\mf{h}$, and that $\mf{h}$ and $\mf{h}^*$ are the halfspaces \emph{associated} to $\mf{w}$. 

Two halfspaces $\mf{h}_1,\mf{h}_2$ are \emph{transverse} if all four intersections $\mf{h}_1\cap\mf{h}_2$, $\mf{h}_1^*\cap\mf{h}_2$, $\mf{h}_1\cap\mf{h}_2^*$, $\mf{h}_1^*\cap\mf{h}_2^*$ are nonempty. If $\mf{w}_1$ and $\mf{w}_2$ are the walls bounding $\mf{h}_1$ and $\mf{h}_2$, we also say that $\mf{w}_1$ is \emph{transverse} to $\mf{w}_2$ and $\mf{h}_2$. If $\mc{U}$ and $\mc{V}$ are sets of walls or halfspaces, we say that $\mc{U}$ and $\mc{V}$ are \emph{transverse} if every element of $\mc{U}$ is transverse to every element of $\mc{V}$. If $\mc{H}$ is a set of halfspaces, we write $\mc{H}^*:=\{\mf{h}^*\mid\mf{h}\in\mc{H}\}$.

We denote by $\mscr{W}(M)$ and $\mscr{H}(M)$, respectively, the set of all walls and all halfspaces of $M$. Given subsets $A,B\cu M$, we write:
\begin{align*}
\mscr{H}(A|B)&=\{\mf{h}\in\mscr{H}(M)\mid A\cu\mf{h}^*,\ B\cu\mf{h}\}, & \mscr{W}(A|B)&=\{\mf{w}\in\mscr{W}(M)\mid \mf{w}\cap\mscr{H}(A|B)\neq\emptyset\}.
\end{align*}
If $\mf{w}_1,\mf{w}_2$ are walls bounding disjoint halfspaces $\mf{h}_1,\mf{h}_2$, we set $\mscr{W}(\mf{w}_1|\mf{w}_2):=\mscr{W}(\mf{h}_1|\mf{h}_2)\setminus\{\mf{w}_1,\mf{w}_2\}$. 

If $A,B\cu M$ are nonempty, then $\mscr{H}(A|B)$ admits minimal elements under inclusion. This follows from Zorn's lemma since, for every totally ordered subset $\mscr{C}\cu\mscr{H}(A|B)$, the intersection of all halfspaces in $\mscr{C}$ is again a halfspace in $\mscr{H}(A|B)$. Note that any two minimal elements $\mf{h}_1,\mf{h}_2\in\mscr{H}(A|B)$ are transverse, since $\mf{h}_1\cap\mf{h}_2$ and $\mf{h}_1^*\cap\mf{h}_2^*$ are nonempty and there is no inclusion relation between $\mf{h}_1$ and $\mf{h}_2$.

If $\mf{w}\in\mscr{W}(A|B)$, we say that the wall $\mf{w}$ \emph{separates} $A$ and $B$. Any two disjoint convex subsets of $M$ are separated by at least one wall \cite[Theorem~2.8]{Roller}; in particular, distinct points of $M$ are always separated by a wall. 

Given a subset $A\cu M$, we also introduce:
\begin{align*}
\mscr{H}_A(M):=&\{\mf{h}\in \mscr{H}(M) \mid \mf{h}\cap A\neq\emptyset,\ \mf{h}^*\cap A\neq\emptyset\}, & \mscr{W}_A(M):=&\{\mf{w}\in\mscr{W}(M) \mid \mf{w}\cu\mscr{H}_A(M)\}.
\end{align*}
Equivalently, a wall $\mf{w}$ lies in $\mscr{W}_A(M)$ if and only if it separates two points of $A$.

\begin{rmk}\label{median homo rmk}
If $\mc{U}\cu\mscr{H}(M)$ and $\mc{V}\cu\mscr{H}(N)$ are subsets, we say that a map $\phi\colon\mc{U}\ra\mc{V}$ is a \emph{morphism of pocsets} if, for all $\mf{h},\mf{k}\in\mc{U}$ with $\mf{h}\cu\mf{k}$, we have $\phi(\mf{h})\cu\phi(\mf{k})$ and $\phi(\mf{h}^*)=\phi(\mf{h})^*$.

Every median morphism $\phi\colon M\ra N$ induces a morphism of pocsets $\phi^*\colon\mscr{H}_{\phi(M)}(N)\ra\mscr{H}(M)$ defined by $\phi^*(\mf{h})=\phi^{-1}(\mf{h})$. When $\phi\colon M\ra N$ is surjective, we obtain a map $\phi^*\colon\mscr{H}(N)\ra\mscr{H}(M)$ that is injective and preserves transversality.
\end{rmk}

\begin{rmk}\label{halfspaces of subsets} 
\begin{enumerate}
\item[] 
\item If $S\cu M$ is a subalgebra, we have a map $\res_C\colon\mscr{H}_C(M)\ra\mscr{H}(C)$ given by $\res_C(\mf{h})=\mf{h}\cap C$. This is a morphism of pocsets and, by \cite[Lemma~6.5]{Bow-cm}, it is a surjection.
\item If $C\cu M$ is convex, then the map $\res_C$ is also injective and it preserves transversality. In particular, the sets $\mscr{H}(C)$ and $\mscr{H}_C(M)$ are naturally identified in this case.

Indeed, if $\mf{h},\mf{k}\in\mscr{H}_C(M)$ are intersecting halfspaces, Helly's lemma guarantees that $\mf{h}\cap C$ and $\mf{k}\cap C$ intersect too. Moreover, we have $\mf{h}=\mf{k}$ if and only if $\mf{h}\cap\mf{k}^*$ and $\mf{h}^*\cap\mf{k}$ are empty.
\item If $C$ is gate-convex with projection $\pi_C$, then $\res_C\o\pi_C^*=\id_{\mscr{H}(C)}$ and $\pi_C^*\o\res_C=\id_{\mscr{H}_C(M)}$.
\end{enumerate}
\end{rmk}

If $C_1,C_2\cu M$ are gate-convex subsets with gate-projections $\pi_1,\pi_2$, then $\mscr{H}(x|C_i)=\mscr{H}(x|\pi_i(x))$ for all $x\in M$. We say that $x_1\in C_1$ and $x_2\in C_2$ are a \emph{pair of gates} if $\pi_2(x_1)=x_2$ and $\pi_1(x_2)=x_1$. Pairs of gates always exist and satisfy $\mscr{H}(x_1|x_2)=\mscr{H}(C_1|C_2)$.

The \emph{standard $k$--cube} is the finite set $\{0,1\}^k$ equipped with the median operator $m$ determined by a majority vote on each coordinate. A subset $S\cu M$ is a \emph{$k$--cube} if it is a median subalgebra isomorphic to the standard $k$--cube. In particular, any subset of $M$ with cardinality $2$ is a $1$--cube.

\begin{rmk}
An important example of median algebra is provided by the $0$--skeleton of any $\CAT$ cube complex $X$ \cite{Chepoi}. The vertex set of any $k$--cell of $X$ is a $k$--cube in the above sense, but the converse does not hold. For instance, in the standard tiling of $\R^n$, every set of the form $\{a_1,b_1\}\x\dots\x\{a_n,b_n\}$ with $a_i<b_i$ is a $k$--cube according to the above notion. To avoid confusion, when dealing with cube complexes we will refer to $k$--cubes in $X^{(0)}$ as \emph{generalised $k$--cubes}.
\end{rmk}

The \emph{rank} of $M$, denoted $\rk M$, is the largest cardinality of a set of pairwise-transverse walls of $M$. Equivalently, $\rk M$ is the supremum of the integers $k$ such that $M$ contains a $k$--cube (assuming $\rk M$ is at most countable). See \cite[Proposition~6.2]{Bow-cm}. We will be exclusively interested in median algebras of finite rank.

We will need the following criterion, which summarises Lemmas~2.9 and~2.11 in \cite{Fio10b}. If $\mc{H}\cu\mscr{H}(M)$, we denote by $\bigcap\mc{H}\cu M$ the intersection of all halfspaces in $\mc{H}$.

\begin{lem}\label{useful criterion}
Let $M$ be a finite-rank median algebra. 
Partially order $\mscr{H}(M)$ by inclusion.
\begin{enumerate}
\item Let $\mc{H}\cu\mscr{H}(M)$ be a set of pairwise intersecting halfspaces. Suppose that every chain in $\mc{H}$ admits a lower bound in $\mc{H}$. Then $\bigcap\mc{H}$ is a nonempty convex subset of $M$.
\item A convex subset $C\cu M$ is gate-convex if and only if there does not exist a chain $\mscr{C}\cu\mscr{H}_C(M)$ such that $\bigcap\mscr{C}$ is nonempty and disjoint from $C$.
\end{enumerate}
\end{lem}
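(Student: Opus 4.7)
Convexity is immediate since an intersection of convex sets is convex. For nonemptiness, my plan is to apply Zorn's lemma to $\mc{H}$ ordered by reverse inclusion; the lower-bound hypothesis supplies upper bounds for chains in this opposite order, so every $\mf{h}\in\mc{H}$ contains some minimal halfspace of $\mc{H}$, and hence $\bigcap\mc{H}=\bigcap\mc{H}_{\min}$. The elements of $\mc{H}_{\min}$ are pairwise incomparable and pairwise intersecting, so every finite subfamily has nonempty intersection by Helly's lemma \cite[Theorem~2.2]{Roller}. Arguing by contradiction, if $\bigcap\mc{H}_{\min}$ were empty I would iteratively extract a strictly decreasing sequence of finite subintersections $\bigcap F_1\supsetneq\bigcap F_2\supsetneq\cdots$, using finite rank $r=\rk M$ to bound antichains of pairwise-transverse walls; the rank bound should force this extraction to produce a strictly descending infinite chain of halfspaces in $\mc{H}$ (not in $\mc{H}_{\min}$) that violates the lower-bound hypothesis.

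\textbf{Part (2), forward direction.} If $C$ is gate-convex with projection $\pi_C$ and $\mscr{C}\cu\mscr{H}_C(M)$ is a chain with $x\in\bigcap\mscr{C}$, then no wall in $\mscr{W}_C(M)$ separates $x$ from $\pi_C(x)$ (a standard property of the gate), so each $\mf{h}\in\mscr{C}$ contains $\pi_C(x)$, placing $\pi_C(x)\in C\cap\bigcap\mscr{C}$ and contradicting disjointness from $C$.

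\textbf{Part (2), backward direction.} For each $z\in M$, I would construct $\pi_C(z)$ as a point of $\bigcap\mc{H}_z\cap C$, where $\mc{H}_z=\{\mf{h}\in\mscr{H}_C(M):z\in\mf{h}\}$. By Remark~\ref{halfspaces of subsets}(2), $\res_C$ identifies $\mscr{H}_C(M)$ with $\mscr{H}(C)$, so $\mc{H}_z$ becomes a family of halfspaces of $C$, pairwise intersecting in $C$ by Helly applied to $\{C,\mf{h}_1,\mf{h}_2\}$ (using $z\in\mf{h}_1\cap\mf{h}_2$). To invoke part~(1) inside the median algebra $C$, I must verify the chain condition: given a chain $\mscr{C}\cu\mc{H}_z$, one has $z\in\bigcap\mscr{C}$, so the hypothesis of part~(2) gives a point $c\in\bigcap\mscr{C}\cap C$; this point should be promoted to a bona fide halfspace lower bound for $\mscr{C}$ inside $C$ via a separation argument. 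Part~(1) then produces $\pi_C(z)\in\bigcap\mc{H}_z\cap C$; uniqueness follows since distinct points of $C$ are separated by some wall in $\mscr{W}_C(M)$, and the gate identity $m(z,\pi_C(z),c)=\pi_C(z)$ for all $c\in C$ reduces to the halfspace description of the median.

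\textbf{Main obstacle.} The crux is the finite-rank extraction in part~(1): ruling out infinite antichains of pairwise-incomparable minimal halfspaces with empty intersection, which is where the finite-rank assumption must do real work. A secondary difficulty is promoting the common point $c\in\bigcap\mscr{C}\cap C$ into an actual lower-bound halfspace in $\mc{H}_z$ during the chain-condition check within part~(2).
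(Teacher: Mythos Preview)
The paper does not actually prove this lemma: it simply records it as a summary of Lemmas~2.9 and~2.11 of \cite{Fio10b}. So there is no in-paper argument to compare against, only the external reference.

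On your proposal itself: the outline is reasonable, but the gaps you flag are real and your sketches do not close them.

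For part~(1), the Zorn reduction to $\mc{H}_{\min}$ is fine, but your extraction argument cannot work as stated. First, the hypothesis is that every chain in $\mc{H}$ has a \emph{lower bound in $\mc{H}$}, not that $\mc{H}$ contains no infinite descending chains; producing an infinite descending chain therefore contradicts nothing. Second, minimal elements of $\mc{H}$ need not be pairwise transverse, so the rank bound does not control $|\mc{H}_{\min}|$: already in a rank--$1$ median algebra (an $\mathbb{R}$--tree) with a branch point $p$ of infinite valence, the halfspaces $\mf{h}_i=M\setminus B_i$ (one per branch $B_i$) are pairwise intersecting, pairwise incomparable, and pairwise \emph{non}-transverse (their complements are disjoint), yet they form an infinite family with $\bigcap\mf{h}_i\ni p$. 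So the finite-rank input must be used more subtly than bounding an antichain of minimal elements; this is exactly the content of \cite[Lemma~2.9]{Fio10b}.

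For part~(2), your forward direction is correct. In the backward direction, the problem you identify is again genuine: from $c\in C\cap\bigcap\mscr{C}$ you do not automatically get a halfspace $\mf{k}\in\mc{H}_z$ below every element of $\mscr{C}$, so you cannot feed the family $\{\mf{h}\cap C:\mf{h}\in\mc{H}_z\}$ directly into part~(1). The argument in \cite[Lemma~2.11]{Fio10b} handles this differently; if you want a self-contained route, it is cleaner to construct the gate of $z$ by analysing $\mscr{H}(z|C)$ rather than by invoking part~(1) inside $C$.
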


If $A\cu M$ is a subset, we denote by $\langle A\rangle$ the median subalgebra generated by $A$, i.e.\ the smallest subalgebra of $M$ containing $A$. We also denote by $\hull A$ the smallest convex subset of $M$ that contains $A$; this coincides with the intersection of all halfspaces of $M$ that contain $A$. 

The sets $\langle A\rangle$ and $\hull A$ are best understood in terms of the following operators:
\begin{align*}
\mc{M}(A)=\mc{M}^1(A)&:=m(A\x A\x A), & \mc{M}^{n+1}(A)&:=\mc{M}(\mc{M}^n(A)); \\
\mc{J}(A)=\mc{J}^1(A)&:=m(A\x A\x M)=\bigcup_{x,y\in A}I(x,y), & \mc{J}^{n+1}(A)&:=\mc{J}(\mc{J}^n(A)).
\end{align*}
It is clear that $\hull A=\bigcup_{n\geq 1}\mc{J}^n(A)$ and $\langle A\rangle=\bigcup_{n\geq 1}\mc{M}^n(A)$. 

\begin{rmk}\label{Bow J}
When $\rk M=r$ is finite, \cite[Lemma~6.4]{Bow-cm} shows that already $\mc{J}^r(A)=\hull A$. A similar result holds for $\langle A\rangle$ and the operator $\mc{M}$ (see Proposition~\ref{gen subalg prop} below), but its proof will require considerable work. 
\end{rmk}

If $M_1$ and $M_2$ are median algebras, we denote by $M_1\x M_2$ their \emph{product}. This is the median algebra with underlying set $M_1\x M_2$ and the only median operator for which both coordinate projections are median morphisms. 

The set $\mscr{W}(M_1\x M_2)$ is naturally partitioned into two transverse subsets $\mscr{W}_1$ and $\mscr{W}_2$. A wall lies in $\mscr{W}_1$ if and only if it separates two points in one (equivalently, every) fibre $M_1\x\{\ast\}$; halfspaces associated to walls in $\mscr{W}_1$ are unions of fibres $\{\ast\}\x M_2$. The set $\mscr{W}_2$ is defined similarly, swapping the roles played by the two indices. Since all fibres are gate-convex in $M_1\x M_2$, Remark~\ref{halfspaces of subsets} gives natural identifications between $\mscr{W}_i$ and $\mscr{W}(M_i)$.

In finite rank, product splittings can be completely characterised in terms of walls. The following is \cite[Lemma~2.12]{Fio10b} (also see \cite[Lemma~2.5]{CS} in the special case of cube complexes).

\begin{lem}\label{product lem}
For a finite-rank median algebra $M$, the following are equivalent:
\begin{enumerate}
\item $M$ splits as a product of median algebras $M_1\x M_2$, where neither $M_i$ is a singleton;
\item there exists a partition $\mscr{W}(M)=\mscr{W}_1\sqcup\mscr{W}_2$, where the $\mscr{W}_i$ are nonempty and transverse.
\end{enumerate}
When this happens, the set $\mscr{W}_i$ is identified with $\mscr{W}(M_i)$ as described above.
\end{lem}

\subsection{Compatible metrics on median algebras.}\label{compatible subsec}

A metric space $(X,d)$ is a \emph{median space} if, for all $x_1,x_2,x_3\in X$, there exists a unique point $m(x_1,x_2,x_3)\in X$ such that
\[d(x_i,x_j)=d(x_i,m(x_1,x_2,x_3))+d(m(x_1,x_2,x_3),x_j)\]
for all $1\leq i<j\leq 3$. In this case, the map $m\colon X^3\ra X$ gives a median algebra $(X,m)$.

\begin{rmk}[Rank of median spaces]
We define the \emph{rank} of $X$ as the rank of the underlying median algebra $(X,m)$. If $X$ is a connected median space, then this notion of rank coincides with the supremum of the topological dimensions of the locally compact subsets of $X$. The latter is the definition of rank that we used in the Introduction. One inequality follows from Theorem~2.2 and Lemma~7.6 in \cite{Bow-cm}, while the other from \cite[Proposition~5.6]{Bow4}.
\end{rmk}

For the purposes of this paper, it is convenient to think of median spaces in terms of the following notion. Let $M$ be a median algebra.

\begin{defn}\label{compatible defn}
A pseudo-metric $\eta\colon M\x M\ra[0,+\infty)$ is \emph{compatible} if, for every $x,y,z\in M$:
\[\eta(x,y)=\eta(x,m(x,y,z))+\eta(m(x,y,z)+y).\]
\end{defn}

Thus, we can equivalently define median spaces as pairs $(M,d)$, where $M$ is a median algebra and $d$ is a compatible metric on $M$.

We write $\mc{D}(M)$ and $\mc{PD}(M)$, respectively, for the sets of all compatible metrics and all compatible pseudo-metrics on $M$. In the presence of a group action $G\acts M$, we write $\mc{D}^G(M)$ and $\mc{PD}^G(M)$ for the subsets of $G$--invariant (pseudo-)metrics (or just $\mc{D}^g(M)$ and $\mc{PD}^g(M)$ if $G=\langle g\rangle$).

To avoid confusion, we will normally denote compatible \emph{metrics} by the letter $\delta$, and general compatible \emph{pseudo-metrics} by the letter $\eta$.

Consider a gate-convex subset $C\cu M$ and its gate-projection $\pi_C\colon M\ra C$. For every pseudo-metric $\eta\in\mc{PD}(M)$, the maps $\pi_C\colon M\ra C$ and $m\colon M^3\ra M$ are $1$--Lipschitz, in the sense that:
\begin{align*}
\eta(\pi_C(x),\pi_C(y))&\leq\eta(x,y), & \eta(m(x,y,z),m(x',y',z'))&\leq\eta(x,x')+\eta(y,y')+\eta(z,z').
\end{align*}
This can be proved as in Lemma~2.13 and Corollary~2.15 of \cite{CDH}. In addition, gate-projections are nearest-point projections, in the sense that $\eta(x,\pi_C(x))=\eta(x,C)$ for all $x\in M$.

If $\delta\in\mc{D}(M)$ and $(M,\delta)$ is complete, then a subset $C\cu M$ is gate-convex if and only if it is convex and closed in the topology induced by $\delta$ (see \cite[Lemma~2.13]{CDH}).

If $M$ is the $0$--skeleton of a $\CAT$ cube complex $X$, then a natural compatible metric on $M$ is given by the restriction of the \emph{combinatorial metric} on $X$: this is just the intrinsic path metric of the $1$--skeleton of $X$. All cube complexes in this paper will be implicitly endowed with their combinatorial metric, rather than the $\CAT$ metric. All geodesics will be assumed to be \emph{combinatorial geodesics}.

\begin{rmk}\label{from pseudo-metrics to measures}
A \emph{halfspace-interval} is a set of the form $\mscr{H}(x|y)\cu\mscr{H}(M)$ for $x,y\in M$. Let $\mscr{B}(M)\cu 2^{\mscr{H}(M)}$ denote the $\s$--algebra generated by halfspace-intervals. We say that a subset $\mc{H}\cu\mscr{H}(M)$ is $\mscr{B}$--measurable if it lies in $\mscr{B}(M)$.

Every $\eta\in\mc{PD}(M)$ induces a measure $\nu_{\eta}$ on $\mscr{B}(M)$ such that $\nu_{\eta}(\mscr{H}(x|y))=\eta(x,y)$ for all $x,y\in M$ (see e.g.\ \cite[Theorem~5.1]{CDH}). If $\eta\in\mc{PD}^G(M)$, then $\nu_{\eta}$ is $G$--invariant. 
\end{rmk}

\begin{lem}\label{about J}
Let $(X,d)$ be a median space. Let $A\cu X$ be a subset such that $\mc{J}(A)\cu\mc{N}_R(A)$ for some $R\geq 0$. Then, for every $D\geq 0$, we have:
\[\mc{J}(\mc{N}_D(A))\cu\mc{N}_{2D+R}(A).\]
In addition, if $\rk X=r$, we have $\hull A\cu\mc{N}_{2^rR}(A)$.
\end{lem}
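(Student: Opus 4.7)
The plan is to reduce everything to the $1$--Lipschitz property of the median operator, which holds in any median space by the inequality $d(m(x,y,z),m(x',y',z')) \leq d(x,x')+d(y,y')+d(z,z')$ noted just after Definition~\ref{compatible defn}.

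For the main inclusion, I would pick an arbitrary point $p \in \mc{J}(\mc{N}_D(A))$ and write it as $p = m(a',b',z)$ for some $a',b' \in \mc{N}_D(A)$ and $z \in X$. Then choose $a,b \in A$ with $d(a,a') \leq D$ and $d(b,b') \leq D$. The Lipschitz estimate gives
\[ d\bigl(m(a',b',z),\,m(a,b,z)\bigr) \leq d(a,a') + d(b,b') \leq 2D. \]
Since $m(a,b,z) \in I(a,b) \subseteq \mc{J}(A) \subseteq \mc{N}_R(A)$, there is $a'' \in A$ with $d(m(a,b,z),a'') \leq R$. Putting these together with the triangle inequality yields $d(p,a'') \leq 2D + R$, proving $p \in \mc{N}_{2D+R}(A)$.

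For the ``in particular'' clause, I would iterate the displayed inclusion. By Remark~\ref{Bow J}, when $\rk X = r$ we have $\hull A = \mc{J}^r(A)$, so it suffices to bound the neighborhoods recursively. Setting $R_0 := 0$ and $R_{n+1} := 2R_n + R$, an induction using the main inclusion with $D = R_n$ gives $\mc{J}^n(A) \subseteq \mc{N}_{R_n}(A)$: the base case is trivial, and the inductive step reads
\[ \mc{J}^{n+1}(A) = \mc{J}(\mc{J}^n(A)) \subseteq \mc{J}(\mc{N}_{R_n}(A)) \subseteq \mc{N}_{2R_n + R}(A) = \mc{N}_{R_{n+1}}(A). \]
The recursion solves to $R_n = (2^n - 1)R$, so $\hull A = \mc{J}^r(A) \subseteq \mc{N}_{(2^r-1)R}(A) \subseteq \mc{N}_{2^rR}(A)$.

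There is no real obstacle here: the only subtle point, and the reason the iteration closes so cleanly, is that the main inclusion produces a neighborhood of $A$ itself (rather than of $\mc{N}_D(A)$) on the right-hand side, which is precisely what enables $R_n$ to be governed by a linear recursion with bounded input $R$ instead of blowing up doubly exponentially.
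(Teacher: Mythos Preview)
Your proof is correct and essentially identical to the paper's: both use the $1$--Lipschitz property of $m$ to compare $m(a',b',z)$ with $m(a,b,z)$ for nearby $a,b\in A$, then iterate via the recursion $R_{n+1}=2R_n+R$ to obtain $\mc{J}^n(A)\subseteq\mc{N}_{(2^n-1)R}(A)$ and invoke Remark~\ref{Bow J}. The paper's argument differs only in notation (it calls your $p$ by $z$ and writes $z\in I(x,y)$ rather than $p=m(a',b',z)$).
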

\begin{proof}
If $z\in\mc{J}(\mc{N}_D(A))$, there exist $x,y\in\mc{N}_D(A)$ and $z\in I(x,y)$. Consider points $x',y'\in A$ with $d(x,x'),d(y,y')\leq D$. Set $z'=m(x',y',z)$. Since $z'\in\mc{J}(A)$, we have $d(z',A)\leq R$. Furthermore:
\[d(z,z')=d(m(x,y,z),m(x',y',z))\leq d(x,x')+d(y,y')\leq 2D.\]
In conclusion, $d(z,A)\leq d(z,z')+d(z',A)\leq 2D+R$, as required.

Proceeding by induction, it is straightforward to obtain $\mc{J}^i(A)\cu\mc{N}_{(2^i-1)R}(A)$ for every $i\geq 0$. If $\rk X=r$, we have $\hull A=\mc{J}^r(A)$ by Remark~\ref{Bow J}, hence $\hull A\cu\mc{N}_{(2^r-1)R}(A)\cu\mc{N}_{2^rR}(A)$.
\end{proof}

\subsection{Convex cores in median algebras.}\label{cores sect}

In this subsection, we collect a few facts proved in \cite{Fio10b} extending the notion of ``essential core'' \cite[Section~3]{CS} from actions on cube complexes to general actions on finite-rank median algebras (even with no invariant metric or topology). These results will only play a role in the proofs of Theorems~\ref{Q2 thm intro} and~\ref{special UNE thm intro} (especially in Sections~\ref{compatible sect} and~\ref{ultra sect}). The reader only interested in the other results mentioned in the Introduction can safely read this subsection with $\CAT$ cube complexes in mind, just to familiarise themselves with our notation. 

Let $M$ be a median algebra of finite rank $r$.

\begin{defn}
We say that $g\in\aut M$ acts:
\begin{enumerate}
\item[(1')] \emph{non-transversely} if there does not exist a wall $\mf{w}\in\mscr{W}(X)$ such that $\mf{w}$ and $g\mf{w}$ are transverse;
\item[(2')] \emph{stably without inversions} if there do not exist $n\in\Z$ and $\mf{h}\in\mscr{H}(X)$ with $g^n\mf{h}=\mf{h}^*$.
\end{enumerate}
An action $G\acts M$ by automorphisms is:
\begin{enumerate}
\item \emph{non-transverse} if every $g\in G$ acts non-transversely;
\item \emph{without wall inversions} if every $g\in G$ acts stably without inversions;
\item \emph{essential} if, for every $\mf{h}\in\mscr{H}(M)$, there exists $g\in G$ with $g\mf{h}\subsetneq\mf{h}$.
\end{enumerate}
\end{defn}

\begin{rmk}\label{connected inversions}
If there exists $\delta\in\mc{D}^G(M)$ such that $(M,\delta)$ is connected, then $G\acts M$ is without wall inversions. This follows from \cite[Proposition~B]{Fio1} when $(M,\delta)$ is complete, and from \cite[Remark~4.3]{Fio10b} in general.
\end{rmk}

Keeping the notation of \cite{Fio10b}, each action $G\acts M$ determines sets of halfspaces:
\begin{align*}
\mc{H}_1(G):=&\{\mf{h}\in\mscr{H}(M) \mid \exists g\in G \text{ such that } g\mf{h}\subsetneq\mf{h}\} \\
\overline{\mc{H}}_{1/2}(G):=&\{\mf{h}\in\mscr{H}(M)\setminus\mc{H}_1(G) \mid \exists g\in G \text{ such that $g\mf{h}^*\cap\mf{h}^*=\emptyset$ and $g\mf{h}\neq\mf{h}^*$}\} \\
\overline{\mc{H}}_0(G):=&\{\mf{h}\in\mscr{H}(M) \mid \forall g\in G \text{ either $g\mf{h}\in\{\mf{h},\mf{h}^*\}$ or $g\mf{h}$ and $\mf{h}$ are transverse}\}.
\end{align*}
As observed in \cite[Subsection~3.1]{Fio10b}, we have a $G$--invariant partition:
\[\mscr{H}(M)=\overline{\mc{H}}_0(G)\sqcup\mc{H}_1(G)\sqcup\overline{\mc{H}}_{1/2}(G)\sqcup\overline{\mc{H}}_{1/2}(G)^*.\]
We write $\W_1(G)$ and $\W_0(G)$ for the sets of walls bounding the halfspaces in $\mc{H}_1(G)$ and $\overline{\mc{H}}_0(G)$.

\begin{defn}
The \emph{reduced core} $\overline{\C}(G)$ is the intersection of all halfspaces lying in $\overline{\mc{H}}_{1/2}(G)$. 
\end{defn}

We adopt the convention that $\overline{\C}(G)=M$ when $\overline{\mc{H}}_{1/2}(G)$ is empty. We will write $\overline{\C}(G,M)$ (and $\mc{H}_{\bullet}(G,M)$, $\W_{\bullet}(G,M)$) if it is necessary to specify the ambient median algebra. We just write $\overline{\C}(g)$ (and $\mc{H}_{\bullet}(g)$, $\W_{\bullet}(g)$) if $G=\langle g\rangle$. 

\begin{thm}[\cite{Fio10b}]\label{all from 10b}
Let $G$ be finitely generated and let $G\acts M$ be without wall inversions.
\begin{enumerate}
\item The reduced core $\overline{\C}(G)$ is nonempty, $G$--invariant and convex.
\end{enumerate}
Suppose in addition that $\mc{D}^G(M)\neq\emptyset$.
\begin{enumerate}
\setcounter{enumi}{1}
\item There is a $G$--fixed point in $M$ if and only if $\mc{H}_1(G)=\emptyset$.
\item The sets $\mc{W}_1(G)$ and $\mc{W}_0(G)$ are transverse and $\mscr{W}_{\overline{\C}(G)}(M)=\mc{W}_0(G)\sqcup\mc{W}_1(G)$.
\item The resulting partition of $\mscr{W}(\overline{\C}(G))$ gives a product splitting $\overline{\C}(G)=\overline{\C}_0(G)\x\overline{\C}_1(G)$. The normaliser of the image of $G$ in $\aut M$ leaves $\overline{\C}(G)$ invariant, preserving the two factors. The action $G\acts\overline{\C}_1(G)$ is essential, while $G\acts\overline{\C}_0(G)$ fixes a point. 
\end{enumerate}
\end{thm}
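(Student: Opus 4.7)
The plan is to address the four parts in order, leaning on finite rank throughout and on the pocset criterion recorded as Lemma~\ref{useful criterion}.

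For part~(1), the nonemptiness of $\overline{\C}(G)$ reduces to verifying the hypotheses of Lemma~\ref{useful criterion}(1) for the family $\overline{\mc{H}}_{1/2}(G)$. First I would show that any two halfspaces $\mf{h}_1,\mf{h}_2\in\overline{\mc{H}}_{1/2}(G)$ intersect: if $\mf{h}_1\cap\mf{h}_2=\emptyset$, then $\mf{h}_2\subseteq\mf{h}_1^*$, so picking $g\in G$ with $g\mf{h}_1^*\cap\mf{h}_1^*=\emptyset$ (which exists by definition of $\overline{\mc{H}}_{1/2}$) yields $g\mf{h}_2\subseteq g\mf{h}_1^*$, disjoint from $\mf{h}_2$; since $G\acts M$ is without wall inversions, this forces $g\mf{h}_2\subsetneq\mf{h}_2^*$, and iterating shows $\mf{h}_2^*\in\mc{H}_1(G)$, contradicting $\mf{h}_2\in\overline{\mc{H}}_{1/2}(G)$. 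Next, for chains: in a finite-rank median algebra every chain of pairwise-nested halfspaces admits an intersection-preserving limit in $\mscr{H}(M)$ (this is essentially the finite Helly-type argument underlying the proof of Lemma~\ref{useful criterion}), and it remains in $\overline{\mc{H}}_{1/2}(G)$ because membership in $\overline{\mc{H}}_{1/2}$ is detected by a first-order condition preserved under nested limits. Finally, $G$-invariance of $\overline{\C}(G)$ is immediate from $G$-invariance of $\overline{\mc{H}}_{1/2}(G)$, and convexity from its being an intersection of convex sets.

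For part~(2), one direction is routine: if $p\in M$ is $G$-fixed and $\mf{h}\in\mscr{H}(M)$ with $g\mf{h}\subsetneq\mf{h}$, then $\{g^n\mf{h}\}_{n\in\Z}$ is a bi-infinite strictly nested chain. Fixing $\eta\in\mc{D}^G(M)$ and the associated measure $\nu_\eta$ on $\mscr{B}(M)$ from Remark~\ref{from pseudo-metrics to measures}, the $\nu_\eta$-measures of the ``slabs'' $g^n\mf{h}\setminus g^{n+1}\mf{h}$ are all equal to $\nu_\eta(\mf{h}\setminus g\mf{h})>0$, and these slabs are disjoint subsets of a halfspace-interval through $p$, which is finite; contradiction. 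For the converse, I would argue that if $\mc{H}_1(G)=\emptyset$, then every halfspace is either $G$-fixed (up to swapping with its complement, which no-inversions rules out on powers) or lies in $\overline{\mc{H}}_{1/2}(G)\sqcup\overline{\mc{H}}_{1/2}(G)^*$ or $\overline{\mc{H}}_0(G)$; the reduced core $\overline{\C}(G)$ is then nonempty by part~(1), and one locates a $G$-fixed point inside it by combining an essential-core reduction with the finite-rank hypothesis and the existence of an invariant pseudo-metric (minimising the displacement function $\tau^\eta_S$ over $\overline{\C}(G)$ for a finite generating set $S$).

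For part~(3), suppose $\mf{h}\in\mc{H}_1(G)$ (witnessed by $g\mf{h}\subsetneq\mf{h}$) and $\mf{k}\in\overline{\mc{H}}_0(G)$. By definition of $\overline{\mc{H}}_0$, $g\mf{k}$ is either $\mf{k}$, $\mf{k}^*$ (excluded by no-inversions), or transverse to $\mf{k}$. If $g\mf{k}=\mf{k}$ but $\mf{h}$ and $\mf{k}$ are \emph{not} transverse, then one of the four intersections $\mf{h}^{(*)}\cap\mf{k}^{(*)}$ is empty; applying $g$ and iterating against $g\mf{h}\subsetneq\mf{h}$ quickly yields a halfspace-inversion or a halfspace in $\mc{H}_1(G)\cap\overline{\mc{H}}_0(G)$, both forbidden. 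The remaining case $g\mf{k}\pitchfork\mf{k}$ combined with non-transversality of $\mf{h},\mf{k}$ leads, again by iterating $g$, to a strictly descending chain violating the finite-rank bound on pairwise-transverse walls. Part~(4) then follows formally: by part~(3), the walls in $\W_1(G)$ are transverse to those in $\W_0(G)$, and using Remark~\ref{halfspaces of subsets} this transversality upgrades the convex set $\overline{\C}(G)$ to a product $\overline{\C}_0(G)\x\overline{\C}_1(G)$ of the gate-projections onto the subalgebras cut out by $\mc{H}_0$ and $\mc{H}_1$; essentiality of $G\acts\overline{\C}_1(G)$ is by construction, while $\mc{H}_1(G|_{\overline{\C}_0(G)})=\emptyset$ gives a fixed point on the second factor via part~(2).

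The hardest step will be part~(2), specifically producing a genuine $G$-fixed point from the hypothesis $\mc{H}_1(G)=\emptyset$ together with $\mc{D}^G(M)\neq\emptyset$. It requires ruling out ``drift at infinity'' inside $\overline{\C}(G)$, which in the absence of completeness or local compactness forces one either to pass to a completion and then descend, or to perform a careful direct minimisation using the invariant pseudo-metric together with the finite-rank hypothesis to guarantee that the infimum $\overline{\tau}_S^\eta$ is attained on $\overline{\C}(G)$. This is also the step where the distinction from the classical $\CAT$ cube complex case is most delicate and where the no-inversions hypothesis is used most substantively.
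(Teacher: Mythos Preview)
The paper does not actually prove this theorem: its ``proof'' consists entirely of pointers to the relevant statements in the cited reference \cite{Fio10b} (Theorem~3.17, Proposition~3.23, Lemma~4.5, Corollary~4.6, Remark~3.16, Lemma~3.22 there). So there is no argument in the present paper to compare your attempt against; the theorem is imported wholesale as background.

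That said, your sketch has real gaps that would need to be closed before it could stand as an independent proof. In part~(1), the pairwise-intersection step is essentially right (though ``iterating'' hides the key observation: from $g\mf{h}_2\subsetneq\mf{h}_2^*$ together with the defining $g_2\mf{h}_2^*\subsetneq\mf{h}_2$ one gets $gg_2\mf{h}_2^*\subsetneq\mf{h}_2^*$, contradicting $\mf{h}_2\notin\mc{H}_1(G)$). The chain condition, however, is not justified: it is \emph{not} true that chains of halfspaces in a finite-rank median algebra have halfspace infima in general, and your appeal to ``a first-order condition preserved under nested limits'' is too vague to carry weight. You need an honest argument that a descending chain in $\overline{\mc{H}}_{1/2}(G)$ has a lower bound \emph{in} $\overline{\mc{H}}_{1/2}(G)$, and this is where finite generation of $G$ gets used nontrivially.

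For part~(2), you correctly identify the hard direction and then do not prove it. ``Minimising $\tau_S^\eta$ over $\overline{\C}(G)$'' is the right instinct, but with only $\mc{D}^G(M)\neq\emptyset$ and no completeness or properness assumption there is no reason the infimum is attained; the actual argument in \cite{Fio10b} is more delicate. Your part~(3) is also only a sketch: the case analysis when $g\mf{k}$ is transverse to $\mf{k}$ is not carried out, and it is unclear how ``iterating $g$'' produces pairwise-transverse walls in excess of the rank. Part~(4) is fine in outline once (2) and (3) are in hand.
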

\begin{proof}
We just refer the reader to the relevant statements in \cite{Fio10b}. Part~(1) follows from Theorem~3.17(2). The two implications in part~(2) are obtained from Proposition~3.23(2) and Lemma~4.5(1), respectively. Part~(3) is a consequence of Lemma~4.5 and Lemma~3.22(2). Finally, part~(4) follows from Remark~3.16 and the previous parts. 
\end{proof}

\begin{rmk}\label{essential core rmk}
If $G$ acts on a $\CAT$ cube complex $X$ and $M=X^{(0)}$, then the action $G\acts\overline{\C}_1(G)$ in part~(4) of Theorem~\ref{all from 10b} can be easily identified as the \emph{$G$--essential core} of Caprace and Sageev (cf.\ \cite[Section~3.3]{CS}). In particular, note that Theorem~\ref{all from 10b} strengthens \cite[Proposition~3.5]{CS}, showing that the $G$--essential core always embeds $G$--equivariantly as a convex subcomplex of $X$. 
\end{rmk}

\begin{thm}\label{all from 10b -2}
If $g\in\aut M$ acts non-transversely and stably without inversions, then:
\begin{enumerate}
\item the reduced core $\overline{\C}(g)$ is gate-convex;
\item for every $x\in M$ and every $\eta\in\mc{PD}^g(M)$, we have $\eta(x,gx)=\ell(g,\eta)+2\eta(x,\overline{\mc{C}}(g))$.
\end{enumerate}
\end{thm}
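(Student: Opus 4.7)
My plan is to exploit the $g$-invariant partition
\[\mscr{H}(M) = \overline{\mc{H}}_0(g) \sqcup \mc{H}_1(g) \sqcup \overline{\mc{H}}_{1/2}(g) \sqcup \overline{\mc{H}}_{1/2}(g)^*.\]
The hypotheses of non-transversality, stable no-inversion, and finite rank yield two preliminary facts I would establish first: (i) the full $\langle g\rangle$-orbit of any halfspace consists of pairwise nested halfspaces (non-transversality propagates to iterates using finite rank); (ii) consequently every halfspace in $\overline{\mc{H}}_0(g)$ is actually $g$-fixed, and every $\mf{k} \in \overline{\mc{H}}_{1/2}(g)$ admits some $n \geq 1$ with $g^n\mf{k}^* \subsetneq \mf{k}$, so that its iterated $\langle g^n\rangle$-orbit produces an infinite strict nesting of halfspaces all containing $\overline{\C}(g)$.

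For part~(1), I apply Lemma~\ref{useful criterion}(2). Every halfspace in $\overline{\mc{H}}_{1/2}(g)$ contains $\overline{\C}(g)$ by definition, so any halfspace in $\mscr{H}_{\overline{\C}(g)}(M)$ must lie in $\mc{H}_1(g) \cup \overline{\mc{H}}_0(g)$. Suppose, for contradiction, that some chain $\mscr{C}$ in this collection has $\bigcap \mscr{C}$ nonempty and disjoint from $\overline{\C}(g)$; pick $p \in \bigcap \mscr{C} \setminus \overline{\C}(g)$ and a witness $\mf{k} \in \overline{\mc{H}}_{1/2}(g)$ with $p \in \mf{k}^*$. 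The iterated orbit of $\mf{k}$ from (ii), combined with its nestedness with the halfspaces of $\mscr{C}$ (forced by (i)), yields via pigeonhole either a collection of pairwise transverse walls violating $\rk M < \infty$ or a chain inclusion forcing $p \in \overline{\C}(g)$, a contradiction.

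For part~(2), gate-convexity provides a $g$-equivariant gate-projection $\pi \colon M \to \overline{\C}(g)$. I work with the $g$-invariant measure $\nu_\eta$ on $\mscr{H}(M)$ from Remark~\ref{from pseudo-metrics to measures}, so that $\eta(a,b) = \nu_\eta(\mscr{H}(a|b))$, and partition $\mscr{H}(x | gx)$ into three parts: (a) halfspaces crossing $\overline{\C}(g)$, which coincide with $\mscr{H}(\pi(x) | g\pi(x))$ via the median characterisation of the gate-projection; (b) halfspaces with $\overline{\C}(g)$ on the $gx$-side; (c) halfspaces with $\overline{\C}(g)$ on the $x$-side. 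No halfspace of $\overline{\mc{H}}_0(g)$ contributes, by (ii). The combined $\nu_\eta$-measure of (b) and (c) equals $2\eta(x, \overline{\C}(g))$, by applying $g$-invariance and the complement-symmetry of $\nu_\eta$ to $\mscr{H}(x | \overline{\C}(g))$ and $\mscr{H}(gx | \overline{\C}(g))$. The measure of (a) is $\eta(\pi(x), g\pi(x))$, which equals $\ell(g, \eta)$ by the product decomposition $\overline{\C}(g) = \overline{\C}_0(g) \times \overline{\C}_1(g)$ and the essential action of $g$ on $\overline{\C}_1(g)$ from Theorem~\ref{all from 10b}(4). Summing gives the formula.

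The main obstacle is the delicate bookkeeping in part~(2), specifically ruling out ``stray'' halfspaces that separate $x$ from $\overline{\C}(g)$ while keeping $x$ and $gx$ on the same side; such halfspaces would carry $\nu_\eta$-mass but fall into none of (a), (b), (c), breaking the count. Any such halfspace would have to be $g$-fixed (hence in $\overline{\mc{H}}_0(g)$) yet separate $x$ from the entire $g$-invariant core; the transversality of $\W_0(g)$ and $\W_1(g)$ from Theorem~\ref{all from 10b}(3), together with the $g$-fixed point in $\overline{\C}_0(g)$, should force such halfspaces to actually cross $\overline{\C}(g)$ rather than separate $x$ from it. Making this pocset-level argument rigorous, and similarly identifying $\eta(\pi(x), g\pi(x))$ with $\ell(g, \eta)$ on the essential factor $\overline{\C}_1(g)$, is where the proof concentrates.
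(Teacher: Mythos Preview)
The paper does not prove this statement itself; it simply cites the companion paper \cite{Fio10b} (Proposition~3.36 for part~(1), Proposition~4.9(3) for part~(2)). Your sketch is therefore an attempt to reconstruct that external argument, and the overall architecture---classifying halfspaces via the partition $\overline{\mc{H}}_0\sqcup\mc{H}_1\sqcup\overline{\mc{H}}_{1/2}\sqcup\overline{\mc{H}}_{1/2}^*$, then using Lemma~\ref{useful criterion}(2) for gate-convexity and a measure decomposition of $\mscr{H}(x|gx)$ for the displacement formula---is the correct one.

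There is, however, a genuine hypothesis mismatch you should address. Your plan for part~(2) explicitly invokes Theorem~\ref{all from 10b}(3) and~(4), both of which carry the standing assumption $\mc{D}^G(M)\neq\emptyset$. Theorem~\ref{all from 10b -2} makes no such assumption: it only gives you $\eta\in\mc{PD}^g(M)$, a pseudo-metric. So you cannot cite Theorem~\ref{all from 10b} to resolve the stray-halfspace issue or to identify $\eta(\pi(x),g\pi(x))$ with $\ell(g,\eta)$. The intended route is that non-transversality together with stable absence of inversions \emph{replaces} the metric hypothesis: it forces every $\mf{h}\in\mc{H}_1(g)$ to satisfy $g\mf{h}\subsetneq\mf{h}$ or $g\mf{h}\supsetneq\mf{h}$, every $\mf{h}\in\overline{\mc{H}}_0(g)$ to satisfy $g\mf{h}=\mf{h}$, and every $\mf{h}\in\overline{\mc{H}}_{1/2}(g)$ to satisfy $g\mf{h}^*\cap\mf{h}^*=\emptyset$ or $g\mf{h}\cap\mf{h}=\emptyset$ (cf.\ Remark~\ref{from 10b rmk}(2), which again points to \cite{Fio10b}). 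Your preliminary facts~(i) and~(ii) are gesturing at exactly this, but you then abandon them and fall back on Theorem~\ref{all from 10b}. You should instead use (i)--(ii) directly: for instance, a stray $\mf{k}\in\mscr{H}(x|\overline{\C}(g))\cap\mscr{H}(gx|\overline{\C}(g))$ cannot lie in $\overline{\mc{H}}_{1/2}(g)$ (the case analysis above rules it out using $\overline{\C}(g)\neq\emptyset$), and if $\mf{k}\in\mc{H}_1(g)\cup\overline{\mc{H}}_0(g)$ you must argue directly that $\mf{k}$ crosses $\overline{\C}(g)$. Similarly, constancy of $y\mapsto\eta(y,gy)$ on $\overline{\C}(g)$ (which gives $\eta(\pi(x),g\pi(x))=\ell(g,\eta)$) should be derived from the fact that $\mscr{H}(y|gy)\subseteq\mc{H}_1(g)$ is a fundamental domain for the $\langle g\rangle$--action there (as in Remark~\ref{length from fundamental domains}), not from the product splitting of Theorem~\ref{all from 10b}(4).

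Your argument for part~(1) is also too compressed to assess: the phrase ``its nestedness with the halfspaces of $\mscr{C}$ (forced by~(i))'' is not justified, since~(i) only concerns halfspaces in a single $\langle g\rangle$--orbit, whereas $\mf{k}\in\overline{\mc{H}}_{1/2}(g)$ and the members of $\mscr{C}\subseteq\mc{H}_1(g)\cup\overline{\mc{H}}_0(g)$ need not be related by~$g$.
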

\begin{proof}
Part~(1) is \cite[Proposition~3.36]{Fio10b} and part~(2) is \cite[Proposition~4.9(3)]{Fio10b}.
\end{proof}

Note that $\overline{\C}(G)$ is not gate-convex in general, even when $G\acts M$ is an isometric action of a finitely generated free group on a complete $\R$--tree. See \cite[Example~3.37]{Fio10b}.

\begin{rmk}
Part~(2) of Theorem~\ref{all from 10b -2} implies that, if $\delta\in\mc{D}^g(M)$ and $(M,\delta)$ is a geodesic space, then $g$ is \emph{semisimple}: either $g$ fixes a point of $M$ or $g$ translates along a $\langle g\rangle$--invariant geodesic.
\end{rmk}

The next two remarks will only be needed in Section~\ref{ultra sect}.

\begin{rmk}\label{from 10b rmk}
Let $g\in\aut M$ act non-transversely and stably without inversions, with $\mc{D}^g(M)\neq\emptyset$.
\begin{enumerate}
\item Each $\mf{h}\in\mc{H}_1(g)$ satisfies $\bigcap_{n\in\Z}g^n\mf{h}=\emptyset$ (see \cite[Lemma~4.5(1)]{Fio10b}). 
\item A halfspace $\mf{h}$ lies in $\mf{h}\in\overline{\mc{H}}_0(g)$ if and only if $g\mf{h}=\mf{h}$, and it lies in $\mc{H}_1(g)$ if and only if either $g\mf{h}\subsetneq\mf{h}$ or $g\mf{h}\supsetneq\mf{h}$. This follows from Remarks~3.33 and~3.34 in \cite{Fio10b}, after observing that $\mc{H}_1(g)\cu\mscr{H}_{\overline{\C}(g)}(M)$ (e.g.\ by part~(1) of this remark).

\item Let $N\cu M$ be a $\langle g\rangle$--invariant median subalgebra. By Remark~\ref{halfspaces of subsets}, intersecting the halfspaces of $M$ with $N$, we obtain a surjective restriction map $\res_N\colon\mscr{H}_N(M)\ra\mscr{H}(N)$. Parts~(1) and~(2) show that:
\begin{itemize}
\item if $\mf{h}\in\overline{\mc{H}}_0(g,M)\cap\mscr{H}_N(M)$, then $g\cdot\res_N(\mf{h})=\res_N(\mf{h})$ and $\res_N(\mf{h})\in\overline{\mc{H}}_0(g,N)$;
\item if $\mf{h}\in\overline{\mc{H}}_{1/2}(g,M)\cap\mscr{H}_N(M)$, then either $\res_N(\mf{h})\in\overline{\mc{H}}_{1/2}(g,N)$ or $g\cdot\res_N(\mf{h})=\res_N(\mf{h})^*$;
\item we have $\mc{H}_1(g,M)\cu\mscr{H}_N(M)$ and $\res_N(\mc{H}_1(g,M))=\mc{H}_1(g,N)$. 
\end{itemize}
\end{enumerate}
\end{rmk}

\begin{rmk}\label{length from fundamental domains}
Let $g\in\aut M$ act non-transversely and stably without inversions. Let $\nu_{\eta}$ be the measure introduced in Remark~\ref{from pseudo-metrics to measures}. Part~(2) of Theorem~\ref{all from 10b -2} shows that $\ell(g,\eta)=\nu_{\eta}(\mscr{H}(x|gx))$ for any $x\in\overline{\C}(g)$. In view of parts~(1) and~(2) of Remark~\ref{from 10b rmk}, the set $\mscr{H}(x|gx)\sqcup\mscr{H}(gx|x)$ is a $\mscr{B}$--measurable fundamental domain for the action $\langle g\rangle\acts\mc{H}_1(g)$. It follows that, for \emph{any} fundamental domain $\Om\in\mscr{B}(M)$ for the action $\langle g\rangle\acts\mc{H}_1(g)$, we have $\ell(g,\eta)=\tfrac{1}{2}\nu_{\eta}(\Om)$.
\end{rmk}

\subsection{Two constructions involving cube complexes.}\label{CCC prelims}

\subsubsection{Restriction quotients.}

Restriction quotients of $\CAT$ cube complexes were originally introduced in \cite[p.\ 860]{CS}. Our interest is due to the fact that the Salvetti blowups and collapses from \cite{CSV} are a particular instance of this construction, which can actually be phrased purely in median-algebra terms. This is mainly needed in the proof of Proposition~\ref{cmp prop intro}(3) in Subsection~\ref{of RAAGs sect}, though it will also be useful in Subsections~\ref{cc sect} and~\ref{ultra-Salvetti sect}.

A map $f\colon X\ra Y$ between cube complexes is said to be \emph{cubical} if, on every cube $c\cu X$, it factors as a projection of $c$ onto one of its faces, followed by an isomorphism onto a cube of $Y$.

Let $X$ be a $\CAT$ cube complex. The \emph{carrier} of a hyperplane $\mf{w}\in\mscr{W}(X)$ is the smallest convex subcomplex of $X$ that contains all edges crossing $\mf{w}$. It naturally splits as a product $C\x[0,1]$, where $C\x\{0\}$ and $C\x\{1\}$ are convex subcomplexes of $X$ on the two sides of $\mf{w}$. 

Given a hyperplane $\mf{w}\in\mscr{W}(X)$, we can construct a new $\CAT$ cube complex $Y$ by \emph{collapsing} $\mf{w}$: we remove from $X$ the interior of the carrier $C\x(0,1)$ and we identify the isomorphic subcomplexes $C\x\{0\}$ and $C\x\{1\}$. The natural collapse map $X\ra Y$ is a cubical map.

Now, consider a set of hyperplanes $\mc{U}\cu\mscr{W}(X)$. The \emph{restriction quotient} of $X$ determined by $\mc{U}$ is the $\CAT$ cube complex $X(\mc{U})$ obtained by collapsing all hyperplanes in $\mscr{W}(X)\setminus\mc{U}$ (which usually involves infinitely many collapses). It has one vertex for every connected component of the complement in $X$ of the union of the hyperplanes in $\mc{U}$, with two vertices joined by an edge exactly when the corresponding components are separated by a single element of $\mc{U}$. Let $\pi_{\mc{U}}\colon X\ra X(\mc{U})$ be the natural collapse, which is again a cubical map.

If $G\acts X$ is an action and the subset $\mc{U}\cu\mscr{W}(X)$ is $G$--invariant, then the restriction quotient $X(\mc{U})$ is also equipped with a natural $G$--action and the collapse map $\pi_{\mc{U}}$ is $G$--equivariant.

\begin{prop}\label{restriction quotient prop}
Consider $\CAT$ cube complexes $X,Y$ and a surjective cubical map $\pi\colon X\ra Y$. Then the following are equivalent:
\begin{enumerate}
\item there exists a subset $\mc{U}\cu\mscr{W}(X)$ and an isomorphism $Y\cong X(\mc{U})$ with respect to which $\pi$ corresponds to the natural collapse $\pi_{\mc{U}}\colon X\ra X(\mc{U})$;
\item for every vertex $v\in Y$, the preimage $\pi^{-1}(v)$ is a convex subcomplex of $X$;
\item the restriction $\pi\colon X^{(0)}\ra Y^{(0)}$ is a median morphism.
\end{enumerate}
If $X$ and $Y$ are equipped with $G$--actions and $\pi$ is $G$--equivariant, then the set $\mc{U}$ is $G$--invariant.
\end{prop}
\begin{proof}
The equivalence of (1) and (2) was shown in \cite[Theorem~4.4]{Huang-Kleiner}. Fibres of median morphisms between median algebras are always convex, so (3) implies (2). Finally, (1)$\Ra$(3) can be shown by observing that single hyperplane-collapses are median morphisms.
\end{proof}

\subsubsection{Roller boundaries.}

In two proofs (Proposition~\ref{AMS fg} and, briefly, Lemma~\ref{friendly label-irreducibles}), we will need the notion of Roller boundary of a $\CAT$ cube complex $X$, denoted $\partial X$. We list here the (well-known) properties that we will use. 

The $0$--skeleton of any $\CAT$ cube complex $X$ has a natural structure of median algebra (see \cite[Theorem~6.1]{Chepoi} and \cite[Theorem~10.3]{Roller}). The $\ell^1$--metric on $X$, denoted $d$, is a compatible metric in the sense of Definition~\ref{compatible defn}. Thus, the pair $(X^{(0)},d)$ is a median space. The notions of ``halfspace'' and ``wall'' coincide with the usual notion of halfspace and hyperplane in $\CAT$ cube complexes. Thus, we write $\mscr{W}(X)$ and $\mscr{H}(X)$ with the meaning of $\mscr{W}(X^{(0)})$ and $\mscr{H}(X^{(0)})$.

We can embed $X^{(0)}\hookrightarrow 2^{\mscr{H}(X)}$ by mapping each vertex $v$ to the subset $\s_v\cu\mscr{H}(X)$ of halfspaces that contain it. This is a median morphism if we endow $2^{\mscr{H}(X)}$ with the structure of median algebra given by:
\[m(\s_1,\s_2,\s_3)=(\s_1\cap\s_2)\cup(\s_2\cap\s_3)\cup(\s_3\cap\s_1).\]
The space $2^{\mscr{H}(X)}$ is compact with the product topology, and we can consider the closure $\overline X$ of $X^{(0)}$ inside it. We define the \emph{Roller boundary} $\partial X$ as the set $\overline X\setminus X^{(0)}$.

For us, the only important facts will be:
\begin{enumerate}
\item The subset $\overline X=X\sqcup\partial X\cu 2^{\mscr{H}(X)}$ is a median subalgebra and $X^{(0)}$ is convex in $\overline X$.
\item The median $m\colon\overline X^3\ra\overline X$ is continuous with respect to the topology that $\overline X$ inherits from $2^{\mscr{H}(X)}$. With this topology, $\overline X$ is compact and totally disconnected. If $X$ is locally finite, the subset $X^{(0)}\cu\overline X$ is discrete. 
\item If $\mf{h}\in\mscr{H}(X)$, its closure $\overline{\mf{h}}$ inside $\overline X$ is gate-convex. In fact, $\overline{\mf{h}}$ and $\overline{\mf{h}^*}$ are complementary halfspaces of the median algebra $\overline X$. The gate-projection $\pi_{\mf{h}}\colon\overline X\ra\overline{\mf{h}}$ takes $X^{(0)}$ to $\mf{h}$.
\item Two halfspaces $\mf{h},\mf{k}\in\mscr{H}(X)$ are said to be \emph{strongly separated} if $\mf{h}\cap\mf{k}=\emptyset$ and no halfspace of $X$ is transverse to both $\mf{h}$ and $\mf{k}$ \cite{Behrstock-Charney}. If $\mf{h}$ and $\mf{k}$ are strongly separated, then the gate-projection $\pi_{\mf{h}}\colon\overline X\ra\overline{\mf{h}}$ maps $\overline{\mf{k}}$ to a single point.
\end{enumerate}
The reader can consult \cite[Subsections~2.3--2.4]{Fernos} and \cite[Theorem~4.14]{Fio1} for more details on Facts~(1)--(3). Fact~(4) follows e.g.\ from Corollary~2.22 and Lemma~2.23 in \cite{Fio3}.

\subsection{Coarse median structures.}\label{coarse median structures sect}

Coarse median spaces were introduced by Bowditch in \cite{Bow-cm}. We present the following equivalent definition from \cite{NWZ1}.

\begin{defn}\label{coarse median space defn}
Let $X$ be a metric space. A \emph{coarse median} on $X$ is a map $\mu\colon X^3\ra X$ for which there exists a constant $C\geq 0$ such that, for all $a,b,c,x\in X$, we have:
\begin{enumerate}
\item $\mu(a,a,b)=a$ and $\mu(a,b,c)=\mu(b,c,a)=\mu(b,a,c)$;
\item $\mu(\mu(a,x,b),x,c)\approx_C\mu(a,x,\mu(b,x,c))$;
\item $d(\mu(a,b,c),\mu(x,b,c))\leq Cd(a,x)+C$.
\end{enumerate}
\end{defn}

Note that part~(2) of the definition is an approximate version of the 4--point condition, from our definition of median algebras at the beginning of Subsection~\ref{median algebras subsect}.

There is an appropriate notion of \emph{rank} also for coarse median spaces. Since this notion will play no significant role in our paper (except when we briefly mention it at the end of Subsection~\ref{BP sect}), we simply refer the reader to \cite{Bow-cm,NWZ1,NWZ2} for more details.

The following notion of \emph{coarse median structure} is different from the one in \cite[Definition~2.8]{NWZ2}, but it is hard to imagine this being cause for confusion.

\begin{defn}\label{structure defn}
Two coarse medians $\mu_1,\mu_2\colon X^3\ra X$ are \emph{at bounded distance} if there exists a constant $C\geq 0$ such that $\mu_1(x,y,z)\approx_C\mu_2(x,y,z)$ for all $x,y,z\in X$. A \emph{coarse median structure} on $X$ is an equivalence class $[\mu]$ of coarse medians pairwise at bounded distance. A \emph{coarse median space} is a pair $(X,[\mu])$ where $X$ is a metric space and $[\mu]$ is a coarse median structure on it.
\end{defn}

\begin{rmk}\label{QIs act on CMs}
Let $f\colon X\ra Y$ be a quasi-isometry with a coarse inverse denoted $f^{-1}\colon Y\ra X$. If $\mu\colon X^3\ra X$ is a coarse median on $X$, then
\[(f_*\mu)(x,y,z):=f(\mu(f^{-1}(x),f^{-1}(y),f^{-1}(z)))\]
is a coarse median on $Y$. If $[\mu_1]=[\mu_2]$, then $[f_*\mu_1]=[f_*\mu_2]$.

If $QI(X)$ is the group of quasi-isometries $X\ra X$ up to bounded distance (as defined e.g.\ in \cite[Definition~8.22]{DK}), the above defines a natural left action of $QI(X)$ on the set of coarse median structures on $X$.
\end{rmk}

\begin{defn}\label{coarse median group defn}
A \emph{coarse median} group is a pair $(G,[\mu])$ where $G$ is a finitely generated group equipped with a word metric and $[\mu]$ is a $G$--invariant coarse median structure on $G$.
\end{defn}

The requirement that $[\mu]$ be $G$--invariant can be equivalently stated as follows: for each $g\in G$, there exists a constant $C(g)\geq 0$ such that $g\mu(g_1,g_2,g_3)\approx_{C(g)}\mu(gg_1,gg_2,gg_3)$ for all $g_1,g_2,g_3\in G$.

Note that Definition~\ref{coarse median group defn} is stronger than Bowditch's original definition from \cite{Bow-cm}, which did not ask for $[\mu]$ to be $G$--invariant. Definition~\ref{coarse median group defn} is better suited to our needs in this paper, but it is not QI--invariant or even commensurability-invariant (unlike Bowditch's).

These two definitions of coarse median group parallel the notions of HHS and HHG from \cite{HHS1,HHS2}. Namely, every hierarchically hyperbolic group is a coarse median group in the sense of Definition~\ref{coarse median group defn}, while any group that admits a structure of hierarchically hyperbolic space is coarse median in the sense of Bowditch \cite{Bow-largescale} (we will simply refer to these as ``groups with a coarse median structure'').

\begin{rmk}\label{motivation for cmp terminology}
If $G$ is finitely generated, any group automorphism $\varphi\colon G\ra G$ is bi-Lipschitz with respect to any word metric on $G$. The resulting homomorphism $\aut G\ra QI(G)$ defines an $(\aut G)$--action on the set of coarse median structures on $G$ that takes $G$--invariant structures to $G$--invariant structures. 
If $(G,[\mu])$ is a coarse median group, then every inner automorphism of $G$ fixes $[\mu]$, and we obtain an action of $\out G$ on the $(\aut G)$--orbit of $[\mu]$.
\end{rmk}

\begin{defn}
Let $(G,[\mu])$ be a coarse median group. We say that $\phi\in\out G$ (or $\varphi\in\aut G$) is \emph{coarse-median preserving} if it fixes $[\mu]$. We denote by $\out(G,[\mu])\leq\out G$ and $\aut(G,[\mu])\leq\aut G$ the subgroups of coarse-median preserving automorphisms.
\end{defn}

Thus $\varphi\in\aut G$ is coarse-median preserving exactly when, fixing a word metric on $G$, there exists a constant $C\geq 0$ such that, for all $g_i\in G$:
\[\varphi(\mu(g_1,g_2,g_3))\approx_C\mu(\varphi(g_1),\varphi(g_2),\varphi(g_3)).\]

\begin{rmk}\label{induced cms}
Let $G\acts X$ be a proper cocompact action on a $\CAT$ cube complex. Any orbit map $o\colon G\ra X$ is a quasi-isometry that can be used to pull back the median operator $m\colon X^3\ra X$ to a coarse median structure $[\mu_X]:=o^{-1}_*[m]$ on $G$. 
It is straightforward to check that $[\mu_X]$ is independent of all choices involved (though the notation is slightly improper, as $[\mu_X]$ does depend on the specific $G$--action on $X$). We refer to $[\mu_X]$ as the \emph{coarse median structure induced by $G\acts X$}.

Let us write $gx$ for the action of $g\in G$ on $x\in X$ according to $G\acts X$. Then, every $\varphi\in\aut G$ gives rise to a twisted $G$--action on $X$, which we denote by $G\acts X^{\varphi}$ and is defined as $g\cdot x=\varphi^{-1}(g)x$. Note that $\varphi_*[\mu_X]=[\mu_{X^{\varphi}}]$ and thus $\varphi\out(G,[\mu_X])\varphi^{-1}=\out(G,[\mu_{X^{\varphi}}])$.

Each of the structures $[\mu_{X^{\varphi}}]$ is $G$--invariant. In particular, $(G,[\mu_X])$ is a coarse median group.
\end{rmk}

\begin{ex}\label{hyp cmp ex}
Every geodesic Gromov-hyperbolic space $X$ is equipped with a natural coarse median structure $[\mu]$ represented by the operators $\mu$ that map each triple $(x,y,z)$ to an approximate incentre for a geodesic triangle with vertices $x,y,z$ (cf.\ \cite[Section~3]{Bow-cm}). In fact, by \cite[Theorem~4.2]{NWZ1}, this is the only coarse median structure that $X$ can be endowed with. It follows that $[\mu]$ is preserved by every quasi-isometry of $X$.

In particular, all automorphisms of Gromov-hyperbolic groups are coarse-median preserving. Alternatively, it is not hard to prove this last fact directly, relying on the Morse lemma and the observation that group automorphisms are quasi-isometries with respect to any word metric.
\end{ex}

\begin{ex}\label{Z^n cmp ex}
Equipping $\Z^n$ with the median operator $\mu$ associated to its $\ell^1$ metric, we obtain a coarse median group $(\Z^n,[\mu])$. An automorphism $\varphi\in\aut\Z^n={\rm GL}_n\Z$ is coarse-median preserving if and only if it lies in the signed permutation group $O(n,\Z)\leq {\rm GL}_n\Z$ (i.e.\ if it can be realised as an automorphism of the standard tiling of $\R^n$ by unit cubes). This will follow from Proposition~\ref{cmp prop intro}(3) once we prove it in Subsection~\ref{of RAAGs sect} (though it also is easily shown by hand).
\end{ex}

We conclude this subsection with the definitions of \emph{quasi-convex} subsets and \emph{approximate median subalgebras}, which will play an important role in Sections~\ref{cc whole sect} and~\ref{fix cmp sect}.

\begin{defn}\label{qc defn}
Let $(X,[\mu])$ be a coarse median space. A subset $A\cu X$ is \emph{quasi-convex} if there exists $R\geq 0$ such that $\mu(A\x A\x X)\cu\mc{N}_R(A)$.
\end{defn}

This notion is clearly independent of the chosen representative $\mu$ of the structure $[\mu]$. Moreover, by Definition~\ref{coarse median space defn}(3), if subsets $A$ and $B$ have finite Hausdorff distance, then $A$ is quasi-convex if and only if $B$ is. 

By Remark~\ref{hyp cmp ex}, Definition~\ref{qc defn} extends the usual notion of quasi-convexity in hyperbolic spaces. The next remark shows that this is also the notion of quasi-convexity appearing in the statement of Theorem~\ref{U_0 cc intro}. We will discuss in Subsection~\ref{cc sect} other equivalent notions of quasi-convexity in (non-hyperbolic) cube complexes.

\begin{rmk}\label{right-angled qc rmk}
Let $G$ be a right-angled Artin/Coxeter group. Let $G\acts X$ be the action on the universal cover of the Salvetti/Davis complex and let $[\mu_X]$ be the induced coarse median structure on $G$, as in Remark~\ref{induced cms}. Recall that, for a subset $A\cu X^{(0)}$, the set $\mc{J}(A)=\mu_X(A\x A\x X)$ is the union of all geodesics joining points of $A$.

Since the standard Cayley graph of $G$ is precisely the $1$--skeleton of $X$, a subgroup $H\leq G$ is quasi-convex as defined in the statement of Theorem~\ref{U_0 cc intro} if and only if we have $\mc{J}(H\cdot x)\cu\mc{N}_R(H\cdot x)$ for some $x\in X$ and $R\geq 0$. This is clearly equivalent to quasi-convexity of $H$ with respect to the coarse median structure $[\mu_X]$.
\end{rmk}

\begin{rmk}\label{qc in median spaces}
If $X$ is a finite-rank median space, then a subset $A\cu X$ is quasi-convex if and only if $d_{\rm Haus}(A,\hull A)<+\infty$. This follows from Lemma~\ref{about J}.
\end{rmk}

A similar, weaker notion is that of \emph{approximate median subalgebra}.

\begin{defn}\label{approx subalg defn}
Let $(X,[\mu])$ be a coarse median space. A subset $A\cu X$ is an \emph{approximate median subalgebra} if there exists $R\geq 0$ such that $\mu(A\x A\x A)\cu\mc{N}_R(A)$.
\end{defn}

Again, the definition only depends on the structure $[\mu]$ and passes on to all subsets of $X$ at finite Hausdorff distance from $A$. An analogue of Remark~\ref{qc in median spaces} also holds, but it is more complicated and will be discussed in Subsection~\ref{approx subalg sect}.

If $\varphi$ is a coarse-median preserving automorphism of a coarse median group $(G,[\mu])$, the fixed subgroup $\Fix\varphi\leq G$ is in general not quasi-convex (for instance, consider the automorphism of $\Z^2$ that swaps the standard generators). However, it is always an approximate median subalgebra, as the next two lemmas show. This will be important in the proof of Theorem~\ref{cmp und intro}.

\begin{lem}\label{displacement vs fix dist}
Let $G$ be a finitely generated group and let $d$ be a word metric on $G$. For every $\varphi\in\aut G$, there exist functions $\zeta_1,\zeta_2\colon\N\ra\R_{>0}$, with $\zeta_1$ linear, such that, for every $g\in G$:
\[\zeta_1\big(d(g,\varphi(g))\big)\leq d(g,\Fix\varphi) \leq \zeta_2\big(d(g,\varphi(g))\big).\]
\end{lem}
\begin{proof}
For the first inequality, note that $\varphi\colon G\ra G$ is $C$--bi-Lipschitz with respect to $d$, for some constant $C\geq 0$. If $g'\in\Fix\varphi$ is an element closest to $g$, we have:
\[d(g,\varphi(g))\leq d(g,g')+d(\varphi(g'),\varphi(g))\leq (1+C)\cdot d(g,g')=(1+C)\cdot d(g,\Fix\varphi).\]
Thus, we can take $\zeta_1(t):=t/(1+C)$.

Regarding the second inequality, suppose for the sake of contradiction that there does not exist a function $\zeta_2$ so that it is satisfied. Then, there exist elements $g_n\in G$ with $d(g_n,\Fix\varphi)\ra+\infty$, but $d(g_n,\varphi(g_n))\leq D$ for some $D\geq 0$. Passing to a subsequence, we can assume that $\varphi(g_n)=g_nx$ for some $x\in G$ and all $n$. Thus $g_ng_m^{-1}\in\Fix\varphi$, hence $d(g_n,\Fix\varphi)=d(g_m,\Fix\varphi)$ for all $n,m\geq 0$, contradicting the fact that the distances $d(g_n,\Fix\varphi)$ diverge.
\end{proof}

\begin{lem}\label{fix approximate subalgebra}
Let $(G,[\mu])$ be a coarse median group. If $\varphi\in\aut(G,[\mu])$, then $\Fix\varphi\leq G$ is an approximate median subalgebra.
\end{lem}
\begin{proof}
Since $\varphi\in\aut(G,[\mu])$, there is a constant $C$ such that:
\[\varphi(\mu(x,y,z))\approx_C\mu(\varphi(x),\varphi(y),\varphi(z)),\ \forall x,y,z\in G.\]
Thus, if $x,y,z\in\Fix\varphi$, we have $\varphi(\mu(x,y,z))\approx_C\mu(x,y,z)$. Lemma~\ref{displacement vs fix dist} gives a constant $C'$ such that $d(\mu(x,y,z),\Fix\varphi)\leq C'$ for all $x,y,z\in\Fix\varphi$, as required.
\end{proof}

\subsection{UNE actions and groups.}

The following (seemingly novel) notion will play an important role in the proof of Theorem~\ref{Q2 thm intro}, especially in Subsections~\ref{metrics on algebras},~\ref{BP sect} and~\ref{meaty sect}.

\begin{defn}\label{UNE defn}
Let $G$ be a finitely generated group and let $(X,d)$ be a (pseudo-)metric space.
\begin{enumerate}
\item An isometric action $G\acts X$
is \emph{uniformly non-elementary (UNE)} if there exists a constant $c>0$ with the following property. For every finite generating set $S\cu G$ and for all $x,y\in X$:
\[d(x,y)\leq c\cdot[\tau_S^d(x)+\tau_S^d(y)].\] 
We say that $G\acts X$ is \emph{$c$--uniformly non-elementary ($c$--UNE)} when we need to specify $c$.
\item An infinite group $G$ is \emph{UNE} if it admits a UNE, proper, cocompact action on a geodesic metric space. 
\end{enumerate}
\end{defn}

The previous definition differs slightly from the one given in the introduction, but it is easily seen to be equivalent.

\begin{rmk}\label{every UNE}
If $G$ is infinite and an action $G\acts X$ is proper and cocompact, then there exists $\eps>0$ such that, for every generating set $S\cu G$ and every $x\in X$, we have $\tau_S^d(x)\geq\eps$. 

Along with the Milnor--Schwarz lemma, this can be used to show that a group is UNE if and only if \emph{every} proper, cocompact action on a geodesic space is UNE. Equivalently, if the action of $G$ on its locally finite Cayley graphs is UNE.
\end{rmk}

\begin{ex}\label{UNE example}
\begin{enumerate}
\item[]
\item Non-elementary hyperbolic groups are UNE (for instance, this is implicitly shown in the last two paragraphs of the proof of \cite[Lemme~3.1]{Paulin-ENS}).
\item Fundamental groups of compact special cube complexes with finite centre are UNE. We will obtain this in Corollary~\ref{UNE cor}.
\item UNE groups have finite centre.
\end{enumerate}
\end{ex}

\section{Cubical convex-cocompactness.}\label{cc whole sect}

This section is devoted to \emph{convex-cocompact} subgroups of cocompactly cubulated groups (Definition~\ref{cc defn}). First, in Subsection~\ref{cc sect}, we discuss the relationship between convex-cocompactness and coarse median quasi-convexity. Then, Subsection~\ref{label-irreducible sect} discusses basic properties of \emph{cyclic}, convex-cocompact subgroups of RAAGs. Finally, Proposition~\ref{cmp prop intro} is proved in Subsection~\ref{of RAAGs sect}.

The reader that is not interested in the proofs of Theorems~\ref{Q2 thm intro} and~\ref{special UNE thm intro} can safely skip Subsection~\ref{more on raag cc}, which is devoted to some of the finer properties of convex-cocompact subgroups of RAAGs and is more technical. Its results will only be needed in Section~\ref{ultra sect}.

\subsection{Cubical convex-cocompactness in general.}\label{cc sect}

Let $G\acts X$ be a proper cocompact action on a $\CAT$ cube complex. In particular, $X$ is finite-dimensional and locally finite.

\begin{defn}\label{cc defn}
A subgroup $H\leq G$ is \emph{convex-cocompact} in $G\acts X$ if there exists an $H$--invariant, convex subcomplex $C\cu X$ that is acted upon cocompactly by $H$.
\end{defn}

Despite the similarity in terminology, we emphasise that the above is much weaker than the notion of ``boundary convex-cocompactness'' due to Cordes and Durham \cite{Cordes-Durham}. For instance, all convex-cocompact subgroups of RAAGs are free if we consider the Cordes--Durham notion \cite{Koberda-Mangahas-Taylor}, whereas every special group is a convex-cocompact subgroup of some RAAG acting on its Salvetti complex according to Definition~\ref{cc defn} (see \cite{Haglund-Wise-GAFA}).

Let $[\mu_X]$ be the coarse median structure on $G$ induced by $G\acts X$ as in Remark~\ref{induced cms}. Recall that quasi-convex subsets of coarse median spaces were introduced in Definition~\ref{qc defn}. For the notion of $H$--essential core, see Remark~\ref{essential core rmk} or \cite[Section~3.3]{CS}.

The following is just a restating of some well-known facts. The equivalence of the first two parts is due to Haglund (see \cite[Theorem~H]{Haglund-GD} and \cite{Sageev-Wise-core}).

\begin{lem}\label{equiv cc}
The following are equivalent for a subgroup $H\leq G$:
\begin{enumerate}
\item $H$ is convex-cocompact in $G\acts X$;
\item $H$ is quasi-convex in $(G,[\mu_X])$;
\item $H$ is finitely generated and acts cocompactly on the $H$--essential core of $H\acts X$.
\end{enumerate}
\end{lem}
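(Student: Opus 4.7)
The plan is to prove the three implications (1)$\Rightarrow$(2), (2)$\Rightarrow$(1) (this being the Haglund step), and then identify (3) with (1) using the material on cores from Subsection~\ref{cores sect}.

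For (1)$\Rightarrow$(2), I would fix a basepoint $x\in C^{(0)}$ and use the orbit map $o\colon G\to X$, $g\mapsto gx$, which is a $(G,H)$--equivariant quasi-isometry by Milnor--Schwarz. Since $[\mu_X]=o^{-1}_*[m]$ by construction, quasi-convexity of $H$ in $(G,[\mu_X])$ is equivalent to quasi-convexity of the orbit $Hx\subseteq X$ in the coarse median sense. Now $Hx$ sits at finite Hausdorff distance from $C^{(0)}$ (by cocompactness of $H\acts C$), and since $C$ is convex in $X$ we have $m(C^{(0)}\times C^{(0)}\times X^{(0)})\subseteq C^{(0)}$, so $Hx$ is quasi-convex in $(X,m)$. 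Pulling back through $o$ gives quasi-convexity in $(G,[\mu_X])$.

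For (2)$\Rightarrow$(1), assuming $H$ is quasi-convex in $(G,[\mu_X])$, the orbit $Hx\subseteq X^{(0)}$ is quasi-convex in the median sense. By Remark~\ref{qc in median spaces} (a consequence of Lemma~\ref{about J}, recalling that $X$ has finite rank because it is finite-dimensional), $Hx$ lies at finite Hausdorff distance from its convex hull $\hull(Hx)\subseteq X^{(0)}$. Let $C$ be the cubical subcomplex of $X$ spanned by $\hull(Hx)$; it is $H$--invariant, convex as a subcomplex of $X$, and $d_{\mathrm{Haus}}(Hx,C^{(0)})<\infty$. Local finiteness of $X$ together with properness of $G\acts X$ then forces $H\acts C$ to be cocompact, giving (1). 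This step is essentially Haglund's argument.

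For the equivalence of (1) and (3), assume (1) and let $C\subseteq X$ be an $H$--invariant convex subcomplex with $H\acts C$ cocompact. Cocompactness on a locally finite complex forces $H$ to be finitely generated. The combinatorial $\ell^1$ metric is an element of $\mc{D}^H(C^{(0)})$, so Theorem~\ref{all from 10b}(4) applies to $H\acts C^{(0)}$ and yields a product decomposition $\overline{\C}(H,C^{(0)})=\overline{\C}_0\times\overline{\C}_1$ with $H$--fixed point in $\overline{\C}_0$ and essential action on $\overline{\C}_1$; cocompactness of $H\acts C$ transfers to cocompactness of $H\acts\overline{\C}_1(H)$, and by Remark~\ref{essential core rmk} this is exactly the $H$--essential core of $H\acts X$. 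Conversely, if (3) holds, then by Remark~\ref{essential core rmk} the $H$--essential core embeds as an $H$--invariant convex subcomplex of $X$, giving (1) directly.

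The main technical point, and the one I would be most careful with, is the passage in (2)$\Rightarrow$(1) from the median convex hull $\hull(Hx)\subseteq X^{(0)}$ to a genuine convex subcomplex of $X$ on which $H$ acts cocompactly; here one uses that $X$ is locally finite and finite-dimensional so that neighborhoods of $Hx$ contain only finitely many $H$--orbits of cubes, and that the subcomplex spanned by a median-convex subset of vertices is itself convex in the usual cubical sense.
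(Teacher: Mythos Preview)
Your treatment of (1)$\Leftrightarrow$(2) matches the paper's almost exactly: both reduce to quasi-convexity of the orbit $Hx$ in $X$, invoke Remark~\ref{qc in median spaces} to get finite Hausdorff distance to $\hull(Hx)$, and use local finiteness of $X$ for cocompactness.

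For (1)$\Leftrightarrow$(3) your route differs from the paper's and carries a small gap. Theorem~\ref{all from 10b} and Remark~\ref{essential core rmk} both sit under the hypothesis that the action is \emph{without wall inversions}, and nothing prevents an element of $H$ from swapping the two sides of a hyperplane of $X$ (or of $C$). The paper deals with this asymmetrically: for (1)$\Rightarrow$(3) it avoids Theorem~\ref{all from 10b} altogether, observing instead that the $H$--essential core of $H\acts X$ is a restriction quotient of any $H$--invariant convex subcomplex $C$, so cocompactness passes down immediately; for (3)$\Rightarrow$(1) it first replaces $X$ by its cubical subdivision $X'$, where inversions disappear, and only then appeals to Remark~\ref{essential core rmk} to embed the essential core convexly. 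Your argument via the product splitting $\overline{\C}(H,C^{(0)})=\overline{\C}_0\times\overline{\C}_1$ is morally the same but needs two additional words: you must first kill inversions (e.g.\ by subdividing), and you must say why $\overline{\C}_1(H,C)$ coincides with the $H$--essential core of $H\acts X$ rather than merely of $H\acts C$ (hyperplanes of $X$ missing $C$ are $H$--inessential, so this is immediate, but it should be stated).
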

\begin{proof}
Let us begin with the equivalence of (1) and (2). Picking a vertex $v\in X$, condition~(2) holds if and only if there exists a constant $R'$ such that $m(H\cdot v,H\cdot v, G\cdot v)\cu\mc{N}_{R'}(H\cdot v)$. Since $G$ acts cocompactly and $m$ is $1$--Lipschitz in each component, this is equivalent to the existence of $R''$ with:
\[\mc{J}(H\cdot v)=m(H\cdot v,H\cdot v,X)\cu\mc{N}_{R''}(H\cdot v).\] 
It is clear that this holds when (1) is satisfied, so (1)$\Ra$(2).

Conversely, if (2) holds, then $H\cdot v$ is quasi-convex in $X$ and Remark~\ref{qc in median spaces} implies that $\hull(H\cdot v)$ is at finite Hausdorff distance from $H\cdot v$. Since $X$ is locally finite, this means that $H$ acts cocompactly on $\hull(H\cdot v)$, hence $H$ is convex-cocompact.

We now show the equivalence of (1) and (3). First, if $C\cu X$ is convex and $H$--invariant, the $H$--essential core of $H\acts X$ is a restriction quotient of $C$ (as defined in Subsection~\ref{CCC prelims}). Thus, if $H$ acts cocompactly on $C$, it also acts cocompactly on the $H$--essential core. Moreover, the action $H\acts C$ is proper and cocompact, which implies that $H$ is finitely generated. This proves (1)$\Ra$(3).

Conversely, let $X'$ be the cubical subdivision. Since $H$ is finitely generated and $H\acts X'$ has no inversions, the essential core of $H\acts X'$ embeds $H$--equivariantly as a convex subcomplex of $X'$ (see Remark~\ref{essential core rmk}). This shows that (3)$\Ra$(1).
\end{proof}

Recalling that automorphisms of $G$ are bi-Lipschitz with respect to word metrics on $G$, the equivalence of (1) and (2) in Lemma~\ref{equiv cc} has the following straightforward consequence:

\begin{cor}\label{cmp preserve cc}
If $\varphi\in\aut(G,[\mu_X])$, then a subgroup $H\leq G$ is convex-cocompact in $G\acts X$ if and only if $\varphi(H)$ is.
\end{cor}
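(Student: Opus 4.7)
The plan is to pass through the equivalence (1)$\Leftrightarrow$(2) of Lemma~\ref{equiv cc}, translating the cube-complex condition of convex-cocompactness into the purely algebraic condition of quasi-convexity in the coarse median space $(G,[\mu_X])$. Once in this language, the statement is just a matter of transporting quasi-convexity along the CMP automorphism $\varphi$, using that $\varphi$ is bi-Lipschitz on $(G,d_S)$ for any word metric.

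More concretely, suppose $H\leq G$ is convex-cocompact in $G\acts X$, so by Lemma~\ref{equiv cc} there exists $R\geq 0$ with $\mu(H\x H\x G)\cu\mc{N}_R(H)$, where $\mu$ is a fixed representative of $[\mu_X]$ and $d$ is a fixed word metric on $G$. Since $\varphi$ preserves $[\mu_X]$, there is a constant $C\geq 0$ with
\[\varphi(\mu(g_1,g_2,g_3))\approx_C \mu(\varphi(g_1),\varphi(g_2),\varphi(g_3)),\qquad \forall g_1,g_2,g_3\in G.\]
Given $h_1,h_2\in\varphi(H)$ and $g\in G$, write $h_i=\varphi(h_i')$ with $h_i'\in H$ and $g=\varphi(g')$ with $g'\in G$ (possible since $\varphi\in\aut G$). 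Then
\[\mu(h_1,h_2,g)\approx_C \varphi(\mu(h_1',h_2',g'))\in \varphi(\mc{N}_R(H)),\]
and since $\varphi$ is $L$--bi-Lipschitz on $(G,d)$ for some $L\geq 1$, one has $\varphi(\mc{N}_R(H))\cu\mc{N}_{LR}(\varphi(H))$. Hence $\mu(h_1,h_2,g)\in\mc{N}_{C+LR}(\varphi(H))$, proving that $\varphi(H)$ is quasi-convex in $(G,[\mu_X])$, and therefore convex-cocompact in $G\acts X$ by Lemma~\ref{equiv cc} again.

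For the converse, apply the same argument with $\varphi^{-1}$ in place of $\varphi$; this is legal because $\aut(G,[\mu_X])$ is a \emph{subgroup} of $\aut G$, so $\varphi^{-1}$ is also coarse-median preserving. There is no real obstacle here: the whole proof is just bookkeeping of constants. The only point worth flagging is that one must use that $\varphi$ is a bijection of $G$ (to write an arbitrary $g\in G$ as $\varphi(g')$), which is why the argument is clean for automorphisms and would require more care for coarse-median-preserving self-quasi-isometries.
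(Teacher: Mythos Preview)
Your proof is correct and follows exactly the approach the paper intends: use the equivalence (1)$\Leftrightarrow$(2) of Lemma~\ref{equiv cc} to recast convex-cocompactness as quasi-convexity in $(G,[\mu_X])$, then transport this along the bi-Lipschitz, coarse-median-preserving automorphism $\varphi$. The paper treats this as an immediate consequence without spelling out the constants, but your explicit bookkeeping is fine.
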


\begin{ex}
If $G$ is Gromov-hyperbolic, then a subgroup $H\leq G$ is convex-cocompact in $G\acts X$ if and only if $H$ is quasi-convex in $G$ (again since (1)$\LRa$(2) in Lemma~\ref{equiv cc}). In particular, the notion of convex-cocompactness is independent of the chosen cubulation of $G$ in this case. A quick look at the standard cubulation of $\Z^2$ immediately shows that the latter does not hold in general.
\end{ex}

\subsection{Label-irreducible elements in RAAGs.}\label{label-irreducible sect}

This subsection studies convex-cocompact \emph{cyclic} subgroups of right-angled Artin groups. Let $\G$ be a finite simplicial graph. Let $\A=\A_{\G}$ be a RAAG and $\X=\X_{\G}$ the universal cover of its Salvetti complex. Set $r=\dim\X$. 

The Cayley graph of $\A$ corresponding to the standard generating set $\G^{(0)}$ is naturally identified with the $1$--skeleton of the $\CAT$ cube complex $\X$. Thus, every edge of $\X$ is labelled by a vertex of $\G$. Observing that edges crossing the same hyperplane have the same label, we obtain a map $\g\colon\mscr{W}(\X)\ra\G^{(0)}$. 

We can apply the discussion in Subsection~\ref{cores sect} to the standard action $\A\acts\X$ (or, to be precise, the action on the $0$--skeleton of $\X$). Every element of $\A$ acts non-transversely and stably without inversions. For every $g\in\A\setminus\{1\}$, the reduced core $\overline{\C}(g)$ is the union of all axes of $g$.

A hyperplane of $\X$ lies in $\W_1(g)$ if and only if it is crossed by one (equivalently, all) axis of $g$. Hyperplanes lie in $\W_0(g)$ when they are preserved by $g$; equivalently, when they are transverse to all elements of $\W_1(g)$, or, again, when they separate two axes of $g$. 

The factor $\overline{\C}_1(g)$ is $\langle g\rangle$--equivariantly isomorphic to the convex hull in $\X$ of any axis of $g$. The factor $\overline{\C}_0(g)$ is fixed pointwise by $g$ and it is isomorphic to $\X_{\Lambda}$, where $\Lambda\cu\G$ is the maximal subgraph all of whose vertices are joined by an edge to all vertices in $\g(\mc{W}_1(g))$.

For a simplicial graph $\Delta$, we denote by $\Delta^o$ the \emph{opposite} of $\Delta$. This the graph that has the same vertex set as $\Delta$ and an edge between two vertices exactly when they are not connected by an edge in $\Delta$.

\begin{defn}\label{label-irreducible defn}
Consider $g\in\A\setminus\{1\}$.
\begin{enumerate}
\item We define $\G(g):=\g(\W_1(g))\cu\G^{(0)}$. These are precisely the standard generators of $\A$ that appear in the cyclically reduced words representing elements conjugate to $g$.
\item We say that $g$ is \emph{label-irreducible} if the full subgraph of $\G$ spanned by $\G(g)$ does not split as a nontrivial join (i.e.\ its opposite graph is connected). Equivalently, $g$ is contracting \cite{Charney-Sultan} within a parabolic subgroup of $\A$.
\end{enumerate}
\end{defn}

Two label-irreducible elements $g,h\in\A$ are \emph{independent} if $\langle g,h\rangle\not\simeq\Z$. If $g,h$ are independent and commute, then $\langle g,h\rangle\simeq\Z^2$. We will also use the following result of Servatius, see e.g.\ \cite[Proposition~III.1]{Servatius}.

\begin{lem}\label{independent LI lem}
If $g,h\in\A$ are commuting, independent, label-irreducible elements, then every vertex of $\G(g)$ is joined to every vertex of $\G(h)$ by an edge of $\G$.
\end{lem}

To each element $g\in\A$, we can associate a canonical collection of label-irreducible elements $g_1,\dots,g_k$, called the \emph{label-irreducible components} of $g$, as shown in the next result.

\begin{lem}[Label-irreducible components]\label{label-irreducible decomposition}
For every element $g\in\A$, the following hold.
\begin{enumerate}
\item We can write $g=g_1\cdot\ldots\cdot g_k$ for pairwise-commuting, pairwise-independent label-irreducibles $g_i\in \A$. In addition, $0\leq k\leq r$ and the $g_i$ are unique up to permutation.
\item The sets $\W_1(g_i)$ are transverse to each other and $\W_1(g_i)\cu\W_0(g_j)$ for $i\neq j$. In addition: 
\begin{align*}
\W_1(g)&=\W_1(g_1)\sqcup\dots\sqcup\W_1(g_k), & \ell(g,\X)&=\ell(g_1,\X)+\dots+\ell(g_k,\X), \\
\overline{\C}_1(g)&\simeq\overline{\C}_1(g_1)\x\dots\x\overline{\C}_1(g_k), & \overline{\C}(g)&=\overline{\C}(g_1)\cap\dots\cap\overline{\C}(g_k).
\end{align*}
\item Centralisers satisfy $Z_{\A}(g)=Z_{\A}(g_1)\cap\dots\cap Z_{\A}(g_k)$. Moreover, $Z_{\A}(g)$ splits as the direct product of a parabolic subgroup of $\A$ and a copy of $\Z^k$ freely generated by roots of $g_1,\dots,g_k$.
\end{enumerate}
\end{lem}
\begin{proof}
Since label-irreducibility is invariant under taking conjugates, we assume throughout the proof that $g$ is cyclically reduced. If $g$ is the identity, we can simply take $k=0$ and the entire lemma holds trivially. Suppose instead that $g\neq 1$.

We begin with part~(1). Let $\L_1,\dots,\L_k$ be the connected components of the subgraph of $\G^o$ spanned by $\G(g)$. In $\G$, every vertex of $\L_i$ is joined by an edge to every vertex of $\L_j$ with $j\neq i$. Thus, permuting the letters in a word representing $g$, we can write $g=g_1\cdot\ldots\cdot g_k$, where each $g_i$ is cyclically reduced and $\G(g_i)=\L_i^{(0)}$. The elements $g_i$ commute pairwise and, since each $\L_i$ is connected, they are all label-irreducible. It is clear that $g_i$ and $g_j$ are independent for $i\neq j$.

Uniqueness of the $g_i$ up to permutations follows from the fact that, by Lemma~\ref{independent LI lem}, $\G(g_1),\dots,\G(g_k)$ must coincide with the vertex sets of $\L_1,\dots,\L_k$ in any such decomposition of $g$. Furthermore, choosing a vertex from each $\L_i$, we obtain a $k$--clique in $\G$, so $k\leq r$. This proves part~(1).

We now prove part~(2). Since $g_i$ is cyclically reduced, there exists a (combinatorial) axis $\alpha_i\cu\X_{\G(g_i)}$ through the identity. Note that the product $\X_{\G(g_1)}\x\dots\x\X_{\G(g_k)}\cu\X$ is preserved by all $g_i$ and that each $g_i$ leaves invariant every hyperplane of $\X_{\G(g_j)}$ for all $j\neq i$. Thus, the sets $\W_1(g_i)$ are transverse to each other and $\W_1(g_i)\cu\W_0(g_j)$ for $i\neq j$. The equality $\W_1(g)=\W_1(g_1)\sqcup\dots\sqcup\W_1(g_k)$ now follows by observing that $\alpha_1\x\dots\x\alpha_k$ contains an axis of $g$. The product splitting of $\overline{\C}_1(g)$ can be deduced from the transverse partition of $\W_1(g)$ using Lemma~\ref{product lem}.

If $\Om_i$ is a fundamental domain for the $\langle g_i\rangle$--action on $\W_1(g_i)$, the previous paragraph implies that $\Om_1\sqcup\dots\sqcup\Om_k$ is a fundamental domain for the $\langle g\rangle$--action on $\W_1(g)$. Taking cardinalities, this shows that $\ell(g,\X)=\ell(g_1,\X)+\dots+\ell(g_k,\X)$. Finally, the characterisation of $\overline{\C}(g)$ can be deduced from the fact that this is the set of points of $\X$ that realise the translation length $\ell(g,\X)$.

We conclude with part~(3). The inclusion $Z_{\A}(g_1)\cap\dots\cap Z_{\A}(g_k)\leq Z_{\A}(g)$ is clear. Conversely, if $h\in\A$ commutes with $g$, uniqueness in part~(1) implies that the elements $hg_ih^{-1}$ coincide with the $g_i$ up to permutation. Since $\G(hg_ih^{-1})=\G(g_i)$, it follows that $hg_ih^{-1}=g_i$ for each $i$. Hence $h\in Z_{\A}(g_1)\cap\dots\cap Z_{\A}(g_k)$, as required. The last statement is Servatius' Centraliser Theorem from \cite[Section~III]{Servatius}. 
\end{proof}

\begin{rmk}\label{noetherian centralisers}
For every $H\leq\A$, there exists a finite subset $F\cu H$ such that $Z_{\A}(H)=Z_{\A}(F)$. 

Indeed, we have observed in Lemma~\ref{label-irreducible decomposition}(3) that the centraliser of every element of $\A$ splits as a product of a free abelian group and a parabolic subgroup of $\A$. It follows that every descending chain of centralisers of subsets of $\A$ eventually stabilises, since this is true of chains of parabolics.
\end{rmk}

We conclude this subsection by showing that label-irreducibles are precisely those elements $g\in\A$ such that the subgroup $\langle g\rangle$ is convex-cocompact in $\A\acts\X$. After a couple of preliminary results, this is shown below in Lemma~\ref{label-irreducibles are cc 1}.

\begin{lem}\label{diameter of antigraph}
Every connected full subgraph $\Lambda\cu\G^o$ has diameter $\leq 2r-1$. 
\end{lem}
\begin{proof}
Suppose towards a contradiction that there exist vertices $x,y\in\Lambda$ and a shortest path $\s\cu\Lambda$ joining them, with $\s$ made up of $2r$ edges. Let $\s_i$ be the $i$--th vertex of $\G^o$ met by $\s$, with $\s_0=x$ and $\s_{2r}=y$. Since $\s$ is shortest and $\Lambda$ is full, no two of the $r+1$ vertices $\s_0,\s_2,\dots,\s_{2r}$ are joined by an edge of $\G^o$. Thus, they form an $(r+1)$--clique in $\G$, contradicting the fact that $r=\dim\X$.
\end{proof}

\begin{lem}\label{irreducible elements new}
Let $g\in\A$ be label-irreducible. Then, for every $\mf{u}\in\W_1(g)$, there exists a point $x\in\overline{\C}(g)$ such that $\mscr{W}(x|gx)\cu\mscr{W}(\mf{u}|g^{4r-2}\mf{u})$. In particular, $\g(\mscr{W}(\mf{u}|g^{4r-2}\mf{u}))=\G(g)$.
\end{lem}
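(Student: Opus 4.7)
The plan is to take an axis $\ell$ of $g$ passing through $\mf{u}$ (such an axis exists since $\mf{u}\in\W_1(g)$), parameterise it so that $g$ acts by translation of length $L=\ell(g,\X)$, and choose $x$ to be the vertex of $\ell$ at combinatorial distance $(2r-1)L$ past the crossing of $\mf{u}$. Then $gx$ lies $L$ further along and $g^{4r-2}\mf{u}$ is crossed $(4r-2)L$ past $\mf{u}$, leaving a ``margin'' of roughly $(2r-1)L$ of axis on each side of the segment $[x,gx]$. Because $\ell$ is a combinatorial geodesic, $\mscr{W}(x|gx)$ is exactly the set of $L$ hyperplanes crossed by $\ell$ between $x$ and $gx$; to prove the inclusion $\mscr{W}(x|gx)\cu\mscr{W}(\mf{u}|g^{4r-2}\mf{u})$ it suffices to show that each such $\mf{v}$ is \emph{disjoint} (neither equal nor transverse) from both $\mf{u}$ and $g^{4r-2}\mf{u}$, because then the position of $\mf{v}$'s $\ell$-crossing places $\mf{u}$ and $g^{4r-2}\mf{u}$ in opposite halfspaces of $\mf{v}$.

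The central tool is a ``barrier lemma'': if $\mf{w}$ is a wall disjoint from $\mf{w}'$ whose $\ell$-crossing lies strictly between those of $\mf{w}'$ and $\mf{v}$, then ``$\mf{v}$ transverse to $\mf{w}'$'' forces ``$\mf{v}$ transverse to $\mf{w}$''. The reason is immediate: $\mf{w}'$ lies entirely in one halfspace of $\mf{w}$ and $\mf{v}$'s $\ell$-crossing lies in the other, so if $\mf{v}$ were disjoint from $\mf{w}$ it would be confined to the side opposite $\mf{w}'$ and could not meet $\mf{w}'$.

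To feed this into a chain argument, set $a=\g(\mf{u})$ and $c=\g(\mf{v})$. By label-irreducibility and Remark~\ref{diameter of antigraph}, $\G(g)$ spans a connected subgraph of $\G^o$ of diameter at most $2r-1$, so I can pick a simple $\G^o$-path $a=v_0,v_1,\ldots,v_k=c$ with $k\leq 2r-1$; consecutive labels along such a path are non-adjacent in $\G$, hence hyperplanes carrying them are automatically pairwise disjoint. (If $k\leq 1$, already $\mf{u}$ and $\mf{v}$ are automatically disjoint, so I may assume $k\geq 2$.) I then construct a chain $\mf{u}=\mf{w}_0,\mf{w}_1,\ldots,\mf{w}_{k-1}$ along $\ell$ by letting $\mf{w}_i$ be the first $v_i$-labelled hyperplane past $\mf{w}_{i-1}$. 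Each period of $\ell$ realises every label in $\G(g)$, so each step advances by at most $L$, placing $\mf{w}_{k-1}$ at $\ell$-position at most $(k-1)L+1/2\leq(2r-2)L+1/2$ --- strictly before $\mf{v}$, whose position is at least $(2r-1)L+1/2$. Iterating the barrier lemma along the chain, the hypothetical ``$\mf{v}$ transverse to $\mf{u}$'' propagates to ``$\mf{v}$ transverse to $\mf{w}_{k-1}$'', contradicting the non-adjacency of $v_k$ and $v_{k-1}$ in $\G$; thus $\mf{v}$ is disjoint from $\mf{u}$.

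The analogous chain run backwards from $g^{4r-2}\mf{u}$, using the matching buffer of $(2r-1)L$ on the other side, yields disjointness from $g^{4r-2}\mf{u}$ and completes the inclusion. The ``In particular'' clause then follows quickly: $\mscr{W}(x|gx)$ is a fundamental domain for $\langle g\rangle\acts\W_1(g)$ and therefore has label-set exactly $\G(g)$, so the inclusion gives $\G(g)\cu\g(\mscr{W}(\mf{u}|g^{4r-2}\mf{u}))$, while the reverse inclusion holds because any wall separating $\mf{u}$ from $g^{4r-2}\mf{u}$ is necessarily crossed by $\ell$ and hence has label in $\G(g)$. The delicate point will be the chain construction: the whole purpose of the constant $4r-2$ is to accommodate two ``barrier buffers'' of at least $2r-1$ periods, one on each side of $[x,gx]$, so that the $\G^o$-diameter bound from Remark~\ref{diameter of antigraph} actually has room to play out.
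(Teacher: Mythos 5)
Your proof is correct and takes essentially the same route as the paper's: both build a chain of hyperplanes on the axis whose consecutive labels trace a $\G^o$--path of length at most $2r-1$ (Remark~\ref{diameter of antigraph}), place $x$ roughly $2r-1$ translation lengths past $\mf{u}$, and deduce that $\mf{u}$ (and, symmetrically, $g^{4r-2}\mf{u}$) is disjoint from every wall in $\mscr{W}(x|gx)$. The only stylistic difference is that the paper pads its chain all the way to $\mf{w}$ (using $\mf{w}_i\in\mscr{W}(g^iy|g^{i+1}y)$) and reads off nestedness directly, whereas you stop one step short and propagate transversality by contradiction through the ``barrier lemma''; the underlying mechanism --- consecutive walls on a common axis with $\G$--non-adjacent labels must be disjoint --- is identical.
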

\begin{proof}
Pick a point $y$ on an axis of $g$ so that $\mf{u}\in\mscr{W}(y|gy)$. Set $x=g^{2r-1}y$ and consider a hyperplane $\mf{w}\in\mscr{W}(x|gx)$.  Since $g$ is label-irreducible, the full subgraph of $\G^o$ spanned by $\G(g)$ is connected. By Lemma~\ref{diameter of antigraph}, there exists a sequence $\s_0=\g(\mf{u}),\s_1,\dots,\s_k=\g(\mf{w})$ of vertices in $\G(g)$ such that $k\leq 2r-1$ and consecutive $\s_i$ are not joined by an edge of $\G$. Set $\s_j=\s_k$ for $k<j\leq 2r-1$.

For $0\leq i\leq 2r-1$, pick a hyperplane $\mf{w}_i\in\mscr{W}(g^iy|g^{i+1}y)$ with $\g(\mf{w}_i)=\s_i$, making sure that $\mf{w}_0=\mf{u}$ and $\mf{w}_{2r-1}=\mf{w}$. Since $\s_i$ and $\s_{i+1}$ are not joined by an edge, the hyperplanes $\mf{w}_i$ and $\mf{w}_{i+1}$ are not transverse. Since these hyperplanes are all crossed by an axis of $g$, we conclude that each $\mf{w}_i$ separates the $\mf{w}_j$ with $j<i$ from those with $j>i$. In particular, $\mf{u}$ and $\mf{w}$ are not transverse. 

The same argument shows that $\mf{w}$ and $g^{4r-2}\mf{u}$ are not transverse, hence $\mf{w}\in\mscr{W}(\mf{u}|g^{4r-2}\mf{u})$. Since $\mf{w}\in\mscr{W}(x|gx)$ was arbitrary, we have shown that $\mscr{W}(x|gx)\cu\mscr{W}(\mf{u}|g^{4r-2}\mf{u})$. 
\end{proof}

\begin{lem}\label{label-irreducibles are cc 1}
\begin{enumerate}
\item[]
\item If $g$ is label-irreducible and $\alpha\cu\X$ is an axis, then $d_{\rm Haus}(\alpha,\hull\alpha)\leq (8r-4)\ell(g,\X)$.
\item An element $g\in\A\setminus\{1\}$ is label-irreducible if and only if $\langle g\rangle$ is convex-cocompact in $\A\acts\X$.
\end{enumerate}
\end{lem}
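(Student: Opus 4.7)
I would begin with part~(1). The first move is to identify $\hull\,\alpha$ using the product decomposition $\overline{\C}(g)=\overline{\C}_0(g)\times\overline{\C}_1(g)$ from Theorem~\ref{all from 10b}(4): every axis of $g$ lies in $\{p\}\times\overline{\C}_1(g)$ for some $g$-fixed point $p\in\overline{\C}_0(g)$, and essentiality of $\langle g\rangle\acts\overline{\C}_1(g)$ forces $\alpha$ to cross every wall of that factor, so $\hull\,\alpha=\{p\}\times\overline{\C}_1(g)$. It therefore suffices to bound $d(y,\alpha)\leq (8r-4)\ell$ for points $y$ in the essential factor $\overline{\C}_1(g)$, which we may take to be all of $\X$ after this reduction.

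For such $y$, Theorem~\ref{all from 10b -2}(2) gives $d(y,gy)=\ell$, and $\mscr{W}(y|gy)$ consists of exactly $\ell$ walls, one from each of the $\ell$ orbits of $\langle g\rangle$ on $\mscr{W}_1(g)$. Fix any $\mf{u}\in\mscr{W}(y|gy)$ and invoke Lemma~\ref{irreducible elements new} to obtain an axis point $x$ satisfying $\mscr{W}(x|gx)\subseteq\mscr{W}(\mf{u}|g^{4r-2}\mf{u})$. A brief case analysis on the four combinations of halfspaces of $\mf{u}$ and $g^{4r-2}\mf{u}$ rules out three of them and forces $x,gx$ into the slab between these two walls. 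I would then apply the same lemma to the other orbit representatives $\mf{u}_i\in\mscr{W}(y|gy)$ and chain the resulting estimates via the connectedness of the opposite graph of $\G(g)$ (Remark~\ref{diameter of antigraph}) to show that, orbit by orbit, the walls of $\mscr{W}(y|gy)$ and $\mscr{W}(x|gx)$ differ by at most $8r-4$ $g$-translates. Summing this estimate over the $\ell$ orbits gives $d(y,x)\leq (8r-4)\ell$, and since $x\in\alpha$ the bound $d(y,\alpha)\leq (8r-4)\ell$ follows.

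Part~(2) is essentially a corollary of part~(1) together with Lemma~\ref{equiv cc}. For the forward direction, part~(1) combined with Remark~\ref{qc in median spaces} shows that any $\langle g\rangle$-orbit in $\X$ is at finite Hausdorff distance from its convex hull, hence quasi-convex, so Lemma~\ref{equiv cc} upgrades this to convex-cocompactness. For the converse, if $g$ is not label-irreducible then Remark~\ref{label-irreducible decomposition} provides a decomposition $g=g_1\cdots g_k$ with $k\geq 2$ and $\overline{\C}_1(g)\simeq\overline{\C}_1(g_1)\times\cdots\times\overline{\C}_1(g_k)$. The axis $\alpha$ is a $1$-dimensional diagonal inside this product of $k\geq 2$ unbounded median algebras, so $d_{\rm Haus}(\alpha,\hull\,\alpha)=\infty$; by Remark~\ref{qc in median spaces} and Lemma~\ref{equiv cc}, $\langle g\rangle$ fails to be convex-cocompact.

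The main obstacle is the orbit-by-orbit comparison in part~(1), where the qualitative statement of Lemma~\ref{irreducible elements new} must be parlayed into a genuinely quantitative bound on the relative offsets of the $\ell$ walls of $\mscr{W}(y|gy)$. The connectedness of the opposite graph of $\G(g)$, whose diameter is at most $2r-1$, is what prevents the $\ell$ wall-orbits from becoming effectively independent; in its absence the spread could be unbounded, which is exactly the failure mode exhibited by reducible elements in the reverse direction of part~(2).
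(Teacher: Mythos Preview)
Your reduction to the essential factor $\overline{\C}_1(g)$ is correct, as is the observation that for any $y$ there the set $\mscr{W}(y|gy)$ contains exactly one wall from each of the $\ell$ orbits of $\langle g\rangle$ on $\W_1(g)$. The difficulty is exactly where you flag it, and it is not resolved by your plan.

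Applying Lemma~\ref{irreducible elements new} to a single $\mf{u}\in\mscr{W}(y|gy)$ yields a point $x\in\alpha$ with $\mscr{W}(x|gx)\subseteq\mscr{W}(\mf{u}|g^{4r-2}\mf{u})$. This pins down, for the orbit $O_1$ containing $\mf{u}$, that the $O_1$--coordinate of $x$ differs from that of $y$ by at most $4r-2$. But for any other orbit $O_i$ it tells you only where the $O_i$--wall of $\mscr{W}(x|gx)$ sits \emph{relative to $\mf{u}$}; it says nothing about where the $O_i$--wall $\mf{u}_i$ of $\mscr{W}(y|gy)$ sits relative to $\mf{u}$, which is what you would need. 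Re-applying the lemma to each $\mf{u}_i$ produces a different axis point $x_i$, and you then face the problem of bounding the pairwise distances $d(x_i,x_j)$ along $\alpha$; the connectedness of $\G(g)^o$ constrains which pairs of walls in $\mscr{W}(y|gy)$ are transverse, but it does not by itself bound how far apart the points $x_i$ are. The ``chaining'' you describe is the whole argument, and it is not supplied.

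The paper sidesteps this coordinate-by-coordinate bookkeeping entirely. Given $p\in\hull\alpha$, it chooses $x\in\alpha$ so that $\mscr{H}(x|p)$ consists only of halfspaces meeting $\alpha$ in a positive sub-ray, then takes $y\in\alpha$ with $d(x,y)=d(x,p)$ and sets $m=m(x,p,y)$. The key structural fact is that $\mscr{W}(m|p)$ and $\mscr{W}(m|y)$ are transverse to each other. If $d(p,y)>(8r-4)\ell$, both sets have size exceeding $(4r-2)\ell$, so each contains a pair $\mf{u},g^{4r-2}\mf{u}$, and Lemma~\ref{irreducible elements new} forces $\gamma(\mscr{W}(m|p))=\G(g)=\gamma(\mscr{W}(m|y))$; this contradicts transversality, since $\G(g)$ is not a join. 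The lemma is used once on each side to produce a label contradiction, rather than $\ell$ times to assemble a distance estimate.

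Your treatment of part~(2) is fine and matches the paper's.
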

\begin{proof}
Assuming part~(1), we first prove part~(2). Using the third characterisation of convex-cocompactness in Lemma~\ref{equiv cc} and the fact that $\overline{\C}_1(g)$ is equivariantly isomorphic to $\hull\alpha$, part~(1) shows that label-irreducible elements are convex-cocompact. Conversely, if $g$ is not label-irreducible, the nontrivial splitting of $\overline{\C}_1(g)$ provided by Lemma~\ref{label-irreducible decomposition}(2) implies that $\langle g\rangle$ cannot act cocompactly on $\overline{\C}_1(g)$.

Let us now prove part~(1). Considering a point $p\in\hull\alpha$, it is enough to obtain the inequality $d(p,\alpha)\leq (8r-4)\ell(g,\X)$. 

Every element of $\mscr{H}_{\hull\alpha}(\X)$ intersects $\alpha$ in a sub-ray. Let $\mc{H}_+$ be the subset of halfspaces intersecting $\alpha$ in a positive sub-ray (i.e.\ containing all points $g^nz$ with $n\geq 0$, for a suitable choice of $z\in\alpha$). Any two maximal halfspaces lying in $\mc{H}_+$ and not containing $p$ are transverse. It follows that there are only finitely many such maximal halfspaces, which we denote by $\mf{h}_1,\dots,\mf{h}_k$. 

A negative sub-ray of $\alpha$ is contained in $\mf{h}_1^*\cap\dots\cap\mf{h}_k^*$, so we can pick a point $x\in\alpha\cap\mf{h}_1^*\cap\dots\cap\mf{h}_k^*$. In particular, $x$ does not lie in any halfspaces of $\mc{H}_+$ that do not contain $p$; hence $\mscr{H}(x|p)\cu\mc{H}_+$. Let $y\in\alpha$ be the point with $d(x,p)=d(x,y)$ and $\mscr{H}(x|y)\cu\mc{H}_+$. Setting $m=m(x,p,y)$, we note that every $\mf{j}\in\mscr{H}(m|p)$ is transverse to every $\mf{k}\in\mscr{H}(m|y)$. Indeed, $m\in\mf{j}^*\cap\mf{k}^*$, $p\in\mf{j}\cap\mf{k}^*$ and $y\in\mf{j}^*\cap\mf{k}$, while $\mf{j}\cap\mf{k}$ is nonempty because $\mf{j}$ and $\mf{k}$ both lie in $\mc{H}_+$.

Now, suppose for the sake of contradiction that $d(p,y)>(8r-4)\ell(g,\X)$. Since we chose $y$ with $d(x,p)=d(x,y)$, we have $d(p,m)=d(m,y)>(4r-2)\ell(g,\X)$. Note that $\mscr{W}(p|m)\cu\mscr{W}_{\hull\alpha}(\X)=\W_1(g)$, a set on which $\langle g^{4r-2}\rangle$ acts with exactly $(4r-2)\ell(g,\X)$ orbits. Thus, there exists a hyperplane $\mf{u}\in\mscr{W}(p|m)$ such that $g^{4r-2}\mf{u}\in\mscr{W}(p|m)$. Lemma~\ref{irreducible elements new} implies that $\g(\mscr{W}(p|m))=\G(g)$. Similarly, we obtain $\g(\mscr{W}(m|y))=\G(g)$. This contradicts the fact that $\mscr{W}(p|m)$ is transverse to $\mscr{W}(m|y)$.
\end{proof}

\subsection{More on convex-cocompactness in RAAGs.}\label{more on raag cc}

The results in this subsection will only be used in Section~\ref{ultra sect} and can be skipped by the reader uninterested in the proof of Theorems~\ref{Q2 thm intro} and~\ref{special UNE thm intro}.

First, we discuss additional properties of label-irreducible elements of RAAGs. Our aim is obtaining \emph{uniform} control on the extent to which axes of distinct label-irreducibles can track each other. The main result here is Lemma~\ref{friendly label-irreducibles}, along with its direct consequence Corollary~\ref{label-irreducible cor}. Both results will be fundamental building blocks in the proof that centreless special groups are UNE.

Then, in the second part of the subsection, we study general convex-cocompact subgroups of RAAGs, proving only a couple of simple properties related to label-irreducible components.

\subsubsection{Additional properties of label-irreducible elements.}

We maintain the notation introduced at the beginning of Subsection~\ref{label-irreducible sect}. Recall that $r=\dim\X$.

Recall that the carrier of a hyperplane $\mf{w}\in\mscr{W}(\X)$ is the smallest convex subcomplex of $\X$ that contains all edges crossing $\mf{w}$. A hyperplane of $\X$ separates two points in the carrier of $\mf{w}$ if and only if it is either equal or transverse to $\mf{w}$. If two hyperplanes $\mf{u},\mf{w}$ have intersecting carriers, then they are transverse if and only if $\g(\mf{u})$ and $\g(\mf{w})$ are joined by an edge of $\G$.

\begin{lem}\label{common argument}
If $g,h\in \A$ and $\G(g)\cu\g\big(\mscr{W}_{\overline{\C}(g)}(\X)\cap\mscr{W}_{\overline{\C}(h)}(\X)\big)$, then $\overline{\C}(g)\cap\overline{\C}(h)\neq\emptyset$.
\end{lem}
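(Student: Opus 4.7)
I would argue by contradiction: suppose $\overline{\C}(g)\cap\overline{\C}(h)=\emptyset$ and derive a violation of the hypothesis.

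\emph{Reduction to $g$ label-irreducible.} Using the label-irreducible decomposition $g=g_1\cdots g_k$ from Remark~\ref{label-irreducible decomposition}, we have $\overline{\C}(g)=\bigcap_i\overline{\C}(g_i)$ and $\G(g)=\bigsqcup_i\G(g_i)$. Since $\overline{\C}(g)\cu\overline{\C}(g_i)$ implies $\mscr{W}_{\overline{\C}(g)}\cu\mscr{W}_{\overline{\C}(g_i)}$, the hypothesis for $g$ restricts to the analogous hypothesis $\G(g_i)\cu\g(\mscr{W}_{\overline{\C}(g_i)}\cap\mscr{W}_{\overline{\C}(h)})$ for each $i$. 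Assuming the label-irreducible case, one obtains $\overline{\C}(g_i)\cap\overline{\C}(h)\neq\emptyset$ for every $i$. The family $\{\overline{\C}(g_i)\}_i\cup\{\overline{\C}(h)\}$ pairwise intersects (each $\overline{\C}(g_i)\cap\overline{\C}(g_j)$ contains $\overline{\C}(g)\neq\emptyset$), so Helly's lemma delivers $\overline{\C}(g)\cap\overline{\C}(h)\neq\emptyset$, a contradiction. Hence it suffices to treat the case in which $g$ is label-irreducible.

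\emph{The transversality observation.} Assume $g$ label-irreducible and $\overline{\C}(g)\cap\overline{\C}(h)=\emptyset$. Set $\Sigma:=\mscr{W}(\overline{\C}(g)|\overline{\C}(h))$, which is non-empty since the two gate-convex sets are disjoint, and $\Theta:=\mscr{W}_{\overline{\C}(g)}\cap\mscr{W}_{\overline{\C}(h)}$. The key observation is that every $\mf{w}'\in\Theta$ is transverse to every $\mf{w}\in\Sigma$: otherwise $\mf{w}'$ would lie in a single halfspace of $\mf{w}$, contradicting that $\mf{w}'$ cuts both $\overline{\C}(g)\cu\mf{h}^*$ and $\overline{\C}(h)\cu\mf{h}$. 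In a RAAG, two hyperplanes sharing a label are never transverse, so transverse hyperplanes carry distinct, commuting labels (adjacent in $\G$). Thus $\g(\Theta)\cu\lk_\G(\g(\Sigma))$ and, in particular, $\g(\Theta)\cap\g(\Sigma)=\emptyset$. Combined with the hypothesis $\G(g)\cu\g(\Theta)$, this forces $\G(g)\cap\g(\Sigma)=\emptyset$ and $\G(g)\cu\lk_\G(\g(\Sigma))$. Moreover, since type-$0$ hyperplanes for $g$ carry labels in $\lk(\G(g))$, which is disjoint from $\G(g)$, any $\mf{w}_v\in\Theta$ with label $v\in\G(g)$ is actually forced to lie in $\W_1(g)$.

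\emph{Main obstacle and conclusion.} What remains is to show that, for $g$ label-irreducible, the non-emptiness of $\W_1(g)\cap\mscr{W}_{\overline{\C}(h)}$ (which is guaranteed by the step above) already forces $\overline{\C}(g)\cap\overline{\C}(h)\neq\emptyset$, contradicting our standing assumption. This is the technical heart of the argument. My approach would be: pick $\mf{w}'\in\W_1(g)\cap\mscr{W}_{\overline{\C}(h)}$ and an axis $\alpha\cu\overline{\C}(g)$ crossing $\mf{w}'$; exploit that $\G(g)$ commutes with $\g(\Sigma)$ so that the parabolic $\A_{\G(g)\cup\g(\Sigma)}$ splits as a direct product $\A_{\G(g)}\x\A_{\g(\Sigma)}$, providing a separating hyperplane stabilised by a conjugate of $g$; combine this with $d_{\rm Haus}(\alpha,\hull\alpha)<\infty$ (Lemma~\ref{label-irreducibles are cc 1}) and Lemma~\ref{irreducible elements new} to show that the gate of a dual edge of $\mf{w}'$ in $\overline{\C}(h)$ can be transported along $\alpha$ to produce a common point of $\overline{\C}(g)$ and $\overline{\C}(h)$. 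Identifying such a point rigorously, via careful manipulation of gate projections and the cubical structure around $\mf{w}'$, is the step I expect to be the most delicate.
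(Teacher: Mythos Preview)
Your transversality observation is correct and is exactly the key step in the paper's proof. However, you then turn away from it too soon and propose a complicated and incomplete endgame, whereas the paper finishes in two lines.

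The missing idea is this: among the hyperplanes in $\Sigma=\mscr{W}(\overline{\C}(g)|\overline{\C}(h))$, pick one $\mf{v}$ whose carrier meets $\overline{\C}(g)$ (equivalently, take $\mf{v}\in\Sigma$ closest to $\overline{\C}(g)$). You have already shown $\G(g)\cu\lk_\G(\g(\mf{v}))$. Now the carrier of $\mf{v}$ meets an axis $\alpha$ of $g$; since every edge of $\alpha$ carries a label in $\G(g)$, which commutes with $\g(\mf{v})$, the axis can never leave the carrier. Hence $\mf{v}$ is transverse to every hyperplane crossed by $\alpha$, i.e.\ $\mf{v}$ is transverse to $\W_1(g)$, so $\mf{v}\in\W_0(g)\cu\mscr{W}_{\overline{\C}(g)}(\X)$. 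This contradicts $\mf{v}\in\Sigma$. That is the entire argument; no reduction to the label-irreducible case is needed, and your ``main obstacle'' simply does not arise.

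Two further comments. First, your reduction step is harmless but superfluous: nothing in the above uses irreducibility of $g$. Second, the route you sketch at the end (producing a common point via gate projections and product splittings) is not obviously salvageable as stated, and even if it were, it would be a much heavier argument than the carrier trick above.
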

\begin{proof}
Suppose for the sake of contradiction that $\overline{\C}(g)$ and $\overline{\C}(h)$ are disjoint. Then there exists a hyperplane $\mf{v}$ separating them, which we pick so that the carrier of $\mf{v}$ intersects $\overline{\C}(g)$. This guarantees that $g$ admits an axis $\alpha$ that intersects the carrier of $\mf{v}$.

Since $\mf{v}$ separates $\overline{\C}(g)$ and $\overline{\C}(h)$, it is transverse to $\mscr{W}_{\overline{\C}(g)}(\X)\cap\mscr{W}_{\overline{\C}(h)}(\X)$, so the vertex $\g(\mf{v})$ is connected by an edge of $\G$ to all elements of $\G(g)$. Observing that all hyperplanes crossed by $\alpha$ are labelled by elements of $\G(g)$ and recalling that $\alpha$ intersects the carrier of $\mf{v}$, we deduce that all hyperplanes crossed by $\alpha$ are transverse to $\mf{v}$. In other words, $\mf{v}$ is transverse to $\W_1(g)$, hence $\mf{v}\in\W_0(g)\cu\mscr{W}_{\overline{\C}(g)}(\X)$. This is the required contradiction.
\end{proof}

\begin{lem}\label{friendly label-irreducibles}
Let $g,h\in \A$ be label-irreducible. If there exist hyperplanes $\mf{u},\mf{w}\in\mscr{W}(\X)$ such that $\{\mf{u},g^{4r}\mf{u},\mf{w},h^{4r}\mf{w}\}\cu\W_1(g)\cap\W_1(h)$, then $\langle g,h\rangle\simeq\Z$. 
\end{lem}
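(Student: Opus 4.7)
The plan is to show that $g$ and $h$ commute, and then use the structure of centralisers of label-irreducible elements (Remark~\ref{label-irreducible decomposition}) to conclude that $\langle g,h\rangle\simeq\Z$. As a first step, I would establish $\G(g)=\G(h)$. Since $\mf{u},g^{4r}\mf{u}\in\W_1(h)$, every axis of $h$ crosses both these hyperplanes, hence every hyperplane of $\mscr{W}(\mf{u}|g^{4r}\mf{u})$; thus $\mscr{W}(\mf{u}|g^{4r}\mf{u})\cu\W_1(h)$. Applying Lemma~\ref{irreducible elements new} to $g$ at $\mf{u}$ yields $\g(\mscr{W}(\mf{u}|g^{4r-2}\mf{u}))=\G(g)$, and hence $\G(g)\cu\G(h)$. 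The symmetric argument with $\mf{w},h^{4r}\mf{w}$ gives the reverse inclusion; set $V:=\G(g)=\G(h)$.

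To show commutation, I would first observe that $\W_1(g)\cu\mscr{W}_{\overline{\C}(g)}(\X)$ and $\W_1(h)\cu\mscr{W}_{\overline{\C}(h)}(\X)$, so the previous step yields $V\cu\g(\mscr{W}_{\overline{\C}(g)}(\X)\cap\mscr{W}_{\overline{\C}(h)}(\X))$; Lemma~\ref{common argument} then provides a point $p\in\overline{\C}(g)\cap\overline{\C}(h)$. I would then project to each tree $\T_v$ with $v\in V$: since no label-$v$ hyperplane can lie in $\W_0(g)$ (vertices of $\G$ being not self-adjacent), the image $\pi_v(\overline{\C}(g))$ coincides with the axis $\alpha_g^v$ of $g$ in $\T_v$, and analogously for $h$; so $\alpha_g^v\cap\alpha_h^v$ contains $\pi_v(p)$. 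Combining label-$v$ hyperplanes drawn from $\mscr{W}(\mf{u}|g^{4r}\mf{u})\cap\W_1(g)\cap\W_1(h)$ with the symmetric ones from $\mscr{W}(\mf{w}|h^{4r}\mf{w})\cap\W_1(g)\cap\W_1(h)$, and exploiting the translations by $g$ and $h$ in $\T_v$, I would force $\alpha_g^v=\alpha_h^v$ for every $v\in V$. Lifting this back to $\X$ produces a common axis of $g$ and $h$, along which they both act by translation, whence $gh=hg$.

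With $h\in Z_\A(g)$ in hand, Remark~\ref{label-irreducible decomposition} applied to the label-irreducible $g$ gives $Z_\A(g)=\langle g_0\rangle\times P$, where $g_0$ is a root of $g$ and $P$ is a parabolic subgroup supported on the subgraph $\Lambda\cu\G$ of vertices joined by an edge to every vertex of $V$. Writing $h=g_0^k\cdot p$ with $k\in\Z$ and $p\in P$, the absence of self-loops in $\G$ gives $V\cap\Lambda=\emptyset$; combining $\G(p)\cu\Lambda$ with $\G(h)=V$ then forces $p=1$, and so $h\in\langle g_0\rangle$. Together with $g\in\langle g_0\rangle$ and the infinite order of $g$, this yields $\langle g,h\rangle\simeq\Z$.

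The main obstacle is the middle step: verifying that the projected axes $\alpha_g^v$ and $\alpha_h^v$ truly coincide in each tree $\T_v$, and lifting this coincidence to a common axis in $\X$. The specific exponent $4r$ in the hypothesis is tailored to Lemma~\ref{irreducible elements new} and provides just enough room to translate the shared label-$v$ hyperplanes by both $g$ and $h$ while keeping them inside the slabs $\mscr{W}(\mf{u}|g^{4r}\mf{u})$ and $\mscr{W}(\mf{w}|h^{4r}\mf{w})$; the careful bookkeeping of these translations in each $\T_v$, together with ensuring that agreement in each tree suffices to produce a genuine common axis in $\X$, is where the technical weight of the argument lies.
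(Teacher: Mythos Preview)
Your opening reduction (establishing $\G(g)=\G(h)$ via Lemma~\ref{irreducible elements new} and obtaining a common core point via Lemma~\ref{common argument}) is correct and matches the paper's Step~1. Your endgame is also fine: once $g$ and $h$ commute, the centraliser description in Remark~\ref{label-irreducible decomposition} forces $h\in\langle g_0\rangle$ exactly as you say.

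The gap is in the middle step, and it is a genuine one. From the hypotheses you can only conclude that $\mscr{W}(\mf{u}|g^{4r}\mf{u})\cup\mscr{W}(\mf{w}|h^{4r}\mf{w})\subseteq\W_1(g)\cap\W_1(h)$. For a generic label $v\in V$ (i.e.\ $v\neq\g(\mf{u}),\g(\mf{w})$), this yields only a \emph{finite} collection of shared label--$v$ hyperplanes --- in $\T_v$ the axes $\alpha_g^v$ and $\alpha_h^v$ share a bounded segment, and nothing you have written prevents them from diverging beyond it. Translating a shared edge by $g$ or $h$ need not keep it inside the slab $\mscr{W}(\mf{u}|g^{4r}\mf{u})\cup\mscr{W}(\mf{w}|h^{4r}\mf{w})$, so you cannot bootstrap to the whole axis. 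Moreover, even if one did obtain $\alpha_g^v=\alpha_h^v$ for every $v$, lifting to a common axis in $\X$ is not automatic: the preimage $\X\cap\prod_v\alpha_g^v$ can be a high-dimensional flat, and having two axes that project equally to each factor does not force them to coincide.

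The paper avoids this entirely by a different mechanism. After the same Step~1 reduction (and further reducing to $\G(g)=\G(h)=\G^{(0)}$, so that $g,h$ are contracting in $\X$), it exploits the fact that $\mf{w}$ and $h^{4r-2}\mf{w}$ are \emph{strongly separated}. Gate-projecting fixed points in the Roller boundary $\partial\X$ to a strongly separated halfspace produces a common vertex $x\in\overline{\C}(g)\cap\overline{\C}(h)$ from which one can build an explicit $\langle g\rangle$--invariant geodesic ray $\rho$ that passes through $h$ and $h^2$, with $h\cdot\tau\subseteq\rho$ for the initial arc $\tau$ from $1$ to $h$. This converts the problem into one about words: if $w_g,w_h\in F(S)$ are the words read along $\rho$, one gets $w_h=w_g^pa$ and $w_h^2=w_g^{p+1}ab$ as reduced words, whence $aw_g=w_ga$ in the free group, so $\langle w_g,w_h\rangle\leq F(S)$ is cyclic and therefore so is $\langle g,h\rangle=\pi(\langle w_g,w_h\rangle)$. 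Commutation is a \emph{consequence}, not an intermediate step.
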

\begin{proof}
The proof will consist of three steps.

\smallskip
{\bf Step~1:} \emph{we can assume that $1\in \A\cong \X^{(0)}$ lies in $\overline{\C}(g)\cap\overline{\C}(h)$, and that $\G(g)=\G(h)=\G^{(0)}$}.

\smallskip \noindent
Since $\W_1(g)\cap\W_1(h)$ contains any hyperplane separating two of its elements, we have $\mscr{W}(\mf{u}|g^{4r-2}\mf{u})\cu\W_1(g)\cap\W_1(h)$. Lemma~\ref{irreducible elements new} yields:
\[\G(g)=\g\left(\mscr{W}(\mf{u}|g^{4r-2}\mf{u})\right)\cu\g\left(\W_1(g)\cap\W_1(h)\right)\cu\G(h).\]
One the one hand, this allows us to apply Lemma~\ref{common argument} and deduce that $\overline{\C}(g)\cap\overline{\C}(h)\neq\emptyset$. On the other, this shows that $\G(g)\cu\G(h)$ and the inclusion $\G(h)\cu\G(g)$ is obtained similarly, so $\G(g)=\G(h)$. 

Conjugating $g$ and $h$ by any $x\in\overline{\C}(g)\cap\overline{\C}(h)$, we can assume that $1\in\overline{\C}(g)\cap\overline{\C}(h)$. Equivalently, $g$ and $h$ are cyclically reduced, so they lie in the parabolic subgroup $\mc{A}_{\G(g)}=\mc{A}_{\G(h)}\leq\A$. Replacing $\A$ with $\mc{A}_{\G(g)}$ does not alter the properties in the statement of the lemma, so we can assume that $\G(g)=\G(h)=\G^{(0)}$.

\smallskip
{\bf Step~2:} \emph{Assume without loss of generality that $\ell(g,\X)\leq\ell(h,\X)$. Possibly replacing $g$ and $h$ with their inverses and conjugating them, there exists a geodesic $\s\cu \X$ from $1$ to $g$ such that:
\begin{itemize}
\item the union $\rho:=\bigcup_{i\geq 0}g^i\s$ is a ray and contains $h$ and $h^2$ (viewing $1,g,h,h^2$ as vertices of $\X$);
\item if $\tau\cu\rho$ is the arc joining $1$ to $h$, then $h\cdot\tau$ is the arc of $\rho$ joining $h$ to $h^2$.
\end{itemize}
}

\smallskip \noindent 
Let $\mf{k}\in\mscr{H}(\X)$ be a halfspace bounded by $h^{4r-2}\mf{w}\in\W_1(g)\cap\W_1(h)$. Possibly replacing $g$ and/or $h$ with their inverses, we have $g\mf{k}\subsetneq\mf{k}$ and $h\mf{k}\subsetneq\mf{k}$. Since $\G^{(0)}=\G(h)$, Lemma~\ref{irreducible elements new} shows that $\mf{w}$ and $h^{4r-2}\mf{w}$ are strongly separated in $\X$. 
 
The sub-ray contained in $\mf{k}^*$ of any (combinatorial) axis of $g$ defines a point $\xi$ in the Roller boundary $\partial \X$ such that $g\xi=\xi$ and $\xi\in h^{-4r+2}\mf{k}^*$ (recall that this halfspace is bounded by $\mf{w}$). Similarly, there exists $\eta\in\partial \X$ with $h\eta=\eta$ and $\eta\in h^{-4r+2}\mf{k}^*$. Since the halfspaces $h^{-4r+2}\mf{k}^*$ and $\mf{k}$ are strongly separated, the gate-projections of $\xi$ and $\eta$ to $\mf{k}$ coincide and they are a vertex $x\in\overline{\C}(g)\cap\overline{\C}(h)$. Conjugating $g$ and $h$ by $x$, we can assume that $x=1$. 

Label $\mf{k}_1\supsetneq\mf{k}_2\supsetneq\dots\supsetneq\mf{k}_m$ the elements of $\mscr{H}(1|h^2)$ bounded by hyperplanes with label $\g(\mf{w})$. Set $\mf{k}_0:=\mf{k}$ and observe that $\mf{k}_m=h^2\mf{k}$, which is bounded by $h^{4r}\mf{w}\in\W_1(g)\cap\W_1(h)$. In conclusion:
\[\xi,\eta\not\in h^{-4r+2}\mf{k}\supsetneq\mf{k}=\mf{k}_0\supsetneq\mf{k}_1\supsetneq\dots\supsetneq\mf{k}_m=h^2\mf{k}.\]

Note that that the hyperplanes bounding the $\mf{k}_i$ all lie in $\W_1(g)\cap\W_1(h)$. Since $1\in\overline{\C}(g)\cap\overline{\C}(h)$, there exist an axis of $h$ and an axis of $g$ each crossing all hyperplanes bounding the $\mf{k}_i$. Hence there exist $1\leq t\leq s$ such that $g\mf{k}_j=\mf{k}_{j+t}$ for all $0\leq j\leq m-t$, and $h\mf{k}_i=\mf{k}_{i+s}$ for all $0\leq i\leq m-s$.

Let $x_i$ be the gate-projection of $x=1$ to $\mf{k}_i$. Note that this is also the gate-projection to $\mf{k}_i$ of $\xi$ and $\eta$. Since $g\xi=\xi$ and $h\eta=\eta$, we must have $gx_j=x_{j+t}$ and $hx_i=x_{i+s}$ for all $1\leq j\leq m-t$ and $1\leq i\leq m-s$. In particular, since $x_0=1$, we have $h=x_s$, $h^2=x_{2s}=x_m$ and $g=x_t$.

Observe that each $x_i$ is also the gate-projection to $\mf{k}_i$ of each $x_j$ with $j<i$. Thus, we can construct a (combinatorial) geodesic $\s$ from $1$ to $g$ by concatenating arbitrary geodesics $\s_j$ from $x_j$ to $x_{j+1}$ for $0\leq j<t$. The union $\rho=\bigcup_{i\geq 0}g^i\s$ is a ray since $1\in\overline{\C}(g)$. Let $k,l\geq 1$ be the integers with $0\leq s-kt<t$ and $0\leq 2s-lt<t$. Since $\s$ contains the points $g^{-k}h=x_{s-kt}$ and $g^{-l}h^2=x_{2s-lt}$, it is clear that $h$ and $h^2$ lie on the ray $\rho$.

Finally, note that we can choose the geodesics $\s_j$ so that the following compatibility condition is satisfied: \emph{whenever there exist $f\in\A$ and $0\leq i,j<t$ with $fx_i=x_j$ and $fx_{i+1}=x_{j+1}$, we have $f\s_i=\s_j$}. This is possible because the action $\A\acts\X$ is free and so the element $f$ is uniquely determined by $i$ and $j$ (when it exists). Now, given $0\leq j<s$, the arc of the ray $\rho$ joining $x_{s+j}$ to $x_{s+j+1}$ is precisely $g^{a_j}\s_{b_j}$, where $s+j=a_jt+b_j$ and $0\leq b_j<t$. The element $g^{-a_j}h$ maps $x_j$ and $x_{j+1}$ to $x_{b_j}$ and $x_{b_j+1}$, so it takes $\s_j$ to $\s_{b_j}$ by our construction. 
Thus $h\s_j=g^{a_j}\s_{b_j}$ is contained in $\rho$, for every $0\leq j<s$. This proves the second condition in the statement of Step~2.

\smallskip
{\bf Step~3:} \emph{we have $\langle g,h\rangle\simeq\Z$.}

\smallskip \noindent 
Let $S\cong\G^{(0)}$ be the standard generating set of $\A$. Let $F(S)$ be the free group freely generated by $S$, and let $\pi\colon F(S)\ra \A$ be the surjective homomorphism that takes each generator of $F(S)$ to the corresponding standard generator of $\A$. Let $w_g\in F(S)$ be the word spelled by the labels of the edges met moving from $1$ to $g$ along the geodesic $\s$. Let $w_h\in F(S)$ be the word spelled moving from $1$ to $h$ along the ray $\rho=\bigcup_{i\geq 0}g^i\s$. It is clear that $\pi(w_g)=g$ and $\pi(w_h)=h$. 

From Step~2, we have $w_h=w_g^pa$, for some $p\geq 1$ and an initial subword $a$ of $w_g$, and $w_h^2=w_g^{p+1}ab$, for some word $b$ such that $w_g^{p+1}ab$ is reduced in $F(S)$. It follows that $w_g^paw_g^pa=w_g^{p+1}ab$ in $F(S)$, where both sides of the equality are reduced words. Looking at the first $((p+1)|w_g|+|a|)$ letters on the left, we deduce that $aw_g=w_ga$. Hence $\langle w_g,w_h\rangle=\langle w_g, a\rangle$ is a cyclic subgroup of $F(S)$. We conclude that $\langle g,h\rangle=\pi\left(\langle w_g,w_h\rangle\right)\simeq\Z$.
\end{proof}

\begin{cor}\label{label-irreducible cor}
Consider two elements $g,h\in\A$. Suppose that $g$ is label-irreducible. Assume in addition that {\bf one} of the following conditions is satisfied.
\begin{itemize}
\item There exists $\mf{w}\in\W_1(g)$ such that $h$ preserves $\mf{w}$ and $g^{4r}\mf{w}$. 
\item There exist hyperplanes $\mf{u},\mf{w}\in\mscr{W}(\X)$ with $\{\mf{u},\mf{w},h^{4r}\mf{u},g^{4r}\mf{w}\}\cu\W_1(g)\cap\W_1(h)$.
\end{itemize}
Then $g$ and $h$ commute in $\A$.
\end{cor}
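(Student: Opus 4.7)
The plan is to reduce both hypotheses to Lemma \ref{friendly label-irreducibles} via the label-irreducible decomposition $h=h_1\cdots h_k$ of $h$ (Remark \ref{label-irreducible decomposition}). By the centraliser identity $Z_\A(h)=Z_\A(h_1)\cap\cdots\cap Z_\A(h_k)$ recalled there, $g$ commutes with $h$ if and only if it commutes with each $h_j$, so it suffices to verify commutation componentwise.

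For the second hypothesis, when $h$ is already label-irreducible, I set $\mf{u}':=\mf{w}$ and $\mf{w}':=\mf{u}$ so that $\{\mf{u}',g^{4r}\mf{u}',\mf{w}',h^{4r}\mf{w}'\}=\{\mf{w},g^{4r}\mf{w},\mf{u},h^{4r}\mf{u}\}\cu\W_1(g)\cap\W_1(h)$, and Lemma \ref{friendly label-irreducibles} yields $\langle g,h\rangle\simeq\Z$. For general $h$, I use two facts from Remark \ref{label-irreducible decomposition}: every halfspace $\mf{v}\in\W_1(h_j)$ is fixed by $h_i$ for $i\neq j$, so $h^n\mf{v}=h_j^n\mf{v}$; and the label sets $\G(h_j)$ partition $\G(h)$. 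Since $g$ preserves labels, if $\mf{u}\in\W_1(h_{j_1})$ and $\mf{w}\in\W_1(h_{j_2})$, then $h^{4r}\mf{u}\in\W_1(h_{j_1})$ and $g^{4r}\mf{w}\in\W_1(h_{j_2})$. When $j_1=j_2$, all four hyperplanes lie in $\W_1(g)\cap\W_1(h_{j_1})$ and Lemma \ref{friendly label-irreducibles} applied to $(g,h_{j_1})$ concludes for this component.

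For the first hypothesis, my reduction rests on the fact that the $\A$-stabiliser of a halfspace $\mf{w}$ with $s:=\g(\mf{w})$ is a conjugate of the standard parabolic $\A_{\text{link}(s)}$. After conjugating $\mf{w}$ to standard position, $h\in\A_{\text{link}(s)}$, and since label-irreducible decompositions are intrinsic to standard parabolics (we have $\G(h_j)\cu\G(h)\cu\text{link}(s)$), each $h_j$ lies in $\A_{\text{link}(s)}$ and hence preserves $\mf{w}$. An analogous argument with the stabiliser of $g^{4r}\mf{w}$ shows each $h_j$ preserves $g^{4r}\mf{w}$ as well, reducing to $h$ label-irreducible.

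The hard part will be twofold. First, completing the second hypothesis when $j_1\neq j_2$ or for components $h_j$ not touched by the prescribed hyperplanes: this will require producing further hyperplanes in $\W_1(g)\cap\W_1(h_j)$, for instance by translating $\mf{u}$ or $\mf{w}$ by powers of $g$ and invoking Lemma \ref{irreducible elements new} to hit every label of $\G(g)$, then reapplying Lemma \ref{friendly label-irreducibles}. Second, proving the first hypothesis with $h$ label-irreducible: here one must extract commutation from the parabolic constraints $h\in\A_{\text{link}(s)}\cap g^{4r}\A_{\text{link}(s)}g^{-4r}$. A natural route is to leverage Lemma \ref{irreducible elements new} to show that $\mf{w}$ and $g^{4r}\mf{w}$ are sufficiently separated, so that $h$ fixes two distinct ends of an axis of $g$ in the Roller boundary $\partial\X$, and then conclude commutation via an argument in the spirit of Step~2 of the proof of Lemma \ref{friendly label-irreducibles}.
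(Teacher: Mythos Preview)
Your proposal has real gaps in both bullets, and in the first bullet you miss the key idea entirely.

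\textbf{First bullet.} The paper's proof is a one-line trick you do not attempt: since $h$ preserves $\mf{w}$ and $g^{4r}\mf{w}$, one has $\{\mf{w},g^{4r}\mf{w}\}=\{\mf{w},(hgh^{-1})^{4r}\mf{w}\}\subseteq\W_1(g)\cap\W_1(hgh^{-1})$, and both $g$ and $hgh^{-1}$ are label-irreducible. Lemma~\ref{friendly label-irreducibles} then gives $\langle g,hgh^{-1}\rangle\simeq\Z$; since $\ell(g,\X)=\ell(hgh^{-1},\X)$ this forces $hgh^{-1}=g^{\pm 1}$, and the $-1$ case is impossible in a RAAG. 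There is no need to decompose $h$, analyse parabolic stabilisers, or invoke the Roller boundary. Your proposed route---reducing to $h$ label-irreducible via $\A_{\lk s}$ and then extracting commutation from two parabolic constraints---is not completed in your write-up, and the ``fixes two ends of an axis'' heuristic you sketch does not obviously yield commutation (fixing both ends of an axis in a tree only gives that $h$ normalises $\langle g\rangle$ up to finite index, which is exactly what the conjugation trick produces directly).

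\textbf{Second bullet.} Your reduction to components is correct, but you leave open both the case $j_1\neq j_2$ and the ``untouched'' components. The paper closes both at once. First, $j_1\neq j_2$ is impossible: by Lemma~\ref{irreducible elements new}, between $\mf{w}$ and $g^{4r}\mf{w}$ (both in $\W_1(g)$) every label of $\G(g)$ appears, so no hyperplane of $\W_1(g)$ is transverse to both; but if $h_1\neq h_2$ then $\W_1(h_1)$ and $\W_1(h_2)$ are transverse, a contradiction since both contain hyperplanes of $\W_1(g)$. Second, once Lemma~\ref{friendly label-irreducibles} gives $\langle g,h_2\rangle\simeq\Z$, some power of $g$ equals a power of $h_2$ and hence commutes with $h$; since $Z_\A(g^n)=Z_\A(g)$, so does $g$. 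You therefore never need to produce hyperplanes for the other components.
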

\begin{proof}
Assume first that there exists $\mf{w}\in\W_1(g)$ such that $\mf{w}$ and $g^{4r}\mf{w}$ are preserved by $h$. Then $\{\mf{w},g^{4r}\mf{w}\}=\{\mf{w},(hgh^{-1})^{4r}\mf{w}\}\cu\W_1(g)\cap\W_1(hgh^{-1})$. Since $g$ and $hgh^{-1}$ are label-irreducible, Lemma~\ref{friendly label-irreducibles} implies that $\langle g,hgh^{-1}\rangle\simeq\Z$. Observing that $\ell(g,\X)=\ell(hgh^{-1},\X)$, we deduce that $hgh^{-1}$ must coincide with either $g$ or $g^{-1}$. The second option cannot occur in a right-angled Artin group, hence $hgh^{-1}=g$, as required.

Suppose now that there exist hyperplanes $\mf{u},\mf{w}$ with $\{\mf{u},\mf{w},h^{4r}\mf{u},g^{4r}\mf{w}\}\cu\W_1(g)\cap\W_1(h)$. In light of Lemma~\ref{label-irreducible decomposition}(2), there exist (possibly equal) irreducible components $h_1,h_2$ of $h$, such that $\{\mf{u},g^{4r}\mf{u}\}\cu\W_1(g)\cap\W_1(h_1)$ and $\{\mf{w},h^{4r}\mf{w}\}=\{\mf{w},h_2^{4r}\mf{w}\}\cu\W_1(g)\cap\W_1(h_2)$. 

Since $g$ is label-irreducible and $\g(\mscr{W}(\mf{u}|g^{4r}\mf{u}))=\G(g)$ by Lemma~\ref{irreducible elements new}, no element of $\W_1(g)$ can be transverse to both $\mf{u}$ and $g^{4r}\mf{u}$. Hence $h_1=h_2$, otherwise $\mc{W}_1(h_1)$ and $\mc{W}_1(h_2)$ would be transverse. Thus $\{\mf{u},g^{4r}\mf{u},\mf{w},h_2^{4r}\mf{w}\}\cu\W_1(g)\cap\W_1(h_2)$ and Lemma~\ref{friendly label-irreducibles} yields $\langle g,h_2\rangle\simeq\Z$. Now, a power of $g$ coincides with a power of $h_2$, hence it commutes with $h$. It follows that $g$ and $h$ commute.
\end{proof}

We conclude with the following lemma, which is actually independent from the notion of label-irreducibility and from the discussion in the rest of this subsection, albeit in a similar spirit.

\begin{lem}\label{another commutation criterion new}
Consider elements $g_1,g_2\in\A$ and vertices $x_1,x_2\in\X$ such that the two sets $\mscr{W}(x_i|g_ix_i)$ are transverse. Then $g_1$ and $g_2$ commute and we have $\W_1(g_i)\cu\W_0(g_j)$ for $i\neq j$.
\end{lem}
\begin{proof}
We begin with the following observation.

\smallskip
{\bf Claim:} \emph{for $\mf{w}\in\mscr{W}(\X)$ and $x,y\in\X$, the hyperplane $\mf{w}$ is transverse to $\mscr{W}(x|y)$ if and only if every vertex in the set $\g(\mscr{W}(x|y))$ is joined by an edge of $\G$ to every vertex in the set $\{\g(\mf{w})\}\cup\g(\mscr{W}(x|\mf{w}))$.}

\smallskip\noindent
\emph{Proof of Claim.}
Suppose first that $\mf{w}$ is transverse to $\mscr{W}(x|y)$. Then, since every hyperplane in $\mscr{W}(\mf{w}|x)$ is disjoint from $\mf{w}$, we have $\mscr{W}(\mf{w}|x)\cu\mscr{W}(\mf{w}|x,y)$. Since $\mscr{W}(x|y)$ is transverse to $\mscr{W}(\mf{w}|x,y)$, it is also transverse to $\{\mf{w}\}\cup\mscr{W}(\mf{w}|x)$. It follows that every vertex in $\g(\mscr{W}(x|y))$ is joined by an edge to every vertex in $\{\g(\mf{w})\}\cup\g(\mscr{W}(x|\mf{w}))$, as required.

Suppose now instead that $\mf{w}$ is disjoint from a hyperplane $\mf{u}\in\mscr{W}(x|y)$. Choosing $\mf{u}$ so that it is closest to $\mf{w}$, we can assume that no hyperplane of $\mscr{W}(x|y)$ separates $\mf{w}$ and $\mf{u}$. If the carriers of $\mf{w}$ and $\mf{u}$ intersect, then $\g(\mf{w})$ and $\g(\mf{u})$ cannot be joined by an edge, as $\mf{u}$ and $\mf{w}$ are disjoint. Otherwise, there exists a hyperplane $\mf{v}\in\mscr{W}(\mf{u}|\mf{w})$ such that its carrier intersects the carrier of $\mf{u}$; in particular, $\g(\mf{u})$ and $\g(\mf{v})$ are not joined by an edge. Since $\mf{v}$ does not separate $x$ and $y$, we must have $\mf{v}\in\mscr{W}(\mf{w}|x,y)$, hence $\g(\mf{v})\in\g(\mscr{W}(x|\mf{w}))$ as required.
\hfill$\blacksquare$ 

\smallskip
Consider for a moment $g\in\A$, $x\in\X$ and $n\geq 1$. Since $\mscr{W}(x|g^nx)$ is contained in the union $\mscr{W}(x|gx)\cup\dots\cup\mscr{W}(g^{n-1}x|g^nx)$, we have $\g(\mscr{W}(x|g^nx))\cu\g(\mscr{W}(x|gx))$. Thus, the Claim implies that a hyperplane $\mf{w}$ is transverse to $\mscr{W}(x|gx)$ if and only if it is transverse to $\bigcup_{n\in\Z}\mscr{W}(x|g^nx)$.

Now, consider the situation in the statement of the lemma. If $x_i'$ is the gate-projection of $x_i$ to $\overline{\C}(g_i)$, we have $\mscr{W}(x_i'|g_ix_i')\cu\mscr{W}(x_i|g_ix_i)$ and $\W_1(g_i)=\bigcup_{n\in\Z}\mscr{W}(x_i'|g_i^nx_i')$. It follows that the sets $\W_1(g_1)$ and $\W_1(g_2)$ are transverse, or, equivalently, $\W_1(g_i)\cu\W_0(g_j)$ for $i\neq j$. This implies that $g_1$ and $g_2$ commute (for instance, by decomposing $g_i$ into label-irreducible components as in Lemma~\ref{label-irreducible decomposition} and applying Corollary~\ref{label-irreducible cor}).
\end{proof}

\subsubsection{Convex-cocompact subgroups of RAAGs.}

Again, we keep the notation from Subsection~\ref{label-irreducible sect}. We will simply say that a subgroup $G\leq\A$ is \emph{convex-cocompact} when $G$ is convex-cocompact for the action $\A\acts\X$ (in the sense of Definition~\ref{cc defn}).

\begin{lem}\label{label-irreducibles don't escape}
Let $G\leq\A$ be convex-cocompact. If $g\in G$ and $g=a_1\cdot\ldots\cdot a_k$ is its decomposition into label-irreducible components $a_i\in\A$, then there exists $m\geq 1$ such that all $a_i^m$ lie in $G$.
\end{lem}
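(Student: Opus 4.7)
The plan is to show that the subgroup $\Lambda := G \cap \langle a_1, \dots, a_k \rangle$ has finite index in $\langle a_1, \dots, a_k \rangle \simeq \Z^k$; any $m \geq 1$ with $m \cdot \langle a_1, \dots, a_k\rangle \leq \Lambda$ then satisfies $a_i^m \in G$ for every $i$. By Definition~\ref{cc defn}, there is a convex, $G$-invariant subcomplex $C \cu \X$ on which $G$ acts properly and cocompactly. The case $g = 1$ is trivial, so assume $g \neq 1$: since $\A$ is torsion-free and $G \acts C$ is proper, $g$ is hyperbolic on $C$ and possesses a combinatorial axis $\alpha \cu C$. Fix a vertex $x_0 \in \alpha$.

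The heart of the argument is to show $\langle a_1, \dots, a_k \rangle \cdot x_0 \cu C$. By Remark~\ref{label-irreducible decomposition}, $\overline{\C}(g) = \bigcap_i \overline{\C}(a_i)$, so $x_0 \in \overline{\C}(g)$ lies on an axis of each $a_i$; Theorem~\ref{all from 10b -2}(2) then yields $d(x_0, a_i x_0) = \ell(a_i, \X)$ for every $i$. Combining this with the fact that the $a_i$ pairwise commute and that $\ell(g, \X) = \sum_i \ell(a_i, \X)$, for every integer $N \geq 1$ and any $0 \leq n_1, \dots, n_k \leq N$ the ``staircase'' path
\[x_0 \to a_1 x_0 \to \dots \to a_1^{n_1} x_0 \to a_1^{n_1} a_2 x_0 \to \dots \to a_1^{n_1} \cdots a_k^{n_k} x_0 \to \dots \to a_1^N \cdots a_k^N x_0 = g^N x_0\]
has total length $N \ell(g, \X) = \ell(g^N, \X) = d(x_0, g^N x_0)$ and is therefore a combinatorial geodesic. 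Convexity of $C$, together with $x_0, g^N x_0 \in C$, then forces every partial product $a_1^{n_1} \cdots a_k^{n_k} x_0$ to lie in $I(x_0, g^N x_0) \cu C$. Running the analogous construction for $g^{-N}$ handles negative exponents, so indeed $\langle a_1, \dots, a_k \rangle \cdot x_0 \cu C$.

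To conclude, freeness of the $\A$-action on $\X^{(0)}$ implies that $a_1^{n_1}\cdots a_k^{n_k} x_0$ and $a_1^{n_1'}\cdots a_k^{n_k'} x_0$ lie in the same $G$-orbit exactly when $(n_1 - n_1', \dots, n_k - n_k') \in \Lambda$. Hence the $G$-orbits among the vertices of $\langle a_1, \dots, a_k\rangle \cdot x_0 \cu C^{(0)}$ biject with $\Z^k / \Lambda$. Cocompactness of $G \acts C^{(0)}$ makes $G \backslash C^{(0)}$ finite, forcing $[\Z^k : \Lambda] < \infty$ and furnishing the required $m$. The main obstacle I foresee is establishing that the staircase is a geodesic: it requires $x_0$ to lie simultaneously on an axis of every $a_j$ (which comes from $\overline{\C}(g) \cu \overline{\C}(a_j)$) together with the additivity $\ell(g, \X) = \sum \ell(a_i, \X)$ from Remark~\ref{label-irreducible decomposition}. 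Once this is in place, convexity of $C$ traps the whole $\Z^k$-orbit of $x_0$ inside $C$, and the rest is elementary orbit counting.
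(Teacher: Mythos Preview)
Your proof is correct and takes a genuinely different, more elementary route than the paper's.

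The paper invokes \cite[Theorem~3.6]{WW}, applied to a highest-rank free abelian subgroup $A\leq G$ containing a power of $g$, to obtain an $A$--invariant convex subcomplex splitting as a product of quasi-lines $L_1\times\dots\times L_p$; it then matches each $\overline{\C}_1(a_i)$ with a factor $L_i$ and finds elements $h_i\in A$ commensurable to $a_i$. Your argument instead uses only the additivity $\ell(g,\X)=\sum_i\ell(a_i,\X)$ and the identity $\overline{\C}(g)=\bigcap_i\overline{\C}(a_i)$ from Remark~\ref{label-irreducible decomposition} to trap the $\Z^k$--orbit of $x_0$ inside intervals between powers of $g$, then finishes by orbit-counting against the finitely many $G$--orbits in $C^{(0)}$. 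This avoids both the external reference and the maximal-abelian machinery, at no visible cost.

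One small imprecision: as written, the ``$g^N$'' and ``$g^{-N}$'' staircases only cover the non-negative and non-positive orthants of $\Z^k$, not mixed-sign tuples. This is harmless: either observe that the positive orthant alone already meets infinitely many $\Lambda$--cosets whenever $[\Z^k:\Lambda]=\infty$, or run the same triangle-inequality computation to place $a_1^{n_1}\cdots a_k^{n_k}x_0$ in $I(g^{-N}x_0,g^Nx_0)\cu C$ for $N\geq\max_i|n_i|$, which handles all of $\Z^k$ at once.
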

\begin{proof}
Let $A\leq G$ be a free abelian subgroup containing a power of $g$, such that no finite-index subgroup of $A$ is contained in a free abelian subgroup of $G$ of higher rank. Since $G$ is convex-cocompact, Theorem~3.6 in \cite{WW} shows that there exists a convex, $A$--invariant, $A$--cocompact subcomplex $Y\cu\X$ that splits as a product $L_1\x\dots\x L_p$, where $A\simeq\Z^p$ and each $L_i$ is a quasi-line. Replacing each $L_i$ with a subcomplex, we can assume that all quasi-lines are $A$--essential.

Note that $Y$ must contain an axis of $g$ in $\X$, hence its convex hull, which is isomorphic to:
\[\overline{\C}_1(g)=\overline{\C}_1(a_1)\x\dots\x\overline{\C}_1(a_k).\] 
Since each $a_i$ is label-irreducible, Lemma~\ref{label-irreducibles are cc 1} shows that $\overline{\C}_1(a_i)$ is an irreducible quasi-line. Up to permuting the factors of $Y$, we can thus assume that $L_i\simeq\overline{\C}_1(a_i)$ for $1\leq i\leq k$, where $k\leq p$.

Since the $L_i$ are locally finite, none of the groups $\aut L_i$ contains subgroups isomorphic to $\Z^2$. It follows that every projection of $\Z^p\simeq A\leq\prod_i\aut L_i$ to a product of $(p-1)$ factors must have nontrivial kernel. Equivalently, there exist elements $h_i\in A$ such that $h_i$ acts loxodromically on $L_i$, and fixes pointwise each $L_j$ with $j\neq i$. For each $1\leq i\leq k$, the elements $h_i$ and $a_i$ stabilise a common copy of $L_i\simeq\overline{\C}_1(a_i)$ inside $Y$, and act freely and cocompactly on it. It follows that $h_i$ and $a_i$ are commensurable, hence a power of $a_i$ lies in $A\leq G$. This concludes the proof.
\end{proof}

The exponent $m$ in Lemma~\ref{label-irreducibles don't escape} can be chosen independently of $g\in G$ due to the following.

\begin{rmk}\label{uniform exponent}
Suppose that $G\leq\A$ is convex-cocompact and, more precisely, that there exists a $G$--invariant, convex subcomplex $Y\cu\X$ such that the action $G\acts Y^{(0)}$ has $q$ orbits. Then, for every $g\in\A$ such that $\langle g\rangle\cap G\neq\{1\}$, there exists $1\leq k\leq q$ such that $g^k\in G$.

Indeed, consider $N\geq 1$ such that $g^N\in G$. Since $Y$ is $G$--invariant and acted upon without inversions, it contains an axis $\alpha$ for $g^N$ \cite{Haglund}. Every axis of a power of $g$ is, in fact, also an axis of $g$ (this property is specific to the action $\A\acts\X$). Thus, picking any $x\in\alpha$, we have $g^ix\in Y$ for all $i\in\Z$. Hence there exist $0\leq i<j\leq q$ such that $g^ix$ and $g^jx$ are in the same $G$--orbit. Since $\A$ acts freely on $\X$, we have $g^{j-i}\in G$ and $0<j-i\leq q$.
\end{rmk}

\subsection{CMP automorphisms of right-angled groups.}\label{of RAAGs sect}

This subsection is devoted to the proof of Proposition~\ref{cmp prop intro}. Automorphisms of hyperbolic groups were already discussed in Example~\ref{hyp cmp ex}, so we are only concerned with right-angled Artin/Coxeter groups.

Let $\G$ be a finite simplicial graph. Let $\A=\A_{\G}$ and $\W=\W_{\G}$ be, respectively, the right-angled Artin group and the right-angled Coxeter group defined by $\G$.

We identify with $\G^{(0)}$ the standard generating sets of $\A$ and $\W$. The standard Cayley graphs of $\A$ and $\W$ are $1$--skeleta of $\CAT$ cube complexes: the universal covers of the Salvetti and Davis complex, respectively. Thus, $\A$ and $\W$ are each endowed with a natural median operator $\mu_{\G}$.

\begin{rmk}
We have $g\cdot\mu_{\G}(x,y,z)=\mu_{\G}(gx,gy,gz)$ for all elements $g,x,y,z$ in $\A$ or $\W$. This implies that $(\A,[\mu_{\G}])$ and $(\W,[\mu_{\G}])$ are coarse median groups, in the sense of Definition~\ref{coarse median group defn}.
\end{rmk}

Unlike hyperbolic groups, $\A$ and $\W$ can admit infinitely many different coarse median structures. For this reason, we will never omit the subscript in $\mu_{\G}$, in order to emphasise that this is the coarse median structure provided by the \emph{standard} generating set of $\A$ or $\W$. Other Artin/Coxeter generating sets can a priori give different coarse median structures; it will be a consequence of Proposition~\ref{cmp prop intro}(2), that this does not actually happen in the Coxeter case.

It was shown by Laurence, Servatius and Corredor--Gutierrez that $\aut\A$ and $\aut\W$ are generated by finitely many \emph{elementary automorphisms} \cite{Servatius,Laurence,Corredor-Gutierrez}. These take the same form in both cases.
\begin{itemize}
\item \emph{Graph automorphisms}. Every automorphism of the graph $\G$ gives a permutation of the standard generating sets that defines an automorphism of $\A$ and $\W$.
\item \emph{Inversions} $\iota_v$ for each $v\in\G^{(0)}$. We have $\iota_v(v)=v^{-1}$ and $\iota_v(u)=u$ for all $u\in\G^{(0)}\setminus\{v\}$.
\item \emph{Partial conjugations} $\kappa_{w,C}$ for $w\in\G^{(0)}$ and a connected component $C$ of $\G\setminus\St w$. We have $\kappa_{w,C}(u)=w^{-1}uw$ if $u\in C^{(0)}$ and $\kappa_{w,C}(u)=u$ if $u\in\G^{(0)}\setminus C$.
\item \emph{Transvections} $\tau_{v,w}$ for $v,w\in\G^{(0)}$ with $\lk v\cu\St w$. They are defined by $\tau_{v,w}(v)=vw$ and $\tau_{v,w}(u)=u$ for all $u\in\G^{(0)}\setminus\{v\}$. 

We refer to $\tau_{v,w}$ as a \emph{fold} if $v$ and $w$ are not joined by an edge (equivalently, $\lk v\cu\lk w$), and as a \emph{twist} if $v$ and $w$ are joined by an edge (equivalently, $\St v\cu\St w$).
\end{itemize}

\begin{rmk}\label{inversions cmp}
Graph automorphisms and inversions can be realised as automorphisms of the Salvetti/Davis complex, so they preserve the operator $\mu_{\G}$ (hence the coarse median structure $[\mu_{\G}]$).
\end{rmk}

In the case of right-angled Artin groups, the following class of automorphisms was introduced by Day \cite{Day-peak} and Charney, Stambaugh and Vogtmann \cite{CSV}.

\begin{defn}\label{U(A) defn}
An automorphism $\varphi\in\aut\A$ is \emph{untwisted} if it lies in the subgroup $U(\A)\leq\aut\A$ generated by graph automorphisms, inversions, partial conjugations and folds. 
\end{defn}

We now proceed to prove parts~(2) and~(3) of Proposition~\ref{cmp prop intro}. We will treat separately the Coxeter and Artin case, as the simplest arguments appear to be quite different in spirit. Still, in both situations, the following basic observation is important.

\begin{rmk}\label{restriction quotient cmp rmk}
Let a group $G$ act properly and cocompactly on two $\CAT$ cube complexes $X$ and $Y$. Let $[\mu_X]$ and $[\mu_Y]$ be the induced coarse median structures on $G$. If there exists an equivariant restriction-quotient map $\pi\colon X\ra Y$, then $[\mu_X]=[\mu_Y]$. This is immediate from the third characterisation of restriction quotients in Proposition~\ref{restriction quotient prop}.
\end{rmk}

\subsubsection{The Coxeter case.} 

Here our aim is to prove that all elements of $\aut\W$ preserve the coarse median structure $[\mu_{\G}]$. We will achieve this by showing that all elementary automorphisms of $\W$ restrict to graph automorphisms on certain finite-index Coxeter subgroups of $\W$. This guarantees that they are all coarse-median preserving.

Given a vertex $w\in\G$, we denote by $\Delta(\G,w)$ the \emph{double} of $\G\setminus\{w\}$ along $\lk w$. More precisely, $\Delta(\G,w)$ is obtained from two disjoint copies of the graph $\G\setminus\{w\}$ by identifying the two subgraphs corresponding to $\lk w$. We continue to denote by $\lk w$ the resulting subgraph of $\Delta(\G,w)$, even though $w$ does not appear in $\Delta(\G,w)$ and so this is not the link of any vertex of $\Delta(\G,w)$.

Let $\alpha_w\colon\W\ra\Z/2\Z$ be the homomorphism that maps $w$ to the generator of $\Z/2\Z$, and all other standard generators of $\W$ to the identity.

\begin{lem}\label{RACG kernels}
Consider a vertex $w\in\G$. Then:
\begin{enumerate}
\item $\ker\alpha_w$ is generated by $\{x\mid x\in\G\setminus\{w\}\}\sqcup\{w^{-1}yw \mid y\in\G\setminus\St w\}$;
\item this is a Coxeter generating set giving an isomorphism between $\ker\alpha_w$ and $\W_{\Delta(\G,w)}$;
\item the coarse median structure $[\mu_{\Delta(\G,w)}]$ induced on $\ker\alpha_w$ by this isomorphism with $\W_{\Delta(\G,w)}$ coincides with the restriction of the coarse median structure $[\mu_{\G}]$ on $\W$.
\end{enumerate}
\end{lem}
\begin{proof}
The first two parts are a straightforward application of the normal form for words in Coxeter groups (see e.g.\ \cite[Chapter~3.4]{Davis}). We instead focus on part~(3).

Let $\mc{Y}_{\G}$ and $\mc{Y}_{\Delta(\G,w)}$ be the universal covers of the Davis complexes of $\W$ and $\W_{\Delta(\G,w)}$. We aim to show that, under the above identification between $\ker\alpha_w$ and $\W_{\Delta(\G,w)}$, the standard action $\W_{\Delta(\G,w)}\acts\mc{Y}_{\Delta(\G,w)}$ is a restriction quotient of the action $\ker\alpha_w\acts\mc{Y}_{\G}$. This proves the lemma, since, by Remark~\ref{restriction quotient cmp rmk}, the two actions then induce the same coarse median structure on $\ker\alpha_w$.

First, if $\Om\cu\mc{Y}_{\G}$ is a fundamental domain for the $\W$--action, note that $\Om\cup w\Om$ is a fundamental domain for $\ker\alpha_w\acts\mc{Y}_{\G}$. In addition, observe that $\ker\alpha_w$ contains the entire $\W$--stabiliser of a hyperplane $\mf{u}\in\mscr{W}(\mc{Y}_{\G})$ precisely when $\g(\mf{u})\not\in\St w$. Thus, the orbit $\W\cdot\mf{u}$ is made up of two $(\ker\alpha_w)$--orbits of hyperplanes when $\g(\mf{u})\not\in\St w$, while it is a single $(\ker\alpha_w)$--orbit when $\g(\mf{u})\in\St w$. Combining these two observations, the reader should convince themselves that, starting with the action $\ker\alpha_w\acts\mc{Y}_{\G}$ and collapsing the single orbit of hyperplanes $\mf{u}$ with $\g(\mf{u})=w$, we obtain precisely the action $\W_{\Delta(\alpha,w)}\acts\mc{Y}_{\Delta(\alpha,w)}$, as required.
\end{proof}

\begin{prop}
All automorphisms of $\W$ preserve the coarse median structure $[\mu_{\G}]$.
\end{prop}
\begin{proof}
Recall that $\aut\W$ is generated by graph automorphisms, partial conjugations and transvections, as defined above. We have already noticed in Remark~\ref{inversions cmp} that graph automorphisms are coarse-median preserving. We make two additional observations.
\begin{itemize}
\item \emph{Every partial conjugation $\kappa_{w,C}$ preserves the subgroup $\ker\alpha_w\leq\W$. The restriction of $\kappa_{w,C}$ to $\ker\alpha_w$ is a graph automorphism with respect to the identification $\ker\alpha_w\simeq\W_{\Delta(\G,w)}$ constructed in Lemma~\ref{RACG kernels}.}

Indeed, the connected component $C\cu\G\setminus\St w$ gets doubled to two connected components $C',C''$ of the graph $\Delta(\G,w)\setminus\lk w$. These two subgraphs correspond to the sets of generators $C^{(0)}$ and $w^{-1}C^{(0)}w$ for $\ker\alpha_w$. The automorphism $\kappa_{w,C}$ swaps these two sets of generators, while fixing all generators of $\ker\alpha_w$ corresponding to vertices of $\Delta(\G,w)\setminus(C'\cup C'')$. This is realised by an automorphism of the graph $\Delta(\G,w)$.
\item \emph{Every transvection $\tau_{v,w}$ preserves $\ker\alpha_v\leq\W$. The restriction of $\tau_{v,w}$ to $\ker\alpha_v$ is a product of partial conjugations with respect to the identification $\ker\alpha_v\simeq\W_{\Delta(\G,v)}$.}

Indeed, if $x\in\G\setminus\{v\}$, we have $\tau_{v,w}(x)=x$ and $\tau_{v,w}(v^{-1}xv)=w^{-1}(v^{-1}xv)w$. Setting $A:=\G\setminus\St v\cu\G$, we have $\Delta(\G,v)=\lk v\sqcup A'\sqcup A''$, where the subgraphs $A',A''$ correspond to the subsets $A^{(0)},v^{-1}A^{(0)}v\cu\W$, respectively. Let $w'\in A'$ be the vertex originating from $w\in\G$. The set $A''$ is a union of connected components of $\Delta(\G,v)\setminus\lk v$. Since $\lk v\cu\St w$ in $\G$, we have $\lk v\cu\St w'$ in $\Delta(\G,v)$, hence $A''$ is also a union of connected components of $\Delta(\G,v)\setminus\St w'$. The composition of the partial conjugations $\kappa_{w',C}\in\aut\W_{\Delta(\G,v)}$, as $C$ ranges through these connected components, is precisely the restriction of $\tau_{v,w}$ to $\ker\alpha_v$.
\end{itemize}

Now, in view of Lemma~\ref{RACG kernels} and Remark~\ref{inversions cmp}, partial conjugations and transvections each preserve the restriction of the coarse median structure $[\mu_{\G}]$ to a finite-index subgroup of $\W$. Since finite-index subgroups are coarsely dense in $\W$, this implies that these automorphisms actually preserve $[\mu_{\G}]$ itself, proving the proposition.
\end{proof}

\subsubsection{The Artin case.}

We begin by showing that \emph{untwisted} automorphisms of $\A$ preserve the coarse median structure $[\mu_{\G}]$. I present a proof that was suggested to me by Ric Wade, as it is much simpler than my original brute-force argument.

The main ingredient is the construction of Salvetti blowups from the work of Charney, Stambaugh and Vogtmann \cite{CSV}, which we record in the following lemma. Restriction quotients were discussed in Subsection~\ref{CCC prelims}.

\begin{lem}\label{blowup lem}
Let $\varphi\in U(\A_{\G})$ be a fold or partial conjugation. Then there exists a proper cocompact action on a $\CAT$ cube complex $\A_{\G}\acts Z$ and two restriction-quotient maps $\pi_1,\pi_2\colon Z\ra\X_{\G}$ such that, for all $g\in\A_{\G}$, we have $\pi_1\o g=g\o\pi_1$ and $\pi_2\o g=\varphi(g)\o\pi_2$.
\end{lem}
\begin{proof}
This holds more generally when $\varphi$ is a \emph{$\G$--Whitehead automorphism}, as defined at the beginning of \cite[Section~2.3]{CSV}. Our statement is a straightforward rephrasing of \cite[Lemma~3.2]{CSV} in terms of universal covers.
\end{proof}

\begin{cor}
Automorphisms in $U(\A)$ preserve the coarse median structure $[\mu_{\G}]$.
\end{cor}
\begin{proof}
Let $\varphi\in U(\A_{\G})$ be a fold or partial conjugation. Let the action $\A_{\G}\acts Z$ and the maps $\pi_1,\pi_2\colon Z\ra\X_{\G}$ be as provided by Lemma~\ref{blowup lem}, and let $[\mu_Z]$ be the coarse median structure on $\A_{\G}$ induced by $Z$. Since $\pi_1$ is $\A_{\G}$--equivariant, Remark~\ref{restriction quotient cmp rmk} guarantees that $[\mu_{\G}]=[\mu_Z]$. 

On the other hand, $\pi_2$ becomes $\A_{\G}$--equivariant if we endow $\X_{\G}$ with the $\varphi$--twisted action: using the notation from Remark~\ref{induced cms}, this corresponds to replacing $\X_{\G}$ with $\X_{\G}^{\varphi^{-1}}$, which induces the coarse median structure $(\varphi^{-1})_*[\mu_{\G}]$ on $\A_{\G}$. Thus, another application of Remark~\ref{restriction quotient cmp rmk}, yields $(\varphi^{-1})_*[\mu_{\G}]=[\mu_Z]$. We conclude that $\varphi_*[\mu_{\G}]=[\mu_{\G}]$.

This shows that all folds and partial conjugations preserve $[\mu_{\G}]$. Graph automorphisms and inversions are also coarse-median preserving, by Remark~\ref{inversions cmp}. Since these four types of elementary automorphisms generate $U(\A)$, this proves the corollary.
\end{proof}

In order to complete the proof of Proposition~\ref{cmp prop intro}(3), we are left to show that all coarse-median preserving automorphisms of $\A$ are untwisted. This can be easily deduced from the work of Laurence \cite{Laurence}, as we now describe.

\begin{prop}\label{cmp -> untwisted}
If $\varphi\in\aut\A$ preserves the coarse median structure $[\mu_{\G}]$, then $\varphi\in U(\A)$.
\end{prop}
\begin{proof}
In the terminology of \cite[Section~2]{Laurence}, an automorphism $\varphi\in\aut\A$ is \emph{conjugating} if it preserves the conjugacy class of each standard generator $v\in\G$. More generally, $\varphi$ is \emph{simple} if, for every $v\in\G$, the image $\varphi(v)$ is label-irreducible and $v\in\G(\varphi(v))$ (compare \cite[Definition~5.3]{Laurence} and Definition~\ref{label-irreducible defn} in our paper). 

Consider a coarse-median preserving automorphism $\varphi=\varphi_0$. By \cite[Corollary to Lemma~4.5]{Laurence}, there exists a graph automorphism $\psi_1$ such that, setting $\varphi_1:=\psi_1\varphi$, we have $v\in\G(\varphi_1(v))$ for every generator $v\in\G$. Since graph automorphisms are coarse-median preserving, $\varphi_1$ is again coarse-median preserving. By Corollary~\ref{cmp preserve cc} and Lemma~\ref{label-irreducibles are cc 1}(2), the element $\varphi_1(v)$ is label-irreducible for every $v\in\G$. Thus, $\varphi_1$ is simple. 

By the proofs of \cite[Lemma~6.8]{Laurence} and \cite[Corollary to Lemma~6.6]{Laurence}, there exists a product of inversions, folds and partial conjugations $\psi_2$ such that $\varphi_2:=\varphi_1\psi_2$ is conjugating. Finally, by \cite[Theorem~2.2]{Laurence}, the automorphism $\varphi_2$ is a product of partial conjugations. This shows that $\varphi\in U(\A)$, as required.
\end{proof}

\subsubsection{Pure automorphisms.}

We end this subsection by introducing the subgroups $U_0(\A)\leq U(\A)$ and $\aut_0\W\leq\aut\W$ generated by inversions, folds and partial conjugations (no graph automorphisms or twists, in both cases). These are the subgroups appearing in the statements of Theorem~\ref{U_0 cc intro} and Proposition~\ref{intro invariant splitting}, and we will study them further in Subsection~\ref{fix cc sect} and Section~\ref{invariant splittings sect}. For the time being, we limit ourselves to a few quick observations.


\begin{rmk}\label{0 finite index}
The subgroups $U_0(\A)\leq U(\A)$ and $\aut_0\W\leq\aut\W$ have finite index. In the Coxeter case, see e.g.\ \cite[Proposition~1.2]{Sale-Susse}. In the Artin case, it suffices to observe that $U_0(\A)$ is normalised by all graph automorphisms, and that the latter generate a finite subgroup of $U(\A)$.
\end{rmk}

\begin{rmk}\label{graph automorphisms in U_0}
Note that, although they do not appear in our chosen generating set for $U_0(\A)$, graph automorphisms of $\A$ can still lie in $U_0(\A)$. Indeed, confusing $\s\in\aut\G$ with the induced $\s\in\aut\A$, we have $\s\in U_0(\A)$ if and only if $\lk\s(v)=\lk v$ for every $v\in\G$.

The ``only if'' part follows from Lemma~\ref{orthogonals are preserved}. For the ``if'' part, it suffices to show that $\s\in U_0(\A)$ whenever $\s$ swaps two vertices of $\G$ with the same link and fixes the rest of $\G$. In this case, $\s$ is a product of 3 folds and 3 inversions, as described at the end of the proof of \cite[Proposition~3.3]{Day-Wade}.
\end{rmk}

\begin{lem}\label{restriction of U_0}
If $\varphi(\A_{\Delta})=\A_{\Delta}$ for a full subgraph $\Delta\cu\G$ and $\varphi\in U_0(\A)$, then $\varphi|_{\A_{\Delta}}\in U_0(\A_{\Delta})$.
\end{lem}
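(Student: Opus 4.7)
The plan is to combine the characterisation $U(\A)=\aut(\A,[\mu])$ from Proposition~\ref{aut right-angled} with the link-preservation criterion for graph automorphisms in $U_0$ provided by Remark~\ref{graph automorphisms in U_0}, reducing the task to a finite check on the standard generators of $\A_\Delta$.

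First, since $\Delta\cu\G$ is a full subgraph, the Salvetti subcomplex $\X_\Delta$ embeds as a convex subcomplex of $\X_\G$ and the inclusion $\A_\Delta\hookrightarrow\A_\G$ is a median morphism onto its image. Therefore the coarse median structure induced on $\A_\Delta$ by the orbit map into $\X_\G$ agrees with the intrinsic one coming from $\X_\Delta$. From $\varphi\in U_0(\A_\G)\cu\aut(\A_\G,[\mu])$ and $\varphi(\A_\Delta)=\A_\Delta$ it follows immediately that $\varphi|_{\A_\Delta}$ is coarse-median-preserving as an automorphism of $\A_\Delta$, and hence $\varphi|_{\A_\Delta}\in U(\A_\Delta)$ by Proposition~\ref{aut right-angled}.

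Next, using Remark~\ref{0 finite index}, I will write $\varphi|_{\A_\Delta}=\sigma\cdot\varphi_0$, where $\sigma\in\aut\Delta$ (regarded as an element of $\aut\A_\Delta$) and $\varphi_0\in U_0(\A_\Delta)$. The goal then becomes to show $\sigma\in U_0(\A_\Delta)$, which by Remark~\ref{graph automorphisms in U_0} amounts to checking $\lk_\Delta\sigma(v)=\lk_\Delta v$ for every $v\in\Delta^{(0)}$. Here I would invoke Lemma~\ref{orthogonals are preserved} applied to $\varphi\in U_0(\A_\G)$, which guarantees that elements of $U_0(\A_\G)$ preserve the link data of each standard generator of $\A_\G$. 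Since $\varphi_0\in U_0(\A_\Delta)$ enjoys the analogous link-preservation in $\A_\Delta$, and since $\lk_\Delta v=\lk_\G v\cap\Delta$ by fullness of $\Delta$, link-preservation at the level of $\G$ for $\varphi$ translates directly into the required identity $\lk_\Delta\sigma(v)=\lk_\Delta v$, completing the argument.

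The main step likely to require care is the correct interpretation of Lemma~\ref{orthogonals are preserved} when $\varphi(v)$ is no longer a standard generator of $\A_\G$; one must match its link data with that of $v$ and verify that the comparison restricts compatibly to $\Delta$. Once that is in hand, the fullness of $\Delta$ makes the descent from $\G$ to $\Delta$ routine, yielding $\varphi|_{\A_\Delta}\in U_0(\A_\Delta)$.
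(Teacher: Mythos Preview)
Your approach is correct and the ``main step requiring care'' does go through, though you should invoke Corollary~\ref{orthogonals are preserved 2} rather than Lemma~\ref{orthogonals are preserved} directly: for $v\in\Delta$ it gives $\G(\varphi(v))^{\perp_\G}=\lk_\G v$, and since $\G(\varphi(v))\cu\Delta$ and $\Delta$ is full, intersecting with $\Delta$ yields $\G(\varphi(v))^{\perp_\Delta}=\lk_\Delta v$. Writing $\varphi(v)=\sigma\varphi_0(v)$ and using that $\sigma$ is a graph automorphism of $\Delta$ together with $\G(\varphi_0(v))^{\perp_\Delta}=\lk_\Delta v$ (Corollary~\ref{orthogonals are preserved 2} applied to $\varphi_0\in U_0(\A_\Delta)$), one gets $\sigma(\lk_\Delta v)=\lk_\Delta v$, hence $\lk_\Delta\sigma(v)=\lk_\Delta v$ as needed.

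Your route differs from the paper's. The paper decomposes $\varphi$ \emph{in $\A_\G$} as $\sigma\varphi_1$ with $\varphi_1$ simple (via Laurence's theory, as in the proof of Proposition~\ref{aut right-angled}), deduces $\sigma\in U_0(\A_\G)$ and hence $\lk_\G\sigma(v)=\lk_\G v$, then shows $\sigma(\Delta)=\Delta$ using that $v\in\G(\varphi_1(v))$ for simple $\varphi_1$; finally $\varphi_1|_{\A_\Delta}$ is visibly simple in $\A_\Delta$, hence in $U_0(\A_\Delta)$. You instead restrict first, appeal to the coarse-median characterisation to land in $U(\A_\Delta)$, and decompose there. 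Your argument is a bit more conceptual (it uses Proposition~\ref{aut right-angled} as a black box rather than re-entering Laurence's machinery), while the paper's stays closer to the elementary generators and avoids the CMP detour; both ultimately hinge on the same link-preservation fact.
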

\begin{proof}
We begin with a general observation. As in the proof of Corollary~\ref{cmp -> untwisted}, we can write $\varphi=\s\varphi_1$, where $\s$ is a graph automorphism and $\varphi_1$ is a simple automorphism of $\A$. Moreover, simple automorphisms are products of inversions, folds and partial conjugations, so $\varphi_1\in U_0(\A)$. We conclude that $\s\in U_0(\A)$ and Remark~\ref{graph automorphisms in U_0} shows that $\lk\s(v)=\lk v$ for every $v\in\G$.

If $v\in\Delta$, then $v\in\G(\varphi_1(v))$ because $\varphi_1$ is simple. Thus: 
\[\s(v)\in\s(\G(\varphi_1(v)))=\G(\s\varphi_1(v))=\G(\varphi(v))\cu\Delta.\]
We deduce that $\s(\Delta)=\Delta$, and Remark~\ref{graph automorphisms in U_0} shows that $\s|_{\A_{\Delta}}\in U_0(\A_{\Delta})$. Since $\s$ and $\varphi$ preserve $\A_{\Delta}$, so does $\varphi_1$, and it suffices to show that $\varphi_1|_{\A_{\Delta}}\in U_0(\A_{\Delta})$.

It is clear that $\varphi_1|_{\A_{\Delta}}$ is a simple automorphism of $\A_{\Delta}$, so the fact that $\varphi_1|_{\A_{\Delta}}\in U_0(\A_{\Delta})$ follows again from \cite{Laurence} as in the proof of Corollary~\ref{cmp -> untwisted}. 
\end{proof}

\section{Fixed subgroups of CMP automorphisms.}\label{fix cmp sect}

This section is devoted to fixed subgroups of coarse-median preserving automorphisms of cocompactly cubulated groups. Theorem~\ref{cmp und intro} is proved in Subsections~\ref{approx subalg sect} and~\ref{fg sect}, where we study the properties of those subgroups of cocompactly cubulated groups that are approximate median subalgebras (see Theorem~\ref{AMS thm}). At the end of Subsection~\ref{fg sect}, we also prove Corollaries~\ref{cmp fi overgroups} and~\ref{Nielsen intro}. 

Then in Subsections~\ref{staircases sect} and~\ref{wqc sect}, we develop a quasi-convexity criterion for approximate median subalgebras of cube complexes (Proposition~\ref{wqc->qc}). This is used to prove Theorem~\ref{U_0 cc intro} in Subsection~\ref{fix cc sect} (see Corollaries~\ref{fix RAAG cc} and~\ref{fix RACG cc}).

The reader interested only in Theorems~\ref{Q2 thm intro} and~\ref{special UNE thm intro} can just read the proof that $\Fix\varphi$ is finitely generated (Proposition~\ref{AMS fg}) and skip the rest of this section in its entirety.

\subsection{Approximate median subalgebras.}\label{approx subalg sect}

The goal of this subsection is to show that approximate median subalgebras (Definition~\ref{approx subalg defn}) of median spaces stay close to actual subalgebras. This is an important ingredient in the proofs of Theorem~\ref{cmp und intro} and Corollaries~\ref{cmp fi overgroups} and~\ref{Nielsen intro}, which will be discussed in the next subsection.

Shortly after the first draft of this paper appeared on arXiv, it was pointed out to me by Mark Hagen that a similar result appears in the work of Bowditch \cite[Proposition~4.1]{Bow-hulls}, which I was not aware of. Although Propositions~\ref{approx subalgebras} and~\ref{gen subalg prop} below are more general and our proofs seem different, I want to emphasise that Bowditch's result would suffice for all applications in this paper.

\begin{prop}\label{approx subalgebras}
If $X$ is a finite-rank median space and $A\cu X$ is an approximate median subalgebra, then $d_{\rm Haus}(A,\langle A\rangle)<+\infty$.
\end{prop}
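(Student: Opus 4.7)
The plan is to prove the stronger statement that $d_{\rm Haus}(A,\hull A)<+\infty$; this suffices since $\hull A$ is itself a median subalgebra (every convex set is closed under medians), so $\langle A\rangle\cu\hull A$. By Lemma~\ref{about J}, it is in turn enough to show that $\mc{J}(A)\cu\mc{N}_{R'}(A)$ for some $R'=R'(R,r)$, where $r=\rk X$.

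To prove the latter, I would fix $x,y\in A$ and $z\in X$ and set $p:=m(x,y,z)\in I(x,y)$, aiming to bound $d(p,A)$ uniformly. Consider the gate-projection $\pi\colon X\ra I(x,y)$ given by $\pi(w)=m(x,y,w)$, which is a $1$--Lipschitz median morphism. Since $\pi(A)=m(x,y,A)\cu\mc{M}(A)\cu\mc{N}_R(A)$, every point of $\pi(A)$ lies within $R$ of $A$; moreover, because $\pi$ is a median morphism, $\pi(A)$ is itself an approximate median subalgebra of $I(x,y)$ with the same constant $R$. Thus it suffices to bound $d(p,\pi(A))$ inside the interval $I(x,y)$.

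The crucial estimate is this last bound. A naive induction on rank does not work, since an interval in $X$ may still have rank equal to $r$, and a direct iteration of the $1$--Lipschitz bound on medians gives only $\mc{M}^n(A)\cu\mc{N}_{C_n}(A)$ with $C_n$ growing like $3^n$. The strategy is instead to argue by contradiction, in the spirit of \cite[Proposition~4.1]{Bow-hulls}: if $p$ were far from $\pi(A)$ in $I(x,y)$, then a long chain of nested halfspaces of $I(x,y)$ would separate $p$ from $\pi(A)$; using the approximate-subalgebra property of $\pi(A)$ together with Helly's lemma (\cite[Theorem~2.2]{Roller}), one extracts from this chain a family of pairwise-transverse halfspaces of cardinality exceeding $r$, contradicting the rank bound. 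The main difficulty lies in this extraction step: one must promote a ``separating halfspace without $\pi(A)$--points nearby'' into genuine transversality with previously chosen halfspaces, which requires a careful inductive selection along the chain.

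Once the estimate $d(p,\pi(A))\leq R''(R,r)$ is obtained, we get $d(p,A)\leq R+R''=:R'$, so $\mc{J}(A)\cu\mc{N}_{R'}(A)$. Lemma~\ref{about J} then yields $\hull A\cu\mc{N}_{2^rR'}(A)$, and since $\langle A\rangle\cu\hull A$, we conclude $d_{\rm Haus}(A,\langle A\rangle)\leq 2^rR'<+\infty$.
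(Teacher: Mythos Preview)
Your very first reduction is wrong: the ``stronger statement'' that $d_{\rm Haus}(A,\hull A)<+\infty$ is \emph{false} for approximate median subalgebras in general, so you cannot prove the proposition by bounding $\mc{J}(A)$. Take $X=\R^2$ with the $\ell^1$ metric and let $A=\{(n,0):n\in\Z\}\cup\{(0,n):n\in\Z\}$ be the union of the two coordinate axes. A direct check shows that $A$ is a genuine median subalgebra (for any three points of $A$, at least two have a common vanishing coordinate, and the median has that coordinate vanishing as well). Hence $\langle A\rangle=A$ and the proposition holds trivially with Hausdorff distance $0$. However $\hull A=\R^2$, since $I((n,0),(0,n))=[0,n]^2$ for all $n>0$, and the point $(n,n)$ has distance $n$ from $A$. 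Thus $d_{\rm Haus}(A,\hull A)=+\infty$ and $\mc{J}(A)\not\subseteq\mc{N}_{R'}(A)$ for any finite $R'$. This is also why your contradiction sketch could never have worked: with $x=(n,0)$, $y=(0,n)$ and $p=(n,n)\in I(x,y)$, no halfspace of $I(x,y)$ separates $p$ from $\pi(A)$ (since $x,y\in\pi(A)$), yet $d(p,\pi(A))\to\infty$.

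The paper's route is genuinely different and does not pass through $\hull A$ or $\mc{J}$. It instead establishes Proposition~\ref{gen subalg prop}: there is a function $h(r)$, depending only on the rank, with $\langle A\rangle=\mc{M}^{h(r)}(A)$ for every subset $A$. Given this, one argues exactly as in Lemma~\ref{about J} but with $\mc{M}$ in place of $\mc{J}$: since $m$ is $1$--Lipschitz in each variable, $\mc{M}(\mc{N}_D(A))\cu\mc{N}_{3D+R}(A)$, and iterating $h(r)$ times gives a uniform bound on $d_{\rm Haus}(A,\langle A\rangle)$. The substance lies entirely in Proposition~\ref{gen subalg prop}, whose proof (via Lemma~\ref{iterated medians in rank 2}, Corollary~\ref{rank 2} and Proposition~\ref{generating median algebras}) is where the rank hypothesis is actually used.
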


The only focus of this subsection will actually be the next result, which provides an analogue of Remark~\ref{Bow J}. From it, it is straightforward to deduce Proposition~\ref{approx subalgebras} proceeding as in Lemma~\ref{about J}, which we leave to the reader.

\begin{prop}\label{gen subalg prop}
There exists a function $h\colon\N\ra\N$ with the following property. If $M$ is a median algebra of rank $r$ and $A\cu M$ is a subset, then $\langle A\rangle=\mc{M}^{h(r)}(A)$.
\end{prop}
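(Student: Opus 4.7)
The plan is to argue by induction on the rank $r$, in analogy with the proof of Remark~\ref{Bow J} for convex hulls. The base case $r \leq 1$ is handled directly: $M$ is then a median tree, and every element of $\langle A \rangle \setminus A$ is a Steiner point sitting at the meeting of three geodesics to elements of $A$; picking one representative of $A$ in each of the three directions expresses this point as $m(a_1,a_2,a_3) \in \mc{M}(A)$. Hence $h(1) = 1$ suffices (and $h(0) = 0$ trivially).

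For the inductive step, the key tool is the gate projection $\pi_{a,b}\colon M \to I(a,b)$ onto the interval between two points $a, b \in A$: this is a median morphism that acts by $\pi_{a,b}(z) = m(a,b,z)$, so $\pi_{a,b}(A) \subseteq \mc{M}(A)$ and $\pi_{a,b}\bigl(\langle A \rangle\bigr) = \bigl\langle \pi_{a,b}(A) \bigr\rangle$ inside $I(a,b)$. If one could arrange that $x \in I(a,b)$ and that $I(a,b)$ itself has rank strictly less than $r$ as a median algebra, then the inductive hypothesis applied inside $I(a,b)$ to $\pi_{a,b}(A)$ would give
\[
x \in \mc{M}^{h(r-1)}\bigl(\pi_{a,b}(A)\bigr) \subseteq \mc{M}^{h(r-1)+1}(A),
\]
so that $h(r) = h(r-1)+1$ would work.

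The main obstacle is that for generic $a, b \in A$ the interval $I(a,b)$ will still have rank $r$ (as already happens for antipodal vertices of a cube). To bypass this I would enlarge the pool of admissible endpoints from $A$ to $\mc{M}^{c}(A)$ for some bounded $c = c(r)$, and show that $\mc{M}^{c}(A)$ always contains a pair $a', b'$ with $x \in I(a',b')$ and such that $I(a',b')$ has rank strictly less than $r$. The intuition is that $r$ applications of $\mc{J}$ (which allows free auxiliary arguments from $M$) suffice to generate $\hull A$, and one can try to mimic each $\mc{J}$-step using only constantly many $\mc{M}$-steps by gate-projecting the auxiliary argument onto an interval whose endpoints have already been manufactured from $A$ in bounded $\mc{M}$-depth. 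Carrying out this rank-bookkeeping cleanly is the hard part, especially identifying the auxiliary elements that force a rank drop while keeping $x$ in the new interval; this is likely where the essential combinatorial content of Bowditch's \cite[Proposition~4.1]{Bow-hulls} enters. Once achieved, the resulting recursion $h(r) = h(r-1) + O(1)$ yields an explicit bound on $h$, and Proposition~\ref{approx subalgebras} follows by the same iteration argument as in Lemma~\ref{about J}.
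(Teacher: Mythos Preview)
Your proposal has a genuine gap at the inductive step. The mechanism you propose---finding, for each $x\in\langle A\rangle$, a pair $a',b'\in\mc{M}^c(A)$ with $x\in I(a',b')$ and $\rk I(a',b')<r$---is not achievable in general. In a rank--$r$ median algebra, intervals typically have rank exactly $r$ (think of the interval between antipodal vertices of $\{0,1\}^r$, which is the whole cube), and there is no bounded--depth manipulation of $A$ that forces a rank drop while still trapping $x$. You acknowledge this is ``the hard part'' and gesture towards Bowditch's argument, but rank reduction is simply the wrong lever here; what intervals do have is not lower rank but a special \emph{structure}: by Dilworth's lemma, every interval in a rank--$r$ algebra embeds in a product of $r$ rank--$1$ algebras.

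The paper exploits exactly this. Rather than inducting on rank, it proceeds in three layers. First (Lemma~\ref{iterated medians in rank 2} and Corollary~\ref{rank 2}), a direct combinatorial analysis shows that in any product $T_1\times T_2$ of two rank--$1$ algebras one already has $\langle A\rangle=\mc{M}(A)$, because pentagonal configurations cannot occur. Second, if $M$ embeds in a product of $d$ trees, one projects to each pair $T_i\times T_j$ to locate finitely many points $m_{ij}\in\mc{M}(A)$ whose generated subalgebra contains $x$; since there are only $\binom{d}{2}$ of them, the finite--set bound (free median algebras) finishes. Third, and this is where your intuition about replacing $\mc{J}$--steps by $\mc{M}$--steps is correct, one shows $\mc{J}(B)\subseteq\mc{M}^{g(r)+1}(B)$ for any generating set $B$: the interval $I(a,b)=\pi_{ab}(\langle B\rangle)=\langle\pi_{ab}(B)\rangle$ lives in a product of $r$ trees, so the second layer applies. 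Iterating $r$ times and invoking $\hull A=\mc{J}^r(A)$ gives the result. The essential combinatorial content is thus not a rank--drop but the rank--$2$ analysis of iterated medians.
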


We now obtain a sequence of lemmas leading up to Proposition~\ref{generating median algebras}, which proves Proposition~\ref{gen subalg prop}.

Let $M$ be a median algebra. We denote by $\mscr{M}(M)$ the collection of subsets of $M$ of one of these three forms:
\begin{itemize}
\item $\mf{h}$, where $\mf{h}$ is a halfspace;
\item $\mf{h}\cup\mf{k}$, where $\mf{h}$ and $\mf{k}$ are transverse halfspaces;
\item $\mf{h}\cup\mf{k}$, where $\mf{h}$ and $\mf{k}$ are disjoint halfspaces.
\end{itemize}
Elements of $\mscr{M}(M)$ are to median subalgebras what halfspaces of $M$ are to convex subsets. More precisely, the following is a straightforward characterisation of the median subalgebra generated by a subset $A\cu M$ (see for instance \cite[II.4.25.7]{vdVel}).

\begin{lem}\label{van de Vel}
For every subset $A\cu M$, the median subalgebra $\langle A\rangle\cu M$ is the intersection of all elements of $\mscr{M}(M)$ 
containing $A$.
\end{lem}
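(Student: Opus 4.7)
The plan is to prove the two inclusions separately, with all the real content in the reverse inclusion. For the easy direction I would first verify that every $S\in\mscr{M}(M)$ is itself a median subalgebra. Halfspaces are convex, hence subalgebras. For a union $S=\mf{h}\cup\mf{k}$ with $\mf{h},\mf{k}$ transverse, suppose for contradiction that $x,y,z\in S$ satisfy $m(x,y,z)\in\mf{h}^*\cap\mf{k}^*$. Convexity of $\mf{h}^*$ forces at least two of $x,y,z$ to lie in $\mf{h}^*\cap S=\mf{k}\cap\mf{h}^*$; symmetrically, at least two lie in $\mf{h}\cap\mf{k}^*$. These two subsets of $S$ are disjoint, so we would need at least four points among $\{x,y,z\}$, a contradiction. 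The disjoint case is analogous, now with each point of $S$ lying in exactly one of $\mf{h},\mf{k}$. Consequently any $S\in\mscr{M}(M)$ containing $A$ also contains $\langle A\rangle$, which yields the easy inclusion.

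For the reverse inclusion I would argue contrapositively: given a point $x\notin\langle A\rangle$, the task is to exhibit some $S\in\mscr{M}(M)$ with $A\cu S$ and $x\notin S$. The poset of median subalgebras of $M$ that contain $A$ and miss $x$ is closed under ascending unions (a union of a chain of subalgebras is again a subalgebra, and still avoids $x$), so Zorn's lemma furnishes a maximal such subalgebra $N$. It then suffices to show $N\in\mscr{M}(M)$, and I would split this according to whether or not $N$ is convex. If $N$ is convex, then $N$ and $\{x\}$ are disjoint nonempty convex sets, so by \cite[Theorem~2.8]{Roller} there exists a halfspace $\mf{h}$ with $N\cu\mf{h}$ and $x\in\mf{h}^*$; since $\mf{h}$ is itself a subalgebra containing $A$ and missing $x$, maximality of $N$ forces $N=\mf{h}\in\mscr{M}(M)$.

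The non-convex case is the main obstacle. If $N$ is not convex, pick $p,q\in N$ and $w\in I(p,q)\setminus N$, and exploit maximality in the form $\langle N\cup\{y\}\rangle\ni x$ for every $y\in M\setminus N$. The target is to realise $M\setminus N$ as an intersection $\mf{h}^*\cap\mf{k}^*$ of two complementary halfspaces, either transverse or disjoint, so that $N=\mf{h}\cup\mf{k}$. A natural strategy is to separate $\{p\}$ from $\{w\}$ by a halfspace $\mf{h}$ (with $p\in\mf{h}$, $w\in\mf{h}^*$), separate $\{q\}$ from $\{w\}$ by a second halfspace $\mf{k}$, and then use maximality of $N$ (together with the requirement that $x$ be thrown outside $\mf{h}\cup\mf{k}$) to pin the two halfspaces down so that $N\cu\mf{h}\cup\mf{k}$. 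The relative position of $p$ and $q$ with respect to the two walls bounding $\mf{h}$ and $\mf{k}$ then determines which of the transverse or disjoint configurations occurs. This structural step is delicate; since the excerpt attributes the lemma to van de Vel II.4.25.7, I would follow the argument given there rather than attempt to reinvent it from scratch.
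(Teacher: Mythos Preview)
The paper does not prove this lemma at all; it simply cites \cite[II.4.25.7]{vdVel}. Your sketch is correct and follows the standard route (easy inclusion via checking that each $S\in\mscr{M}(M)$ is a subalgebra; reverse inclusion via Zorn's lemma to produce a maximal subalgebra $N\supseteq A$ missing $x$, then showing $N\in\mscr{M}(M)$), and you explicitly defer to the same reference for the delicate non-convex step, so you are fully aligned with the paper.

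One minor slip of phrasing in the easy direction: when $m(x,y,z)\in\mf{h}^*$, it is convexity of $\mf{h}$ (not of $\mf{h}^*$) that forces at least two of $x,y,z$ to lie in $\mf{h}^*$ --- if two were in $\mf{h}$, the median would land in $\mf{h}$. The conclusion is unaffected.
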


We will make repeated use of the following observation, without explicit mention:

\begin{lem}\label{3pwt}
Given points $a,b,c,d\in M$, the three sets $\mscr{W}(a,b|c,d),\mscr{W}(a,c|b,d),\mscr{W}(a,d|b,c)$ are transverse to each other.
\end{lem}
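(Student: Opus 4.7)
The plan is to prove this directly by inspecting, for each of the three pairs of wall-sets, the four quadrants defined by representative halfspaces and verifying each is nonempty using the four given points themselves.

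More precisely, by symmetry of the three sets under permutation of $\{a,b,c,d\}$, it suffices to show that any $\mf{w}\in\mscr{W}(a,b|c,d)$ is transverse to any $\mf{w}'\in\mscr{W}(a,c|b,d)$. Choose halfspace representatives so that $\mf{w}=\{\mf{h},\mf{h}^*\}$ with $a,b\in\mf{h}^*$ and $c,d\in\mf{h}$, and $\mf{w}'=\{\mf{k},\mf{k}^*\}$ with $a,c\in\mf{k}^*$ and $b,d\in\mf{k}$. Then transversality is immediate from the observation that
\[ a\in\mf{h}^*\cap\mf{k}^*,\quad b\in\mf{h}^*\cap\mf{k},\quad c\in\mf{h}\cap\mf{k}^*,\quad d\in\mf{h}\cap\mf{k},\]
so all four intersections are nonempty.

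There is no real obstacle here; the only mildly delicate point is being careful about the choice of halfspace orientations representing each wall, and noting that the cyclic symmetry of the three pairs $(\{a,b\},\{c,d\})$, $(\{a,c\},\{b,d\})$, $(\{a,d\},\{b,c\})$ means the argument given for one pair applies verbatim to the other two.
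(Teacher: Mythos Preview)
Your proof is correct and is exactly the direct verification the paper has in mind; indeed, the paper states this lemma as a self-evident observation without proof, and your argument---exhibiting $a,b,c,d$ as witnesses for the four quadrants---is the natural one.
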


It is also convenient to give a name to the situation in Figure~\ref{pentagonal fig}.

\begin{defn}
An ordered $5$--tuple $(x_1,x_2,x_3,x_4,x_5)\in M^5$ is a \emph{pentagonal configuration} if the $5$ sets 
$\mscr{W}(x_{i-1},x_i,x_{i+1}|x_{i+2},x_{i+3})$ 
are all nonempty (indices are taken mod $5$).
\end{defn}

This requirement is invariant under cyclic permutations of the $5$ points. Also note that, setting 
$\mc{W}_i:=\mscr{W}(x_{i-1},x_i,x_{i+1}|x_{i+2},x_{i+3})$, 
the sets $\mc{W}_i$ and $\mc{W}_{i+1}$ are transverse for all $i$ mod $5$.

\begin{figure} 
\begin{tikzpicture}
\draw[fill] (0,0) -- ({1.5*0.5*cos(36)/cos(36)},{1.5*0.5*sin(36)/cos(36)});
\draw[fill] (0,0) -- ({1.5*0.5*cos(108)/cos(36)},{1.5*0.5*sin(108)/cos(36)});
\draw[fill] (0,0) -- ({1.5*0.5*cos(180)/cos(36)},{1.5*0.5*sin(180)/cos(36)});
\draw[fill] (0,0) -- ({1.5*0.5*cos(-108)/cos(36)},{1.5*0.5*sin(-108)/cos(36)});
\draw[fill] (0,0) -- ({1.5*0.5*cos(-36)/cos(36)},{1.5*0.5*sin(-36)/cos(36)});
\draw[fill] ({1.5*0.5*cos(36)/cos(36)},{1.5*0.5*sin(36)/cos(36)}) -- ({1.5*cos(0)},{1.5*sin(0)});
\draw[fill] ({1.5*0.5*cos(108)/cos(36)},{1.5*0.5*sin(108)/cos(36)}) -- ({1.5*cos(72)},{1.5*sin(72)});
\draw[fill] ({1.5*0.5*cos(180)/cos(36)},{1.5*0.5*sin(180)/cos(36)}) -- ({1.5*cos(144)},{1.5*sin(144)});
\draw[fill] ({1.5*0.5*cos(-108)/cos(36)},{1.5*0.5*sin(-108)/cos(36)}) -- ({1.5*cos(-144)},{1.5*sin(-144)});
\draw[fill] ({1.5*0.5*cos(-36)/cos(36)},{1.5*0.5*sin(-36)/cos(36)}) -- ({1.5*cos(-72)},{1.5*sin(-72)});
\draw[fill] ({1.5*0.5*cos(36)/cos(36)},{1.5*0.5*sin(36)/cos(36)}) -- ({1.5*cos(72)},{1.5*sin(72)});
\draw[fill] ({1.5*0.5*cos(108)/cos(36)},{1.5*0.5*sin(108)/cos(36)}) -- ({1.5*cos(144)},{1.5*sin(144)});
\draw[fill] ({1.5*0.5*cos(180)/cos(36)},{1.5*0.5*sin(180)/cos(36)}) -- ({1.5*cos(-144)},{1.5*sin(-144)});
\draw[fill] ({1.5*0.5*cos(-108)/cos(36)},{1.5*0.5*sin(-108)/cos(36)}) -- ({1.5*cos(-72)},{1.5*sin(-72)});
\draw[fill] ({1.5*0.5*cos(-36)/cos(36)},{1.5*0.5*sin(-36)/cos(36)}) -- ({1.5*cos(0)},{1.5*sin(0)});
\draw[fill] ({1.5*cos(0)},{1.5*sin(0)}) circle [radius=0.04cm];
\draw[fill] ({1.5*cos(72)},{1.5*sin(72)}) circle [radius=0.04cm];
\draw[fill] ({1.5*cos(144)},{1.5*sin(144)}) circle [radius=0.04cm];
\draw[fill] ({1.5*cos(-144)},{1.5*sin(-144)}) circle [radius=0.04cm];
\draw[fill] ({1.5*cos(-72)},{1.5*sin(-72)}) circle [radius=0.04cm];
\node[right] at ({1.5*cos(0)},{1.5*sin(0)}) {$x_1$};
\node[right] at ({1.5*cos(72)},{1.5*sin(72)}) {$x_2$};
\node[left] at ({1.5*cos(144)},{1.5*sin(144)}) {$x_3$};
\node[left] at ({1.5*cos(-144)},{1.5*sin(-144)}) {$x_4$};
\node[right] at ({1.5*cos(-72)},{1.5*sin(-72)}) {$x_5$};
\end{tikzpicture}
\caption{A pentagonal configuration in the $0$--skeleton of a $\CAT$ square complex.}
\label{pentagonal fig} 
\end{figure}
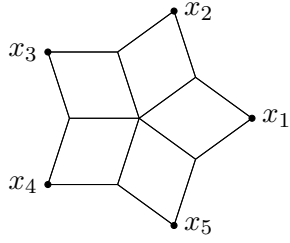

\begin{lem}\label{iterated medians in rank 2}
Suppose that $\rk M\leq 2$. Consider $x\in M$ with $x=m(m(a_1,a_2,a_3),b,c)$ for points $a_i,b,c\in M$. Then one of the following 
happens:
\begin{itemize}
\item there exists $1\leq i\leq 3$ such that $x=m(a_i,b,c)$;
\item there exist $1\leq i<j\leq 3$ such that either $x=m(a_i,a_j,b)$ or $x=m(a_i,a_j,c)$;
\item we have $x=m(a_1,a_2,a_3)$;
\item the points $a_1,a_2,a_3,b,c$ can be ordered to form a pentagonal configuration.
\end{itemize}
\end{lem}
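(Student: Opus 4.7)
The plan is to argue by contradiction: assume none of the first three conclusions holds, and produce a pentagonal configuration out of the five points $a_1,a_2,a_3,b,c$.

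Set $y:=m(a_1,a_2,a_3)$, so that $x=m(y,b,c)$. The main bookkeeping device will be the trace $S_-(\mf{w})\cu\{a_1,a_2,a_3,b,c\}$ of the halfspace bounded by $\mf{w}$ not containing $x$, for every wall $\mf{w}$ of $M$. The identities above, combined with $x\in I(b,c)$ and the fact that $y$ sits on the majority side of the $a_i$ across every wall, prohibit $S_-(\mf{w})$ from containing both $b$ and $c$, and from containing one of $b,c$ alongside two or more $a_i$; this leaves an explicit short list of admissible $S_-(\mf{w})$. The failure of bullet~(1) at $i$ then produces a wall $\mf{h}_i$ with $S_-(\mf{h}_i)=\{a_i,s_i\}$ for some $s_i\in\{b,c\}$, and the failure of bullet~(3) produces $\mf{h}_0$ with $S_-(\mf{h}_0)\in\{\{a_i,a_j\},\{a_1,a_2,a_3\}\}$.

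Pigeonhole on $(s_1,s_2,s_3)$ yields two cases. In \emph{Case II} all three $s_i$ agree -- say $=b$ -- and I plan to dispatch it at once by a direct quadrant computation: for any pair of walls with $S_-=\{a_i,b\},\{a_j,b\}$ each of the four quadrants already meets $\{a_1,a_2,a_3,b,c,x\}$ (the common letter $b$ supplying the delicate $S_-\cap S_-$ quadrant), so $\mf{h}_1,\mf{h}_2,\mf{h}_3$ are pairwise transverse, contradicting $\rk M\leq 2$. In \emph{Case I}, up to renaming one has $s_1=s_2=b$ and $s_3=c$. The same rank-$2$ transversality obstruction rules out $S_-(\mf{h}_0)\in\{\{a_1,a_2\},\{a_1,a_2,a_3\}\}$, since either choice would make $\mf{h}_0,\mf{h}_1,\mf{h}_2$ pairwise transverse; up to swapping $a_1\leftrightarrow a_2$ I may therefore assume $S_-(\mf{h}_0)=\{a_1,a_3\}$.

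Applying the failure of bullet~(2) to the triple $(a_1,a_2,c)$ will then yield a wall $\mf{k}$ with $S_-(\mf{k})$ meeting one of $\{a_1,a_2\},\{a_1,c\},\{a_2,c\}$; the first (and its $\{a_1,a_2,a_3\}$ variant) is excluded as above, and $S_-(\mf{k})=\{a_1,c\}$ is excluded by the observation that $\mf{k},\mf{h}_0,\mf{h}_3$ would otherwise be pairwise transverse. Thus $S_-(\mf{k})=\{a_2,c\}$, and the five walls $\mf{h}_1,\mf{h}_0,\mf{h}_3,\mf{k},\mf{h}_2$ realise the five $2$-element sides demanded by the cyclic tuple $(b,a_1,a_3,c,a_2)$, which is therefore a pentagonal configuration. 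The delicate part I expect is the initial classification of admissible $S_-(\mf{w})$; once that list is in hand, the argument reduces to four concrete applications of the rank-$2$ transversality obstruction, and no heavier machinery should be needed.
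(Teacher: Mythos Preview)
Your argument is correct. The trace map $S_-(\mf w)\subseteq\{a_1,a_2,a_3,b,c\}$ is a clean bookkeeping device: the classification of admissible traces you describe follows immediately from $x\in I(b,c)$ and from $y=m(a_1,a_2,a_3)$ sitting on the majority side of the $a_i$ across every wall, and each quadrant check you perform is witnessed by elements of $\{a_1,a_2,a_3,b,c,x\}$ exactly as you say. The five walls $\mf h_1,\mf h_0,\mf h_3,\mf k,\mf h_2$ do realise the pentagonal configuration $(b,a_1,a_3,c,a_2)$.

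This is a genuinely different route from the paper's. The paper projects the $a_i$ to $I(b,c)$ via $\overline a_i=m(a_i,b,c)$, and works with wall-sets of the form $\mscr W(a_i,a_j|b,c)$, $\mscr W(a_i,b|a_j,c)$, etc.; a key step is a median identity (its Claim~3) computing $\mscr W(x\mid m(a_1,a_3,c))=\mscr W(b,c\mid a_1,a_3)$, after which the failure of bullet~(2) is invoked once to produce three pairwise-transverse wall-sets. You replace all of this with the combinatorics of the trace map and several direct transversality checks, using the rank bound repeatedly to prune cases rather than once at the end. What your approach buys is elementarity: no gate-projections, no median-identity computation, just finite case analysis on a five-element set. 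What the paper's approach buys is that the pentagonal configuration falls out of Claim~2 more organically, and the remaining work is compressed into a single rank contradiction; it also hints at why the interval $I(b,c)$ is the natural ambient object. Both arguments ultimately rest on the same obstruction (three pairwise-transverse walls violate $\rk M\le 2$), but the organising principles are quite different.
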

\begin{proof}
Set $n=m(a_1,a_2,a_3)$. Consider the projections $\overline a_i=m(a_i,b,c)$ to the interval $I(b,c)$. Since gate-projections are median morphisms, we have $x=m(\overline a_1,\overline a_2,\overline a_3)$. 

\smallskip
{\bf Claim~1:} \emph{if we are not in the 1st or 3rd case, we can assume that the four sets $\mscr{W}(x|\overline a_1)$, $\mscr{W}(x|\overline a_2)$, $\mscr{W}(x|\overline a_3)$, $\mscr{W}(a_1,a_2|b,c)$ are all nonempty, and that $\mscr{W}(a_1,c|a_2,b)=\emptyset$.}

\smallskip\noindent
\emph{Proof of Claim~1.} If one of the sets $\mscr{W}(x|\overline a_i)$ is empty, then $x=\overline a_i$ and we are in the 1st case. On the other hand, if the sets $\mscr{W}(a_i,a_j|b,c)$ are all empty for $i\neq j$, then we are in the 3rd case. Indeed, since $\mscr{W}(n|b,c)\cu\bigcup_{i<j}\mscr{W}(a_i,a_j|b,c)$, we have $n\in I(b,c)$, hence $x=m(n,b,c)=n=m(a_1,a_2,a_3)$.

Thus, up to permuting the $a_i$, we can assume that $\mscr{W}(a_1,a_2|b,c)\neq\emptyset$. Since this is transverse to the transverse sets $\mscr{W}(a_1,b|a_2,c)$ and $\mscr{W}(a_1,c|a_2,b)$, one of the latter must be empty. Swapping $b$ and $c$ if necessary, we can assume that it is $\mscr{W}(a_1,c|a_2,b)$.
\hfill$\blacksquare$

\smallskip
{\bf Claim~2:} \emph{if we are not in the 4th case either, we can further assume that $\mscr{W}(a_1,a_2,b|a_3,c)=\emptyset$.}

\smallskip\noindent
\emph{Proof of Claim~2.}
Note that the assumptions in Claim~1 are left unchanged if we simultaneously swap $b\leftrightarrow c$ and $a_1\leftrightarrow a_2$. Thus, it suffices to show that we can suppose that at least one of the two sets $\mscr{W}(b,a_1,a_2|c,a_3)$ and $\mscr{W}(a_1,a_2,c|a_3,b)$ is empty.

In order to do so, we assume that $\mscr{W}(b,a_1,a_2|c,a_3)$ and $\mscr{W}(a_1,a_2,c|a_3,b)$ are both nonempty and show that $(b,a_1,a_2,c,a_3)$ is a pentagonal configuration. This places us in the 4th case. 

Since $\mscr{W}(a_1,c|a_2,b)=\emptyset$ and $x=m(\overline a_1,\overline a_2,\overline a_3)$, where $\overline a_i$ is the projection of $a_i$ to $I(b,c)$, we have:
\begin{align*}
\mscr{W}(a_2,c,a_3|b,a_1)&\supseteq\mscr{W}(\overline a_2,\overline a_3|\overline a_1)=\mscr{W}(x|\overline a_1)\neq\emptyset, \\  
\mscr{W}(a_3,b,a_1|a_2,c)&\supseteq\mscr{W}(\overline a_3,\overline a_1|\overline a_2)=\mscr{W}(x|\overline a_2)\neq\emptyset.
\end{align*}
Moreover, since $\mscr{W}(a_1,a_3|b,c)$ is transverse to the nonempty transverse subsets $\mscr{W}(b,a_1,a_2|c,a_3)$ and $\mscr{W}(a_1,a_2,c|a_3,b)$, we have $\mscr{W}(a_1,a_3|b,c)=\emptyset$. 
Hence $\mscr{W}(c,a_3,b|a_1,a_2)=\mscr{W}(c,b|a_1,a_2)\neq\emptyset$.
\hfill$\blacksquare$

\smallskip
{\bf Claim~3:} \emph{under these assumptions, we have $\mscr{W}(x|m(a_1,a_3,c))=\mscr{W}(b,c|a_1,a_3)$.}

\smallskip\noindent
\emph{Proof of Claim~3.}
By the properties of gate-projections, the set $\mscr{W}(b|c)$ does not intersect any of the sets $\mscr{W}(a_i|\overline a_i)$. Thus, since $x=m(\overline a_1,\overline a_2,\overline a_3)$, we must have:
\begin{align*}
\mscr{W}(x|m(a_1,a_3,c))\cap\mscr{W}(b|c)&=\mscr{W}(m(a_1,a_2,a_3)|m(a_1,a_3,c))\cap\mscr{W}(b|c) \\
&=\mscr{W}(a_1|a_3)\cap\mscr{W}(a_2|c)\cap\mscr{W}(b|c) \\
&=\mscr{W}(a_1,a_2,b|a_3,c)\sqcup\mscr{W}(a_2,a_3,b|a_1,c)=\emptyset,
\end{align*}
where we have used Claims~1 and~2 at the very end. Since $x\in I(b,c)$, we have $\mscr{W}(x|b,c)=\emptyset$. Thus:
\[\mscr{W}(x|m(a_1,a_3,c))=\mscr{W}(x,b,c|m(a_1,a_3,c))=\mscr{W}(b,c|a_1,a_3). \] 
\hfill$\blacksquare$

\smallskip
In order to conclude the proof of the lemma, suppose for the sake of contradiction that we are not in the 2nd case, in addition to the assumptions of the claims. Then, Claim~3 implies:
\[\emptyset\neq\mscr{W}(x|m(a_1,a_3,c))=\mscr{W}(b,c|a_1,a_3).\]
On the other hand, since $\mscr{W}(a_1,c|a_2,b)$ and $\mscr{W}(a_1,a_2,b|a_3,c)$ are empty by Claims~1 and~2:
\begin{align*}
\emptyset\neq\mscr{W}(x|\overline a_1)&=\mscr{W}(\overline a_2,\overline a_3|\overline a_1)=\mscr{W}(c,a_2,a_3|b,a_1)\cu\mscr{W}(c,a_3|b,a_1), \\
\emptyset\neq\mscr{W}(x|\overline a_3)&=\mscr{W}(\overline a_1,\overline a_2|\overline a_3)=\mscr{W}(a_1,a_2,c|a_3,b)\cu\mscr{W}(a_1,c|a_3,b).
\end{align*}
Since the three sets $\mscr{W}(b,c|a_1,a_3),\mscr{W}(c,a_3|b,a_1),\mscr{W}(a_1,c|a_3,b)$ are pairwise transverse, this violates the assumption that $\rk M\leq 2$. This proves the lemma.
\end{proof}

\begin{cor}\label{rank 2}
If $T_1,T_2$ are rank--$1$ median algebras, then $\langle A\rangle=\mc{M}(A)$ for all $A\cu T_1\x T_2$.
\end{cor}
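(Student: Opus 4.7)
The plan is to prove that $\mc{M}(A)$ is itself a median subalgebra; combined with the obvious inclusions $A \cu \mc{M}(A) \cu \langle A\rangle$ this gives $\mc{M}(A) = \langle A\rangle$. Concretely, given $p,q,r \in \mc{M}(A)$, I will show $m(p,q,r) \in \mc{M}(A)$ by successive applications of Lemma~\ref{iterated medians in rank 2}, which is available since $T_1 \x T_2$ has rank at most $2$ (each $T_i$ being rank $1$).

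The crucial preliminary step is to verify that $T_1 \x T_2$ admits no pentagonal configuration. Every halfspace of $T_1 \x T_2$ is either \emph{horizontal}, of the form $\mf{h} \x T_2$ with $\mf{h} \in \mscr{H}(T_1)$, or \emph{vertical}, of the form $T_1 \x \mf{k}$ with $\mf{k} \in \mscr{H}(T_2)$. Two halfspaces of the same type are transverse only if the corresponding halfspaces in $T_i$ are transverse, which is impossible in rank $1$; on the other hand a horizontal and a vertical halfspace are always transverse. Hence no single wall can be simultaneously transverse to both a horizontal and a vertical wall. In any pentagonal configuration $(x_1,\dots,x_5)$ the sets $\mc{W}_i := \mscr{W}(x_{i-1},x_i,x_{i+1} \mid x_{i+2},x_{i+3})$ are nonempty and $\mc{W}_i$ is transverse to $\mc{W}_{i+1}$; picking any wall in some $\mc{W}_i$ forces $\mc{W}_{i+1}$ to consist entirely of walls of the opposite type, and propagating this around the cycle shows that the types of the $\mc{W}_i$ must alternate around the $5$-cycle. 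Since $5$ is odd, this is a contradiction.

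Now fix $p,q,r \in \mc{M}(A)$ and write $p = m(a_1,a_2,a_3)$, $q = m(b_1,b_2,b_3)$, $r = m(c_1,c_2,c_3)$ with $a_i,b_j,c_k \in A$. Applying Lemma~\ref{iterated medians in rank 2} to $x := m(p,q,r)$ with $p$ expanded, the pentagonal case is excluded and so $x$ equals one of: $p$ itself (then $x \in \mc{M}(A)$); $m(a_i,a_j,q)$ or $m(a_i,a_j,r)$ for some $i<j$; or $m(a_i,q,r)$ for some $i$. In each remaining case we reapply the lemma to the residual nested median, expanding $q$ or $r$ and using the symmetry of $m$ to place the expanded compound argument first; each such iteration eliminates one compound argument, so after at most three applications $x$ is expressed as a median of three elements of $A$, or is found equal to one of $p,q,r$. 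In every branch we conclude $x \in \mc{M}(A)$.

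The main obstacle is the non-existence of pentagonal configurations in $T_1 \x T_2$; once this is established, the recursion terminates in finitely many steps because each application of Lemma~\ref{iterated medians in rank 2} strictly decreases the number of ``compound'' arguments in the nested median under consideration, and the routine finite case analysis described above suffices.
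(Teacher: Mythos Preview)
Your proof is correct and follows essentially the same approach as the paper. The paper's argument is simply that $T_1\times T_2$ contains no pentagonal configurations, so the fourth case of Lemma~\ref{iterated medians in rank 2} is excluded and one obtains $\mc{M}^2(A)=\mc{M}(A)$; your proof unpacks both of these steps in detail (the parity argument for the absence of pentagons, and the explicit recursion reducing the number of compound arguments), but the underlying idea is identical.
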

\begin{proof}
The product $T_1\x T_2$ does not contain any pentagonal configurations. Otherwise, there would be walls $\mf{w}_1,\mf{w}_2,\mf{w}_3,\mf{w}_4,\mf{w}_5$ with each $\mf{w}_i$ transverse to $\mf{w}_{i+1}$. If $\mf{w}_1$ originates from the factor $T_1$, say, then $\mf{w}_2$ must originate from $T_2$ and, continuing like this, $\mf{w}_5$ again originates from $T_1$. Since $\mf{w}_5$ and $\mf{w}_1$ are transverse, this would contradict the fact that $\rk T_1=1$.

Thus, the 4th case of Lemma~\ref{iterated medians in rank 2} never occurs, hence $\mc{M}^2(A)=\mc{M}(A)$ for all $A\cu T_1\x T_2$. 
\end{proof}

For the next result, let us consider the following functions $f,g,h\colon\N\ra\N$:
\begin{align*} 
f(n)&=2^{2^n}, & g(n)&=1+f\left(\tfrac{n(n-1)}{2}\right), & h(n)&=ng(n)+n.
\end{align*}

\begin{prop}\label{generating median algebras}
Given a median algebra $M$ and a subset $A\cu M$, the following hold.
\begin{enumerate}
\item If $\#A\leq n$, then $\langle A\rangle=\mc{M}^{f(n)}(A)$.
\item If $M$ can be embedded in a product of $d$ rank--$1$ median algebras, then $\langle A\rangle=\mc{M}^{g(d)}(A)$.
\item If $\rk M\leq r$, then $\langle A\rangle=\mc{M}^{h(r)}(A)$.
\end{enumerate}
\end{prop}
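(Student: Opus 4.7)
The plan is to bound $\#\langle A\rangle$ by $f(n)$, which forces the ascending chain $A\subseteq\mc{M}(A)\subseteq\mc{M}^2(A)\subseteq\dots\subseteq\langle A\rangle$ to stabilise within $f(n)$ steps (since it strictly increases until $\mc{M}^k(A)$ is itself a subalgebra, at which point it equals $\langle A\rangle$). For the cardinality bound, observe that walls of $\langle A\rangle$ correspond (up to swapping halfspaces) to non-constant median morphisms $\langle A\rangle\to\{0,1\}$, and each such morphism is uniquely determined by its restriction to $A$ since $A$ generates; hence $\langle A\rangle$ has at most $2^{n-1}$ walls and embeds as a median subalgebra of $\{0,1\}^{2^{n-1}}$, giving $\#\langle A\rangle\leq 2^{2^{n-1}}<f(n)$.

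\textbf{Part (2).} The plan rests on an intermediate lemma: for a product $N=T_1\times\dots\times T_d$ of trees with $d\geq 2$, a subset $B\subseteq N$, and $y\in N$, one has $y\in\langle B\rangle$ iff $\pi_{ij}(y)\in\langle\pi_{ij}(B)\rangle$ for every pair of distinct coordinates $(i,j)$. The ``only if'' direction is immediate from functoriality of $\pi_{ij}$. For ``if'', apply Lemma~\ref{van de Vel}: every element of $\mscr{M}(N)$ has membership determined by at most two coordinates, since halfspaces of $N$ are pulled back from a single factor, two transverse halfspaces must come from distinct factors (walls within a tree are never transverse), and two disjoint halfspaces must come from the same factor (otherwise they would meet).

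Given this lemma, set $A_1:=\mc{M}(A)$, so Corollary~\ref{rank 2} yields $\pi_{ij}(A_1)=\mc{M}(\pi_{ij}(A))=\langle\pi_{ij}(A)\rangle$, already closed. For $x\in\langle A\rangle=\langle A_1\rangle$, we have $\pi_{ij}(x)\in\pi_{ij}(A_1)$, so pick $y_{ij}\in A_1$ with $\pi_{ij}(x)=\pi_{ij}(y_{ij})$. The set $A_2:=\{y_{ij}\}_{(i,j)}$ has cardinality at most $d(d-1)/2$, and the lemma applied to $A_2$ gives $x\in\langle A_2\rangle$. Part (1) then yields $x\in\mc{M}^{f(\#A_2)}(A_2)\subseteq\mc{M}^{f(d(d-1)/2)}(A_1)=\mc{M}^{f(d(d-1)/2)+1}(A)=\mc{M}^{g(d)}(A)$.

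\textbf{Part (3) and the main obstacle.} My plan is induction on $r$, peeling off one dimension per step. For the inductive step at rank $r$, I would choose a maximal family of $r$ pairwise-transverse walls in $M$ and use the associated restriction quotients to build a median morphism $\pi\colon M\to T_1\times\dots\times T_r$ to a product of $r$ trees. Applying part (2) to $\pi(A)$ costs $g(r)$ iterations of $\mc{M}$; combining the result with the rank-$(r-1)$ fiber data via one further application of $\mc{M}$ accounts for the ``$+1$'' and lets the induction run. Summing $r$ such steps gives $h(r)=r(g(r)+1)=rg(r)+r$.

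The main obstacle is controlling the interaction between iterates of $\mc{M}$ in $M$ and the restriction quotient: the fibers of $\pi$ need not decompose $M$ as a product of a rank-$1$ and a rank-$(r-1)$ factor, and the bookkeeping for how a median in $M$ projects to a median in the image and simultaneously restricts to a median in a fiber is delicate. A conceptually cleaner alternative --- which would sharpen the bound to $\mc{M}^{g(r)}(A)=\langle A\rangle$ --- would be to prove that every finite rank-$r$ median algebra embeds as a median subalgebra in a product of $r$ trees (equivalently, that the crossing graph of walls is $r$-colourable); if such a local embedding is available, part (3) follows directly by applying part (2) to the finite subalgebra $\langle A_0\rangle$ generated by a finite $A_0\subseteq A$ with $x\in\langle A_0\rangle$. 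Either way, the heart of the argument is ensuring that median depth is not lost when passing between $M$ and its rank-$r$ tree-product approximations.
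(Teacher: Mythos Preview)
Your Parts~(1) and~(2) are correct and essentially coincide with the paper's argument; in particular, your justification of the ``intermediate lemma'' in Part~(2) via Lemma~\ref{van de Vel} is precisely what the paper leaves implicit.

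Part~(3) is where the gap lies. Your induction scheme (``peel off one dimension per step via restriction quotients'') is not an argument as written: a maximal transverse family of $r$ walls does not produce a median morphism to a product of $r$ trees, and you yourself note that the fibre bookkeeping is unclear. Your alternative---embedding an arbitrary finite rank--$r$ median algebra in a product of $r$ trees---is in fact false. A finite median algebra is the vertex set of a finite $\CAT$ cube complex, and such an embedding exists exactly when the hyperplane crossing graph is $r$--colourable. But in dimension~$2$ the crossing graph need not be bipartite: for instance, the convex hull of $\{1,a,b,c,d,e,ab,bc,cd,de,ea\}$ in $\X_{C_5}$ (where $C_5$ is the pentagon) is a finite $2$--dimensional cube complex whose crossing graph contains a $5$--cycle.

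The paper avoids this by embedding only \emph{intervals} in products of $r$ trees, which does work: the walls of $I(a,b)$ form a poset of width $\leq r$, so Dilworth applies. Assuming $M=\langle A\rangle$, for $a,b\in B\subseteq M$ with $\langle B\rangle=M$ one has
\[
I(a,b)=\pi_{ab}(\langle B\rangle)=\langle\pi_{ab}(B)\rangle=\mc{M}^{g(r)}(\pi_{ab}(B))=\pi_{ab}(\mc{M}^{g(r)}(B))\subseteq\mc{M}^{g(r)+1}(B),
\]
using Part~(2) for the third equality and $\pi_{ab}(x)=m(a,b,x)$ with $a,b\in B$ for the last inclusion. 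Thus $\mc{J}(B)\subseteq\mc{M}^{g(r)+1}(B)$ whenever $\langle B\rangle=M$; iterating, $\mc{J}^m(A)\subseteq\mc{M}^{m(g(r)+1)}(A)$. Now Remark~\ref{Bow J} gives $\langle A\rangle\subseteq\hull A=\mc{J}^r(A)\subseteq\mc{M}^{r(g(r)+1)}(A)=\mc{M}^{h(r)}(A)$. So the factor $r$ in $h(r)$ records the $\mc{J}$--depth needed to fill the convex hull, not a rank induction; your counting happened to land on the right formula for the wrong reason.
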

\begin{proof}
Part~(1) is immediate from most constructions of the free median algebra on the set $A$; for instance, see \cite[Lemma~4.2]{Bow-cm} and the 
subsequent paragraph.

Regarding part~(2), let us fix an injective median morphism $M\hookrightarrow T_1\x\dots\x T_d$, where the $T_i$ have rank $1$. Let $\pi_{ij}\colon M\ra T_i\x T_j$ denote the composition with the projection to $T_i\x T_j$. Given $x\in M$, Lemma~\ref{van de Vel} implies that 
$x\in\langle A\rangle$ if and only if $\pi_{ij}(x)\in\langle\pi_{ij}(A)\rangle$ for all $1\leq i<j\leq d$. 

Since each $\pi_{ij}$ is a median morphism, Corollary~\ref{rank 2} shows that:
\[\langle\pi_{ij}(A)\rangle=\mc{M}(\pi_{ij}(A))=\pi_{ij}(\mc{M}(A)).\] 
Thus, given $x\in\langle A\rangle$, there exist points $m_{ij}\in\mc{M}(A)$ such that $\pi_{ij}(x)=\pi_{ij}(m_{ij})$. It follows that:
\[x\in\langle\{m_{ij} \mid 1\leq i<j\leq d\}\rangle.\] 
Since there are at most $\tfrac{d(d-1)}{2}$ distinct points $m_{ij}$, part~(1) yields: 
\[\langle\{m_{ij} \mid 1\leq i<j\leq d\}\rangle=\mc{M}^{g(d)-1}(\{m_{ij} \mid 1\leq i<j\leq d\})\cu\mc{M}^{g(d)-1}(\mc{M}(A))=\mc{M}^{g(d)}(A).\]
Hence $\langle A\rangle\cu\mc{M}^{g(d)}(A)$.

Finally, let us prove part~(3). Since $\rk\langle A\rangle\leq\rk M$, we can safely assume that $M=\langle A\rangle$. Consider two points $a,b\in M$ and recall that the gate-projection $\pi_{ab}\colon M\ra I(a,b)$ is given by $\pi_{ab}(x)=m(a,b,x)$. By Dilworth's lemma, the interval $I(a,b)\cu M$ can be embedded in a product of $r$ rank--$1$ median algebras for all $a,b\in M$ (cf.\ \cite[Proposition~1.4]{Bow2}). 

If $B\cu M$ is a subset with $\langle B\rangle=M$ and $a,b\in B$, then part~(2) yields:
\[I(a,b)=\pi_{ab}(M)=\pi_{ab}(\langle 
B\rangle)=\langle\pi_{ab}(B)\rangle=\mc{M}^{g(r)}(\pi_{ab}(B))=\pi_{ab}(\mc{M}^{g(r)}(B))\cu\mc{M}^{g(r)+1}(B).\]
It follows that $\mc{J}(B)\cu\mc{M}^{g(r)+1}(B)$ for every subset $B\cu M$ with $\langle B\rangle=M$. Observing that:
\[\mc{J}^{k+1}(B)=\mc{J}(\mc{J}^k(B))\cu\mc{M}^{g(r)+1}(\mc{J}^k(B)),\] 
we inductively obtain $\mc{J}^m(B)\cu\mc{M}^{m(g(r)+1)}(B)$ for all $m\geq 1$. In particular, by Remark~\ref{Bow J}:
\[\langle A\rangle\cu\hull A=\mc{J}^r(A)\cu\mc{M}^{r(g(r)+1)}(A)=\mc{M}^{h(r)}(A).\]
This concludes the proof of the proposition.
\end{proof}

\begin{rmk}
The bounds provided by Proposition~\ref{generating median algebras} are highly non-sharp. For instance, if $\rk M\leq 2$, a slightly more careful use of Lemma~\ref{iterated medians in rank 2} would show that $\langle A\rangle=\mc{M}^2(A)$ for every $A\cu M$, while the proposition only gives $\langle A\rangle=\mc{M}^{244}(A)$. For the purposes of this paper, we only care that such bounds exist and only depend on the rank of $M$.
\end{rmk}

\subsection{Approximate subalgebras of cubulated groups.}\label{fg sect}

This subsection is devoted to the proof of Theorem~\ref{cmp und intro} and to a few examples of how this result can fail for automorphisms that do not preserve the coarse median structure (Example~\ref{mother of all evil}). Towards the end, we use similar techniques to prove Corollaries~\ref{cmp fi overgroups} and~\ref{Nielsen intro}.

Our main focus will be the following result. Recall that, if $\varphi$ is a coarse-median preserving automorphism of a cocompactly cubulated group, Lemma~\ref{fix approximate subalgebra} guarantees that the subgroup $\Fix\varphi$ is an approximate median subalgebra. Thus, Theorem~\ref{cmp und intro} immediately follows from:

\begin{thm}\label{AMS thm}
Let $G\acts X$ be a proper cocompact action on a $\CAT$ cube complex. Let $[\mu_X]$ be the induced coarse median structure on $G$. If a subgroup $H\leq G$ is an approximate median subalgebra of $(G,[\mu_X])$, then:
\begin{enumerate}
\item $H$ is finitely generated and undistorted in $G$;
\item $H$ admits a proper cocompact action on a $\CAT$ cube complex.
\end{enumerate}
\end{thm}

As a first step, we need to show that the subgroup $H$ in Theorem~\ref{AMS thm} is finitely generated. The proof of this is a straightforward adaptation of an argument due to Cooper and Paulin for fixed subgroups of automorphisms of hyperbolic groups \cite{Cooper-fix,Paulin-Fourier}.

\begin{prop}\label{AMS fg}
Let $G\acts X$ be a proper cocompact action on a $\CAT$ cube complex. If a subgroup $H\leq G$ is an approximate median subalgebra of $(G,[\mu_X])$, then $H$ is finitely generated.
\end{prop}
\begin{proof}
Fix a base vertex $p\in X$. The main observation is the following.

\smallskip
{\bf Claim:} \emph{If $x_n\in H\cdot p$ is a diverging sequence, then there exists an element $h\in H$ such that $d(p,hx_n)<d(p,x_n)$ holds for infinitely many values of $n$.}

\smallskip\noindent
\emph{Proof of Claim.}
Since $H$ is an approximate median subalgebra of $(G,[\mu_X])$, there exists $L\geq 0$ such that all medians of points in $H\cdot p$ lie in the $L$--neighbourhood of $H\cdot p$ in $X$.

Passing to a subsequence, we can assume that the vertices $x_n$ converge to a point in the Roller boundary $\xi\in\partial X$. Recalling that $X^{(0)}$ is discrete in the Roller compactification and that the median map is continuous, there exist integers $M(n)\geq 0$ such that, for every $m\geq M(n)$, we have:
\[m(p,x_n,\xi)=m(p,x_n,x_m).\]
In particular, there exist elements $h_n\in H$ such that $h_np$ is $L$--close to $m(p,x_n,\xi)$. 

Now, since $x_n\ra\xi$, the medians $m(p,x_n,\xi)$ diverge, and so do the points $h_np$. In particular, there exist indices $i<j$ such that:
\[d(p,h_ip)+2L<d(p,h_jp).\] 
Since, for $m\geq M(j)$, the point $h_jp$ is $L$--close to the median $m(p,x_j,x_m)$, we also have:
\[d(p,h_jp)+d(h_jp,x_m)\leq d(p,x_m)+2L.\]

In conclusion, setting $h:=h_ih_j^{-1}$, we obtain, for all $m\geq M(j)$:
\begin{align*}
d(p,hx_m)=d(p,h_ih_j^{-1}x_m)&\leq d(p,h_ip)+d(h_ip,h_ih_j^{-1}x_m)=d(p,h_ip)+d(h_jp,x_m) \\
&\leq d(p,h_ip)+d(p,x_m)-d(p,h_jp)+2L \\
&<d(p,x_m).
\end{align*}
\hfill$\blacksquare$

\smallskip
Now, suppose for the sake of contradiction that $H$ is not finitely generated. Write $H$ as the union of an infinite ascending chain of subgroups $H_1\lneq H_2\lneq\dots$, where $H_{n+1}=\langle H_n,h_{n+1}\rangle$ for some $h_{n+1}\in H$. Possibly replacing $h_{n+1}$, we can assume that the point $x_{n+1}:=h_{n+1}p$ minimises the distance to $p$ within the set $H_nh_{n+1}\cdot p$.

The Claim provides an element $h\in H$ such that $d(p,hx_n)<d(p,x_n)$ occurs infinitely often. Since $h\in H$, there exists $N\geq 0$ such that $h\in H_n$ for all $n\geq N$. This contradicts the fact that $x_{n+1}$ minimises the distance to $p$ within $H_n\cdot x_{n+1}$ for $n\geq N$.
\end{proof}

Along with Propositions~\ref{approx subalgebras} and~\ref{AMS fg}, the following is the only missing ingredient in the proof of Theorem~\ref{AMS thm}. We refer the reader to the proof sketch in the Introduction.

\begin{lem}\label{cubical undistortion criterion}
Let $G\acts X$ be a proper cocompact action on a $\CAT$ cube complex. Consider a subgroup $H\leq G$. Suppose that there exists an $H$--invariant median subalgebra $M\cu X^{(0)}$ such that the action $H\acts M$ is cofinite. Then:
\begin{enumerate}
\item $H$ is finitely generated and undistorted in $G$; 
\item $H$ admits a proper cocompact action on a $\CAT$ cube complex.
\end{enumerate}
\end{lem}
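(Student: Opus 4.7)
The plan is to build $\mc{W}$ as the Chepoi--Roller cubulation of the median algebra $M$ itself, then verify proper cocompactness of $H\acts\mc{W}$ and compare its combinatorial metric to that of $X$. Since $M\cu X^{(0)}$ is a median subalgebra of a discrete median algebra, it is itself discrete (intervals satisfy $I_M(v,w)=I_X(v,w)\cap M$, which is finite) and has rank $\rk M\leq\rk X^{(0)}=\dim X<+\infty$. As recalled in the proof of Corollary~\ref{cmp fi overgroups}, the construction of \cite[Section~10]{Roller} (or the earlier \cite[Theorem~6.1]{Chepoi}) then yields a finite-dimensional $\CAT$ cube complex $\mc{W}$ with $\mc{W}^{(0)}=M$, whose hyperplanes correspond bijectively to the walls of $M$ and in which two vertices are joined by an edge precisely when a single wall of $M$ separates them.

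For part~(2), the action $H\acts\mc{W}$ is clearly proper: the stabiliser in $H$ of each $v\in M$ is contained in the stabiliser in $G$, which is finite by properness of $G\acts X$. Cocompactness will then follow from finite-dimensionality and cofiniteness on vertices, provided we verify that $\mc{W}$ is locally finite. To this end, fix $v\in M$ and suppose $\mf{w}_M$ is a wall of $M$ adjacent to $v$, so that there exists a $\mc{W}$-neighbour $w\in M$ of $v$ with $\mscr{W}_M(v|w)=\{\mf{w}_M\}$. Take any combinatorial geodesic from $v$ to $w$ in $X$: its first edge crosses some hyperplane $\mf{w}_X\in\mscr{W}(X)$ adjacent to $v$ in $X$. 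Then $\mf{w}_X$ separates $v$ from $w$, so $\mf{w}_X\cap M$ determines a wall of $M$ separating them, which forces $\mf{w}_X\cap M=\mf{w}_M$. Since distinct walls of $M$ cannot be restrictions of a common $X$-hyperplane, the assignment $\mf{w}_M\mapsto\mf{w}_X$ is an injection from the set of walls of $M$ adjacent to $v$ into the (finite) set of hyperplanes of $X$ adjacent to $v$. Hence $\mc{W}$ is locally finite at $v$, and $H\acts\mc{W}$ is proper and cocompact.

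For part~(1), the Milnor--Schwarz lemma applied to $H\acts\mc{W}$ and $G\acts X$ reduces undistortion of $H$ in $G$ to showing that the inclusion $(M,d_{\mc{W}})\hookrightarrow(X^{(0)},d_X)$ is a quasi-isometric embedding; in fact, it is $1$-Lipschitz. Indeed, $d_{\mc{W}}(v,w)$ equals $|\mscr{W}_M(v|w)|$ and $d_X(v,w)$ equals $|\mscr{W}_X(v|w)|$, and Remark~\ref{halfspaces of subsets}(1) asserts that the restriction map $\res_M\colon\mscr{H}_M(X)\ra\mscr{H}(M)$ is a surjection of pocsets; this immediately induces a surjection $\mscr{W}_X(v|w)\twoheadrightarrow\mscr{W}_M(v|w)$ exactly as in the previous paragraph. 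Hence $d_{\mc{W}}(v,w)\leq d_X(v,w)$ for all $v,w\in M$, concluding the proof.

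The main technical subtlety is the local finiteness of $\mc{W}$: a priori the Chepoi--Roller cubulation of a median subalgebra of a locally finite cube complex need not be locally finite, as the restriction map $\res_M$ is typically very far from injective. The ``first-hyperplane-on-a-geodesic'' argument produces, for each wall of $M$ adjacent to $v$, a canonical preimage under $\res_M$ lying in the finite set of $X$-hyperplanes adjacent to $v$, thereby converting local finiteness of $X$ into local finiteness of $\mc{W}$.
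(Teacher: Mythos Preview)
Your argument is correct, but the exposition in part~(1) is muddled. The inequality you prove, $d_{\mc{W}}(v,w)\leq d_X(v,w)$, is \emph{not} the statement that the inclusion $(M,d_{\mc{W}})\hookrightarrow(X,d_X)$ is $1$--Lipschitz; that would be the opposite inequality. You also claim to need a full quasi-isometric embedding but only establish one of the two bounds, then declare the proof complete. The inequality you have proved is in fact the nontrivial one for undistortion (the other direction, $d_X\leq C\cdot d_{\mc{W}}+C$, follows from the trivial bound $|h|_G\leq C'|h|_H$ valid for any subgroup, transported through the two Milnor--Schwarz quasi-isometries and the cofiniteness of $H\acts M$), so the argument does go through --- but this should be said.

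The paper's proof takes a slightly different route. Rather than proving local finiteness of $\mc{W}$ via your ``first-hyperplane-on-a-geodesic'' injection, it observes that $M$ is coarsely connected in $(X,d_X)$ (since $H$ is finitely generated and $H\acts M$ is cofinite), and deduces a \emph{uniform} bound $m$ on the cardinality of the fibres of $\res_M\colon\mscr{H}_M(X)\to\mscr{H}(M)$. This yields the full bi-Lipschitz comparison $d_{\mc{W}}\leq d_X\leq m\cdot d_{\mc{W}}$ at once, from which both properness of $H\acts\mc{W}$ and the quasi-isometric embedding follow immediately. Your local-finiteness argument is a pleasant alternative, but note that it only controls the fibres of $\res_M$ over walls \emph{adjacent to a given vertex}, not uniformly; this is why you needed the separate (and, as written, incomplete) argument for the metric comparison.
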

\begin{proof}
Halfspaces and hyperplanes of the cube complex $X$, as usually defined, correspond exactly to halfspaces and hyperplanes of the median algebra $X^{(0)}$. As customary, we write $\mscr{H}(X)$ and $\mscr{W}(X)$ with the meaning of $\mscr{H}(X^{(0)})$ and $\mscr{W}(X^{(0)})$. By Remark~\ref{halfspaces of subsets}(1), we have a natural surjection ${\rm res}_M\colon\mscr{H}_M(X)\ra\mscr{H}(M)$.

Since $H$ acts cofinitely on the subalgebra $M$, it is an approximate median subalgebra of $(G,[\mu_X])$ and Proposition~\ref{AMS fg} implies that $H$ is finitely generated. Thus, every $H$--orbit in $X$ is coarsely connected and, since $H\acts M$ is cofinite, $M$ is coarsely connected as well. It follows that there exists a uniform upper bound $m$ to the cardinality of the fibres of the map ${\rm res}_M$.

As in \cite[Section~10]{Roller}, \cite[Theorem~6.1]{Chepoi}, we can construct a $\CAT$ cube complex $X(M)$ such that $M$ is naturally isomorphic to the median algebra $X(M)^{(0)}$. Given $x,y\in M$, let us denote by $d(x,y)$ and $d_M(x,y)$ their distance in the $1$--skeleta of $X$ and $X(M)$, respectively. 

By construction, $d_M(x,y)$ coincides with the number of walls of $M$ separating $x$ and $y$. It follows from the above discussion on ${\res_M}$ that:
\[d_M(x,y)\leq d(x,y)\leq m\cdot d_M(x,y)\]
for all $x,y\in M$. Thus, the identification between $X(M)^{(0)}$ and $M\cu X^{(0)}$ gives a quasi-isometric embedding $X(M)\ra X$ that is equivariant with respect to the inclusion $H\hookrightarrow G$.

The action $H\acts (M,d_M)$ is cofinite by assumption, and it follows from the above inequalities that it is also proper. This shows that the induced action $H\acts X(M)$ is proper and cocompact, proving part~(2). The Milnor-Schwarz Lemma now implies that the inclusion $H\hookrightarrow G$ is a quasi-isometric embedding, which proves part~(1).
\end{proof}

\begin{proof}[Proof of Theorem~\ref{AMS thm}]
For any vertex $p\in X$, the orbit $H\cdot p$ is an approximate median subalgebra of $X$. By Proposition~\ref{approx subalgebras}, the subalgebra $M:=\langle H\cdot p\rangle$ is at finite Hausdorff distance from $H\cdot p$. Since $X$ is locally finite, it follows that the action $H\acts M$ is cofinite, hence Lemma~\ref{cubical undistortion criterion} shows that $H$ is finitely generated, undistorted and cocompactly cubulated.
\end{proof}

As discussed above, this completes the proof of Theorem~\ref{cmp und intro}. The next example shows that, even for automorphisms of RAAGs, all of the claims in the statement of Theorem~\ref{cmp und intro} can fail if the automorphism does not preserve the coarse median structure.

\begin{ex}\label{mother of all evil}
Here is a recipe to construct automorphisms with unpleasant fixed subgroups. Consider a group $G$ and a homomorphism $\alpha\colon G\ra\Z$. These data define an automorphism $\psi\in\aut(G\x\Z)$ by the formula:
\[\psi(g,n):=(g,n+\alpha(g)).\]
It is clear that $\Fix\psi=\ker\alpha\x\Z$.

Now, consider the situation where $G$ is a right-angled Artin group $\A_{\G}$ and $\alpha\colon\A_{\G}\ra\Z$ takes all standard generators to $+1$. The resulting automorphism $\psi\in\aut(\A_{\G}\x\Z)$ is a product of finitely many twists (as defined in Subsection~\ref{of RAAGs sect}) and we have $\Fix\psi=BB_{\G}\x\Z$, where $BB_{\G}$ denotes the Bestvina--Brady subgroup of $\A_{\G}$ \cite{Bestvina-Brady}.

We apply this construction to obtain examples where $\Fix\psi$ fails to have the three properties provided by Theorem~\ref{cmp und intro}.
\begin{enumerate}
\item The subgroup $BB_{\G}$ is finitely generated if and only if $\G$ is connected \cite{Meier-VanWyck}. For instance, there exists $\psi\in\aut(F_2\x\Z)$ such that $\Fix\psi$ is not finitely generated.
\item If $\A_{\G}$ is freely irreducible, directly irreducible and non-cyclic, then $BB_{\G}$ is finitely generated and quadratically distorted \cite[Theorem~1.1]{Tran-distortion}. This gives examples where $\Fix\psi$ is finitely generated, but distorted.
\item As shown in \cite[Main Theorem]{Bestvina-Brady}, the finiteness properties and homological finiteness properties of $BB_{\G}$ are governed by the homology and homotopy groups of the flag simplicial complex $L_{\G}$ determined by $\G$. The same is true of $\Fix\psi=BB_{\G}\x\Z$ \cite{MMVW,Bux-Gonzalez}. In particular, if $L_{\G}$ is not contractible, then $\Fix\psi$ is not of type $F$ (hence not cocompactly cubulated, since there is no torsion).

This can even be achieved while ensuring that $\Fix\varphi$ is undistorted: by \cite[Theorem~1.1]{Tran-distortion}, it suffices to make sure that $\A_{\G}$ splits as a product.
\end{enumerate}

We emphasise that, by embedding $\A_{\G}\x\Z$ as a parabolic subgroup of a larger RAAG and suitably extending the automorphism $\psi$, we can ensure that all these bad behaviours also occur for automorphisms of \emph{irreducible} RAAGs.
\end{ex}

We conclude this subsection by proving Corollaries~\ref{cmp fi overgroups} and~\ref{Nielsen intro}. All that is required is Proposition~\ref{cmp prop intro}, Proposition~\ref{approx subalgebras} and (part of) Lemma~\ref{cubical undistortion criterion}.

\begin{proof}[Proof of Corollary~\ref{cmp fi overgroups}]
Let $H\leq G$ be a finite-index subgroup with a proper cocompact action on a $\CAT$ cube complex $H\acts X$. Replacing $H$ with a finite-index subgroup, it is not restrictive to suppose that $H\lhd G$. By our assumption, the conjugation action $G\acts H$ preserves the coarse median structure $[\mu]$ induced on $H$ by $H\acts X$.

It is well-known that the concept of induced representation can be generalised to actions on metric spaces (see e.g.\ \cite[Subsection~2.1]{Bridson-rhombic} or \cite[Subsection~2.2]{BBF-QT}). In our context, this yields a proper action $G\acts X_1\x\dots\x X_n$, where $n$ is the index of $H$ in $G$ and each $X_i$ is isomorphic to $X$. Since $H$ is normal, each factor is preserved by $H$ and each action $H\acts X_i$ can be made equivariantly isomorphic to $H\acts X$ by twisting it by an automorphism of $H$ corresponding to a conjugation by an element of $G$.

Since $[\mu]$ is preserved by the conjugation action $G\acts H$, it is the coarse median structure induced by all the cubulations $H\acts X_i$. This implies that, for every finite subset $A\cu X^{(0)}$, the orbit $H\cdot A$ is an approximate median subalgebra. Since $H$ is normal, we can choose a finite subset $A\cu X^{(0)}$ so that $H\cdot A$ is $G$--invariant. Proposition~\ref{approx subalgebras} guarantees that the $G$--invariant median subalgebra $M:=\langle H\cdot A\rangle$ is at finite Hausdorff distance from $H\cdot A$. Since each $X_i$ is locally finite, this implies that the action $G\acts M$ is cofinite.

Since $M$ is a discrete median algebra, the natural $\CAT$ cube complex $X(M)$ with $M$ as its $0$--skeleton (as in \cite[Section~10]{Roller} or \cite[Theorem~6.1]{Chepoi}) gives the required cocompact cubulation of $G$. Here properness of the $G$--action on $X(M)$ can be checked as in the proof of Lemma~\ref{cubical undistortion criterion}, using the fact that $M$ is coarsely connected to conclude that $X(M)$ and $\prod X_i$ induce bi-Lipschitz equivalent metrics on $M$.
\end{proof}

\begin{proof}[Proof of Corollary~\ref{Nielsen intro}]
Let $G=\A_{\G}$ or $G=\W_{\G}$. Consider a finite subgroup $F\leq\out G$ as in the statement. Let $\pi\colon\aut G\ra\out G$ be the quotient projection. Our goal is to construct a proper, cocompact action on a $\CAT$ cube complex $\pi^{-1}(F)\acts Y$. We can then take $Q$ to be the quotient of $Y$ by the finite-index normal subgroup $G\simeq\ker\pi\lhd\pi^{-1}(F)$.

Let $G\acts X$ be the standard action on the universal cover of the Salvetti/Davis complex. In both cases, Proposition~\ref{cmp prop intro} shows that $F$ preserves the coarse median structure on $G$ induced by this action. Thus, Corollary~\ref{cmp fi overgroups} provides the required proper cocompact action $\pi^{-1}(F)\acts Y$.
\end{proof}

\subsection{Staircases in cube complexes.}\label{staircases sect}

In the rest of Section~\ref{fix cmp sect}, our goal is to obtain a \emph{quasi-convexity} criterion for median subalgebras of cube complexes, which will lead to the proof of Theorem~\ref{U_0 cc intro}. Ultimately, we will restrict to universal covers of Davis/Salvetti complexes for right-angled groups and an important point will be that they do not admit infinite \emph{staircases}.

In this subsection, we study staircases in general $\CAT$ cube complexes.

\begin{defn}\label{staircase defn}
Let $M$ be a median algebra.
\begin{enumerate}
\item A \emph{length--$n$ staircase} in $M$ is the data of two chains of halfspaces $\mf{h}_1\supsetneq\dots\supsetneq\mf{h}_n$ and $\mf{k}_1\supsetneq\dots\supsetneq\mf{k}_n$ such that $\mf{h}_i$ is transverse to $\mf{k}_j$ for $j\leq i$, while $\mf{k}_{i+1}\subsetneq\mf{h}_i$. 
\item The \emph{staircase length} of $M$ is the supremum of $n\in\N$ such that $M$ has a length--$n$ staircase.
\end{enumerate}
Figure~\ref{staircase} depicts part of a staircase of length $\geq 5$.
\end{defn}

When speaking of staircases in relation to a $\CAT$ cube complex $X$, we always refer to the median algebra $M=X^{(0)}$. Note that the above notion of staircase seems to be a bit more general than the one in \cite[p.~51]{Hagen-Susse}: given hyperplanes bounding halfspaces as in Definition~\ref{staircase defn}, there might not be a convex subcomplex of $X$ with exactly these hyperplanes.

\begin{figure} 
\begin{tikzpicture}[smooth]
\draw[cyan,thick] plot[tension=0.3] coordinates{ (-1.5,-2) (-1.5,-1.5) (-2.5,-1.5)};
\draw[cyan,thick] plot[tension=0.3] coordinates{ (-0.5,-2) (-0.5,-0.5) (-2.5,-0.5)};
\draw[cyan,thick] plot[tension=0.3] coordinates{ (0.5,-2) (0.5,0.5) (-2.5,0.5)};
\draw[cyan,thick] plot[tension=0.3] coordinates{ (1.5,-2) (1.5,1.5) (-2.5,1.5)};
\draw[fill,magenta,thick] (-2,-2) -- (-2,2);
\draw[fill,magenta,thick] (-1,-2) -- (-1,2);
\draw[fill,magenta,thick] (0,-2) -- (0,2);
\draw[fill,magenta,thick] (1,-2) -- (1,2);
\draw[fill,magenta,thick] (2,-2) -- (2,2);
\draw [->,cyan,thick] (-2.25,-1.5) -- (-2.25, -1.25);
\draw [->,cyan,thick] (-2.25,-0.5) -- (-2.25, -0.25);
\draw [->,cyan,thick] (-2.25,0.5) -- (-2.25, 0.75);
\draw [->,cyan,thick] (-2.25,1.5) -- (-2.25, 1.75);
\draw [->,magenta,thick] (-2,1.75) -- (-1.75, 1.75);
\draw [->,magenta,thick] (-1,1.75) -- (-0.75, 1.75);
\draw [->,magenta,thick] (0,1.75) -- (0.25, 1.75);
\draw [->,magenta,thick] (1,1.75) -- (1.25, 1.75);
\draw [->,magenta,thick] (2,1.75) -- (2.25, 1.75);
\node[left,cyan] at (-2.25,1.75) {$\mf{h}_4$};
\node[left,cyan] at (-2.25,0.75) {$\mf{h}_3$};
\node[left,cyan] at (-2.25,-0.25) {$\mf{h}_2$};
\node[left,cyan] at (-2.25,-1.25) {$\mf{h}_1$};
\node[above,magenta] at (-1.75, 1.75) {$\mf{k}_1$};
\node[above,magenta] at (-0.75, 1.75) {$\mf{k}_2$};
\node[above,magenta] at (0.25, 1.75) {$\mf{k}_3$};
\node[above,magenta] at (1.25, 1.75) {$\mf{k}_4$};
\node[above,magenta] at (2.25, 1.75) {$\mf{k}_5$};
\end{tikzpicture}
\caption{}
\label{staircase} 
\end{figure}

In view of the following discussion, it is convenient to introduce a notation for gate-projections to intervals. Given a median algebra $M$ and points $x,y\in M$, we denote by $\pi_{xy}\colon M\ra I(x,y)$ the map $\pi_{xy}(z)=m(x,y,z)$.

\begin{lem}\label{no staircases +}
Let $M$ be a median algebra of rank $r$ and staircase length $d$. If there exist halfspaces $\mf{k}_1\supsetneq\dots\supsetneq\mf{k}_n$ and points $x,y\in\mf{k}_1^*$ such that $\pi_{xy}(\mf{k}_1)\supsetneq\dots\supsetneq\pi_{xy}(\mf{k}_n)$, then $n\leq 2rd$.
\end{lem}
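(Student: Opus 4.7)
The plan is to build, for each $i\in\{1,\dots,n-1\}$, a halfspace $\mf{h}_i\in\mscr{H}(M)$ bounded by a hyperplane $\mf{w}_i\in\mscr{W}(x|y)$, and then extract from them a sub-staircase of length comparable to $n/(2r)$. Since gate-projections are median morphisms, each $\pi_{xy}(\mf{k}_i)$ is convex in $I(x,y)$, so the strict chain $\pi_{xy}(\mf{k}_1)\supsetneq\dots\supsetneq\pi_{xy}(\mf{k}_n)$ yields, for every $i<n$, a point $p_i\in\pi_{xy}(\mf{k}_i)\setminus\pi_{xy}(\mf{k}_{i+1})$. Separation of disjoint convex subsets of the median algebra $I(x,y)$, together with the bijection between $\mscr{H}(I(x,y))$ and the restrictions of halfspaces in $\mscr{W}(x|y)$, then provides $\mf{w}_i\in\mscr{W}(x|y)$ with an orientation $\mf{h}_i$ satisfying $\pi_{xy}(\mf{k}_{i+1})\subseteq\mf{h}_i$ and $p_i\in\mf{h}_i^*$.

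The central input is the elementary majority-vote identity: for every $\mf{h}\in\mscr{H}(x|y)\cup\mscr{H}(y|x)$ and every $z\in M$, $\pi_{xy}(z)=m(x,y,z)$ lies in $\mf{h}$ if and only if $z$ lies in $\mf{h}$. Applying this to $\mf{h}=\mf{h}_i$, I would deduce that $\mf{k}_{i+1}\subseteq\mf{h}_i$ (strictly, by transversality) and that $\mf{h}_i$ is transverse to $\mf{k}_j$ for every $j\leq i$: indeed $x,y\in\mf{k}_j^*$ sit on opposite sides of $\mf{w}_i$, while $\mf{k}_j\supseteq\mf{k}_i$ meets both sides of $\mf{w}_i$ thanks to the preimages of $p_i$ and of points of $\pi_{xy}(\mf{k}_{i+1})$. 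A further use of the identity shows that for $i<j$, $p_j\in\pi_{xy}(\mf{k}_j)\subseteq\pi_{xy}(\mf{k}_{i+1})\subseteq\mf{h}_i$, whereas $p_j\in\mf{h}_j^*$; thus $\mf{h}_i\cap\mf{h}_j^*\ni p_j$, which together with $\mf{h}_i\neq\mf{h}_j$ rules out $\mf{h}_i\subseteq\mf{h}_j$. Consequently, whenever $\mf{w}_i$ and $\mf{w}_j$ are nested and $\mf{h}_i,\mf{h}_j$ contain the same element of $\{x,y\}$ (forcing one to be contained in the other), the inclusion must go the ``correct'' way $\mf{h}_i\supsetneq\mf{h}_j$.

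To conclude, I would colour each $\mf{w}_i$ by which of $x,y$ lies in $\mf{h}_i$ and, by pigeonhole, restrict attention to a monochromatic subfamily of size $\geq\lceil(n-1)/2\rceil$. Within this subfamily, an antichain with respect to nesting consists of pairwise-transverse hyperplanes and therefore has cardinality at most $r$; Dilworth's theorem then produces a nested subchain $\mf{w}_{i_1},\dots,\mf{w}_{i_m}$ with $i_1<\dots<i_m$ and $m\geq\lceil(n-1)/(2r)\rceil$. By the previous paragraph, $\mf{h}_{i_1}\supsetneq\dots\supsetneq\mf{h}_{i_m}$, and combined with the sub-chain $\mf{k}_{i_1}\supsetneq\dots\supsetneq\mf{k}_{i_m}$ (using $\mf{k}_{i_{l+1}}\subseteq\mf{k}_{i_l+1}\subseteq\mf{h}_{i_l}$ for the staircase condition) this realises a staircase of length $m$ in $M$. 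Hence $m\leq d$, which gives $n\leq 2rd+1$, i.e.\ the claimed bound up to an immaterial off-by-one. The only delicate point in the argument is the forced direction of nesting in the monochromatic chain: everything else is direct bookkeeping from the majority-vote identity for $\pi_{xy}$ together with the standard Dilworth-type estimate supplied by the finite rank of $M$.
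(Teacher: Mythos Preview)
Your proof is correct and follows essentially the same approach as the paper: choose separating halfspaces $\mf{h}_i$ witnessing $C_{i+1}\subsetneq C_i$, observe they lie in $\mscr{H}(x|y)\cup\mscr{H}(y|x)$, apply pigeonhole plus Dilworth to extract a nested subchain, and verify the staircase conditions via the majority-vote identity (which the paper uses implicitly when passing from $\mf{h}_i\in\mscr{H}_{C_j}(M)$ to $\mf{h}_i\cap\mf{k}_j\neq\emptyset$). Your off-by-one is honest; the paper's proof has the same slip, since it also constructs only $n-1$ halfspaces $\mf{h}_i$ before asserting $k\geq n/2r$.
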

\begin{proof}
The sets $C_i:=\pi_{xy}(\mf{k}_i)$ are convex, for instance by \cite[Lemma~2.2(1)]{Fio1}. Since $C_{i+1}\subsetneq C_i$, there exist halfspaces $\mf{h}_i\in\mscr{H}(M)$ such that $\mf{h}_i\in\mscr{H}_{C_i}(M)$ and $C_{i+1}\cu\mf{h}_i$. 

Since both $\mf{h}_i$ and $\mf{h}_i^*$ intersect $C_i\cu I(x,y)$, we have $\mf{h}_i\in\mscr{H}(x|y)\sqcup\mscr{H}(y|x)$ for all $i$. Possibly swapping $x$ and $y$, we can assume that at least $n/2$ of the $\mf{h}_i$ lie in $\mscr{H}(x|y)$. By Dilworth's lemma, there exist $k\geq n/2r$ and indices $i_1<\dots<i_k$ such that $\mf{h}_{i_1},\dots,\mf{h}_{i_k}$ lie in $\mscr{H}(x|y)$ and no two of them are transverse. Up to re-indexing, we can assume that these are $\mf{h}_1,\dots,\mf{h}_k$. 

Since $C_j$ is contained in $\mf{h}_i$ if and only if $j>i$, we must have $\mf{h}_1\supsetneq\dots\supsetneq\mf{h}_k$. Note that $y\in\mf{h}_i\cap\mf{k}_j^*$ and $x\in\mf{h}_i^*\cap\mf{k}_j^*$ for all $i,j$. If $j\leq i$, we have $\mf{h}_i\in\mscr{H}_{C_j}(X)$, hence $\mf{h}_i\cap\mf{k}_j$ and $\mf{h}_i^*\cap\mf{k}_j$ are both nonempty. This shows that $\mf{h}_i$ and $\mf{k}_j$ are transverse for $j\leq i$, while the fact that $C_{i+1}\cu\mf{h}_i$ implies that $\mf{k}_{i+1}\cu\mf{h}_i$. In conclusion, the $\mf{h}_i$ and $\mf{k}_j$ form a length--$k$ staircase with $k\geq n/2r$. Since $M$ has staircase length $d$, we have $n\leq 2rk\leq 2rd$.
\end{proof}

\begin{lem}\label{preservation of finite staircase length}
Let $X$ be a $\CAT$ cube complex of dimension $r$ and staircase length $d$. Consider vertices $x,y\in X$ and $z\in I(x,y)$. Let $\alpha\cu I(x,z)$ be a (combinatorial) geodesic from $x$ to $z$. Then the median subalgebra $M=X^{(0)}\cap I(x,y)\cap\pi_{xz}^{-1}(\alpha)$ has staircase length $\leq d(1+2r^2)$.
\end{lem}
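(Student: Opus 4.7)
The plan is to lift a length-$n$ staircase in $M$ to $X$ and then combine a direct ``staircase in $I(z,y)$'' argument with Lemma~\ref{no staircases +}. Let $(\mf{h}_i,\mf{k}_j)_{i,j=1}^n$ be a staircase in $M$ and, for each $i,j$, use the surjection $\res_M\colon\mscr{H}_M(X)\ra\mscr{H}(M)$ of Remark~\ref{halfspaces of subsets} to pick lifts $\mf{H}_i,\mf{K}_j\in\mscr{H}(X)$. Because $M\cu I(x,y)$ and $z\in I(x,y)$, the walls of all these lifts lie in $\mscr{W}(x|y)=\mscr{W}(x|z)\sqcup\mscr{W}(z|y)$.

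The crucial geometric observation is that any two halfspaces $\mf{H},\mf{H}'\in\mscr{H}(X)$ with walls in $\mscr{W}(x|z)$ give comparable (never transverse) restrictions to $M$: for $v\in M$, we have $v\in\mf{H}$ iff $\pi_{xz}(v)\in\mf{H}\cap\alpha$, and $\mf{H}\cap\alpha$ is an initial/terminal segment of the totally ordered geodesic $\alpha$, so two such segments are always nested. In particular, for each $i$, the transversality of $\mf{h}_i$ and $\mf{k}_i$ in $M$ forces at least one of the walls $\mf{W}_{\mf{H}_i},\mf{W}_{\mf{K}_i}$ to lie in $\mscr{W}(z|y)$. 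This lets me partition $\{1,\dots,n\}$ into $I_A=\{i:\mf{W}_{\mf{H}_i},\mf{W}_{\mf{K}_i}\in\mscr{W}(z|y)\}$ and $I_B=\{i:\text{exactly one of } \mf{W}_{\mf{H}_i},\mf{W}_{\mf{K}_i}\text{ lies in }\mscr{W}(x|z)\}$.

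On $I_A$, I use that $I(z,y)\cu M$ (any $v\in I(z,y)$ satisfies $\pi_{xz}(v)=z\in\alpha$) and that the gate-projection $\pi_{zy}\colon X\ra I(z,y)$ is a median morphism that respects halfspace membership for walls in $\mscr{W}(z|y)$. A short computation with the median shows that $\pi_{zy}$ transports any witness of strict inclusion or fourfold transversality from $M$ into $I(z,y)$. Hence the sub-staircase on $I_A$ descends to a genuine staircase in the convex subalgebra $I(z,y)\cu X$, whose staircase length is at most $d$. This gives $|I_A|\leq d$.

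On $I_B$, let $\mf{L}_i$ denote the unique one of $\mf{H}_i,\mf{K}_i$ whose wall lies in $\mscr{W}(x|z)$; the walls of the $\mf{L}_i$ are then distinct walls of $\alpha$. Since antichains of pairwise-transverse halfspaces in $X$ have cardinality at most $r$, Dilworth's lemma extracts from $(\mf{L}_i)_{i\in I_B}$ a genuine chain $\mf{L}_{i_1}\supsetneq\dots\supsetneq\mf{L}_{i_m}$ in $X$ with $m\geq|I_B|/r$. I then apply Lemma~\ref{no staircases +} to this chain in $X$: picking $u,w\in\mf{L}_{i_1}^*$ so that the projections $\pi_{uw}(\mf{L}_{i_s})$ remain strictly nested (using the point $x$, together with a vertex of $\alpha$ on the $x$-side of the chain's outermost wall, to witness strictness) gives $m\leq 2rd$, hence $|I_B|\leq 2r^2d$. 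Combining, $n=|I_A|+|I_B|\leq d+2r^2d=d(1+2r^2)$. The main obstacle is arranging the application of Lemma~\ref{no staircases +} in case $I_B$: one must simultaneously ensure that $u,w$ lie in $\mf{L}_{i_1}^*$ and that projecting the Dilworth-extracted chain preserves strict nesting, and more delicately, one must account for the possibility that $\mf{L}_i$ comes from either the $\mf{h}$-chain or the $\mf{k}$-chain of the staircase, so that the constant $2r^2$ is obtained by a single Dilworth argument rather than two (which would yield a worse $4r^2$).
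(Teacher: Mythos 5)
Your decomposition of the staircase indices into $I_A$ (both halfspaces have walls in $\mscr{W}(z|y)$) and $I_B$ (exactly one wall lies in $\mscr{W}(x|z)$) matches the paper's after its normalisation step (replacing the staircase with its dual so that all $\mf{h}_i,\mf{k}_j\in\mscr{H}(x|y)$, and defining thresholds $a,b$ via the position of $z$; the paper then shows $\min\{a,b\}=0$ and $b\leq a+1$, so that $I_A$ is a terminal block and $I_B$ an initial block). Your treatment of $I_A$ via $I(z,y)^{(0)}\cu M$ is also essentially the paper's Claim~2, just phrased through the projection $\pi_{zy}$ rather than the restriction bijection of Remark~\ref{halfspaces of subsets}(2). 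Both give $|I_A|\leq d$.

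The genuine gap is in $I_B$, and it is not the $2r^2$-vs-$4r^2$ worry you flag at the end. You apply Dilworth to the family $(\mf{L}_i)_{i\in I_B}$ — the lifts whose walls lie in $\mscr{W}(x|z)$ — and then try to apply Lemma~\ref{no staircases +} to the resulting chain, hoping to make the projections $\pi_{uw}(\mf{L}_{i_s})$ strictly nested using $u=x$ and $w$ a vertex of $\alpha$ on the $x$-side. This cannot work: if $u,w\in\mf{L}_{i_1}^*$ then every wall of $I(u,w)$ lies on the $x$-side of the whole chain, so such a wall can only distinguish $\pi_{uw}(\mf{L}_{i_s})$ from $\pi_{uw}(\mf{L}_{i_{s+1}})$ if it happens to be transverse to one of $\mf{L}_{i_s},\mf{L}_{i_{s+1}}$ but not the other, and there is no reason for this to happen. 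Your proposal never invokes the other sides $\mf{K}_i$ of the staircase, which are precisely what should supply the strictness. The paper does the opposite: it applies Dilworth to the lifts $\hat{\mf{k}}_j$ (walls in $\mscr{H}(z|y)$, so $x,z\in\hat{\mf{k}}_1^*$ automatically), extracts a chain $\hat{\mf{k}}_1\supsetneq\dots\supsetneq\hat{\mf{k}}_{a'}$ of length $a'\geq a/r$, projects it via $\pi_{xz}$, and then uses the $\hat{\mf{h}}_j$'s as witnesses: the staircase relations $\mf{h}_j^*\cap\mf{k}_{j+1}=\emptyset$ and $\mf{h}_j^*\cap\mf{k}_j\neq\emptyset$ in $M$ translate to $\pi_{xz}(\hat{\mf{k}}_{j+1})\cap\hat{\mf{h}}_j^*\cap\alpha=\emptyset$ while $\pi_{xz}(\hat{\mf{k}}_j)\cap\hat{\mf{h}}_j^*\cap\alpha\neq\emptyset$, forcing strict nesting of the projections. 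Lemma~\ref{no staircases +} then gives $a'\leq 2rd$, hence $a\leq 2r^2d$. So the two families of the staircase play asymmetric roles (one supplies the chain, the other supplies the witnesses), and you have them reversed, with the witness family dropped entirely. Once the roles are swapped, the paper's normalisation also disposes of your mixing worry: since $\min\{a,b\}=0$, all the $\mf{L}_i$ with $i\in I_B$ come from the same one of the two chains.
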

\begin{proof}
Since $\pi_{xz}(y)=z$ and $x,z\in\alpha$, the three points $x,y,z$ all lie in $M$. Since $M\cu I(x,y)$, every wall of $M$ separates $x$ and $y$. Recall that we use the notation $\mscr{H}(X)$ and $\mscr{W}(X)$ with the meaning of $\mscr{H}(X^{(0)})$ and $\mscr{W}(X^{(0)})$. 

\smallskip
{\bf Claim~1:} \emph{if $\mf{u},\mf{v}\in\mscr{W}(M)$ separate $x$ and $z$, then $\mf{u}$ and $\mf{v}$ are not transverse.}

\smallskip\noindent
\emph{Proof of Claim~1.}
Pick halfspaces $\hat{\mf{h}},\hat{\mf{k}}\in\mscr{H}(X)\cap\mscr{H}(x|z)$ such that $\mf{h}:=\hat{\mf{h}}\cap M\in\mscr{H}(M)$ is bounded by $\mf{u}$ and $\mf{k}:=\hat{\mf{k}}\cap M$ is bounded by $\mf{v}$; this is possible by Remark~\ref{halfspaces of subsets}(1). The intersections $\hat{\mf{h}}\cap\alpha$ and $\hat{\mf{k}}\cap\alpha$ are subsegments of $\alpha$ containing $z$. Without loss of generality, we have $\hat{\mf{h}}\cap\alpha\cu\hat{\mf{k}}\cap\alpha$. Then $\hat{\mf{h}}\cap\hat{\mf{k}}^*\cap\alpha=\emptyset$, hence $\emptyset=\hat{\mf{h}}\cap\hat{\mf{k}}^*\cap M=\mf{h}\cap\mf{k}^*$, proving the claim.
\hfill$\blacksquare$

\smallskip
{\bf Claim~2:} \emph{if $\hat{\mf{h}},\hat{\mf{k}}\in\mscr{H}(z|y)$ are halfspaces of $X$, then $\hat{\mf{h}}$ and $\hat{\mf{k}}$ are transverse if and only if $\hat{\mf{h}}\cap M$ and $\hat{\mf{k}}\cap M$ are transverse halfspaces of $M$.}

\smallskip\noindent
\emph{Proof of Claim~2.}
The vertex set of the interval $I(z,y)\cu X$ is entirely contained in $M$, since $\pi_{xz}(I(z,y))=\{z\}$. Thus, $I(z,y)$ is a convex subset of both $X$ and $M$. Remark~\ref{halfspaces of subsets}(2) then shows that $\hat{\mf{h}}$ and $\hat{\mf{k}}$ are transverse if and only if $\hat{\mf{h}}\cap I(z,y)$ and $\hat{\mf{k}}\cap I(z,y)$ are transverse, if and only if $\hat{\mf{h}}\cap M$ and $\hat{\mf{k}}\cap M$ are transverse.
\hfill$\blacksquare$

\smallskip
Now, suppose that $M$ contains a length--$n$ staircase. Thus $M$ has halfspaces $\mf{h}_1\supsetneq\dots\supsetneq\mf{h}_n$ and $\mf{k}_1\supsetneq\dots\supsetneq\mf{k}_n$ such that each $\mf{h}_i$ is transverse to all $\mf{k}_j$ with $j\leq i$, while $\mf{k}_{i+1}\cu\mf{h}_i$. 

Since $\mf{k}_n\cu\mf{h}_{n-1}\cu\mf{h}_1$, we have either $\{\mf{h}_1,\mf{k}_n\}\cu\mscr{H}(x|y)$ or $\{\mf{h}_1,\mf{k}_n\}\cu\mscr{H}(y|x)$. If we replace all $\mf{h}_i$ and $\mf{k}_j$ with $\mf{k}^*_{n-i+1}$ and $\mf{h}^*_{n-j+1}$, respectively, we obtain another length--$n$ staircase. Thus, we can assume that $\{\mf{h}_1,\mf{k}_n\}\cu\mscr{H}(x|y)$. It follows that all $\mf{h}_i$ and $\mf{k}_j$ lie in $\mscr{H}(x|y)$.

Let $0\leq a,b\leq n$ be the largest indices such that $z\in\mf{h}_i$ and $z\in\mf{k}_j$ hold for $1\leq i\leq a$ and $1\leq j\leq b$. Since $\mf{h}_1$ and $\mf{k}_1$ are transverse, Claim~1 shows that they cannot both lie in $\mscr{H}(x|z)$. Thus $\min\{a,b\}=0$. Since $\mf{k}_{a+2}\cu\mf{h}_{a+1}$, we have $z\not\in\mf{k}_{a+2}$, hence $b\leq a+1$. In conclusion, either $b=0$, or $(a,b)=(0,1)$.

The halfspaces $\mf{h}_i,\mf{k}_j$ with $i,j>\max\{a,b\}$ all lie in $\mscr{H}(z|y)$ and form a staircase of length $n-\max\{a,b\}$. By Remark~\ref{halfspaces of subsets}(1) and Claim~2, this determines a staircase of halfspaces of $X$. Since $X$ has staircase length $d$, we deduce that $n-\max\{a,b\}\leq d$. 

If $b=1$ and $a=0$, we get $n\leq d+1$ and we are done. If instead $b=0$, then $n\leq a+d$ and the proof is completed with the following claim.

\smallskip
{\bf Claim~3:} \emph{if $b=0$, then $a\leq 2r^2d$.}

\smallskip\noindent
\emph{Proof of Claim~3.}
As a recap, $M$ has halfspaces $\mf{h}_1\supsetneq\dots\supsetneq\mf{h}_a$ in $\mscr{H}(x|z,y)$ and $\mf{k}_1\supsetneq\dots\supsetneq\mf{k}_a$ in $\mscr{H}(x,z|y)$ forming a length--$a$ staircase. By Remark~\ref{halfspaces of subsets}(1), there exist halfspaces $\hat{\mf{h}}_i,\hat{\mf{k}}_j\in\mscr{H}(X)$ such that $\mf{h}_i=\hat{\mf{h}}_i\cap M$ and $\mf{k}_i=\hat{\mf{k}}_i\cap M$. 

By Dilworth's lemma, there exist $a'\geq a/r$ and indices $1\leq j_1<\dots<j_{a'}\leq a$ such that no two among $\hat{\mf{k}}_{j_1},\dots,\hat{\mf{k}}_{j_{a'}}$ are transverse. Thus, up to reindexing, we can assume that $\hat{\mf{k}}_1\supsetneq\dots\supsetneq\hat{\mf{k}}_{a'}$.

Now, since the $\mf{h}_i$ and $\mf{k}_j$ form a staircase in $M$ and $\hat{\mf{h}}_i\in\mscr{H}(x|z)$, we have, for every $1\leq j\leq a'$:
\begin{itemize}
\item $\emptyset=\mf{h}_j^*\cap\mf{k}_{j+1}=\hat{\mf{h}}_j^*\cap\hat{\mf{k}}_{j+1}\cap M$, hence $\pi_{xz}(\hat{\mf{k}}_{j+1})\cap\hat{\mf{h}}_j^*\cap\alpha=\emptyset$;
\item $\emptyset\neq\mf{h}_j^*\cap\mf{k}_j=\hat{\mf{h}}_j^*\cap\hat{\mf{k}}_j\cap M$, hence $\pi_{xz}(\hat{\mf{k}}_j)\cap\hat{\mf{h}}_j^*\cap\alpha\neq\emptyset$.
\end{itemize}
Note moreover that $x,z\in\hat{\mf{k}}_1^*$. If we had $a'>2rd$, Lemma~\ref{no staircases +} would imply that there exists $j$ with $\pi_{xz}(\hat{\mf{k}}_j)=\pi_{xz}(\hat{\mf{k}}_{j+1})$. However, $\pi_{xz}(\hat{\mf{k}}_j)$ intersects $\hat{\mf{h}}_j^*\cap\alpha$ while $\pi_{xz}(\hat{\mf{k}}_{j+1})$ does not.

We conclude that $a\leq ra'\leq 2r^2d$, as required.
\hfill$\blacksquare$

\smallskip
As discussed before Claim~3, this proves the lemma.
\end{proof}

Recall that, if $\G$ is a finite simplicial graph, $\X_{\G}$ and $\mc{Y}_{\G}$ denote the universal covers, respectively, of the Salvetti complex for $\A_{\G}$ and the Davis complex for $\W_{\G}$.

\begin{lem}\label{no staircases}
The staircase length of $\X_{\G}$ and $\mc{Y}_{\G}$ is at most $\#\G^{(0)}$.
\end{lem}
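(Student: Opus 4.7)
The strategy is to bound the staircase length by the number of labels available. Concretely, given a length-$n$ staircase $(\mf{h}_1\supsetneq\cdots\supsetneq\mf{h}_n,\ \mf{k}_1\supsetneq\cdots\supsetneq\mf{k}_n)$ in $\X_\G$, I plan to show that the labels $\g(H_1),\ldots,\g(H_n)\in\G^{(0)}$ (where $H_i\in\mscr{W}(\X_\G)$ bounds $\mf{h}_i$) are pairwise distinct. Since the label map $\g$ takes values in $\G^{(0)}$, this yields $n\leq\#\G^{(0)}$. The hyperplanes $H_1,\ldots,H_n$ are already pairwise non-transverse, corresponding to the chain $\mf{h}_1\supsetneq\cdots\supsetneq\mf{h}_n$; the additional transversality to the $K_j$'s will produce the desired label distinctness.

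I argue by contradiction: suppose $\g(H_i)=\g(H_j)=v$ for some $i<j$. Then $H_i,H_j$ are parallel $v$-hyperplanes (two hyperplanes of $\X_{\G}$ with the same label are never transverse). By the staircase condition $H_j\pf K_j$, the label $w:=\g(K_j)$ is adjacent to $v$ in $\G$ and distinct from $v$. On the other hand, $\mf{k}_j\subsetneq\mf{h}_i$ strictly, so $K_j$ is not transverse to $H_i$ and its carrier lies in the open halfspace $\mf{h}_i$.

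To extract the contradiction, I examine the $w$-edge $f$ contained in the carrier of $K_j$ whose endpoints straddle $H_j$ (witnessing $H_j\pf K_j$): its endpoint $p^-$ in $\mf{h}_i\cap\mf{h}_j^*$ sits in the region between $H_i$ and $H_j$. I then perform a disc-diagram argument in the spirit of Lemma~\ref{disc diagram like}: starting from $p^-$, one uses the commutation $[v,w]=1$ in $\A_\G$ (valid because $v,w$ are adjacent in $\G$) to slide the $w$-edge across each of the finitely many $v$-hyperplanes separating $H_j$ from $H_i$. Each sliding step produces a $2$-cube spanned by a $v$-edge and a $w$-edge, transporting the $w$-edge across the intermediate $v$-hyperplane while keeping it dual to the same $w$-hyperplane $K_j$. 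After finitely many steps the resulting $w$-edge straddles $H_i$, yielding a square witnessing $K_j\pf H_i$, contradicting $\mf{k}_j\subsetneq\mf{h}_i$.

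The principal technical obstacle is to make the sliding procedure rigorous: one must verify that the intermediate $v$-hyperplanes between $H_j$ and $H_i$ really are crossed by consecutive $v$-edges starting at $p^-$ (so the commutation argument applies at each step), and that the process terminates with a $w$-edge straddling $H_i$ rather than being diverted by some unrelated hyperplane. The same strategy adapts without change to the Davis complex $\mc{Y}_\G$ in the Coxeter case, since the hyperplane-labelling data and the commutation structure of $\W_\G$ are formally identical to those of $\A_\G$.
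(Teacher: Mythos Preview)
Your central claim---that the labels $\g(H_1),\dots,\g(H_n)$ are pairwise distinct---is false, so the approach does not work. Here is a length-$2$ staircase in $\X_{C_5}$ (the pentagon, with vertices $1,\dots,5$ and edges $12,23,34,45,51$) in which $\g(H_1)=\g(H_2)=1$. Take $H_1$ dual to the edge $e\to 1$ and $H_2$ dual to $14\to 141$; take $K_1$ to be the $5$--hyperplane dual to $e\to 5$ and $K_2$ the $2$--hyperplane dual to $14\to 142$. Set $\mf{h}_1,\mf{h}_2$ to be the sides containing $1,141$ respectively, $\mf{k}_1$ the side of $K_1$ containing $e$, and $\mf{k}_2$ the side of $K_2$ containing $142$. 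One checks directly that $\mf{h}_1\supsetneq\mf{h}_2$, $\mf{k}_1\supsetneq\mf{k}_2$, that $H_1\pitchfork K_1$, $H_2\pitchfork K_1$, $H_2\pitchfork K_2$, and that $\mf{k}_2\subsetneq\mf{h}_1$ (the carrier of $K_2$ is $14\cdot\langle 1,3\rangle$, which never meets $\mf{h}_1^*$). This is exactly the staircase condition, yet both $H_i$ carry label $1$.

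The failure point in your sliding argument is visible here: between $H_1$ and $H_2$ sits the $4$--hyperplane dual to $1\to 14$, and $4$ is \emph{not} adjacent to $w=\g(K_2)=2$ in $C_5$. So there is no square letting you push a $2$--edge across it, and the carrier of $K_2$ is genuinely trapped inside $\mf{h}_1$. Your assumption that only $v$--hyperplanes separate $H_i$ from $H_j$ (or that all separating hyperplanes have labels adjacent to $w$) is unjustified. The paper's proof avoids this by tracking a different invariant: the sets $\G_j=\g(\mscr{W}(\mf{k}_1^*|\mf{k}_j))\cup\{\g(\mf{k}_j)\}$, which form a strictly increasing chain of subsets of $\G^{(0)}$. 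The strict increase is what requires work, and it exploits the interaction between the two chains rather than the labels along a single chain.
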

\begin{proof}
We only run the proof for $\X_{\G}$, since the argument for $\mc{Y}_{\G}$ is identical. The important property, shared by both complexes, is that there is a map $\g\colon\mscr{W}(\X_{\G})\ra\G^{(0)}$ such that, if $\mf{u},\mf{v}$ are hyperplanes with intersecting carriers, then $\mf{u}$ and $\mf{v}$ are transverse if and only if $\g(\mf{u})$ and $\g(\mf{v})$ are joined by an edge of $\G$. For simplicity, let us extend the map $\g$ to $\mscr{H}(\X_{\G})$, simply by composing it with the $2$-to-$1$ map $\mscr{H}(\X_{\G})\ra\mscr{W}(\X_{\G})$ pairing each halfspace with its hyperplane.

Consider halfspaces $\mf{h}_1\supsetneq\dots\supsetneq\mf{h}_n$ and $\mf{k}_1\supsetneq\dots\supsetneq\mf{k}_n$ such that $\mf{h}_i$ is transverse to all $\mf{k}_j$ with $j\leq i$, while $\mf{k}_{i+1}\subsetneq\mf{h}_i$. We define the following subsets of $\G^{(0)}$:
\[\G_j:=\g(\mf{k}_1^*)\cup\g(\mscr{W}(\mf{k}_1^*|\mf{k}_j))\cup\{\g(\mf{k}_j)\}.\]
It is clear that $\G_j\cu\G_{j+1}$ for all $j\geq 1$. The lemma is immediate from the following claim:

\smallskip
{\bf Claim:} \emph{we have $\G_j\subsetneq\G_{j+1}$ for all $j\geq 1$.}

\smallskip\noindent
Suppose for the sake of contradiction that, for some $j\geq 1$, we have $\G_{j+1}=\G_j$.

Given $\mf{j}\in\mscr{H}(\mf{h}_j^*|\mf{k}_{j+1})$, we have $\mf{j}\cap\mf{k}_1\supseteq\mf{k}_{j+1}\neq\emptyset$. Moreover, $\mf{j}^*\cap\mf{k}_1\neq\emptyset$ and $\mf{j}^*\cap\mf{k}_1^*\neq\emptyset$, since $\mf{j}^*$ contains $\mf{h}_j^*$, which is transverse to $\mf{k}_1$. Thus, for each $\mf{j}\in\mscr{H}(\mf{h}_j^*|\mf{k}_{j+1})$, there are only two possibilities:
\begin{enumerate}
\item[(a)] either $\mf{j}\cap\mf{k}_1^*=\emptyset$, hence $\mf{j}\cu\mf{k}_1$ and $\mf{j}\in\mscr{H}(\mf{k}_1^*|\mf{k}_{j+1})$;
\item[(b)] or $\mf{j}$ is transverse to $\mf{k}_1$. 
\end{enumerate}
Note that no halfspace of type~(a) can contain a halfspace of type~(b). Moreover, each $\mf{j}$ of type~(b) is also transverse to $\mf{k}_j$: we have $\mf{j}\cap\mf{k}_j\supseteq\mf{k}_{j+1}\neq\emptyset$, $\mf{j}\cap\mf{k}_j^*\supseteq\mf{j}\cap\mf{k}_1^*\neq\emptyset$, $\mf{j}^*\cap\mf{k}_j\supseteq\mf{h}_j^*\cap\mf{k}_j\neq\emptyset$ and $\mf{j}^*\cap\mf{k}_j^*\supseteq\mf{j}^*\cap\mf{k}_1^*\neq\emptyset$. Thus, every $\mf{j}$ of type~(b) is transverse to the set $\mscr{H}(\mf{k}_1^*|\mf{k}_j)\cup\{\mf{k}_1^*,\mf{k}_j\}$.

Now, consider a maximal chain of halfspaces $\mf{j}_1\supsetneq\dots\supsetneq\mf{j}_m$ in $\mscr{H}(\mf{h}_j^*|\mf{k}_{j+1})$ with $m\geq 0$. We can enlarge this chain by adding $\mf{j}_0:=\mf{h}_j$ and $\mf{j}_{m+1}=\mf{k}_{j+1}$, which are, respectively, of type~(b) and~(a). Thus, there exists an index $0\leq k\leq m$ such that $\mf{j}_0,\dots,\mf{j}_k$ are of type~(b) and $\mf{j}_{k+1},\dots,\mf{j}_{m+1}$ are of type~(a). Since the chain is maximal, the set $\mscr{W}(\mf{j}_k^*|\mf{j}_{k+1})$ is empty. Thus, since $\mf{j}_k$ and $\mf{j}_{k+1}$ are not transverse, the labels $\g(\mf{j}_k)$ and $\g(\mf{j}_{k+1})$ are not joined by an edge of $\G$.

However, since $\G_{j+1}=\G_j$, we have: 
\[\g(\mf{j}_{k+1})\in\g(\mscr{W}(\mf{k}_1^*|\mf{k}_{j+1}))\cup\{\g(\mf{k}_1^*),\g(\mf{k}_{j+1})\}=\g(\mscr{W}(\mf{k}_1^*|\mf{k}_j))\cup\{\g(\mf{k}_1^*),\g(\mf{k}_j)\},\] 
while $\mf{j}_k$ is transverse to $\mscr{H}(\mf{k}_1^*|\mf{k}_j)\cup\{\mf{k}_1^*,\mf{k}_j\}$, a contradiction. This proves claim and lemma.
\end{proof}

\subsection{A quasi-convexity criterion for median subalgebras.}\label{wqc sect}

In this subsection, we provide a criterion (Proposition~\ref{wqc->qc}) for when a median subalgebra $M$ of a $\CAT$ cube complex $X$ is quasi-convex. The subalgebra $M$ will be required to satisfy two conditions, \emph{edge-connectedness} and \emph{weak quasi-convexity}, which we study separately in the next two subsubsections.

\subsubsection{Edge-connected median subalgebras.}

Let $X$ be a $\CAT$ cube complex.

\begin{defn}\label{edge-connected defn}
A subset $A\cu X^{(0)}$ is \emph{edge-connected} if, for all $x,y\in A$, there exists a sequence of points $x_1,\dots,x_n\in A$ such that $x_1=x$, $x_n=y$ and, for all $i$, the points $x_i$ and $x_{i+1}$ are joined by an edge of $X$.
\end{defn}

\begin{rmk}\label{intersection with edge-connected}
If $A\cu X^{(0)}$ is edge-connected, then there do not exist distinct halfspaces $\mf{h},\mf{k}\in\mscr{H}_A(X)$ with $\mf{h}\cap A=\mf{k}\cap A$. Indeed, the intersections $\mf{h}\cap\mf{k}$ and $\mf{h}^*\cap\mf{k}^*$ would both be nonempty, so, possibly swapping $\mf{h}$ and $\mf{k}$, we would either have $\mf{h}\subsetneq\mf{k}$ or $\mf{h}$ and $\mf{k}$ would be transverse. However, since $A$ is edge connected and intersects both $\mf{h}\cap\mf{k}$ and $\mf{h}^*\cap\mf{k}^*$, we must have $A\cap\mf{h}^*\cap\mf{k}\neq\emptyset$ if $\mf{h}\subsetneq\mf{k}$, and either $A\cap\mf{h}^*\cap\mf{k}\neq\emptyset$ or $A\cap\mf{h}\cap\mf{k}^*\neq\emptyset$ if $\mf{h}$ and $\mf{k}$ are transverse. This contradicts the fact that $\mf{h}\cap A=\mf{k}\cap A$.
\end{rmk}

\begin{lem}\label{eq characterisations}
For a median subalgebra $M\cu X^{(0)}$, the following are equivalent: 
\begin{enumerate}
\item $M$ is edge-connected;
\item for all $x,y\in M$, there exists a geodesic $\alpha\cu X$ joining $x$ and $y$ such that $\alpha\cap X^{(0)}\cu M$;
\item the restriction map ${\rm res}_M\colon\mscr{H}_M(X)\ra\mscr{H}(M)$ is injective.
\end{enumerate}
\end{lem}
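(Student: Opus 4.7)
The first two implications will be fast. For $(2)\Ra(1)$, any combinatorial geodesic $\alpha\cu X$ with $\alpha\cap X^{(0)}\cu M$ gives consecutive vertices that are joined by edges, so $M$ is edge-connected. The implication $(1)\Ra(3)$ is precisely the content of Remark~\ref{intersection with edge-connected}: if $\mf{h}\cap M=\mf{k}\cap M$ for distinct halfspaces of $X$, then (whether $\mf{h},\mf{k}$ are nested or transverse) an $M$--edge crossing the relevant symmetric difference would force $M$ to contain a point in a quadrant disjoint from both $\mf{h}\cap M$ and $\mf{k}\cap M$, a contradiction.

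For the main implication $(3)\Ra(2)$, the plan is to pass to the natural $\CAT$ cube complex $X(M)$ whose $0$--skeleton is the discrete median algebra $M$ (via \cite[Section~10]{Roller} or \cite[Theorem~6.1]{Chepoi}), whose halfspace pocset is $\mscr{H}(M)$. Injectivity of $\res_M\colon\mscr{H}_M(X)\ra\mscr{H}(M)$ combined with the surjectivity always granted by Remark~\ref{halfspaces of subsets} promotes $\res_M$ to a bijection of pocsets. The key observation, to be used twice, is that for any $p,q\in M$ \emph{every} hyperplane of $X$ separating $p$ and $q$ automatically lies in $\mscr{W}_M(X)$, since both of its halfspaces meet $M$ (one contains $p$, the other $q$). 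In particular $d_X(p,q)=|\mscr{W}(p|q)|=|\mscr{W}_M(X)\cap\mscr{W}(p|q)|=d_{X(M)}(p,q)$.

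Now given $x,y\in M$, pick a combinatorial geodesic $x=v_0,v_1,\dots,v_n=y$ in $X(M)$; by construction each $v_i\in M\cu X^{(0)}$ and consecutive $v_i,v_{i+1}$ are separated by exactly one wall of $M$. Applying the boxed observation again to the pair $(v_i,v_{i+1})$, together with the bijectivity of $\res_M$, I conclude that $v_i$ and $v_{i+1}$ are separated by exactly one hyperplane of $X$, hence are joined by an edge of $X$. The concatenation $v_0,\dots,v_n$ is then a combinatorial path in $X$ of length $n=d_{X(M)}(x,y)=d_X(x,y)$, so it is a geodesic of $X$ whose vertex set lies in $M$, proving (2). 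There is no real obstacle here; the only subtlety is the double use of the bijection $\mscr{H}_M(X)\leftrightarrow\mscr{H}(M)$ --- once globally to identify the $X$-- and $X(M)$--distances between $x$ and $y$, and once locally to promote adjacency in $X(M)$ to adjacency in $X$ for consecutive vertices of the chosen geodesic.
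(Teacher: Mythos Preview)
Your proof is correct and follows essentially the same approach as the paper: both argue $(2)\Rightarrow(1)$ trivially, $(1)\Rightarrow(3)$ via Remark~\ref{intersection with edge-connected}, and $(3)\Rightarrow(2)$ by taking a geodesic in the cube complex $X(M)$ and showing its consecutive vertices are adjacent in $X$. The only cosmetic difference is that you first establish $d_X=d_{X(M)}$ on $M$ and conclude the resulting path is geodesic by length, whereas the paper verifies directly that the hyperplanes crossed are pairwise distinct; these are two sides of the same coin.
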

\begin{proof}
The implication (2)$\Ra$(1) is clear and the implication (1)$\Ra$(3) follows from Remark~\ref{intersection with edge-connected}. Let us show that (3)$\Ra$(2).

Since $M$ is a discrete median algebra, it is isomorphic to the $0$--skeleton of a $\CAT$ cube complex $X(M)$ (see \cite[Theorem~6.1]{Chepoi} or \cite[Section~10]{Roller}). Given $x,y\in M$, let $\beta\cu X(M)$ be a geodesic joining $x$ and $y$, and let $x_1=x,x_2,\dots,x_n=y$ be the elements of $\beta\cap M$ as they appear along $\beta$. Since the restriction map ${\rm res}_M\colon\mscr{H}_M(X)\ra\mscr{H}(M)$ is injective, there is only one hyperplane $\mf{w}_i\in\mscr{W}(X)$ separating $x_i$ and $x_{i+1}$, that is, these two points are joined by an edge of $X$. If $i\neq j$, then $\mf{w}_i\neq\mf{w}_j$, or $\beta$ would cross the corresponding wall of $M$ twice. We conclude that there exists a geodesic $\alpha\cu X$ with $\alpha\cap M=\{x_1,\dots,x_n\}$. 
\end{proof}

By the 3rd characterisation in Lemma~\ref{eq characterisations}, edge-connected subalgebras can be viewed as a middle ground between general median subalgebras and convex subcomplexes (cf.\ part~(2) of Remark~\ref{halfspaces of subsets}).

\begin{lem}\label{generate edge-connected}
If $A\cu X^{(0)}$ is an edge-connected subset, then $\langle A\rangle$ is an edge-connected subalgebra.
\end{lem}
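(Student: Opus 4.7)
The plan is to reduce the statement to a single key observation about one iteration of the median operator $\mc{M}$, and then iterate using Proposition~\ref{gen subalg prop}.

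First I would prove the auxiliary statement: \emph{if $B\cu X^{(0)}$ is edge-connected, then so is $\mc{M}(B) = m(B\x B\x B)$.} The main ingredient is the 1--Lipschitz property of the median operator on $(X^{(0)},d)$ (where $d$ is the combinatorial/$\ell^1$ metric): for all vertices $x,y,z,x',y',z'$,
\[ d(m(x,y,z),m(x',y',z')) \leq d(x,x')+d(y,y')+d(z,z'). \]
In particular, if $p,p' \in X^{(0)}$ are at combinatorial distance $\leq 1$, then either $p=p'$ or $p,p'$ are joined by an edge of $X$.

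Using this, let $a,b,c\in B$ and set $m:=m(a,b,c)\in\mc{M}(B)$. Since $B$ is edge-connected, there is an edge-path $b=b_0,b_1,\dots,b_k=a$ with all $b_j\in B$. Consider the sequence
\[ m(a,b_0,c),\ m(a,b_1,c),\ \dots,\ m(a,b_k,c) = m(a,a,c) = a. \]
Every term lies in $\mc{M}(B)$, and by the 1--Lipschitz property consecutive terms are either equal or joined by an edge. Hence $m$ is edge-connected to $a$ inside $\mc{M}(B)$; symmetrically, it is edge-connected to $b$ and to $c$. Combined with the edge-connectedness of $B\cu\mc{M}(B)$, this shows that every pair of points in $\mc{M}(B)$ can be joined by an edge-path in $\mc{M}(B)$, i.e.\ $\mc{M}(B)$ is edge-connected.

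Finally, I would iterate: starting from edge-connected $A$, repeated application of the auxiliary statement shows by induction that $\mc{M}^k(A)$ is edge-connected for every $k\geq 0$. By Proposition~\ref{gen subalg prop}, we have $\langle A\rangle = \mc{M}^{h(r)}(A)$, where $r$ is the rank of $X^{(0)}$ (finite since $X$ is finite-dimensional), so $\langle A\rangle$ is edge-connected, as required. No step should pose a genuine obstacle; the only subtle point is invoking the correct Lipschitz estimate for the median on a $\CAT$ cube complex, which is standard.
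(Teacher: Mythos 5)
Your proof is correct, but it takes a genuinely different route from the paper. The paper argues by contradiction using the halfspace characterisation of edge-connectedness (part~(3) of Lemma~\ref{eq characterisations}): if $\langle A\rangle$ were not edge-connected, the restriction map $\mscr{H}_{\langle A\rangle}(X)\ra\mscr{H}(\langle A\rangle)$ would fail to be injective, producing two distinct halfspaces of $X$ with the same intersection with $\langle A\rangle$; since halfspaces are median-convex, both must already cross $A$ and have the same intersection with $A$, contradicting Remark~\ref{intersection with edge-connected}. Your argument instead builds edge-paths directly, using the $1$--Lipschitz property of the median to show that a single application of $\mc{M}$ preserves edge-connectedness, and then iterates. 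Both are valid; the paper's is shorter and stays entirely at the level of halfspaces, while yours is more hands-on and gives explicit edge-paths in $\mc{M}(B)$ connecting each $m(a,b,c)$ to $a$. One small remark: you do not actually need Proposition~\ref{gen subalg prop} (or finite dimension of $X$) for the final step — since $\langle A\rangle=\bigcup_{n\geq 1}\mc{M}^n(A)$ is an increasing union of edge-connected sets each containing $A$, any two points of $\langle A\rangle$ already lie in some common $\mc{M}^n(A)$ and can be joined by an edge-path there. The paper's proof likewise does not require finite dimension. Invoking $\langle A\rangle=\mc{M}^{h(r)}(A)$ is harmless in the setting of the paper (where all cube complexes are finite-dimensional), but the simpler union argument is a touch cleaner and more general.
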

\begin{proof}
Suppose for the sake of contradiction that $\langle A\rangle$ is not edge-connected. Then there exist distinct halfspaces $\mf{h},\mf{k}\in\mscr{H}_{\langle A\rangle}(X)$ with $\mf{h}\cap\langle A\rangle=\mf{k}\cap\langle A\rangle$ by Lemma~\ref{eq characterisations}. Note that $\mf{h},\mf{k}\in\mscr{H}_A(X)$, and $\mf{h}^*\cap\mf{k}\cap A=\emptyset$ and $\mf{h}\cap\mf{k}^*\cap A=\emptyset$. In particular, $\mf{h}\cap A=\mf{k}\cap A$, which violates Remark~\ref{intersection with edge-connected}.
\end{proof}

\begin{lem}\label{edge-connected vs gate-projections}
Let $M\cu X^{(0)}$ be an edge-connected median subalgebra. Let $C\cu X$ be a convex subcomplex with gate-projection $\pi\colon X\ra C$. Then:
\begin{enumerate}
\item $\pi(M)$ is an edge-connected subalgebra of $C^{(0)}$;
\item if $N\cu\pi(M)$ is an edge-connected subalgebra, then $M\cap\pi^{-1}(N)$ is edge-connected as well.
\end{enumerate}
\end{lem}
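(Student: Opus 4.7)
Both parts rest on the geodesic characterisation of edge-connectedness from Lemma~4.10(2), together with two elementary observations: that the gate-projection $\pi$ is a median morphism carrying cubes to cubes, and that preimages of convex sets under a median morphism are convex.

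For~(1), I would take $a,b\in\pi(M)$, lift them to $x,y\in M$ with $\pi(x)=a$ and $\pi(y)=b$, and apply Lemma~4.10(2) to obtain a combinatorial geodesic $\alpha\cu X$ from $x$ to $y$ whose vertices $v_0=x,v_1,\dots,v_k=y$ all lie in $M$. Because $\pi$ carries each edge of $\alpha$ either to a single edge of $C$ (when the crossed hyperplane is transverse to $C$) or collapses it to a single vertex (otherwise), the sequence $\pi(v_0),\pi(v_1),\dots,\pi(v_k)\in\pi(M)$ has consecutive entries that are either equal or edge-adjacent in $C$. Collapsing the duplicates yields the desired edge-path in $\pi(M)$ from $a$ to $b$. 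That $\pi(M)$ is a median subalgebra of $C^{(0)}$ is immediate from $\pi$ being a median morphism.

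For~(2), the idea is to chain geodesics living in ``slabs''. Given $x,y\in M\cap\pi^{-1}(N)$, set $a=\pi(x)$ and $b=\pi(y)\in N$ and, using edge-connectedness of $N$, pick a sequence $a=n_0,n_1,\dots,n_k=b$ in $N$ whose consecutive members are edge-adjacent in $C$. Since $N\cu\pi(M)$ we can choose $m_i\in M$ with $\pi(m_i)=n_i$, taking $m_0=x$ and $m_k=y$. The crucial point is that each two-element set $\{n_i,n_{i+1}\}$ is convex in $C^{(0)}$, being the interval $I(n_i,n_{i+1})$ of a single edge; hence its preimage $D_i:=\pi^{-1}(\{n_i,n_{i+1}\})\cu X$ is a convex subcomplex of $X$. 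Applying Lemma~4.10(2) to the edge-connected $M$, I pick a combinatorial geodesic from $m_i$ to $m_{i+1}$ with all vertices in $M$; convexity of $D_i$ forces this geodesic to lie inside $D_i$, so its vertices lie in $M\cap\pi^{-1}(\{n_i,n_{i+1}\})\cu M\cap\pi^{-1}(N)$, and consecutive vertices are edge-adjacent in $X$. Concatenating these sub-paths over $i=0,\dots,k-1$ produces an edge-path in $M\cap\pi^{-1}(N)$ from $x$ to $y$.

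The only mildly delicate point is the justification that $D_i$ is convex: this combines the identification of the vertex set of an edge with its median interval, and the fact that convexity pulls back along any median morphism. Beyond that I do not anticipate any real obstacle; the argument is essentially a bookkeeping of what Lemma~4.10(2) provides, once one realises that edge-connectedness of $N$ lets us reduce the problem to the edge-adjacent case, where the slab $D_i$ is convex and hence contains every $X$--geodesic between its points.
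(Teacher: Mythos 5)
Your proof is correct and follows essentially the same route as the paper: for (1) you project a geodesic with vertices in $M$ and observe edges map to edges or vertices, and for (2) you use edge-connectedness of $N$ to reduce to the case where $\pi(x)$ and $\pi(y)$ are edge-adjacent, then invoke edge-connectedness of $M$ to get a geodesic whose vertices stay in $M\cap\pi^{-1}(N)$. The one minor point of divergence is in justifying why that geodesic stays over the edge $e$: you argue via convexity of $\pi^{-1}(\{n_i,n_{i+1}\})$ (preimage of an interval under a median morphism), whereas the paper argues that $\pi(\beta)$ is a geodesic from $\pi(x)$ to $\pi(y)$ and hence equals $e$; these are two sides of the same coin and both are valid.
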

\begin{proof}
If vertices $x,y\in X$ are joined by an edge, then either $\pi(x)$ and $\pi(y)$ are joined by an edge or they are equal. Thus, part~(1) is immediate from definitions.

Let us address part~(2). Consider two points $x,y\in M\cap\pi^{-1}(N)$. Since $N$ is edge-connected, there exists a geodesic $\alpha\cu C$ joining $\pi(x)$ and $\pi(y)$ with $\alpha\cap C^{(0)}\cu N$ (see Lemma~\ref{eq characterisations}). It suffices to show that $M\cap\pi^{-1}(\alpha)$ is edge-connected.

In fact, since $\pi^{-1}(v)\cap M\neq\emptyset$ for every vertex $v\in\alpha$, it suffices to show that $M\cap\pi^{-1}(e)$ is edge-connected for every edge $e\cu\alpha$. In other words, we can suppose that $\pi(x)$ and $\pi(y)$ are joined by an edge $e\cu C$. Since $M$ is edge-connected, there exists a geodesic $\beta\cu X$ joining $x$ and $y$ with $\beta\cap X^{(0)}\cu M$. Since $\pi$ is a median morphism, the projection $\pi(\beta)$ is the image of a geodesic from $\pi(x)$ to $\pi(y)$, i.e.\ $\pi(\beta)=e$. Thus $\beta\cap X^{(0)}\cu M\cap\pi^{-1}(e)$, concluding the proof.
\end{proof}

\subsubsection{Weakly quasi-convex median subalgebras.}

Let $X$ be a $\CAT$ cube complex. 

\begin{defn}\label{wqc defn}
A subset $A\cu X^{(0)}$ is \emph{weakly quasi-convex} if there exists a function $\eta\colon\N\ra\N$ such that, for all $a,b,p\in X^{(0)}$ with $\mscr{W}(p|a)$ transverse to $\mscr{W}(p|b)$, we have:
\[d(p,A)\leq \eta\big(\max\{d(a,A),d(b,A)\}\big).\]
\end{defn}

\begin{rmk}
\begin{enumerate}
\item[]
\item If $A\cu X^{(0)}$ is quasi-convex in the sense of Definition~\ref{qc defn}, then $A$ is weakly quasi-convex. Indeed, suppose that $\mc{J}(A)\cu\mc{N}_R(A)$ and set $D=\max\{d(a,A),d(b,A)$. If $\mscr{W}(p|a)$ and $\mscr{W}(p|b)$ are transverse, then $p\in I(a,b)$. Thus, $p\in\mc{J}(\mc{N}_D(A))$ and Lemma~\ref{about J} yields $d(p,A)\leq 2D+R=:\eta(D)$.
\item If $A,B\cu X^{(0)}$ have finite Hausdorff distance, then $A$ is weakly quasi-convex if and only if $B$ is. This is straightforward, observing that $\eta$ can always taken to be weakly increasing.
\end{enumerate}
\end{rmk}

The following is the main result of this subsection.

\begin{prop}\label{wqc->qc}
If $X$ has finite dimension and finite staircase length, then every edge-connected, weakly quasi-convex median subalgebra $M\cu X^{(0)}$ is quasi-convex.
\end{prop}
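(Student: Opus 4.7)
Plan. By Remark~\ref{qc in median spaces} it suffices to exhibit a uniform bound $R \geq 0$ such that $d(p, M) \leq R$ for every $p \in \hull M$. Since $\hull M = \mc{J}^{r}(M)$ for $r = \dim X$ (Remark~\ref{Bow J}), and since weak quasi-convexity passes to any subset at finite Hausdorff distance (if $d_{\rm Haus}(A,A') \leq D$ then $A'$ is weakly quasi-convex with gauge $t \mapsto \eta(t+D)+D$), a straightforward induction on $k$ reduces matters to the following claim: there exists a constant $R$ depending only on $\dim X$, the staircase length of $X$, and $\eta$, such that $d(p,M) \leq R$ whenever $x,y \in M$ and $p \in I(x,y)$.

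Fix such a triple $(x,y,p)$. By edge-connectedness (Lemma~\ref{eq characterisations}) choose a combinatorial geodesic $\alpha = v_0 v_1 \cdots v_n$ from $x$ to $y$ with $v_i \in M$ for every $i$. The plan is to produce $a, b \in X^{(0)}$ at distance bounded by a uniform constant from $\{v_0,\dots,v_n\} \subseteq M$ such that the wall sets $\mscr{W}(p|a)$ and $\mscr{W}(p|b)$ are transverse; weak quasi-convexity will then yield the desired bound $d(p,M) \leq \eta(R')$, where $R'$ is the uniform bound on $d(a,M), d(b,M)$.

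The starting observation is that a pair $(\mf{h},\mf{k}) \in \mscr{H}(x|p) \times \mscr{H}(p|y)$ fails to be transverse if and only if $\mf{k} \subsetneq \mf{h}$. Indeed, using $p \in I(x,y)$ one checks that the three quadrants $\mf{h} \cap \mf{k}$, $\mf{h} \cap \mf{k}^*$ and $\mf{h}^* \cap \mf{k}^*$ always contain $y$, $p$ and $x$ respectively, so non-transversality forces $\mf{h}^* \cap \mf{k} = \emptyset$. When no such ``bad pair'' exists, $(a,b)=(x,y)$ already works and weak quasi-convexity gives $d(p,M) \leq \eta(0)$. Otherwise, each bad pair is witnessed by two hyperplanes crossed by $\alpha$ at positions $i_{\mf{h}} < i_{\mf{k}}$, and one shifts $a$ (resp.\ $b$) to a vertex $v_i$ of $\alpha$ lying just past the outermost bad halfspace in $\mscr{H}(x|p)$ (resp.\ just before that in $\mscr{H}(p|y)$). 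Each such replacement lands $a,b$ on $\alpha \subseteq M$ and removes a family of bad halfspaces from one of the two sides.

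The main obstacle is controlling this iteration: shifting $a$ past a bad halfspace can a priori introduce \emph{new} bad pairs (coming from walls originally of the ``wrong'' type between $x$ and the new $a$), so one must show that the process nevertheless terminates after boundedly many steps. This is where the finite staircase length is essential. The key input is Lemma~\ref{no staircases +}: any chain of nested halfspaces whose gate-projections to $I(x,y)$ are also strictly nested has length at most $2rd$, where $d$ is the staircase length of $X$. Combined with Dilworth's theorem applied in the rank-$r$ algebra $\mscr{H}(X)$, this gives an $O(rd)$ bound on the size of a maximal chain of bad halfspaces that can appear throughout the iteration, and therefore on the total number of shifting steps. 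Keeping track, via Lemma~\ref{preservation of finite staircase length}, of the subalgebras of $I(x,y)$ produced by the successive gate-projections used in the shifts ensures that the bound on $d(a,M), d(b,M)$ depends only on $r$ and $d$; the proof is completed by one final application of weak quasi-convexity.
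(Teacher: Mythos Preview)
Your plan hinges on locating, after finitely many shifts, a pair $a,b$ on the geodesic $\alpha\cu M$ with $\mscr{W}(p|a)$ transverse to $\mscr{W}(p|b)$, so that a \emph{single} application of weak quasi-convexity yields $d(p,M)\leq\eta(0)$. This is a genuine gap: such a pair need not exist. Take $X$ to be a two-dimensional staircase region as in the figure accompanying Lemma~\ref{wqc 2d}, with $x$ the bottom-left vertex, $y$ the top-right, and $p=z$ the top-left corner. Let $M$ be a single monotone lattice path from $x$ to $y$ (an edge-connected median subalgebra). Since no two horizontal hyperplanes of $X$ are transverse, and likewise for vertical ones, any candidate pair must have $a$ in the column of $p$ and $b$ in the row of $p$. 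The only such points of $M$ are $x$ and $y$ themselves, and $\mscr{W}(p|x)$ is not transverse to $\mscr{W}(p|y)$ --- that failure is exactly the staircase. No amount of sliding $a,b$ along $\alpha$ changes this, so your iteration cannot terminate where you claim. The sentence invoking Lemma~\ref{preservation of finite staircase length} to bound $d(a,M),d(b,M)$ is also puzzling: your shifts keep $a,b\in\alpha\cu M$ throughout, so those distances are zero and nothing is being tracked.

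The paper's argument is organised around a different idea: rather than one application of weak quasi-convexity, it iterates it along a chain $p=z_0,z_1,\dots,z_k$ of auxiliary points (not in $M$, but in $I(x,y)$), each step equipped with a witness $w_{i+1}\in M$ satisfying $\mscr{W}(z_i|z_{i+1})$ transverse to $\mscr{W}(z_i|w_{i+1})$; this gives $d(z_i,M)\leq\eta(d(z_{i+1},M))$ and hence $d(p,M)\leq\eta^k(0)$. Producing such a chain with $k$ bounded is done in two stages: Lemma~\ref{2d reduction} passes to a rank--$2$ subalgebra $N\cu I(x,y)$ of controlled staircase length (this is where Lemma~\ref{preservation of finite staircase length} actually enters), and Lemma~\ref{wqc 2d} builds the chain explicitly in $N$, reading it off the staircase structure.
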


Proposition~\ref{wqc->qc} fails for cube complexes of infinite staircase length, as the next example shows.

\begin{ex}
Consider the standard structure of cube complex on $\R^2$. Let $\alpha$ be the geodesic line through all points $(n,n)$ and $(n+1,n)$ with $n\in\Z$. Let $X\cu\R^2$ be the subcomplex that lies above $\alpha$, including $\alpha$ itself. Note that $X$ is a $2$--dimensional $\CAT$ cube complex of infinite staircase length, and $\alpha\cu X$ is an edge-connected median subalgebra that is not quasi-convex. It is not hard to see that $\alpha$ is weakly quasi-convex with $\eta(t)=2t$.
\end{ex}

The next lemma essentially proves the $2$--dimensional case of Proposition~\ref{wqc->qc}.

\begin{lem}\label{wqc 2d}
Suppose that $\dim X=2$ and that $X$ has staircase length $d$. Let $M\cu X^{(0)}$ be an edge-connected median subalgebra. Consider $x,y\in M$ and $z\in X^{(0)}\cap I(x,y)$. Then there exist $0\leq k\leq d$ and vertices $z_0,z_1,z_2,\dots,z_k\in I(x,y)$ and $w_1,\dots,w_k\in I(x,y)$ such that:
\begin{itemize}
\item $z_0=z$, while $z_k\in M$ and $w_1,\dots,w_k\in M$;
\item the sets $\mscr{W}(z_i|w_{i+1})\cu\mscr{W}(X)$ and $\mscr{W}(z_i|z_{i+1})\cu\mscr{W}(X)$ are transverse for all $0\leq i\leq k-1$.
\end{itemize}
\end{lem}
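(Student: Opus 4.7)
Set $z_0 := z$ and proceed recursively: at each step, given $z_i \in I(x,y)$, terminate with $k = i$ if $z_i \in M$, else construct $w_{i+1} \in M \cap I(x,y)$ and $z_{i+1} \in I(x,y)$ with $\mscr{W}(z_i \mid w_{i+1})$ transverse to $\mscr{W}(z_i \mid z_{i+1})$. The key input is that $M$ is edge-connected and contains $x, y$: by Lemma~\ref{eq characterisations}, there is a combinatorial geodesic $\alpha \cu X$ from $x$ to $y$ with $\alpha \cap X^{(0)} \cu M$, and any such $\alpha$ lies in $I(x,y)$, so $M \cap I(x,y)$ is rich enough to support the argument.

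The inductive step exploits $\dim X = 2$. Note that any two hyperplanes in $\mscr{W}(x \mid y)$ are either nested or transverse (since $x$ and $y$ witness two of the four halfspace-intersections automatically), so by Dilworth's theorem $\mscr{W}(x \mid y)$ partitions into at most two chains of pairwise non-transverse hyperplanes, say $\mathcal{C}_1$ and $\mathcal{C}_2$. Given $z_i \notin M$, pick $w_{i+1} \in M \cap I(x,y)$ as a vertex reachable from $z_i$ by crossing only hyperplanes of $\mathcal{C}_1$ (guided by $\alpha$), and pick $z_{i+1} \in I(x,y)$ by crossing one or more hyperplanes of $\mathcal{C}_2$ away from $z_i$. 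The existence of such $z_{i+1}$ requires that some hyperplane of $\mathcal{C}_2$ separate $z_i$ from $x$ or $y$; when this fails, a direct argument using edge-connectedness of $M$ shows $z_i \in M$ already. By choosing the chains carefully so that the relevant cross-chain pairs are actually transverse (rather than nested), we secure $\mscr{W}(z_i \mid w_{i+1}) \cu \mathcal{C}_1$ and $\mscr{W}(z_i \mid z_{i+1}) \cu \mathcal{C}_2$ transverse as required.

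To bound $k \leq d$, extract a staircase from the sequences: at each step $i$, let $\mf{h}_i$ be the halfspace in $\mscr{H}(z_{i-1} \mid z_i)$ bounded by an outermost hyperplane of $\mathcal{C}_2$ crossed between $z_{i-1}$ and $z_i$, and let $\mf{k}_i \in \mscr{H}(z_{i-1} \mid w_i)$ be bounded by an outermost hyperplane of $\mathcal{C}_1$ crossed between $z_{i-1}$ and $w_i$. The transversality $\mf{h}_i \pf \mf{k}_j$ for $j \leq i$ and the nesting relations $\mf{h}_i \supsetneq \mf{h}_{i+1}$ and $\mf{k}_{i+1} \subsetneq \mf{h}_i$ follow from the monotone progression of $z_j, w_j$ through the chain structure. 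This yields a length-$k$ staircase in $X$, so $k \leq d$. The main obstacle is to make the choices of $w_{i+1}$ and $z_{i+1}$ globally compatible, so that the halfspaces $\mf{h}_i, \mf{k}_i$ indeed form a genuine staircase across all steps rather than only locally satisfying the step-wise transversality; this requires fixing consistently, throughout the iteration, which chain plays the ``$w$-role'' and which plays the ``$z$-role''.
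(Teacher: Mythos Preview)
Your overall strategy --- march from $z$ toward $M$ step by step, recording at each step a pair $(w_{i+1}, z_{i+1})$ with transverse separating sets, then bound the number of steps by a staircase --- matches the paper's. The gap is exactly where you flag it: a Dilworth partition of $\mscr{W}(x|y)$ into two chains $\mc{C}_1,\mc{C}_2$ only guarantees that hyperplanes \emph{within} a chain are non-transverse; across the chains a $\mc{C}_1$--hyperplane and a $\mc{C}_2$--hyperplane may be nested rather than transverse, so ``$\mscr{W}(z_i|w_{i+1})\cu\mc{C}_1$ and $\mscr{W}(z_i|z_{i+1})\cu\mc{C}_2$'' does not by itself yield the conclusion. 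The fix you gesture at (``choosing the chains carefully'') is doing all the work without a construction. Relatedly, there is no reason a point of $M$ should be reachable from $z_i$ by crossing only $\mc{C}_1$--hyperplanes: the geodesic $\alpha\cu M$ need not be monotone with respect to an arbitrary Dilworth chain.

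The paper resolves this with a preliminary reduction: replace $x,y$ by a pair in $M$ minimising $d(x,y)$ subject to $z\in I(x,y)$, and use edge-connectedness of $M$ to arrange that $z$ lies in the carriers of transverse hyperplanes $\mf{u}\in\mscr{W}(x,z|y)$ and $\mf{v}\in\mscr{W}(y,z|x)$. This forces $\mscr{H}(z|x)$ and $\mscr{H}(z|y)$ each to be a single chain (everything in $\mscr{W}(z|x)$ is transverse to $\mf{u}$, and $\dim X=2$). For \emph{these} two chains, any cross-pair $(\mf{h}_i,\mf{k}_j)$ is either transverse or bounds disjoint halfspaces --- never nested --- since $z\in\mf{h}_i^*\cap\mf{k}_j^*$, $x\in\mf{h}_i\cap\mf{k}_j^*$, $y\in\mf{h}_i^*\cap\mf{k}_j$. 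That is what lets the transversality pattern be encoded by a single monotone function $\tau$, from whose drops the paper reads off all $z_s$ (marching from $z$ to $x=z_k\in M$), all $w_s$ (vertices of $\alpha$ in specified halfspace regions), and the length-$k$ staircase, in one shot rather than recursively.
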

\begin{proof}
If $z\in M$, we can simply take $k=0$. If $z\not\in M$, we begin with the following observation.

\smallskip
{\bf Claim:} \emph{we can assume that there exist transverse hyperplanes $\mf{u}\in\mscr{W}(x,z|y)$ and $\mf{v}\in\mscr{W}(y,z|x)$ such that $x,z$ lie in the carrier of $\mf{u}$ and $y,z$ lie in the carrier of $\mf{v}$.}

\smallskip\noindent
\emph{Proof of Claim.} 
Up to replacing $x$ and $y$ with other points in the interval $I(x,y)$, we can assume that there do not exist points $x',y'\in I(x,y)$ with $z\in I(x',y')$, except for $\{x',y'\}=\{x,y\}$. 

Since $M$ is edge-connected, there exists a point $x'\in M\cap I(x,y)$ such that $x$ and $x'$ are separated by a single hyperplane $\mf{u}\in\mscr{W}(X)$. By the above assumption on $x$ and $y$, we must have $z\not\in I(x',y)$, hence $\emptyset\neq\mscr{W}(z|x',y)=\mscr{W}(z,x|x',y)\cu\{\mf{u}\}$. It follows that $\mscr{W}(z,x|x',y)=\{\mf{u}\}$. 

Observing that $\mscr{W}(z|\mf{u})\cu\mscr{W}(z|x',y)=\mscr{W}(z,x|x',y)=\{\mf{u}\}$, we conclude that $\mscr{W}(z|\mf{u})$ is empty. This shows that the carrier of $\mf{u}$ contains $z$, while it is clear that it also contains $x$. The existence of $\mf{v}$ is obtained similarly. Finally, since $\mf{v}\in\mscr{W}(y,z|x)$ and $\mf{v}\neq\mf{u}$, we must have $\mf{v}\in\mscr{W}(y,z|x,x')$. Recalling that $\mf{u}\in\mscr{W}(z,x|x',y)$, this shows that $\mf{u}$ and $\mf{v}$ are transverse.
\hfill$\blacksquare$

\smallskip
Now, the sets $\mscr{H}(z|x)$ and $\mscr{H}(z|y)$ are transverse, respectively, to $\mf{u}$ and $\mf{v}$. Since $\dim X=2$, the set $\mscr{H}(z|x)$ is a descending chain $\mf{h}_1\supsetneq\dots\supsetneq\mf{h}_m$, and $\mscr{H}(z|y)$ is a descending chain $\mf{k}_1\supsetneq\dots\supsetneq\mf{k}_n$. Note that $\mf{k}_1$ and $\mf{h}_1$ are bounded, respectively, by $\mf{u}$ and $\mf{v}$, as depicted in Figure~\ref{2d lemma}.

Since $\mf{h}_1$ and $\mf{k}_1$ are transverse, there exists a function $\tau\colon \{1,\dots,m\}\ra\{1,\dots,n\}$ such that $\mf{h}_i$ is transverse to $\mf{k}_j$ if and only if $1\leq j\leq \tau(i)$. Note that $\tau(1)=n$ and that $\tau$ is weakly decreasing. 

Let $1\leq i_1<\dots<i_{k-1}<m$ be all indices $i$ with $\tau(i+1)<\tau(i)$. Also define $i_k:=m$ and set $\tau_s:=\tau(i_s)$ for simplicity. Since the halfspaces $\mf{h}_{i_k}^*,\dots,\mf{h}_{i_1}^*$ and $\mf{k}_{\tau_k},\dots,\mf{k}_{\tau_1}$ form a length--$k$ staircase, while $X$ has staircase length $d$, we must have $k\leq d$.

Set $z_0=z$ and $w_1=y$. For $1\leq s\leq k$, let $z_s\in I(x,y)$ be the point with $\mscr{H}(z|z_s)=\{\mf{h}_1,\dots,\mf{h}_{i_s}\}$. In particular, $z_k=x\in M$. Since $M$ is edge-connected, there exist points 
\[w_{s+1}\in M\cap\mf{h}_{i_s}\cap\mf{h}^*_{i_s+1}\cap\mf{k}^*_{\tau_{s+1}+1}.\] 
Observing that $\mscr{H}(z_s|w_{s+1})\cu\{\mf{k}_1,\dots,\mf{k}_{\tau_{s+1}}\}$ is transverse to $\mscr{H}(z_s|z_{s+1})=\{\mf{h}_{i_s+1},\dots,\mf{h}_{i_{s+1}}\}$, this completes the proof of the lemma.
\end{proof}

\begin{figure} 
\begin{tikzpicture}
\draw[fill] (0,3.5) -- (3.5,3.5);
\draw[fill] (0,3) -- (3.5,3);
\draw[fill] (0,2.5) -- (2.5,2.5);
\draw[fill] (0,2) -- (2.5,2);
\draw[fill] (0,1.5) -- (1.5,1.5);
\draw[fill] (0,1) -- (1.5,1);
\draw[fill] (0,0.5) -- (1,0.5);
\draw[fill] (0,0) -- (0.5,0);
\draw[fill] (0,0) -- (0,3.5);
\draw[fill] (0.5,0) -- (0.5,3.5);
\draw[fill] (1,0.5) -- (1,3.5);
\draw[fill] (1.5,1) -- (1.5,3.5);
\draw[fill] (2,2) -- (2,3.5);
\draw[fill] (2.5,2) -- (2.5,3.5);
\draw[fill] (3,3) -- (3,3.5);
\draw[fill] (3.5,3) -- (3.5,3.5);
\draw[dashed,thick,cyan] (-0.25,3.25) -- (3.75,3.25);
\draw[->,thick,cyan]  (3.75,3.25) --  (3.75,3);
\node[right,cyan] at (3.75,3.25) {$\mf{h}_1$};
\draw[dashed,thick,cyan] (-0.25,2.25) -- (2.75,2.25);
\draw[->,thick,cyan]  (2.75,2.25) --  (2.75,2);
\node[right,cyan] at (2.75,2.25) {$\mf{h}_{i_s}$};
\draw[dashed,thick,cyan] (-0.25,1.75) -- (1.75,1.75);
\draw[->,thick,cyan]  (1.75,1.75) --  (1.75,1.5);
\node[right,cyan] at (1.75,1.75) {$\mf{h}_{i_s+1}$};
\draw[dashed,thick,cyan] (-0.25,1.25) -- (1.75,1.25);
\draw[->,thick,cyan]  (1.75,1.25) --  (1.75,1);
\node[right,cyan] at (1.75,1.25) {$\mf{h}_{i_{s+1}}$};
\draw[dashed,thick,magenta] (0.25,-0.25) -- (0.25,3.75);
\draw[->,thick,magenta]  (0.25,3.75) --  (0.5,3.75);
\node[right,magenta] at (0.5,3.75) {$\mf{k}_1$};
\draw[dashed,thick,magenta] (1.25,0.75) -- (1.25,3.75);
\draw[->,thick,magenta]  (1.25,3.75) --  (1.5,3.75);
\node[right,magenta] at (1.5,3.75) {$\mf{k}_{\tau_{s+1}}$};
\node[right,magenta] at (0.25,-0.25) {$\mf{u}$};
\node[below left] at (0,0) {$z_k=x$};
\draw[fill] (0,0) circle [radius=0.04cm];
\node[above right] at (3.5,3.5) {$y=w_1$};
\draw[fill] (3.5,3.5) circle [radius=0.04cm];
\node[above left] at (0,3.5) {$z_0=z$};
\draw[fill] (0,3.5) circle [radius=0.04cm];
\node[left] at (0,2) {$z_s$};
\draw[fill] (0,2) circle [radius=0.04cm];
\node[left] at (0,3) {$z_1$};
\draw[fill] (0,3) circle [radius=0.04cm];
\end{tikzpicture}
\caption{}
\label{2d lemma} 
\end{figure}

The next lemma allows us to reduce the proof of Proposition~\ref{wqc->qc} to the $2$--dimensional case.

\begin{lem}\label{2d reduction}
Let $X$ have dimension $r$ and staircase length $d$. Let $M\cu X^{(0)}$ be an edge-connected median subalgebra. For all points $x,y\in M$ and $z\in X^{(0)}\cap I(x,y)$, there exists a median subalgebra $N\cu X^{(0)}\cap I(x,y)$ with the following properties:
\begin{itemize}
\item $x,y,z\in N$ and $\rk N\leq 2$;
\item $N$ has staircase length $\leq d(1+2r^2)^2$;
\item $N$ and $N\cap M$ are edge-connected.
\end{itemize}
\end{lem}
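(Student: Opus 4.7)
The plan is to construct $N$ as the intersection of $I(x,y)^{(0)}$ with the preimages of two carefully chosen geodesics under the gate projections onto $I(x,z)$ and $I(z,y)$. Write $\pi_1 = \pi_{xz}$ and $\pi_2 = \pi_{zy}$, and set $M' = M \cap I(x,y)^{(0)}$. Then $M'$ is itself edge-connected: the geodesic in $X$ between any two points of $M'$ provided by Lemma~\ref{eq characterisations} automatically stays in the convex subcomplex $I(x,y)$ while keeping its vertices in $M$. Since $x,y \in M'$ with $\pi_1(y) = \pi_2(x) = z$, the images $\pi_1(M')$ and $\pi_2(M')$ are edge-connected subalgebras containing $\{x,z\}$ and $\{z,y\}$ respectively (Lemma~\ref{edge-connected vs gate-projections}(1)). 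First choose a geodesic $\alpha$ from $x$ to $z$ inside $I(x,z)$ with $\alpha \cap X^{(0)} \subseteq \pi_1(M')$, and set $N_1 = I(x,y)^{(0)} \cap \pi_1^{-1}(\alpha)$. Next, observe that $\pi_2(N_1 \cap M')$ is edge-connected and contains $z$ and $y$ (since $x,y \in N_1 \cap M'$), so we can pick a geodesic $\beta$ from $z$ to $y$ with $\beta \cap X^{(0)} \subseteq \pi_2(N_1 \cap M')$, and finally define $N = N_1 \cap \pi_2^{-1}(\beta)$.

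Two applications of Lemma~\ref{edge-connected vs gate-projections}(2) along $M' \leadsto N_1 \cap M' \leadsto N \cap M'$ and along $I(x,y)^{(0)} \leadsto N_1 \leadsto N$ yield the edge-connectedness of $N \cap M$ and of $N$. Membership $x,y,z \in N$ is a direct verification using $\pi_1(y) = \pi_2(x) = z$. For the rank bound, every wall of $N$ separates two points of $I(x,y)$, hence belongs to $\mscr{W}(x|z) \sqcup \mscr{W}(z|y)$. If $\mf{w} \in \mscr{W}(x|z)$ then $\mf{w}$ does not separate any $v$ from $\pi_1(v)$, so $\mf{w}$ separates $u,v \in N$ if and only if it separates $\pi_1(u),\pi_1(v) \in \alpha$; since $\alpha$ is a geodesic, this forces $\mf{w}$ to lie in a chain of walls. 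Symmetrically the walls of $N$ in $\mscr{W}(z|y)$ form a chain, and since any two pairwise-transverse walls must come from different chains we conclude $\rk N \leq 2$.

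The staircase bound is obtained by applying Lemma~\ref{preservation of finite staircase length} twice. A first application inside $X$ shows that $N_1$ has staircase length at most $d(1+2r^2)$. To iterate, I identify $N_1$ with the $0$-skeleton of a CAT(0) cube complex $X_1$ (via the Roller/Chepoi construction), of dimension $\leq r$ and with staircase length at most $d(1+2r^2)$. Edge-connectedness (via Lemma~\ref{eq characterisations}(3)) guarantees that walls of $X_1$ are in bijection with walls of $X$ meeting $N_1$, which in turn implies that the intrinsic medians, gate projections and distances of $X_1$ coincide with those of $X$ restricted to $N_1$; in particular, since $\beta \subseteq \pi_2(N_1 \cap M') \subseteq N_1$, the path $\beta$ is also a geodesic in $X_1$ from $z$ to $y$. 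Applying Lemma~\ref{preservation of finite staircase length} to $X_1$ (with $x$ and $y$ swapped) gives the desired bound $d(1+2r^2)^2$ on the staircase length of $N$.

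The main obstacle is precisely this iteration: one must be confident that, once $N_1$ is realised as a cube complex $X_1$, the gate projection $\pi_2$ and the geodesic $\beta$ inherited from $X$ remain the correct objects to feed into Lemma~\ref{preservation of finite staircase length}. Everything hinges on the edge-connectedness of $N_1$, which is exactly why the construction is arranged so that the preimage of an edge-connected geodesic under a gate projection is edge-connected at each stage.
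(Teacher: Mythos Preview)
Your proof is correct and follows essentially the same approach as the paper's: construct $N$ by intersecting $I(x,y)^{(0)}$ with preimages of two geodesics under $\pi_{xz}$ and $\pi_{zy}$, using Lemma~\ref{edge-connected vs gate-projections} for edge-connectedness and two applications of Lemma~\ref{preservation of finite staircase length} for the staircase bound. The only differences are cosmetic: you work with $M' = M\cap I(x,y)$ from the outset (the paper uses $M$ directly, which is equivalent since $\pi_{xz}(M)$ and $\pi_{xz}(M')$ both contain an edge-connected path from $x$ to $z$), and you spell out explicitly the passage to the cube complex $X_1$ for the second iteration, which the paper handles with a one-line parenthetical remark.
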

\begin{proof}
Let $\pi_{xz}\colon X\ra I(x,z)$ be the gate-projection and note that $\pi_{xz}(y)=z$. By Lemma~\ref{edge-connected vs gate-projections}(1), the projection $\pi_{xz}(M)$ is an edge-connected median subalgebra containing $x$ and $z$. Thus there exists a (combinatorial) geodesic $\alpha\cu I(x,z)$ joining $x$ and $z$ with $\alpha\cap X^{(0)}\cu\pi_{xz}(M)$. 

By Lemma~\ref{edge-connected vs gate-projections}(2), the median subalgebras $N':=\pi_{xz}^{-1}(\alpha)\cap I(x,y)\cap X^{(0)}$ and $M\cap N'$ are edge-connected. Lemma~\ref{preservation of finite staircase length} shows that $N'$ has staircase length $\leq d(1+2r^2)$, while it is clear that $\rk N'\leq\dim X=r$.

Note that $x,y,z\in N'$. Since $\pi_{xz}(I(z,y))=\{z\}$, the entire interval $I(z,y)\cap X^{(0)}$ is contained in $N'$. Consider the projection $\pi_{zy}\colon X\ra I(z,y)$. Since $M\cap N'$ is edge-connected, Lemma~\ref{edge-connected vs gate-projections} again shows that the projection $\pi_{zy}(M\cap N')$ is edge-connected, and we can join $y$ and $z$ by a geodesic $\beta$ with $\beta\cap X^{(0)}\cu\pi_{zy}(M\cap N')$. Repeating the above argument, we see that $N:=N'\cap\pi_{yz}^{-1}(\beta)$ has staircase length $\leq d(1+2r^2)^2$, that $N$ and $N\cap M$ are edge-connected, and that $x,y,z\in N$ (recall that $N'$ is a finite median algebra, so it is naturally identified with the $0$--skeleton of a $\CAT$ cube complex and we can run the above argument in this cube complex).

We are left to show that $\rk N\leq 2$. Since $x,y\in N\cu I(x,y)$, every wall of $N$ either separates $x$ from $y,z$, or it separates $x,z$ from $y$. If two walls of $N$ separate $x$ and $z$, then they are not transverse (cf.\ Claim~1 during the proof of Lemma~\ref{preservation of finite staircase length}). The same is true of walls separating $z$ and $y$. This implies that $\rk N\leq 2$, concluding the proof.
\end{proof}

\begin{proof}[Proof of Proposition~\ref{wqc->qc}]
Let $X$ have dimension $r$ and staircase length $d$. Let $M$ be an edge-connected, weakly quasi-convex subalgebra. We will show that $d_{\rm Haus}(I(x,y),M\cap I(x,y))$ remains uniformly bounded as $x$ and $y$ vary in $M$, which implies that $M$ is quasi-convex.

Consider $x,y\in M$ and $z\in X^{(0)}\cap I(x,y)$. By Lemma~\ref{2d reduction}, the points $x,y,z$ lie in a median subalgebra $N\cu X^{(0)}\cap I(x,y)$ such that $N$ and $N\cap M$ are edge-connected, $\rk N\leq 2$, and $N$ has staircase length $\leq d(1+2r^2)^2$. 

Viewing $N$ as the vertex set of a finite $\CAT$ cube complex and applying Lemma~\ref{wqc 2d} to $M\cap N$, there exist points $z_0=z,z_1,\dots,z_{k-1}\in N$ and $z_k,w_1,\dots,w_k\in N\cap M$ with $k\leq d(1+2r^2)^2$, such that each wall of $N$ separating $z_i$ and $z_{i+1}$ is transverse to every wall of $N$ separating $z_i$ and $w_{i+1}$. The same is true of hyperplanes of $X$ separating these points.

Since $M$ is weakly quasi-convex, it admits a function $\eta$ as in Definition~\ref{wqc defn}. Without loss of generality, we can take $\eta$ to be weakly increasing. Then, since $d(w_i,M)=0$, we have:
\begin{align*}
d(z,M)\leq \max\{\eta(d(z_1,M)),\eta(0)\}&\leq\max\{\eta^2(d(z_2,M)),\eta^2(0),\eta(0)\} \\
&\leq\dots\leq \max\{\eta^k(0),\dots,\eta^2(0),\eta(0)\}.
\end{align*}
The last constant only depends on $d$, $r$ and $\eta$, so this proves that $M$ is quasi-convex.
\end{proof}

\subsection{Fixed subgroups in right-angled groups.}\label{fix cc sect}

In this subsection, we combine the results of the previous two subsections to prove Theorem~\ref{U_0 cc intro}.

Let $\G$ be a finite simplicial graph. Our focus will be on the right-angled Artin group $\A=\A_{\G}$ and the universal cover of its Salvetti complex $\X=\X_{\G}$. Throughout, we will identify $\A\cong\X^{(0)}$.

However, all results and proofs in this subsection (except for Remark~\ref{useless rmk}) immediately extend to right-angled Coxeter groups $\W=\W_{\G}$ and Davis complexes $\mc{Y}_{\G}$, without requiring any adaptations. We suggest that the reader keep track of this as they make their way through the results, in view of Corollary~\ref{fix RACG cc} below. The relevant properties shared by RAAGs and RACGs are:
\begin{itemize}
\item the Cayley graph of $\A$/$\W$ associated to the standard generators (vertices of $\G$) is the $1$--skeleton of a $\CAT$ cube complex (the universal cover of the Salvetti/Davis complex) of finite staircase length (Lemma~\ref{no staircases}); 
\item hyperplanes are labelled by vertices of $\G$ and labels of transverse hyperplanes are joined by an edge of $\G$;
\item elementary automorphisms of $\A$ and $\W$ (as defined in Subsection~\ref{of RAAGs sect}) have the same form with respect to standard generators.
\end{itemize}
We are interested in the subgroups $U_0(\A)\leq U(\A)$ and $\aut_0\W\leq\aut\W$ generated by inversions, folds and partial conjugations, as defined at the end of Subsection~\ref{of RAAGs sect}. 

Given a subset $\Delta\cu\G^{(0)}$, it is convenient to introduce the notation:
\[\Delta^{\perp}=\bigcap_{v\in\Delta}\lk v.\]

\begin{rmk}\label{useless rmk}
It is not hard to observe that a subgroup of $\A$ is an intersection of stabilisers of hyperplanes of $\X$ if and only if it is conjugate to a subgroup of the form $\A_{\Delta^{\perp}}$ for some $\Delta\cu\G$. 

Although we will not be using this remark in the present paper, we find it interesting in relation to Lemma~\ref{orthogonals are preserved} below: elements of $U_0(\A)$ permute hyperplane-stabilisers while preserving labels.
\end{rmk}

Statements similar to the next lemma have been widely used in the literature, e.g.\ in \cite[Proposition~3.2]{CCV}, \cite[Proposition~3.2]{CV1} and \cite[Section~3]{CV2}). Compared to these references, we get a slightly stronger result because here we are only concerned with untwisted automorphisms.

\begin{lem}\label{orthogonals are preserved}
For every $\varphi\in U_0(\A)$ and $\Delta\cu\G$, the subgroups $\A_{\Delta^{\perp}}$ and $\varphi(\A_{\Delta^{\perp}})$ are conjugate.
\end{lem}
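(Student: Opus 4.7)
I would verify the statement generator by generator. First, observe that the set
\[
\mathcal{F}_{\Delta}:=\{\varphi\in\aut\A \mid \varphi(\A_{\Delta^{\perp}})\text{ is conjugate to }\A_{\Delta^{\perp}}\}
\]
is a subgroup of $\aut\A$: if $\varphi(\A_{\Delta^{\perp}})=g\A_{\Delta^{\perp}}g^{-1}$ and $\psi(\A_{\Delta^{\perp}})=h\A_{\Delta^{\perp}}h^{-1}$, then $(\varphi\psi)(\A_{\Delta^{\perp}})=\varphi(h)g\A_{\Delta^{\perp}}g^{-1}\varphi(h)^{-1}$, and a similar argument handles inverses. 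Hence it suffices to check that $\mathcal{F}_{\Delta}$ contains each of the three types of generators of $U_0(\A)$: inversions, joins, and partial conjugations. Inversions $\iota_v$ preserve $\A_{\Delta^{\perp}}$ on the nose, since they send a generator to its inverse.

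For a join $\tau_{v,w}$ (where $\lk v\cu\lk w$ and $v,w$ are not joined by an edge), if $v\notin\Delta^{\perp}$ then $\tau_{v,w}$ fixes every standard generator of $\A_{\Delta^{\perp}}$, so the subgroup is preserved. If instead $v\in\Delta^{\perp}$, then $\Delta\cu\lk v\cu\lk w$, which forces $w\in\Delta^{\perp}$ as well. Then $\tau_{v,w}(\A_{\Delta^{\perp}})$ is generated by $\Delta^{\perp}\setminus\{v\}$ together with $vw$; but this subgroup already contains both $w$ and $vw$, hence $v$, so it equals $\A_{\Delta^{\perp}}$.

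The main case is the partial conjugation $\kappa_{w,C}$, where $C$ is a component of $\G\setminus\St w$. Partition $\Delta^{\perp}=A\sqcup B\sqcup D$, where $A=\Delta^{\perp}\cap\St w$, $B=\Delta^{\perp}\cap C$, and $D=\Delta^{\perp}\setminus(\St w\cup C)$. Every $u\in A\setminus\{w\}$ lies in $\lk w$ and therefore commutes with $w$, so $\kappa_{w,C}$ fixes $A\setminus\{w\}$ pointwise; it also fixes $D$ and $w$ (if $w\in A$), while it conjugates $B$ by $w^{-1}$. If $w\in\Delta^{\perp}$, then $w$ lies in $\kappa_{w,C}(\A_{\Delta^{\perp}})$, which therefore contains $w\cdot w^{-1}Bw\cdot w^{-1}=B$; combined with $A$ and $D$, this yields $\kappa_{w,C}(\A_{\Delta^{\perp}})=\A_{\Delta^{\perp}}$.

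The key remaining point, and the only mildly subtle one, is to rule out $D\neq\emptyset$ when $w\notin\Delta^{\perp}$. If $w\notin\Delta^{\perp}$ there exists $\delta\in\Delta$ not adjacent to $w$, so $\delta\in\G\setminus\St w$ lies in some component $C_\delta$. Every $b\in B$ is adjacent to $\delta$ by definition of $\Delta^{\perp}$, and neither $b$ nor $\delta$ lies in $\St w$, so this edge shows $\delta\in C$; hence $C_\delta=C$. The same argument applied to any $d\in D$ gives $\delta\in C_d$ with $C_d\neq C$, a contradiction. Hence $D=\emptyset$, and since $\kappa_{w,C}$ fixes $A$ pointwise and satisfies $w^{-1}Aw=A$, we obtain $\kappa_{w,C}(\A_{\Delta^{\perp}})=\langle A,\,w^{-1}Bw\rangle=w^{-1}\A_{\Delta^{\perp}}w$, which is conjugate to $\A_{\Delta^{\perp}}$. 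This completes the verification on generators and hence on all of $U_0(\A)$.
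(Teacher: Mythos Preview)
Your approach is the same as the paper's: reduce to generators and check each type. The arguments for inversions and joins are fine and match the paper verbatim. However, your treatment of partial conjugations contains a gap.

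You claim that $D=\emptyset$ whenever $w\notin\Delta^{\perp}$, but this is false. Take $\G$ with vertices $w,c,d,\delta$ and a single edge joining $d$ and $\delta$; let $C=\{c\}$ and $\Delta=\{\delta\}$. Then $\Delta^{\perp}=\{d\}$, $w\notin\Delta^{\perp}$, $A=B=\emptyset$, and $D=\{d\}\neq\emptyset$. Looking at your argument, the contradiction you derive uses \emph{both} some $b\in B$ (to force $C_\delta=C$) and some $d\in D$ (to force $C_\delta\neq C$); so what you have actually proved is that $B$ and $D$ cannot both be nonempty when $w\notin\Delta^{\perp}$. That is exactly what is needed: if $B=\emptyset$ then $\kappa_{w,C}$ fixes every generator of $\A_{\Delta^{\perp}}$, while if $D=\emptyset$ your final computation correctly gives $\kappa_{w,C}(\A_{\Delta^{\perp}})=w^{-1}\A_{\Delta^{\perp}}w$. (There is also a tiny slip: ``$\delta$ not adjacent to $w$'' does not by itself exclude $\delta=w$; but if $w\in\Delta$ then $\Delta^{\perp}\cu\lk w$, so $B=D=\emptyset$ trivially, and otherwise any such $\delta$ is indeed $\neq w$.) With this correction your proof is complete and coincides with the paper's, which phrases the same dichotomy as: either $\Delta^{\perp}$ meets at most one component of $\G\setminus\St w$, or $w\in\Delta^{\perp}$.
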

\begin{proof}
It suffices to prove the lemma for elementary generators. It is clear that it holds for inversions, so we are left to consider folds and partial conjugations.

If $\tau_{v,w}$ is a fold, then $\tau_{v,w}(\A_{\Delta^{\perp}})=\A_{\Delta^{\perp}}$. This is immediate if $v\not\in\Delta^{\perp}$. If instead $v\in\Delta^{\perp}$, we have $\Delta\cu\lk v\cu\lk w$, hence $w\in\Delta^{\perp}$.

If $\kappa_{w,C}$ is a partial conjugation, then $\kappa_{w,C}(\A_{\Delta^{\perp}})$ is either $\A_{\Delta^{\perp}}$ or $w^{-1}\A_{\Delta^{\perp}}w$. This is clear if $\Delta^{\perp}$ intersects at most one connected component of $\G\setminus\St w$. Suppose instead that $\Delta^{\perp}$ intersects two distinct components of $\G\setminus\St w$. Then, for every $a\in\Delta$, the fact that $\Delta^{\perp}\cu\lk a$ implies that $a\in\lk w$. Thus, $w\in\Delta^{\perp}$ and $\kappa_{w,C}(\A_{\Delta^{\perp}})=\A_{\Delta^{\perp}}$ in this case.
\end{proof}

\begin{cor}\label{orthogonals are preserved 2}
For every $\varphi\in U_0(\A)$ and $g\in\A$, we have $\G(\varphi(g))^{\perp}=\G(g)^{\perp}$.
\end{cor}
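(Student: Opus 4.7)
The plan is to deduce the corollary from Lemma~\ref{orthogonals are preserved} via a clean reformulation: I will characterize the condition $\Delta\cu\G(g)^{\perp}$ group-theoretically, in a way that is manifestly preserved by elements of $U_0(\A)$.

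The key equivalence I will establish is the following: for every $\Delta\cu\G$,
\[ \Delta\cu\G(g)^{\perp} \quad \Longleftrightarrow \quad g\in h\A_{\Delta^{\perp}}h^{-1} \text{ for some } h\in\A. \]
The forward direction is immediate, since $\Delta\cu\G(g)^{\perp}$ rearranges to $\G(g)\cu\Delta^{\perp}$, and any cyclically reduced conjugate of $g$ lies in $\A_{\G(g)}\cu\A_{\Delta^{\perp}}$. For the converse, if $h^{-1}gh\in\A_{\Delta^{\perp}}$, then cyclic reduction in a RAAG keeps us inside the parabolic subgroup $\A_{\Delta^{\perp}}$ (parabolic subgroups are convex in RAAGs and closed under cyclic permutation of reduced words), so $\G(g)\cu\Delta^{\perp}$, i.e.\ $\Delta\cu\G(g)^{\perp}$.

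Given the equivalence, the rest is formal. Suppose $\Delta\cu\G(g)^{\perp}$ and write $g=h\,a\,h^{-1}$ with $a\in\A_{\Delta^{\perp}}$ and $h\in\A$. Then
\[ \varphi(g)=\varphi(h)\,\varphi(a)\,\varphi(h)^{-1}\in\varphi(h)\cdot\varphi(\A_{\Delta^{\perp}})\cdot\varphi(h)^{-1}. \]
Lemma~\ref{orthogonals are preserved} says that $\varphi(\A_{\Delta^{\perp}})$ is conjugate to $\A_{\Delta^{\perp}}$ inside $\A$, so $\varphi(g)$ lies in a conjugate of $\A_{\Delta^{\perp}}$, and the equivalence yields $\Delta\cu\G(\varphi(g))^{\perp}$. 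Setting $\Delta=\G(g)^{\perp}$ gives the inclusion $\G(g)^{\perp}\cu\G(\varphi(g))^{\perp}$.

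For the reverse inclusion, note that $U_0(\A)$ is a subgroup of $\aut\A$, hence $\varphi^{-1}\in U_0(\A)$. Applying the argument above to $\varphi^{-1}$ and to the element $\varphi(g)$ gives $\G(\varphi(g))^{\perp}\cu\G(\varphi^{-1}\varphi(g))^{\perp}=\G(g)^{\perp}$, completing the proof. The entire argument rests on two ingredients: the convexity of parabolic subgroups in RAAGs (for the equivalence) and Lemma~\ref{orthogonals are preserved} itself; neither step should present any genuine obstacle.
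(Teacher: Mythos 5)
Your proof is correct and follows essentially the same route as the paper: conjugate $g$ into the parabolic $\A_{\Delta^{\perp}}$ for $\Delta=\G(g)^{\perp}$, apply Lemma~\ref{orthogonals are preserved} to see that $\varphi(g)$ also lies in a conjugate of $\A_{\Delta^{\perp}}$, and conclude via the closure of parabolics under cyclic reduction. Your framing of the key step as an explicit if-and-only-if equivalence is a bit more pedagogical, but the underlying argument and use of the lemma are identical to the paper's.
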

\begin{proof}
It suffices to show that $\G(\varphi(g))^{\perp}\supseteq\G(g)^{\perp}$ for all $\varphi\in U_0(\A)$ and $g\in\A$. Note that $g$ has a conjugate in $\A_{\G(g)}\leq\A_{\G(g)^{\perp\perp}}$. Thus, Lemma~\ref{orthogonals are preserved} implies that a conjugate of $\varphi(g)$ lies in $\A_{\G(g)^{\perp\perp}}$. This shows that $\G(\varphi(g))\cu\G(g)^{\perp\perp}$, hence $\G(\varphi(g))^{\perp}\supseteq\G(g)^{\perp\perp\perp}=\G(g)^{\perp}$, as required.
\end{proof}

For the next results, recall that we are identifying elements of $\A$ and vertices of $\X$.

\begin{lem}\label{few labels outside}
For every $\varphi\in U_0(\A)$, there exists a constant $K(\varphi)$ with the following property. For all $x,y\in\A$, at most $K(\varphi)$ among the hyperplanes in $\mscr{W}(\varphi(x)|\varphi(y))$ have label outside $\g(\mscr{W}(x|y))^{\perp\perp}$.
\end{lem}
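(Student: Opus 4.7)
The plan is to apply Lemma~\ref{orthogonals are preserved} in order to confine $\varphi(x^{-1}y)$ inside a conjugate of a fixed parabolic subgroup, with a conjugating element whose word length is bounded only in terms of $\varphi$.

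Set $g := x^{-1}y$ and $\Omega := \g(\mscr{W}(x|y))^{\perp\perp}$. A reduced word for $g$ has all its letters in $\g(\mscr{W}(x|y)) \cu \Omega$, so $g \in \A_\Omega$. Since $\Omega = (\Omega^{\perp})^{\perp}$ is of the form required by Lemma~\ref{orthogonals are preserved}, that lemma applied with $\Delta := \Omega^{\perp}$ yields that $\varphi(\A_\Omega)$ is a conjugate of $\A_\Omega$ in $\A$. Pick $k \in \A$ with $\varphi(\A_\Omega) = k \A_\Omega k^{-1}$; then $\varphi(g) = k h k^{-1}$ for some $h \in \A_\Omega$. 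Since $k$ is determined only modulo $N_\A(\A_\Omega)$, I would replace it with a shortest element of the coset $k \cdot N_\A(\A_\Omega)$. The key observation is that this coset, and hence the length $|k|$ of its shortest representative, depends only on $\varphi$ and $\Omega$. Because $\Omega$ ranges only over the finite collection $\{\Lambda^{\perp\perp} : \Lambda \cu \G^{(0)}\}$ as $x,y$ vary, one obtains a uniform bound $|k| \leq K_1(\varphi)$ depending only on $\varphi$.

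To finish, the number of hyperplanes in $\mscr{W}(\varphi(x)|\varphi(y))$ with a given label equals the number of letters of that label in a reduced word for $\varphi(g)$. Since $h \in \A_\Omega$ admits a reduced word entirely in $\Omega$, every letter of the product $k h k^{-1}$ whose label lies outside $\Omega$ must come from $k$ or $k^{-1}$, and there are at most $2|k| \leq 2 K_1(\varphi)$ of them. Because RAAG reduction only deletes pairs of like-labelled letters, this bound passes to the reduced word for $\varphi(g)$, and setting $K(\varphi) := 2 K_1(\varphi)$ proves the lemma. The only subtle point—and the main obstacle I would take care to justify—is the uniformity of $|k|$, which rests on the finiteness of the collection of orthogonally-closed subsets of $\G^{(0)}$.
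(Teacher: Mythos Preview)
Your proof is correct and takes essentially the same approach as the paper's: reduce to $g=x^{-1}y$, use Lemma~\ref{orthogonals are preserved} to write $\varphi(g)$ as a conjugate of an element of $\A_{\Omega}$ by a conjugator of uniformly bounded length (uniform because $\G$ has only finitely many subsets), and count the letters outside $\Omega$. The paper is slightly terser---it simply picks one conjugator $x_{\Delta}$ for each of the finitely many $\Delta\cu\G$ rather than minimising over a normaliser coset---but the argument is the same.
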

\begin{proof}
It suffices to show that, for every $g\in\A$, at most $K(\varphi)$ among the hyperplanes in $\mscr{W}(1|\varphi(g))$ have label outside $\g(\mscr{W}(1|g))^{\perp\perp}$.

Since $\G$ has only finitely many subsets, Lemma~\ref{orthogonals are preserved} shows that there exists a constant $K'(\varphi)$ with the following property. For every $\Delta\cu\G$ there exists $x_{\Delta}\in\A$ with $\varphi(\A_{\Delta^{\perp}})=x_{\Delta}\A_{\Delta^{\perp}}x_{\Delta}^{-1}$ and $|x_{\Delta}|\leq K'(\varphi)$. Here $|\cdot|$ denotes word length with respect to the standard generators.

Now, consider $g\in\A$ and set $\Delta(g):=\g(\mscr{W}(1|g))^{\perp}$. Then $g\in\A_{\Delta(g)^{\perp}}$ and the above observation shows that all but $2|x_{\Delta(g)}|$ hyperplanes in $\mscr{W}(1|\varphi(g))$ have label in $\Delta(g)^{\perp}$. Taking $K(\varphi):=2K'(\varphi)$, this concludes the proof.
\end{proof}

\begin{prop}\label{fix wqc}
If $\varphi\in U_0(\A)$, the subgroup $\Fix\varphi$ is a weakly quasi-convex subset of $\X^{(0)}\cong\A$.
\end{prop}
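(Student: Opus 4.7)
The plan is to bound $d(p,\varphi(p))$ linearly in $D:=\max\{d(a,\Fix\varphi),d(b,\Fix\varphi)\}$ by classifying the hyperplanes of $\mscr{W}(p\,|\,\varphi(p))$ according to their behaviour with respect to $a,\varphi(a),b,\varphi(b)$. Since $\varphi\in U_0(\A)\leq U(\A)=\aut(\A,[\mu])$ (by Proposition~\ref{aut right-angled}), there is a constant $C\geq 0$ with $\varphi(\mu(x,y,z))\approx_C\mu(\varphi(x),\varphi(y),\varphi(z))$ for all $x,y,z\in\A$; let also $K=K(\varphi)$ be as in Lemma~\ref{few labels outside} and $\zeta$ as in Lemma~\ref{displacement vs fix dist}. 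The latter gives $d(a,\varphi(a)),d(b,\varphi(b))\leq 2D$, and since $\mscr{W}(p|a)$ is transverse to $\mscr{W}(p|b)$ one has in particular $p\in I(a,b)$. Finally, once $d(p,\varphi(p))$ is bounded by some function of $D$, Lemma~\ref{displacement vs fix dist} again gives the required bound on $d(p,\Fix\varphi)$.

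The key preliminary observation concerns the label sets $\Delta_a:=\g(\mscr{W}(p|a))$ and $\Delta_b:=\g(\mscr{W}(p|b))$. Transversality of $\mscr{W}(p|a)$ and $\mscr{W}(p|b)$ forces the labels in $\Delta_a$ to be joined by edges of $\G$ to the labels in $\Delta_b$, and also $\Delta_a\cap\Delta_b=\emptyset$ (two parallel hyperplanes are never transverse). Thus $\Delta_a\subseteq\Delta_b^{\perp}$ and $\Delta_b\subseteq\Delta_a^{\perp}$. From this I deduce that $\Delta_b\cap\Delta_a^{\perp\perp}=\emptyset$: a label $v$ in this intersection would lie in $\Delta_a^{\perp}$ (since $\Delta_b\subseteq\Delta_a^{\perp}$) and would also be joined to every vertex of $\Delta_a^{\perp}$ (by definition of $\Delta_a^{\perp\perp}$), forcing a self-loop at $v$ in the simplicial graph $\G$. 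Symmetrically, $\Delta_a\cap\Delta_b^{\perp\perp}=\emptyset$.

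Now I partition $\mscr{W}(p|\varphi(p))$ into three cases. \textbf{Case 1:} $\mf{w}\in\mscr{W}(a|\varphi(a))\cup\mscr{W}(b|\varphi(b))$, contributing at most $4D$ hyperplanes. \textbf{Case 2:} $a,\varphi(a),b,\varphi(b)$ all lie on the same side of $\mf{w}$; since $p\in I(a,b)$, the point $p$ is on that side too, so $\mf{w}$ separates $\varphi(p)$ from the halfspace containing $I(\varphi(a),\varphi(b))$, and the $C$-closeness of $\varphi(p)$ to $\mu(\varphi(a),\varphi(b),\varphi(p))\in I(\varphi(a),\varphi(b))$ bounds this case by $C$. \textbf{Case 3:} $a,\varphi(a)$ and $b,\varphi(b)$ end up on opposite sides of $\mf{w}$, splitting further into sub-cases 3a ($p,a,\varphi(a)$ together, $\varphi(p),b,\varphi(b)$ together) and 3b (the symmetric configuration). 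In sub-case 3a, $\mf{w}\in\mscr{W}(p|b)$ forces $\g(\mf{w})\in\Delta_b$, while $\mf{w}\in\mscr{W}(\varphi(a)|\varphi(p))$ together with Lemma~\ref{few labels outside} applied to the pair $(a,p)$ says that, apart from at most $K$ exceptions, such a hyperplane has label in $\Delta_a^{\perp\perp}$; since $\Delta_b\cap\Delta_a^{\perp\perp}=\emptyset$, every hyperplane in sub-case 3a must be one of these exceptions. Sub-case 3b is symmetric via the pair $(b,p)$ and the identity $\Delta_a\cap\Delta_b^{\perp\perp}=\emptyset$.

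Adding the contributions yields $d(p,\varphi(p))\leq 4D+C+2K$, and hence $d(p,\Fix\varphi)\leq\zeta(4D+C+2K)$, giving weak quasi-convexity with $\eta(D):=\zeta(4D+C+2K)$. The only delicate point is the combinatorial claim that $\Delta_b\cap\Delta_a^{\perp\perp}=\emptyset$, which is where the absence of self-loops in $\G$ and the precise statement of Lemma~\ref{few labels outside} for automorphisms in $U_0(\A)$ (rather than the whole of $U(\A)$) combine to rule out uncontrollable hyperplane contributions; everything else is elementary bookkeeping with halfspaces.
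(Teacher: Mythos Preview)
Your proof is correct and follows essentially the same approach as the paper: both bound $d(p,\varphi(p))$ by classifying hyperplanes in $\mscr{W}(p|\varphi(p))$, using the CMP constant $C$, the displacement bound $2D$ from Lemma~\ref{displacement vs fix dist}, and the crucial label constraint from Lemma~\ref{few labels outside} together with the combinatorial fact $\Delta_b\cap\Delta_a^{\perp\perp}=\emptyset$ (equivalently $\Delta^{\perp}\cap\Delta^{\perp\perp}=\emptyset$ in a simplicial graph). The only difference is organisational: the paper decomposes $\mscr{W}(p|\varphi(p))$ according to the positions of $\varphi(a),\varphi(b)$ alone and then perturbs back to $a,b$ at a cost of $8D$, whereas you decompose according to all four points $a,\varphi(a),b,\varphi(b)$ simultaneously, absorbing the perturbation into Case~1 and obtaining the slightly sharper bound $4D+C+2K$; this is a cosmetic improvement and the substance of the argument is identical.
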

\begin{proof}
Consider vertices $a,b,p\in\X$ with $\mscr{W}(p|a)$ transverse to $\mscr{W}(p|b)$. Set: 
\[D:=\max\{d(a,\Fix\varphi),d(b,\Fix\varphi)\}.\] 
Let $K=K(\varphi)$ be as in Lemma~\ref{few labels outside}, let $\zeta_1,\zeta_2$ be the functions provided by Lemma~\ref{displacement vs fix dist} (without loss of generality, strictly increasing), and let $C$ be a constant such that
\[\varphi(m(x,y,z))\approx_Cm(\varphi(x),\varphi(y),\varphi(z)),\ \forall x,y,z\in\X.\] 

Let us write $a',b',p'$ for $\varphi(a),\varphi(b),\varphi(p)$. Since $\mscr{W}(p|a)$ and $\mscr{W}(p|b)$ are transverse, we have $p\in I(a,b)$ hence $\mscr{W}(p|a,b)=\emptyset$. Observing that $m(a',b',p')\approx_C\varphi(m(a,b,p))=p'$, we also have $\#\mscr{W}(p'|a',b')\leq C$. Finally, by the first inequality in Lemma~\ref{displacement vs fix dist}, we have $a'\approx_{D'}a$ and $b'\approx_{D'}b$, where $D':=\zeta_1^{-1}(D)$.

Putting together these inequalities, we obtain:
\begin{align*}
\#\mscr{W}(p|p')&= \#\mscr{W}(p|a',b',p')+\#\mscr{W}(p,a'|b',p')+\#\mscr{W}(p,b'|a',p')+\#\mscr{W}(p,a',b'|p') \\ 
&\leq \#\mscr{W}(p|a,b)+2D'+\#\mscr{W}(p,a'|b,p')+D'+\#\mscr{W}(p,b'|a,p')+D'+\#\mscr{W}(a',b'|p') \\
&\leq \#\mscr{W}(p,a'|b,p')+\#\mscr{W}(p,b'|a,p')+C+4D'.
\end{align*}

By Lemma~\ref{few labels outside}, at most $K$ elements of $\mscr{W}(a'|p')$ have label in $\g(\mscr{W}(a|p))^{\perp}$. Since $\mscr{W}(p|a)$ and $\mscr{W}(p|b)$ are transverse, we deduce that $\#\mscr{W}(p,a'|b,p')\leq K$ and, similarly, $\#\mscr{W}(p,b'|a,p')\leq K$. We conclude that:
\[d(p,\varphi(p))=\#\mscr{W}(p|p')\leq 2K+C+4D'.\]
Lemma~\ref{displacement vs fix dist} gives $d(p,\Fix\varphi)\leq\zeta_2(2K+C+4\cdot\zeta_1^{-1}(D))$, as required by Definition~\ref{wqc defn}. 
\end{proof}

\begin{cor}\label{fix RAAG cc}
For every $\varphi\in U_0(\A)$, the subgroup $\Fix\varphi$ is convex-cocompact in $\A\acts\X$.
\end{cor}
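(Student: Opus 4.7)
The plan is to combine the weak quasi-convexity result of Proposition~\ref{fix wqc} with the criterion of Proposition~\ref{wqc->qc}, passing through the median subalgebra generated by an edge-connected enlargement of a $\Fix\varphi$--orbit, and then conclude via Lemma~\ref{equiv cc}.

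Set $H := \Fix\varphi$. Since $U_0(\A)\leq U(\A)=\aut(\A,[\mu])$ by Proposition~\ref{aut right-angled}, Proposition~\ref{fg fix} applies and $H$ is finitely generated. Choose generators $h_1,\dots,h_k$ of $H$ and, for each $i$, a combinatorial geodesic $\gamma_i\cu\X$ from $1$ to $h_i^{\pm 1}$; let $A\cu\X^{(0)}$ be the finite set consisting of $1$ together with the vertices of the $\gamma_i$. Writing any $h\in H$ as a word in the $h_i^{\pm 1}$ and concatenating the corresponding $H$--translates of the $\gamma_i$, one checks that the orbit $H\cdot A$ is edge-connected and at finite Hausdorff distance from $H\cdot 1$. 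This enlargement is necessary because the standard generators of $\A$ lying in $H$ need not suffice to connect $H\cdot 1$ by edges.

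By Lemma~\ref{fix approximate subalgebra}, $H$ is an approximate median subalgebra of $\A$, and hence so is $H\cdot A$ in $\X^{(0)}$. Proposition~\ref{approx subalgebras} then yields that the median subalgebra $M:=\langle H\cdot A\rangle\cu\X^{(0)}$ is at finite Hausdorff distance from $H\cdot A$, and Lemma~\ref{generate edge-connected} ensures that $M$ is edge-connected. By Proposition~\ref{fix wqc}, the orbit $H\cdot 1$ is weakly quasi-convex in $\X^{(0)}$; since weak quasi-convexity (Definition~\ref{wqc defn}) is easily seen to be stable under finite Hausdorff distance by adjusting $\eta$ by an additive constant, $M$ is weakly quasi-convex as well.

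Finally, $\X=\X_{\G}$ is finite-dimensional and, by Lemma~\ref{no staircases}, has finite staircase length, so Proposition~\ref{wqc->qc} applies and $M$ is quasi-convex (in the sense of Definition~\ref{qc defn}). Using part~(3) of Definition~\ref{coarse median space defn}, quasi-convexity is stable under finite Hausdorff distance, so the orbit $H\cdot 1$ is quasi-convex in the coarse median group $(\A,[\mu_{\X}])$. The equivalence (1)$\LRa$(2) of Lemma~\ref{equiv cc} then gives that $\Fix\varphi$ is convex-cocompact in $\A\acts\X$. All the heavy lifting has been done in the preceding subsections, and no step is expected to present a substantial obstacle; the only bookkeeping point is the enlargement $A$, needed to supply the edge-connectedness hypothesis of Proposition~\ref{wqc->qc}.
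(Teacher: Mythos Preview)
Your proof is correct and follows essentially the same route as the paper: enlarge $H$ to an edge-connected $H$--invariant set at finite Hausdorff distance (you use $H\cdot A$ built from geodesics to generators, the paper uses $\mc{N}_R(H)$), generate a median subalgebra $M$, and apply Proposition~\ref{wqc->qc} after invoking Propositions~\ref{fg fix},~\ref{approx subalgebras},~\ref{fix wqc} and Lemma~\ref{no staircases}. The only cosmetic difference is in the final step, where the paper passes directly to $\hull M$ via Lemma~\ref{about J} rather than quoting Lemma~\ref{equiv cc}, but these are equivalent.
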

\begin{proof}
Set $H:=\Fix\varphi$. By Theorem~\ref{cmp und intro}, $H$ is finitely generated, so there exists $R\geq 0$ such that $\mc{N}_R(H)$ is edge-connected, viewed as a subset of $\X$. By Lemma~\ref{generate edge-connected}, the median subalgebra $M:=\langle\mc{N}_R(H)\rangle$ is edge-connected. Since $H$ is an approximate median subalgebra by Lemma~\ref{fix approximate subalgebra}, Proposition~\ref{approx subalgebras} shows that $M$ is at finite Hausdorff distance from $H$. Since $H$ is weakly quasi-convex by Proposition~\ref{fix wqc}, so is $M$.

Finally, $\X$ has finite staircase length by Lemma~\ref{no staircases}. We have shown that $M\cu\X^{(0)}$ is edge-connected and weakly quasi-convex, so Proposition~\ref{wqc->qc} implies that $M$ is quasi-convex. By Lemma~\ref{about J}, $\hull M$ is at finite Hausdorff distance from $M$, which is at finite Hausdorff distance from $H$. This implies that $H$ acts cocompactly on the convex subcomplex $\hull M\cu\X$.
\end{proof}

The discussion in this subsection immediately extends to right-angled Coxeter groups $\W$ and the finite-index subgroup $\aut_0\W\leq\aut\W$ generated by folds and partial conjugations.

\begin{cor}\label{fix RACG cc}
For every $\varphi\in\aut_0\W$, the subgroup $\Fix\varphi$ is convex-cocompact in $\W\acts\mc{Y}$, where $\mc{Y}$ is the universal cover of the Davis complex.
\end{cor}

Recalling Lemma~\ref{equiv cc} and Remark~\ref{right-angled qc rmk}, the previous two corollaries prove Theorem~\ref{U_0 cc intro}.

\section{Invariant splittings of RAAGs.}\label{invariant splittings sect}
 
This section only contains the proofs of Proposition~\ref{intro invariant splitting} and Corollary~\ref{intro fix graph}, which are independent from all other results mentioned in the Introduction. 

Let $\G$ be a finite simplicial graph and let $\A=\A_{\G}$ be the corresponding right-angled Artin group. All results and proofs in this section immediately extend to the right-angled Coxeter group $\W_{\G}$ and automorphisms in $\aut_0\W_{\G}$. We encourage the reader to verify this as they go through the material, emphasising that only Lemmas~\ref{preservative representative} and~\ref{splitting preserved by U} and Corollary~\ref{fix graph} require any kind of attention, as all other results in this section are purely about the finite graph $\G$.

The following is Proposition~\ref{intro invariant splitting} from the Introduction.

\begin{prop}\label{invariant splitting prop}
Let $\A$ be directly irreducible, freely irreducible and non-cyclic. Then there exists an amalgamated product splitting $\A=\A_+\ast_{\A_0}\A_-$, with $\A_{\pm}$ and $\A_0$ parabolic subgroups of $\A$, such that the corresponding Bass--Serre tree $\A\acts T$ is $U_0(\A)$--invariant. That is: for every $\varphi\in U_0(\A)$, there exists an isometry $f\colon T\ra T$ satisfying $f\o g=\varphi(g)\o f$ for all $g\in\A$.
\end{prop}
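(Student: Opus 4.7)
The strategy is to construct a proper subset $\Delta\subsetneq\G^{(0)}$ of the form $\Delta=\Sigma^{\perp}$ that separates $\G$, use it as the edge group of a visual splitting of $\A$, and then verify that the resulting Bass--Serre tree is $U_0(\A)$-invariant. Lemma~\ref{orthogonals are preserved} is tailor-made to control the edge group under $U_0(\A)$, so the work is concentrated on the vertex groups and on the compatibility of incidence.

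\medskip\noindent
For the existence of the splitting, the key observation is that under the hypotheses $\G$ is connected (freely irreducible), is not a join (directly irreducible), and is not complete (otherwise $\A\cong\Z^n$ would be directly reducible for $n\geq 2$, and cyclic for $n\leq 1$), so in particular $\G^{\perp}=\emptyset$. For every $v\in\G^{(0)}$ there thus exists $u\neq v$ with $u\notin\lk v$, while $v$ is automatically isolated in the induced subgraph $\G\setminus\lk v$ (all its neighbours lie in $\lk v$). Hence $\lk v=\{v\}^{\perp}$ is already a proper separator of $\G$. Taking $\Sigma=\{v\}$ and grouping the components of $\G\setminus\lk v$ into two nonempty classes $C_{+},C_{-}$, I would set $\Delta_{\pm}:=\lk v\cup C_{\pm}$ and invoke the visual decomposition theorem for RAAGs to obtain an amalgamated product $\A=\A_{\Delta_{+}}*_{\A_{\lk v}}\A_{\Delta_{-}}$ with Bass--Serre tree $\A\acts T$.

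\medskip\noindent
For the invariance, fix $\varphi\in U_0(\A)$. By Lemma~\ref{orthogonals are preserved}, $\varphi(\A_{\lk v})$ is $\A$-conjugate to $\A_{\lk v}$, so $\varphi$ permutes the edge-stabilisers of $T$. To promote this into an isometry $f_{\varphi}\colon T\to T$ with $f_{\varphi}\o g=\varphi(g)\o f_{\varphi}$, I need to check that $\varphi$ permutes the pair of $\A$-conjugacy classes $\{[\A_{\Delta_{+}}],[\A_{\Delta_{-}}]\}$, compatibly with incidence in $T$. It is enough to verify this for the elementary generators of $U_0(\A)$: inversions preserve every parabolic subgroup pointwise; partial conjugations $\kappa_{w,C}$ send each $\A_{\Delta_{\pm}}$ to an $\A$-conjugate of itself (by essentially the argument already used in Lemma~\ref{orthogonals are preserved}); and for a join $\tau_{u,w}$ (so $\lk u\cu\lk w$ and $u\not\sim w$, whence $w\in(\lk u)^{\perp}$), a careful choice of $v$ forces $w$ and $u$ onto the same side of the splitting, after which $\tau_{u,w}$ preserves each $\A_{\Delta_{\pm}}$ up to an inner automorphism of $\A$.

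\medskip\noindent
The main obstacle I anticipate is the invariance check for joins $\tau_{u,w}$ with $u$ and $w$ potentially on opposite sides of the splitting: the naive choice $\Sigma=\{v\}$ for an arbitrary $v$ does not suffice, since a transvection $u\mapsto uw$ straddling the splitting would destroy the $T$-action. I expect this to be circumvented by choosing $v$ extremally -- for instance so that $\lk v$ is maximal (or $\Sigma^\perp$ minimal) among admissible separators of the form $(\cdot)^{\perp}$ -- and then arguing that any offending pair $(u,w)$ contradicts this extremality via a second application of Lemma~\ref{orthogonals are preserved} (enlarging $\Sigma^{\perp}$ to include $u,w$ would produce a smaller admissible separator). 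Once this compatibility is secured, the verifications for inversions and partial conjugations are routine, and the three cases combine to produce the desired $U_0(\A)$-action on $T$.
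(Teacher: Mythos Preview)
Your overall plan --- choose a separating subset $\Lambda$ of $\Gamma$, form the visual splitting $\A_+\ast_{\A_\Lambda}\A_-$, and verify invariance on elementary generators --- is exactly the paper's strategy. The gap is in the choice of $\Lambda$: restricting to separators of the form $\Sigma^\perp$ (in particular $\lk v$) is too rigid, and your proposed extremality fix does not close the gap.

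Consider the path $P_4$ on vertices $1$--$2$--$3$--$4$. This graph is connected, irreducible, and not a clique. The joins in $U_0(\A_{P_4})$ include $\tau_{1,3}$ (since $\lk 1=\{2\}\subseteq\lk 3=\{2,4\}$) and $\tau_{4,2}$ (since $\lk 4=\{3\}\subseteq\lk 2=\{1,3\}$). For \emph{any} vertex $v$, the separator $\lk v$ places one of these pairs on opposite sides: with $v=2$ the join $\tau_{4,2}$ sends $4\in\Lambda^-$ to $4\cdot 2$ with $2\in\Lambda^+$, and one checks that $4\cdot 2$ acts loxodromically on the Bass--Serre tree, so $\tau_{4,2}(\A_-)$ is not conjugate to a vertex group. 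The cases $v=1,3,4$ fail symmetrically. More to the point, the separator that \emph{does} work here is $\Lambda=\{2,3\}$, and since $\lk 2\cap\lk 3=\emptyset$ this $\Lambda$ is not of the form $\Sigma^\perp$ for any nonempty $\Sigma$. So the shape constraint you impose on $\Lambda$ must be abandoned, and with it the appeal to Lemma~\ref{orthogonals are preserved} as the organising principle.

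What the paper does instead is isolate three combinatorial conditions on a partition $\Gamma=\Lambda^+\sqcup\Lambda\sqcup\Lambda^-$ (Definition~\ref{good defn}): condition~(i) gives the splitting, condition~(ii) is precisely what forces every join $\tau_{u,w}$ to have $w\in\Lambda$ or $\{u,w\}$ on one side, and condition~(iii) handles partial conjugations by ensuring that $\Lambda\sqcup\Lambda^{-\varepsilon}$ meets only one component of $\Gamma\setminus\St w$ whenever $w\in\Lambda^\varepsilon$. The existence of such a ``good'' partition (Proposition~\ref{goods exist}) is then proved by a genuine induction on $|\Gamma^{(0)}|$: a direct construction when $\diam\Gamma\geq 3$, a reduction via the quotient graph $\overline\Gamma$ identifying vertices with equal links, and an extension lemma passing from good partitions of $\Gamma\setminus x$ to $\Gamma$. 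Your extremality heuristic is closest in spirit to the case handled by Lemma~\ref{G-x reducible}, but that lemma only covers one branch of the induction.
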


Proposition~\ref{invariant splitting prop} follows from Corollary~\ref{splitting preserved by U} and Proposition~\ref{goods exist} below. The latter will be proved right after Lemma~\ref{graph reduction}.

Given a partition $\G^{(0)}=\L^+\sqcup\L\sqcup\L^-$, we write $\A_+:=\A_{\L\sqcup\L^+}$ and $\A_-:=\A_{\L\sqcup\L^-}$ for simplicity. If $\L^{\pm}$ are nonempty and $d(\L^+,\L^-)\geq 2$ (where $d$ denotes the graph metric on $\G$), then the partition corresponds to a splitting as amalgamated product: 
\[\A=\A_+\ast_{\A_{\L}}\A_-.\] 
We denote by $\A\acts T_{\L}$ the Bass--Serre tree of this splitting. This will not cause any ambiguity related to possible different choices of the sets $\L^{\pm}$ in the following discussion.

We are interested in partitions of $\G^{(0)}$ that satisfy a certain list of properties.

\begin{defn}\label{good defn}
A partition $\G^{(0)}=\L^+\sqcup\L\sqcup\L^-$ into three nonempty subsets is \emph{good} if:
\begin{enumerate}
\item[(i)] $d(\L^+,\L^-)\geq 2$, where $d$ is the graph metric on $\G$;
\item[(ii)] for every $\eps\in\{\pm\}$ and $w\in\L^{\eps}$, there does not exist $v\in \L\sqcup\L^{-\eps}$ with $\lk v\cu\lk w\cup\L^{\eps}$;
\item[(iii)] for every $\eps\in\{\pm\}$ and $w\in\L^{\eps}$, the subgraph of $\G$ spanned by $(\L\sqcup\L^{-\eps})\setminus\St w$ is connected.
\end{enumerate}
We will simply write $\G=\L^+\sqcup\L\sqcup\L^-$, rather than $\G^{(0)}=\L^+\sqcup\L\sqcup\L^-$.
\end{defn}

The motivation for Definition~\ref{good defn} comes from the next lemma and the subsequent corollary. Definition~\ref{good defn} actually contains slightly stronger requirements than what is strictly necessary to the two results: this will facilitate the inductive construction of good partitions of graphs $\G$.

\begin{lem}\label{preservative representative}
Let $\G=\L^+\sqcup\L\sqcup\L^-$ be a good partition. For every $\psi\in U_0(\A)$, there exists $\varphi\in U_0(\A)$ representing the same outer automorphism and simultaneously satisfying $\varphi(\A_+)=\A_+$ and $\varphi(\A_-)=\A_-$ (hence also $\varphi(\A_{\L})=\A_{\L}$).
\end{lem}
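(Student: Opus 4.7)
The strategy is to factor $\psi$ as a product of elementary generators of $U_0(\A)$, rewrite each factor as an inner automorphism composed with an element of $U_0(\A)$ preserving both $\A_+$ and $\A_-$, and then push the inner automorphisms to one side using the identity $g' \circ \mathrm{inn}_h = \mathrm{inn}_{g'(h)} \circ g'$. Since inner automorphisms themselves lie in $U_0(\A)$ (they are products of partial conjugations), the output $\varphi$ will automatically lie in $U_0(\A)$, preserve both $\A_{\pm}$, and differ from $\psi$ by an inner automorphism.

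Inversions $\iota_v$ preserve every standard parabolic subgroup, so they require no modification. For a join $\tau_{v,w} \in U_0(\A)$, the condition $\lk v \subseteq \lk w$ (characterising joins) combined with good-partition property (ii) forces $v$ and $w$ to lie in compatible blocks of $\L^+ \sqcup \L \sqcup \L^-$. Indeed, if $w \in \L^{\eps}$, then $\lk v \subseteq \lk w \subseteq \lk w \cup \L^{\eps}$, so (ii) excludes $v \in \L \sqcup \L^{-\eps}$, forcing $v \in \L^{\eps}$; while $w \in \L$ trivially ensures $w \in \A_+ \cap \A_-$. In every case $\tau_{v,w}$ fixes every generator of $\A_{\pm}$ that it does not belong to, and multiplies by $w \in \A_{\pm}$ the ones it does, so $\tau_{v,w}(\A_{\pm}) = \A_{\pm}$.

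The main work is with partial conjugations $\kappa_{w,C}$. If $w \in \L$ then $w \in \A_+ \cap \A_-$ and each standard generator is sent either to itself or to its $w$-conjugate, so both $\A_{\pm}$ are preserved. Suppose instead $w \in \L^+$ (the case $w \in \L^-$ being symmetric). By condition (iii), the subgraph of $\G$ spanned by $(\L \sqcup \L^-) \setminus \St w$ is connected, so it lies in a single connected component $C_0$ of $\G \setminus \St w$; every other component $C$ is then contained in $\L^+$. For $C \neq C_0$ the partial conjugation $\kappa_{w,C}$ modifies only $\L^+$-generators by a $w \in \L^+$-conjugation, hence preserves both $\A_+$ and $\A_-$. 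The remaining $\kappa_{w,C_0}$ is the obstacle, and it is handled by the key identity
\[
\mathrm{inn}_w \circ \kappa_{w,C_0} \;=\; \prod_{C \neq C_0} \kappa_{w^{-1}, C},
\]
verified by direct computation on each standard generator (noting that partial conjugations with the same conjugator commute). Hence $\kappa_{w,C_0} = \mathrm{inn}_{w^{-1}} \circ \prod_{C \neq C_0} \kappa_{w^{-1}, C}$, where the rightmost product is in $U_0(\A)$ and preserves $\A_{\pm}$.

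Finally, given any factorisation $\psi = g_1 \cdots g_n$ with $g_i = \mathrm{inn}_{h_i} \circ g_i'$ as above, applying the push-through identity inductively yields $\psi = \mathrm{inn}_H \circ \varphi$, where $H \in \A$ and $\varphi := g_1' \cdots g_n' \in U_0(\A)$ preserves both $\A_{\pm}$. Condition (i) of a good partition is used only as part of the setup (it is what makes $\A = \A_+ \ast_{\A_{\L}} \A_-$ a genuine amalgamated product), while (ii) is tailored to joins and (iii) to partial conjugations. The subtlest point is the verification of the rewriting identity for $\kappa_{w,C_0}$, together with the observation that condition (iii) is exactly what guarantees there is a \emph{unique} problematic component, so that the remaining pieces can all be absorbed into $U_0(\A)$-elements preserving $\A_{\pm}$.
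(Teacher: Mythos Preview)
Your proof is correct and follows essentially the same approach as the paper: treat each elementary generator of $U_0(\A)$ separately, using condition~(ii) for joins and condition~(iii) for partial conjugations, with the key rewriting $\kappa_{w,C_0} = \mathrm{inn}_{w^{-1}} \circ \prod_{C \neq C_0} \kappa_{w^{-1},C}$ when $C_0$ is the unique component meeting $\L\sqcup\L^-$. Your version is slightly more explicit than the paper's in spelling out the final ``push-through'' step $g' \circ \mathrm{inn}_h = \mathrm{inn}_{g'(h)} \circ g'$ to collect all inner factors on one side, which the paper leaves implicit.
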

\begin{proof}
Inversions preserve $\A^+$ and $\A^-$. Given vertices $v,w\in\G$ with $\lk v\cu\lk w$, condition~(ii) implies that either $w\in\L$, or $\{v,w\}\cu\L^+$, or $\{v,w\}\cu\L^-$. Thus, folds also preserve $\A^+$ and $\A^-$. 

We are left to prove the lemma in the case when $\psi$ is a partial conjugation $\kappa_{w,C}$. If $w\in\L$, it is clear that $\kappa_{w,C}$ preserves $\A^+$ and $\A^-$. Thus, let us assume without loss of generality that $w\in\L^+$. By condition~(iii), the set $\L\cup\L^-$ intersects a unique connected component $K\cu\G\setminus\St w$. 

If $K\neq C$, then $\kappa_{w,C}$ is the identity on $\A^-$, so $\A^{\pm}$ are both preserved. If $K=C$, then $\kappa_{w,C}$ represents the same outer automorphism as $\kappa_{w^{-1},K_1}\cdot\ldots\cdot\kappa_{w^{-1},K_k}$, where $K_1,\dots,K_k$ are the connected components of $\G\setminus\St w$ other than $K$. Again, the latter is the identity on $\A^-$, so $\A^{\pm}$ are preserved.
\end{proof}

This shows that $T_{\L}$ is invariant under twisting by elements of $U_0(\A)$:

\begin{cor}\label{splitting preserved by U}
Let $\G=\L^+\sqcup\L\sqcup\L^-$ be a good partition. For every $\varphi\in U_0(\A)$, there exists an automorphism $f\colon T_{\L}\ra T_{\L}$ satisfying $f\o g=\varphi(g)\o f$ for all $g\in\A$.
\end{cor}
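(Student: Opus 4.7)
The plan is to reduce the corollary to the case where the automorphism itself preserves the splitting, and then to construct $f$ directly on cosets using Bass--Serre theory. Given $\varphi\in U_0(\A)$, I will first invoke Lemma~\ref{preservative representative} to produce $\varphi'\in U_0(\A)$ representing the same outer automorphism as $\varphi$ and simultaneously satisfying $\varphi'(\A_+)=\A_+$ and $\varphi'(\A_-)=\A_-$. Since in any RAAG the intersection of two standard parabolic subgroups is again standard parabolic, we have $\A_+\cap\A_-=\A_{\L}$, and bijectivity of $\varphi'$ then forces $\varphi'(\A_{\L})=\A_{\L}$ as well. Writing $\varphi=c_h\o\varphi'$, where $c_h$ denotes conjugation by some $h\in\A$, will let us transfer the conclusion from $\varphi'$ to $\varphi$ at the end.

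Next I will build the required simplicial automorphism for $\varphi'$. Recall that $T_{\L}$ is the Bass--Serre tree of $\A=\A_+\ast_{\A_{\L}}\A_-$, whose vertex set is $\A/\A_+\sqcup\A/\A_-$ and whose edge set is $\A/\A_{\L}$, with $\A$ acting by left multiplication. Because $\varphi'$ preserves each of $\A_+$, $\A_-$, and $\A_{\L}$ setwise, the formulas $f'(g\A_{\pm}):=\varphi'(g)\A_{\pm}$ and $f'(g\A_{\L}):=\varphi'(g)\A_{\L}$ are well-defined, extend to a simplicial bijection $f'\colon T_{\L}\to T_{\L}$, and satisfy the intertwining identity $f'\o g=\varphi'(g)\o f'$ directly from the definition: $f'(g\cdot k\A_{\pm})=\varphi'(gk)\A_{\pm}=\varphi'(g)\cdot f'(k\A_{\pm})$.

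To finish, I will define $f(x):=h\cdot f'(x)$ for every $x\in T_{\L}$, where $h$ is the element above with $\varphi(g)=h\varphi'(g)h^{-1}$ for all $g\in\A$. A direct computation then yields
\[f(gx)=h\cdot f'(gx)=h\varphi'(g)f'(x)=h\varphi'(g)h^{-1}\cdot hf'(x)=\varphi(g)\cdot f(x),\]
as required. The only step worth flagging is the well-definedness of $f'$, and it is exactly the three equalities $\varphi'(\A_{\pm})=\A_{\pm}$ and $\varphi'(\A_{\L})=\A_{\L}$ that make this work; these are guaranteed by Lemma~\ref{preservative representative} together with the RAAG identity $\A_+\cap\A_-=\A_{\L}$. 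There is no genuine obstacle here: the corollary is essentially a formal consequence of the lemma, with the passage from an ``outer'' statement to one about individual automorphisms absorbed by the freedom to twist $f'$ by any element of $\A$.
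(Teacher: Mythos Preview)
Your proof is correct and follows essentially the same approach as the paper's own proof: decompose $\varphi$ as an inner automorphism composed with an automorphism preserving $\A_+$ and $\A_-$ (via Lemma~\ref{preservative representative}), handle each factor separately, and combine. You have written out the coset-level construction of $f'$ and the final intertwining computation more explicitly than the paper, which simply notes that the Bass--Serre tree ``can be defined in terms of cosets of $\A^{\pm}$'' and that inner automorphisms act via elements of $\A$.
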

\begin{proof}
If $\varphi$ is inner, we can take $f$ to coincide with an element of $\A$. If $\varphi(\A_+)=\A_+$ and $\varphi(\A_-)=\A_-$, the statement is also clear, since the Bass--Serre tree can be defined in terms of cosets of $\A^{\pm}$. By Lemma~\ref{preservative representative}, every element of $U_0(\A)$ is a product of two automorphisms of these two types. 
\end{proof}

Our next goal is to show that good partitions (almost) always exist. We say that $\G$ is \emph{irreducible} if it does not split as a nontrivial join (equivalently, the opposite graph $\G^o$ is connected).

\begin{prop}\label{goods exist}
If $\G$ is connected, irreducible and not a singleton, then $\G$ admits a good partition.
\end{prop}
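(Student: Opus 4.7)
The natural first attempt is to pick two vertices $a,b\in\G^{(0)}$ at maximum graph distance (which is $\geq 2$ because $\G$ is irreducible and not a singleton), set $\L^+:=\{a\}$, $\L^-:=\{b\}$, $\L:=\G^{(0)}\setminus\{a,b\}$, and verify the three defining conditions of Definition~\ref{good defn}. With this choice condition~(i) reads $a\not\sim_{\G}b$; condition~(iii) reduces to ``$\G\setminus\St a$ and $\G\setminus\St b$ are connected''; and condition~(ii) translates, after noting that a hypothetical $v\in\G\setminus\{a\}$ with $\lk v\cu\St a$ would automatically satisfy $\lk v\cu\lk a$ (since $a\notin\lk v$), into the assertion that $a$ and $b$ are maximal in the \emph{domination preorder} $u\preceq w\Leftrightarrow\lk u\cu\St w$. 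So the task reduces to locating two non-adjacent, domination-maximal vertices whose open stars do not disconnect $\G$.

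To produce such a pair I would use irreducibility, i.e.\ connectedness of $\G^o$, through two complementary observations. First, maximal elements of the domination preorder exist trivially in any finite graph; if every pair of maximal vertices were joined by an edge of $\G$, the nonempty set $M$ of maximal vertices would be a clique, and since any $v\notin M$ satisfies $\lk v\cu\St w$ for some $w\in M$ one can show that $M$ and $\G^{(0)}\setminus M$ assemble into a nontrivial join decomposition of $\G$, contradicting irreducibility. This gives the existence of two non-adjacent maximal vertices $a,b$. Second, for a maximal vertex $v$ whose star $\St v$ disconnects $\G$, the components of $\G\setminus\St v$ would each be ``absorbed'' into $\St v$ in a controlled way; iterating this analysis one again isolates a join factor of $\G$, violating irreducibility. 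Combining the two, one can arrange that both $a$ and $b$ have connected complementary stars.

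If the singleton-endpoint strategy above fails at some step, I would fall back to the partition induced by a star-separator: choose $v\in\G^{(0)}$ such that $\G\setminus\St v$ is disconnected, set $\L:=\St v\cap\G^{(0)}$, take $\L^+$ to be the vertex set of one connected component of $\G\setminus\St v$ and $\L^-$ the union of the remaining components. Conditions~(i) and~(iii) follow from the star-separator structure (the vertex $v\in\L$ provides the required connectivity in $(\L\sqcup\L^{-\eps})\setminus\St w$), and condition~(ii) is again the delicate one. Organising these two strategies into a single case analysis, indexed by whether $\G$ admits a star-separator, gives the proof.

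The main obstacle I anticipate is verifying condition~(ii): unlike the graph-theoretic conditions~(i) and~(iii), it is a non-local ``no domination'' constraint that is fragile under local adjustments of the partition. The systematic remedy is to absorb any offending dominated vertex $v$ into $\L$ (shrinking the relevant $\L^\eps$ by one vertex); one must show that this process terminates before $\L^+$ or $\L^-$ becomes empty. Each instance where such termination could fail should, by the same type of argument sketched above, produce a nontrivial join decomposition of $\G$ and so be precluded by irreducibility.
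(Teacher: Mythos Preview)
There is a genuine gap in your reading of condition~(ii). With $\L^+=\{a\}$, condition~(ii) forbids the existence of any $v\in\G^{(0)}\setminus\{a\}$ with $\lk v\subseteq\lk a\cup\{a\}=\St a$; in your preorder this reads $v\preceq a$, so what is needed is that $a$ be domination-\emph{minimal}, not maximal. The parenthetical ``since $a\notin\lk v$'' is unjustified --- nothing stops $v$ from being a neighbour of $a$ --- so the passage to $\lk v\subseteq\lk a$ fails. After correcting the direction, your clique argument collapses: if the set $M$ of minimal vertices were a clique, then for $v\notin M$ one only gets some $w\in M$ with $\lk w\subseteq\St v$, which does not force $v$ adjacent to all of $M$ and yields no join. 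Your ``absorb the offender into $\L$'' repair is vacuous in the singleton set-up, since the offending $v$ already lies in $\L\sqcup\L^{-\eps}$ and any move empties one side. The star-separator fallback has the same problem with~(ii) (take any $u\in\L^+$ with $\lk u\subseteq\lk v$), and condition~(iii) for $w\in\L^+$ asks for connectivity of $(\St v\cup\L^-)\setminus\St w$, which the single vertex $v$ does not guarantee when $\lk v\cap\St w$ separates $\L^-$ from the rest of $\lk v$.

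The paper proceeds quite differently, by induction on $|\G^{(0)}|$. When $\diam\G\geq 3$ there is an explicit construction (Lemma~\ref{excellent construction}). Otherwise one passes to the quotient $\overline\G$ by the relation $\lk u=\lk v$ (Lemma~\ref{graph reduction}), reducing to $\G=\overline\G$ and $\diam\G=2$. One then deletes a vertex $x$ with inclusion-maximal link: if $\G\setminus x$ is reducible, a good partition is built directly with $\L^+=\{x\}$ (Lemma~\ref{G-x reducible}); if not, $\G\setminus x$ is again connected and irreducible, a good partition of it exists by induction, and a four-case extension lemma (Lemma~\ref{ext**}) promotes it to one of $\G$, the maximality of $\lk x$ ruling out the one bad case.
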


Proposition~\ref{goods exist} and Corollary~\ref{splitting preserved by U} immediately imply Proposition~\ref{invariant splitting prop}, as well as the analogous result for right-angled Coxeter groups.

Before proving Proposition~\ref{goods exist}, we need to obtain a few lemmas.

\begin{lem}\label{excellent construction}
If $\G$ is connected and $\diam\G^{(0)}\geq 3$, there exists a good partition of $\G$.
\end{lem}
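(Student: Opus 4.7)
The plan is to choose $a,b\in\G^{(0)}$ realising the diameter, $d(a,b)=\diam\G=:D\geq 3$, and build the partition by a saturation procedure. I initialise $\L^+:=\{a\}$ and $\L^-:=\{b\}$, and iteratively enlarge: while there exists a vertex $v\in\G^{(0)}\setminus(\L^+\cup\L^-)$ and some $w\in\L^\eps$ (for $\eps\in\{\pm\}$) with $\lk v\cu\lk w\cup\L^\eps$, I add $v$ to $\L^\eps$, breaking ties by a fixed rule (say, always preferring $\L^+$ if $v$ is simultaneously eligible for both sides). When the process stabilises, set $\L:=\G^{(0)}\setminus(\L^+\cup\L^-)$. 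Condition (ii) of Definition~\ref{good defn} then holds by construction.

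For condition (i), I would prove inductively two invariants: (a) every $u\in\L^+$ satisfies $d(u,b)=D$ (symmetrically $d(u,a)=D$ for $u\in\L^-$); and (b) no edge of $\G$ joins $\L^+$ to $\L^-$. The base case is immediate, since $d(a,b)=D\geq 3$ precludes any $a$--$b$ edge. Inductive step for (a): if $v$ is added to $\L^+$ because $\lk v\cu\lk w\cup\L^+$ with $w\in\L^+$, each neighbour of $v$ lies either in $\lk w$ (at distance $\geq D-1$ from $b$) or in $\L^+$ (at distance $D$ from $b$ by the prior invariant); since $v\neq b$ automatically (as $b\in\L^-$ is never a candidate for being added to $\L^+$), the identity $d(v,b)=1+\min_{u\in\lk v}d(u,b)$ forces $d(v,b)\geq D$, hence $=D$. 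For (b), the neighbours of the newly added $v$ lie in $\lk w\cup\L^+$, which is disjoint from $\L^-$ by the prior stage of invariant (b). Invariants (a)--(b) together give $d(\L^+,\L^-)\geq 2$ and, together with the tie-breaking rule, $\L^+\cap\L^-=\emptyset$; nonemptiness of $\L$ follows because the interior vertices of any $a$--$b$ geodesic lie at distance strictly less than $D$ from both endpoints and so are never absorbed.

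The hardest part will be condition (iii): for every $w\in\L^\eps$, the induced subgraph $(\L\sqcup\L^{-\eps})\setminus\St w$ must be connected. My plan is to show every vertex in this set can be joined to $b$ (respectively $a$) through a path avoiding $\L^+\cup\St w$. The guiding intuition is that the saturation has already absorbed into $\L^\eps$ exactly those vertices whose removal would otherwise create disconnection: a hypothetical singleton orphan component $\{v\}$ in $\G\setminus(\L^+\cup\St w)$ would force $\lk v\cu\L^+\cup\St w=\L^+\cup\lk w$, so that $v$ would have met the saturation criterion and belong to $\L^+$, a contradiction. Extending this absorption principle to multi-vertex orphan components --- where one must iteratively ``peel off'' leaves of the component using the saturation rule --- together with carefully managing the interplay between the two saturations (in particular ruling out the scenario in which a vertex of $\L^+$ becomes dominated by $\L^-$ in violation of the symmetric form of (ii), possibly by running the two saturations in tandem rather than sequentially) will be the main technical work.
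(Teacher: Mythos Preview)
Your saturation approach is genuinely different from the paper's, and condition~(iii) is where it breaks down. Consider $\G$ with vertices $a,p,q,b,v_1,v_2$ and edges $a\text{--}p$, $p\text{--}q$, $q\text{--}b$, $p\text{--}v_1$, $p\text{--}v_2$, $v_1\text{--}v_2$. Then $\diam\G=3$, realised by $(a,b)$. Starting from $\L^+=\{a\}$, $\L^-=\{b\}$, no vertex ever meets your absorption criterion: for instance $\lk v_1=\{p,v_2\}\not\subseteq\lk a\cup\L^+=\{p,a\}$ because $v_2$ is in the way, and symmetrically for $v_2$. Saturation halts with $\L^+=\{a\}$, $\L^-=\{b\}$, $\L=\{p,q,v_1,v_2\}$; conditions~(i) and~(ii) hold, but $(\L\sqcup\L^-)\setminus\St a=\{q,b\}\sqcup\{v_1,v_2\}$ is disconnected. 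Your peeling idea cannot rescue this: the two-vertex orphan component has each vertex shielded by the other from the saturation rule, and this persists regardless of the order of operations. (Had you happened to pick $(v_1,b)$ as basepoints the saturation would absorb $a,v_2$ and reach the paper's partition, but nothing in your setup forces that choice.)

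The paper avoids iteration entirely. It picks $x,y$ with $d(x,y)\geq 3$, lets $C_y$ be the component of $\G\setminus\St x$ containing $y$ and $C_x$ the component of $\G\setminus\St y$ containing $x$, and sets $\L^+=\{z:\St z\cap C_y=\emptyset\}$, $\L^-=\{z:\St z\cap C_x=\emptyset\}$. The point is that $C_y$ is a single \emph{connected} set sitting inside $(\L\sqcup\L^-)\setminus\St w$ for every $w\in\L^+$, and by construction every vertex of $\L\sqcup\L^-$ has a neighbour in $C_y$; this gives~(iii) in one line. In the example above this yields $\L^+=\{a,v_1,v_2\}$. What your saturation misses, and what the paper's definition captures directly, is that membership in $\L^+$ should be governed by a global connectivity condition (failure to reach $C_y$) rather than a local link-containment one.
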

\begin{proof}
Let $x,y\in\G$ be arbitrary vertices with $d(x,y)\geq 3$. Let $C_y$ be the connected component of $\G\setminus\St x$ that contains $y$. Similarly, let $C_x$ be the connected component of $\G\setminus\St y$ that contains $x$. 

Since $d(x,y)\geq 3$, we have $\St x\cap\St y=\emptyset$, hence $\St y\cu C_y$ and $\St x\cu C_x$. Since $\G$ is connected, $\G\setminus C_x$ is also connected. Note that $\St x$ and $\G\setminus C_x$ are disjoint and $y\in\G\setminus C_x$. This implies that $\G\setminus C_x\cu C_y$. In conclusion, $\G=C_x\cup C_y$.

Note that, if $z\in\G^{(0)}$ and $\lk z\cap C_y=\emptyset$, we cannot have $z\in C_y$. Indeed, this would imply that $C_y=\{z\}$ and $\lk z\cu\St x$. Since $y\in C_y$, we would then have $y=z$ and $\lk y\cu\St x$, contradicting the fact that $\G$ is connected and $d(x,y)\geq 3$. 

Thus, we can define:
\begin{align*}
\L^+&:=\{z\in\G^{(0)} \mid \St z\cap C_y=\emptyset\}=\{z\in\G^{(0)} \mid \lk z\cap C_y=\emptyset\}, \\
\L^-&:=\{z\in\G^{(0)} \mid \St z\cap C_x=\emptyset\}=\{z\in\G^{(0)} \mid \lk z\cap C_x=\emptyset\}, \\
\L&:=\G^{(0)}\setminus(\L^+\sqcup\L^-).
\end{align*}
Note that $x\in\L^+$ and $y\in\L^-$. If $z\in\L^+$ and $w\in\L^-$, we have $\St z\cap\St w=\emptyset$, since $\G=C_x\cup C_y$. This shows that $d(\L^+,\L^-)\geq 3$. Since $\G$ is connected, we also conclude that $\L\neq\emptyset$. We are left to verify conditions~(ii) and~(iii) of Definition~\ref{good defn}.

If $v\in\L$, then $\lk v$ intersects both $C_x$ and $C_y$. Since $C_y$ is disjoint from $\lk w\cup\L^+$ for every $w\in\L^+$ (and similarly with $C_x$ and $\L^-$), this implies condition~(ii) when $v\in\L$. On the other hand, the case with $v\in\L^{-\eps}$ is immediate from the fact that $d(\L^+,\L^-)\geq 3$ and $\G$ is connected.

Finally, let us check condition~(iii). Without loss of generality, we can suppose that $w\in\L^+$. Note that $C_y$ is connected, contained in $(\L\sqcup\L^-)\setminus\St w$, and it intersects the link of every point of $\L$. Moreover, since $\L^-\cap C_x=\emptyset$ and $\G=C_x\cup C_y$, we have $\L^-\cu C_y$. This shows that $(\L\sqcup\L^-)\setminus\St w$ is connected, concluding the proof.
\end{proof}

When the previous lemma cannot be applied, we will construct a good partition of $\G$ inductively, extending good partitions on subgraphs. We now prove a sequence of three lemmas aimed precisely at this, after which we will give the argument for Proposition~\ref{goods exist}.

For $x\in\G^{(0)}$, let $\G\setminus x$ be the graph obtained by removing $x$ and all open edges incident to $x$.

\begin{lem}\label{ext**}
Let $\G\setminus x=\Delta^+\sqcup\Delta\sqcup\Delta^-$ be a good partition. Then one of the following happens:
\begin{enumerate}
\item there exist $w\in\Delta^+$ and $z\in\Delta^-$ with $\lk x\cu\lk z\cap\lk w$; 
\item the partition of $\G$ with $\L^+=\Delta^+\sqcup\{x\}$, $\L=\Delta$, $\L^-=\Delta^-$ is good;
\item the partition of $\G$ with $\L^+=\Delta^+$, $\L=\Delta\sqcup\{x\}$, $\L^-=\Delta^-$ is good;
\item the partition of $\G$ with $\L^+=\Delta^+$, $\L=\Delta$, $\L^-=\Delta^-\sqcup\{x\}$ is good.
\end{enumerate}
\end{lem}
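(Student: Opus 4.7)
The plan is to prove the contrapositive: assuming none of alternatives~(2), (3), (4) gives a good partition of $\G$, I would deduce alternative~(1). Condition~(i) of Definition~\ref{good defn} is easy: in case~(3) one has $d_\G(\Delta^+, \Delta^-) = d_{\G \setminus x}(\Delta^+, \Delta^-) \geq 2$ automatically, while in cases~(2) and~(4) it becomes the requirement that $x$ not be adjacent to $\Delta^-$ or to $\Delta^+$ respectively. Condition~(iii) reduces to a routine connectivity bookkeeping built on the old condition~(iii) for $\G \setminus x$ together with information about $\lk_\G x$ provided by condition~(ii). The heart of the argument is therefore condition~(ii).

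First I would analyse case~(3). Instances of~(ii) with $v \neq x$ reduce to the corresponding instance in $\G \setminus x$: since $\L^{\pm} = \Delta^{\pm}$ does not contain $x$, whenever $v$ is adjacent to $x$ the element $x \in \lk_\G v$ can only be absorbed on the right-hand side by having $x \in \lk_\G w$ (that is, $w$ adjacent to $x$), and then $x$ cancels from both sides leaving an inclusion in $\G \setminus x$ ruled out by the old condition~(ii). Hence the only critical instances are $v = x$: namely, $\lk_\G x \nsubseteq \lk_\G w^+ \cup \Delta^+$ for all $w^+ \in \Delta^+$, and $\lk_\G x \nsubseteq \lk_\G w^- \cup \Delta^-$ for all $w^- \in \Delta^-$. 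The key observation is that if both fail, with witnesses $w^\pm$, then intersecting the two inclusions and using $\lk_\G w^+ \cap \Delta^- = \emptyset = \lk_\G w^- \cap \Delta^+$ (from $d(\Delta^+, \Delta^-) \geq 2$) together with $\Delta^+ \cap \Delta^- = \emptyset$ collapses the right-hand side to $\lk_\G w^+ \cap \lk_\G w^-$, yielding exactly alternative~(1).

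Hence if~(1) fails, exactly one of the two instances fails in case~(3); say the $\Delta^+$-version does, with witness $w^+$ (the other case is symmetric and leads to case~(4)). Then $\lk_\G x \cu \lk_\G w^+ \cup \Delta^+$ forces $\lk_\G x \cap \Delta^- = \emptyset$, so $x$ is not adjacent to $\Delta^-$, and I would try case~(2). Condition~(i) is then clear; condition~(ii) with $v \neq x$ reduces to the goodness of $\G \setminus x$, the extra element $x \in \L^+ = \Delta^+ \sqcup \{x\}$ on the right being harmlessly absorbed; and the new instance with $v = x$ fails only if some $w^- \in \Delta^-$ satisfies $\lk_\G x \cu \lk_\G w^- \cup \Delta^-$, which combined with the inclusion at $w^+$ again forces alternative~(1). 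The hard part will be condition~(iii) of case~(2) at $w = x$: showing that $(\Delta \sqcup \Delta^-) \setminus \St_\G x$ is connected. I would deduce this from $\lk_\G x \cap (\Delta \sqcup \Delta^-) \cu \lk_\G w^+ \cap \Delta$ (forced by the inclusion at $w^+$, since $\lk_\G w^+$ avoids $\Delta^-$), which shows that the subgraph $(\Delta \setminus \lk_\G w^+) \sqcup \Delta^-$ sits inside $(\Delta \sqcup \Delta^-) \setminus \St_\G x$ and is connected by the old condition~(iii) applied to $w^+$, while the remaining vertices of $\Delta$ attach to it through neighbours provided by the goodness of $\G \setminus x$.
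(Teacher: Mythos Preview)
Your proposal is correct and follows essentially the same route as the paper, just organised in the reverse order: you start by attempting case~(3) and fall back to case~(2) or~(4) when the key inclusion $\lk x\subseteq\lk w\cup\Delta^{\eps}$ appears, whereas the paper first proves as a Claim that this inclusion forces~(1) or~(2), and only afterwards verifies case~(3) when no such inclusion exists. The substantive arguments---intersecting the two inclusions to obtain~(1), reducing the $v\neq x$ instances of~(ii) to the old partition, and the connectivity argument for $(\Delta\sqcup\Delta^-)\setminus\St x$ via the old~(iii) at~$w^+$ combined with the old~(ii)---are the same in both.

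One small point worth making explicit: you dismiss condition~(iii) in case~(3) as routine bookkeeping, but in fact its failure is another source of the same inclusions. If $(\Delta\sqcup\{x\}\sqcup\Delta^{-\eps})\setminus\St u$ is disconnected for some $u\in\Delta^{\eps}$, then by the old~(iii) the point $x$ must be isolated there, which says precisely $\lk x\subseteq\lk u\cup\Delta^{\eps}$. The paper records this; your narrative should too, so that the dichotomy ``either case~(3) holds or some inclusion $\lk x\subseteq\lk w\cup\Delta^{\eps}$ holds'' is fully justified. Similarly, in case~(2) for $w\in\Delta^-$, the failure of~(iii) (i.e.\ $x$ isolated in $(\Delta\sqcup\Delta^+\sqcup\{x\})\setminus\St w$) again yields $\lk x\subseteq\lk w\cup\Delta^-$ and hence~(1); this is the last piece your sketch leaves implicit.
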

\begin{proof}
We begin with the following observation.

\smallskip
{\bf Claim}: \emph{if there exists $w\in\Delta^+$ such that $\lk x\cu\lk w\cup\Delta^+$, we are either in case~(1) or in case~(2).}

\smallskip\noindent
\emph{Proof of Claim.} We assume that we are not in case~(1) and show that the partition of $\G$ in case~(2) is good. We need to verify conditions~(i)--(iii) from Definition~\ref{good defn}.

Since $d(\Delta^+,\Delta^-)\geq 2$ (both in $\G\setminus x$ and in $\G$), the set $\Delta^-$ is disjoint from $\lk w\cup\Delta^+$. Since $\lk x\cu\lk w\cup\Delta^+$, it follows that $\Delta^-\cap\St x=\emptyset$, hence $d(\L^+,\L^-)\geq 2$. This proves condition~(i).

If condition~(ii) fails, there exist $u\in\L^{\eps}$ and $v\in \L\sqcup\L^{-\eps}$ with $\lk v\cu\lk u\cup\L^{\eps}$. Since the partition of $\G\setminus x$ is good, we must have either $v=x$ or $u=x$. If $v=x$, then $u\in\Delta^-$ and 
\[\lk x\cu (\lk u\cup\Delta^-)\cap(\lk w\cup\Delta^+)=\lk u\cap\lk w,\] 
which lands us in case~(1). If instead $u=x$, we have $v\in\Delta\sqcup\Delta^-$ with:
\[\lk v\cu\lk x\cup\L^+\cu\lk w\cup\Delta^+\cup\{x\}.\] 
This violates condition~(ii) for the partition of $\G\setminus x$.

Finally, suppose that condition~(iii) fails. Thus, there exists $u\in\L^{\eps}$ such that $(\L\sqcup\L^{-\eps})\setminus\St u$ is disconnected. Since the partition of $\G\setminus x$ is good, this can happen only in two ways: either $u=x$, or $u\in\L^-$ and $x$ is isolated in $(\L\sqcup\L^+)\setminus\St u$. In the latter case, we have $\lk x\cu\lk u\cup\Delta^-$, which again leads to case~(1).

Suppose instead that $u=x$ and let us show that $(\L\sqcup\L^-)\setminus\St x=(\Delta\sqcup\Delta^-)\setminus\lk x$ is connected. Since $\lk x\cu\lk w\cup\Delta^+$, the set $(\Delta\sqcup\Delta^-)\setminus\lk x$ contains $(\Delta\sqcup\Delta^-)\setminus\lk w$. The latter is connected, as the partition of $\G\setminus x$ satisfies condition~(iii). Since condition~(ii) is satisfied, every point of $(\Delta\sqcup\Delta^-)\cap\lk w=\Delta\cap\lk w$ is joined by an edge to a point of $\G\setminus(\lk w\cup\L^+)=(\Delta\sqcup\Delta^-)\setminus\lk w$. Thus, the star of every point of $(\Delta\sqcup\Delta^-)\setminus\lk x$ intersects the connected set $(\Delta\sqcup\Delta^-)\setminus\lk w$, proving that $(\Delta\sqcup\Delta^-)\setminus\lk x$ is connected. This completes the proof of the Claim. 
\hfill$\blacksquare$

\smallskip
By the Claim, if there exist either $w\in\Delta^+$ with $\lk x\cu\lk w\cup\Delta^+$ or $z\in\Delta^-$ with $\lk x\cu\lk z\cup\Delta^-$, then we are in cases~(1),~(2) or~(4). In order to conclude the proof of the lemma, let us suppose that neither of the two inclusions is satisfied. We will show that the partition in case~(3) is good.

Condition~(i) is clear. Condition~(ii) is immediate from the corresponding condition for $\G\setminus x$ and our assumption that $\lk x$ be not contained in any subsets as in the previous paragraph. 

Suppose that condition~(iii) fails. Then there exists $u\in\L^{\eps}$ such that $(\L\sqcup\L^{-\eps})\setminus\St u$ is disconnected. Without loss of generality, we have $u\in\L^+$. Since the partition of $\G\setminus x$ satisfies condition~(iii), the point $x$ must be isolated in $(\L\sqcup\L^-)\setminus\St u$. Hence $\lk x\cu\lk u\cup\Delta^+$, again violating our assumption.
\end{proof}

\begin{lem}\label{G-x reducible}
Let $\G$ be an irreducible graph. Let $x\in\G$ be a vertex such that there does not exist $y\in\G^{(0)}\setminus\{x\}$ with $\lk x\cu\lk y$. Suppose that $\G\setminus x$ is reducible. Then the partition of $\G$ given by $\L^+=\{x\}$, $\L=\lk x$, $\L^-=\G\setminus\St x$ is good.
\end{lem}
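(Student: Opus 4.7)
The plan is to verify directly the three conditions of Definition~\ref{good defn} for the proposed partition. Using reducibility of $\G\setminus x$, fix a nontrivial join decomposition $\G\setminus x=A\ast B$, so every $a\in A$ is joined by an edge to every $b\in B$. The two hypotheses will be used in an interlinked way: the non-containment condition on $\lk x$ governs condition~(ii) when $\eps=-$, whereas irreducibility of $\G$, combined with the join $A\ast B$, governs condition~(ii) when $\eps=+$ and condition~(iii) when $w=x$.

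First, I would note that the three parts are nonempty: $\L=\lk x$ is nonempty because $\G$ is connected (this is the relevant case from Proposition~\ref{goods exist}), while $\L^-=\G\setminus\St x$ is nonempty because otherwise $\G=\{x\}\ast(\G\setminus x)$ would be a nontrivial join, contradicting irreducibility. Condition~(i) is then immediate from the definition of $\L^-$.

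Condition~(ii) would be checked case-by-case. For $\eps=-$, a potential $v\in\St x$ with $\lk v\cu\lk w\cup\L^-$ leads quickly to a contradiction: if $v=x$, then $\lk x\cap\L^-=\emptyset$ by definition, forcing $\lk x\cu\lk w$ and violating the hypothesis on $\lk x$; if $v\in\lk x$, then $x\in\lk v$ must lie in $\lk w\cup\L^-$, forcing $x\in\lk w$ and contradicting $w\in\L^-$. The main obstacle lies in the case $\eps=+$, $w=x$: here a putative $v\in\G\setminus\{x\}$ with $\lk v\cu\St x$ can be placed in $A$ (say) by the join decomposition; the join then forces $B\cu\lk v$, hence $B\cu\lk x$ (as $x\notin B$); this in turn yields a nontrivial join decomposition $\G=(\{x\}\cup A)\ast B$, contradicting irreducibility of $\G$.

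Condition~(iii) is verified similarly. For $w\in\L^-$, the induced subgraph $(\L\sqcup\{x\})\setminus\St w$ equals $\{x\}\cup(\lk x\setminus\lk w)$, a star centered at $x$, hence connected. For $w=x$, the induced subgraph is $\L^-=(A\setminus\lk x)\sqcup(B\setminus\lk x)$; by the same irreducibility argument as in condition~(ii) above, both pieces are nonempty, and the join structure of $\G\setminus x$ endows them with a complete bipartite graph between them, giving connectedness. Everything reduces cleanly to the irreducibility/reducibility interplay, with condition~(ii) for $\eps=+$ being the key step.
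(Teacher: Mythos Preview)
Your proof is correct and follows essentially the same route as the paper's. Both arguments hinge on the same two observations: first, that irreducibility of $\G$ combined with the join $\G\setminus x=A\ast B$ forces $A\setminus\lk x$ and $B\setminus\lk x$ to be nonempty (the paper fixes witnesses $a_1,a_2$ and uses them repeatedly, whereas you re-derive the contradiction each time); second, that for $\eps=-$ the set $(\L\sqcup\L^+)\setminus\St w$ is a cone over $x$. One small remark: you justify $\lk x\neq\emptyset$ by appealing to connectedness from Proposition~\ref{goods exist}, but in fact it already follows from the hypothesis on $\lk x$ (if $\lk x=\emptyset$ then $\lk x\cu\lk y$ for any $y\neq x$).
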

\begin{proof}
Write $\G\setminus x$ as a join of nonempty subgraphs $\G_1$ and $\G_2$. Since $\G$ is irreducible, there exist points $a_1\in\G_1\setminus\lk x$ and $a_2\in\G_2\setminus\lk x$. Condition~(i) is clear.

In order to verify condition~(ii), we need to exclude the existence of $w\in\L^{\eps}$ and $v\in \L\sqcup\L^{-\eps}$ with $\lk v\cu\lk w\cup\L^{\eps}$. If $\eps=-$ and $v\in\L$, then $x$ lies in $\lk v$, but not in $\lk w\cup\L^-$. If $\eps=-$ and $v=x$, then $\lk x$ is disjoint from $\L^-$, and it cannot be contained in the link of any point of $\G\setminus x$ by our hypotheses. If $\eps=+$, then $\lk w\cup\L^{\eps}=\St x$, which cannot contain the link of any point of $\G\setminus x$, as it does not contain $a_1$ and $a_2$.

Finally, let us show that, for every $w\in\L^{\eps}$, the set $(\L\sqcup\L^{-\eps})\setminus\St w$ is connected. If $\eps=+$, this amounts to showing that $\G\setminus\St x$ is connected. This is immediate, since every point of $\G\setminus x$ is joined by an edge to either $a_1$ or $a_2$, and these two points are themselves joined by an edge. If instead $\eps=-$, we need to show that $\St x\setminus\St w$ is connected for every $w\in\G\setminus\St x$. This is also clear since this set is a cone over $x$.
\end{proof}

Consider the equivalence relation on $\G^{(0)}$ where $v\sim w$ if and only if $\lk v=\lk w$. We define a graph $\overline\G$ with a vertex for every $\sim$--equivalence class $[v]\cu\G$ and an edge joining $[v]$ and $[w]$ exactly when $v$ and $w$ are joined by an edge (this is independent of the chosen representatives). 

It is clear that $\overline\G$ is again a simplicial graph, with at most as many vertices as $\G$. We denote by $r\colon\G\ra\overline\G$ the natural morphism of graphs.

\begin{lem}\label{graph reduction}
\begin{enumerate}
\item[]
\item $\G$ is irreducible if and only if $\overline\G$ is irreducible.
\item If $\G$ has at least one edge, then $\G$ is connected if and only if $\overline\G$ is connected.
\item If $\overline\G=\L^+\sqcup\L\sqcup\L^-$ is a good partition, then so is $\G=r^{-1}(\L^+)\sqcup r^{-1}(\L)\sqcup r^{-1}(\L^-)$.
\end{enumerate}
\end{lem}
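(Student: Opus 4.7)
The plan hinges on two preliminary observations that I will invoke freely: (a) if $v \sim w$ and $v \neq w$, then $v$ and $w$ are \emph{not} adjacent in $\G$, because otherwise $w \in \lk v = \lk w$, contradicting the fact that $\G$ is simple; and (b) for every $v \in \G$ one has $\lk_\G v = r^{-1}(\lk_{\overline\G}[v])$, since equivalent vertices share a link and adjacency in $\overline\G$ is well-defined on representatives. Together with the surjectivity of $r$, these observations allow edges and links to be transferred freely between $\G$ and $\overline\G$.

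Part~(1) is immediate: a nontrivial join $\G = A \ast B$ forces every $\sim$-class into $A$ or $B$ by (a) (since a vertex of $A$ is always adjacent to a vertex of $B$), so (b) yields the nontrivial join $\overline\G = r(A) \ast r(B)$; conversely, any nontrivial join $\overline\G = X \ast Y$ lifts to $\G = r^{-1}(X) \ast r^{-1}(Y)$ by (b). Part~(2) is similar: projecting a path in $\G$ gives a walk in $\overline\G$ whose consecutive vertices are distinct by (a), hence an honest path; and any path in $\overline\G$ lifts edge-by-edge via (b). The only subtle case in the converse is $[v] = [w]$ with $v \neq w$, where a common neighbor of $v$ and $w$ is needed; such a neighbor exists unless $[v]$ is isolated in $\overline\G$, in which case connectedness forces $\overline\G$ to be a single vertex and hence (by (a)) $\G$ to be edgeless, contradicting the standing hypothesis.

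For Part~(3) set $L^\pm := r^{-1}(\L^\pm)$ and $L := r^{-1}(\L)$; nonemptiness is clear. Condition~(i) follows from (b) because any edge joining $L^+$ to $L^-$ would project to an edge joining $\L^+$ and $\L^-$. For (ii), an offending pair $w \in L^\eps$, $v \in L \sqcup L^{-\eps}$ with $\lk_\G v \cu \lk_\G w \cup L^\eps$ projects under $r$ via (b) to $\lk_{\overline\G}[v] \cu \lk_{\overline\G}[w] \cup \L^\eps$, with $[v] \neq [w]$ because the parts are disjoint; this contradicts (ii) for $\overline\G$.

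The crux is condition~(iii). Fix $w \in L^\eps$ and set $U := (L \sqcup L^{-\eps}) \setminus \St_\G w$. Because $[w] \cu L^\eps$ is disjoint from $L \sqcup L^{-\eps}$, one checks using (b) that $U = r^{-1}(\overline U)$, where $\overline U := (\L \sqcup \L^{-\eps}) \setminus \St_{\overline\G}[w]$ is connected by (iii) for $\overline\G$. To connect two points $u, u' \in U$, take a path $[u] = [v_0], \ldots, [v_k] = [u']$ in $\overline U$ and lift it edge-by-edge via (b); this succeeds whenever $k \geq 1$. The sole obstacle, and the main technical point of the proof, is the degenerate case $[u] = [u']$ with $u \neq u'$: here $U \cu [u]$ is edgeless by (a), so a neighbor of $[u]$ inside $\overline U$ is required, which would fail if $\overline U = \{[u]\}$. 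I will rule this configuration out by showing it contradicts (ii) for $\overline\G$: $\overline U = \{[u]\}$ gives $\L \sqcup \L^{-\eps} \cu \lk_{\overline\G}[w] \cup \{[u]\}$, and any $[x] \in \lk_{\overline\G}[u]$ satisfies $[x] \neq [u]$ (no self-adjacency), so either $[x] \in \L^\eps$ or $[x] \in \lk_{\overline\G}[w]$; thus $\lk_{\overline\G}[u] \cu \lk_{\overline\G}[w] \cup \L^\eps$ with $[u] \in \L \sqcup \L^{-\eps}$, violating (ii) for $\overline\G$.
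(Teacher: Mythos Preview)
Your proof is correct and follows essentially the same approach as the paper's: both reduce condition~(iii) to showing that the preimage $r^{-1}(\overline U)$ of the connected set $\overline U=(\L\sqcup\L^{-\eps})\setminus\St_{\overline\G}[w]$ is connected, isolate the singleton case $\overline U=\{[u]\}$ as the only obstruction, and dispose of it via condition~(ii) for $\overline\G$. One small wording slip: where you write ``$U\cu[u]$ is edgeless'' you presumably mean that the class $[u]$ (containing both $u$ and $u'$) spans an edgeless subgraph, not that $U\subseteq[u]$; the latter would already presuppose $\overline U=\{[u]\}$, which you only introduce in the next clause.
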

\begin{proof}
Parts~(1) and~(2) are straightforward, so we only prove part~(3). 

Consider a good partition $\overline\G=\L^+\sqcup\L\sqcup\L^-$. It is clear that the partition of $\G$ satisfies condition~(i), while condition~(ii) follows from the observation that $\lk r(x)=r(\lk x)$ for every $x\in\G$. 

Finally, we verify condition~(iii). Given $w\in r^{-1}(\L^{\eps})$, observe that $r$ maps the subgraph $(r^{-1}(\L)\sqcup r^{-1}(\L^{-\eps}))\setminus\St w$ onto the connected graph $(\L\sqcup\L^{-\eps})\setminus\St r(w)$. As in part~(2), this shows that $(r^{-1}(\L)\sqcup r^{-1}(\L^{-\eps}))\setminus\St w$ is connected, possibly except the case when $(\L\sqcup\L^{-\eps})\setminus\St r(w)$ is a singleton. The latter is ruled out by the fact that the partition of $\overline\G$ satisfies condition~(ii).
\end{proof}

\begin{proof}[Proof of Proposition~\ref{goods exist}]
We proceed by induction on the number of vertices of $\G$. Since no graph with at most $3$ vertices satisfies the hypotheses of the proposition, the base step is trivially satisfied. For the inductive step, we consider a connected irreducible graph $\G$ with at least $4$ vertices, and assume that the proposition is satisfied by all graphs with fewer vertices than $\G$.

If $\diam\G^{(0)}\geq 3$, we can simply appeal to Lemma~\ref{excellent construction}. If the graph $\overline\G$ defined above has fewer vertices than $\G$, then we can use the inductive hypothesis and Lemma~\ref{graph reduction}. Thus, we can assume that $\G=\overline\G$ and $\diam\G^{(0)}=2$.

Pick a vertex $x\in\G$ whose link is maximal under inclusion. Since $\G=\overline\G$, there does not exist $y\in\G^{(0)}\setminus\{x\}$ with $\lk x=\lk y$. If $\G\setminus x$ is reducible, Lemma~\ref{G-x reducible} then shows that $\G$ admits a good partition. If $\G\setminus x$ were disconnected, then the fact that $\diam\G^{(0)}=2$ would imply that $\lk x=\G\setminus x$, contradicting the assumption that $\G$ is irreducible.

In conclusion, $\G\setminus x$ is connected, irreducible, not a singleton, and it has fewer vertices than $\G$. We conclude by applying the inductive hypothesis and Lemma~\ref{ext**} (case~(1) of the latter is ruled out by our choice of $x$).
\end{proof}

The previous results prove Proposition~\ref{invariant splitting prop}. The following is Corollary~\ref{intro fix graph} from the Introduction.

\begin{cor}\label{fix graph}
Consider $\varphi\in U_0(\A)$. 
\begin{enumerate}
\item If $\A$ splits as a direct product $\A_1\x\A_2$, then $\varphi(\A_i)=\A_i$ and $\Fix\varphi=\Fix\varphi|_{\A_1}\x\Fix\varphi|_{\A_2}$.
\item If $\A$ is directly irreducible, then the subgroup $\Fix\varphi\leq\A$ splits as a (possibly trivial) finite graph of groups with vertex and edge groups of the form $\Fix\varphi|_{P}$, for proper parabolic subgroups $P\leq\A$ with $\varphi(P)=P$ and $\varphi|_{P}\in U_0(P)$.
\end{enumerate}
\end{cor}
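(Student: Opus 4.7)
For Part (1), I would first reduce to showing that $\varphi$ preserves each direct factor $\A_i = \A_{\G_i}$, since then the product formula $\Fix\varphi = \Fix(\varphi|_{\A_1}) \times \Fix(\varphi|_{\A_2})$ is immediate from uniqueness of the direct product decomposition of $\A$. The join structure $\G = \G_1 \ast \G_2$ gives $\A_i \subseteq \A_{\G_j^{(0)\perp}}$ for $j \neq i$, and Lemma~\ref{orthogonals are preserved} (applied with $\Delta = \G_j^{(0)}$) tells us that $\varphi(\A_{\G_j^{(0)\perp}})$ is conjugate to $\A_{\G_j^{(0)\perp}}$. Since $\A = \A_1 \times \A_2$, any $\A$-conjugator factors as $c_1 c_2$ with $c_k \in \A_k$; conjugation by the $\A_j$-component acts trivially on $\A_i$ while conjugation by the $\A_i$-component preserves $\A_i$ setwise. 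Combined with Corollary~\ref{orthogonals are preserved 2}, this should force $\varphi(\A_i) = \A_i$.

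For Part (2), I would proceed by cases. The cyclic case is essentially trivial. If $\A$ is freely decomposable, the Grushko decomposition into free factors indexed by the connected components of $\G$ is $U_0(\A)$-invariant up to conjugation, reducing the problem to the freely-irreducible case applied to each free factor. In the main case ($\A$ freely and directly irreducible, non-cyclic), I would apply Proposition~\ref{invariant splitting prop} to obtain a $U_0(\A)$-invariant amalgamated splitting $\A = \A_+ \ast_{\A_0} \A_-$ with proper parabolic factors, and consider the twisting isometry $f \colon T \to T$ of its Bass--Serre tree satisfying $f \circ g = \varphi(g) \circ f$ for every $g \in \A$. The key observation is that $f \circ h = \varphi(h) \circ f = h \circ f$ for every $h \in \Fix\varphi$, so $f$ commutes with the $\Fix\varphi$-action on $T$, and thus $\Fix\varphi$ preserves a nonempty subtree $T' \subseteq T$ on which every vertex and edge admits an $f$-fixed representative (the $f$-fixed subtree if $f$ is elliptic; a suitable modification of its axis if $f$ is hyperbolic).

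For any vertex or edge $x \in T'$ with $f(x) = x$, the $\A$-stabiliser $P := \mathrm{Stab}_\A(x)$ is a conjugate of $\A_+$, $\A_-$, or $\A_0$, hence a proper parabolic of $\A$, and $f(x) = x$ gives $\varphi(P) = P$. Writing $P = g\A_\eta g^{-1}$, the self-normalisation of parabolic subgroups in RAAGs should force $g^{-1}\varphi(g) \in \A_\eta$, so the restriction $\varphi|_P$ corresponds, under the isomorphism $P \cong \A_\eta$, to the composition of an inner automorphism with $\varphi|_{\A_\eta}$; Lemma~\ref{restriction of U_0} then yields $\varphi|_{\A_\eta} \in U_0(\A_\eta)$, hence $\varphi|_P \in U_0(P)$. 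The stabiliser of $x$ in $\Fix\varphi$ is then $\Fix\varphi \cap P = \Fix(\varphi|_P)$, as required. The main obstacle will be establishing that the resulting graph of groups is finite, i.e.\ that $\Fix\varphi$ acts cocompactly on $T'$; my plan is to exploit the convex-cocompactness of $\Fix\varphi$ in $\A \acts \X$ (Corollary~\ref{fix RAAG cc}) together with the natural $\A$-equivariant projection $\X \to T$ arising from the graph-of-spaces decomposition of the Salvetti complex dual to the amalgamation, so that a $\Fix\varphi$-cocompact convex subcomplex of $\X$ projects to a $\Fix\varphi$-invariant subtree of $T$ with finite quotient, which can be matched with $T'$ to conclude.
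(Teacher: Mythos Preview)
Your overall architecture matches the paper's: Part~(1) by showing $\varphi$ preserves the factors, Part~(2) via Proposition~\ref{invariant splitting prop} and the twisting isometry $f$ of the Bass--Serre tree commuting with $\Fix\varphi$. Two points deserve attention.

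\textbf{Cocompactness.} You plan to deduce that $\Fix\varphi$ acts cocompactly on $T'$ from Corollary~\ref{fix RAAG cc} together with an $\A$--equivariant projection $\X\to T$. The paper's route is much shorter: $\Fix\varphi$ is finitely generated by Proposition~\ref{fg fix}, and any finitely generated group acts cocompactly on its minimal invariant subtree. No projection from $\X$ is needed (and it is not clear such a map exists in the form you describe).

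\textbf{The loxodromic case.} When $f$ is loxodromic there is no $f$--fixed vertex or edge, so your description of $T'$ (``every vertex and edge admits an $f$--fixed representative'') and the argument that follows it do not apply. The paper handles this case separately: the axis $\alpha$ of $f$ is $\Fix\varphi$--invariant, and $\Fix\varphi$ splits as $H_0\rtimes\langle h\rangle$ with $H_0$ fixing $\alpha$ pointwise and $h$ a shortest loxodromic in $\Fix\varphi$. The relevant $\varphi$--invariant parabolic is the intersection $Q$ of the $\A$--stabilisers of \emph{all} vertices of $\alpha$; since $f(\alpha)=\alpha$ permutes these stabilisers, $\varphi(Q)=Q$ and $H_0=\Fix(\varphi|_Q)$.

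\textbf{Minor issues.} Parabolic subgroups of RAAGs are not self-normalising (take $\langle a\rangle$ in $\Z^2$), so that step fails; fortunately it is unnecessary, since $\tilde\varphi:=\cc[g^{-1}]\circ\varphi\circ\cc[g]\in U_0(\A)$ satisfies $\tilde\varphi(\A_\eta)=\A_\eta$ directly from $\varphi(P)=P$, and Lemma~\ref{restriction of U_0} applies. For the freely reducible case, the paper obtains the graph-of-groups decomposition of $\Fix\varphi$ via Kurosh's theorem applied to the free product $\A=F\ast\A_1\ast\cdots\ast\A_m$, then uses Grushko and malnormality of the $\A_k$ to identify the free factors of $\Fix\varphi$ as $\Fix(\varphi|_P)$ for suitable $\varphi$--invariant conjugates $P$; your one-line sketch elides this.
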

\begin{proof}
For simplicity, set $H:=\Fix\varphi$. We distinguish three cases.

\smallskip
{\bf Case~1}: \emph{$\A$ is not directly irreducible.}

\noindent
Let us write $\A=A\x\A_1\x\dots\x\A_m$, where $A$ is a free abelian group and $\A_i$ are directly-irreducible (non-cyclic) right-angled Artin groups. This corresponds to a splitting of $\G$ as a join of a complete subgraph and irreducible subgraphs $\G_1,\dots,\G_m$. 

Since $\varphi\in U_0(\A)$, we have $\varphi(\A_k)=\A_k$ and $\varphi|_{\A_k}\in U_0(\A_k)$ for every $1\leq k\leq m$, and $\varphi|_A$ is a product of inversions. Indeed, this is clear for inversions, folds and partial conjugations. 

Thus $H=A'\x H_1\x\dots\x H_m$, where $H_i=\Fix(\varphi|_{\A_i})$ and $A'$ is a standard direct factor of $A$. This proves part~(1) of the corollary.

\smallskip
{\bf Case~2:} \emph{$\A$ is not freely irreducible.} 

\noindent
Write $\A=F\ast\A_1\ast\dots\ast\A_m$, where $F$ is a free group and $\A_i$ are freely-irreducible (non-cyclic) right-angled Artin groups of lower complexity. Since $H$ is finitely generated by Theorem~\ref{cmp und intro}, Kurosh's theorem guarantees that $H$ decomposes as a free product $H=L\ast H_1\ast\dots\ast H_n$, where $L$ is a finitely generated free group and each $H_i$ is a finitely generated subgroup of some $g_i\A_{k_i}g_i^{-1}$ with $g_i\in\A$ and $1\leq k_i\leq m$.

By Grushko's theorem, the subgroup $\varphi(\A_k)$ is conjugate to some $\A_{k'}$ for every $1\leq k\leq m$. Since $\varphi$ fixes the nontrivial subgroup $H_i\leq g_i\A_{k_i}g_i^{-1}$ pointwise, we must have $\varphi(g_i\A_{k_i}g_i^{-1})=g_i\A_{k_i}g_i^{-1}$ for $1\leq i\leq n$. 

Consider the automorphism $\psi_i\in U_0(\A)$ defined by $\psi_i(x)=g_i^{-1}\varphi(g_ixg_i^{-1})g_i$. Note that $\psi_i(\A_{k_i})=\A_{k_i}$ and $\Fix\psi_i|_{\A_{k_i}}=g_i^{-1}H_ig_i$. By Lemma~\ref{restriction of U_0}, we have $\psi_i|_{\A_{k_i}}\in U_0(\A_{k_i})$. This proves part~(2) of the corollary in the freely reducible case.

\smallskip
{\bf Case~3}: \emph{$\A$ is freely and directly irreducible.}

\noindent
We can assume that $\A\not\simeq\Z$. By Proposition~\ref{goods exist}, $\G$ admits a good partition $\G=\L^+\sqcup\L\sqcup\L^-$. By Corollary~\ref{splitting preserved by U}, there exists $f\in\aut T_{\L}$ satisfying $f\o g=\varphi(g)\o f$ for all $g\in\A$.  

If $H$ is elliptic in $T_{\L}$, we have $H\leq V$, where $V$ is the $\A$--stabiliser of some vertex of $T_{\L}$. The existence of the automorphism $f\in\aut T_{\L}$ guarantees that all subgroups $\varphi^n(V)$ with $n\in\Z$ are $\A$--stabilisers of vertices of $T_{\L}$; in particular, they are all conjugate to either $\A_+$ or $\A_-$. We conclude that $H$ is contained in the $\langle\varphi\rangle$--invariant parabolic subgroup $P:=\bigcap_{n\in\Z}\varphi^n(V)$. Thus, we have $H=\Fix\varphi|_P$ and, by Lemma~\ref{restriction of U_0}, $\varphi|_P\in U_0(P)$. This proves the corollary in this case, with $H$ splitting as a trivial graph of groups.

Suppose instead that $H$ is not elliptic in $T_{\L}$ and denote by $T_H\cu T_{\L}$ the $H$--minimal subtree. Since $H$ is finitely generated, the action $H\acts T_H$ is cocompact and gives a splitting of $H$ as a (nontrivial) finite graph of groups. We are left to understand vertex-stabilisers of the action $H\acts T_H$. 

As $f$ normalises $H$ in $\aut T_{\L}$, we have $f(T_H)=T_H$. It is convenient to distinguish two subcases.

\smallskip
{\bf Case~3a}: \emph{$f$ is elliptic in $T_{\L}$.}

\noindent
Since $f$ commutes with every element of $H$, the tree $T_H$ is fixed pointwise by $f$. For every $v\in T_H$, its $\A$--stabiliser $\A_v$ satisfies $\varphi(\A_v)=\A_v$ and is conjugate to either $\A_+$ or $\A_-$. By Lemma~\ref{restriction of U_0}, we have $\varphi|_{\A_v}\in U_0(\A_v)$, proving the corollary in this case.

\smallskip
{\bf Case~3b}: \emph{$f$ is loxodromic in $T_{\L}$.}

\noindent
Let $\alpha\cu T_{\L}$ be the axis of $f$. Since $f$ commutes with every element of $H$, the geodesic $\alpha$ must be $H$--invariant and every non-loxodromic element of $H$ fixes $\alpha$ pointwise. Note that $T_H$ cannot be a singleton, or $f$ would be elliptic. Thus, $T_H=\alpha$ and $H$ contains a shortest loxodromic element $h\in H$. Moreover, $H=H_0\rtimes\langle h\rangle$, where $H_0$ is the kernel of the action $H\acts\alpha$.

Let $Q\leq\A$ be the intersection of the $\A$--stabilisers of the vertices of $\alpha$. Being an intersection of parabolic subgroups, $Q$ is itself a (possibly trivial) parabolic subgroup of $\A$. Since $f(\alpha)=\alpha$, we have $\varphi(Q)=Q$ and $H_0=\Fix\varphi|_Q$. Lemma~\ref{restriction of U_0} guarantees that $\varphi|_Q\in U_0(Q)$. Thus, the HNN splitting $H=H_0\rtimes\langle h\rangle$ is as required by the the corollary.
\end{proof}

\begin{rmk}\label{better 3b}
In Case~3b of the proof of Corollary~\ref{fix graph}, we can actually say more on the structure of $H=\Fix\varphi$. Specifically, $H=H_0\x\langle h\rangle$ and $h$ can be taken to be label-irreducible.

Indeed, since $h\alpha=\alpha$, the element $h$ lies in the normaliser of $Q$ in $\A$, which is a subgroup of the form $Q\x Q'$ (since $Q$ is parabolic in $\A$). If $h=h_1\cdot\ldots\cdot h_k$ is the decomposition of $h$ into label-irreducible components, every $h_i$ lies in either $Q$ or $Q'$. Since $\varphi$ is coarse-median preserving and fixes $h$, it must permute the $h_i$; Corollary~\ref{orthogonals are preserved 2} then shows that $\varphi(h_i)=h_i$ for every $i$. Thus, all the label-irreducible components of $h$ that lie in $Q$ actually lie in $H_0$. Up to replacing $h$, we can assume that all $h_i$ lie in $Q'$; in particular, $h$ lies in $Q'$, hence it commutes with $H_0$. Since $H=\Fix\varphi$ is generated by $H_0$ and $h$, we must then have $k=1$, i.e.\ $h$ is label-irreducible.
\end{rmk}

In relation to Theorem~\ref{U_0 cc intro}, it is natural to wonder if the proof of Corollary~\ref{fix graph} can be used to give an alternative, inductive argument showing that $\Fix\varphi$ is convex-cocompact in $\A$ for every $\varphi\in U_0(\A)$. In light of Remark~\ref{better 3b}, the only problematic situation is the one in Case~3a. 

Unfortunately, cubical convex-cocompactness does not seem to be well-behaved with respect to graph-of-groups constructions, as the next example shows.

\begin{figure} 
\begin{tikzpicture}
\draw[fill] (-1,0) -- (0,0.5);
\draw[fill] (1,0) -- (0,0.5);
\draw[fill] (-1,0) circle [radius=0.04cm];
\draw[fill] (1,0) circle [radius=0.04cm];
\draw[fill] (0,0.5) circle [radius=0.04cm];
\draw[fill] (0,-0.5) circle [radius=0.04cm];
\node[below] at (0,-0.5) {$x$};
\node[above] at (0,0.5) {$y$};
\node[left] at (-1,0) {$a$};
\node[right] at (1,0) {$b$};
\end{tikzpicture}
\caption{}
\label{cc bad graphs fig} 
\end{figure}

\begin{ex}\label{cc bad graphs}
Let $\G$ be the graph in Figure~\ref{cc bad graphs fig}. Consider the subgroup $H=\langle ayx^{-1},xby\rangle\leq\A_{\G}$. We have an amalgamated product splitting $\A_{\G}=\langle a,x,y\rangle\ast_{\langle x,y\rangle}\langle b,x,y\rangle$, which induces a splitting $H=\langle ayx^{-1}\rangle\ast\langle xby\rangle\simeq F_2$. The subgroups $\langle ayx^{-1}\rangle$ and $\langle xby\rangle$ are convex-cocompact, as they are each generated by a single label-irreducible element.

However, $H$ is not convex-cocompact in $\A$: the element $aby^2$ lies in $H$, but no power of its label-irreducible components $ab$ and $y^2$ does (which, for instance, violates Lemma~\ref{label-irreducibles don't escape}).
\end{ex}

\section{Projectively invariant metrics on finite-rank median algebras.}\label{compatible sect}

In this section, we initiate the lengthy proof of Theorem~\ref{Q2 thm intro}, which will be completed in Section~\ref{ultra sect}. Our main goal here is to formulate a criterion, for a group $U$ and a subgroup $G\leq U$, guaranteeing that a $U$--action on a finite-rank median algebra admits a $G$--invariant compatible pseudo-metric for which $U$ acts by homotheties (Corollary~\ref{homothety 1}). An important tool will be the Lefschetz fixed point theorem for compact ANRs.

Throughout the section, $M$ denotes a fixed median algebra of finite rank $r$.

\subsection{Multi-bridges.}\label{multi-bridge sect}

The \emph{bridge} of two gate-convex sets was first studied in \cite{Behrstock-Charney,CFI} for $\CAT$ cube complexes and in \cite[Section~2.2]{Fio3} for general median algebras. We will need an extension of this concept to arbitrary finite collections of gate-convex subsets: \emph{multi-bridges}.

We briefly motivate why. As a recurring setup in the rest of the paper (especially in Subsections~\ref{WNE subsubsect} and~\ref{meaty sect}), we will often find ourselves studying a group $G\leq\aut M$ with a finite generating subset $S\cu G$ and a $G$--invariant compatible pseudo-metric $\eta$ on $M$. It will be important to understand which points of $M$ are moved as little as possible by all elements of $S$ (i.e.\ which points realise the quantity $\overline\tau_S^{\eta}$ from Subsection~\ref{identities sect}). It turns out that the set of such points does not depend much on the specific pseudo-metric $\eta$, and can instead be characterised purely in terms of the median-algebra structure on $M$, using the notion of multi-bridge (Propositions~\ref{multi-bridge of S} and~\ref{properties of B(S)}).

Let $C_1,\dots,C_k\cu M$ be gate-convex subsets, with gate-projections $\pi_i\colon M\ra C_i$. Let $\mc{H}\cu\mscr{H}(M)$ be the set of halfspaces that contain at least one $C_i$ and intersect each $C_i$. Then we have a partition:
\[\mscr{H}(M)=\Big(\mc{H}\sqcup\mc{H}^*\Big)\sqcup\Big(\bigcap_{1\leq i\leq k}\mscr{H}_{C_i}(M)\Big)\sqcup\Big(\bigcup_{1\leq i,j\leq k}\mscr{H}(C_i|C_j)\Big).\]
If $i\neq j$, the sets $\mscr{H}_{C_i}(M)\cap\mscr{H}_{C_j}(M)$ and $\mscr{H}(C_i|C_j)$ are transverse. Thus, every halfspace in the second set of the above partition of $\mscr{H}(M)$ is transverse to every halfspace in the third set.

\begin{lem}
The intersection of all halfspaces in $\mc{H}$ is a nonempty convex subset of $M$.
\end{lem}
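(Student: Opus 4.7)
The plan is to apply Lemma~\ref{useful criterion}(1): convexity of $\bigcap\mc{H}$ is automatic as an intersection of convex subsets, so it suffices to verify that the halfspaces in $\mc{H}$ are pairwise intersecting and that every chain in $\mc{H}$ admits a lower bound in $\mc{H}$.

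First, pairwise intersection is straightforward. Given $\mf{h},\mf{k}\in\mc{H}$, pick an index $i$ with $C_i\subseteq\mf{h}$, which exists by definition of $\mc{H}$. Since $\mf{k}$ meets every $C_j$, in particular $\mf{k}\cap C_i\neq\emptyset$, and therefore $\mf{h}\cap\mf{k}\supseteq\mf{k}\cap C_i\neq\emptyset$.

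Next, for the chain condition, consider a chain $\mscr{C}\subseteq\mc{H}$. For each $\mf{h}\in\mscr{C}$, choose $i(\mf{h})\in\{1,\dots,k\}$ with $C_{i(\mf{h})}\subseteq\mf{h}$. By the pigeonhole principle, since $k$ is finite, there exists an index $i$ such that $\mscr{C}_i:=\{\mf{h}\in\mscr{C}\mid C_i\subseteq\mf{h}\}$ is cofinal in $\mscr{C}$; producing a lower bound for $\mscr{C}_i$ then provides one for $\mscr{C}$. Fix $x\in C_i$ and, for $j\neq i$, set $x_j:=\pi_j(x)\in C_j$. The gate property gives $\pi_j(x)\in I(x,y)$ for any $y\in C_j$; hence, for every $\mf{h}\in\mscr{C}_i$, choosing $y\in\mf{h}\cap C_j\neq\emptyset$ and using $x\in C_i\subseteq\mf{h}$ together with convexity of $\mf{h}$, we obtain $x_j\in\mf{h}$. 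Therefore the finite set $F:=\{x\}\cup\{x_j\mid j\neq i\}$ is contained in every element of $\mscr{C}_i$, so $F\subseteq D:=\bigcap\mscr{C}_i$.

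The principal obstacle is to upgrade this to an actual halfspace lower bound $\mf{k}\in\mc{H}$, since $\mscr{C}_i$ need not admit a minimum element. The plan here is to exploit the finite rank of $M$: descending chains of halfspaces sharing a common interior point behave tamely, and I expect that either $\mscr{C}_i$ contains a minimum (which we can take as $\mf{k}$), or the intersection $D$ itself is a halfspace. In either case, the resulting $\mf{k}$ satisfies $C_i\subseteq\mf{k}$ and $x_j\in\mf{k}\cap C_j$ for $j\neq i$, so $\mf{k}\in\mc{H}$ and $\mf{k}$ is below $\mscr{C}_i$ (hence below $\mscr{C}$). With both hypotheses of Lemma~\ref{useful criterion}(1) verified, the lemma delivers nonemptiness of $\bigcap\mc{H}$, completing the proof.
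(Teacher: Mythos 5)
Your proof follows essentially the same route as the paper's: verify the pairwise-intersection and chain hypotheses of Lemma~\ref{useful criterion}(1), isolate a cofinal subchain $\mscr{C}_i$ whose members all contain one fixed $C_i$, and use gate projections to produce points of each $C_j$ in the intersection.

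However, there is a genuine gap at the decisive step. You write that you \emph{expect}, by finite rank, that either $\mscr{C}_i$ has a minimum or $D:=\bigcap\mscr{C}_i$ is a halfspace, but you do not prove either disjunct, so the hypothesis of Lemma~\ref{useful criterion}(1) (a lower bound \emph{in} $\mc{H}$) is not actually established. The claim is true, but the correct argument is uniform and does not use finite rank at all: $D$ is convex as an intersection of convex sets; $M\setminus D=\bigcup_{\mf{h}\in\mscr{C}_i}\mf{h}^*$ is an increasing (under reverse inclusion on $\mscr{C}_i$) union of convex sets, hence convex; $D$ is nonempty because it contains your finite set $F$ (indeed $C_i\cu D$); and $M\setminus D$ is nonempty because each $\mf{h}^*$ is. Therefore $D$ is a halfspace, and since $C_i\cu D$ and $x_j\in D\cap C_j$ for all $j\neq i$, we get $D\in\mc{H}$, which is the required lower bound. (When $\mscr{C}_i$ does have a minimum, $D$ is that minimum, so no case distinction is needed.) This is precisely what the paper does, working with $\mf{k}=\bigcap\mscr{C}$, which equals your $D$ by cofinality; the paper's phrasing ``\emph{Since $\mf{k}$ and $\mf{k}^*$ are convex, it follows that $\mf{k}$ is a halfspace}'' is the step you left as a conjecture. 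Replace the ``I expect'' paragraph with this three-line verification and your proof is complete.
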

\begin{proof}
We will prove this by appealing to Lemma~\ref{useful criterion}(1). It is clear that the elements of $\mc{H}$ intersect pairwise. Let us show that, for every chain $\mscr{C}\cu\mc{H}$, the set $\mf{k}:=\bigcap\mscr{C}$ is again an element of $\mc{H}$. 

Note that there exist $1\leq i_0\leq k$ and a cofinal subset $\mscr{C}'\cu\mscr{C}$ consisting of halfspaces containing $C_{i_0}$. Thus, $C_{i_0}\cu\mf{k}$ and $\mf{k}$ is nonempty. Since $\mf{k}$ is the intersection of a chain of halfspaces, both $\mf{k}$ and $\mf{k}^*$ are convex. It follows that $\mf{k}$ is a halfspace of $M$. 

For every $\mf{h}\in\mscr{C}\cu\mc{H}$, the fact that $\mf{h}$ intersects each $C_i$ implies that $\pi_i(\mf{h})=\mf{h}\cap C_i$ (see e.g.\ \cite[Lemma~2.2(1)]{Fio1}). Recalling that $\mf{k}=\bigcap\mscr{C}$, we deduce that $\pi_i(\mf{k})\cu\mf{k}\cap C_i$ for $1\leq i\leq k$, hence $\mf{k}$ intersects all $C_i$. Since we have already seen that $C_{i_0}\cu\mf{k}$, we conclude that $\mf{k}\in\mc{H}$, as required.
\end{proof}

\begin{defn}
The intersection $\mc{B}=\mc{B}(C_1,\dots,C_k)\cu M$ of all halfspaces in $\mc{H}$ is the \emph{multi-bridge} of the gate-convex sets $C_1,\dots,C_k$. 
\end{defn}

For every $\mf{k}\in\mscr{H}(M)\setminus\mc{H}^*$, the set $\mc{H}\sqcup\{\mf{k}\}$ is again pairwise-intersecting. Hence, Lemma~\ref{useful criterion}(1) yields: 
\[\mscr{H}_{\mc{B}}(M)=\mscr{H}(M)\setminus(\mc{H}\sqcup\mc{H}^*)=\Big(\bigcap\mscr{H}_{C_i}(M)\Big)\sqcup\Big(\bigcup\mscr{H}(C_i|C_j)\Big).\]
We have already observed that the two sets in this partition are transverse. By Remark~\ref{halfspaces of subsets}(2) and Lemma~\ref{product lem}, we obtain a natural product splitting:
\begin{align*}
\mc{B}&=\mc{B}_{/\mkern-5mu/}\x\mc{B}_{\perp}, \text{ \hspace{.1cm} where:}&  \mscr{H}_{\mc{B}_{/\mkern-5mu/}}(M)&=\bigcap\mscr{H}_{C_i}(M), & \mscr{H}_{\mc{B}_{\perp}}(M)&=\bigcup\mscr{H}(C_i|C_j).
\end{align*}
We can view $\mc{B}_{/\mkern-5mu/}$ and $\mc{B}_{\perp}$ as subsets of $M$ by identifying them with any fibre of the splitting of $\mc{B}$. 

\begin{lem}
The sets $\mc{B}$, $\mc{B}_{/\mkern-5mu/}$, $\mc{B}_{\perp}$ are gate-convex in $M$.
\end{lem}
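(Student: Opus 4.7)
The plan is to verify gate-convexity via part~(2) of Lemma~\ref{useful criterion}: a convex subset $X \subseteq M$ is gate-convex if and only if no chain $\mscr{C} \subseteq \mscr{H}_X(M)$ has $\bigcap \mscr{C}$ nonempty and disjoint from $X$. So for each of $X \in \{\mc{B}, \mc{B}_{/\mkern-5mu/}, \mc{B}_{\perp}\}$, I take such a chain $\mscr{C}$ with $\bigcap \mscr{C} \neq \emptyset$ and produce a point of $X \cap \bigcap\mscr{C}$.

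I would first handle $\mc{B}$, and the key reduction is that the partition
\[
\mscr{H}_{\mc{B}}(M) \;=\; \Bigl(\bigcap_{i}\mscr{H}_{C_i}(M)\Bigr) \sqcup \Bigl(\bigcup_{i,j}\mscr{H}(C_i|C_j)\Bigr)
\]
consists of two pairwise-transverse subfamilies. Since comparable halfspaces are never transverse, any chain $\mscr{C}\subseteq\mscr{H}_{\mc{B}}(M)$ lies entirely in one of the two pieces. This splits the analysis into the $\mc{B}_{/\mkern-5mu/}$-type and $\mc{B}_{\perp}$-type cases; since $\mc{B}\cong\mc{B}_{/\mkern-5mu/}\x\mc{B}_{\perp}$, once gate-convexity of both factors is established, gate-convexity of $\mc{B}$ (and of each factor viewed as a fibre in $M$) will follow.

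For $\mc{B}_{/\mkern-5mu/}$, a chain $\mscr{C}$ in $\bigcap_i\mscr{H}_{C_i}(M)$ is in particular a chain in each $\mscr{H}_{C_i}(M)$, so gate-convexity of $C_i$ together with Lemma~\ref{useful criterion}(2) forces $\bigcap\mscr{C}$ to meet each $C_i$. Starting from any point $x\in\bigcap\mscr{C}$, I would iteratively apply the gate-projections $\pi_i$ and median combinations to produce a point in $\mc{B}_{/\mkern-5mu/}\cap\bigcap\mscr{C}$. The crucial observation that keeps us inside $\bigcap\mscr{C}$ during this process is that $\pi_i(x)$ differs from $x$ only across halfspaces in $\mscr{H}(x|C_i)$; each such halfspace contains $C_i$ entirely on the opposite side of $x$ and therefore does not lie in $\mscr{H}_{C_i}(M)$, let alone in $\bigcap_j\mscr{H}_{C_j}(M)=\mscr{H}_{\mc{B}_{/\mkern-5mu/}}(M)$. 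Hence all halfspaces in $\mscr{C}$ that contain $x$ still contain the projected point.

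The hard case is $\mc{B}_{\perp}$, where a chain in $\bigcup_{i,j}\mscr{H}(C_i|C_j)$ can a priori mix halfspaces separating different pairs $(C_i,C_j)$; I expect this to be the main obstacle. To control such chains, I would fix a cofinal descending sub-chain, use the finiteness of the collection $\{C_1,\dots,C_k\}$ to guarantee that infinitely many elements of the sub-chain separate a common fixed pair $(C_i,C_j)$, and then appeal to the gate-convexity of the classical two-set bridge $\mc{B}(C_i,C_j)$ (established in \cite{Fio3}) to produce a point in $\bigcap\mscr{C}$ which is a "pair of gates'' between $C_i$ and $C_j$. An inductive argument on $k$, collapsing one $C_l$ at a time, then upgrades this to a genuine point of $\mc{B}_{\perp}$. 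Finite rank is used throughout to ensure such cofinal sub-chains and compatible gates exist.
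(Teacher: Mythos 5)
Your high-level strategy matches the paper's: both verify gate-convexity via Lemma~\ref{useful criterion}(2), both exploit the transverse decomposition $\mscr{H}_{\mc{B}}(M)=\mscr{H}_{\mc{B}_{/\mkern-5mu/}}(M)\sqcup\mscr{H}_{\mc{B}_\perp}(M)$ to conclude that a chain in $\mscr{H}_{\mc{B}}(M)$ lies entirely in one of the two pieces, and both use finiteness of the index set $\{(i,j)\}$ to find a cofinal subchain of any chain $\mscr{C}\cu\mscr{H}_{\mc{B}_\perp}(M)$ inside a single $\mscr{H}(C_i|C_j)$. However, the endgame you propose is both harder than necessary and not clearly sound. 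The observation you miss is that, when $\bigcap\mscr{C}$ is nonempty, it \emph{is itself a halfspace} (its complement $\bigcup_{\mf{h}\in\mscr{C}}\mf{h}^*$ is an increasing union of convex sets, hence convex), and it is then immediate that this halfspace lies in the relevant family. In the $\mc{B}_{/\mkern-5mu/}$ case, gate-convexity of each $C_i$ gives $\bigcap\mscr{C}\cap C_i\neq\emptyset$, while $(\bigcap\mscr{C})^*\supseteq\mf{h}^*$ meets $C_i$ for any $\mf{h}\in\mscr{C}$, so $\bigcap\mscr{C}\in\bigcap_i\mscr{H}_{C_i}(M)=\mscr{H}_{\mc{B}_{/\mkern-5mu/}}(M)$. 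In the $\mc{B}_\perp$ case, the cofinal subchain $\mscr{C}'\cu\mscr{H}(C_i|C_j)$ gives $C_j\cu\bigcap\mscr{C}'=\bigcap\mscr{C}$ and $C_i\cu(\bigcap\mscr{C})^*$, so $\bigcap\mscr{C}\in\mscr{H}(C_i|C_j)\cu\mscr{H}_{\mc{B}_\perp}(M)$. Once $\bigcap\mscr{C}\in\mscr{H}_X(M)$ is established, the intersection $\bigcap\mscr{C}\cap X$ is nonempty by the very definition of $\mscr{H}_X(M)$; there is no point to construct.

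Your proposed replacements for that last step are where things fray. For $\mc{B}_{/\mkern-5mu/}$, iteratively gate-projecting a point of $\bigcap\mscr{C}$ onto the $C_i$ does stay inside $\bigcap\mscr{C}$, but there is no reason the process reaches the fixed fibre $\mc{B}_{/\mkern-5mu/}\x\{\ast\}$, and with no metric on a general median algebra there is no obvious way to extract a limit. For $\mc{B}_\perp$, the two-set bridge $\mc{B}(C_i,C_j)$ is not in general contained in the multi-bridge $\mc{B}(C_1,\dots,C_k)$, let alone in the factor $\mc{B}_\perp$, so finding a point of $\mc{B}(C_i,C_j)\cap\bigcap\mscr{C}$ does not directly help; the proposed ``inductive argument on $k$, collapsing one $C_l$ at a time'' is not spelled out and does not obviously produce a point of $\mc{B}_\perp\cap\bigcap\mscr{C}$. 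Finally, finite rank plays no additional role here beyond what is already built into Lemma~\ref{useful criterion} itself, contrary to what your closing sentence suggests.
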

\begin{proof}
Since each $C_i$ is gate-convex, Lemma~\ref{useful criterion}(2) shows that, for every chain $\mscr{C}\cu\bigcap\mscr{H}_{C_i}(M)$, either $\bigcap\mscr{C}$ is empty in $M$, or $\bigcap\mscr{C}\in\bigcap\mscr{H}_{C_i}(M)$. Hence $\mc{B}_{/\mkern-5mu/}$ is gate-convex in $M$.

If $\mscr{C}\cu\bigcup\mscr{H}(C_i|C_j)$ is a chain, a cofinal subset of $\mscr{C}$ is contained in a single $\mscr{H}(C_i|C_j)$. Hence $\bigcap\mscr{C}\in\mscr{H}(C_i|C_j)$. Invoking again Lemma~\ref{useful criterion}(2), this shows that $\mc{B}_{\perp}$ is gate-convex. 

Every chain in $\mscr{H}_{\mc{B}}(M)$ has a cofinal subset contained in either $\bigcap\mscr{H}_{C_i}(M)$ or $\bigcup\mscr{H}(C_i|C_j)$. One last application of Lemma~\ref{useful criterion}(2) shows that $\mc{B}$ is gate-convex.
\end{proof}

\begin{cor}\label{multibridge cor}
If $C_1,\dots,C_k\cu M$ are gate-convex subsets, their multi-bridge $\mc{B}=\mc{B}(C_1,\dots,C_k)$ is a gate-convex subset of $M$ enjoying the following properties:
\begin{enumerate}
\item $\mc{B}$ splits as a product $\mc{B}_{/\mkern-5mu/}\x\mc{B}_{\perp}$ with $\mscr{H}_{\mc{B}_{/\mkern-5mu/}}(M)=\bigcap\mscr{H}_{C_i}(M)$ and $\mscr{H}_{\mc{B}_{\perp}}(M)=\bigcup\mscr{H}(C_i|C_j)$;
\item each fibre $\{\ast\}\x\mc{B}_{\perp}$ intersects all of the $C_i$.
\end{enumerate}
\end{cor}
\begin{proof}
The only statement that has not already been proved is part~(2). If it were false, there would exist an index $i$ and $\mf{h}\in\mscr{H}(M)$ such that $C_i\cu\mf{h}$ and $\{\ast\}\x\mc{B}_{\perp}\cu\mf{h}^*$. Since $C_i\cu\mf{h}$, we have $\mf{h}\not\in\mscr{H}_{\mc{B}_{/\mkern-5mu/}}(M)$, so $\mc{B}\cu\mf{h}^*$. Hence $\mf{h}^*\in\mc{H}$, contradicting the fact that $C_i\cu\mf{h}$.
\end{proof}

Recall the notation $\mc{PD}(M)$ and $\mc{D}(M)$ for compatible (pseudo-)metrics, as in Subsection~\ref{compatible subsec}.

\begin{rmk}\label{constant distance on fibres}
If $\eta\in\mc{PD}(M)$ and $x,y\in\mc{B}$ lie in the same fibre $\mc{B}_{/\mkern-5mu/}\x\{\ast\}$, then $\eta(x,C_i)=\eta(y,C_i)$ for all $1\leq i\leq k$. Indeed, since $\mscr{H}(x|y)\cu\mscr{H}_{\mc{B}_{/\mkern-5mu/}}(M)=\bigcap\mscr{H}_{C_i}(M)$, we have $\mscr{W}(x|C_i)=\mscr{W}(y|C_i)$ and it follows (e.g.\ by Remark~\ref{from pseudo-metrics to measures}) that $\eta(x,\pi_i(x))=\eta(y,\pi_i(y))$ for every $\eta\in\mc{PD}(M)$.
\end{rmk}

\begin{rmk}\label{distance to multi-bridge}
If $\eta\in\mc{PD}(M)$, then $\eta(x,\mc{B})\leq r\cdot\max_i\eta(x,C_i)$ for every $x\in M$.

In order to see this, let $\mf{h}_1,\dots,\mf{h}_k$ be the minimal elements of $\mscr{H}(x|\mc{B})$. Since the $\mf{h}_i$ are pairwise transverse and $\rk M=r$, we have $k\leq r$. Note that each $\mf{h}_i$ must lie in $\mc{H}$, hence there exists an index $j_i$ such that $C_{j_i}\cu\mf{h}_i$. It follows that:
\[\mscr{H}(x|\mc{B})\cu\bigcup\mscr{H}(x|\mf{h}_i)\cu\bigcup\mscr{H}(x|C_{j_i}).\]
Hence $\eta(x,\mc{B})\leq k\cdot\max_i\eta(x,C_i)\leq r\cdot\max_i\eta(x,C_i)$.
\end{rmk}

\begin{rmk}\label{compact parallel multi-bridge}
If $\delta\in\mc{D}(M)$ and $(M,\delta)$ is complete, then $\mc{B}_{\perp}$ is compact in $(M,\delta)$.

In order to prove this, let $x_{i,j}\in C_i$ and $x_{j,i}\in C_j$ be a pair of gates for all distinct $1\leq i,j\leq k$. Let $K$ be the convex hull of the finite set $F=\{x_{i,j}\mid 1\leq i,j\leq k\}$. Recall that $K=\mc{J}^r(F)$ by Remark~\ref{Bow J}, so it follows from \cite[Corollary~2.20]{Fio1} that $K$ is compact.

We have $K\cap\mc{B}\neq\emptyset$. Otherwise, the set $\mscr{H}(K|\mc{B})$ would be nonempty and contained in $\mc{H}$. However, each element of $\mc{H}$ contains some $C_i$, so it cannot be disjoint from $K$. 

Finally, observing that $\mscr{H}_K(M)$ contains the set
\[\bigcup\mscr{H}(x_{i,j}|x_{j,i})=\bigcup\mscr{H}(C_i|C_j)=\mscr{H}_{\mc{B}_{\perp}}(M),\]
we deduce that $K\cap\mc{B}$ must contain a fibre $\{\ast\}\x\mc{B}_{\perp}$. Since $\mc{B}_{\perp}$ is gate-convex, it must be a closed subset of $K$, hence it is compact too.
\end{rmk}

Now, let $S\cu\aut M$ be a finite set of automorphisms acting non-transversely and stably without inversions. By Theorem~\ref{all from 10b -2}(1), the reduced cores $\overline{\mc{C}}(s)$ of $s\in S$ are all gate-convex. Let $\mc{B}(S)$ be their multi-bridge. 

\begin{defn}\label{multi-bridge of S defn}
We refer to $\mc{B}(S)$ as the \emph{multi-bridge} of the finite set $S\cu\aut M$.
\end{defn}

Recalling the notation introduced in Subsection~\ref{identities sect}, we have:

\begin{prop}\label{multi-bridge of S}
Let $S\cu\aut M$ be a finite set of automorphisms acting non-transversely and stably without inversions. The multi-bridge $\mc{B}(S)$ is gate-convex and, for all $\eta\in\mc{PD}(M)^{\langle S\rangle}$:
\begin{enumerate}
\item we have $\tau_S^{\eta}(\pi_{\mc{B}}(x))\leq\tau_S^{\eta}(x)$ for all $x\in M$, where $\pi_{\mc{B}}\colon M\ra\mc{B}(S)$ is the gate-projection;
\item $\tau_S^{\eta}(\cdot)$ is constant on each fibre $\mc{B}_{/\mkern-5mu/}(S)\x\{\ast\}$;
\item if $\delta\in\mc{D}(M)^{\langle S\rangle}$ and $(M,\delta)$ is complete, then there exists $z\in\mc{B}(S)$ with $\tau_S^{\delta}(x)=\overline\tau_S^{\delta}$.
\end{enumerate}
\end{prop}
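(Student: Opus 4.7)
The multi-bridge $\mc{B}(S)$ is gate-convex by the general construction carried out earlier in this subsection. All three parts hinge on the identity
$\eta(x, sx) = \ell(s, \eta) + 2\eta(x, \overline{\mc{C}}(s))$
from Theorem~\ref{all from 10b -2}(2), which rewrites $\tau_S^{\eta}(x) = \max_{s \in S}\bigl[\ell(s, \eta) + 2\eta(x, \overline{\mc{C}}(s))\bigr]$. Since $\ell(s, \eta)$ is independent of $x$, everything reduces to studying the functions $x \mapsto \eta(x, \overline{\mc{C}}(s))$ for $s \in S$.

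The main work goes into part~(1). Setting $y := \pi_{\mc{B}}(x)$, I plan to establish the stronger statement $\mscr{H}(y \mid \overline{\mc{C}}(s)) \subseteq \mscr{H}(x \mid \overline{\mc{C}}(s))$ for each $s \in S$; combined with the measure $\nu_{\eta}$ from Remark~\ref{from pseudo-metrics to measures} and the identity $\eta(p, C) = \nu_{\eta}(\mscr{H}(p \mid C))$ valid for any gate-convex $C$, this yields $\eta(y, \overline{\mc{C}}(s)) \leq \eta(x, \overline{\mc{C}}(s))$ and hence part~(1). To prove the inclusion, take $\mf{h} \in \mscr{H}(y \mid \overline{\mc{C}}(s))$ and suppose for contradiction that $x \in \mf{h}$. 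Two cases: if $\mf{h}$ is disjoint from $\mc{B}(S)$, then $\mc{B}(S) \subseteq \mf{h}^*$, contradicting the fact (established earlier for multi-bridges) that every fibre $\{\ast\} \x \mc{B}_{\perp}$ meets $\overline{\mc{C}}(s) \subseteq \mf{h}$; otherwise $\mf{h} \in \mscr{H}_{\mc{B}(S)}(M)$, and the characterisation $\mscr{H}(x \mid \pi_{\mc{B}}(x)) = \mscr{H}(x \mid \mc{B}(S))$ of the gate-projection forces $x$ and $y$ to lie on the same side of $\mf{h}$, contradicting $y \in \mf{h}^*$. This case analysis is the only step where genuine thought is required; everything else is formal.

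Parts~(2) and~(3) should then follow immediately. Remark~\ref{constant distance on fibres} shows that each function $\eta(\cdot, \overline{\mc{C}}(s))$ is constant on fibres $\mc{B}_{/\mkern-5mu/}(S) \x \{\ast\}$, so the formula for $\tau_S^{\eta}$ above delivers part~(2). For part~(3), parts~(1) and~(2) together reduce the infimum $\overline{\tau}_S^{\delta}$ over $M$ to the infimum of $\tau_S^{\delta}$ over a single fibre $\{\ast\} \x \mc{B}_{\perp}(S)$. This function is continuous since each $s \in S$ is a $\delta$-isometry (so each map $\delta(\cdot, s\cdot)$ is continuous, and a finite maximum of continuous functions is continuous); by Remark~\ref{compact parallel multi-bridge}, the set $\mc{B}_{\perp}(S)$ is compact in $(M, \delta)$. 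The infimum is therefore attained at some $z \in \mc{B}(S)$, as required.
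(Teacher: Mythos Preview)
Your proof is correct and follows essentially the same route as the paper's. The only difference is cosmetic: for the key inclusion $\mscr{H}(\pi_{\mc{B}}(x)\mid\overline{\mc{C}}(s))\subseteq\mscr{H}(x\mid\overline{\mc{C}}(s))$ in part~(1), the paper simply invokes the general fact (cited as \cite[Lemma~2.2(1)]{Fio1}) that gate-projection to a gate-convex set $\mc{B}$ does not increase the distance to any gate-convex set meeting $\mc{B}$, whereas you unwind this into an explicit two-case halfspace argument; parts~(2) and~(3) are handled identically via Remark~\ref{constant distance on fibres} and Remark~\ref{compact parallel multi-bridge}.
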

\begin{proof}
Since the multi-bridge $\mc{B}(S)$ intersects each $\overline{\mc{C}}(s)$, we have $\mscr{H}(\pi_{\mc{B}}(x)|\overline{\mc{C}}(s))\cu\mscr{H}(x|\overline{\mc{C}}(s))$ for all $x\in M$. Hence $\eta(\pi_{\mc{B}}(x),\overline{\mc{C}}(s))\leq\eta(x,\overline{\mc{C}}(s))$. 
Theorem~\ref{all from 10b -2}(2) now implies that $\tau_S^{\eta}(\pi_{\mc{B}}(x))\leq\tau_S^{\eta}(x)$, proving part~(1). By Remark~\ref{constant distance on fibres}, if $x,y\in\mc{B}(S)$ lie in the same fibre $\mc{B}_{/\mkern-5mu/}(S)\x\{\ast\}$, then $\eta(x,\overline{\mc{C}}(s))=\eta(y,\overline{\mc{C}}(s))$. This proves part~(2). Finally, part~(3) follows from the previous two parts and Remark~\ref{compact parallel multi-bridge}.
\end{proof}

\begin{ex}
Let $G=\langle a,b\rangle$ be the free group over two generators. Let $T$ be the standard Cayley graph of $G$, with all edges of length $1$. Let $(X,\delta)$ be the (incomplete) median space obtained by removing from $T$ all midpoints of edges. Then, taking $S=\{a,bab^{-1}\}\cu G\cu\isom X$, there is no point $x\in X$ with $\tau^{\delta}_S(x)=\overline\tau_S^{\delta}=2$.
\end{ex}

Our interest in multi-bridges is due to the following result, which helps us understand the behaviour on $M$ of the functions $\tau_S^{\eta}(\cdot)$ for $\eta\in\mc{PD}(M)^{\langle S\rangle}$.

\begin{prop}\label{properties of B(S)}
Let $S\cu\aut M$ be a finite set of automorphisms acting non-transversely and stably without inversions. Recall that $r=\rk M$. Then, the following hold for every $\eta\in\mc{PD}(M)^{\langle S\rangle}$.
\begin{enumerate}
\item If $s_1,s_2\in S$, then $\eta(\overline{\mc{C}}(s_1),\overline{\mc{C}}(s_2))\leq \overline\tau^{\eta}_S$.
\item If $s\in S$ and $x\in\mc{B}(S)$, then $\eta(x,\overline{\mc{C}}(s))\leq r\overline\tau^{\eta}_S$.
\item If $x\in\mc{B}(S)$, then $\tau^{\eta}_S(x)\leq (2r+1)\overline\tau^{\eta}_S$.
\item The $\eta$--diameter of each fibre $\{\ast\}\x\mc{B}_{\perp}(S)$ is at most $r^2\overline\tau_S^{\eta}$.
\item If $x\in M$, then $\eta(x,\mc{B}(S))\leq\tfrac{r}{2}\tau_S^{\eta}(x)$.
\item For any $x\in M$ and any fibre $P=\mc{B}_{/\mkern-5mu/}(S)\x\{\ast\}$, we have $\eta(x,P)\leq 2r^2\tau_S^{\eta}(x)$.
\end{enumerate}
\end{prop}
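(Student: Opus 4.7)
The common thread across all five parts will be to translate $\eta$ into its associated wall-measure $\nu_{\eta}$ (Remark~\ref{from pseudo-metrics to measures}) and to use Theorem~\ref{all from 10b -2}(2), which gives the identity $\eta(x,sx)=\ell(s,\eta)+2\eta(x,\overline{\mc{C}}(s))$ for every $s\in S$. In particular, from $\ell(s,\eta)\leq\overline{\tau}_S^{\eta}$ one gets $\eta(x,\overline{\mc{C}}(s))\leq\tfrac{1}{2}\eta(x,sx)\leq\tfrac{1}{2}\tau_S^{\eta}(x)$, which will be invoked repeatedly.

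\smallskip
For part~(1), I would pick $x\in M$ near-minimising $\tau_S^{\eta}$, i.e.\ with $\tau_S^{\eta}(x)\leq\overline{\tau}_S^{\eta}+\varepsilon$. Applying the inequality above to both $s_1$ and $s_2$ and using the triangle inequality $\eta(\overline{\mc{C}}(s_1),\overline{\mc{C}}(s_2))\leq\eta(\overline{\mc{C}}(s_1),x)+\eta(x,\overline{\mc{C}}(s_2))$ gives a bound of $\overline{\tau}_S^{\eta}+\varepsilon$; letting $\varepsilon\to 0$ yields the claim. Part~(5) is immediate from Remark~\ref{distance to multi-bridge} together with the same one-line bound $\eta(x,\overline{\mc{C}}(s_i))\leq\tfrac{1}{2}\tau_S^{\eta}(x)$.

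\smallskip
For parts~(2) and~(4) the main trick is the same. Given the target set $A$ (either $\overline{\mc{C}}(s)$ or a point $y$ in the same fibre as $x$), enumerate the minimal halfspaces $\mf{h}_1,\dots,\mf{h}_k$ of $\mscr{H}(x|A)$. These are pairwise transverse, hence $k\leq r$ by definition of rank. Crucially, because $x\in\mc{B}(S)$, no $\mf{h}_l$ can belong to the defining collection $\mc{H}$ of the multi-bridge: writing $\mscr{H}_{\mc{B}_{\perp}}(M)=\bigcup_{i,j}\mscr{H}(\overline{\mc{C}}(s_i)|\overline{\mc{C}}(s_j))$, each $\mf{h}_l$ therefore separates some pair $\overline{\mc{C}}(s_{i_l}),\overline{\mc{C}}(s_{j_l})$ of cores. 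One checks directly that $\mscr{H}(x|\mf{h}_l)\cup\{\mf{h}_l\}\subseteq\mscr{H}(\overline{\mc{C}}(s_{i_l})|\overline{\mc{C}}(s_{j_l}))$, and that $\mscr{H}(x|A)=\bigcup_l\bigl(\mscr{H}(x|\mf{h}_l)\cup\{\mf{h}_l\}\bigr)$. Using the measure $\nu_{\eta}$ and part~(1) of the proposition, this gives
\[\eta(x,A)=\nu_{\eta}(\mscr{H}(x|A))\leq\sum_{l=1}^k\eta(\overline{\mc{C}}(s_{i_l}),\overline{\mc{C}}(s_{j_l}))\leq k\,\overline{\tau}_S^{\eta}\leq r\,\overline{\tau}_S^{\eta},\]
which proves both~(2) and~(4) (with the slightly sharper bound $r\overline{\tau}_S^{\eta}$ in place of $r^2\overline{\tau}_S^{\eta}$ in part~(4)). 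Part~(3) is then a direct combination: Theorem~\ref{all from 10b -2}(2) gives $\eta(x,sx)=\ell(s,\eta)+2\eta(x,\overline{\mc{C}}(s))$; plugging in $\ell(s,\eta)\leq\overline{\tau}_S^{\eta}$ and the bound from part~(2) yields $\tau_S^{\eta}(x)\leq(2r+1)\overline{\tau}_S^{\eta}$.

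\smallskip
The only step that requires genuine care, rather than mechanical bookkeeping, is the identification of the minimal halfspaces of $\mscr{H}(x|A)$ with halfspaces separating pairs of reduced cores. This rests on the structural description of $\mscr{H}_{\mc{B}(S)}(M)$ provided by the multi-bridge construction (and the partition of $\mscr{H}(M)$ preceding Definition~\ref{multi-bridge of S defn}), so that once one commits to passing from distances to wall-measures the argument essentially writes itself.
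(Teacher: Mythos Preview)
Your arguments for parts~(1), (3), and~(5) are correct and essentially match the paper's.  The problem is in parts~(2) and~(4): the inclusion
\[
\mscr{H}(x|\mf{h}_l)\cup\{\mf{h}_l\}\subseteq\mscr{H}(\overline{\mc{C}}(s_{i_l})|\overline{\mc{C}}(s_{j_l}))
\]
fails when the $\mf{h}_l$ are \emph{minimal} elements of $\mscr{H}(x|A)$.  If $\mf{h}\supseteq\mf{h}_l$, then $\mf{h}^*\subseteq\mf{h}_l^*$, so the condition $\overline{\mc{C}}(s_{i_l})\subseteq\mf{h}_l^*$ does \emph{not} pass up to $\overline{\mc{C}}(s_{i_l})\subseteq\mf{h}^*$.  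For a concrete counterexample, take $M=\{0,1,2,3\}$ (a path), reduced cores $\{0\},\{2\},\{3\}$, and $x=0$, $y=3$.  The unique minimal element of $\mscr{H}(0|3)$ is $\{3\}\in\mscr{H}(\{2\}|\{3\})$, but $\{2,3\}\in\mscr{H}(0|\{3\})$ is not in $\mscr{H}(\{2\}|\{3\})=\{\{3\}\}$.

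The paper instead takes \emph{maximal} elements of $\mscr{H}(x|\overline{\mc{C}}(s))$ in part~(2): if $\mf{h}_l$ is maximal with $\overline{\mc{C}}(t_l)\subseteq\mf{h}_l^*$, then every $\mf{h}\subseteq\mf{h}_l$ in $\mscr{H}(x|\overline{\mc{C}}(s))$ satisfies $\overline{\mc{C}}(t_l)\subseteq\mf{h}_l^*\subseteq\mf{h}^*$ and $\overline{\mc{C}}(s)\subseteq\mf{h}$, so the inclusion into $\mscr{H}(\overline{\mc{C}}(t_l)|\overline{\mc{C}}(s))$ does hold.  For part~(4), even after switching to maximal elements you only get $\overline{\mc{C}}(s_{i_l})\subseteq\mf{h}^*$ (the containment $\overline{\mc{C}}(s_{j_l})\subseteq\mf{h}$ need not persist for $\mf{h}\subsetneq\mf{h}_l$), so you land in $\mscr{H}(x|\overline{\mc{C}}(s_{i_l}))$ rather than in $\mscr{H}(\overline{\mc{C}}(s_{i_l})|\overline{\mc{C}}(s_{j_l}))$; applying part~(2) then gives the paper's bound $r^2\overline\tau_S^\eta$.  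Your claimed sharpening to $r\overline\tau_S^\eta$ is therefore unjustified.
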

\begin{proof}
We begin with part~(1). For every $x\in M$, we have:
\[\mscr{W}(\overline{\mc{C}}(s_1)|\overline{\mc{C}}(s_2))=\mscr{W}(x,\overline{\mc{C}}(s_1)|\overline{\mc{C}}(s_2))\sqcup\mscr{W}(\overline{\mc{C}}(s_1)|\overline{\mc{C}}(s_2),x)\cu \mscr{W}(x|\overline{\mc{C}}(s_1))\sqcup\mscr{W}(x|\overline{\mc{C}}(s_2)). \]
Along with Theorem~\ref{all from 10b -2}(2), this implies that:
\[\tfrac{1}{2}\eta(\overline{\mc{C}}(s_1),\overline{\mc{C}}(s_2))\leq\max\{\eta(x,\overline{\mc{C}}(s_1)),\eta(x,\overline{\mc{C}}(s_2))\}\leq\tfrac{1}{2}\max\{\eta(x,s_1x),\eta(x,s_2x)\}\leq\tfrac{1}{2}\tau_S^{\eta}(x).\]
Part~(1) follows by taking an infimum over $x\in M$.

Let us prove part~(2). If $x\in\mc{B}(S)$ and $s\in S$, Corollary~\ref{multibridge cor}(2) implies that $\mscr{H}(x|\overline{\mc{C}}(s))$ is contained in the union of the sets $\mscr{H}(\overline{\mc{C}}(t)|\overline{\mc{C}}(s))$ with $t\in S\setminus\{s\}$. The maximal halfspaces in $\mscr{H}(x|\overline{\mc{C}}(s))$ are pairwise-transverse, so there are at most $r$ of them. Hence, there exist $t_1,\dots,t_r\in S$ such that $\Om:=\bigcup_i\mscr{H}(\overline{\mc{C}}(t_i)|\overline{\mc{C}}(s))$ contains every maximal element of $\mscr{H}(x|\overline{\mc{C}}(s))$. In particular, $\mscr{H}(x|\overline{\mc{C}}(s))\cu\Om$ and part~(1) yields $\eta(x,\overline{\mc{C}}(s))\leq r\overline\tau^{\eta}_S$. 

Part~(3) of the proposition now follows from Theorem~\ref{all from 10b -2}(2):
\[ \tau^{\eta}_S(x)=\max_{s\in S}[\ell(s,\eta)+2\eta(x,\overline{\mc{C}}(s))]\leq\max_{s\in S}[\overline\tau^{\eta}_S+2r\overline\tau^{\eta}_S]=(2r+1)\overline\tau^{\eta}_S.\]

Regarding part~(4), consider two points $x,y$ lying in the same fibre $\{\ast\}\x\mc{B}_{\perp}(S)$. Let $\mf{h}_1,\dots,\mf{h}_k$ be the minimal elements of $\mscr{H}(x|y)$. Since $\rk M=r$, we have $k\leq r$. By definition of $\mc{B}_{\perp}(S)$, there exist elements $s_i\in S$ with $\overline{\C}(s_i)\cu\mf{h}_i$. Thus: 
\[\mscr{H}(x|y)\cu\bigcup\mscr{H}(x|\mf{h}_i)\cu\bigcup\mscr{H}(x|\overline{\C}(s_i)).\]
Using part~(2) of the proposition, it follows that $\eta(x,y)\leq k\cdot\max_s\eta(x,\overline{\C}(s))\leq kr\overline\tau_S^{\eta}\leq r^2\overline\tau_S^{\eta}$.

Finally, part~(5) is a consequence of Remark~\ref{distance to multi-bridge} and the fact, due to Theorem~\ref{all from 10b -2}(2), that $\tau_S^{\eta}(x)\geq 2\eta(x,\overline{\C}(s))$ for every $s\in S$. Part~(6) is obtained by combining parts~(4) and~(5):
\[\eta(x,P)\leq\eta(x,\mc{B}(S))+r^2\overline\tau_S^{\eta}\leq \tfrac{r}{2}\tau_S^{\eta}(x)+r^2\overline\tau_S^{\eta}\leq 2r^2\tau_S^{\eta}(x). \qedhere\]
\end{proof}

\subsection{Promoting median automorphisms to homotheties.}\label{metrics on algebras}

Recall that $M$ is a median algebra of finite rank $r$. In this subsection, we consider subgroups $G\lhd U\leq\aut M$, with the goal of constructing $G$--invariant compatible pseudo-metrics $\eta\in\mc{PD}^G(M)$ with respect to which $U$ acts by homotheties and $G$ is non-elliptic. In general, this will only be possible after passing to a subalgebra of $M$. The final result in this direction is Corollary~\ref{homothety 1}.

Our main technical tools are the notion of multi-bridge (exploited in Lemma~\ref{c(x) lemma new}) and the Lefschetz fixed point theorem applied to projectivisations of certain cones $\mc{C}$ in the topological vector space $\mc{PD}^G(M)$ (Proposition~\ref{finding eigenvectors new}). Some extra work is required in order to ensure that our cones $\mc{C}$ have compact projectivisation and that they only contain pseudo-metrics $\eta$ for which $G$ acts non-elliptically (i.e.\ $\overline\tau_S^{\eta}>0$ for some/any generating set $S\cu G$).

\subsubsection{Preliminaries on normed spaces and ARs.}

\begin{defn}
Let $V$ be a real vector space. 
\begin{enumerate}
\item A \emph{cone} is a convex subset $\mc{C}\cu V$ that is closed under multiplication by scalars in $[0,+\infty)$. 
\item A \emph{positive cone} is a cone $\mc{C}\cu V$ for which $\mc{C}\setminus\{0\}$ is convex. Equivalently, $\mc{C}\cap(-\mc{C})=\{0\}$.
\item The \emph{projectivisation} $\mbb{P}(\mc{C})$ of a cone $\mc{C}$ is the quotient of $\mc{C}\setminus\{0\}$ obtained by identifying points that differ by multiplication by a scalar. 
\end{enumerate}
\end{defn}

Given a countable probability space $(\Om,\s)$ and a function $f\colon\Om\ra\R$, recall that:
\begin{align*}
\|f\|_1&=\sum_{\om\in\Om}|f(\om)|\s(\om), & \|f\|_{\infty}&=\sup_{\om\in\Om}|f(\om)|.
\end{align*}
We denote by $\ell^1(\Om,\s)$ and $\ell^{\infty}(\Om)$ the spaces of functions where $\|\cdot\|_1$ and $\|\cdot\|_{\infty}$ are finite, respectively. 

The next result collects a few simple observations that will be useful later in this subsection. In particular, part~(3) will be our compactness criterion for projectivised cones: we only need to ensure that $\|\cdot\|_1$ and $\|\cdot\|_{\infty}$ are bi-Lipschitz equivalent on the cone. This is one of the reasons why we are forced to work with both norms $\|\cdot\|_1$ and $\|\cdot\|_{\infty}$.

\begin{lem}\label{general L^1}
Let $(\Om,\s)$ be a countable set with a fully-supported probability measure.
\begin{enumerate}
\item We have $\ell^{\infty}(\Om)\cu \ell^1(\Om,\s)$ and $\|\cdot\|_1\leq\|\cdot\|_{\infty}$.
\item The topology of $(\ell^1(\Om,\s),\|\cdot\|_1)$ is finer than the topology of pointwise convergence on $\Om$. The converse holds on those subsets of $\ell^1(\Om,\s)$ where $\|\cdot\|_{\infty}$ is bounded.
\item Let $\mc{C}\cu \ell^1(\Om,\s)$ be a positive cone that is closed in the topology of $\|\cdot\|_1$.
Suppose that there exists $c>0$ such that $\|f\|_{\infty}\leq c\cdot\|f\|_1$ for all $f\in\mc{C}$. Then $\mbb{P}(\mc{C})$ is compact with respect to the quotient topology induced by $\|\cdot\|_1$. 
\end{enumerate}
\end{lem}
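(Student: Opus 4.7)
The plan is to handle the three parts in order, with part~(3) reducing via parts~(1) and~(2) to a standard Tychonoff/dominated-convergence argument.

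For part~(1), I would simply compute: for $f\in\ell^{\infty}(\Omega)$,
\[\|f\|_1=\sum_{\om\in\Om}|f(\om)|\mu(\om)\leq\|f\|_{\infty}\cdot\mu(\Om)=\|f\|_{\infty},\]
which shows inclusion and the inequality of norms.

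For part~(2), the first direction uses the fact that $\mu$ has full support: for every $\om\in\Om$, the evaluation map $f\mapsto f(\om)$ is bounded by $\mu(\om)^{-1}\|f\|_1$, so $L^1$--convergence forces pointwise convergence. For the converse on a subset where $\|\cdot\|_{\infty}\leq M$, if $f_n\to f$ pointwise with $\|f_n\|_{\infty}\leq M$ then $\|f\|_{\infty}\leq M$ too, and the constant function $2M$ is integrable (as $\mu$ is a probability measure), so dominated convergence gives $\|f_n-f\|_1\to 0$.

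For part~(3), the natural strategy is to identify $\mbb{P}(\mc{C})$ with the $L^1$--unit sphere inside $\mc{C}$. Set $S:=\{f\in\mc{C}\mid\|f\|_1=1\}$. The normalisation $f\mapsto f/\|f\|_1$ is continuous on $\mc{C}\setminus\{0\}$ and factors through the quotient, producing a continuous inverse to the natural map $S\to\mbb{P}(\mc{C})$; hence $\mbb{P}(\mc{C})$ is homeomorphic to $S$, and it suffices to show that $S$ is compact. The hypothesis $\|f\|_{\infty}\leq c\|f\|_1$ means that $S\cu\prod_{\om\in\Om}[-c,c]$, which is compact in the product (pointwise) topology by Tychonoff. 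I would then show that $S$ is closed in this product: if $f_n\in S$ converges pointwise to some $f\in\prod_{\om}[-c,c]$, then $\|f\|_{\infty}\leq c$ and, since the constant $2c$ dominates $|f_n-f|$ and is $\mu$--integrable, $f_n\to f$ in $L^1$ by dominated convergence. The $L^1$--closedness of $\mc{C}$ then yields $f\in\mc{C}$, while $L^1$--continuity of $\|\cdot\|_1$ gives $\|f\|_1=1$, so $f\in S$. Thus $S$ is compact, and so is $\mbb{P}(\mc{C})$.

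The only mildly delicate step is checking that pointwise limits inside $S$ stay in $S$, which is where both the $L^1$--closedness of $\mc{C}$ and the bound $\|\cdot\|_{\infty}\leq c\|\cdot\|_1$ are used simultaneously; everything else is bookkeeping.
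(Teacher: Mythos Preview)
Your proof is correct and follows essentially the same approach as the paper. The only cosmetic difference is in part~(3): the paper extracts a pointwise-convergent subsequence directly via a diagonal argument (sequential compactness), whereas you embed $S$ in the Tychonoff cube $\prod_{\om}[-c,c]$ and show it is closed there; since $\Om$ is countable the cube is metrisable, so these are the same argument in different clothing. One small point you leave implicit: having shown $S$ is compact in the pointwise topology, you need part~(2) once more to conclude that this coincides with the $\|\cdot\|_1$--topology on $S$ (since $S$ is $\|\cdot\|_{\infty}$--bounded), hence $S$ is $\|\cdot\|_1$--compact as required.
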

\begin{proof}
Part~(1) is clear. The two halves of part~(2) respectively follow from the inequalities:
\begin{align*}
|f(\om)|\s(\{\om\})&\leq\|f\|_1, & \|f\|_1&\leq\sum_{x\in F}|f(x)|\s(\{x\}) + \|f\|_{\infty}\cdot \s(\Om\setminus F),
\end{align*}
which hold for all $f\in \ell^1(\Om,\s)$, all $\om\in\Om$ and every finite subset $F\cu\Om$.

Finally, let us prove part~(3). If $S$ is the unit sphere in $\ell^1(\Om,\s)$, then $\mbb{P}(\mc{C})$ is homeomorphic to $\mc{C}\cap S$. 
Since the latter is metrisable, it suffices to show that every sequence $(f_k)_k\cu\mc{C}\cap S$ has a converging subsequence. Since $\|f_k\|_{\infty}\leq c\cdot\|f_k\|_1=c$, the sequence $(f_k(\om))_k$ takes values in the compact interval $[-c,c]$ for all $\om\in\Om$.  Since $\Om$ is countable, a diagonal argument allows us to replace $(f_k)_k$ with a subsequence that converges pointwise to a function $f\colon\Om\ra [-c,c]$. Thus, part~(2) shows that $\|f_k-f\|_1\ra 0$. Since $\mc{C}$ is closed in $\ell^1(\Om,\s)$, we have $f\in\mc{C}\cap S$, as required.
\end{proof}

\begin{defn}
A metrisable topological space $X$ is an \emph{absolute retract (AR)} if it enjoys the following property. For every metrisable topological space $Y$ and every closed subset $A\cu Y$ homeomorphic to $X$, there exists a continuous retraction $Y\ra A$.
\end{defn}

The following summarises the key properties of ARs that we will need. 

\begin{thm}\label{all about ARs}
\begin{enumerate}
\item[]
\item Let $X$ be a compact AR. Then every continuous map $f\colon X\ra X$ has a fixed point.
\item Let $(E,\|\cdot\|)$ be a normed space. If $\mc{C}\cu E$ is any positive cone, then $\mbb{P}(\mc{C})$ is an AR (with the quotient of the norm topology of $E$).
\end{enumerate}
\end{thm}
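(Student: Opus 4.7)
The plan is to prove (1) using the Schauder fixed-point theorem after embedding $X$ into the Hilbert cube, and to prove (2) using Dugundji's theorem that convex subsets of metrisable locally convex topological vector spaces are ARs, together with the standard fact that a retract of an AR is an AR.

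For part (2), I would identify $\mbb{P}(\mc{C})$ with the cross-section $\mc{C}\cap S$, where $S\cu E$ denotes the unit sphere. The radial map $r\colon \mc{C}\setminus\{0\}\ra \mc{C}\cap S$ defined by $r(x)=x/\|x\|$ is continuous, constant on projective classes, and restricts to the identity on $\mc{C}\cap S$; hence it descends to a continuous map $\mbb{P}(\mc{C})\ra \mc{C}\cap S$ whose inverse is the map induced by the inclusion $\mc{C}\cap S\hookrightarrow \mc{C}\setminus\{0\}$. This exhibits $\mbb{P}(\mc{C})\cong \mc{C}\cap S$ as a homeomorphism. Since $\mc{C}$ is a positive cone, the set $\mc{C}\setminus\{0\}$ is convex in the metrisable normed space $E$, and is therefore an AR by Dugundji's theorem. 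The map $r$ presents $\mc{C}\cap S$ as a retract of $\mc{C}\setminus\{0\}$, and retracts of ARs are again ARs (immediate from the absolute-extensor characterisation: extend first to the ambient AR, then post-compose with the retraction). We conclude that $\mbb{P}(\mc{C})$ is an AR.

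For part (1), I would embed the compact metric AR $X$ as a closed subset of the Hilbert cube $Q:=[0,1]^{\N}$, using Urysohn's theorem on the universality of $Q$ among compact metrisable spaces. The AR property of $X$ supplies a continuous retraction $\rho\colon Q\ra X$. Given a continuous $f\colon X\ra X$, the composition $f\o \rho\colon Q\ra Q$ is continuous, and $Q$ is a compact convex subset of, say, $\ell^2$ via a suitable weighted embedding. Schauder's fixed-point theorem yields $q\in Q$ with $f(\rho(q))=q$; then $q$ lies in the image of $f$, hence in $X$, whence $\rho(q)=q$ and $f(q)=q$. (Alternatively, one can appeal to the Lefschetz fixed-point theorem, observing that ARs are contractible and therefore acyclic, and that compact ARs are in particular compact ANRs.)

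The main technical subtlety lies in part (2), specifically in verifying that the quotient topology on $\mbb{P}(\mc{C})$ coincides with the subspace topology on $\mc{C}\cap S$. This relies crucially on the positivity hypothesis on the cone: convexity of $\mc{C}\setminus\{0\}$ is what makes Dugundji's theorem applicable, and the condition $\mc{C}\cap(-\mc{C})=\{0\}$ is what guarantees that the radial retraction identifies projective classes bijectively with points of $\mc{C}\cap S$. Beyond this verification, both parts of the theorem reduce to well-known results of infinite-dimensional topology.
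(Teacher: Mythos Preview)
Your proposal is correct and matches the paper's argument almost exactly: for part~(2) both you and the paper identify $\mbb{P}(\mc{C})$ with $\mc{C}\cap S$, invoke Dugundji's theorem on the convex set $\mc{C}\setminus\{0\}$, and use that retracts of ARs are ARs. For part~(1) the paper simply cites the Lefschetz fixed-point theorem for compact ANRs, which you mention as an alternative; your primary route via a Hilbert-cube embedding and Schauder is a pleasant, more elementary variant that avoids homological machinery.
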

\begin{proof}
Part~(1) is a consequence of the Lefschetz fixed point theorem for compact ANRs \cite{Lefschetz1,Lefschetz2}. See e.g.\ Theorem~III.7.4 and Section~I.6 in \cite{Hu} for a clear statement.

If $S$ is the unit sphere in the normed space $E$, then $\mbb{P}(\mc{C})$ is homeomorphic to $\mc{C}\cap S$. 
Recall that every convex subset of a normed space is an AR (see e.g.\ \cite[Corollary~4.2]{Dugundji} or Corollary~II.14.2 and Theorem~III.3.1 in \cite{Hu}). 
Every retract of an AR is again an AR \cite[Proposition~7.7]{Hu}. Thus, part~(2) is immediate from the observation that $\mc{C}\cap S$ is a retract of the convex set $\mc{C}\setminus\{0\}$.
\end{proof}

\subsubsection{Finding a projectively invariant metric.}\label{middle subsub}
\hfill \smallskip \\ 
Let $M$ be a {\bf countable}, finite-rank median algebra. Consider a finite set $S\cu\aut M$ and let $G\leq\aut M$ be the subgroup that it generates. Let $\alpha\in\aut M$ be an element that normalises $G$. 

Consider the locally convex real vector space $\mc{E}(M)=\R^{M\x M}$, endowed with the topology of pointwise convergence on $M\x M$. We have a continuous linear action $\aut M\acts\mc{E}(M)$ given by
\[(\psi\cdot f)(x,y)=f(\psi^{-1}(x),\psi^{-1}(y)),\ \forall\psi\in\aut M,\ \forall f\in\mc{E}(M),\ \forall x,y\in M.\]

\begin{rmk}\label{PD closed}
The sets $\mc{PD}(M)$ and $\mc{PD}^G(M)$ (introduced in Subsection~\ref{compatible subsec}) are closed positive cones in $\mc{E}(M)$. In addition, $\mc{PD}(M)$ is $(\aut M)$--invariant and $\mc{PD}^G(M)$ is $\langle\alpha\rangle$--invariant. 

Although $\mc{D}(M)\cup\{0\}$ also is a positive cone, it is only closed when $M$ is a single point. 
\end{rmk}

Given a function $\cc\colon M\x M\ra(0,+\infty)$, consider the (not necessarily convex) subset:
\[\mc{PD}_{\cc}^G(M):=\{\eta\in\mc{PD}^G(M) \mid \eta(x,y)\leq\cc(x,y)\cdot\overline\tau_S^{\eta},\ \forall x,y\in M\}.\]
As we shall see, this serves two purposes: on the one hand all closed cones in $\mc{PD}_{\cc}^G(M)$ have compact projectivisation; on the other, they only contain pseudo-metrics with $\overline\tau_S^{\eta}>0$ (except for $\eta=0$).

Our main aim in this subsubsection is to prove the following result:

\begin{prop}\label{finding eigenvectors new}
Suppose that, for some $\cc\colon M\x M\ra(0,+\infty)$, there exists a nontrivial 
 $\langle\alpha\rangle$--invariant cone $\mc{C}\cu\mc{PD}_{\cc}^G(M)$ that is closed in $\mc{E}(M)$ with respect to the topology of pointwise convergence. Then there exists $\eta\in\mc{C}\setminus\{0\}$ such that $\overline\tau_S^{\eta}>0$ and $\alpha\cdot\eta=\l\eta$ for some $\l>0$.
\end{prop}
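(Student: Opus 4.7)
The overall strategy is to apply the Lefschetz fixed point theorem (Theorem~\ref{all about ARs}(1)) to a self-map of the projective space $\mbb{P}(\mc{C})$, using Theorem~\ref{all about ARs}(2) to endow $\mbb{P}(\mc{C})$ with the structure of an AR. This requires three ingredients: a normed-space realisation of $\mc{C}$ on which $\alpha$ acts continuously, a compact subset of $\mc{C}$ that surjects onto $\mbb{P}(\mc{C})$, and continuity of the induced map $\mbb{P}\alpha$ on $\mbb{P}(\mc{C})$.

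The first step is to construct a fully-supported probability measure $\mu$ on the countable set $M\x M$ with two properties: $\cc\in L^1(M\x M,\mu)$, and the Radon--Nikodym derivative $d(\alpha_*\mu)/d\mu$ is bounded above and below. I would build $\mu$ orbit-by-orbit: enumerate the $\alpha$--orbits $O_i$ with basepoints $p_i$, place weights proportional to $2^{-|n|}$ on $\alpha^n p_i$ (so the ratios along each orbit lie in $[1/2,2]$), then weight the orbits with coefficients $\beta_i>0$ decaying fast enough to guarantee $\sum_p\cc(p)\mu(\{p\})<\infty$. Viewing $\mc{C}\cu\mc{E}(M)$ inside $L^1(M\x M,\mu)$, the bound $\eta(x,y)\leq\cc(x,y)\overline\tau_S^\eta$ yields $\|\eta\|_1\leq\overline\tau_S^\eta\|\cc\|_1<\infty$, so $\mc{C}\cu L^1$. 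Since $L^1$--convergence implies pointwise convergence (Lemma~\ref{general L^1}(2), as $\mu$ has full support) and $\mc{C}$ is closed in pointwise topology by hypothesis, $\mc{C}$ is also closed in $L^1$. Non-negativity forces $\mc{C}\cap(-\mc{C})=\{0\}$, so $\mc{C}$ is a positive cone, and Theorem~\ref{all about ARs}(2) identifies $\mbb{P}(\mc{C})$ (with the quotient $L^1$--topology) as an AR.

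Next I would establish compactness of $\mbb{P}(\mc{C})$ via the auxiliary set $\mc{K}':=\{\eta\in\mc{C}\mid\overline\tau_S^\eta\geq 1\text{ and }\eta(x,y)\leq\cc(x,y)\ \forall x,y\in M\}$. The first condition expands to ``$\max_{s\in S}\eta(x,sx)\geq 1$ for every $x\in M$'', a closed condition in pointwise topology, and so is the coordinatewise bound $\eta\leq\cc$. Combined with the closedness of $\mc{C}$, this makes $\mc{K}'$ closed in pointwise topology, hence also in $L^1$. By Tychonoff, $\mc{K}'$ sits inside the pointwise-compact product $\prod_p[0,\cc(p)]$, and dominated convergence (with dominator $\cc\in L^1$) upgrades pointwise convergence to $L^1$--convergence on this set, so $\mc{K}'$ is $L^1$--compact. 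For every nontrivial $\eta\in\mc{C}$ one has $\overline\tau_S^\eta>0$ (otherwise $\eta\leq\cc\cdot\overline\tau_S^\eta=0$), and $\eta/\overline\tau_S^\eta$ lies in $\mc{K}'$, so the continuous map $\mc{K}'\ra\mbb{P}(\mc{C})$ is surjective. Therefore $\mbb{P}(\mc{C})$ is compact.

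For the final step, the quasi-invariance of $\mu$ ensures that $\eta\mapsto\alpha\cdot\eta$ is a bounded linear operator on $L^1$, hence $L^1$--continuous as a map $\mc{C}\ra\mc{C}$. The induced map $\mbb{P}\alpha\colon\mbb{P}(\mc{C})\ra\mbb{P}(\mc{C})$ is well-defined because $\alpha$ is a bijection of $\mc{C}\setminus\{0\}$, and continuous by the universal property of the quotient. Theorem~\ref{all about ARs}(1) then provides a fixed point $[\eta]$, i.e.\ $\alpha\cdot\eta=\l\eta$ for some $\l\in\R$; since $\alpha$ preserves the positive cone $\mc{C}$, we have $\l>0$, and $\overline\tau_S^\eta>0$ by the observation used in Step~3. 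The main technical obstacle is Step~1: simultaneously engineering $\mu$ so that $\cc$ is integrable \emph{and} the $\alpha$--action on $L^1$ is bounded. Without quasi-invariance one would only get that $\alpha$ is continuous in the weaker pointwise (Fr\'echet) topology, in which the normed-space version of Theorem~\ref{all about ARs}(2) does not directly apply.
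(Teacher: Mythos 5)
Your proof is correct, and the overall strategy (realise $\mbb{P}(\mc{C})$ as a compact AR and apply Lefschetz) coincides with the paper's; but the technical implementation is genuinely different, and the difference is worth flagging.

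The paper does not construct a bespoke quasi-invariant measure. Instead, it fixes an arbitrary fully-supported probability measure $\s$ on $M$ and defines \emph{weighted} norms $\|f\|_1^{\cc}=\int |f(x,y)|/\cc(x,y)\,d(\s\otimes\s)$ and $\|f\|_\infty^{\cc}=\sup|f(x,y)|/\cc(x,y)$, so that rescaling by $\cc$ identifies this with plain $L^1(M\times M,\s\otimes\s)$. The decisive technical input is then a sandwich inequality (Lemma~\ref{controlling each other new}(2)): on $\mc{PD}_{\cc}^G(M)$ one has $\|\eta\|_1^{\cc}\leq\|\eta\|_\infty^{\cc}\leq\overline\tau_S^{\eta}\leq c\cdot\|\eta\|_1^{\cc}$. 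Compactness of $\mbb{P}(\mc{C})$ then comes from the abstract statement in Lemma~\ref{general L^1}(3), and continuity of $\alpha$ in the $\|\cdot\|_1^{\cc}$--topology is deduced \emph{without any invariance of the measure}: one observes $\overline\tau_S^{\alpha\cdot\eta}=\overline\tau_{\alpha^{-1}S\alpha}^{\eta}\leq|\alpha^{-1}S\alpha|_S\cdot\overline\tau_S^{\eta}$, so $\alpha$ carries $\|\cdot\|_1^{\cc}$--bounded subsets of the cone to $\|\cdot\|_1^{\cc}$--bounded subsets, and on $\|\cdot\|_\infty^{\cc}$--bounded sets (equivalently $\overline\tau_S$--bounded sets) pointwise and $L^1$ convergence agree. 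Your route instead bakes $\cc$--integrability and $\alpha$--quasi-invariance into the measure $\mu$ itself, then gets continuity of $\alpha$ for free as a bounded linear operator and compactness by Tychonoff plus dominated convergence. Both are valid; the paper's version avoids any measure engineering and places the whole burden on the structural constraint $\eta\leq\cc\cdot\overline\tau_S^{\eta}$ (which is exactly what $\mc{PD}_{\cc}^G$ encodes), while yours is measure-theoretically more self-contained but requires the orbit-by-orbit construction to be carried out with care (note your formula ``weights proportional to $2^{-|n|}$'' needs a separate case for finite $\alpha$--orbits on $M\times M$, where uniform weight should be used). Your closing remark that ``without quasi-invariance one would only get pointwise continuity'' is slightly too strong: as the paper's argument shows, the constraint defining $\mc{PD}_{\cc}^G(M)$ is enough to promote pointwise continuity to $L^1$--continuity on the cone without touching the measure.
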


In order to prove the proposition, let us fix a probability measure $\s$ on $M$ with full support. Given a function $\cc\colon M\x M\ra(0,+\infty)$, we define for $f\in\mc{E}(M)$:
\begin{align*}
\|f\|_1^{\cc}:=&\sum_{x,y\in M}\frac{|f(x,y)|}{\cc(x,y)}\s(x)\s(y), & \|f\|_{\infty}^{\cc}:=&\sup_{x,y\in M}\frac{|f(x,y)|}{\cc(x,y)}.
\end{align*}
Note that $\|f\|_1^{\cc}$ is a norm on the subspace $\mc{E}^1_{\cc}(M)\cu\mc{E}(M)$ where it is finite (the same is true of $\|f\|_{\infty}^{\cc}$, but this will not be relevant to us).

\begin{rmk}
Rescaling functions $f\in\mc{E}(M)$ by $\cc$, we map $\mc{E}^1_{\cc}(M)$ linearly isometrically onto $\ell^1(M\x M,\s\otimes\s)$ while taking $\|f\|_{\infty}^{\cc}$ to $\|f\|_{\infty}$. Thus, we can apply Lemma~\ref{general L^1} in this context.
\end{rmk}

\begin{lem}\label{controlling each other new}
Consider a function $\cc\colon M\x M\ra (0,+\infty)$.
\begin{enumerate}
\item The subset $\mc{PD}_{\cc}^G(M)\cu\mc{E}(M)$ is closed under pointwise convergence.
\item There exists a constant $c>0$ (depending on $\cc$ and $\s$) such that, for every $\eta\in\mc{PD}_{\cc}^G(M)$:
\[\|\eta\|_1^{\cc}\leq\|\eta\|_{\infty}^{\cc}\leq\overline\tau_S^{\eta}\leq c\cdot\|\eta\|_1^{\cc}.\]
\end{enumerate}
\end{lem}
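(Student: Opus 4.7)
The plan is to handle the two parts independently. Part~(1) is a direct pointwise-limit argument leveraging the fact that $\mc{PD}^G(M)$ is already known to be pointwise-closed by Remark~\ref{PD closed}. Part~(2) splits into three inequalities: the first is an instance of Lemma~\ref{general L^1}(1), the second is essentially the definition of $\mc{PD}_{\cc}^G(M)$, and only the third requires a short argument exploiting the positivity of $\s$ on singletons.

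For part~(1), I would take a sequence $\eta_n \in \mc{PD}_{\cc}^G(M)$ converging pointwise to some $\eta \in \mc{E}(M)$. Remark~\ref{PD closed} immediately gives $\eta \in \mc{PD}^G(M)$, so the only thing to verify is the pointwise bound $\eta(x,y) \leq \cc(x,y)\overline\tau_S^{\eta}$. The key observation is that $\overline\tau_S^{(\cdot)}$ is upper semicontinuous under pointwise convergence: since $S$ is finite, for every $x \in M$ one has $\tau_S^{\eta_n}(x) \to \tau_S^{\eta}(x)$, hence
\[
\limsup_n \overline\tau_S^{\eta_n} \leq \limsup_n \tau_S^{\eta_n}(x) = \tau_S^{\eta}(x),
\]
and taking the infimum over $x \in M$ yields $\limsup_n \overline\tau_S^{\eta_n} \leq \overline\tau_S^{\eta}$. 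Passing to the $\limsup$ in the inequality $\eta_n(x,y) \leq \cc(x,y)\overline\tau_S^{\eta_n}$ then gives what we want.

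For part~(2), the first inequality $\|\eta\|_1^{\cc} \leq \|\eta\|_\infty^{\cc}$ follows from Lemma~\ref{general L^1}(1) applied to the function $(x,y) \mapsto \eta(x,y)/\cc(x,y)$ on the probability space $(M \x M, \s \otimes \s)$; the second inequality $\|\eta\|_\infty^{\cc} \leq \overline\tau_S^{\eta}$ is precisely the defining condition of $\mc{PD}_{\cc}^G(M)$ rephrased in terms of $\|\cdot\|_\infty^{\cc}$.

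The third inequality is the only step with any content, and here the plan is to fix an arbitrary basepoint $x_0 \in M$ once and for all and bound $\tau_S^{\eta}(x_0) \geq \overline\tau_S^{\eta}$ in terms of $\|\eta\|_1^{\cc}$. Since $M$ is countable and $\s$ has full support, the numbers $\s(\{x_0\})$ and $\s(\{sx_0\})$ are strictly positive for every $s \in S$. Keeping only the single atom $(x_0, sx_0)$ inside the defining integral gives, for each $s \in S$,
\[
\|\eta\|_1^{\cc} \;\geq\; \frac{\eta(x_0, sx_0)}{\cc(x_0, sx_0)}\,\s(\{x_0\})\,\s(\{sx_0\}),
\]
so that $\eta(x_0, sx_0) \leq c_s \|\eta\|_1^{\cc}$ with $c_s := \cc(x_0,sx_0)/(\s(\{x_0\})\s(\{sx_0\}))$. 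Taking the maximum over the finite set $S$ yields $\tau_S^{\eta}(x_0) \leq c\,\|\eta\|_1^{\cc}$ with $c := \max_{s\in S} c_s$, and hence $\overline\tau_S^{\eta} \leq \tau_S^{\eta}(x_0) \leq c\,\|\eta\|_1^{\cc}$. The constant $c$ depends only on $\cc$, $\s$, $S$ and the fixed choice of $x_0$, and in particular not on $\eta$, which is exactly what the statement requires. I do not foresee any real obstacle; the whole argument is essentially a bookkeeping exercise around the definitions.
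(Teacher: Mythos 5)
Your proof is correct and takes essentially the same approach as the paper's: the same upper-semicontinuity argument for $\overline\tau_S^{(\cdot)}$ in part~(1), and in part~(2) the same bound from a single atom $(x_0,sx_0)$ — with your version being marginally cleaner, since you take the max over the finite set $S$ upfront to make the constant manifestly independent of $\eta$, whereas the paper picks the maximising $s_0$ (which a priori depends on $\eta$) and leaves the finiteness of $S$ implicit.
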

\begin{proof}
We begin with part~(1). First, observe that the function $\eta\mapsto\overline\tau^{\eta}_S$ is upper semicontinuous. Indeed, if $\eta_n\in\mc{PD}^G(M)$ converge pointwise to some $\eta\in\mc{PD}^G(M)$, then, for every $x\in M$:
\[\max_{s\in S}\eta(x,sx)=\lim_{n\ra+\infty}\max_{s\in S}\eta_n(x,sx)\geq\limsup_{n\ra+\infty}\overline\tau_S^{\eta_n}.\]
Hence $\overline\tau_S^{\eta}\geq\limsup\overline\tau_S^{\eta_n}$, which proves upper semicontinuity. Now, if $\eta_n\in\mc{PD}_{\cc}^G(M)$, then
\[\eta(x,y)=\lim_{n\ra+\infty}\eta_n(x,y)\leq\limsup_{n\ra+\infty}\cc(x,y)\cdot\overline\tau_S^{\eta_n}\leq\cc(x,y)\cdot\overline\tau_S^{\eta},\]
for all $x,y\in M$. Along with Remark~\ref{PD closed}, this yields $\eta\in\mc{PD}_{\cc}^G(M)$, proving part~(1).

Regarding part~(2), the first inequality is in Lemma~\ref{general L^1}(1) and the second is immediate from the fact that $\eta\in\mc{PD}_{\cc}^G(M)$. In order to prove the third one, choose any point $x_0\in M$. Then:
\[\overline\tau_S^{\eta}=\inf_{x\in M}\max_{s\in S}\eta(x,sx)\leq\max_{s\in S}\eta(x_0,sx_0)\leq \|\eta\|_1^{\cc}\cdot \max_{s\in S}\frac{\cc(x_0,sx_0)}{\s(\{x_0\})\s(\{sx_0\})}.\]
The constant appearing on the rightmost side is positive and well-defined, since $\cc$ takes positive values and $\s$ has full support. This concludes the proof.
\end{proof}

\begin{proof}[Proof of Proposition~\ref{finding eigenvectors new}]
We want to apply the Lefschetz fixed point theorem to $\alpha\colon\mbb{P}(\mc{C})\ra\mbb{P}(\mc{C})$.

Since $\mc{C}\cu\mc{PD}^G(M)$, the cone $\mc{C}$ is actually a positive cone. By Lemma~\ref{controlling each other new}(2), the set $\mc{C}$ is contained in $\mc{E}^1_{\cc}(M)$. Thus, Theorem~\ref{all about ARs}(2) shows that the projectivisation $\mbb{P}(\mc{C})$, endowed with the quotient topology induced by $\|\cdot\|^{\cc}_1$, is an AR. 

Since $\mc{C}\cu\mc{E}^1_{\cc}(M)$ is closed in the topology of pointwise convergence, the first half of Lemma~\ref{general L^1}(2) guarantees that $\mc{C}$ is also closed in the topology of $\|\cdot\|^{\cc}_1$. Thus, by Lemma~\ref{controlling each other new}(2) and Lemma~\ref{general L^1}(3), the projectivisation $\mbb{P}(\mc{C})$ is compact. 

We are left to show that the action $\langle\alpha\rangle\acts\mc{C}$ is continuous with respect to the topology of $\|\cdot\|^{\cc}_1$. Note that, by Lemma~\ref{controlling each other new}(2), $\alpha$ takes $\|\cdot\|^{\cc}_1$--bounded subsets of $\mc{C}\cu\mc{PD}_{\cc}^G(M)$ to $\|\cdot\|^{\cc}_1$--bounded subsets of $\mc{C}$:
\[\|\alpha\cdot\eta\|_1^{\cc}\leq\overline\tau_S^{\alpha\cdot\eta}=\inf_{x\in M}\max_{s\in S}\eta(\alpha^{-1}x,\alpha^{-1}sx)=\overline\tau_{\alpha^{-1}S\alpha}^{\eta}\leq |\alpha^{-1}S\alpha|_S\cdot\overline\tau_S^{\eta}\leq c|\alpha^{-1}S\alpha|_S\cdot\|\eta\|_1^{\cc}.\]

Since the topology given by $\|\cdot\|^{\cc}_1$ is metrisable, it suffices to show that $\alpha\colon\mc{C}\ra\mc{C}$ is sequentially continuous. Let $\eta_n\in\mc{C}$ be a sequence that $\|\cdot\|^{\cc}_1$--converges to $\eta\in\mc{C}$. By Lemma~\ref{general L^1}(2), $\eta_n$ converges to $\eta$ pointwise. Since the action $\aut M\acts\mc{E}(M)$ is continuous, the sequence $\alpha\cdot\eta_n$ converges to $\alpha\cdot\eta$ pointwise. Note that the set $\{\eta_n\}_{n\geq 0}\cup\{\eta\}$ is $\|\cdot\|^{\cc}_1$--bounded and, by the above observation, so must be $\{\alpha\cdot\eta_n\}_{n\geq 0}\cup\{\alpha\eta\}$. By Lemma~\ref{controlling each other new}(2), this set is also $\|\cdot\|^{\cc}_{\infty}$--bounded, so Lemma~\ref{general L^1}(2) shows that $\alpha\cdot\eta_n$ $\|\cdot\|^{\cc}_1$--converges to $\alpha\cdot\eta$, as required.

In conclusion, $\alpha$ induces a homeomorphism of the compact AR $\mbb{P}(\mc{C})$. Theorem~\ref{all about ARs}(1) yields an $\langle\alpha\rangle$--fixed point $[\eta]\in\mbb{P}(\mc{C})$. The fact that $\overline\tau_S^{\eta}>0$ is clear since $\eta\in\mc{PD}_{\cc}^G(M)\setminus\{0\}$. 
\end{proof}

In fact, Proposition~\ref{finding eigenvectors new} can be easily generalised to extensions of $G$ by abelian groups.

\begin{cor}\label{finding eigenvectors abelian}
Let $U\leq\aut M$ be a countable subgroup such that $G\lhd U$, with abelian quotient $U/G$; let $p\colon U\ra A$ be the quotient projection. Suppose that, for some $\cc$, there exists a nontrivial, $U$--invariant, closed cone $\mc{C}\cu\mc{PD}_{\cc}^G(M)$. Then there exists $\eta\in\mc{C}\setminus\{0\}$ with $\overline\tau_S^{\eta}>0$ and a homomorphism $\l\colon A\ra (\R_{>0},\ast)$ such that $u\cdot\eta=\l(p(u))\eta$ for all $u\in U$.
\end{cor}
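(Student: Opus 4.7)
The plan is to apply Proposition~\ref{finding eigenvectors new} inductively. Since $G\leq U$ acts trivially on $\mc{PD}^G(M)$, the $U$--action on $\mc{C}$ descends to an action of $A$, and what we seek is a projective class $[\eta]\in\mbb{P}(\mc{C})$ fixed by all of $A$. The proof of Proposition~\ref{finding eigenvectors new} shows that $\mbb{P}(\mc{C})$ (with its $\|\cdot\|_1^{\cc}$--topology) is a compact AR and that each $u\in U$ acts on it by a homeomorphism. Writing $F_u\subseteq\mbb{P}(\mc{C})$ for the fixed-point set of $u$, the goal becomes to show that $\bigcap_{u\in U}F_u\neq\emptyset$. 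Each $F_u$ is closed, so by compactness it suffices to prove that every finite intersection $F_{u_1}\cap\dots\cap F_{u_n}$ is nonempty.

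To produce a common eigenvector for $u_1,\dots,u_n\in U$, I would proceed by induction on $n$. The base case $n=1$ is Proposition~\ref{finding eigenvectors new} applied to $\alpha=u_1$ and the cone $\mc{C}$. For the inductive step, assume we have positive scalars $\l_1,\dots,\l_{n-1}$ such that the joint eigencone
\[
\mc{K}:=\{\eta\in\mc{C}\mid u_i\cdot\eta=\l_i\eta,\ 1\leq i\leq n-1\}
\]
contains a nonzero element. Then $\mc{K}$ is a closed cone (an intersection of kernels of continuous linear maps inside $\mc{C}$) and is contained in $\mc{PD}_{\cc}^G(M)$. The decisive point is that $\mc{K}$ is $\langle u_n\rangle$--invariant, which is where the abelian hypothesis on $U/G$ enters: for $\eta\in\mc{K}$, since $u_iu_n=u_nu_ig_i$ for some $g_i\in G$ and $g_i$ fixes the $G$--invariant pseudo-metric $\eta$, we have
\[
u_i\cdot(u_n\cdot\eta)=(u_nu_ig_i)\cdot\eta=u_nu_i\cdot\eta=\l_i(u_n\cdot\eta),
\]
so $u_n\cdot\eta\in\mc{K}$. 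Proposition~\ref{finding eigenvectors new} applied to $u_n$ and the cone $\mc{K}$ (in place of $\alpha$ and $\mc{C}$) then yields $\eta\in\mc{K}\setminus\{0\}$ and $\l_n>0$ with $u_n\cdot\eta=\l_n\eta$, completing the induction.

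From the compactness argument we obtain a nonzero $\eta\in\mc{C}$ and a function $\l\colon U\to\R_{>0}$ with $u\cdot\eta=\l(u)\eta$ for all $u\in U$. Linearity forces $\l$ to be a homomorphism, and $\l(g)=1$ for $g\in G$ because $g$ acts trivially on $\eta\in\mc{PD}^G(M)$; hence $\l$ descends to the required $A\to(\R_{>0},\ast)$. Finally, the inequality $\|\eta\|_1^{\cc}\leq\overline\tau_S^{\eta}$ from Lemma~\ref{controlling each other new}, combined with the fact that $\|\cdot\|_1^{\cc}$ is a genuine norm on $\mc{C}\subseteq\mc{PD}_{\cc}^G(M)$, yields $\overline\tau_S^{\eta}>0$. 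The main subtlety is the verification that the joint eigencone is invariant under the next automorphism at each inductive step; everything else is formal once Proposition~\ref{finding eigenvectors new} is in hand.
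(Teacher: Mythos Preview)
Your proof is correct and essentially matches the paper's approach: both induct using Proposition~\ref{finding eigenvectors new}, showing at each step that the joint eigencone is invariant under the next element because the $U$--action on $\mc{PD}^G(M)$ factors through the abelian quotient $A$, and both conclude via the compactness of $\mbb{P}(\mc{C})$ established in that proposition's proof. The only cosmetic difference is that the paper fixes a generating set for $A$ and builds an explicit descending chain of $U$--invariant eigencones, whereas you invoke the finite intersection property directly.
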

\begin{proof}
Let $\{a_i\}_{i\geq 0}$ be a generating set for $A$. Consider the subgroups $A_n:=\langle a_i\mid i<n\rangle$ and $U_n:=p^{-1}(A_n)$; in particular, $A_0=\{1\}$ and $U_0=G$. We will show by induction on $n\geq 0$ that there exist nontrivial, $U$--invariant, closed cones $\mc{C}_n\cu\mc{PD}_{\cc}^G(M)$ and homomorphisms $\l_n\colon A_n\ra(\R_{>0},\ast)$ such that $u\cdot\eta=\l_n(p(u))\eta$ for all $\eta\in\mc{C}_n$ and $u\in U_n$. As base step, set $\mc{C}_0:=\mc{C}$. 

Regarding the inductive step, suppose that we have constructed $\mc{C}_n$ and $\l_n$. By Proposition~\ref{finding eigenvectors new}, there exists a point $[\eta_{n+1}]\in\mbb{P}(\mc{C}_n)$ fixed by $p^{-1}(a_{n+1})$. In fact, since $U_n$ acts trivially on $\mbb{P}(\mc{C}_n)$, the entire group $U_{n+1}$ fixes $[\eta_{n+1}]$ and there exists a homomorphism $\l_{n+1}\colon A_{n+1}\ra(\R_{>0},\ast)$ such that $u\cdot\eta_{n+1}=\l_{n+1}(p(u))\eta_{n+1}$ for all $u\in U_{n+1}$. We can then define $\mc{C}_{n+1}$ as the closed cone: 
\[\{\eta\in\mc{C}_n\mid u\cdot\eta=\l_{n+1}(p(u))\eta,\ \forall u\in U_{n+1}\}.\] 
Since $U\acts\mc{C}$ factors through the abelian group $A$, this cone is $U$--invariant, as required.

Finally, when $A$ is not finitely generated, note that the intersection of the descending chain $\mc{C}_n$ is not just $\{0\}$. This is because, as we observed in the proof of Proposition~\ref{finding eigenvectors new}, the sets $\mbb{P}(\mc{C}_n)$ are compact. This concludes the proof.
\end{proof}

\subsubsection{Universal uniform non-elementarity.}\label{WNE subsubsect}
\hfill \smallskip \\ 
Let $G\acts M$ be an action by automorphisms on a median algebra of finite rank $r$. Consider the following strengthening of Definition~\ref{UNE defn} in the context of compatible metrics on median algebras:

\begin{defn}\label{WNE defn}
The action $G\acts M$ is \emph{universally uniformly non-elementary (WNE)} if there exists a constant $c>0$ such that, for every $\eta\in\mc{PD}^G(M)$, the action $G\acts(M,\eta)$ is $c$--UNE. 
\end{defn}

This may seem an impossibly strong requirement to impose on $G\acts M$, but we will see in Corollary~\ref{from sub-ultras to X new cor} that many actions arising from ultralimits of Salvetti complexes are WNE.

\begin{lem}\label{c(x) lemma new}
Let $G\leq\aut M$ be generated by a finite set $S$ of automorphisms acting non-transversely and stably without inversions. Let $G\lhd U\leq\aut M$. Pick a point $q$ in the multi-bridge $\mc{B}(S)\cu M$ and let $\mf{M}\cu M$ be the median subalgebra generated by the orbit $U\cdot q$. 
Then:
\begin{enumerate}
\item there exists $\cc_1\colon\mf{M}\ra (0,+\infty)$ such that $\tau_S^{\eta}(x)\leq\cc_1(x)\cdot\overline\tau^{\eta}_S$ for all $\eta\in\mc{PD}^G(M)$ and $x\in\mf{M}$;
\item if $G\acts M$ is WNE, there exists $\cc_2\colon\mf{M}\x\mf{M}\ra (0,+\infty)$ such that $\eta(x,y)\leq \cc_2(x,y)\cdot\overline\tau^{\eta}_S$ for all $\eta\in\mc{PD}^G(M)$ and $x,y\in\mf{M}$.
\end{enumerate}
\end{lem}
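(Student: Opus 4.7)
The plan is to first prove part~(1) for points $uq$ in the orbit $U\cdot q$, then extend it to all of $\mf{M}$ via iterated medians (using Proposition~\ref{gen subalg prop}), and finally deduce part~(2) from part~(1) together with the WNE hypothesis. The central difficulty is part~(1) at a point $uq$: one cannot directly apply Proposition~\ref{properties of B(S)}(3), because in general $uq\notin\mc{B}(S)$, and $u$ need not act isometrically for $\eta$, so ordinary triangle inequalities land one on the circular problem of bounding $\eta(q,uq)$ by $\overline\tau_S^\eta$.

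The key trick is to exploit the normality $G\lhd U$ by pulling $\eta$ back via $u$. Specifically, $(u^{-1}\cdot\eta)(x,y):=\eta(ux,uy)$ is again in $\mc{PD}^G(M)$, because $uGu^{-1}=G$ and $\eta$ is $G$--invariant. This lets us transport every estimate at $uq$ to an estimate at $q\in\mc{B}(S)$, where Proposition~\ref{properties of B(S)} does apply. Concretely, for $s\in S$ set $w_s:=u^{-1}su\in G$; then
\[\eta(uq,suq)=(u^{-1}\cdot\eta)(q,w_sq)\leq|w_s|_S\cdot\tau_S^{u^{-1}\cdot\eta}(q).\]
Applying Proposition~\ref{properties of B(S)}(3) to $u^{-1}\cdot\eta$ at $q\in\mc{B}(S)$ gives $\tau_S^{u^{-1}\cdot\eta}(q)\leq(2r+1)\overline\tau_S^{u^{-1}\cdot\eta}$, and the substitution $y=ux$ in the defining infimum identifies $\overline\tau_S^{u^{-1}\cdot\eta}=\overline\tau_{uSu^{-1}}^\eta$, which the identities of Subsection~\ref{identities sect} bound by $|uSu^{-1}|_S\cdot\overline\tau_S^\eta$. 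Taking a maximum over $s\in S$ yields $\tau_S^\eta(uq)\leq C(u)\cdot\overline\tau_S^\eta$ for a constant $C(u)$ depending only on $u$ and $S$.

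To pass from $U\cdot q$ to all of $\mf{M}$, Proposition~\ref{gen subalg prop} writes each $x\in\mf{M}$ as an iterated median of bounded depth of points $u_1q,\dots,u_Nq\in U\cdot q$. Since $s\cdot m(y_1,y_2,y_3)=m(sy_1,sy_2,sy_3)$ and the median is $1$--Lipschitz for every compatible pseudo-metric, $\tau_S^\eta$ is subadditive under medians, so $\tau_S^\eta(x)\leq\sum_i\tau_S^\eta(u_iq)\leq\big(\sum_iC(u_i)\big)\overline\tau_S^\eta$; fixing, for each $x$, an expression as an iterated median of elements of $U\cdot q$, we define $\cc_1(x)$ to be this sum, and part~(1) is complete. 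For part~(2), the WNE hypothesis on $G\acts\mf{M}$, applied to the restriction of $\eta$, provides a constant $c$ with $\eta(x,y)\leq c[\tau_S^\eta(x)+\tau_S^\eta(y)]$ for all $x,y\in\mf{M}$; combined with part~(1), this yields $\eta(x,y)\leq c[\cc_1(x)+\cc_1(y)]\cdot\overline\tau_S^\eta$, so $\cc_2(x,y):=c[\cc_1(x)+\cc_1(y)]$ works. The main obstacle is thus the first step: without the $u^{-1}\cdot\eta$ trick there is no obvious way to control the displacement of $uq$ under $S$ purely in terms of $\overline\tau_S^\eta$.
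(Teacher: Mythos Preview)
Your proof is correct and follows essentially the same approach as the paper. The only cosmetic difference is in the key step at $uq$: the paper observes directly that $uq\in\mc{B}(uSu^{-1})$ (since $u\,\overline{\C}(s)=\overline{\C}(usu^{-1})$) and applies Proposition~\ref{properties of B(S)}(3) with the generating set $uSu^{-1}$, whereas you keep $\mc{B}(S)$ fixed and pull back the pseudo-metric via $u^{-1}\cdot\eta$; unwinding either computation gives the identical constant $(2r+1)\,|S|_{uSu^{-1}}\,|uSu^{-1}|_S$. (Your invocation of Proposition~\ref{gen subalg prop} for bounded depth is harmless but unnecessary: since $\cc_1$ may depend on $x$, it suffices that every point of $\mf{M}$ is \emph{some} iterated median of orbit points.)
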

\begin{proof}
We only prove part~(1), since part~(2) then follows by setting $\cc_2(x,y):=c\cdot(\cc_1(x)+\cc_1(y))$, for a constant $c$ as in Definition~\ref{WNE defn}. 

If part~(1) holds for points $x,y,z\in\mf{M}$, then it holds for their median $m(x,y,z)$. Indeed, we can take $\cc_1(m(x,y,z))=\cc_1(x)+\cc_1(y)+\cc_1(z)$ and we have:
\begin{align*}
\tau_S^{\eta}(m(x,y,z))&=\max_{s\in S}\eta(m(x,y,z),m(sx,sy,sz))\leq\max_{s\in S}[\eta(x,sx)+\eta(y,sy)+\eta(z,sz)] \\
&\leq\tau_S^{\eta}(x)+\tau_S^{\eta}(y)+\tau_S^{\eta}(z)\leq [\cc_1(x)+\cc_1(y)+\cc_1(z)]\cdot\overline\tau_S^{\eta}.
\end{align*}

Thus, it suffices to prove part~(1) for $x\in U\cdot q$. Since $q\in\mc{B}(S)$, we have $uq\in\mc{B}(uSu^{-1})$ for all $u\in U$. Moreover, since $U$ normalises $G$, the set $uSu^{-1}$ is just another generating set of $G$. By Proposition~\ref{properties of B(S)}(3), we have: 
\begin{align*}
\tau_S^{\eta}(uq)\leq|S|_{uSu^{-1}}\cdot \tau^{\eta}_{uSu^{-1}}(uq)&\leq |S|_{uSu^{-1}}\cdot(2r+1)\overline\tau^{\eta}_{uSu^{-1}} \\
&\leq |S|_{uSu^{-1}}\cdot(2r+1)\cdot|uSu^{-1}|_{S}\cdot\overline\tau^{\eta}_{S}.
\end{align*}
So we can take $\cc_1(uq)=(2r+1)\cdot|S|_{uSu^{-1}}\cdot|uSu^{-1}|_{S}$. This concludes the proof.
\end{proof}

\begin{cor}\label{homothety 1}
Let $G\leq\aut M$ be generated by a finite set $S$ of automorphisms acting non-transversely and stably without inversions. Suppose that $G\acts M$ is WNE and that $\mc{D}^G(M)\neq\emptyset$. Consider a countable subgroup $U\leq\aut M$ such that $G\lhd U$ and $U/G$ is abelian. Then there exist a nonempty, countable, $U$--invariant, median subalgebra $\mf{M}\cu M$, a pseudo-metric $\eta\in\mc{PD}^G(\mf{M})\setminus\{0\}$ with $\overline\tau_S^{\eta}>0$, and a homomorphism $\l\colon U\ra (\R_{>0},\ast)$ (trivial on $G$) with $u\cdot\eta=\l(u)\eta$ for all $u\in U$.
\end{cor}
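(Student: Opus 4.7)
The strategy is to apply Corollary~\ref{finding eigenvectors abelian} after passing to a countable, $U$-invariant submedian algebra. First I would pick any point $q\in\mc B(S)$ (nonempty since each reduced core $\overline{\mc C}(s)$ is gate-convex by Theorem~\ref{all from 10b -2}(1)) and set $\mf M:=\langle U\cdot q\rangle\cu M$. By construction $\mf M$ is $U$-invariant, and it is countable: the orbit $U\cdot q$ is countable because $U$ is, and Proposition~\ref{gen subalg prop} expresses $\mf M$ as finitely many iterates of the median operator applied to $U\cdot q$, preserving countability. If $U$ happens to fix $q$, then $G$ has a global fixed point, so the WNE hypothesis forces $\overline\tau_S^\eta=0$ for every $\eta$ and the conclusion is vacuous; discard that case.

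Next I would introduce the cone $\mc C\cu\mc{PD}^G(\mf M)$ defined as the closure, in the topology of pointwise convergence on $\mf M\x\mf M$, of the image of $\mc{PD}^G(M)$ under the restriction map $R\colon\eta\mapsto\eta|_{\mf M}$. By construction $\mc C$ is a closed cone, and it is $U$-invariant: $U$ normalises $G$ and so acts linearly on $\mc{PD}^G(M)$; $\mf M$ is $U$-invariant, so $R$ is $U$-equivariant; and $U$ acts continuously on $\mc E(\mf M)$, so closures of $U$-invariant sets are $U$-invariant. Moreover, any $\delta\in\mc D^G(M)$ (nonempty by hypothesis) restricts to a nonzero element of $\mc C$, since we have ruled out $\mf M=\{q\}$.

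To verify the remaining hypothesis of Corollary~\ref{finding eigenvectors abelian}, Lemma~\ref{c(x) lemma new}(2) (whose WNE hypothesis is supplied, for our purposes, by the WNE assumption on $G\acts M$ applied to the restrictions of pseudo-metrics in $\mc{PD}^G(M)$) yields a function $\cc_2\colon\mf M\x\mf M\ra(0,+\infty)$ with $\eta(x,y)\leq\cc_2(x,y)\cdot\overline\tau_S^\eta$ for every $\eta\in\mc{PD}^G(M)$ and every $x,y\in\mf M$. Since $\overline\tau_S^{R(\eta)}\geq\overline\tau_S^\eta$ (the infimum defining the former ranges over the smaller set $\mf M$), we obtain $R(\eta)\in\mc{PD}_{\cc_2}^G(\mf M)$, and this inclusion survives taking the closure because $\mc{PD}_{\cc_2}^G(\mf M)$ is itself closed under pointwise convergence (Lemma~\ref{controlling each other new}(1)). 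Hence $\mc C\cu\mc{PD}_{\cc_2}^G(\mf M)$.

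Finally, Corollary~\ref{finding eigenvectors abelian} applied to the countable median algebra $\mf M$, the abelian extension $G\lhd U$ with quotient projection $p\colon U\ra A=U/G$, and the $U$-invariant closed cone $\mc C\cu\mc{PD}_{\cc_2}^G(\mf M)$, produces $\eta\in\mc C\setminus\{0\}$ with $\overline\tau_S^\eta>0$ together with a homomorphism $A\ra(\R_{>0},\ast)$; precomposing with $p$ gives the required $\l\colon U\ra(\R_{>0},\ast)$, trivial on $G$. The main obstacle is the interplay between closures and the cone bound: it is precisely the use of the auxiliary cone $\mc{PD}_{\cc}^G(\mf M)$ (with its crucial closure property from Lemma~\ref{controlling each other new}) that lets us close up $\mc C$ in $\mc E(\mf M)$ — needed both for $U$-invariance and for the Lefschetz fixed-point argument on the projectivisation — while preserving the estimate that makes $\mbb P(\mc C)$ compact.
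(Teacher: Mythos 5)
Your proof is correct and follows essentially the same approach as the paper: choose $q\in\mc{B}(S)$, set $\mf{M}=\langle U\cdot q\rangle$, verify via Lemma~\ref{c(x) lemma new} that restrictions of pseudo-metrics in $\mc{PD}^G(M)$ land in $\mc{PD}^G_{\cc_2}(\mf{M})$, and feed a closed $U$-invariant sub-cone into Corollary~\ref{finding eigenvectors abelian}. The only cosmetic difference is your choice of cone (the closure of the full restricted $\mc{PD}^G(M)$ rather than the cone generated by $U\cdot\delta$ for a single $\delta\in\mc{D}^G(M)$ as in the paper); both work, since $\mc{PD}^G(M)$ is $U$-invariant precisely because $U$ normalises $G$. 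Your aside about the degenerate case $U q=q$ is slightly muddled — the vanishing $\overline\tau_S^\eta=0$ there is automatic because $q$ is $G$-fixed, not a consequence of WNE, and the right phrasing is that the corollary implicitly excludes this case (as the paper does too) — but this does not affect the main argument.
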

\begin{proof}
Define the median subalgebra $\mf{M}\cu M$ as in the statement of Lemma~\ref{c(x) lemma new}. Since $\mf{M}$ is generated by a countable set, it is itself countable. The restriction map
\[{\rm res}_{\mf{M}}\colon\mc{PD}(M)\ra\mc{PD}(\mf{M})\]
takes $\mc{PD}^G(M)$ into $\mc{PD}^G(\mf{M})$ without decreasing the value of $\overline\tau_S^{\bullet}$. Thus, in the notation of Subsubsection~\ref{middle subsub}, Lemma~\ref{c(x) lemma new}(2) yields:
\[{\rm res}_{\mf{M}}(\mc{PD}^G(M))\cu\mc{PD}^G_{\cc_2}(\mf{M}).\]

Choose $\delta\in\mc{D}^G(M)$ and let $\mc{C}\cu\mc{D}^G(M)$ be the smallest cone containing the $U$--orbit of $\delta$. In other words, $\mc{C}$ is the convex hull of $U\cdot \delta$, saturated under multiplication by nonnegative scalars. Then ${\rm res}_{\mf{M}}(\mc{C})$ is a nontrivial $U$--invariant cone contained in $\mc{PD}_{\cc_2}^G(\mf{M})$.

Its closure $\overline{{\rm res}_{\mf{M}}(\mc{C})}\cu\mc{E}(\mf{M})$ in the topology of pointwise convergence is also a $U$--invariant cone. By Lemma~\ref{controlling each other new}(1), this is still contained in the set $\mc{PD}_{\cc_2}^G(\mf{M})$. We can thus apply Corollary~\ref{finding eigenvectors abelian}, obtaining $\eta\in\overline{{\rm res}_{\mf{M}}(\mc{C})}\setminus\{0\}$ with $\overline\tau_S^{\eta}>0$, and a homomorphism $\l\colon U\ra (\R_{>0},\ast)$ such that $u\cdot\eta=\l(u)\eta$ for all $u\in U$.
\end{proof}

\section{Ultralimits and coarse-median preserving automorphisms.}\label{ultra sect}

In this section we prove Theorem~\ref{special UNE thm intro} (Corollary~\ref{UNE cor}) and complete the proof of Theorem~\ref{Q2 thm intro} (Theorem~\ref{homothety 2}). Both results will follow quickly once we prove Theorem~\ref{from sub-ultras to X new new} in Subsection~\ref{meaty sect}, which can be viewed as the main goal of this entire section.

This theorem claims that, in many cases, if $G\acts M$ is an action of a special group on a median algebra, $\eta$ is a $G$--invariant compatible pseudo-metric and $C$ is a large $k$--cube in $M$, then any subset of $G$ that moves all points in $C$ by a lot less than the ``size'' of $C$ must commute with a copy of $\Z^k$ sitting inside $G$. This result holds, for instance, for co-special cubulations of $G$, ultralimits of these, and subalgebras thereof, with uniform constants that are independent of the specific choice of $\eta$. 

The case $k=1$ thus implies that all these actions are WNE (Definition~\ref{WNE defn}) and that centreless special groups are UNE (Definition~\ref{UNE defn}). The cases with $k>1$ ensure that the actions on median spaces that we will construct for Theorem~\ref{Q2 thm intro} are \emph{moderate}, as defined in the Introduction.

\subsection{The Bestvina--Paulin construction.}\label{BP sect}

As sketched in the Introduction, the first step in the proof of Theorem~\ref{Q2 thm intro} will involve a standard Bestvina--Paulin construction, with some additional issues caused by the lack of hyperbolicity. In this subsection, we discuss the role played by UNE groups (Definition~\ref{UNE defn}) in addressing these issues.

Consider a group $G$, a geodesic metric space $(X,d)$, and a homomorphism $\rho\colon G\ra\isom X$ inducing a proper cocompact action $G\acts X$ (we simply write $gx$ rather than $\rho(g)\cdot x$). 

\subsubsection{The classical Bestvina--Paulin construction.}
\hfill \smallskip \\ 
Fix a finite generating set $S\cu G$ and let $|\cdot|_S$ be the induced word length on $G$. Denote by $\pi\colon\aut G\ra\out G$ the quotient projection. Given $g,h\in G$, we write $\mf{c}[g](h):=ghg^{-1}$.

Every group automorphism $\varphi\colon G\ra G$ is bi-Lipschitz with respect to $|\cdot|_S$. By the Milnor--Schwarz lemma, $\varphi$ induces a quasi-isometry $\wt\varphi\colon X\ra X$ satisfying $\wt\varphi\o\rho(g)=\rho(\varphi(g))\o\wt\varphi$ for all $g\in G$.

Consider a sequence $\varphi_n\in\aut G$ and set $\rho_n:=\rho\o\varphi_n$ for all $n\geq 0$. Pick basepoints $p_n\in X$ with: 
\[\tau^{\rho_n}_S(p_n)-\overline\tau^{\rho_n}_S\leq 1.\] 
We introduce the quantities $\eps_n:=1/\overline\tau^{\rho_n}_S$ to simplify the notation.

\begin{ass}\label{tau diverges}
In the rest of Subsection~\ref{BP sect}, we assume that no two elements of the sequence $\pi(\varphi_n)\in\out G$ coincide. A classical argument due to Bestvina and Paulin (see e.g.\ \cite{Bestvina-Duke} and \cite[p.\ 338]{Paulin-arboreal}) then guarantees that $\eps_n\ra 0$ for $n\ra+\infty$.
\end{ass}

Fix a non-principal ultrafilter $\om$ and consider the ultralimit $(X_{\om},d_{\om},p_{\om})=\lim_{\om}(X,\eps_nd,p_n)$. We have a homomorphism $\rho_{\om}\colon G\ra\isom X_{\om}$ obtained as ultralimit of the actions $\rho_n$, namely: 
\[\rho_{\om}(g)\cdot (x_n)=(\rho_n(g)\cdot x_n)=(\varphi_n(g)x_n),\] 
for all $g\in G$ and $(x_n)\in X_{\om}$. This is well-defined since:
\begin{align*}
\lim_{\om}\eps_nd(\varphi_n(g)x_n,p_n)&\leq \lim_{\om}\eps_n[d(\varphi_n(g)x_n,\varphi_n(g)p_n)+d(\varphi_n(g)p_n,p_n)] \\
&\leq\lim_{\om}\eps_n\left[d(x_n,p_n)+|g|_S\cdot \tau_S^{\rho_n}(p_n)\right]=d_{\om}((x_n),p_{\om})+|g|_S<+\infty.
\end{align*}
One easily checks that $\tau_S^{\rho_{\om}}(p_{\om})=\overline\tau_S^{\rho_{\om}}=1$, so the action $G\acts X_{\om}$ induced by $\rho_{\om}$ does not have a global fixed point. 

\subsubsection{Automorphisms of UNE groups.}
\hfill \smallskip \\ 
Suppose for a moment that we are in the special case where there exists $\varphi\in\aut G$ such that $\varphi_n=\varphi^n$ for all $n\geq 0$ (thus $\rho_n=\rho\o\varphi^n$). We want to show that $\varphi$ induces a map $\Phi\colon X_{\om}\ra X_{\om}$ with the property that $\Phi\o\rho_{\om}(g)=\rho_{\om}(\varphi(g))\o\Phi$ for all $g\in G$. A natural attempt is setting $\Phi((x_n))=(\wt\varphi(x_n))$ for all $(x_n)\in X_{\om}$. However, for this to be well-defined we need $\lim_{\om}\eps_nd(\wt\varphi(p_n),p_n)<+\infty$. 

We are actually interested in the following more general setting. 

\begin{ass}\label{ass BP 2}
Let $N\leq\out G$ be a subgroup with infinite centre $Z(N)$. Let $\varphi_n\in\aut G$ be a sequence that is mapped by the projection $\pi\colon\aut G\ra\out G$ to a sequence of pairwise distinct elements in $Z(N)$. Consider again $\rho_n=\rho\o\varphi_n$ as above.
\end{ass}

If $\psi\in\pi^{-1}(N)$, then $\pi(\psi)$ commutes with each $\pi(\varphi_n)$. For every $n\in\Z$, choose $g_{n,\psi}\in G$ with:
\[\varphi_n\o\psi=\mf{c}[g_{n,\psi}]\o\psi\o\varphi_n.\]
We are about to prove that, if $G$ is UNE, $\psi$ induces a well-defined map $\zeta(\psi)\colon X_{\om}\ra X_{\om}$ given by:
\[\zeta(\psi)((x_n))=(g_{n,\psi}\wt\psi(x_n)),\] 
(recall that $\wt\psi\colon X\ra X$ is the quasi-isometry induced by $\psi$). We essentially use the same argument as \cite[pp.\ 154--156]{Paulin-ENS}, replacing hyperbolicity with the UNE condition. 

The proof of this result is quite technical. On a first read, we suggest restricting to the situation where $N\simeq\Z$ and the automorphisms $\psi$ and $\varphi_n$ are all powers of a given automorphism, in which case the elements $g_{n,\psi}$ can all be taken to be the identity and our strategy boils down to what is described right before Assumption~\ref{ass BP 2}. This case is sufficient for Theorem~\ref{Q2 thm intro}, though not for the more general Theorem~\ref{homothety 2} below.

\begin{prop}\label{from UNE to X_om}
Suppose that $G$ is UNE. Let $N\leq\out G$ and $\varphi_n\in\aut G$ be as in Assumption~\ref{ass BP 2}. Then there exists a homomorphism $\zeta\colon \pi^{-1}(N)\ra\homeo X_{\om}$ that extends $\rho_{\om}$, in the sense that $\zeta(\cc[g])=\rho_{\om}(g)$ for every $g\in G$. Every homeomorphism in the image of $\zeta$ is bi-Lipschitz.
\end{prop}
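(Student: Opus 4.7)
The plan is to verify that the formula
\[\zeta(\psi)((x_n)) := (g_{n,\psi}\wt\psi(x_n))\]
produces a well-defined bi-Lipschitz homeomorphism of $X_{\om}$ for each $\psi \in \pi^{-1}(N)$, and that the resulting assignment $\zeta$ is a homomorphism extending $\rho_{\om}$. The only genuine difficulty is to control the displacement of the basepoint: one must show that $\lim_{\om} \eps_n\, d(g_{n,\psi}\wt\psi(p_n), p_n) < +\infty$, for otherwise $(g_{n,\psi}\wt\psi(p_n))$ would not even define a point of $X_{\om}$. This is exactly where the UNE hypothesis intervenes, and this step is the main technical obstacle. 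Note that by Remark~\ref{every UNE}, the same UNE constant $c$ works for every action $\rho_n = \rho\o\varphi_n$: the bound for $\rho_n$ with generating set $S$ is just the bound for $\rho$ applied to the generating set $\varphi_n(S)$, and in Definition~\ref{UNE defn} the constant does not depend on the generating set.

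To prove the key estimate, I will compute $\tau^{\rho_n}_S(g_{n,\psi}\wt\psi(p_n))$ explicitly. Applying the defining relation $\varphi_n(\psi(h)) = g_{n,\psi}\,\psi(\varphi_n(h))\,g_{n,\psi}^{-1}$ with $h = \psi^{-1}(s)$ for $s \in S$, one obtains $\varphi_n(s)\,g_{n,\psi} = g_{n,\psi}\,\psi(\varphi_n(\psi^{-1}(s)))$. Combining this with the intertwining relation $\wt\psi \o \rho(g) = \rho(\psi(g)) \o \wt\psi$ yields
\[\rho_n(s)\cdot g_{n,\psi}\wt\psi(p_n) \;=\; g_{n,\psi}\,\wt\psi\bigl(\rho_n(\psi^{-1}(s))p_n\bigr).\]
Since $\wt\psi$ is a $(K,L)$--quasi-isometry and $g_{n,\psi}$ acts by isometries, this gives $\tau^{\rho_n}_S(g_{n,\psi}\wt\psi(p_n)) \leq K\cdot|\psi^{-1}(S)|_S\cdot\tau^{\rho_n}_S(p_n) + L$. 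Applying the UNE bound for $\rho_n$ to the pair of points $g_{n,\psi}\wt\psi(p_n)$ and $p_n$, and recalling that $\tau^{\rho_n}_S(p_n) \leq \overline\tau^{\rho_n}_S + 1 = 1/\eps_n + 1$, I conclude that $d(g_{n,\psi}\wt\psi(p_n), p_n) \leq C(\psi)/\eps_n$ for some constant $C(\psi)$ depending on $\psi$ but independent of $n$, as needed.

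With this estimate in hand, the remaining verifications are routine. Independence of the representative sequence $(x_n)$ in $X_{\om}$ follows because $\wt\psi$ is $K$--Lipschitz up to the additive error $L$, and $L\eps_n \to 0$; the same observation immediately shows that $\zeta(\psi)$ is $K$--bi-Lipschitz, hence a homeomorphism. The potential ambiguity in the choice of $g_{n,\psi}$ modulo $Z(G)$ is harmless: by Example~\ref{UNE example}(3) the centre $Z(G)$ is finite, and two choices of $g_{n,\psi}$ differ by multiplication by a fixed element of $Z(G)$, which contributes only a uniformly bounded displacement in $(X,d)$ that is killed by the rescaling $\eps_n \to 0$.

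Finally, the homomorphism property is verified by expanding
\[\zeta(\psi_1)\o\zeta(\psi_2)((x_n)) = \bigl(g_{n,\psi_1}\,\psi_1(g_{n,\psi_2})\,\wt{\psi_1\psi_2}(x_n)\bigr),\]
where I use that $\wt\psi_1\wt\psi_2$ and $\wt{\psi_1\psi_2}$ agree up to bounded distance, being two quasi-isometries intertwining the same pair of proper cocompact $G$--actions on $X$. The identity
\[\cc[g_{n,\psi_1}\psi_1(g_{n,\psi_2})]\o\psi_1\psi_2\o\varphi_n \;=\; \varphi_n\o\psi_1\psi_2 \;=\; \cc[g_{n,\psi_1\psi_2}]\o\psi_1\psi_2\o\varphi_n\]
in $\aut G$ then shows that $g_{n,\psi_1\psi_2}$ and $g_{n,\psi_1}\psi_1(g_{n,\psi_2})$ differ only by an element of $Z(G)$, which vanishes in the limit by the previous paragraph. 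For the extension property, one takes the natural choice $\wt{\cc[g]} = \rho(g)$ and observes that $g_{n,\cc[g]} = \varphi_n(g)g^{-1}$ modulo $Z(G)$, so that $g_{n,\cc[g]}\wt{\cc[g]}(x_n) = \rho(\varphi_n(g))(x_n) = \rho_n(g)x_n$, giving $\zeta(\cc[g]) = \rho_{\om}(g)$.
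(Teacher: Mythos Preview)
Your proof is correct and follows essentially the same approach as the paper's: the key displacement estimate via the UNE property, the computation of $\tau^{\rho_n}_S(g_{n,\psi}\wt\psi(p_n))$ through the commutation relation, the use of finiteness of $Z(G)$ for the homomorphism property, and the verification of the extension are all carried out in the same way. Your presentation is in places slightly cleaner (e.g.\ taking $\wt{\cc[g]}=\rho(g)$ directly rather than noting it is at bounded distance from $\rho(g)$), but there is no substantive difference.
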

\begin{proof}
Consider an element $\psi\in \pi^{-1}(N)$. Let $L\geq 1$ be a constant such that $\wt\psi\colon X\ra X$ is an $(L,L)$--quasi-isometry and such that $\psi\colon G\ra G$ is $L$--bi-Lipschitz with respect to $|\cdot|_S$.

\smallskip
{\bf Step~1:} \emph{the map $\zeta(\psi)$ described above is a well-defined bi-Lipschitz homeomorphism of $X_{\om}$.} \\
Since $\wt\psi$ is a quasi-isometry and $\eps_n\ra 0$, it suffices to show that $\zeta(\psi)$ is a well-defined map, i.e.\ that
$\lim_{\om}\eps_nd(g_{n,\psi}\wt\psi(p_n),p_n)$ is finite. 

We begin by observing that, since $\varphi_n\o\psi=\mf{c}[g_{n,\psi}]\o\psi\o\varphi_n$ and $\wt\psi\o\rho(g)=\rho(\psi(g))\o\wt\psi$:
\begin{align*}
\tau^{\rho_n}_S(g_{n,\psi}\wt\psi(p_n))&=\max_{s\in S}d(\varphi_n(s)g_{n,\psi}\wt\psi(p_n),g_{n,\psi}\wt\psi(p_n))=\max_{s\in S}d((\mf{c}[g_{n,\psi}]^{-1}\varphi_n)(s)\wt\psi(p_n),\wt\psi(p_n)) \\
&=\max_{s\in S}d(\wt\psi((\psi^{-1}\mf{c}[g_{n,\psi}]^{-1}\varphi_n)(s)p_n),\wt\psi(p_n))=\max_{s\in S}d(\wt\psi(\varphi_n\psi^{-1}(s)p_n),\wt\psi(p_n)) \\
&\leq L\cdot\max_{s\in S}d(\varphi_n\psi^{-1}(s)p_n,p_n) +L=L\cdot\max_{s\in S}d(\rho_n(\psi^{-1}(s))\cdot p_n,p_n)+L \\
&\leq L\cdot\max_{s\in S} |\psi^{-1}(s)|_S\cdot\tau^{\rho_n}_S(p_n)+L\leq L^2\cdot\tau^{\rho_n}_S(p_n)+L.
\end{align*}

Now, since $G$ is UNE, there exists a constant $c>0$ such that, for every generating set $T\cu G$ and all $x,y\in X$, we have $d(x,y)\leq c\cdot(\tau_T^{\rho}(x)+\tau_T^{\rho}(y))$. For $T=\varphi_n(S)$, we obtain:
\begin{align*}
\lim_{\om}\eps_nd(g_{n,\psi}\wt\psi(p_n),p_n) &\leq c\cdot\lim_{\om}\eps_n(\tau_{\varphi_n(S)}^{\rho}(g_{n,\psi}\wt\psi(p_n))+\tau_{\varphi_n(S)}^{\rho}(p_n)) \\
&=c\cdot\lim_{\om}\eps_n(\tau^{\rho_n}_S(g_{n,\psi}\wt\psi(p_n))+\tau^{\rho_n}_S(p_n)) \leq c(L^2+1)\cdot\lim_{\om}\eps_n\tau^{\rho_n}_S(p_n)<+\infty.
\end{align*}

{\bf Step~2:} \emph{$\zeta$ is a homomorphism.} \\
Since $G$ is UNE, Example~\ref{UNE example}(3) shows that the centre $Z(G)\leq G$ is finite. Then, since $G$ acts cocompactly on $X$, there exists a constant $M$ such that $d(x,zx)\leq M$ for all $x\in X$ and $z\in Z(G)$. Given $\psi_1,\psi_2\in N$, we can take $\wt{\psi_1\psi_2}=\wt\psi_1\wt\psi_2$. Moreover:
\begin{align*}
\mf{c}[g_{n,\psi_1\psi_2}]\psi_1\psi_2\varphi_n&=\varphi_n\psi_1\psi_2=\mf{c}[g_{n,\psi_1}]\psi_1\varphi_n\psi_2 \\
&=\mf{c}[g_{n,\psi_1}]\psi_1\mf{c}[g_{n,\psi_2}]\psi_2\varphi_n=\mf{c}[g_{n,\psi_1}]\mf{c}[\psi_1(g_{n,\psi_2})]\psi_1\psi_2\varphi_n.
\end{align*}
Hence $g_{n,\psi_1\psi_2}$ and $g_{n,\psi_1}\psi_1(g_{n,\psi_2})$ differ by multiplication by an element of $Z(G)$. It follows that, for every $x\in X$, we have $d(g_{n,\psi_1\psi_2}x,g_{n,\psi_1}\psi_1(g_{n,\psi_2})x)\leq M$.
Thus, for every $(x_n)\in X_{\om}$:
\begin{align*}
\zeta(\psi_1\psi_2)((x_n))&=(g_{n,\psi_1\psi_2}\wt{\psi_1\psi_2}(x_n))=(g_{n,\psi_1}\psi_1(g_{n,\psi_2})\wt\psi_1(\wt\psi_2(x_n))) \\
&=(g_{n,\psi_1}\wt\psi_1(g_{n,\psi_2}\wt\psi_2(x_n)))=\zeta(\psi_1)((g_{n,\psi_2}\wt\psi_2(x_n)))=\zeta(\psi_1)\zeta(\psi_2)((x_n)).
\end{align*}

\smallskip
{\bf Step~3:} \emph{we have $\zeta(\cc[g])=\rho_{\om}(g)$ for all $g\in G$.} \\
Since $\cc[g]\colon G\ra G$ is at bounded distance from left multiplication by $g$, the quasi-isometry $\wt{\cc[g]}$ is at bounded distance from $\rho(g)$. Moreover, observing that
\[\cc[\varphi_n(g)]\o\varphi_n=\varphi_n\o\cc[g]=\cc[g_{n,\cc[g]}]\o\cc[g]\o\varphi_n,\]
we deduce that $\cc[\varphi_n(g)g^{-1}]=\cc[g_{n,\cc[g]}]$, hence $g_{n,\cc[g]}\in Z(G)\varphi_n(g)g^{-1}$. Thus, for every $(x_n)\in X_{\om}$:
\[\zeta(\cc[g])((x_n))=(g_{n,\cc[g]}\wt{\cc[g]}(x_n))=(g_{n,\cc[g]}gx_n)=(\varphi_n(g)g^{-1}gx_n)=(\varphi_n(g)x_n)=\rho_{\om}(g)((x_n)).\]
This concludes the proof of the proposition.
\end{proof}

In the special case where there exists $\varphi\in\aut G$ such that $\varphi_n=\varphi^n$ and $N=\langle\pi(\varphi)\rangle$, we have $\pi^{-1}(N)\simeq(G/Z(G))\rtimes_{\varphi}\Z$ and we obtain:

\begin{cor}\label{from UNE to X_om 2}
Suppose that $G$ is UNE and that $\pi(\varphi)\in\out G$ has infinite order. Take $\varphi_n=\varphi^n$. Then the map $\Phi\colon X_{\om}\ra X_{\om}$ given by $\Phi((x_n))=(\wt\varphi(x_n))$ is a well-defined bi-Lipschitz homeomorphism of $X_{\om}$ satisfying $\Phi\o\rho_{\om}(g)=\rho_{\om}(\varphi(g))\o\Phi$ for all $g\in G$.
\end{cor}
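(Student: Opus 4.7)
The plan is to deduce this corollary directly from Proposition~\ref{from UNE to X_om} by taking $N=\langle\pi(\varphi)\rangle$ and setting $\Phi:=\zeta(\varphi)$. First I would check that the hypotheses of Proposition~\ref{from UNE to X_om} are satisfied in this setting. Since $\pi(\varphi)$ has infinite order, the cyclic subgroup $N\leq\out G$ is infinite abelian, so $Z(N)=N$ is infinite. The sequence $\varphi_n=\varphi^n$ projects to the pairwise distinct elements $\pi(\varphi)^n\in Z(N)$, as required by Assumption~\ref{ass BP 2}. Since $G$ is UNE by hypothesis, Proposition~\ref{from UNE to X_om} produces a homomorphism $\zeta\colon\pi^{-1}(N)\ra\homeo X_{\om}$ whose images are bi-Lipschitz and which extends $\rho_{\om}$ in the sense that $\zeta(\cc[g])=\rho_{\om}(g)$ for all $g\in G$.

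Taking $\Phi:=\zeta(\varphi)$ immediately gives a bi-Lipschitz self-homeomorphism of $X_{\om}$. The intertwining property follows from the fact that $\zeta$ is a homomorphism: in $\aut G$ we have the identity $\varphi\o\cc[g]=\cc[\varphi(g)]\o\varphi$, hence
\[\Phi\o\rho_{\om}(g)=\zeta(\varphi)\o\zeta(\cc[g])=\zeta(\cc[\varphi(g)])\o\zeta(\varphi)=\rho_{\om}(\varphi(g))\o\Phi.\]

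The only remaining point is to verify the explicit formula $\Phi((x_n))=(\wt\varphi(x_n))$. By the definition of $\zeta(\varphi)$ given just before Proposition~\ref{from UNE to X_om}, we have $\zeta(\varphi)((x_n))=(g_{n,\varphi}\wt\varphi(x_n))$, where $g_{n,\varphi}\in G$ is chosen to satisfy $\varphi^n\o\varphi=\cc[g_{n,\varphi}]\o\varphi\o\varphi^n$. Since $\varphi^n\o\varphi=\varphi\o\varphi^n=\varphi^{n+1}$, this equation forces $\cc[g_{n,\varphi}]=\id$, i.e.\ $g_{n,\varphi}\in Z(G)$.

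The main (mild) obstacle is then to show that this central translation disappears in the ultralimit. This is where UNE is essential: by part~(3) of Example~\ref{UNE example}, the centre $Z(G)$ is finite, so cocompactness of $G\acts X$ gives a uniform constant $M$ with $d(x,zx)\leq M$ for every $z\in Z(G)$ and every $x\in X$. Combined with $\eps_n\ra 0$ (Assumption~\ref{tau diverges}), this implies $\lim_{\om}\eps_n d(g_{n,\varphi}\wt\varphi(x_n),\wt\varphi(x_n))=0$, so the sequences $(g_{n,\varphi}\wt\varphi(x_n))$ and $(\wt\varphi(x_n))$ represent the same point of $X_{\om}$, yielding $\Phi((x_n))=(\wt\varphi(x_n))$ as required.
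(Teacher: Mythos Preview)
Your proof is correct and follows precisely the approach the paper indicates: the corollary is stated immediately after Proposition~\ref{from UNE to X_om} as the special case $N=\langle\pi(\varphi)\rangle$, and you have carefully spelled out the details (including the verification that $g_{n,\varphi}\in Z(G)$ and hence vanishes in the ultralimit).
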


\subsubsection{Coarse-median preserving automorphisms of UNE groups.}
\hfill \smallskip \\ 
Suppose now that $X$ admits a coarse median $\mu$ of finite rank $r$. We can define a map $\mu_{\om}\colon X_{\om}^3\ra X_{\om}$ by setting $\mu_{\om}((x_n),(y_n),(z_n))=(\mu(x_n,y_n,z_n))$. It was shown in \cite[Section~9]{Bow-cm} that $\mu_{\om}$ is well-defined and the pair $(X_{\om},\mu_{\om})$ is a median algebra of rank $\leq r$. 

If the coarse median structure $[\mu]$ is fixed by $G\acts X$, then the action $G\acts X_{\om}$ is by automorphisms of the median algebra $(X_{\om},\mu_{\om})$. Moreover, if an automorphism $\psi\in\pi^{-1}(N)\leq\aut G$ is such that $\wt\psi$ fixes $[\mu]$, then $\zeta(\psi)\in\aut(X_{\om},\mu_{\om})$. Note that, although the metric $d_{\om}$ on $X_{\om}$ is $G$--invariant, it needs not be preserved by $\zeta(\psi)$.

\begin{rmk}
If the space $X$ is coarse median but not median, the metric $d_{\om}$ may not be compatible with $\mu_{\om}$ (in the sense of Definition~\ref{compatible defn}). However, it was shown by Zeidler \cite[Proposition~3.3]{Zeidler} that there always exists a metric $\delta\in\mc{D}^G(X_{\om},\mu_{\om})$ such that $(X_{\om},\delta)$ is complete, geodesic, and bi-Lipschitz equivalent to $(X_{\om},d_{\om})$. Theorem~\ref{all from 10b}(2) and the fact that $G$ does not fix a point in $X_{\om}$ then imply that $G$ acts on $(X_{\om},\delta)$ with unbounded orbits (alternatively, one can appeal to \cite{Bow4}).

This is only tangentially relevant to us as we will only be interested in ultralimits of $\CAT$ cube complexes in the forthcoming subsections.
\end{rmk}

Summing up the above discussion:

\begin{cor} 
Let $G$ be a UNE group. Let $N\leq\out G$ be a subgroup with infinite centre. Let $(X,[\mu])$ be a geodesic coarse median space of finite rank $r$. Let $G\acts X$ be a proper cocompact action fixing the coarse median structure $[\mu]$. Suppose that the quasi-isometries of $X$ induced by the elements of $\pi^{-1}(N)$ also preserve $[\mu]$.

Then there exists a complete, geodesic 
median space $X_{\om}$ of rank $\leq r$, and an action $\pi^{-1}(N)\acts X_{\om}$ by bi-Lipschitz homeomorphisms that preserve the underlying median-algebra structure. The composition $G\ra G/Z(G)\hookrightarrow\pi^{-1}(N)\acts X_{\om}$ is an isometric $G$--action with unbounded orbits.
\end{cor}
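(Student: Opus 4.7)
The plan is to feed the hypotheses straight into the machinery developed earlier in this subsection. Since $Z(N)$ is infinite, choose a sequence of pairwise distinct elements $\phi_n \in Z(N)$ and lifts $\varphi_n \in \pi^{-1}(\phi_n) \cu \aut G$. Following the construction recalled above, set $\rho_n := \rho \o \varphi_n$, pick basepoints $p_n \in X$ with $\tau_S^{\rho_n}(p_n) \leq \overline\tau_S^{\rho_n}+1$, and let $\eps_n := 1/\overline\tau_S^{\rho_n}$. By Assumption~\ref{tau diverges}, $\eps_n \to 0$, so the ultralimit $(X_\om, d_\om, p_\om) := \lim_\om(X, \eps_n d, p_n)$ is a complete, geodesic metric space carrying an isometric $G$--action $\rho_\om$ with $\tau_S^{\rho_\om}(p_\om) = \overline\tau_S^{\rho_\om} = 1$.

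\textbf{Extension to $\pi^{-1}(N)$ and transfer of the median.} Since $G$ is UNE, Proposition~\ref{from UNE to X_om} produces a homomorphism $\zeta \colon \pi^{-1}(N) \to \homeo X_\om$ by bi-Lipschitz homeomorphisms, satisfying $\zeta(\cc[g]) = \rho_\om(g)$ for every $g \in G$. Because $\rho_\om$ is isometric and visibly kills $Z(G)$, the composition $G \to G/Z(G) \hookrightarrow \pi^{-1}(N) \to \homeo X_\om$ coincides with $\rho_\om$ and is therefore an isometric $G$--action on $(X_\om, d_\om)$. Next, for a fixed representative $\mu$ of $[\mu]$, the formula $\mu_\om((x_n),(y_n),(z_n)) := (\mu(x_n,y_n,z_n))$ defines, by \cite[Section~9]{Bow-cm}, a median operator on $X_\om$ of rank $\leq r$. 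Preservation of $[\mu]$ by $G \acts X$ ensures that $\rho_\om$ acts by median automorphisms; preservation of $[\mu]$ by every $\wt\psi$ with $\psi \in \pi^{-1}(N)$ analogously places each $\zeta(\psi)$ in $\aut(X_\om, \mu_\om)$.

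\textbf{Compatible metric and unbounded orbits.} Apply Zeidler's theorem \cite[Proposition~3.3]{Zeidler} to obtain a $G$--invariant compatible metric $\delta \in \mc{D}^G(X_\om, \mu_\om)$ such that $(X_\om, \delta)$ is complete, geodesic and bi-Lipschitz equivalent to $(X_\om, d_\om)$. The $G$--action is isometric with respect to $\delta$ by construction, and because bi-Lipschitz equivalence is preserved, $\zeta(\pi^{-1}(N))$ still consists of bi-Lipschitz median automorphisms of $(X_\om, \delta)$. To see that $G$--orbits are unbounded, observe that $\overline\tau_S^{d_\om} = 1 > 0$ together with bi-Lipschitz equivalence gives $\overline\tau_S^\delta > 0$, so $G$ has no $\delta$--fixed point in $X_\om$; since $(X_\om, \delta)$ is connected, Remark~\ref{connected inversions} makes the action without wall inversions, and then part~(2) of Theorem~\ref{all from 10b} forces $\mc{H}_1(G) \neq \emptyset$, whence orbits are unbounded. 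The only step that is not a direct invocation of earlier results is producing a $G$--invariant \emph{compatible} metric on $X_\om$, since $d_\om$ itself need not be compatible with $\mu_\om$; this is precisely the obstacle dissolved by Zeidler's theorem, so the remainder of the argument reduces to bookkeeping.
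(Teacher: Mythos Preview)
Your proof is correct and follows exactly the approach the paper intends: this corollary is introduced as ``Summing up the above discussion'', and your argument is a faithful unpacking of that discussion --- the Bestvina--Paulin ultralimit, Proposition~\ref{from UNE to X_om} for the $\pi^{-1}(N)$--extension, the ultralimit coarse median $\mu_\om$, Zeidler's compatible metric, and Theorem~\ref{all from 10b}(2) for unbounded orbits. The one point you leave slightly implicit is why $\rho_\om$ kills $Z(G)$: this uses that $Z(G)$ is finite (UNE) and hence acts with uniformly bounded displacement on $X$, which vanishes under the rescaling $\eps_n\to 0$; the paper records this in Step~2 of the proof of Proposition~\ref{from UNE to X_om}.
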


\subsection{Equivariant embeddings in products of $\R$--trees.}\label{treeable ultralimits sect}

Let $M$ be a median algebra and $G\acts M$ an action by median automorphisms. In the rest of Section~\ref{ultra sect}, we will be interested in situations where $M$ can be embedded $G$--equivariantly into a finite product of $\R$--trees. We reserve this subsection for a few general remarks on this setting.

\begin{defn}
An \emph{$\R$--tree} is a geodesic, rank--$1$ median space. 
\end{defn}

This is equivalent to the usual definition of $\R$--trees as geodesic metric spaces where every geodesic triangle is a tripod. We stress that $\R$--trees are not required to be complete.

The next remark collects various simple observations for later use.

\begin{rmk}\label{tree factors vs X}
Consider isometric $G$--actions on $\R$--trees $T_1,\dots,T_k$. Equip $T_1\x\dots\x T_k$ with the diagonal $G$--action. Let $f=(f_i)\colon M\hookrightarrow\prod T_i$ be a $G$--equivariant, injective median morphism.
\begin{enumerate}
\item The image $f(M)$ is a median subalgebra of $\prod_iT_i$. The set of halfspaces of the median algebra $\prod_iT_i$ is naturally identified with the disjoint union $\bigsqcup_i\mscr{H}(T_i)$. 

Every halfspace of $T_i$ is either open or closed. Open halfspaces are precisely the single connected components of the sets $T_i\setminus\{p\}$, as $p$ varies through all points of $T_i$ (including when $T_i\setminus\{p\}$ is connected). Closed halfspaces are precisely the complements of open halfspaces.

If we let $\mscr{H}_i\cu\mscr{H}(M)$ be the set of halfspaces of the form $f_i^{-1}(\mf{h})$ with $\mf{h}\in\mscr{H}(T_i)$, then the $\mscr{H}_i$ cover $\mscr{H}(M)$ by Remark~\ref{halfspaces of subsets}(1). However, the $\mscr{H}_i$ are usually not pairwise disjoint. 

\item Since the sets $\mscr{H}_i$ are $G$--invariant and no two halfspaces in the same $\mscr{H}_i$ are transverse, we see that each $g\in G$ must act non-transversely on $M$.

\item Suppose that, for all $i$, all $x\in T_i$ and all $g\in G$, we have $g^2x=x$ if and only if $gx=x$. Then the action $G\acts M$ has no wall inversions.

Indeed, suppose instead that there exists $\mf{h}\in\mscr{H}(M)$ such that $g\mf{h}=\mf{h}^*$. Pick $i$ such that $\mf{h}\in\mscr{H}_i$, and choose $\mf{k}\in\mscr{H}(T_i)$ with $f_i^{-1}(\mf{k})=\mf{h}$. Then $g\mf{k}\cap\mf{k}$ and $g\mf{k}^*\cap\mf{k}^*$ are disjoint from the $\langle g\rangle$--invariant median subalgebra $f_i(M)$. Note that we cannot have $g\mf{k}\cu\mf{k}$ or $g\mf{k}\supseteq\mf{k}$, so, without loss of generality, $g\mf{k}\cap\mf{k}=\emptyset$. It follows that $f_i(M)\cu\mf{k}\cup g\mf{k}$, hence $g$ is elliptic and fixes a unique point $p$ in the convex hull of $\mf{k}\cup g\mf{k}$. We conclude that $g^2\mf{k}=\mf{k}$, hence the points on the arc connecting $p$ to $\mf{k}$ are fixed by $g^2$, but not by $g$. This is a contradiction.

\item Suppose that $g$ acts on $M$ stably without wall inversions. Then, by Remark~\ref{from 10b rmk}(2) and Theorem~\ref{all from 10b}(3), a halfspace $\mf{h}\in\mscr{H}(M)$ lies in the set $\mscr{H}_{\overline{\C}(g)}(M)$ if and only if either $\mf{h}\subsetneq g\mf{h}$, or $\mf{h}\subsetneq g^{-1}\mf{h}$, or $\mf{h}=g\mf{h}$.  

It follows that, for every $i$, either $g$ is loxodromic in $T_i$ and $f_i(\overline{\C}(g,M))$ is contained in its axis, or $g$ is elliptic in $T_i$ and fixes $f_i(\overline{\C}(g,M))$ pointwise. 
\end{enumerate}
\end{rmk}

Now, let us fix a non-principal ultrafilter $\om$. Let the group $G$ be generated by a finite subset $S$. Consider a sequence of actions by automorphism on median algebras $G\acts M_n$, along with metrics $\delta_n\in\mc{D}^G(M_n)$ and basepoints $p_n\in M_n$. Suppose moreover that:
\[\max_{s\in S}\sup_n \delta_n(sp_n,p_n)<+\infty.\]

Define $(M_{\om},\delta_{\om},p_{\om}):=\lim_{\om}(M_n,\delta_n,p_n)$. The set $M_{\om}$ becomes a median algebra if we endow it with the operator $m((x_n),(y_n),(z_n))=(m(x_n,y_n,z_n))$. We have an action by median automorphisms $G\acts M_{\om}$ given by $g(x_n)=(gx_n)$. Finally, note that $\delta_{\om}\in\mc{D}^G(M_{\om})$, and that $(M_{\om},\delta_{\om})$ is a complete median space (every ultralimit of metric spaces is complete).

Given a sequence of subsets $A_n\cu M_n$, we will employ the notation:
\[\lim_{\om}A_n:=\{(x_n)\in M_{\om} \mid x_n\in A_n \text{ for $\om$-all $n$}\}=\{(y_n)\in M_{\om} \mid \lim_{\om} \delta_n(y_n,A_n)=0\}.\]
Note that $\lim_{\om}A_n$ is a (possibly empty) \emph{closed} subset of $(M_{\om},\delta_{\om})$ for \emph{any} sequence of subsets $A_n\cu M_n$. It is also clear that $\lim_{\om}A_n\cu M_{\om}$ is convex as soon as $A_n\cu M_n$ is convex for $\om$--all $n$.

\smallskip
Fix an integer $k\geq 1$. Suppose that each action $G\acts M_n$ is equipped with a $G$--equivariant, $\delta_n$--isometric embedding $f_n=(f_n^i)\colon M_n\hookrightarrow\prod_iT_n^i$, where $\prod_iT_n^i$ is a product of $k$ $\R$--trees endowed with an isometric, diagonal $G$--action as in Remark~\ref{tree factors vs X}. (We have switched the index ``$i$'' from subscript to superscript to avoid confusion.)

It is straightforward to check that the ultralimits $\lim_{\om}(T_n^i,f_n^i(p_n))$ yield isometric $G$--actions on $\R$--trees $T_{\om}^i$ and a $G$--equivariant, $\delta_{\om}$--isometric embedding $f_{\om}=(f_{\om}^i)\colon M_{\om}\hookrightarrow\prod_iT_{\om}^i$.

\begin{lem}\label{ultra-Salvetti 1}
Consider the above setting. For every $g\in G$, we have:
\begin{enumerate}
\item $\ell(g,T^i_{\om})=\lim_{\om}\ell(g,T^i_n)$ and $\overline{\C}(g,T^i_{\om})=\lim_{\om}\overline{\C}(g,T^i_n)$ for all $1\leq i\leq k$.
\end{enumerate}
If, in addition, $(M_n,\delta_n)$ is a geodesic space for $\om$--all $n$, then $(M_{\om},\delta_{\om})$ is geodesic and:
\begin{enumerate}
\setcounter{enumi}{1}
\item $\ell(g,\delta_{\om})=\lim_{\om}\ell(g,\delta_n)$ and $\overline{\C}(g,M_{\om})=\lim_{\om}\overline{\C}(g,M_n)$.
\end{enumerate}
\end{lem}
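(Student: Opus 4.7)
The plan is to derive both parts from the displacement formula in part~(2) of Theorem~\ref{all from 10b -2}, which reads
\[ d(x,gx) = \ell(g) + 2\,d(x,\overline{\C}(g)) \]
on any median space where $g$ acts non-transversely and stably without inversions. So first I need to verify these hypotheses on each of $T_n^i$, $T_{\om}^i$, $M_n$ and $M_{\om}$. Isometries of $\R$--trees act automatically non-transversely (rank~$1$), and any $\R$--tree is connected, so Remark~\ref{connected inversions} applied to $\delta_{\om}|_{T_{\om}^i}$ and to each $T_n^i$ rules out wall inversions there. For part~(2), the geodesic hypothesis makes $(M_n,\delta_n)$ connected, so Remark~\ref{connected inversions} gives the no-inversion condition, while non-transversity of $g$ on $M_n$ follows from the $G$--equivariant $\delta_n$--isometric embedding $f_n$ into $\prod_i T_n^i$ via Remark~\ref{tree factors vs X}(2). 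Since ultralimits of geodesic spaces are geodesic, $(M_{\om},\delta_{\om})$ is geodesic and the same reasoning applies to it.

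With the hypotheses checked, I would write $g = s_1\cdots s_m$ as a word in $S$, which immediately bounds both $\delta_n(p_n,gp_n)$ and $d(f_n^i(p_n), g f_n^i(p_n))$ by $m\cdot\max_{s\in S}\delta_n(sp_n,p_n)$, hence these quantities stay $\om$--bounded. Applying the displacement formula with $x = p_n$ (resp.\ $x = f_n^i(p_n)$) therefore forces $\lim_{\om}\ell(g,T_n^i)$, $\lim_{\om}\ell(g,\delta_n)$, $\lim_{\om}d(f_n^i(p_n), \overline{\C}(g,T_n^i))$ and $\lim_{\om}\delta_n(p_n,\overline{\C}(g,M_n))$ all to be finite. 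Gate-projecting $f_n^i(p_n)$ (resp.\ $p_n$) onto $\overline{\C}(g,T_n^i)$ (resp.\ $\overline{\C}(g,M_n)$), which is permitted by gate-convexity from part~(1) of Theorem~\ref{all from 10b -2}, then yields a sequence at $\om$--bounded distance from the basepoint, producing a genuine point of the ultralimit lying in $\lim_{\om}\overline{\C}(g,T_n^i)$ (resp.\ $\lim_{\om}\overline{\C}(g,M_n)$); in particular, both sets are nonempty.

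The equality of translation lengths now follows easily: evaluating at the representative just produced gives $\ell(g,T_{\om}^i) \leq \lim_{\om}\ell(g,T_n^i)$, while any $(x_n) \in T_{\om}^i$ satisfies $d_{\om}((x_n),g(x_n)) = \lim_{\om}d(x_n,gx_n) \geq \lim_{\om}\ell(g,T_n^i)$, yielding the opposite inequality upon taking infima. The exact same computation on $M_{\om}$ gives $\ell(g,\delta_{\om}) = \lim_{\om}\ell(g,\delta_n)$. For the reduced cores, one inclusion is immediate from the displacement formula: a point $(z_n) \in \lim_{\om}\overline{\C}(g,T_n^i)$ satisfies $d_{\om}((z_n),g(z_n)) = \lim_{\om}\ell(g,T_n^i) = \ell(g,T_{\om}^i)$, and the displacement formula in $T_{\om}^i$ forces $(z_n) \in \overline{\C}(g,T_{\om}^i)$. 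For the reverse inclusion, given $z = (z_n) \in \overline{\C}(g,T_{\om}^i)$, subtracting the displacement formulas in $T_{\om}^i$ and $T_n^i$ gives $\lim_{\om}d(z_n,\overline{\C}(g,T_n^i)) = 0$, so replacing each $z_n$ by its gate-projection produces the same element of $T_{\om}^i$, now manifestly inside $\lim_{\om}\overline{\C}(g,T_n^i)$. Running this argument verbatim inside $M_{\om}$ completes part~(2). The main subtlety is simply the gate-convexity that makes the nearest-point projections available, which is precisely what Theorem~\ref{all from 10b -2}(1) delivers once the hypotheses on $g$ are established.
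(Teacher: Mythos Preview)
Your proof is correct and follows essentially the same approach as the paper: both arguments verify the non-transversality and no-inversion hypotheses via Remark~\ref{connected inversions} and Remark~\ref{tree factors vs X}(2), then use the displacement formula from Theorem~\ref{all from 10b -2}(2) together with gate-convexity from Theorem~\ref{all from 10b -2}(1) to project the basepoints and match translation lengths and cores. The paper's proof is slightly more compact (it proves only part~(2) and notes that part~(1) is a special case, and it handles both inclusions of the core equality as a single biconditional), but the content is the same.
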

\begin{proof}
We only prove part~(2), since part~(1) is a special case of it. 

By Remark~\ref{connected inversions} and Remark~\ref{tree factors vs X}(2), each $g\in G$ acts on $M_{\om}$ stably without inversions and non-transversely; the same is true of the action on $\om$--all $M_n$. Theorem~\ref{all from 10b -2}(2) shows that, for every $x=(x_n)\in M_{\om}$, we have:
\[\delta_{\om}(x,gx)=\lim_{\om}\delta_n(x_n,gx_n)=\lim_{\om}~[\ell(g,\delta_n)+2\delta_n(x_n,\overline{\C}(g,M_n))]\geq\lim_{\om}\ell(g,\delta_n).\]
Hence $\ell(g,\delta_{\om})\geq\lim_{\om}\ell(g,\delta_n)$. By Theorem~\ref{all from 10b -2}(1), the sets $\overline{\C}(g,M_n)$ are gate-convex. If $y_n$ is the gate-projection of the basepoint $p_n\in M_n$ to $\overline{\C}(g,M_n)$, we have:
\[\lim_{\om}\delta_n(y_n,p_n)=\lim_{\om}\delta_n(p_n,\overline{\C}(g,M_n))\leq\lim_{\om}\tfrac{1}{2}\delta_n(p_n,gp_n)<+\infty.\]
It follows that we have a well-defined point $y=(y_n)\in M_{\om}$ and that $\delta_{\om}(y,gy)=\lim_{\om}\ell(g,\delta_n)$. This shows that $\ell(g,\delta_{\om})=\lim_{\om}\ell(g,\delta_n)$.

Finally, since $\overline{\C}(g,M_{\om})$ is gate-convex, it is a closed subset of the complete median space $(M_{\om},\delta_{\om})$. Thus a point $x=(x_n)\in M_{\om}$ lies in $\overline{\C}(g,M_{\om})$ if and only if $\delta_{\om}(x,\overline{\C}(g,M_{\om}))=0$, which happens if and only if $\delta_{\om}(x,gx)=\ell(g,\delta_{\om})$ (again by Theorem~\ref{all from 10b -2}). Equivalently, $x$ lies in $\overline{\C}(g,M_{\om})$ if and only if $\lim_{\om}\delta_n(x_n,\overline{\C}(g,M_n))=0$, i.e.\ if and only if $x\in\lim_{\om}\overline{\C}(g,M_n)$. This concludes the proof.
\end{proof}

\begin{lem}\label{walls from om to n}
Consider again the above setting, with $(M_n,\delta_n)$ geodesic for $\om$--all $n$. Consider two elements $g,h\in G$ and $s\geq 1$.
\begin{enumerate}
\item Suppose that, for some $\mf{w}\in\mscr{W}(M_{\om})$, we have $\{\mf{w},g^s\mf{w}\}\cu\W_1(g,M_{\om})\cap\W_1(h,M_{\om})$. Then, for $\om$--all $n$, there exists $\mf{w}_n\in\mscr{W}(M_n)$ such that $\{\mf{w}_n,g^s\mf{w}_n\}\cu\W_1(g,M_n)\cap\W_1(h,M_n)$.
\item If there exist walls $\mf{u},\mf{v}\in\W_1(g,M_{\om})$ such that $\{\mf{u},g^s\mf{u}\}$ is transverse to $\{\mf{v},g^s\mf{v}\}$, then, for $\om$--all $n$, there exist $\mf{u}_n,\mf{v}_n\in\W_1(g,M_n)$ such that $\{\mf{u}_n,g^s\mf{u}_n\}$ is transverse to $\{\mf{v}_n,g^s\mf{v}_n\}$.
\end{enumerate}
\end{lem}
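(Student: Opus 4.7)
The strategy for both parts is to lift each wall of $M_\om$ to a halfspace in a single tree factor $T_\om^i$, approximate it in $T_n^i$ for $\om$-all $n$ via the ultralimit structure of $\R$-trees, and pull the approximations back to $M_n$, checking the required conclusions with witnesses lifted from $M_\om$. Two preliminary observations are central. First, every halfspace of $\prod T_\om^i$ is of the form $T_\om^1\x\cdots\x\hat{\mf{h}}\x\cdots\x T_\om^k$ for some $\hat{\mf{h}}\in\mscr{H}(T_\om^i)$, so by Remark~\ref{tree factors vs X}(1) every halfspace $\mf{h}\in\mscr{H}(M_\om)$ may be written as $f_\om^{-1}(\hat{\mf{h}})$. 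Second, if this halfspace bounds a wall in $\W_1(g,M_\om)$ with $g\mf{h}\subsetneq\mf{h}$, then $g\hat{\mf{h}}\subsetneq\hat{\mf{h}}$ in $T_\om^i$: two halfspaces in an $\R$-tree are nested or disjoint (never transverse), and any point of $g\mf{h}$ pulls back to a point in $\hat{\mf{h}}\cap g\hat{\mf{h}}$, ruling out disjointness. By Remark~\ref{tree factors vs X}(4), $g$ is then loxodromic in $T_\om^i$ with axis crossing $\hat{\mf{h}}$.

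\textbf{Part~(1).} Fix $\hat{\mf{h}}\in\mscr{H}(T_\om^i)$ with $f_\om^{-1}(\hat{\mf{h}})=\mf{h}$ and let $p_\om\in T_\om^i$ be its branch point. Applying the loxodromic observation to $g,h$ and to both $\mf{w}$ and $g^s\mf{w}$, both $g$ and $h$ are loxodromic in $T_\om^i$ and each of their axes contains $p_\om$ and $g^sp_\om$; by Lemma~\ref{ultra-Salvetti 1}(1), the same holds in $T_n^i$ for $\om$-all $n$. Lift witnesses for the three independent conditions $\mf{w}\in\W_1(g)$, $\mf{w}\in\W_1(h)$, and $g^s\mf{w}\in\W_1(h)$ (the fourth, $g^s\mf{w}\in\W_1(g)$, is automatic from $\langle g\rangle$-invariance of $\W_1(g,M_n)$): choose $a\in\overline{\C}(g,M_\om)$, $b\in\overline{\C}(h,M_\om)$, $b^*\in\overline{\C}(g^{-s}hg^s,M_\om)$ and integers $m,m_1,m_2$ with $\mf{w}\in\mscr{W}(a\mid g^ma)\cap\mscr{W}(b\mid h^{m_1}b)\cap\mscr{W}(b^*\mid g^{-s}h^{m_2}g^sb^*)$, then lift to sequences with core witnesses in the corresponding cores of $M_n$ via Lemma~\ref{ultra-Salvetti 1}(2). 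Each of the three associated geodesic segments in $T_\om^i$ contains $p_\om$ strictly in its interior. Using Helly's property for convex subsets of an $\R$-tree together with compactness of the three geodesic segments, the corresponding segments in $T_n^i$ share a common point $p_n$ for $\om$-all $n$, with $p_n\to p_\om$. Define $\hat{\mf{h}}_n$ as the component of $T_n^i\setminus\{p_n\}$ on the $\hat{\mf{h}}$-side (fixed by a lifted witness) and let $\mf{w}_n$ be the wall of $M_n$ bounded by $(f_n^i)^{-1}(\hat{\mf{h}}_n)$. Then $\mf{w}_n$ separates each of the three lifted witness pairs in $M_n$, and the $G$-invariant partition $\mscr{H}(M_n)=\overline{\mc{H}}_0\sqcup\mc{H}_1\sqcup\overline{\mc{H}}_{1/2}\sqcup\overline{\mc{H}}_{1/2}^*$ rules out $\mf{w}_n$ lying outside $\mc{H}_1$ for the relevant element (walls outside $\mc{H}_1(k)$ cannot separate two points of a common $\langle k\rangle$-orbit inside $\overline{\C}(k)$), forcing $\mf{w}_n\in\W_1(g,M_n)\cap\W_1(h,M_n)$ and $g^s\mf{w}_n\in\W_1(h,M_n)$, with $g^s\mf{w}_n\in\W_1(g,M_n)$ automatic.

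\textbf{Part~(2) and the main obstacle.} Lift halfspaces bounding $\mf{u}$ and $\mf{v}$ to $\hat{\mf{u}}\in\mscr{H}(T_\om^i)$ and $\hat{\mf{v}}\in\mscr{H}(T_\om^j)$. The indices differ: two halfspaces of a single $\R$-tree are nested or disjoint, and pullback via $f_\om$ preserves these relations, so transversality of $\mf{u},\mf{v}$ in $M_\om$ forces $i\neq j$. Apply the Part~(1) construction independently in $T_n^i$ (where only $g$ needs to be tracked, so the Helly step is simpler) and in $T_n^j$, producing $\mf{u}_n,\mf{v}_n\in\W_1(g,M_n)$. Transversality of $\{\mf{u}_n,g^s\mf{u}_n\}$ and $\{\mf{v}_n,g^s\mf{v}_n\}$ in $M_n$ reduces to the non-emptiness of sixteen Boolean intersections; for each, lift a witness point from $M_\om$ (supplied by the transversality of the corresponding pair there), noting that each Boolean side-condition is strict in the relevant tree factor (its image is at positive distance from the branch point) and therefore transfers to $M_n$ for $\om$-all $n$. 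The main technical obstacle, concentrated in Part~(1), is the simultaneous satisfaction of three separation conditions by a single $p_n\in T_n^i$; this is resolved by combining Helly's property for $\R$-trees with the compactness of the relevant geodesic segments in $T_n^i$, so that the pairwise ``ultralimit-zero-distance'' intersections in $T_n^i$ (witnessed by $p_\om$) give actual pairwise intersections, and Helly then produces the common point $p_n$.
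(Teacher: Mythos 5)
There is a genuine gap in your Part~(1), concentrated in the Helly step. You correctly identify that both $g$ and $h$ are loxodromic in $T_\om^i$ with axes passing through $p_\om$ and $g^s p_\om$. But instead of exploiting this directly, you introduce a third axis (that of $g^{-s}hg^s$) and three separate separation conditions, and appeal to Helly's theorem to find a common point $p_n$ in $T^i_n$. For Helly to apply you need the three segments $I_1^n, I_2^n, I_3^n$ to \emph{pairwise intersect} for $\om$-all $n$, and this is not justified. The ultralimit intersections $I_1\cap I_2$ (along $\ell_g$ towards $g^sp_\om$) and $I_1\cap I_3$ (along $\ell_g$ towards $g^{-s}p_\om$) do have positive length, but $I_2\cap I_3$ can very well equal $\{p_\om\}$: the axis of $h$ must contain $[p_\om, g^sp_\om]$, the axis of $g^{-s}hg^s$ must contain $[g^{-s}p_\om, p_\om]$, and beyond $p_\om$ the two lines can branch off into distinct directions if $T_\om^i$ has valence $\geq 4$ at $p_\om$ (which ultralimits readily produce). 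A single-point intersection at the ultralimit does not give nonempty intersection at the $n$-level: the segments $I_2^n, I_3^n$ can be disjoint with distance tending to $0$.

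You already have the ingredients to avoid this entirely. Since the axes of $g$ and $h$ in $T_\om^i$ are geodesics passing through the two distinct points $p_\om$ and $g^sp_\om$, they coincide on the arc $[p_\om, g^sp_\om]$; since both axes actually \emph{cross} $\hat{\mf{w}}$ and $g^s\hat{\mf{w}}$ (rather than merely touching them), the shared arc has length $\epsilon + s\cdot\ell(g,T_\om^i)$ for some $\epsilon>0$. Lemma~\ref{ultra-Salvetti 1}(1) converts this into a shared arc of the $n$-level axes of length $> s\cdot\ell(g,T_n^i)$ for $\om$-all $n$, and then one directly picks a wall $\mf{v}_n$ whose boundary point and its $g^s$-translate both lie in the interior of this shared arc. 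This is the paper's argument, and it dispenses with the third axis, the Helly step, and the branch-point bookkeeping altogether. Your Part~(2) is correct in substance — the observation that $\mf{u},\mf{v}$ must come from distinct tree factors is the key point — but the case-check of sixteen Boolean intersections is unnecessarily heavy: the paper lifts the four corner points $x,y,z,w\in\overline{\C}(g,M_\om)$ supplied by Helly, and then transversality of $\{\mf{u}_n,g^s\mf{u}_n\}$ with $\{\mf{v}_n,g^s\mf{v}_n\}$ is an instance of Lemma~\ref{3pwt}, with no further checking needed.
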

\begin{proof}
We begin with some general observations. We have already noted in Lemma~\ref{ultra-Salvetti 1} that $(M_{\om},\delta_{\om})$ is connected, hence $g,h$ act stably without inversions. By parts~(1) and~(4) of Remark~\ref{tree factors vs X}, each wall of $M_{\om}$ arises from a wall of (at least) one of the trees $T^i_{\om}$. Moreover, each projection $f_{\om}^i(\overline{\C}(g,M_{\om}))$ is either fixed pointwise by $g$ or it is a $\langle g\rangle$--invariant geodesic (and similarly for $h$). 

We now prove part~(1). By the above discussion, there exist an index $i$ and $\mf{v}\in\mscr{W}(T^i_{\om})$ such that $\{\mf{v},g^s\mf{v}\}\cu\W_1(g,T^i_{\om})\cap\W_1(h,T^i_{\om})$. Thus, $g$ and $h$ are both loxodromic in $T^i_{\om}$, which implies that they are loxodromic in $\om$--all $T^i_n$. Let $\alpha_{\om},\alpha_n$ and $\beta_{\om},\beta_n$ be the axes in $T^i_{\om},T^i_n$ of $g$ and $h$, respectively. By Lemma~\ref{ultra-Salvetti 1}, we have $\alpha_{\om}=\lim_{\om}\alpha_n$ and $\beta_{\om}=\lim_{\om}\beta_n$. Since $\alpha_{\om}$ and $\beta_{\om}$ both cross $\mf{v}$ and $g^s\mf{v}$, they must share a segment of length $\eps+s\cdot\ell(g,T^i_{\om})$ for some $\eps>0$. 

If $y$ and $z$ are the endpoints of this segment, we can write $y=(y_n)=(y_n')$ and $z=(z_n)=(z_n')$ with $y_n,z_n\in\alpha_n$ and $y_n',z_n'\in\beta_n$. Denoting by $\delta_n^i$ the metric of $T_n^i$, we have:
\begin{align*}
\lim_{\om}\delta_n^i(y_n,y_n')=&\lim_{\om}\delta_n^i(z_n,z_n')=0, &\lim_{\om}\delta_n^i(y_n,z_n)=&\lim_{\om}\delta_n^i(y_n',z_n')=\eps+s\cdot \lim_{\om}\ell(g,T_n^i).
\end{align*}
Hence $\alpha_n$ and $\beta_n$ share a segment $\s_n$ of length $>s\cdot\ell(g,T_n^i)$ for $\om$--all $n$. It follows that there exists a wall $\mf{v}_n\in\mscr{W}(T^i_n)$ such that $\s_n$ crosses $\mf{v}_n$ and $g^s\mf{v}_n$. Hence $\{\mf{v}_n,g^s\mf{v}_n\}\cu\W_1(g,T^i_n)\cap\W_1(h,T^i_n)$, and it is clear that $\mf{v}_n$ determines a wall $\mf{w}_n$ of $M$ with $\{\mf{w}_n,g^s\mf{w}_n\}\cu\W_1(g,M_n)\cap\W_1(h,M_n)$.

Let us now prove part~(2). By Remark~\ref{tree factors vs X}(4), $\mf{u}$ and $\mf{v}$ determine halfspaces $\mf{h},\mf{k}\in\mscr{H}(M_{\om})$ satisfying $g\mf{h}\subsetneq\mf{h}$ and $g\mf{k}\subsetneq\mf{k}$. Since $\{\mf{u},g^s\mf{u}\}$ and $\{\mf{v},g^s\mf{v}\}$ are transverse, Helly's lemma implies that there exist points:
\begin{align*} 
x&\in g^s\mf{h}\cap g^s\mf{k}\cap\overline{\C}(g,M_{\om}), & y&\in g^s\mf{h}\cap\mf{k}^*\cap\overline{\C}(g,M_{\om}), \\
z&\in \mf{h}^*\cap g^s\mf{k}\cap\overline{\C}(g,M_{\om}), & w&\in\mf{h}^*\cap\mf{k}^*\cap\overline{\C}(g,M_{\om}).
\end{align*} 

Suppose that $\mf{u}$ and $\mf{v}$ arise from trees $T^i_{\om}$ and $T^j_{\om}$, where $g$ has axes $\alpha^i$ and $\alpha^j$, respectively. Then the points $f_{\om}^i(x),f_{\om}^i(y),f_{\om}^i(z),f_{\om}^i(w)$ lie on $\alpha^i$, and $\{f_{\om}^i(x),f_{\om}^i(y)\}$ is separated from $\{f_{\om}^i(z),f_{\om}^i(w)\}$ by a segment of length $>s\cdot\ell(g,T^i_{\om})$. Similarly, $\{f_{\om}^j(x),f_{\om}^j(z)\}$ and $\{f_{\om}^j(y),f_{\om}^j(w)\}$ are separated by a subsegment of $\alpha^j$ of length $>s\cdot\ell(g,T^j_{\om})$. 

Writing $x=(x_n),y=(y_n),z=(z_n),w=(w_n)$, it follows that, for $\om$-all $n$, there exist walls $\mf{u}_n'\in\W_1(g,T^i_n)$ and $\mf{v}_n'\in\W_1(g,T^j_n)$ such that:
\begin{align*} 
\{\mf{u}_n',g^s\mf{u}_n'\}&\cu\mscr{W}(f_n^i(x_n),f_n^i(y_n)|f_n^i(z_n),f_n^i(w_n)), & \{\mf{v}_n',g^s\mf{v}_n'\}&\cu\mscr{W}(f_n^j(x_n),f_n^j(z_n)|f_n^j(y_n),f_n^j(w_n)).
\end{align*}
Thus $\mf{u}_n',\mf{v}_n'$ induce $\mf{u}_n,\mf{v}_n\in\W_1(g,M_n)$ with $\{\mf{u}_n,g^s\mf{u}_n\}$ transverse to $\{\mf{v}_n,g^s\mf{v}_n\}$ (cf.\ Lemma~\ref{3pwt}).
\end{proof}

\subsection{Ultralimits of convex-cocompact actions on Salvettis.}\label{ultra-Salvetti sect}

Let $\G$ be a finite simplicial graph, $\A=\A_{\G}$ the associated right-angled Artin group, and $\X=\X_{\G}$ the universal cover of its Salvetti complex. Denote by $d$ the $\ell^1$ metric on $\X$ and set $r=\dim\X$. Fix a non-principal ultrafilter $\om$.

When we speak \emph{convex-cocompactness} in $\A$ from now on (Definition~\ref{cc defn}), this is always meant with respect to the standard action $\A\acts\X$. Note that a group $G$ is isomorphic to a convex-cocompact subgroup of a right-angled Artin group if and only if $G$ is the fundamental group of a compact special cube complex \cite{Haglund-Wise-GAFA}. In particular, $G$ must be torsion-free and finitely generated.

In the rest of Section~\ref{ultra sect} we make the following assumption.

\begin{ass}
Let $G\leq\A$ be a convex-cocompact subgroup. Let $Y\cu\X$ be a $G$--invariant, convex subcomplex on which $G$ acts with exactly $q$ orbits of vertices. Let $[\mu]$ be the induced coarse median structure on $G$. Consider a sequence $\varphi_n\in\aut(G,[\mu])$. Denote by $\rho\colon G\hookrightarrow\A$ the standard inclusion and set $\rho_n=\rho\o\varphi_n$.
\end{ass}

We say for simplicity that $g\in G$ is \emph{label-irreducible} if $\rho(g)$ is a label-irreducible element of $\A$.

\begin{rmk}\label{invariance of label-irreducibility}
If $g\in G$ is label-irreducible, then Corollary~\ref{cmp preserve cc} and Lemma~\ref{label-irreducibles are cc 1}(2) show that $\rho_n(g)\in\A$ is label-irreducible for all $n\geq 0$.
\end{rmk}

Let $S\cu G$ be a finite generating set. Choose basepoints $p_n\in Y_n$ with $\tau_S^{\rho_n}(p_n)=\overline\tau_S^{\rho_n}$ and define $\delta_n:=d/\overline\tau_S^{\rho_n}\in\mc{D}^G(\X)$. For ease of notation, let us write $G\acts\X_n$ and $G\acts Y_n$ for the actions of $G$ on $\X$ and $Y$ induced by the homomorphism $\rho_n$.  

Recall that $\g\colon\mscr{W}(\X)\ra\G^{(0)}$ is the map pairing each hyperplane with its label. For every $v\in\G^{(0)}$, the hyperplanes in $\g^{-1}(v)$ are pairwise disjoint. Hence there is a natural simplicial tree $\T^v$ (usually locally infinite) that is dual to the collection $\g^{-1}(v)$. In the terminology of Subsection~\ref{CCC prelims}, the tree $\T^v$ is the restriction quotient of $\X$ associated to $\g^{-1}(v)\cu\mscr{W}(\X)$. 

In particular, we have an $\A$--equivariant, surjective median morphism $\pi^v\colon \X\ra\T^v$ taking cubes to edges or vertices, and an $\A$--equivariant, isometric median morphism $(\pi^v)\colon\X\hookrightarrow\prod_{v\in\G}\T^v$.

Let $\T_n^v$ denote the tree $\T^v$ equipped with the twisted $G$--action induced by $\rho_n$ and with its graph metric rescaled by $\overline\tau_S^{\rho_n}$. We obtain a $G$--equivariant, $\delta_n$--isometric embedding $(\pi_n^v)\colon\X_n\hookrightarrow\prod_{v\in\G}\T_n^v$.

Thus, our setting is a special case of the one in the second part of Subsection~\ref{treeable ultralimits sect} (after Remark~\ref{tree factors vs X}). If the automorphisms $\varphi_n$ are pairwise distinct in $\out G$, then we are also in a special case of Subsection~\ref{BP sect}, but we do not make this assumption for the moment.

As in Subsection~\ref{treeable ultralimits sect}, the sequence of actions $G\acts\X_n$ with metrics $\delta_n$ and basepoints $p_n$ yields a limit action $G\acts\X_{\om}$, along with a metric $\delta_{\om}\in\mc{D}^G(\X_{\om})$, a basepoint $p_{\om}\in\X_{\om}$, and a $G$--equivariant, $\delta_{\om}$--isometric embedding $(\pi_{\om}^v)\colon\X_{\om}\hookrightarrow\prod_{v\in\G}\T_{\om}^v$. The pair $(\X_{\om},\delta_{\om})$ is a complete, geodesic median space of rank $\leq r$. 

We now prove a sequence of fairly straightforward lemmas regarding the action of $G$ on $\X_{\om}$ and its median subalgebras. After that comes the most important part of this subsection, which is concerned with the notion of \emph{cubical configurations} (Definition~\ref{cubical config defn}).

\begin{lem}\label{from om to commutation new}
Consider label-irreducible elements $g,h\in G$. 
\begin{enumerate}
\item If there exist walls $\mf{u},\mf{w}$ with $\{\mf{u},\mf{w},h^{4r}\mf{u},g^{4r}\mf{w}\}\cu\W_1(g,\X_{\om})\cap\W_1(h,\X_{\om})$, then $\langle g,h\rangle\simeq\Z$.
\item There do not exist walls $\mf{u},\mf{w}\in\W_1(g,\X_{\om})$ such that $\{\mf{u},g^{4r}\mf{u}\}$ is transverse to $\{\mf{w},g^{4r}\mf{w}\}$.
\end{enumerate}
\end{lem}
\begin{proof}
We begin with part~(1). By Lemma~\ref{walls from om to n}(1), there exist hyperplanes $\mf{u}_n,\mf{w}_n\in\mscr{W}(\X_n)$, for some $n$, such that $\{\mf{u}_n,\mf{w}_n,h^{4r}\mf{u}_n,g^{4r}\mf{w}_n\}\cu\W_1(g,\X_n)\cap\W_1(h,\X_n)$. Since $\rho_n(g)$ and $\rho_n(h)$ are label-irreducible by Remark~\ref{invariance of label-irreducibility}, Lemma~\ref{friendly label-irreducibles} guarantees that $\langle g,h\rangle\simeq\Z$.

Regarding part~(2), if there existed such walls, then Lemma~\ref{walls from om to n}(2) would yield hyperplanes $\mf{u}_n,\mf{w}_n\in\mscr{W}(\X_n)$, for some $n$, such that the sets $\{\mf{u}_n,g^{4r}\mf{u}_n\}$ and $\{\mf{w}_n,g^{4r}\mf{w}_n\}$ are transverse and both contained in $\W_1(g,\X_n)$. This would violate Lemma~\ref{irreducible elements new}, since $\rho_n(g)$ is label-irreducible.
\end{proof}

\begin{lem}\label{fixsets of powers}
For every $G$--invariant median subalgebra $M\cu\X_{\om}$, we have:
\begin{enumerate}
\item the action $G\acts M$ has no wall inversions;
\item each element $g\in G$ is elliptic (resp.\ loxodromic) in $M$ if and only if it is in $\X_{\om}$.
\end{enumerate}
\end{lem}
\begin{proof}
Part~(2) quickly follows from part~(1). Indeed, note that $\mc{H}_1(g,M)=\emptyset$ if and only if $\mc{H}_1(g,\X_{\om})=\emptyset$, for instance by Remark~\ref{from 10b rmk}(3). Since the action $G\acts M$ has no inversions, Theorem~\ref{all from 10b}(2) then shows that $g$ is elliptic/loxodromic in $M$ if and only if it is $\X_{\om}$.

Regarding part~(1), we will need the following observation.

\smallskip
{\bf Claim}: \emph{Let an action $G\acts (T_{\om},d_{\om})$ be the ultralimit of a sequence of actions on $\R$--trees $G\acts (T_n,d_n)$. Suppose in addition that $g\in G$ is loxodromic in $\om$--all $T_n$. Then, for all $k\in\Z\setminus\{0\}$ and all $x\in T_{\om}$, the point $x$ is fixed by $g^k$ if and only if it is fixed by $g$.}

\smallskip\noindent
\emph{Proof of Claim.} 
Let $\alpha_n$ be the axis of $g$ in $T_n$ and consider a point $y=(y_n)\in T_{\om}$. Then:
\[d_n(y_n,g^ky_n)=\ell(g^k,T_n)+2d_n(y_n,\alpha_n)\geq\ell(g,T_n)+2d_n(y_n,\alpha_n)=d_n(y_n,gy_n).\]
It follows that $d_{\om}(y,g^ky)\geq d_{\om}(y,gy)$ for all $k\in\Z\setminus\{0\}$, which proves the Claim.
\hfill$\blacksquare$

\smallskip
Now, we will deduce that the action $G\acts M$ has no wall inversions from Remark~\ref{tree factors vs X}(3). We need to show that, for every $v\in\G$, every $x\in\T_{\om}^v$ and every $g\in G$, we have $g^2x=x$ if and only if $gx=x$. If $\rho_n(g)$ is loxodromic in $\T_n^v$ for $\om$--all $n$, this follows from the Claim. If instead $\rho_n(g)$ is elliptic in $\T_n^v$ for $\om$--all $n$, then it follows from the observation that edge-stabilisers for the action $G\acts\T_n^v$ are closed under taking roots in $G$ (since they are hyperplane-stabilisers for $G\acts\X_n$).
\end{proof}

\begin{lem}\label{label-irreducibles in sub-ultras}
Consider $g\in G$ such that its label-irreducible components $g_1,\dots,g_k$ also lie in $G$ (in general, they only lie in $\A$). Then, for every $G$--invariant median subalgebra $M\cu\X_{\om}$:
\begin{enumerate}
\item we have a partition $\W_1(g,M)=\W_1(g_1,M)\sqcup\dots\sqcup\W_1(g_k,M)$; 
\item each wall in $\W_1(g_i,M)$ is preserved by each $g_j$ with $j\neq i$;
\item the sets $\W_1(g_1,M),\dots,\W_1(g_k,M)$ are transverse to each other;
\item we have $\overline{\mc{C}}(g,M)=\overline{\mc{C}}(g_1,M)\cap\dots\cap\overline{\mc{C}}(g_k,M)$ and $\overline{\mc{C}}(g^m,M)=\overline{\mc{C}}(g,M)$ for all $m\geq 1$;
\item for every $\eta\in\mc{PD}^G(M)$, we have $\ell(g,\eta)=\ell(g_1,\eta)+\dots+\ell(g_k,\eta)$.
\end{enumerate}
\end{lem}
\begin{proof}
Let us prove parts~(1) and~(2) first, except for disjointness of the sets $\W_1(g_i,M)$, which will follow from part~(3). Note that it suffices to consider the case when $M=\X_{\om}$. Indeed, by Remark~\ref{halfspaces of subsets}, we have a surjection $\res_M\colon\mscr{W}_M(\X_{\om})\ra\mscr{W}(M)$
and, by Remark~\ref{from 10b rmk}(3), a wall $\mf{w}\in\mscr{W}_M(\X_{\om})$ lies in $\W_1(g,\X_{\om})$ if and only if $\res_M(\mf{w})$ lies in $\W_1(g,M)$. 

In fact, Remark~\ref{tree factors vs X}(1) shows that it suffices to prove parts~(1) and~(2) ``for the trees $\T_{\om}^v$'', i.e.\ that, for every $v\in\G$, we have $\W_1(g,\T_{\om}^v)=\W_1(g_1,\T_{\om}^v)\cup\dots\cup\W_1(g_k,\T_{\om}^v)$, and that $g_j$ fixes the set $\W_1(g_i,\T_{\om}^v)$ pointwise for $j\neq i$. 

Note that distinct components $g_i$ cannot be loxodromic in the same tree $\T_{\om}^v$. Otherwise they would have the same axis, since they commute, and Lemma~\ref{from om to commutation new}(1) would give a contradiction. Thus, at most one of the sets $\W_1(g_1,\T_{\om}^v),\dots,\W_1(g_k,\T_{\om}^v)$ can be nonempty, for each $v$. 

Recalling that $g=g_1\cdot\ldots\cdot g_k$ and that the $g_i$ commute pairwise, we conclude that either $\W_1(g,\T_{\om}^v)$ is empty, or it coincides with $\W_1(g_{i_v},\T_{\om}^v)$, where $g_{i_v}$ is the only label-irreducible component that is loxodromic in $\T_{\om}^v$. If $j\neq i_v$, then $g_j$ is elliptic in $\T_{\om}^v$ and, since it commutes with $g_{i_v}$, it must fix pointwise its entire axis. In particular, $g_j$ preserves every wall in the set $\W_1(g_{i_v},\T_{\om}^v)$. This proves parts~(1) and~(2), except for disjointness of the sets $\W_1(g_i,M)$.

In order to prove part~(3), note that part~(2) shows that $\W_1(g_i,M)\cu\W_0(g_j,M)$ for $i\neq j$. By Lemma~\ref{fixsets of powers}(1), the action $G\acts M$ has no wall inversions. Thus $\W_1(g_i,M)$ and $\W_1(g_j,M)$ are transverse by Theorem~\ref{all from 10b}(3). In particular, $\W_1(g_i,M)$ and $\W_1(g_j,M)$ are disjoint, which completes the proof of part~(1).

Regarding part~(4), it suffices to prove the statements for $M=\X_{\om}$. Indeed, since $G$ acts non-transversely on $\X_{\om}$ and without inversions on $M$, we have $\overline{\C}(g,M)=M\cap\overline{\C}(g,\X_{\om})$, for instance by \cite[Proposition~3.40]{Fio10b}. The same holds for the $g_i$. Now, Lemma~\ref{ultra-Salvetti 1}(2) implies that $\overline{\C}(g,\X_{\om})$ coincides with $\bigcap_i\overline{\C}(g_i,\X_{\om})$ and $\overline{\C}(g^m,\X_{\om})$, since this is true for convex cores in all $\X_n$ (recall Lemma~\ref{label-irreducible decomposition}(2) and Remark~\ref{invariance of label-irreducibility}).


Finally, we prove part~(5). Parts~(1) and~(2) imply that a $\mscr{B}$--measurable fundamental domain for the action $\langle g\rangle\acts\mc{H}_1(g,M)$ can be constructed as the disjoint union of $\mscr{B}$--measurable fundamental domains for the actions $\langle g_i\rangle\acts\mc{H}_1(g_i,M)$. Since $G\acts M$ has no wall inversions, translation lengths coincide with a measure of these fundamental domains (Remark~\ref{length from fundamental domains}) and part~(5) follows.
\end{proof}

\begin{lem}\label{displacement lem}
Let $M\cu\X_{\om}$ be a $G$--invariant median subalgebra with a pseudo-metric $\eta\in\mc{PD}^G(M)$. Consider an element $g\in G$ and a point $x\in M$.
\begin{enumerate}
\item For every $m\geq 1$, we have $\eta(x,gx)\leq\eta(x,g^mx)$.
\item If $h\in G$ is a label-irreducible component of $g$, then $\eta(x,hx)\leq\eta(x,gx)$.
\end{enumerate}
\end{lem}
\begin{proof}
Recall that the action $G\acts M$ is non-transverse by Remark~\ref{tree factors vs X}(2), and without inversions by Lemma~\ref{fixsets of powers}(1). Thus, Theorem~\ref{all from 10b -2}(2) guarantees that $\eta(x,gx)=\ell(g,\eta)+2\eta(x,\overline{\mc{C}}(g,M))$.

Now, part~(1) is obtained by observing that $\ell(g^m,\eta)=m\cdot\ell(g,\eta)$ and $\overline{\mc{C}}(g^m,M)=\overline{\mc{C}}(g,M)$, which follow from Remark~\ref{length from fundamental domains} and Lemma~\ref{label-irreducibles in sub-ultras}(4), respectively. For part~(2), it suffices to recall that $\overline{\mc{C}}(g,M)\cu\overline{\mc{C}}(h,M)$ and $\ell(h,\eta)\leq\ell(g,\eta)$, as shown in parts~(4) and~(5) of Lemma~\ref{label-irreducibles in sub-ultras}.
\end{proof}

We now introduce \emph{cubical configurations}, which will be important in the proof of Theorem~\ref{from sub-ultras to X new new}, hence in those of Theorems~\ref{Q2 thm intro} and~\ref{special UNE thm intro}. The idea is that large cubes in $\X_{\om}$ that are moved very little by a subset $F\cu G$ will give rise to cubical configurations in $\X_{\om}$ (Lemma~\ref{from cubes to cubical config}). 

After the definition, we will see how to transfer cubical configurations from $\X_{\om}$ to $\X$ (Lemma~\ref{transferring cubical config}) and how to use them to construct large abelian subgroups in the centraliser $Z_G(F)$ (Lemma~\ref{commutation from cubical config}).

\begin{defn}\label{cubical config defn}
Consider an action on a median algebra $G\acts M$ and a finite subset $F\cu G$. An \emph{$(s,t,F)$--cubical configuration of width $m\geq 1$} in $M$ is the datum of nonempty subsets $\mc{U}_1,\dots,\mc{U}_s\cu\mscr{W}(M)$, walls $\mf{v}_1,\dots,\mf{v}_t\in\mscr{W}(M)$ and a partition $F=F_0\sqcup\{g_1,\dots,g_t\}$ such that:
\begin{enumerate}
\item the sets $\mc{U}_1,\dots,\mc{U}_s,\{\mf{v}_1,g_1^m\mf{v}_1\},\dots,\{\mf{v}_t,g_t^m\mf{v}_t\}$ are transverse to each other and their union is contained in $\W_0(f,M)$ for every $f\in F_0$; 
\item for each $1\leq j\leq t$, we have $\{\mf{v}_j,g_j^m\mf{v}_j\}\cu\W_1(g_j,M)$, while $\W_0(g_j,M)$ contains $\mc{U}_1,\dots,\mc{U}_s$ and all sets $\{\mf{v}_{j'},g_{j'}^m\mf{v}_{j'}\}$ with $j'\neq j$.
\end{enumerate}
We refer to $\mc{U}_1,\dots,\mc{U}_s$ as the \emph{static sets} and to $g_1,\dots,g_t$ as the \emph{skewering elements}.
\end{defn}

The proof of the next result is quite similar in spirit to that of Lemma~\ref{ultra-Salvetti 1}, but we repeat it for the reader's convenience, since it is a bit more technical.

We denote by $Y_{\om}\cu\X_{\om}$ the convex subset obtained as $\lim_{\om}Y_n$. A subset $\mscr{C}\cu\mscr{W}(Y)$ is a \emph{chain} if it is the set of hyperplanes associated to a set of halfspaces that is totally ordered by inclusion.

\begin{lem}\label{transferring cubical config}
Suppose that the sequence $\varphi_n$ is not $\om$--constant. Let $F\cu G$ be a finite subset of label-irreducible elements such that no two of them generate a cyclic subgroup. Suppose that $Y_{\om}$ admits an $(s,t,F)$--cubical configuration of width $\geq 4r$ with skewering elements $g_1,\dots,g_t$.

Then, for $\om$--all $n$, there exists a $(\s,\tau,\varphi_n(F))$--cubical configuration of width $\geq 4r$ in $Y$ such that $\s+\tau=s+t$ and the $\varphi_n(g_i)$ are skewering elements (hence $\tau\geq t$ and $\s\leq s$). In addition, the static sets of this configuration can be taken to be arbitrarily long chains of hyperplanes.
\end{lem}
\begin{proof}
Let the cubical configuration in $Y_{\om}$ consist of static sets $\mc{U}_1,\dots,\mc{U}_s$, walls $\mf{v}_1,\dots,\mf{v}_t$ and the partition $F=F_0\sqcup\{g_1,\dots,g_t\}$. It suffices to assume that each $\mc{U}_i$ is a singleton $\{\mf{u}_i\}$. Recall that the action $G\acts Y_{\om}$ is non-transverse by Remark~\ref{tree factors vs X}(2), and without inversions by Remark~\ref{connected inversions}.

By Remark~\ref{tree factors vs X}(1), there exist vertices $u_1,\dots,u_s,v_1,\dots,v_t\in\G$ such that the walls $\mf{u}_i$ and $\mf{v}_j$ arise from walls $\overline{\mf{u}}_i\in\mscr{W}(\T_{\om}^{u_i})$ and $\overline{\mf{v}}_j\in\mscr{W}(\T_{\om}^{v_j})$, respectively. Note that each $\overline{\mf{u}}_i$ is preserved by all elements of $F$, while $\overline{\mf{v}}_j$ and $g_j^{4r}\overline{\mf{v}}_j$ are preserved by $F\setminus\{g_j\}$ and cross the axis of $g_j$ in $\T_{\om}^{v_j}$.

\smallskip
{\bf Claim:} \emph{there exist nontrivial arcs $\alpha_i\cu\T_{\om}^{u_i}$ and $\beta_j\cu\T_{\om}^{v_j}$, with endpoints $\alpha_i^{\pm}$ and $\beta_j^{\pm}$, such that:}
\begin{itemize}
\item[\emph{(a)}] \emph{each $\alpha_i$ is fixed by $F$ and each $\beta_j$ is fixed by $F\setminus\{g_j\}$;}
\item[\emph{(b)}] \emph{$\beta_j$ is contained in the axis of $g_j$ in $\T_{\om}^{v_j}$ and it has length $>4r\cdot\ell(g_j,\T_{\om}^{v_j})$;}
\item[\emph{(c)}] \emph{these arcs induce transverse sets of walls of $Y_{\om}$. More precisely, for every $(\eps,\zeta)\in\{\pm\}^s\x\{\pm\}^t$, there exists a point $x^{\eps,\zeta}\in Y_{\om}$ such that, for all $i$ and $j$, the nearest-point projection of $\pi_{\om}^{u_i}(x^{\eps,\zeta})$ to $\alpha_i$ is $\alpha_i^{\eps_i}$ and the nearest-point projection of $\pi_{\om}^{v_j}(x^{\eps,\zeta})$ to $\beta_j$ is $\beta_j^{\zeta_j}$.}
\end{itemize}

\smallskip\noindent
\emph{Proof of Claim.}
The walls $\mf{u}_i,\mf{v}_j\in\mscr{W}(Y_{\om})$ correspond to halfspaces $\mf{u}_i^{\pm},\mf{v}_j^{\pm}\in\mscr{H}(Y_{\om})$, which we label so that, for each $j$, the halfspaces $\mf{v}_j^-$ and $g_j^{4r}\mf{v}_j^+$ are disjoint. Since the sets $\{\mf{u}_i\}$ and $\{\mf{v}_j,g_j^{4r}\mf{v}_j\}$ are all transverse to each other, Helly's lemma allows us to find points $x^{\eps,\zeta}\in Y_{\om}$ so that $x^{\eps,\zeta}$ lies in $\mf{u}_i^{\eps_i}$ for all $i$ and so that, for all $j$, it lies in $\mf{v}_j^-$ if $\zeta_j=-$ and in $g_j^{4r}\mf{v}_j^+$ if $\zeta_j=+$. 

For each $i$, there exists a point $q_i\in\T_{\om}^{u_i}$ such that one of the two halfspaces associated to $\overline{\mf{u}}_i$ is a connected component $\kappa_i$ of $\T_{\om}^{u_i}\setminus\{q_i\}$ (in particular, $\kappa_i$ is open). Since $G$ acts on $Y_{\om}$ without inversions, $F$ fixes $q_i$ and leaves $\kappa_i$ invariant. Since $F$ is finite, it fixes nontrivial arc of $\T_{\om}^{u_i}$ with one endpoint equal to $q_i$ and the other lying in $\kappa_i$. We let $\alpha_i$ be this arc, possibly shrinking it a bit to ensure that the finitely many points $\pi_{\om}^{u_i}(x^{\eps,\zeta})$ have the correct projections to $\alpha_i$.

Finally, consider an index $1\leq j\leq t$. The set of points $x^{\eps,\zeta}$ with $\zeta_j=-$ is contained in $\mf{v}_j^-$, whereas the set of points $x^{\eps,\zeta}$ with $\zeta_j=+$ is contained in $g_j^{4r}\mf{v}_j^+$. Note that either $\pi_{\om}^{v_j}(\mf{v}_j^-)$ or $\pi_{\om}^{v_j}(g_j^{4r}\mf{v}_j^+)$ is an \emph{open} halfspace of $\T_{\om}^{v_j}$. It follows that the set of points $\pi_{\om}^{v_j}(x^{\eps,\zeta})$ with $\zeta_j=-$ is separated by the set of points $\pi_{\om}^{v_j}(x^{\eps,\zeta})$ with $\zeta_j=+$ by an arc $\beta_j$ that is contained in the axis of $g_j$ in $\T_{\om}^{v_j}$ and has length $>4r\cdot\ell(g_j,\T_{\om}^{v_j})$. Since $F\setminus\{g_j\}$ preserves the halfspaces $\mf{v}_j^-$ and $g_j^{4r}\mf{v}_j^+$, we can shrink $\beta_j$ a bit to ensure that it is fixed by $F\setminus\{g_j\}$, while retaining length $>4r\cdot\ell(g_j,\T_{\om}^{v_j})$. This concludes the proof of the Claim.
\hfill$\blacksquare$ 

\smallskip
Now, it is straightforward to approximate, for $\om$--all $n$, the data provided by the Claim by arcs $\alpha_i(n)\cu\T_n^{u_i}$, $\beta_j(n)\cu\T_n^{v_j}$ and points $x^{\eps,\zeta}_n\in Y_n$ satisfying analogous conditions. 

Here we need to account for the fact that some elements of $F$ that are elliptic in one of the trees $\T_{\om}^{\bullet}$ might be loxodromic in the trees $\T_n^{\bullet}$, with translation lengths converging to zero. In any case, we can ensure that the following are satisfied:
\begin{enumerate}
\item[$(a^{\prime})$] for all $f\in F$ and $1\leq i\leq s$, either $f$ fixes $\alpha_i$ pointwise, or $\alpha_i$ is contained in the axis of $f$ in $\T_n^{u_i}$ and has length $>4r\cdot\ell(f,\T_n^{u_i})$; the same holds for $f\in F\setminus\{g_j\}$ and $\beta_j$;
\item[$(b^{\prime})$] $\beta_j(n)$ is contained in the axis of $g_j$ in $\T_n^{v_j}$ and it has length $>4r\cdot\ell(g_j,\T_n^{v_j})$;
\item[$(c^{\prime})$] the nearest-point projection of $\pi_n^{u_i}(x_n^{\eps,\zeta})$ to $\alpha_i(n)$ is $\alpha_i^{\eps_i}(n)$, and the nearest-point projection of $\pi_n^{v_j}(x_n^{\eps,\zeta})$ to $\beta_j(n)$ is $\beta_j^{\zeta_j}(n)$.
\end{enumerate}

Fix a value of $n$ such that the above are satisfied. Condition~$(c^{\prime})$ implies that the subsets of $\mscr{W}(Y_n)$ corresponding to the arcs $\alpha_i(n)$ and $\beta_j(n)$ are all transverse to each other. Hence:
\begin{itemize}
\item \emph{Each $f\in F$ fixes all arcs $\alpha_i(n),\beta_j(n)$ except at most one.} Otherwise, Conditions~$(a^{\prime})$ and~$(b^{\prime})$ would yield $\mf{w},\mf{w}'\in\W_1(f,Y_n)$ such that $\{\mf{w},f^{4r}\mf{w}\}$ and $\{\mf{w}',f^{4r}\mf{w}'\}$ are transverse. Along with Lemma~\ref{irreducible elements new}, this would contradict label-irreducibility of $\rho_n(f)$ (Remark~\ref{invariance of label-irreducibility}).

\smallskip
\item \emph{Each of the arcs $\alpha_i(n),\beta_j(n)$ is fixed by all elements of $F$ except at most one.} Indeed, if neither of $f_1,f_2\in F$ fixed a given arc, then the same conditions would yield $\mf{w}_1,\mf{w}_2$ such that $\{\mf{w}_1,f_1^{4r}\mf{w}_1,\mf{w}_2,f_2^{4r}\mf{w}_2\}\cu\W_1(f_1,Y_n)\cap\W_1(f_2,Y_n)$. Since $\rho_n(f_1)$ and $\rho_n(f_2)$ are label-irreducible, Lemma~\ref{friendly label-irreducibles} would then imply that $\langle f_1,f_2\rangle\simeq\Z$, contradicting our assumptions.
\end{itemize}

In conclusion, up to reordering, there exists $0\leq\s\leq s$ such that, for $1\leq i\leq\s$, the arcs $\alpha_i(n)$ are fixed by the whole $F$, while, for $\s<i\leq s$, there exists $f_i\in F$ such that $f_i$ contains $\alpha_i(n)$ in its axis and $\alpha_i(n)$ is fixed by $F\setminus\{f_i\}$. We obtain a $(\s,\tau,F)$--cubical configuration of width $\geq 4r$ in $Y_n$, where the static sets are given by the hyperplanes of $Y_n$ originating from the arcs $\alpha_i(n)$ with $i\leq\s$, while the skewering elements are $f_{\s+1},\dots,f_s$ and $g_1,\dots,g_t$ (thus, $\tau=s+t-\s$).

This immediately translates into a $(\s,\tau,\varphi_n(F))$--cubical configuration of width $\geq 4r$ in $Y$. The fact that the static sets can be taken with arbitrarily large cardinality is also immediate, recalling that, since $\varphi_n$ is not $\om$--constant, the scaling factors $\overline\tau_S^{\rho_n}$ diverge (cf.\ Assumption~\ref{tau diverges} above). 
\end{proof}

The next result only requires the material in Subsection~\ref{more on raag cc} for its proof. However, it is best stated in terms of cubical configurations, as defined above. 

Recall that $q$ is the number of orbits of vertices for the action $G\acts Y$.

\begin{lem}\label{commutation from cubical config}
Let $F\cu G$ be a finite set of label-irreducible elements. Suppose that there is an $(s,t,F)$--cubical configuration of width $\geq 4r$ in $Y$, where all the static sets are chains containing each $\geq q$ hyperplanes. Then the centraliser $Z_G(F)$ contains a copy of $\Z^k$ with $k=s+t$.
\end{lem}
\begin{proof}
Let the cubical configuration consist of static sets $\mc{U}_1,\dots,\mc{U}_s$, hyperplanes $\mf{v}_1,\dots,\mf{v}_t$ and the partition $F=F_0\sqcup\{g_1,\dots,g_t\}$. 

Form a set $\mc{U}_i'$ by adding to $\mc{U}_i$ all hyperplanes of $Y$ that separate hyperplanes of $\mc{U}_i$. This guarantees that there exist vertices $x_i,y_i\in Y$ such that $\mscr{W}(x_i|y_i)=\mc{U}_i'$. The sets $\mc{U}_i'$ and $\{\mf{v}_j,g_j^{4r}\mf{v}_j\}$ are still all transverse to each other and the elements of $F$ still fix each $\mc{U}_i'$ pointwise.

Since $\#\mscr{W}(x_i|y_i)\geq\#\mc{U}_i\geq q$, any geodesic joining $x_i$ to $y_i$ must contain two points in the same $G$--orbit. Thus, there exist $z_i\in Y$ and $h_i\in G\setminus\{1\}$ with $\mscr{W}(z_i|h_iz_i)\cu\mc{U}_i'$ for all $1\leq i\leq s$. 

Lemma~\ref{another commutation criterion new} shows that the $h_i$ commute pairwise. In addition, for every $f\in F$, we have $\mscr{W}(z_i|h_iz_i)\cu\mc{U}_i'\cu\W_0(f)$, which is transverse to $\W_1(f)$. Since $\W_1(f)$ contains $\mscr{W}(z|fz)$ for any $z\in\overline{\mc{C}}(f)$, another application of Lemma~\ref{another commutation criterion new} guarantees that $h_i$ and $f$ commute. Finally, for $1\leq j\leq t$, the hyperplanes  $\mf{v}_j$ and $g_j^{4r}\mf{v}_j$ are preserved by all elements of $F\setminus\{g_j\}$. Since $g_j$ is label-irreducible, Corollary~\ref{label-irreducible cor}(1) implies that $g_j$ commutes with every element of $F$.

In conclusion, we have shown that the subgroup generated by $A:=\{h_1,\dots,h_s,g_1,\dots,g_t\}$ is abelian and contained in $Z_G(F)$. We are left to show that $A$ is a basis for $\langle A\rangle$.

Observe that $\mf{v}_j$ is preserved by all elements of $A\setminus\{g_j\}$, but lies in $\W_1(g_j)$. Similarly, there exist hyperplanes $\mf{u}_i\in\mscr{W}(z_i|h_iz_i)$ that are preserved by $A\setminus\{h_i\}$, but lie in $\W_1(h_i)$. 
If a product $h_1^{m_1}\cdot\ldots\cdot h_s^{m_s}\cdot g_1^{n_1}\cdot\ldots\cdot g_t^{n_t}$ represents the identity, then it must preserve all hyperplanes $\mf{u}_i$ and $\mf{v}_j$, which implies that $m_i=0$ and $n_j=0$ for all $i,j$. This concludes the proof.
\end{proof}

\subsection{Ultralimits of Salvettis and the WNE property.}\label{meaty sect}

This subsection is devoted to the proof of Theorems~\ref{Q2 thm intro} and~\ref{special UNE thm intro}. We keep the exact same setting as the previous subsection:

\begin{ass}\label{assumption}
Let $G\leq\A$ be a convex-cocompact subgroup. Let $Y\cu\X$ be a $G$--invariant, convex subcomplex on which $G$ acts with $q$ orbits of vertices. Set $r=\dim\X$. Denote by $d$ the $\ell^1$ metric on $\X$ and $Y$. Let $[\mu]$ be the induced coarse median structure on $G$. 

Consider a sequence $\varphi_n\in\aut(G,[\mu])$. Denote by $\rho\colon G\hookrightarrow\A$ the standard inclusion and set $\rho_n=\rho\o\varphi_n$. Fixing a non-principal ultrafilter $\om$, define $\X_{\om},\X_n,Y_n$ and $Y_{\om}=\lim_{\om}Y_n$ as in Subsection~\ref{ultra-Salvetti sect}.
\end{ass}

The following result is the coronation of our efforts from Subsection~\ref{more on raag cc} and the previous portion of Section~\ref{ultra sect}. Its second part (with $k=1$) proves Theorem~\ref{special UNE thm intro}, while its first part is the last remaining ingredient in the proof of Theorem~\ref{Q2 thm intro} (together with Corollary~\ref{homothety 1}).

\begin{thm}\label{from sub-ultras to X new new}
Let $F\cu G$ be a finite subset and suppose that {\bf one} of the following holds.
\begin{enumerate}
\item Let $\varphi_n$ not be $\om$--constant. Let $M\cu Y_{\om}$ be a $G$--invariant median subalgebra and consider $\eta\in\mc{PD}^G(M)$. There exists a $k$--cube $C\cu M$ such that, for any two distinct points $x,y\in C$:
\[\eta(x,y)>4r^2q\cdot[\tau^{\eta}_F(x)+\tau^{\eta}_F(y)].\]
\item There exists a (generalised) $k$--cube $C\cu Y^{(0)}$ such that, for any two distinct points $x,y\in C$:
\[d(x,y)>(2r^2q+\tfrac{rq}{2}\cdot\max\{4r,q\})\cdot[\tau^d_F(x)+\tau^d_F(y)].\]
\end{enumerate}
Then the centraliser $Z_G(F)$ contains a copy of $\Z^k$. 
\end{thm}

The theorem will follow quickly from Lemma~\ref{from cubes to cubical config} below, which constructs a cubical configuration in $Y_{\om}$ (in Case~(1)) or directly in $Y$ (in Case~(2)). Indeed, we can then use Lemma~\ref{transferring cubical config} to always obtain a cubical configuration in $Y$ and this yields the required copy of $\Z^k$ in $Z_G(F)$ by Lemma~\ref{commutation from cubical config}.

\begin{lem}\label{from cubes to cubical config}
Consider the setting of Theorem~\ref{from sub-ultras to X new new}. There exists an $(s,t,F')$--cubical configuration of width $\geq 4r$ in $Y_{\om}$ (in Case~(1)) or in $Y$ (in Case~(2)), where $s+t=k$ and $F'\cu G$ is a finite subset with $Z_G(F')=Z_G(F)$. All elements of $F'$ are label-irreducible and no two of them generate a cyclic subgroup.

In addition, in Case~(2), the static sets are chains of hyperplanes of cardinality $\geq q$.
\end{lem}
\begin{proof}
We prove the lemma simultaneously in the two cases of the theorem. In fact, in this proof it is irrelevant whether the $\varphi_n$ are $\om$--constant or not, so we can view Case~(2) as a special instance of Case~(1) by taking $\varphi_n\equiv\id_G$, $Y_{\om}=Y$, $M=Y^{(0)}$, $\eta=d$.

Recall that, by Remark~\ref{tree factors vs X}(2) and Lemma~\ref{fixsets of powers}(1), the action $G\acts M$ is non-transverse and without inversions. We begin by constructing the finite subset $F'\cu G$.

\smallskip
{\bf Claim~1:} \emph{there exists $F'\cu G$ such that $Z_G(F')=Z_G(F)$, all elements of $F'$ are label-irreducible and no two of them generate a cyclic subgroup. In addition, $\tau_{F'}^{\eta}(x)\leq q\cdot\tau_F^{\eta}(x)$ for all $x\in M$.}

\smallskip\noindent
\emph{Proof of Claim~1.}
Let $a_1,\dots,a_N\in\A$ be a choice of generator for each maximal cyclic subgroup of $\A$ that contains a label-irreducible component of an element of $F$. Let $m_i\geq 1$ be the smallest integer such that $a_i^{m_i}$ lies in $G$; by Lemma~\ref{label-irreducibles don't escape}, $m_i$ is well-defined and, by Remark~\ref{uniform exponent}, we have $1\leq m_i\leq q$. Define $F':=\{a_i^{m_i} \mid 1\leq i\leq N\}$.

It is clear that every element of $F'$ is label-irreducible and that any two elements of $F'$ generate a non-cyclic subgroup. Since all nontrivial powers of any given element of $\A$ have the same centraliser, Lemma~\ref{label-irreducible decomposition}(3) implies that $Z_G(F')=Z_G(F)$. 

For every $a_i$, there exist $n\geq 1$ and $f\in F$ such that $a_i^n$ is a label-irreducible component of $f$. Thus $a_i^{nm_i}$ is a label-irreducible component of $f^{m_i}$. Applying Lemma~\ref{displacement lem}, it follows that:
\[\eta(x,a_i^{m_i}x)\leq\eta(x, a_i^{nm_i}x)\leq\eta(x,f^{m_i}x)\leq m_i\cdot\eta(x,fx)\leq q\cdot\eta(x,fx)\leq q\cdot\tau_F^{\eta}(x).\] 
Hence $\tau_{F'}^{\eta}(x)\leq q\cdot\tau_F^{\eta}(x)$ for all $x\in M$, as required.
\hfill$\blacksquare$ 

\smallskip
Now, consider the multi-bridge $\mc{B}(F')\cu M$ introduced in Definition~\ref{multi-bridge of S defn}. Pick any fibre $P=\mc{B}_{/\mkern-5mu/}(F')\x\{\ast\}$. Let $\pi_P\colon M\ra P$ be the gate-projection.

\smallskip
{\bf Claim~2:} \emph{the set $C':=\pi_P(C)$ is again a $k$--cube and, for all distinct points $x',y'\in C'$, we have $\eta(x',y')\geq 4r^2\cdot\overline\tau_{F'}^{\eta}$. Under the assumptions of Case~(2), we further have $\eta(x',y')\geq rq\cdot\overline\tau_{F'}^{\eta}$.}

\smallskip\noindent
\emph{Proof of Claim~2.} 
If $x,y\in C$ are distinct, note that we have $\eta(x,y)>4r^2q\cdot[\tau^{\eta}_F(x)+\tau^{\eta}_F(y)]$ under the assumptions of both Case~(1) and Case~(2). Also recall that, by Proposition~\ref{properties of B(S)}(6), we have $\eta(x,\pi_P(x))\leq 2r^2\cdot\tau_{F'}^{\eta}(x)$ for all $x\in M$. Combining these inequalities with Claim~1, we obtain:
\begin{align*}
\eta(\pi_P(x),\pi_P(y))&\geq \eta(x,y)-2r^2\cdot[\tau^{\eta}_{F'}(x)+\tau^{\eta}_{F'}(y)] \\
&>4r^2q\cdot[\tau^{\eta}_F(x)+\tau^{\eta}_F(y)]-2r^2\cdot[\tau^{\eta}_{F'}(x)+\tau^{\eta}_{F'}(y)] \\
&\geq 2r^2\cdot[\tau^{\eta}_{F'}(x)+\tau^{\eta}_{F'}(y)]\geq 4r^2\cdot\overline\tau_{F'}^{\eta}.
\end{align*}
In particular, distinct points of $C$ project to distinct points of $C'$, which guarantees that $C'$ is again a $k$--cube. Moreover, $\eta(x',y')\geq 4r^2\cdot\overline\tau_{F'}^{\eta}$ whenever $x',y'$ are distinct points of $C'$. 

Under the assumptions of Case~(2), we also have $\eta(x,y)>(2r^2q+rq^2/2)\cdot[\tau^{\eta}_F(x)+\tau^{\eta}_F(y)]$ for all $x,y\in C$. Using this instead of $\eta(x,y)>4r^2q\cdot[\tau^{\eta}_F(x)+\tau^{\eta}_F(y)]$ in the above chain of inequalities, we obtain $\eta(\pi_P(x),\pi_P(y))\geq rq\cdot\overline\tau_{F'}^{\eta}$, as required.
\hfill$\blacksquare$ 

\smallskip
Let $\{C_{i,-}',C_{i,+}'\}$ be the $k$ pairs of opposite codimension--$1$ faces of $C'$. Setting $\mc{H}_i:=\mscr{H}(C_{i,-}'|C_{i,+}')$, we obtain sets of halfspaces $\mc{H}_1,\dots,\mc{H}_k\cu\mscr{H}(M)$ that are transverse to each other. If $\nu_{\eta}$ is the measure introduced in Remark~\ref{from pseudo-metrics to measures}, we have $\nu_{\eta}(\mc{H}_i)>4r^2\cdot\overline\tau_{F'}^{\eta}$ by Claim~2.

The set $\mc{H}_i$ can be partitioned into at most $r$ measurable subsets such that no two halfspaces in the same subset are transverse; this follows from Corollary~\ref{measurable partitions of intervals new}, proved in the Appendix (note that $\mc{D}(M)\neq\emptyset$ since $\mc{D}(\X_{\om})\neq\emptyset$, even though $\eta$ is just a pseudo-metric). Define $\mc{H}_i'\cu\mc{H}_i$ as the subset with the largest measure among those in this partition. No two halfspaces in $\mc{H}_i'$ are transverse and:
\[\nu_{\eta}(\mc{H}_i')\geq\tfrac{1}{r}\cdot\nu_{\eta}(\mc{H}_i)>4r\cdot\overline\tau_{F'}^{\eta}.\]

Let $\mc{U}_i'\cu\mscr{W}(M)$ be the set of walls associated to $\mc{H}_i'$. Recall that: 
\[\mc{U}_i'\cu\mscr{W}_{C'}(M)\cu\mscr{W}_P(M)\cu\bigcap_{f\in F'}\mscr{W}_{\overline{\C}(f)}(M)=\bigcap_{f\in F'}\left(\W_1(f,M)\sqcup\W_0(f,M)\right).\]
Since the sets $\W_1(f,M)$ and $\W_0(f,M)$ are transverse, while no two walls in $\mc{U}_i'$ are transverse, we must have either $\mc{U}_i'\cu\W_1(f,M)$ or $\mc{U}_i'\cu\W_0(f,M)$ for every index $i$ and element $f\in F'$. Consider the partitions $F'=F_i\sqcup F_i^{\perp}$ such that $\mc{U}_i'\cu\W_1(f,M)$ if $f\in F_i$ and $\mc{U}_i'\cu\W_0(f,M)$ if $f\in F_i^{\perp}$.

\smallskip
{\bf Claim~3:} \emph{we have $\#F_i\leq 1$ for all $1\leq i\leq k$, and $F_i\cap F_j=\emptyset$ for $i\neq j$.}

\smallskip\noindent
\emph{Proof of Claim~3}
For every $f\in F'$, we have:
\[\nu_{\eta}(\mc{H}_i')>4r\cdot\overline\tau_{F'}^{\eta}\geq 4r\cdot\ell(f,\eta)=\ell(f^{4r},\eta).\]
If $\mc{U}_i'\cu\W_1(f,M)$, it follows that there exists a wall $\mf{w}\in\mc{U}_i'$ such that $f^{4r}\mf{w}\cu\mc{U}_i'$. 

Thus, if $f,g\in F_i$, there exist walls $\mf{w},\mf{w}'$ such that $\{\mf{w},f^{4r}\mf{w},\mf{w}',g^{4r}\mf{w}'\}\cu\W_1(f,M)\cap\W_1(g,M)$. By Remark~\ref{halfspaces of subsets}(1) and Remark~\ref{from 10b rmk}(3), there is an analogous inclusion involving walls of $\X_{\om}$, so Lemma~\ref{from om to commutation new}(1) yields $\langle f,g\rangle\simeq\Z$. Since $f,g\in F'$, this means that $f=g$. Hence $\#F_i\leq 1$.

Suppose towards a contradiction that there exists $f\in F_i\cap F_j$. Then there are walls $\mf{w}_i,\mf{w}_j$ such that $\{\mf{w}_i,f^{4r}\mf{w}_i\}\cu\mc{U}_i'$ and $\{\mf{w}_j,f^{4r}\mf{w}_j\}\cu\mc{U}_j'$. In particular, the subsets $\{\mf{w}_i,f^{4r}\mf{w}_i\}$ and $\{\mf{w}_j,f^{4r}\mf{w}_j\}$ are transverse to each other and contained in $\W_1(f,M)$. Again, Remark~\ref{halfspaces of subsets}(1) and Remark~\ref{from 10b rmk}(3) give walls of $\X_{\om}$ with the same properties, which contradicts Lemma~\ref{from om to commutation new}(2).
\hfill$\blacksquare$

\smallskip
Up to permuting the sets $\mc{U}_i'$, we can assume that there exists an index $0\leq s\leq k$ such that $F_i=\emptyset$ for $1\leq i\leq s$, while $F_i=\{f_i\}$ for $s<i\leq k$ and pairwise-distinct elements $f_i\in F'$. This all follows from Claim~3. Each $f\in F'$ preserves every wall in $\mc{U}_i'$ except if $i>s$ and $f=f_i$. In addition, the proof of Claim~3 gives walls $\mf{v}_i$, for $i>s$, such that $\{\mf{v}_i,f_i^{4r}\mf{v}_i\}\cu\mc{U}_i'\cu\W_1(f_i,M)$. Also recall that the sets $\mc{U}_i'$ are all transverse to each other.

This gives an $(s,k-s,F)$--cubical configuration of width $\geq 4r$ in $M$, where $\mc{U}_1',\dots,\mc{U}_s'$ are the static sets and $f_{s+1},\dots,f_k$ are the skewering elements. Since $M\cu Y_{\om}$, it is straightforward to transfer this to a cubical configuration in $Y_{\om}$ with the same parameters using Remark~\ref{halfspaces of subsets}(1) and Remark~\ref{from 10b rmk}(3). This completes the proof of the lemma in Case~(1).

In Case~(2), we are left to show that the static sets $\mc{U}_1',\dots,\mc{U}_s'$ contain at least $q$ hyperplanes each. Recall that $M=Y^{(0)}$ and $\eta=d$, so $\nu_{\eta}$ is just the counting measure. By Claim~2, we have $\#\mc{H}_i'=\nu_{\eta}(\mc{H}_i')\geq\tfrac{1}{r}\nu_{\eta}(\mc{H}_i)\geq q\cdot\overline\tau_{F'}^{\eta}\geq q$, which concludes the proof since $\#\mc{H}_i'=\#\mc{U}_i'$.
\end{proof}

Combining Lemmas~\ref{transferring cubical config},~\ref{commutation from cubical config} and~\ref{from cubes to cubical config}, we can finally prove Theorem~\ref{from sub-ultras to X new new}.

\begin{proof}[Proof of Theorem~\ref{from sub-ultras to X new new}]
Our goal is to construct an $(s,t,F'')$--cubical configuration of width $\geq 4r$ in $Y$, where: $s+t=k$, the centraliser $Z_G(F'')$ is isomorphic to $Z_G(F)$, all elements of $F''$ are label-irreducible, and all static sets are chains of hyperplanes of cardinality $\geq q$. If we manage to do this, then Lemma~\ref{commutation from cubical config} guarantees that $Z_G(F)\simeq Z_G(F'')$ contains the required copy of $\Z^k$.

In Case~(2) of the theorem, a cubical configuration with these properties is provided by Lemma~\ref{from cubes to cubical config}. In Case~(1), we first apply Lemma~\ref{from cubes to cubical config} to obtain an $(s,t,F')$--cubical configuration of width $\geq 4r$ in $Y_{\om}$, where $Z_G(F')=Z_G(F)$. Then we obtain the required cubical configuration in $Y$ from Lemma~\ref{transferring cubical config}, with $F''=\varphi_n(F')$ for some $n$ (this is the only place where it is important that $\varphi_n$ is not $\om$--constant). Since $Z_G(F'')=\varphi_n(Z_G(F'))\simeq Z_G(F)$, this proves the theorem.
\end{proof}

The following two corollaries collect the key takeaways from Theorem~\ref{from sub-ultras to X new new} that we will need in the rest of the paper.

\begin{cor}\label{UNE cor}
Every special group with trivial centre is UNE (Definition~\ref{UNE defn}).
\end{cor}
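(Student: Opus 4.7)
The plan is to derive Corollary~\ref{UNE cor} as an almost immediate consequence of part~(1) of Theorem~\ref{from sub-ultras to X new}, using the triviality of $Z(G)$ to obstruct that theorem's conclusion in the rank-one case.

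First I would fix the geometry. Since $G$ is the fundamental group of a compact special cube complex, Haglund--Wise realise $G$ as a convex-cocompact subgroup of some right-angled Artin group $\A_{\G}$. Choose a $G$-invariant, convex, $G$-cocompact subcomplex $Y\cu\X_{\G}$; let $q$ be the number of $G$-orbits of vertices of $Y$, and set $r=\dim\X_{\G}$. Equip $Y$ with the restriction of the $\ell^{1}$ metric $d$. The action $G\acts (Y,d)$ is then proper, cocompact and isometric, so by Remark~\ref{every UNE} it is enough to prove that this particular action is uniformly non-elementary.

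Next, fix an arbitrary finite generating set $S\cu G$ and two points $x,y\in Y$. I would place myself in the setting of Assumption~\ref{assumption} with the constant sequence $\varphi_{n}\equiv\mathrm{id}_{G}$ (which is trivially coarse-median preserving), so that $\X_{n}=\X_{\G}$, $Y_{n}=Y$, and the metric $d_{n}$ coincides with $d$. The two-element set $C=\{x,y\}$ is a generalised $1$-cube in $Y^{(0)}$. I then aim to apply Theorem~\ref{from sub-ultras to X new}(1) with $k=1$, $F=S$ and this $C$: if the inequality
\[ d(x,y) \;>\; 2r^{2}(q+1)\bigl[\tau_{S}^{d}(x)+\tau_{S}^{d}(y)\bigr] \]
were to hold, the theorem would yield a copy of $\Z$ inside $Z_{G}(S)$. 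But $\langle S\rangle=G$, so $Z_{G}(S)=Z(G)$, which is trivial by hypothesis. This contradiction forces
\[ d(x,y)\;\leq\;2r^{2}(q+1)\bigl[\tau_{S}^{d}(x)+\tau_{S}^{d}(y)\bigr], \]
with the constant $c=2r^{2}(q+1)$ depending only on the chosen cubulation, not on $S$.

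Therefore the action $G\acts (Y,d)$ is $c$-UNE, and Remark~\ref{every UNE} promotes this to the statement that $G$ itself is UNE. The only point of care in carrying out this plan is checking that the proof of Theorem~\ref{from sub-ultras to X new}(1) really does go through for the constant sequence $\varphi_{n}\equiv\mathrm{id}_{G}$; this is immediate from inspection, since Case~(1) of that theorem avoids the $\om$-non-constancy hypothesis which appears only in Case~(2). There is no hard step here: the substantive work is entirely absorbed into Theorem~\ref{from sub-ultras to X new}, whose proof in turn draws on Subsections~\ref{label-irreducible sect}, \ref{cc raag sect} and \ref{ultra-Salvetti sect}.
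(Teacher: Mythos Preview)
Your proposal is correct and is precisely the argument the paper has in mind: the paper simply states that the corollary ``is immediate from part~(1) of Theorem~\ref{from sub-ultras to X new}'', and you have spelled out that immediate deduction by taking $k=1$, $F=S$ a generating set, and $C=\{x,y\}$, so that the conclusion $\Z\leq Z_G(S)=Z(G)$ contradicts triviality of the centre. The only cosmetic point is that Theorem~\ref{from sub-ultras to X new}(1) takes $C\cu Y^{(0)}$, so you should take $x,y$ to be vertices; the UNE inequality then extends to arbitrary points of $Y$ (or one simply invokes Remark~\ref{every UNE} to pass to a Cayley graph), exactly as you indicate.
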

\begin{proof}
Let $G$ be a special group with trivial centre. Embed $G$ as a convex-cocompact subgroup of a RAAG and apply Theorem~\ref{from sub-ultras to X new new}(2), taking $k=1$ and letting $F$ be an arbitrary generating set for $G$. This shows that the proper cocompact action $G\acts Y$ is UNE, hence $G$ is a UNE group.
\end{proof}

\begin{cor}\label{from sub-ultras to X new cor}
Consider the setting of Assumption~\ref{assumption}. Suppose that the $\varphi_n$ are pairwise distinct.
\begin{enumerate}
\item If $C\cu Y_{\om}$ is a $k$--cube and $H\leq G$ fixes $C$ pointwise, then $Z_G(H)$ contains a copy of $\Z^k$.
\item Let $G$ have trivial centre. Then, for every $G$--invariant median subalgebra $M\cu Y_{\om}$ the action $G\acts M$ is WNE (in the sense of Definition~\ref{WNE defn}).
\end{enumerate}
\end{cor}
\begin{proof} 
Note that by Remark~\ref{noetherian centralisers}, it suffices to prove part~(1) under the additional assumption that $H$ be finitely generated. So let us suppose that $H$ is generated by a finite set $F$ that fixes the $k$--cube $C$. We have observed in Subsection~\ref{ultra-Salvetti sect} that $\mc{D}(\X_{\om})^G\neq\emptyset$. Applying Theorem~\ref{from sub-ultras to X new new}(1) to any choice of $\eta\in\mc{D}(Y_{\om})^G$, we obtain the required copy of $\Z^k$ inside $Z_G(F)=Z_G(H)$.

Part~(2) also follows from Theorem~\ref{from sub-ultras to X new new}(1), setting $k=1$ and letting $F$ generate $G$.
\end{proof}

The following implies parts~(1) and~(2) of Theorem~\ref{Q2 thm intro} as a special case (parts~(3) and~(4) are obtained below in Remark~\ref{addendum to Q2 thm}). Note that the essentiality requirement in Theorem~\ref{homothety 2}(3) is equivalent to the minimality requirement in Theorem~\ref{Q2 thm intro}(2), because of \cite[Theorem~C]{Fio10b}.

Recall that we denote by $\pi\colon\aut G\ra\out G$ the quotient projection. If $G$ has trivial centre and $A\leq\out G$ is a subgroup, we have $G\lhd\pi^{-1}(A)$ and $\pi^{-1}(A)/G\simeq A$. 

\begin{thm}\label{homothety 2}
Let $G\leq\A$ be a convex-cocompact subgroup with trivial centre. Let $[\mu]$ be the induced coarse median structure on $G$. Let $A\leq\out (G,[\mu])$ be an infinite abelian subgroup. Then there exists an action $\pi^{-1}(A)\acts X$ with the following properties:
\begin{enumerate}
\item $X$ is a geodesic median space $X$ with $\rk X\leq r$;
\item $\pi^{-1}(A)\acts X$ is an action by homotheties;
\item the restriction $G\acts X$ is isometric, essential and with unbounded orbits;
\item if $C\cu X$ is a $k$--cube and $H\leq G$ fixes $C$ pointwise, then $Z_G(H)$ contains a copy of $\Z^k$.
\end{enumerate}
\end{thm}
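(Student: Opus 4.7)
The plan is to implement the two-step strategy announced in the introduction. First I would fix a finite generating set $S \cu G$, choose automorphisms $\varphi_n \in \aut(G,[\mu])$ projecting to pairwise distinct elements of the infinite abelian group $A = Z(A)$, and form the Bestvina--Paulin ultralimit as in Subsection~\ref{ultra-Salvetti sect}: with basepoints $p_n \in Y_n$ realising $\overline\tau_S^{\rho_n}$, scaling factors $\eps_n := 1/\overline\tau_S^{\rho_n} \to 0$ (Assumption~\ref{tau diverges}), and $\rho_n := \rho \o \varphi_n$, take the $\om$-ultralimit to obtain a complete median space $(X_\om,\delta_\om,\mu_\om,p_\om)$ of rank $\leq r$ carrying an isometric $G$-action with $\overline\tau_S^{\delta_\om} = 1$ and no global fixed point, together with a $G$-invariant convex subset $Y_\om := \lim_\om Y_n$ containing $p_\om$. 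Since $G$ has trivial centre and is special, Corollary~\ref{UNE cor} ensures that $G$ is UNE, and Proposition~\ref{from UNE to X_om} extends the $G$-action to a homomorphism $\zeta\colon \pi^{-1}(A) \to \homeo X_\om$ by bi-Lipschitz maps. Coarse-median preservation forces $\zeta$ to take values in $\aut(X_\om,\mu_\om)$; moreover $Y_\om$ is $\zeta$-invariant, because for any $\psi \in \pi^{-1}(A)$ the quasi-isometry $\wt\psi$ sends $Y$ within bounded Hausdorff distance of itself (both $Y$ and $\wt\psi(Y)$ being $G$-invariant, convex-cocompact subsets of $\X$), and the bounded error vanishes after rescaling.

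Second, I would apply Corollary~\ref{homothety 1} with $M := Y_\om$ and $U := \pi^{-1}(A)$ (passing first to a countable infinite subgroup of $A$ if needed, then upgrading as described below). The hypotheses are met: $G$ acts non-transversely and stably without inversions by Remarks~\ref{tree factors vs X}(2) and~\ref{fixsets of powers}(2); the restriction $\delta_\om|_{Y_\om}$ witnesses $\mc{D}^G(Y_\om) \neq \emptyset$; and Corollary~\ref{from sub-ultras to X new cor}(2) supplies the WNE property directly, because $G$ has trivial centre and the $\varphi_n$ are pairwise distinct. The conclusion is a nonempty countable $\pi^{-1}(A)$-invariant median subalgebra $\mf{M} \cu Y_\om$, a nonzero pseudo-metric $\eta \in \mc{PD}^G(\mf{M})$ with $\overline\tau_S^\eta > 0$, and a character $\l\colon \pi^{-1}(A) \to (\R_{>0},*)$ trivial on $G$ such that $u \cdot \eta = \l(u)\eta$ for all $u$. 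For uncountable $A$, one obtains the output for each countable $A_0 \leq A$ and then intersects the nonempty closed subsets of projectively fixed metrics for finitely-generated subgroups of $A$ (a filtered family in the compact projective cone $\mbb{P}(\mc{C})$ constructed in the proof of Corollary~\ref{finding eigenvectors abelian}) to produce a pseudo-metric projectively preserved by all of $\pi^{-1}(A)$.

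Third, I would take $X$ to be the metric completion of the quotient $\mf{M}/{\sim}$, where $x \sim y \LRa \eta(x,y) = 0$. The median operator of $\mf{M}$, being $1$-Lipschitz in each variable, descends to $\mf{M}/{\sim}$ and extends continuously to the completion; finite rank is preserved, and a complete finite-rank median space is geodesic by the theorem of Bowditch cited at the end of the introduction. The action of $\pi^{-1}(A)$ descends to $X$ by homotheties, with its $G$-restriction isometric (since $\l|_G \equiv 1$) and of unbounded orbits (from $\overline\tau_S^\eta > 0$). Essentiality is arranged, if necessary, by replacing $X$ with the $G$-reduced core $\overline{\C}_1(G) \cu X$ from part~(4) of Theorem~\ref{all from 10b}, which is preserved by the normaliser $\pi^{-1}(A)$ of $G$ and on which $G$ acts essentially. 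For part~(4), given a $k$-cube $C \cu X$ fixed pointwise by $H \leq G$ (which we may assume finitely generated by Remark~\ref{noetherian centralisers}), lift $C$ to a $k$-cube $\tilde C \cu \mf{M} \cu Y_\om$ on which $\tau_H^\eta$ vanishes, and apply part~(2) of Theorem~\ref{from sub-ultras to X new} with $F$ a finite generating set of $H$: the required inequality $\eta(x,y) > 2r^2(q+1)[\tau_F^\eta(x) + \tau_F^\eta(y)]$ reduces on $\tilde C$ to $\eta(x,y) > 0$ and is satisfied, so $Z_G(H) \supseteq \Z^k$ as claimed.

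The hardest part is the cube-lifting in the verification of~(4): a $k$-cube in the quotient $\mf{M}/{\sim}$ need not lift to an on-the-nose median-closed $k$-cube of $\mf{M}$, because choosing arbitrary representatives breaks median-closure. I expect to handle this by exploiting that each $\sim$-equivalence class is a convex subset of $\mf{M}$ (as $\eta$ is a compatible pseudo-metric with $\mu_\om$), and then iteratively gate-projecting representatives into coherent lifts whose median subalgebra in $\mf{M}$ forms a genuine $k$-cube mapping injectively onto $C$. A secondary subtlety is confirming that the construction of the subalgebra $\mf{M}$ in Corollary~\ref{homothety 1} can indeed be seeded inside $Y_\om$ so that the WNE hypothesis supplied by Corollary~\ref{from sub-ultras to X new cor}(2) applies verbatim; this follows from choosing the multi-bridge basepoint of $S$ in $Y_\om$ and noting that $Y_\om$ is $U$-invariant as established in the first paragraph.
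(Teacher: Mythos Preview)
Your strategy matches the paper's: build $Y_\om$, extend to $\pi^{-1}(A)\acts Y_\om$ via Corollary~\ref{UNE cor} and Proposition~\ref{from UNE to X_om}, feed the WNE property from Corollary~\ref{from sub-ultras to X new cor}(2) into Corollary~\ref{homothety 1} to obtain $(\mf{M},\eta,\l)$, pass to the metric quotient, complete, and take the essential factor via Theorem~\ref{all from 10b}(4); part~(4) is verified through Theorem~\ref{from sub-ultras to X new}(2) and cube-lifting, exactly as in the paper (which is terser on the lift; you are right to flag it). Two side remarks: the uncountable-$A$ detour is unnecessary since $\aut G$ is countable for finitely generated $G$; and $\zeta$ automatically takes values in $\aut Y_\om$ because Proposition~\ref{from UNE to X_om} is applied to the proper cocompact action $G\acts Y$, so the induced quasi-isometries are maps $Y\ra Y$ from the outset and your Hausdorff-distance argument is superfluous.

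There is one genuine gap. You claim that ``a complete finite-rank median space is geodesic'', but this is false: $\{0,1\}$ is complete, rank~$1$, and not geodesic. Bowditch's \cite[Lemma~4.6]{Bow4} requires completeness \emph{and connectedness}. Since $\mf{M}$ is countable and $\eta$ is an abstract pseudo-metric unrelated to $\delta_\om$, the completion $\overline{\mf{M}}_{\o}$ of $\mf{M}/{\sim}$ has no reason to be connected. The paper inserts an extra step here: it embeds $\overline{\mf{M}}_{\o}$ into a complete \emph{connected} median space $Z$ of the same rank via the ``filling in cubes'' construction of \cite[Corollary~2.16]{Fio2}, extends the homothetic $\pi^{-1}(A)$--action to $Z$, and only then applies Bowditch to conclude that $Z$ is geodesic. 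With this step inserted, your essential-core reduction goes through; note that verifying~(4) for $Z$ (or its essential factor) then requires producing a $k$--cube in $\mf{M}$ on which $\tau_F^{\eta}$ is merely \emph{small} rather than vanishing, which still suffices for the strict inequality in Theorem~\ref{from sub-ultras to X new}(2).
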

\begin{proof}
Consider a sequence of pairwise distinct automorphisms $\varphi_n\in A$ and set $\rho_n=\rho\o\varphi_n$. Choose a finite generating set $S\cu G$ and consider the action $G\acts Y_{\om}$ as in Subsection~\ref{ultra-Salvetti sect}. 

Corollary~\ref{UNE cor} shows that $G$ is UNE. Thus, denoting by $\aut Y_{\om}$ the group of automorphisms of the underlying median algebra, Proposition~\ref{from UNE to X_om} yields a homomorphism $\zeta\colon\pi^{-1}(A)\ra\aut Y_{\om}$ that extends the isometric action $G\acts Y_{\om}$.

By Corollary~\ref{from sub-ultras to X new cor}(2), the action $G\acts Y_{\om}$ is WNE. Thus, Corollary~\ref{homothety 1} yields a nonempty, countable, $\pi^{-1}(A)$--invariant, median subalgebra $\mf{M}\cu Y_{\om}$, and a pseudo-metric $\eta\in\mc{PD}^G(\mf{M})\setminus\{0\}$ for which $\overline\tau_S^{\eta}>0$ and $\pi^{-1}(A)\acts (\mf{M},\eta)$ is homothetic. 

Let $(\mf{M}_{\o},\delta)$ be the quotient median space obtained by identifying points $x,y\in\mf{M}$ with $\eta(x,y)=0$. By Remark~\ref{median homo rmk}, we have $\rk\mf{M}_{\o}\leq\rk\mf{M}\leq\rk X_{\om}\leq r$. Since $\overline\tau_S^{\delta}=\overline\tau_S^{\eta}>0$, the action $G\acts\mf{M}_{\o}$ does not have a global fixed point. Moreover, since the action $G\acts\mf{M}$ has no wall inversions by Lemma~\ref{fixsets of powers}(1), the action $G\acts\mf{M}_{\o}$ also has no inversions. Theorem~\ref{all from 10b}(2) then guarantees that $G$ acts on $\mf{M}_{\o}$ with unbounded orbits. 

Theorem~\ref{from sub-ultras to X new new}(1) (applied to the pseudo-metric $\eta$ on $\mf{M}$) shows that $G\acts\mf{M}_{\o}$ satisfies part~(4). Thus, we are only left to ensure that the median space be geodesic and the action essential.

In order to make our space geodesic, note that the homothetic $\pi^{-1}(A)$--action extends to the metric completion $\overline{\mf{M}}_{\o}$ of $\mf{M}_{\o}$. This is a median space of rank $\leq r$ by \cite[Proposition~2.21]{CDH} and \cite[Lemma~2.5]{Fio1}. Note that $G\acts \overline{\mf{M}}_{\o}$ still satisfies part~(4) because of Theorem~\ref{from sub-ultras to X new new}(1). Now, ``filling in cubes'' as in \cite[Corollary~2.16]{Fio2}, the space $\overline{\mf{M}}_{\o}$ embeds into a complete, connected median space $Z$ of the same rank. By \cite[Lemma~4.6]{Bow4}, the space $Z$ is geodesic. The isometric $G$--action extends to $Z$ and one can similarly check that so does the homothetic $\pi^{-1}(A)$--action. 

Summing up, we have constructed an action $\pi^{-1}(A)\acts Z$ that satisfies conditions (1)--(4), possibly except essentiality of the $G$--action (in addition, $Z$ is complete). By Theorem~\ref{all from 10b}(4), there exists a $\pi^{-1}(A)$--invariant, nonempty, convex subset $K\cu Z$ and a $\pi^{-1}(A)$--invariant splitting $K=K_0\x K_1$ such that the action $G\acts K_1$ is essential. We conclude by taking $X=K_1$. (Note that $K$ is not closed in $Z$ in general, so we may have lost completeness along the way).
\end{proof}

\begin{rmk}
In Theorem~\ref{homothety 2}, we cannot both require the space $X$ to be complete and the action $G\acts X$ to be essential. There is a very good reason for this. 

Consider the special case where $G$ is hyperbolic. Then $Y_{\om}$ is an $\R$--tree, which forces $X$ to also be an $\R$--tree. Note that an isometric action on an $\R$--tree is essential if and only if it is minimal. 

Let us show that, if $G$ is a finitely generated group and $G\acts T$ is a minimal action on a complete $\R$--tree not isometric to $\R$, then no homothety $\Phi\colon T\ra T$ with factor $\l\neq 1$ can normalise $G$. 

If $G$ is generated by $s_1,\dots,s_k$ and $x\in T$ is any point, the union of all segments $g[x,s_ix]$ with $g\in G$ is a $G$--invariant subtree. Since $G\acts T$ is minimal, $T$ must be covered by the segments $g[x,s_ix]$. In particular, the action $G\acts T$ is cocompact. If $\Phi$ normalised $G$, then every orbit of $G\acts T$ would be dense (see e.g.\ \cite[Proposition~3.10]{Paulin-ENS}). Since $T\not\simeq\R$, this implies that each segment $g[x,s_ix]$ is nowhere-dense. This violates Baire's theorem, since a complete metric space cannot be covered by countably many nowhere-dense subsets. 

I learned this argument from \cite[Example~II.6]{Gaboriau-Levitt}.
\end{rmk}

The following proves parts~(3) and~(4) of Theorem~\ref{Q2 thm intro}.

\begin{rmk}\label{addendum to Q2 thm}
Consider the special case of Theorem~\ref{homothety 2} with $A=\Z$, generated by an outer automorphism $\phi\in\out(G,[\mu])$. Picking a representative $\varphi\in\aut (G,[\mu])$, we have $\pi^{-1}(A)=G\rtimes_{\varphi}\Z$. The theorem gives an isometric action $G\acts X$ and a homothety $H\colon X\ra X$ of factor $\l$ such that $H\o g=\varphi(g)\o H$ for all $g\in G$. We keep the notation of the proof of Theorem~\ref{homothety 2}.
\begin{enumerate}
\item Each $g\in\Fix\varphi$ is elliptic in $X$. Indeed, Lemma~\ref{ultra-Salvetti 1} shows that $g$ is elliptic in $\X_{\om}$, since $\ell(\varphi^n(g),\X)$ does not diverge. Lemma~\ref{fixsets of powers}(2) then implies that $g$ is elliptic in $\mf{M}$, and it is clear that a fixed point in $\mf{M}$ will translate into a fixed point in $X$.

Recalling that $\Fix\varphi$ is finitely generated by Theorem~\ref{cmp und intro}, Theorem~\ref{all from 10b}(2) actually implies that $\Fix\varphi$ has a global fixed point $x_0\in X$. This proves Theorem~\ref{Q2 thm intro}(3).
\item Fix a finite generating set $S\cu G$. Recall from Subsection~\ref{identities sect}, that we denote conjugacy length by $\|\cdot\|_S$. Let $\Lambda(\varphi)$ be the maximal exponential growth rate of the quantity $\|\varphi^n(g)\|_S^{1/n}$:
\[\Lambda(\varphi):=\sup_{g\in G}\limsup_{n\ra+\infty}\|\varphi^n(g)\|_S^{1/n}.\]
Note that $\Lambda(\varphi)$ is independent of the generating set $S$. For every $g\in G$, we have: 
\[\l^n\ell(g,X)=\ell(H^ngH^{-n},X)=\ell(\varphi^n(g),X)\leq\|\varphi^n(g)\|_S\cdot\overline\tau_S^X,\]
where the last inequality follows from the identities in Subsection~\ref{identities sect}. Since there exist elements $g\in G$ with $\ell(g,X)>0$, we deduce that $\l\leq\Lambda(\varphi)$ and, similarly, $\l^{-1}\leq\Lambda(\varphi^{-1})$. 

If $\varphi$ has \emph{sub-exponential growth} (in the sense that $\Lambda(\varphi)=\Lambda(\varphi^{-1})=1$), then these inequalities force $\l=1$. Hence the homothetic action $G\rtimes_{\varphi}\Z\acts X$ provided by Theorem~\ref{homothety 2} is actually isometric, which proves Theorem~\ref{Q2 thm intro}(4).
\end{enumerate}
\end{rmk}

\appendix
\section{Measurable partitions of halfspace-intervals.}\label{app1}

This appendix is devoted to the proof of Corollary~\ref{measurable partitions of intervals new} below. This is needed in the proof of Theorem~\ref{from sub-ultras to X new new} in order to get the exact constant $4r^2q$, and could be avoided if we contented ourselves with the worse bound $4rq\cdot\#\G^{(0)}$. However, Corollary~\ref{measurable partitions of intervals new} is important in the general theory of median spaces and we think it is likely to prove useful elsewhere.

Let $M$ be a median algebra. Given a subset $P\cu M\x M$, let us write $\mc{H}_P:=\bigcup_{(x,y)\in P}\mscr{H}(x|y)$.

\begin{lem}\label{countably many pairs}
Every subset $P\cu [0,1]^n\x [0,1]^n$ contains a countable subset $\Delta\cu P$ with $\mc{H}_{\Delta}=\mc{H}_P$.
\end{lem}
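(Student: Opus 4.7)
The plan is to reduce the problem to a one-dimensional statement and then invoke the Lindel\"of property of $[0,1]$. First, viewing $[0,1]^n$ as the product of $n$ copies of the median algebra $[0,1]$ (equipped with its usual order-median), Remark~\ref{tree factors vs X} gives that every halfspace of the product is the pull-back of a halfspace of a single factor, and halfspaces of $[0,1]$ are exactly the intervals $[0,a]$, $[0,a)$, $[a,1]$, $(a,1]$. Thus any $\mathfrak{h}\in\mscr{H}([0,1]^n)$ is determined by a coordinate $i\in\{1,\ldots,n\}$, a threshold $a\in[0,1]$, and a type (strict/non-strict, lower/upper); and $\mathfrak{h}\in\mscr{H}(x|y)$ for $(x,y)\in P$ precisely when $x_i,y_i$ lie on opposite sides of $a$ with the correct orientation (\emph{e.g.} for $\mathfrak{h}=\{z:z_i\geq a\}$ one needs $x_i<a\leq y_i$).

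For each coordinate $i$ and each of the two orientations ``$x_i<y_i$'' and ``$x_i>y_i$'', I would treat the two cases symmetrically; the ``$x_i<y_i$'' case reduces to the following one-dimensional sub-problem. Let $Q_i^+\cu\{(s,t)\in[0,1]^2 : s<t\}$ be the image of $\{(x,y)\in P : x_i<y_i\}$ under the projection $(x,y)\mapsto(x_i,y_i)$. The goal is to produce a countable $\Delta_i^+\cu Q_i^+$ such that for every $a\in[0,1]$: (a) if some $(s,t)\in Q_i^+$ satisfies $s<a\leq t$, then some $(s',t')\in\Delta_i^+$ does too; (b) the analogous implication holds with ``$s\leq a<t$'' on both sides.

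To handle this sub-problem, introduce the open set $V_i:=\bigcup_{(s,t)\in Q_i^+}(s,t)\cu[0,1]$. By the Lindel\"of property of $[0,1]$ (second countability of $\R$) there is a countable subfamily $\{(s_k,t_k)\}\cu Q_i^+$ whose \emph{open} intervals $(s_k,t_k)$ already cover $V_i$; this subfamily witnesses every threshold $a\in V_i$ with respect to both strict and non-strict halfspaces. The remaining thresholds lie in $\bigl(\bigcup_{Q_i^+}(s,t]\bigr)\setminus V_i$ or in $\bigl(\bigcup_{Q_i^+}[s,t)\bigr)\setminus V_i$. The key observation is that any $a$ in the first set satisfies $a=t_0$ for some $(s_0,t_0)\in Q_i^+$ with $s_0<a$, so that $(s_0,a)\cu V_i$; hence $a$ is the right endpoint of a connected component of $V_i$. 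Symmetrically, each $a$ in the second set is a left endpoint of a connected component of $V_i$. Since $V_i$ has at most countably many connected components, these ``endpoint'' thresholds form a countable set, and for each one I add a single witnessing pair from $Q_i^+$ to the subfamily.

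The main obstacle is precisely this endpoint analysis: halfspaces of $[0,1]$ come in both strict and non-strict forms, so merely covering $V_i$ by open intervals is not enough; one has to argue that the discrepancy between the open and half-open unions is concentrated on countably many boundary points of $V_i$. Once $\Delta_i^+$ (and analogously $\Delta_i^-$) is constructed, summing the resulting countable subsets over the $2n$ cases and lifting each chosen pair $(s,t)$ back to some $(x,y)\in P$ with $(x_i,y_i)=(s,t)$ yields the desired countable $\Delta\cu P$ with $\mc{H}_\Delta=\mc{H}_P$.
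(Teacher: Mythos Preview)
Your proof is correct and follows the same overall architecture as the paper's: reduce to single coordinates (the paper does this in the second paragraph, you do it first via Remark~\ref{tree factors vs X}), then exploit second countability of $[0,1]$ in the one-dimensional case.

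The execution of the one-dimensional step differs. The paper works with the union $\Omega(P)$ of \emph{closed} intervals $I(x,y)$, introduces an auxiliary set $\mc{D}(P)$ of ``bad'' interior points, partitions $\Omega(P)$ into maximal segments $J_i$ avoiding $\mc{D}(P)$ in their interior, and then argues that a countable $\Delta$ reproducing both $\Omega(P)$ and $\mc{D}(P)$ yields the same $J_i$ and hence the same $\mc{H}_P$. Your argument instead takes the union $V_i$ of \emph{open} intervals, invokes Lindel\"of to extract a countable subcover, and then handles the residual strict/non-strict discrepancy by observing that the offending thresholds are endpoints of connected components of $V_i$, hence countably many. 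Your route is more direct and makes the role of second countability explicit, while the paper's decomposition into the $J_i$ gives a slightly more structural description of $\mc{H}_P$ itself; for the purposes of this lemma both reach the goal with comparable effort.
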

\begin{proof}
First, we prove the case $n=1$. We can assume that $x<y$ for every $(x,y)\in P$. 

Let $\Om(P)\cu [0,1]$ be the union of the closed arcs $I(x,y)$ with $(x,y)\in P$. Let $\mc{D}(P)$ be the set of points that lie in the interior of $\Om(P)$, but not in the interior of any arc $I(x,y)$ with $(x,y)\in P$. Thus each point of $\Om(P)$ lies either in the frontier of $\Om(P)$, or in the interior of some $I(x,y)$, or in the set $\mc{D}(P)$, and these three possibilities are disjoint. There is a unique partition of $\Om(P)$ into maximal segments $J_i$ (closed, open, or half-open) such that:
\begin{itemize}
\item the interior of $J_i$ does not intersect $\mc{D}(P)$;
\item if $J_i$ intersects the interior of $I(x,y)$ for some $(x,y)\in P$, then $I(x,y)\cu J_i$.
\end{itemize} 
Observe that $\mc{H}_{P}=\bigsqcup_i\mscr{H}_{J_i}([0,1])\cap\mscr{H}(0|1)$.

It is classical to see that there exists a countable subset $\Delta\cu P$ with $\Om(\Delta)=\Om(P)$. Note that $\mc{D}(\Delta)$ is countable and it contains $\mc{D}(P)$. Adding to $\Delta$ countably many pairs in $P$, we can thus ensure that $\mc{D}(\Delta)=\mc{D}(P)$. Hence, $P$ and $\Delta$ determine the same the segments $J_i$, and $\mc{H}_{P}=\mc{H}_{\Delta}$.

Now consider a general $n\geq 1$. Let $I_i\cu[0,1]^n$ be the segment where all coordinates but the $i$--th vanish. Let $\pi_i\colon [0,1]^n\ra I_i$ be the coordinate projections. Setting $P_i:=(\pi_i\x\pi_i)(P)\cu [0,1]^n\x [0,1]^n$, we have $\mc{H}_{P}=\bigcup_i\mc{H}_{P_i}$. By the case $n=1$, there exist countable subsets $\Delta_i\cu P_i$ with $\mc{H}_{\Delta_i}=\mc{H}_{P_i}$. Choosing countable sets $\Delta_i'\cu P$ with $(\pi_i\x\pi_i)(\Delta_i')=\Delta_i$, we have $\mc{H}_{\Delta_i}\cu\mc{H}_{\Delta_i'}\cu\mc{H}_{P}$. Hence, taking $\Delta:=\bigcup_i\Delta_i'$, we obtain $\mc{H}_{P}=\mc{H}_{\Delta}$.
\end{proof}

Recall that $\mscr{B}(M)$ is the $\s$--algebra generated by halfspace-intervals, as in Remark~\ref{from pseudo-metrics to measures}.

\begin{lem}\label{measurable projections}
Let $M\cu [0,1]^n$ be a median subalgebra containing the points $\underline 0=(0,\dots,0)$ and $\underline 1=(1,\dots,1)$. Let $\pi_i\colon M\ra [0,1]$ denote the coordinate projections. Then the induced maps $\pi_i^*\colon\mscr{H}([0,1])\ra\mscr{H}(M)$ (as in Remark~\ref{median homo rmk}) map $\mscr{B}$--measurable sets to $\mscr{B}$--measurable sets.
\end{lem}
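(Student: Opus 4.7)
The plan is a standard $\sigma$-algebra argument: show that
$$\mathcal{F} := \{A \in \mscr{B}([0,1]) : \pi_i^*(A) \in \mscr{B}(M)\}$$
is a $\sigma$-algebra containing every halfspace-interval $\mscr{H}(x|y)$, which forces $\mathcal{F} = \mscr{B}([0,1])$.

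The key step is $\mscr{H}(x|y) \in \mathcal{F}$. The plan is to describe $\pi_i^*(\mscr{H}(x|y)) \cu \mscr{H}(M)$ as the intersection of several Borel conditions on a halfspace $\mf{K} \in \mscr{H}(M)$: a \emph{$\pi_i$--invariance} condition (namely, $\mf{K}$ does not separate any pair $(u,v) \in M \times M$ with $\pi_i(u) = \pi_i(v)$, ensuring that $\mf{K}$ is a pullback of a halfspace of $[0,1]$ via $\pi_i$); a \emph{positive} condition (there exists a witness pair $(u,v) \in M \times M$ with $\pi_i(u) \le x$ and $\pi_i(v) \ge y$ separated by $\mf{K}$, providing a transition point in $[x,y]$); and two \emph{boundary} conditions excluding transitions outside $[x,y]$ (no pair lying entirely in $\pi_i^{-1}([0,x))$ or entirely in $\pi_i^{-1}((y,1])$ is separated by $\mf{K}$). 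Each such condition has the form $\mf{K} \in \mc{H}_P^M := \bigcup_{(u,v) \in P} \mscr{H}(u|v)$ (or its complement) for an explicit $P \cu M \times M$. Applying Lemma~\ref{countably many pairs} to $P \cu [0,1]^n \times [0,1]^n$ and descending via Remark~\ref{halfspaces of subsets}(1) (halfspaces of $[0,1]^n$ separating points of $M$ restrict to halfspaces of $M$ with the same separation property), one obtains a countable $\Delta \cu P$ with $\mc{H}_\Delta^M = \mc{H}_P^M$. Each condition is thus a countable union of halfspace-intervals of $M$, hence in $\mscr{B}(M)$, and so is their intersection $\pi_i^*(\mscr{H}(x|y))$.

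For the $\sigma$-algebra closure, countable unions are automatic from $\pi_i^*(\bigcup_n A_n) = \bigcup_n \pi_i^*(A_n)$. Closure under complements is the delicate point, because $\pi_i^*$ may fail to be injective: halfspaces of $[0,1]$ whose transition points lie in a common ``gap'' of $\pi_i(M) \cu [0,1]$ can have identical pullbacks. I would exploit the dual identity $\pi_i^{*-1}(\mscr{H}(u|v)) = \mscr{H}(\pi_i(u)|\pi_i(v))$, which shows $\pi_i^{*-1}$ maps $\mscr{B}(M)$ into $\mscr{B}([0,1])$; consequently, the fiber-saturation $\widetilde A := \pi_i^{*-1}(\pi_i^*(A))$ of any $A \in \mathcal{F}$ is Borel with $\pi_i^*(\widetilde A) = \pi_i^*(A)$. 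Saturated Borel sets form a $\sigma$-subalgebra of $\mscr{B}([0,1])$, and on this subalgebra the naive complement identity $\pi_i^*(\mscr{H}([0,1]) \setminus \widetilde A) = \pi_i^*(\mscr{H}([0,1])) \setminus \pi_i^*(\widetilde A)$ holds, giving closure under complements. Since every $A \in \mathcal{F}$ shares its image with the saturated set $\widetilde A$, this suffices. The main obstacle throughout is this careful bookkeeping with the fiber equivalence; the rest is a routine application of the combinatorics of Lemma~\ref{countably many pairs} within each pair-set condition of the first step.
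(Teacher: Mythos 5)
Your strategy shares its key ingredient with the paper's proof --- both express the relevant set through pair-set conditions of the form $\mc{H}_P$ and invoke Lemma~\ref{countably many pairs} --- though the bookkeeping differs: the paper reduces to the single case $\pi_i^*\mscr{H}(0|1)$ and exhibits its complement in $\mscr{H}(M)$ as a countable union of halfspace-intervals, while you aim at $\pi_i^*\mscr{H}(x|y)$ directly. Your observation that $\pi_i^*\colon\mscr{H}([0,1])\to\mscr{H}(M)$ can fail to be injective is correct and sharp; the injectivity asserted by Remark~\ref{median homo rmk} holds only for \emph{surjective} median morphisms, whereas $\pi_i$ is in general not surjective onto $[0,1]$ (any gap in $\pi_i(M)$ produces distinct halfspaces of $[0,1]$ with the same pullback), so this is a genuine subtlety in the complement step.

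However, the fiber-saturation argument does not close this gap. You correctly show that for $A\in\mathcal F$ the saturation $\widetilde A=\pi_i^{*-1}(\pi_i^*(A))$ is Borel, that $\pi_i^*(\widetilde A)=\pi_i^*(A)$, and that the naive complement identity holds for the saturated $\widetilde A$; this yields $\pi_i^*(\mscr{H}([0,1])\setminus\widetilde A)\in\mscr{B}(M)$. But $\mscr{H}([0,1])\setminus A$ and $\mscr{H}([0,1])\setminus\widetilde A$ generally have \emph{different} images under $\pi_i^*$: they differ by $\pi_i^*(\widetilde A\setminus A)$, a subset of $\pi_i^*(A)$ whose measurability is exactly the sort of thing in question. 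The remark that ``$A$ shares its image with $\widetilde A$'' is a statement about $\pi_i^*(A)$, not about $\pi_i^*(\mscr{H}([0,1])\setminus A)$; so closure of $\mathcal F$ under complements is not established, and the generating-class argument does not terminate. Separately, a smaller point: your list of conditions on $\mf K$ (flatness, positive witness, boundary exclusions) is symmetric under $\mf K\leftrightarrow\mf K^*$, hence characterises $\pi_i^*\mscr{H}(x|y)\cup\pi_i^*\mscr{H}(y|x)$ rather than $\pi_i^*\mscr{H}(x|y)$ alone; intersecting with the halfspace-interval $\mscr{H}(\underline 0|\underline 1)$ (available because $\underline 0,\underline 1\in M$) fixes the orientation and is harmless, but should be stated.
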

\begin{proof}
Since $\pi_i^*$ is injective, we have:
\[\pi_i^*(\mscr{H}([0,1])\setminus E)=\pi_i^*(\mscr{H}(0|1))\cup\pi_i^*(\mscr{H}(1|0))\setminus\pi_i^*(E),\] 
for every $E\cu\mscr{H}([0,1])$. Thus, it suffices to show that, for all $0\leq a<b\leq 1$, the set $\pi_i^*\mscr{H}(a|b)$ is $\mscr{B}$--measurable. 

Let $a'$ and $b'$ be, respectively, the infimum and the maximum of $\pi_i(M)\cap[a,b]$. Pick sequences of elements $a'\leq a_{n+1}<a_n<b_n<b_{n+1}\leq b'$ so that $a_n,b_n\in\pi_i(M)$ and $a_n\ra a'$, $b_n\ra b'$. These sequences can be empty if $\pi_i(M)\cap[a,b]=\emptyset$, or consist of single elements if $a',b'\in\pi_i(M)$. Then:
\[\pi_i^*\mscr{H}(a|b)=\bigcup\pi_i^*\mscr{H}(a_n|b_n)\cup\{\pi_i^{-1}((a,1])\}\cup\{\pi_i^{-1}([b,1])\}.\]
Observing that singletons are $\mscr{B}$--measurable, 
 it suffices to show that, for every $x,y\in M$, the set $\pi_i^*\mscr{H}(\pi_i(x)|\pi_i(y))$ is $\mscr{B}$--measurable. 

This means that it actually suffices to prove that the sets $\pi_i^*\mscr{H}(0|1)$ are $\mscr{B}$--measurable. We will achieve this by showing that each set $\mscr{H}(M)\setminus\pi_i^*\mscr{H}(0|1)$ is a countable union of halfspace-intervals.

Note that $\mf{h}\in\mscr{H}(M)$ lies in $\pi_i^*\mscr{H}([0,1])$ if and only if the projections $\pi_i(\mf{h})$ and $\pi_i(\mf{h}^*)$ are disjoint. Thus, $\mf{h}$ lies in $\mscr{H}(M)\setminus\pi_i^*\mscr{H}(0|1)$ if and only if there exist $x,y\in M$ such that $\mf{h}\in\mscr{H}(x|y)$ and $\pi_i(x)\geq\pi_i(y)$. This gives a subset $P\cu M\x M$ with $\mscr{H}(M)\setminus\pi_i^*\mscr{H}(0|1)=\mc{H}_P$. 

In view of Lemma~\ref{countably many pairs} and Remark~\ref{halfspaces of subsets}(1), there exists a countable subset $\Delta\cu P$ with $\mc{H}_{\Delta}=\mc{H}_P$. This concludes the proof.
\end{proof}

The following would be an immediate consequence of Dilworth's lemma, were it not for the measurability requirement.

\begin{cor}\label{measurable partitions of intervals new}
Let $X$ be a median space of finite rank $r$. For all $x,y\in X$, there exists a $\mscr{B}$--meas\-ur\-a\-ble partition $\mscr{H}(x|y)=\mc{H}_1\sqcup\dots\sqcup\mc{H}_r$ so that no two halfspaces in the same $\mc{H}_i$ are transverse. 
\end{cor}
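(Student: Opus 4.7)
The strategy is to reduce the statement to Lemma~\ref{measurable projections} by embedding $I(x,y)$ into $[0,1]^r$ in a controlled way. By Dilworth's lemma for median algebras, as invoked in the proof of Proposition~\ref{generating median algebras}, the interval $I(x,y)$ admits an injective median morphism into a product $T_1\x\dots\x T_r$ of rank--$1$ median algebras (\cite[Proposition~1.4]{Bow2}). Since $x$ and $y$ are endpoints of the interval, every halfspace of $I(x,y)$ separates them; in particular, the image of $I(x,y)$ in each $T_i$ is an interval $[\pi_i(x),\pi_i(y)]$, which is a totally ordered set. Order-embedding each such image into $[0,1]$ with $\pi_i(x)\mapsto 0$ and $\pi_i(y)\mapsto 1$ would then yield a median embedding $\iota\colon I(x,y)\hookrightarrow[0,1]^r$ with $\iota(x)=\underline 0$ and $\iota(y)=\underline 1$.

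With such an embedding in place, Lemma~\ref{measurable projections} applies to $\iota(I(x,y))\cu[0,1]^r$ and, for each $i=1,\dots,r$, gives that the subset $E_i:=\pi_i^*(\mscr{H}(0|1))\cu\mscr{H}(I(x,y))$ is $\mscr{B}$--measurable. Moreover, no two halfspaces in $E_i$ are transverse---halfspaces of $[0,1]$ are linearly ordered by inclusion, and $\pi_i^*$ preserves transversality (Remark~\ref{median homo rmk}). The sets $E_1,\dots,E_r$ cover all halfspaces of $I(x,y)$ that separate $\iota(x)$ from $\iota(y)$: by part~(1) of Remark~\ref{halfspaces of subsets}, every halfspace of the subalgebra $\iota(I(x,y))$ restricts from a halfspace of $[0,1]^r$, and a standard fact about median products says every halfspace of $[0,1]^r$ has the form $\pi_i^{-1}(\mf{k})$ for some $i$ and some $\mf{k}\in\mscr{H}([0,1])$; the requirement that the halfspace separate $\underline 0$ from $\underline 1$ forces $\mf{k}\in\mscr{H}(0|1)$.

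Since $I(x,y)\cu X$ is convex, part~(2) of Remark~\ref{halfspaces of subsets} supplies a bijection $\res_{I(x,y)}\colon\mscr{H}_{I(x,y)}(X)\ra\mscr{H}(I(x,y))$ that sends halfspace-intervals to halfspace-intervals, and hence preserves $\mscr{B}$--measurability in both directions. Pulling back the $E_i$ and intersecting with $\mscr{H}(x|y)$ produces $\mscr{B}$--measurable sets $\widetilde E_i\cu\mscr{H}(x|y)$ covering $\mscr{H}(x|y)$, with no two transverse halfspaces in any $\widetilde E_i$; the disjointification $\mc{H}_1:=\widetilde E_1$, $\mc{H}_i:=\widetilde E_i\setminus\bigcup_{j<i}\widetilde E_j$ for $2\le i\le r$ is then the required partition. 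The only delicate point is the initial order-embedding of each interval image into $[0,1]$: not every linearly ordered set admits such an embedding (the long line does not), but the image of each $\pi_i$ here is the projection of an interval in a finite-rank median \emph{space}, and the resulting separability should rule out such pathologies. Alternatively, one can dispense with the detour through $[0,1]^r$ altogether and extend Lemma~\ref{countably many pairs} (and hence Lemma~\ref{measurable projections}) directly to products of arbitrary rank--$1$ median algebras---the argument is essentially the same once the relevant countability of ``gaps'' is verified---which I expect to be the main obstacle of the proof.
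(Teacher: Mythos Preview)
Your overall architecture---embed $I(x,y)$ into $[0,1]^r$, invoke Lemma~\ref{measurable projections} to get $\mscr{B}$--measurable non-transverse families $E_i$, then disjointify---is exactly the paper's approach. The difference lies in how the embedding is obtained, and this is precisely where you correctly locate the obstacle.

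You use Dilworth's lemma at the median-algebra level to get $I(x,y)\hookrightarrow T_1\times\dots\times T_r$ with each $T_i$ rank~$1$, and then try to order-embed each $\pi_i(I(x,y))$ into $[0,1]$. As you note, an arbitrary totally ordered set need not embed in $[0,1]$, and you appeal vaguely to ``separability''---but the corollary assumes only that $X$ is a finite-rank median space, with no separability hypothesis. The paper sidesteps this entirely by exploiting the \emph{metric} structure: it passes to the metric completion and invokes \cite[Proposition~2.19]{Fio1}, which supplies an \emph{isometric} embedding $\iota\colon I(x,y)\hookrightarrow\R^r$. The image then sits inside a product of compact intervals $J_1\times\dots\times J_r\cong[0,1]^r$, and Lemma~\ref{measurable projections} applies directly. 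So the missing ingredient in your argument is this metric embedding theorem; once you have it, the rest of your write-up (including the measurability transfer along $\res_{I(x,y)}$) goes through as written.

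Your alternative suggestion---redoing Lemmas~\ref{countably many pairs} and~\ref{measurable projections} for arbitrary products of rank--$1$ median algebras---would also require nontrivial countability input, since the proof of Lemma~\ref{countably many pairs} genuinely uses the second-countable topology of $[0,1]$. The paper's route via the isometric embedding is the cleaner fix.
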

\begin{proof}
Taking the metric completion of $X$ and applying \cite[Proposition~2.19]{Fio1}, we obtain an isometric embedding $\iota\colon I(x,y)\hookrightarrow\R^r$. The image of $\iota$ is contained in a product $J_1\x\dots\x J_r$ of compact intervals $J_i\cu\R$, which is isomorphic to the median algebra $[0,1]^r$. Let $\pi_i\colon M\ra J_i$ be the composition of $\iota$ with the projection to $J_i$, and set $\mc{H}_i':=\mscr{H}(x|y)\cap\pi_i^*(\mscr{H}(J_i))$. We have $\mscr{H}(x|y)=\mc{H}_1'\cup\dots\cup\mc{H}_r'$, no two halfspaces in the same $\mc{H}_i'$ are transverse, and each $\mc{H}_i'$ is $\mscr{B}$--measurable by Lemma~\ref{measurable projections}. We conclude by taking $\mc{H}_i:=\mc{H}_i'\setminus(\mc{H}_1'\cup\dots\cup\mc{H}_{i-1}')$.
\end{proof}

\bibliography{../../mybib}

\begin{thebibliography}{MMV98}

\bibitem[BB97]{Bestvina-Brady}
Mladen Bestvina and Noel Brady.
\newblock Morse theory and finiteness properties of groups.
\newblock {\em Invent. Math.}, 129(3):445--470, 1997.

\bibitem[BBF21]{BBF-QT}
Mladen Bestvina, Ken Bromberg, and Koji Fujiwara.
\newblock Proper actions on finite products of quasi-trees.
\newblock {\em Ann. H. Lebesgue}, 4:685--709, 2021.

\bibitem[BC12]{Behrstock-Charney}
Jason Behrstock and Ruth Charney.
\newblock Divergence and quasimorphisms of right-angled {A}rtin groups.
\newblock {\em Math. Ann.}, 352(2):339--356, 2012.

\bibitem[BCV20]{BCV}
Corey Bregman, Ruth Charney, and Karen Vogtmann.
\newblock Outer space for {RAAG}s.
\newblock {\em arXiv:2007.09725v1}, 2020.

\bibitem[Bes88]{Bestvina-Duke}
Mladen Bestvina.
\newblock Degenerations of the hyperbolic space.
\newblock {\em Duke Math. J.}, 56(1):143--161, 1988.

\bibitem[BF95]{BF-stable}
Mladen Bestvina and Mark Feighn.
\newblock Stable actions of groups on real trees.
\newblock {\em Invent. Math.}, 121(2):287--321, 1995.

\bibitem[BF21]{BF1}
Jonas Beyrer and Elia Fioravanti.
\newblock Cross-ratios on {CAT}(0) cube complexes and marked length-spectrum
  rigidity.
\newblock {\em J. Lond. Math. Soc. (2)}, 104(5):1973--2015, 2021.

\bibitem[BF22]{BF2}
Jonas Beyrer and Elia Fioravanti.
\newblock Cross ratios and cubulations of hyperbolic groups.
\newblock {\em Math. Ann.}, 384(3-4):1547--1592, 2022.

\bibitem[BFH00]{BFH1}
Mladen Bestvina, Mark Feighn, and Michael Handel.
\newblock The {T}its alternative for {${\rm Out}(F_n)$}. {I}. {D}ynamics of
  exponentially-growing automorphisms.
\newblock {\em Ann. of Math. (2)}, 151(2):517--623, 2000.

\bibitem[BG99]{Bux-Gonzalez}
Kai-Uwe Bux and Carlos Gonzalez.
\newblock The {B}estvina-{B}rady construction revisited: geometric computation
  of {$\Sigma$}-invariants for right-angled {A}rtin groups.
\newblock {\em J. London Math. Soc. (2)}, 60(3):793--801, 1999.

\bibitem[BH83]{Bandelt-Hedlikova}
Hans-J\"urgen Bandelt and Jarmila Hedl\'ikov\'a.
\newblock Median algebras.
\newblock {\em Discrete Math.}, 45(1):1--30, 1983.

\bibitem[BH92]{BH-Ann}
Mladen Bestvina and Michael Handel.
\newblock Train tracks and automorphisms of free groups.
\newblock {\em Ann. of Math. (2)}, 135(1):1--51, 1992.

\bibitem[BHS17]{HHS1}
Jason Behrstock, Mark~F. Hagen, and Alessandro Sisto.
\newblock Hierarchically hyperbolic spaces, {I}: {C}urve complexes for cubical
  groups.
\newblock {\em Geom. Topol.}, 21(3):1731--1804, 2017.

\bibitem[BHS19]{HHS2}
Jason Behrstock, Mark Hagen, and Alessandro Sisto.
\newblock Hierarchically hyperbolic spaces {II}: {C}ombination theorems and the
  distance formula.
\newblock {\em Pacific J. Math.}, 299(2):257--338, 2019.

\bibitem[BL16]{BL-res}
Benjamin Beeker and Nir Lazarovich.
\newblock Resolutions of {${\rm CAT}(0)$} cube complexes and accessibility
  properties.
\newblock {\em Algebr. Geom. Topol.}, 16(4):2045--2065, 2016.

\bibitem[BL18]{BL-folds}
Benjamin Beeker and Nir Lazarovich.
\newblock Stallings' folds for cube complexes.
\newblock {\em Israel J. Math.}, 227(1):331--363, 2018.

\bibitem[Bow13]{Bow-cm}
Brian~H. Bowditch.
\newblock Coarse median spaces and groups.
\newblock {\em Pacific J. Math.}, 261(1):53--93, 2013.

\bibitem[Bow14]{Bow2}
Brian~H. Bowditch.
\newblock Embedding median algebras in products of trees.
\newblock {\em Geom. Dedicata}, 170:157--176, 2014.

\bibitem[Bow16]{Bow4}
Brian~H. Bowditch.
\newblock Some properties of median metric spaces.
\newblock {\em Groups Geom. Dyn.}, 10(1):279--317, 2016.

\bibitem[Bow18a]{Bow-hulls}
Brian~H. Bowditch.
\newblock Convex hulls in coarse median spaces.
\newblock https://homepages.warwick.ac.uk/~masgak/papers/hulls-cms.pdf, 2018.

\bibitem[Bow18b]{Bow-largescale}
Brian~H. Bowditch.
\newblock Large-scale rigidity properties of the mapping class groups.
\newblock {\em Pacific J. Math.}, 293(1):1--73, 2018.

\bibitem[Bri11]{Bridson-rhombic}
Martin~R. Bridson.
\newblock The rhombic dodecahedron and semisimple actions of {${\rm Aut}(F_n)$}
  on {${\rm CAT}(0)$} spaces.
\newblock {\em Fund. Math.}, 214(1):13--25, 2011.

\bibitem[CCV07]{CCV}
Ruth Charney, John Crisp, and Karen Vogtmann.
\newblock Automorphisms of 2-dimensional right-angled {A}rtin groups.
\newblock {\em Geom. Topol.}, 11:2227--2264, 2007.

\bibitem[CD19]{Cordes-Durham}
Matthew Cordes and Matthew~Gentry Durham.
\newblock Boundary convex cocompactness and stability of subgroups of finitely
  generated groups.
\newblock {\em Int. Math. Res. Not. IMRN}, (6):1699--1724, 2019.

\bibitem[CDH10]{CDH}
Indira Chatterji, Cornelia Dru\c{t}u, and Fr\'{e}d\'{e}ric Haglund.
\newblock Kazhdan and {H}aagerup properties from the median viewpoint.
\newblock {\em Adv. Math.}, 225(2):882--921, 2010.

\bibitem[CFI16]{CFI}
Indira Chatterji, Talia Fern\'{o}s, and Alessandra Iozzi.
\newblock The median class and superrigidity of actions on {$\rm CAT(0)$} cube
  complexes.
\newblock {\em J. Topol.}, 9(2):349--400, 2016.
\newblock With an appendix by Pierre-Emmanuel Caprace.

\bibitem[CG12]{Corredor-Gutierrez}
Luis~J. Corredor and Mauricio~A. Gutierrez.
\newblock A generating set for the automorphism group of a graph product of
  abelian groups.
\newblock {\em Internat. J. Algebra Comput.}, 22(1):1250003, 21, 2012.

\bibitem[Che00]{Chepoi}
Victor Chepoi.
\newblock Graphs of some {${\rm CAT}(0)$} complexes.
\newblock {\em Adv. in Appl. Math.}, 24(2):125--179, 2000.

\bibitem[CL89]{Cohen-Lustig}
Marshall~M. Cohen and Martin Lustig.
\newblock On the dynamics and the fixed subgroup of a free group automorphism.
\newblock {\em Invent. Math.}, 96(3):613--638, 1989.

\bibitem[CM87]{Culler-Morgan}
Marc Culler and John~W. Morgan.
\newblock Group actions on {${\bf R}$}-trees.
\newblock {\em Proc. London Math. Soc. (3)}, 55(3):571--604, 1987.

\bibitem[Coo87]{Cooper-fix}
Daryl Cooper.
\newblock Automorphisms of free groups have finitely generated fixed point
  sets.
\newblock {\em J. Algebra}, 111(2):453--456, 1987.

\bibitem[CS11]{CS}
Pierre-Emmanuel Caprace and Michah Sageev.
\newblock Rank rigidity for {CAT}(0) cube complexes.
\newblock {\em Geom. Funct. Anal.}, 21(4):851--891, 2011.

\bibitem[CS15]{Charney-Sultan}
Ruth Charney and Harold Sultan.
\newblock Contracting boundaries of {$\rm CAT(0)$} spaces.
\newblock {\em J. Topol.}, 8(1):93--117, 2015.

\bibitem[CSV17]{CSV}
Ruth Charney, Nathaniel Stambaugh, and Karen Vogtmann.
\newblock Outer space for untwisted automorphisms of right-angled {A}rtin
  groups.
\newblock {\em Geom. Topol.}, 21(2):1131--1178, 2017.

\bibitem[Cul84]{Culler-finiteorder}
Marc Culler.
\newblock Finite groups of outer automorphisms of a free group.
\newblock In {\em Contributions to group theory}, volume~33 of {\em Contemp.
  Math.}, pages 197--207. Amer. Math. Soc., Providence, RI, 1984.

\bibitem[CV09]{CV1}
Ruth Charney and Karen Vogtmann.
\newblock Finiteness properties of automorphism groups of right-angled {A}rtin
  groups.
\newblock {\em Bull. Lond. Math. Soc.}, 41(1):94--102, 2009.

\bibitem[CV11]{CV2}
Ruth Charney and Karen Vogtmann.
\newblock Subgroups and quotients of automorphism groups of {RAAG}s.
\newblock In {\em Low-dimensional and symplectic topology}, volume~82 of {\em
  Proc. Sympos. Pure Math.}, pages 9--27. Amer. Math. Soc., Providence, RI,
  2011.

\bibitem[Dav08]{Davis}
Michael~W. Davis.
\newblock {\em The geometry and topology of {C}oxeter groups}, volume~32 of
  {\em London Mathematical Society Monographs Series}.
\newblock Princeton University Press, Princeton, NJ, 2008.

\bibitem[Day09]{Day-peak}
Matthew~B. Day.
\newblock Peak reduction and finite presentations for automorphism groups of
  right-angled {A}rtin groups.
\newblock {\em Geom. Topol.}, 13(2):817--855, 2009.

\bibitem[DJ00]{Davis-Janusz}
Michael~W. Davis and Tadeusz Januszkiewicz.
\newblock Right-angled {A}rtin groups are commensurable with right-angled
  {C}oxeter groups.
\newblock {\em J. Pure Appl. Algebra}, 153(3):229--235, 2000.

\bibitem[DK18]{DK}
Cornelia Dru\c{t}u and Michael Kapovich.
\newblock {\em Geometric group theory}, volume~63 of {\em American Mathematical
  Society Colloquium Publications}.
\newblock American Mathematical Society, Providence, RI, 2018.
\newblock With an appendix by Bogdan Nica.

\bibitem[DL21]{Dani-Levcovitz}
Pallavi Dani and Ivan Levcovitz.
\newblock Subgroups of right-angled {C}oxeter groups via {S}tallings-like
  techniques.
\newblock {\em J. Comb. Algebra}, 5(3):237--295, 2021.

\bibitem[DS75]{Dyer-Scott}
Joan~L. Dyer and G.~Peter Scott.
\newblock Periodic automorphisms of free groups.
\newblock {\em Comm. Algebra}, 3:195--201, 1975.

\bibitem[DSW21]{Day-Sale-Wade}
Matthew~B. Day, Andrew~W. Sale, and Richard~D. Wade.
\newblock Calculating the virtual cohomological dimension of the automorphism
  group of a {RAAG}.
\newblock {\em Bull. Lond. Math. Soc.}, 53(1):259--273, 2021.

\bibitem[Dug51]{Dugundji}
James Dugundji.
\newblock An extension of {T}ietze's theorem.
\newblock {\em Pacific J. Math.}, 1:353--367, 1951.

\bibitem[DW19]{Day-Wade}
Matthew~B. Day and Richard~D. Wade.
\newblock Relative automorphism groups of right-angled {A}rtin groups.
\newblock {\em J. Topol.}, 12(3):759--798, 2019.

\bibitem[Fer18]{Fernos}
Talia Fern\'{o}s.
\newblock The {F}urstenberg-{P}oisson boundary and {${\rm CAT}(0)$} cube
  complexes.
\newblock {\em Ergodic Theory Dynam. Systems}, 38(6):2180--2223, 2018.

\bibitem[Fio18]{Fio2}
Elia Fioravanti.
\newblock The {T}its alternative for finite rank median spaces.
\newblock {\em Enseign. Math.}, 64(1-2):89--126, 2018.

\bibitem[Fio19]{Fio3}
Elia Fioravanti.
\newblock Superrigidity of actions on finite rank median spaces.
\newblock {\em Adv. Math.}, 352:1206--1252, 2019.

\bibitem[Fio20]{Fio1}
Elia Fioravanti.
\newblock Roller boundaries for median spaces and algebras.
\newblock {\em Algebr. Geom. Topol.}, 20(3):1325--1370, 2020.

\bibitem[Fio21a]{Fio10b}
Elia Fioravanti.
\newblock Convex cores for actions on finite-rank median algebras.
\newblock arXiv:2101.03060v1, 2021.

\bibitem[Fio21b]{Fio10e}
Elia Fioravanti.
\newblock On automorphisms and splittings of special groups.
\newblock arXiv:2108.13212v2, 2021.

\bibitem[Ger83]{Gersten-bull}
Stephen~M. Gersten.
\newblock On fixed points of automorphisms of finitely generated free groups.
\newblock {\em Bull. Amer. Math. Soc. (N.S.)}, 8(3):451--454, 1983.

\bibitem[Ger84]{Gersten-proc}
Stephen~M. Gersten.
\newblock Addendum: ``{O}n fixed points of certain automorphisms of free
  groups''.
\newblock {\em Proc. London Math. Soc. (3)}, 49(2):340--342, 1984.

\bibitem[Ger87]{Gersten-fix}
Stephen~M. Gersten.
\newblock Fixed points of automorphisms of free groups.
\newblock {\em Adv. in Math.}, 64(1):51--85, 1987.

\bibitem[GH17]{Groves-Hull-little}
Daniel Groves and Michael Hull.
\newblock Abelian splittings of right-angled {A}rtin groups.
\newblock In {\em Hyperbolic geometry and geometric group theory}, volume~73 of
  {\em Adv. Stud. Pure Math.}, pages 159--165. Math. Soc. Japan, Tokyo, 2017.

\bibitem[GJLL98]{GJLL}
Damien Gaboriau, Andre Jaeger, Gilbert Levitt, and Martin Lustig.
\newblock An index for counting fixed points of automorphisms of free groups.
\newblock {\em Duke Math. J.}, 93(3):425--452, 1998.

\bibitem[GL95]{Gaboriau-Levitt}
Damien Gaboriau and Gilbert Levitt.
\newblock The rank of actions on {${\bf R}$}-trees.
\newblock {\em Ann. Sci. \'{E}cole Norm. Sup. (4)}, 28(5):549--570, 1995.

\bibitem[GLL98]{GLL}
Damien Gaboriau, Gilbert Levitt, and Martin Lustig.
\newblock A dendrological proof of the {S}cott conjecture for automorphisms of
  free groups.
\newblock {\em Proc. Edinburgh Math. Soc. (2)}, 41(2):325--332, 1998.

\bibitem[GS18]{Guirardel-Sale}
Vincent Guirardel and Andrew Sale.
\newblock Vastness properties of automorphism groups of {RAAG}s.
\newblock {\em J. Topol.}, 11(1):30--64, 2018.

\bibitem[GT84]{Goldstein-Turner}
Richard~Z. Goldstein and Edward~C. Turner.
\newblock Automorphisms of free groups and their fixed points.
\newblock {\em Invent. Math.}, 78(1):1--12, 1984.

\bibitem[Gui05]{Guirardel-core}
Vincent Guirardel.
\newblock C\oe ur et nombre d'intersection pour les actions de groupes sur les
  arbres.
\newblock {\em Ann. Sci. \'{E}cole Norm. Sup. (4)}, 38(6):847--888, 2005.

\bibitem[Hag07]{Haglund}
Fr\'ed\'eric Haglund.
\newblock Isometries of {$\rm CAT(0)$} cube complexes are semi-simple.
\newblock {\em arXiv preprint arXiv:0705.3386v1}, 2007.

\bibitem[Hag08]{Haglund-GD}
Fr\'{e}d\'{e}ric Haglund.
\newblock Finite index subgroups of graph products.
\newblock {\em Geom. Dedicata}, 135:167--209, 2008.

\bibitem[Hag14]{Hagen-cryst}
Mark~F. Hagen.
\newblock Cocompactly cubulated crystallographic groups.
\newblock {\em J. Lond. Math. Soc. (2)}, 90(1):140--166, 2014.

\bibitem[HK18a]{Hensel-Kielak}
Sebastian Hensel and Dawid Kielak.
\newblock Nielsen realisation for untwisted automorphisms of right-angled
  {A}rtin groups.
\newblock {\em Proc. Lond. Math. Soc. (3)}, 117(5):901--950, 2018.

\bibitem[HK18b]{Huang-Kleiner}
Jingyin Huang and Bruce Kleiner.
\newblock Groups quasi-isometric to right-angled {A}rtin groups.
\newblock {\em Duke Math. J.}, 167(3):537--602, 2018.

\bibitem[HS20]{Hagen-Susse}
Mark~F. Hagen and Tim Susse.
\newblock On hierarchical hyperbolicity of cubical groups.
\newblock {\em Israel J. Math.}, 236(1):45--89, 2020.

\bibitem[HT19]{Hagen-Touikan}
Mark~F. Hagen and Nicholas W.~M. Touikan.
\newblock Panel collapse and its applications.
\newblock {\em Groups Geom. Dyn.}, 13(4):1285--1334, 2019.

\bibitem[Hu65]{Hu}
Sze-tsen Hu.
\newblock {\em Theory of retracts}.
\newblock Wayne State University Press, Detroit, 1965.

\bibitem[HW]{Hagen-Wilton}
Mark~F. Hagen and Henry Wilton.
\newblock Guirardel cores for cube complexes.
\newblock In preparation.

\bibitem[HW08]{Haglund-Wise-GAFA}
Fr\'{e}d\'{e}ric Haglund and Daniel~T. Wise.
\newblock Special cube complexes.
\newblock {\em Geom. Funct. Anal.}, 17(5):1551--1620, 2008.

\bibitem[JS77]{Jaco-Shalen}
William Jaco and Peter~B. Shalen.
\newblock Surface homeomorphisms and periodicity.
\newblock {\em Topology}, 16(4):347--367, 1977.

\bibitem[KM74]{Kolibiar-Marcisova}
Milan Kolibiar and Tamara Marcisov\'{a}.
\newblock On a question of {J}. {H}ashimoto.
\newblock {\em Mat. \v{C}asopis Sloven. Akad. Vied}, 24:179--185, 1974.

\bibitem[KMT17]{Koberda-Mangahas-Taylor}
Thomas Koberda, Johanna Mangahas, and Samuel~J. Taylor.
\newblock The geometry of purely loxodromic subgroups of right-angled {A}rtin
  groups.
\newblock {\em Trans. Amer. Math. Soc.}, 369(11):8179--8208, 2017.

\bibitem[Lau95]{Laurence}
Michael~R. Laurence.
\newblock A generating set for the automorphism group of a graph group.
\newblock {\em J. London Math. Soc. (2)}, 52(2):318--334, 1995.

\bibitem[Lef34]{Lefschetz1}
Solomon Lefschetz.
\newblock On locally connected and related sets.
\newblock {\em Ann. of Math. (2)}, 35(1):118--129, 1934.

\bibitem[Lef36]{Lefschetz2}
Solomon Lefschetz.
\newblock On locally-connected and related sets. ({S}econd paper).
\newblock {\em Duke Math. J.}, 2(3):435--442, 1936.

\bibitem[Lev05]{Levitt-hyp}
Gilbert Levitt.
\newblock Automorphisms of hyperbolic groups and graphs of groups.
\newblock {\em Geom. Dedicata}, 114:49--70, 2005.

\bibitem[MMV98]{MMVW}
John Meier, Holger Meinert, and Leonard VanWyk.
\newblock Higher generation subgroup sets and the {$\Sigma$}-invariants of
  graph groups.
\newblock {\em Comment. Math. Helv.}, 73(1):22--44, 1998.

\bibitem[MO12]{Minasyan-Osin-fix}
Ashot Minasyan and Denis Osin.
\newblock Fixed subgroups of automorphisms of relatively hyperbolic groups.
\newblock {\em Q. J. Math.}, 63(3):695--712, 2012.

\bibitem[MV95]{Meier-VanWyck}
John Meier and Leonard VanWyk.
\newblock The {B}ieri-{N}eumann-{S}trebel invariants for graph groups.
\newblock {\em Proc. London Math. Soc. (3)}, 71(2):263--280, 1995.

\bibitem[Neu92]{Neumann-fix}
Walter~D. Neumann.
\newblock The fixed group of an automorphism of a word hyperbolic group is
  rational.
\newblock {\em Invent. Math.}, 110(1):147--150, 1992.

\bibitem[NWZ19]{NWZ1}
Graham~A. Niblo, Nick Wright, and Jiawen Zhang.
\newblock A four point characterisation for coarse median spaces.
\newblock {\em Groups Geom. Dyn.}, 13(3):939--980, 2019.

\bibitem[NWZ21]{NWZ2}
Graham~A. Niblo, Nick Wright, and Jiawen Zhang.
\newblock Coarse median algebras: the intrinsic geometry of coarse median
  spaces and their intervals.
\newblock {\em Selecta Math. (N.S.)}, 27(2):Paper No. 20, 50, 2021.

\bibitem[Pau88]{Paulin-IM}
Fr\'{e}d\'{e}ric Paulin.
\newblock Topologie de {G}romov \'{e}quivariante, structures hyperboliques et
  arbres r\'{e}els.
\newblock {\em Invent. Math.}, 94(1):53--80, 1988.

\bibitem[Pau89]{Paulin-Fourier}
Fr\'{e}d\'{e}ric Paulin.
\newblock Points fixes des automorphismes de groupe hyperbolique.
\newblock {\em Ann. Inst. Fourier (Grenoble)}, 39(3):651--662, 1989.

\bibitem[Pau91]{Paulin-arboreal}
Fr\'{e}d\'{e}ric Paulin.
\newblock Outer automorphisms of hyperbolic groups and small actions on {${\bf
  R}$}-trees.
\newblock In {\em Arboreal group theory ({B}erkeley, {CA}, 1988)}, volume~19 of
  {\em Math. Sci. Res. Inst. Publ.}, pages 331--343. Springer, New York, 1991.

\bibitem[Pau97]{Paulin-ENS}
Fr\'{e}d\'{e}ric Paulin.
\newblock Sur les automorphismes ext\'{e}rieurs des groupes hyperboliques.
\newblock {\em Ann. Sci. \'{E}cole Norm. Sup. (4)}, 30(2):147--167, 1997.

\bibitem[Rol98]{Roller}
Martin~A. Roller.
\newblock Poc sets, median algebras and group actions. {A}n extended study of
  {D}unwoody's construction and {S}ageev's theorem.
\newblock Preprint, University of Southampton, available at arXiv:1607.07747,
  1998.

\bibitem[Sag14]{Sageev-notes}
Michah Sageev.
\newblock {$\rm CAT(0)$} cube complexes and groups.
\newblock In {\em Geometric group theory}, volume~21 of {\em IAS/Park City
  Math. Ser.}, pages 7--54. Amer. Math. Soc., Providence, RI, 2014.

\bibitem[Sel97]{Sela-II}
Zlil Sela.
\newblock Structure and rigidity in ({G}romov) hyperbolic groups and discrete
  groups in rank {$1$} {L}ie groups. {II}.
\newblock {\em Geom. Funct. Anal.}, 7(3):561--593, 1997.

\bibitem[Ser89]{Servatius}
Herman Servatius.
\newblock Automorphisms of graph groups.
\newblock {\em J. Algebra}, 126(1):34--60, 1989.

\bibitem[SS19]{Sale-Susse}
Andrew Sale and Tim Susse.
\newblock Outer automorphism groups of right-angled {C}oxeter groups are either
  large or virtually abelian.
\newblock {\em Trans. Amer. Math. Soc.}, 372(11):7785--7803, 2019.

\bibitem[Sta83]{Stallings-folds}
John~R. Stallings.
\newblock Topology of finite graphs.
\newblock {\em Invent. Math.}, 71(3):551--565, 1983.

\bibitem[Sta91]{Stallings-arboreal}
John~R. Stallings.
\newblock Foldings of {$G$}-trees.
\newblock In {\em Arboreal group theory ({B}erkeley, {CA}, 1988)}, volume~19 of
  {\em Math. Sci. Res. Inst. Publ.}, pages 355--368. Springer, New York, 1991.

\bibitem[SW15]{Sageev-Wise-core}
Michah Sageev and Daniel~T. Wise.
\newblock Cores for quasiconvex actions.
\newblock {\em Proc. Amer. Math. Soc.}, 143(7):2731--2741, 2015.

\bibitem[Thu88]{Thur-BullAMS}
William~P. Thurston.
\newblock On the geometry and dynamics of diffeomorphisms of surfaces.
\newblock {\em Bull. Amer. Math. Soc. (N.S.)}, 19(2):417--431, 1988.

\bibitem[Tra17]{Tran-distortion}
Hung~Cong Tran.
\newblock Geometric embedding properties of {B}estvina-{B}rady subgroups.
\newblock {\em Algebr. Geom. Topol.}, 17(4):2499--2510, 2017.

\bibitem[vdV93]{vdVel}
Marcel van~de Vel.
\newblock {\em Theory of convex structures}, volume~50 of {\em North-Holland
  Mathematical Library}.
\newblock North-Holland Publishing Co., Amsterdam, 1993.

\bibitem[WW17]{WW}
Daniel~T. Wise and Daniel~J. Woodhouse.
\newblock A cubical flat torus theorem and the bounded packing property.
\newblock {\em Israel J. Math.}, 217(1):263--281, 2017.

\bibitem[Zei16]{Zeidler}
Rudolf Zeidler.
\newblock Coarse median structures and homomorphisms from {K}azhdan groups.
\newblock {\em Geom. Dedicata}, 180:49--68, 2016.

\end{thebibliography}
\bibliographystyle{alpha}

\end{document}